\documentclass[11pt, letterpaper, reqno]{amsart}
\usepackage[utf8]{inputenc} 	
\usepackage{microtype} 			
\usepackage{geometry} 			
\usepackage{fancyhdr} 			
\usepackage{amsmath}			
\usepackage{amsthm}		 		
\usepackage{amssymb}	 		
\usepackage{bm}					
\usepackage{mathrsfs}			
\usepackage{thmtools}
\usepackage{xcolor}				
\definecolor{darkcandyapp}{rgb}{0.66, 0.1, 0.1}
\definecolor{darkblue}{rgb}{0.1, 0.1, 0.66}
\definecolor{darkgreen}{rgb}{0.1, 0.45,.1}
\definecolor{mygreen}{rgb}{0.0, 0.66,0}
\definecolor{darkpurple}{rgb}{.33,0,.33}
\definecolor{whitesmokedark}{RGB}{235,235,235}
\definecolor{gainsboro}{RGB}{220,220,220} 
\usepackage{tikz}				
\usetikzlibrary{decorations.markings}
\usepackage{pgfplots}
\pgfplotsset{compat=1.13}
\usetikzlibrary{patterns}
\usepackage[all]{xy}			
\usepackage{graphicx}			
\usepackage{caption}
\usepackage{subcaption}
\usepackage{floatrow}
\usepackage{enumitem} 			
\usepackage{booktabs}
\usepackage{array}
\usepackage{lmodern}			
\usepackage[T1]{fontenc}		
\usepackage[colorlinks=true, citecolor=cyan, urlcolor=magenta]{hyperref}
\usepackage{cleveref}
\usepackage[backend=bibtex, backref=true, style=alphabetic, firstinits=true, maxbibnames=6, isbn=false, date=year]{biblatex}
\addbibresource{KGI.bib}

\makeatletter
\renewbibmacro{in:}{}
\DeclareFieldFormat{pages}{#1}
\renewcommand*{\bibnamedash}{%
	\leavevmode\raise +0.6ex\hbox to 5.5ex{\hrulefill}.\space\space}

\InitializeBibliographyStyle{\global\undef\bbx@lasthash}

\newbibmacro*{bbx:savehash}{\savefield{fullhash}{\bbx@lasthash}}

\renewbibmacro*{author}{%
	\ifboolexpr{
		test \ifuseauthor
		and
		not test {\ifnameundef{author}}
	}
	{%
		\iffieldequals{fullhash}{\bbx@lasthash}
		{\bibnamedash\addcomma\space}
		{\printnames{author}}%
		\usebibmacro{bbx:savehash}%
		\iffieldundef{authortype}
		{}
		{%
			\setunit{\addcomma\space}%
			\usebibmacro{authorstrg}%
		}%
	}
	{\global\undef\bbx@lasthash}%
}
\makeatother
\geometry
{
	letterpaper,
	margin=1in,
	headheight=15pt
}

\newenvironment{proposition}
{\pushQED{\qed}\propositionx}
{\popQED\endpropositionx}
\newenvironment{propositionp}
{\pushQED{\qed}\propositionx}
{\popQED\endpropositionx}

\newenvironment{theorem}
{\pushQED{\qed}\theoremx}
{\popQED\endtheoremx}

\newenvironment{corollary}
{\pushQED{\qed}\corollaryx}
{\popQED\endcorollaryx}

\newenvironment{lemma}
{\pushQED{\qed}\lemmax}
{\popQED\endlemmax}

\theoremstyle{remark}

\newenvironment{remark*}
{\pushQED{\qed}\remarkxx}
{\popQED\endremarkxx}
\newenvironment{remark}
{\pushQED{\qed}\remarkx}
{\popQED\endremarkx}

\newenvironment{example}
{\pushQED{\qed}\examplex}
{\popQED\endexamplex}
\newtheorem*{warning*}{Warning}
%

\newcommand{\dd}{\,\mathrm{d}}

%


\newcommand{\bbB}{\mathbb{B}}
\newcommand{\bbC}{\mathbb{C}}

\newcommand{\bbH}{\mathbb{H}}

\newcommand{\bbM}{\mathbb{M}}
\newcommand{\bbN}{\mathbb{N}}
\newcommand{\bbO}{\mathbb{O}}
\newcommand{\bbP}{\mathbb{P}}

\newcommand{\bbR}{\mathbb{R}}
\newcommand{\bbS}{\mathbb{S}}

\newcommand{\bbX}{\mathbb{X}}

\newcommand{\calA}{\mathcal{A}}

\newcommand{\calC}{\mathcal{C}}
\newcommand{\calD}{\mathcal{D}}
\newcommand{\calE}{\mathcal{E}}
\newcommand{\calF}{\mathcal{F}}

\newcommand{\calI}{\mathcal{I}}

\newcommand{\calK}{\mathcal{K}}

\newcommand{\calN}{\mathcal{N}}

\newcommand{\calR}{\mathcal{R}}
\newcommand{\calS}{\mathcal{S}}

\newcommand{\calV}{\mathcal{V}}


\newcommand{\scrI}{\mathscr{I}}


\newcommand{\frakM}{\mathfrak{M}}
\newcommand{\frakN}{\mathfrak{N}}


\newcommand{\bfk}{\mathbf{k}}

\newcommand{\bfv}{\mathbf{v}}

\newcommand{\bfx}{\mathbf{x}}
\newcommand{\bfy}{\mathbf{y}}


\newcommand{\bmtheta}{\bm{\theta}}

\newcommand{\bmxi}{\bm{\xi}}


\title{Massive wave propagation near null infinity}
\author{Ethan Sussman}
\date{December 16th, 2024 (Last update: exposition added, many typos and a handful of minor mistakes fixed), May 1st, 2023 (Preprint)}
\email{ethanws@mit.edu}
\address{Department of Mathematics, Massachusetts Institute of Technology, Massachusetts 02139-4307, USA}

\makeatletter
\@namedef{subjclassname@2020}{\textup{2020} Mathematics Subject Classification}
\makeatother
\subjclass[2020]{Primary 35L05; 35B40, Secondary 35P25, 35C20, 58J40, 58J47}

\newlength\octheight
\setlength\octheight{60pt}
\newlength\octheightr
\setlength\octheightr{90pt}

\newlength\arrowscale
\setlength\arrowscale{1pt}

\setcounter{tocdepth}{1}

\begin{document}
	
\begin{abstract}
	We study, fully microlocally, the propagation of massive waves on the \emph{octagonal compactification} 
	\[\bbO=[\overline{\bbR^{1,d}};\scrI;1/2]\] 
	of asymptotically Minkowski spacetime, which allows a detailed analysis both at timelike and spacelike infinity (as previously investigated using Parenti--Shubin--Melrose's sc-calculus) and, more novelly, at null infinity, denoted $\scrI$. The analysis is closely related to Hintz--Vasy's recent analysis of massless wave propagation at null infinity using the ``e,b-calculus'' on $\bbO$.
	We prove several elementary corollaries regarding the Klein--Gordon IVP.
	Our main technical tool is a fully symbolic pseudodifferential calculus, $\Psi_{\mathrm{de,sc}}(\bbO)$, the ``de,sc-calculus'' on $\bbO$.
	The `de' refers to the structure (``double edge'') of the calculus at null infinity, and the `sc' refers to the structure (``scattering'') at the other boundary faces.  
	We relate this structure to the hyperbolic coordinates used in other studies of the Klein--Gordon equation. Unlike hyperbolic coordinates, the de,sc- boundary fibration structure is Poincar\'e invariant.
\end{abstract}

\maketitle
\tableofcontents

\section{Introduction}

The subject of this paper is the propagation of massive waves on Minkowski-like spacetimes $(\bbR^{1,d}_{t,\bfx},g)$. 
We will be more precise later regarding the meaning of ``Minkowski-like'' in the previous sentence; see \S\ref{sec:proofs} for the precise conditions, which define the class of what we call \emph{admissible} metrics. For now, we just note that such spacetimes are 
\begin{itemize}
	\item non-trapping, in the sense that null geodesics asymptote in the usual way, 
	\item globally hyperbolic, with $t=x_0$ a smooth time function and with each $\Sigma_T = \{(T,\bfx):\bfx\in \bbR^d\}$, $T\in \bbR$, a Cauchy hypersurface -- so that the Cauchy problem with data specified on $\Sigma_0$ is well-posed -- and 
	\item asymptotically flat, in both the spacelike and timelike directions, so that the metric asymptotes to the Minkowski metric at large distances \emph{and} large times. Our main theorem allows only short-range perturbations of the Minkowski metric (see \Cref{prop:allowed_decay}). Through much of \S\ref{sec:propagation}, the discussion applies to longer-range perturbations, though we will not attempt to be sharp.
\end{itemize}
The main microlocal estimates below depend only on the asymptotic structure of the metric, but global hyperbolicity and the non-trapping assumption are required for the various applications given.

Of course, the exact Minkowski spacetime $(\bbR^{1,d},g_\bbM)$, with 
\begin{equation} 
	g_\bbM = - \mathrm{d}t^2+ \sum_{j=1}^d \mathrm{d}x_j^2,
\end{equation} 
counts as admissible. Our sign convention for Lorentzian metrics is the mostly positive one. 
\begin{remark}[Our vs.\ usual notion of asymptotic flatness]
	We do not allow stationary spacetimes besides the exact Minkowski spacetime itself, as other stationary spacetimes do not asymptote to the Minkowski spacetime at large \textit{times}. The usual notion of asymptotic flatness (see e.g.\ \cite[Chp.\ 11]{Wald}) only restricts large distance behavior. Consequently, our analysis here excludes many physically important spacetimes, indicating that there is much work left to be done.
\end{remark}

Fix $\mathsf{m}>0,d\in \bbN^+$. Given an admissible Lorentzian metric $g$ on $\bbR^{1,d}$, let  
\begin{align}
	\begin{split}  
	\square_g &= -\frac{1}{|g|^{1/2}} \sum_{i,j=0}^d \frac{\partial}{\partial x_i} \Big[ |g|^{1/2} g^{ij} \frac{\partial}{\partial x_j} \Big] \\ 
	&=  -\frac{1}{|g|^{1/2}} \Big( \sum_{i=0}^d \frac{\partial}{\partial x_i} \Big[ |g|^{1/2} g^{i0} \frac{\partial}{\partial t} \Big] + \sum_{j=1}^d \frac{\partial}{\partial t} \Big[ |g|^{1/2} g^{0j} \frac{\partial}{\partial x_j} \Big] \Big) + \triangle_g 
	\end{split} 
	\label{eq:square}
\end{align} 
denote the associated d'Alembertian, with the sign convention being such that the (likely time-dependent) Laplace--Beltrami portion $\triangle_g$ is positive semidefinite.

Consider the Klein--Gordon equation
\begin{equation}
	\square_g u + Qu + \mathsf{m}^2 u  = f,  
	\label{eq:KG} 
\end{equation} 
where $u$ is the unknown, $f$ is the forcing, and $Q$ is drawn from a subspace of appropriate first-order differential operators whose coefficients decay at infinity (more precisely, are short range; the precise condition is in \S\ref{sec:proofs}). For instance, $Q$ can be any Schwartz function, considered as a multiplication operator, so included in this setup is 
\begin{equation} 
	\square+\mathsf{m}^2 + V = \frac{\partial^2}{\partial t^2} - \sum_{i=1}^d \frac{\partial^2}{\partial x_i^2} +\mathsf{m}^2 + V
	\label{eq:exactKG}
\end{equation} 
for $V\in \mathcal{S}(\bbR^{1,d})$, 
which governs the evolution of massive waves on the exact Minkowski spacetime in the presence of the ``potential'' $V$. 
Here, $\square = \square_{g_{\bbM}} = \partial_t^2 -( \partial_{x_1}^2 + \cdots + \partial_{x_d}^2)$ is the exact flat spacetime d'Alembertian.

The behavior of solutions of the associated initial value problem (IVP)
\begin{equation}
	\begin{cases}
		\square_g u +Q u + \mathsf{m}^2 u = f\in \calS(\bbR^{1+d}), \\ 
		u|_{t=0} = u^{(0)} \in \calS(\bbR^d), \\
		\partial_t u|_{t=0} = u^{(1)}\in \calS(\bbR^d),
	\end{cases}
	\label{eq:IVP}
\end{equation}
is a rather classical topic. 

\begin{remark}
	In this introduction, and in \S\ref{sec:proofs} (in which the results stated in this introduction are proven), we restrict attention to the case when the forcing $f$ and initial data $u^{(0)},u^{(1)}$ are Schwartz. Our main estimates, proven in \S\ref{sec:propagation}, \S\ref{sec:radialpoint}, are much more general.
\end{remark}

Nevertheless, it has apparently remained open to establish (beyond the exact Minkowski case) that the solution $u$ admits a full asymptotic expansion at infinity. At infinity means as $t\to\infty$ (allowing $x_1,\dots,x_d\to\infty$ as well). 
Alternatively, at infinity roughly means at the boundary of the radial compactification
\begin{equation} 
	\bbM = \overline{\bbR^{1,d}} = \overline{\bbR^{1+d}} = \bbR^{1+d} \cup \{\infty \bbS^{d}\}
\end{equation} 
of the spacetime.

\begin{figure}
	\begin{tikzpicture}
		\draw[dashed] (-2.55,-2.6) rectangle (4,2.6);
		\draw[fill=lightgray!20] (0,0) circle (2);
		\coordinate (a) at (1.97,0.347);
		\coordinate (b) at (-1.97,-0.347); 
		\coordinate (c) at (1.414,1.414);
		\coordinate (d) at (-1.414,-1.414);
		\coordinate (e) at (0.347,1.97);
		\coordinate (f) at (-0.347,-1.97); 
		\draw[gray] (a) -- (b);
		\draw[gray] (b) to[out=20, in=180] (a);
		\draw[gray] (b) to[out=0, in=200] (a);
		\draw[gray] (c) -- (d);
		\draw[gray] (d) to[out=55, in=215] (c);
		\draw[gray] (d) to[out=35, in=235] (c);
		\draw[gray] (e) -- (f);
		\draw[gray] (f) to[out=90, in=250] (e);
		\draw[gray] (f) to[out=70, in=270] (e);
		\draw[darkcandyapp, ->] (2.1,0) to[out=90, in=-70] (1.95,.8) node[right] {$\operatorname{tan}^{-1}(t/r)$ };
		\node () at (-1,1) {$\bbM$};
	\end{tikzpicture}
	\begin{tikzpicture}
	\draw[dashed] (-2.6,-2.6) rectangle (2.6,2.6);
	\draw[fill=lightgray!20] (0,0) circle (2);
	\draw[orange, dashed] (1.414,1.414) -- (-1.414,-1.414) ;
	\draw[orange, dashed] (-1.414,1.414) -- (1.414,-1.414) ;
	\draw[orange, dashed] (0,.714) ellipse (20pt and 3pt);
	\draw[orange, dashed] (0,-.714) ellipse (20pt and 3pt);
	\draw[orange, dashed] (0,.314) ellipse (8pt and 2pt);
	\draw[orange, dashed] (0,-.314) ellipse (8pt and 2pt);
	\draw[orange, dashed] (0,1.1) ellipse (30.5pt and 4pt);
	\draw[orange, dashed] (0,-1.1) ellipse (30.5pt and 4pt);
	\draw[black, dashed] (0,1.414) ellipse (39pt and 4pt);
	\draw[black, dashed] (0,-1.414) ellipse (39pt and 4pt);
	\node () at (-1,0) {$\bbM$};
	\fill[black] (1.414,1.414) circle (.05);
	\fill[black] (1.414,-1.414) circle (.05);
	\fill[black] (-1.414,1.414) circle (.05);
	\fill[black] (-1.414,-1.414) circle (.05);
	\node[below right] () at (1.414,-1.414) {$\scrI^-$};
	\node[above right] () at (1.414,1.414) {$\scrI^+$};
	\node[right] () at (2,0) {$i^0$};
	\node[above] () at (0,1.95) {$C_+$};
	\node[below] () at (0,-1.95) {$C_-$};
	\end{tikzpicture}
	\caption{The radial compactification $\bbM\hookleftarrow \bbR^{1,d}$ of Minkowski spacetime is topologically a ball, $\bbM\cong \bbB^{1+d}$. On the left, we show three families of straight parallel lines $\{(t,x) : \bfx=\bfx_0+\bfv t\}$ in $\bbR^{1,d}$ (each family consisting of three parallel lines, so three different $x_0\in \bbR^{1,d}$), as seen from the compactified perspective -- one family of timelike lines, one of null lines, and one of spacelike lines. These hit timelike infinity $C=C_+\cup C_-$, null infinity $\scrI=\scrI^+\cup \scrI^-$, and spacelike infinity $i^0$, respectively. These subsets of $\partial \bbM\cong \bbS^d$ are shown in the figure on the right, in which the light cone is also shown in orange.}
	\label{fig:M}
\end{figure}

Such a result appears below. However, its formulation requires working on a more complicated compactification than $\bbM$. For now, we state a more elementary formulation that can be phrased using only $\bbM$:
\begin{theorem}
	Given the setup above:
	\begin{enumerate}[label=(\alph*)]
		\item in the region $\bbM\backslash \operatorname{cl}_\bbM \{|t|\geq r\}$, the solution $u$ of the IVP is Schwartz, 
		\item for any $\bfv\in \bbR^d$ with $\lVert \bfv \rVert = 1$ and $\bfx_0\in \bbR^d$, the solution $u$, restricted to the line $
			\gamma_{\bfv,\bfx_0}=\{(t,\bfx)\in \bbR^{1,d} : \bfx = \bfx_0+ \bfv  t\}$
		is Schwartz as a function of $t$. Moreover, the same is true for all derivatives of $u$. 
		\item 
		Within $\{|t|>r+1\}$, we can write 
		\begin{equation}
			u = |t|^{-d/2} e^{-i\mathsf{m}\sqrt{t^2-r^2}} u_{-} + |t|^{-d/2} e^{+i\mathsf{m}\sqrt{t^2-r^2}} u_+ 
			\label{eq:u_decomp_initial}
		\end{equation}
		for $u_\pm \in C^\infty(\bbM\backslash \scrI)$. Moreover, if we let $\overline{C}_\pm$ denote the
		(closed) past and future caps 
		\begin{equation} 
			\overline{C}_\pm = \mathrm{cl}_\bbM\{\pm t\geq r\} \cap \partial \bbM,
		\end{equation} 
		then, for each $\varsigma \in \{-,+\}$, each term in the asymptotic expansion of $u_\varsigma$ at $C_\pm$ is a Schwartz function on $\overline{C}_\pm$, i.e.\ decays rapidly when approaching the boundary $\overline{C}_\pm \backslash C_\pm$.
	\end{enumerate}
	In summary, $u$ decays rapidly at spacelike infinity and null infinity, and at timelike infinity it has a full asymptotic expansion, the terms in which decay rapidly at null infinity.
	
	We can combine (and slightly strengthen) the three parts of this theorem as follows: let $\bbX_0 = \mathrm{cl}_{\bbM}\{|t|\geq r\}\backslash \{0\}$ denote the set of all points in $\bbM$ timelike or lightlike with respect to the origin (and excluding the origin itself). Then, there exist $u_\pm \in \calS(\bbX_0)$ and $u_0\in \calS(\bbR^{1,d})$ such that the support of $u_\pm$ excludes the origin and  
	\begin{equation}
		u = |t|^{-d/2} e^{-i\mathsf{m}\sqrt{t^2-r^2}} u_{-} + |t|^{-d/2} e^{+i\mathsf{m}\sqrt{t^2-r^2}} u_+ +u_0 
	\end{equation}
	holds globally. (Here, we are considering $u_\pm$ as functions on $\bbR^{1,d}$ supported within $\bbX_0$.)
	\label{thm:easy}
\end{theorem}
Here, $r= \lVert \bfx \rVert$ is the \emph{spatial} Euclidean radial coordinate, and $\operatorname{cl}_\bullet$ is used to denote closure in $\bullet$.

For the exact Klein--Gordon operator, a proof of essentially this result can be found in \cite[\S7.2]{HormanderNL}. The proof there utilizes the global Fourier transform to produce the solution of the IVP in terms of oscillatory integrals  whose asymptotics can be extracted via the method of stationary phase. Hence, it does not generalize to the case when the PDE has variable coefficients. As is by now well-known, the Parenti--Shubin--Melrose sc-calculus \cite{MelroseSC}\cite{VasyGrenoble} straightforwardly allows us to estimate the solution to the IVP in weighted $L^2$-based Sobolev spaces, including the variable coefficient case. The basic estimates are discussed in \cite{VasyGrenoble, VasyKG}, and standard modifications using module regularity \cite{HassellMelroseVasy, HassellETAL, HassellNL} allow one to establish asymptotic expansions (cf.\ \cite{MelroseSC}) away from \emph{null infinity}, 
\begin{equation}
	\scrI = \mathrm{cl}_\bbM\{|t|=r\} \cap \partial \bbM. 
\end{equation}
The upshot is that, if $u$ solves the IVP, then
\begin{itemize}
	\item $u$ is Schwartz away from the timelike caps $\overline{C}_\pm$ 
	\item within $\{|t|>r\}$, \cref{eq:u_decomp_initial} holds.
\end{itemize}
Thus, the sc- tools yield all of \Cref{thm:easy} \emph{except} the parts having to do with decay at null infinity. Our main contribution is to analyze the situation near null infinity.

Before stating our main theorem, it is worth explaining \textit{why} the sc-calculus is not well-suited to proving decay at null infinity. This will serve as motivation for our choice of compactification $\bbO\hookleftarrow \bbR^{1,d}$. 
We will assume throughout this paper that the reader is familiar with the sc-calculus. See \cite{VasyGrenoble} for an exposition of this theory. However, the statement of our main theorem does not require any microlocal analysis whatsoever. Consequently, the reader unfamiliar with the sc- terminology may wish to skip the next subsection, proceeding directly to \S\ref{subsec:main}.

\subsection{Limitations of the sc-calculus}
\label{subsec:limitations}

Recall that the compactified phase space relevant to the sc-calculus is the (radially compactified) sc-cotangent bundle 
\begin{equation} 
	{}^{\mathrm{sc}}\overline{T}^* \bbM = \underbrace{\bbM}_{\text{base}} \times \underbrace{\bbB^{1+d}}_{\text{fiber}} \hookleftarrow T^* \bbR^{1,d}.
\end{equation}
Over the interior of $\bbM$, this is the usual (radially compactified) cotangent bundle $\overline{T}^* \bbR^{1,d}$. On the other hand, ${}^{\mathrm{sc}} T^* \bbM =\bbM\times \bbR^{1+d}_{\tau,\xi}$ is the vector bundle whose smooth sections are of the form 
\begin{equation}
	\tau(t,\bfx)\dd t + \xi(t,\bfx)\cdot \dd \bfx,\quad \tau\in C^\infty(\bbM;\bbR), \xi \in C^\infty(\bbM;\bbR^d).
\end{equation}
The ball bundle ${}^{\mathrm{sc}}\overline{T}^* \bbM$ just results from radially compactifying the factor $\smash{\bbR^{1+d}_{\tau,\xi}}$. Topologically, $	{}^{\mathrm{sc}}\overline{T}^* \bbM$ is just a product of two $(1+d)$-balls. It has two boundary hypersurfaces, fiber infinity and base infinity.

Exactly at null infinity, the notion of module regularity needed to extract asymptotics for the Klein--Gordon equation becomes problematic. The reason for this is that the sources/sinks (a.k.a.\ radial sets) of the sc-Hamiltonian flow hit fiber infinity there (\Cref{fig:sc}). Relatedly, the phases in \cref{eq:u_decomp_initial} become singular at the light cone:
\begin{equation}
	 \dd \sqrt{t^2- r^2} = \frac{t\dd t- r\dd r}{\sqrt{t^2-r^2}},
	 \label{eq:012}
\end{equation}
and it is these sc- 1-forms that parametrize the radial sets over $\overline{C}_\pm$. Dually, consider the first-order differential operator that one inverts near points in $C_\pm = \smash{\overline{C}_\pm^\circ}$ to produce asymptotic expansions (see \S\ref{sec:asymptotics}):
\begin{equation}
	\frac{1}{\sqrt{t^2-r^2}} \Big(t \frac{\partial}{\partial t}  +r \frac{\partial}{\partial r}\Big) \mp i \mathsf{m},
	\label{eq:misc_010}
\end{equation}
which is related to the hyperbolic symmetries of the PDE (see \cref{eq:misc_ptf}). This too
becomes singular at the light cone.  Multiplying by $(t^2-r^2)^{1/2}$ cures this but causes other problems in the extraction of asymptotic expansions (it degenerates in the relevant sense at the light cone). Passing to hyperbolic coordinates facilitates the extraction process but complicates the regularity theory and breaks Poincar\'e invariance.
At first glance, these issues seem like they should be merely technical. 
This does not appear to be the case. Even if it is, the fact remains that the situation at null infinity requires clarification.

\begin{figure}[t]
	\begin{center}
		\begin{tikzpicture}[scale=1.1, decoration={
				markings,
				mark=at position 0.53 with {\arrow[scale=1.5,>=latex]{>}}}]
			\draw[dashed, gray] (-1,1) -- (-2.9,1) node[left] {${}^{\mathrm{sc}}\Sigma_{\mathsf{m},+} \cap  \mathrm{df}$ };
			\node[gray] () at (-4.0,.6) {(left moving, hidden)};
			\draw[fill=gray!10] (0,0) circle (1\octheight);
			\draw[fill=gray!30] (0,0) circle (.6\octheight);
			\coordinate (1) at (-.43\octheight,-.43\octheight);
			\coordinate (2) at (.705\octheight,-.705\octheight); 
			\coordinate (3) at (.43\octheight,.43\octheight);
			\coordinate (4) at (-.705\octheight,.705\octheight);
			\draw[gray, postaction={decorate}] (1) -- (3);
			\draw[gray, postaction={decorate}] (1) -- (3);
			\draw[gray, postaction={decorate}] plot [smooth,tension=1] coordinates { (1) (-.1\octheight,.1\octheight) (3) };
			\draw[gray, postaction={decorate}] plot [smooth,tension=1] coordinates { (1) (-.2\octheight,.2\octheight) (3) };
			\draw[gray, postaction={decorate}] plot [smooth,tension=1] coordinates { (1) (-.35\octheight,.35\octheight) (3) };
			\draw[gray, postaction={decorate}] plot [smooth,tension=1] coordinates { (1) (.1\octheight,-.1\octheight) (3) };
			\draw[gray, postaction={decorate}] plot [smooth,tension=1] coordinates { (1) (.2\octheight,-.2\octheight) (3) };
			\draw[gray, postaction={decorate}] plot [smooth,tension=1] coordinates { (1) (.35\octheight,-.35\octheight) (3) };
			\draw[gray, postaction={decorate}] (-.5\octheight,-.5\octheight) arc (-135:45:.7\octheight);
			\draw[gray, postaction={decorate}] (-.5\octheight,-.5\octheight) arc (225:45:.7\octheight);
			\draw[gray, postaction={decorate}] (-.625\octheight,-.625\octheight) arc (-135:45:.9\octheight);
			\draw[gray, postaction={decorate}] (-.625\octheight,-.625\octheight) arc (225:45:.9\octheight);
			\node[circle,draw=darkcandyapp, fill=darkcandyapp, inner sep=0pt,minimum size=4pt] (1c) at (1) {};
			\node[circle,draw=darkcandyapp, fill=darkcandyapp, inner sep=0pt,minimum size=4pt] (2c) at  (2) {};
			\draw[darkcandyapp] plot [smooth,tension=1] coordinates { (1) (-.3\octheight,-.75\octheight) (2)};
			\node[circle,draw=darkcandyapp, fill=darkcandyapp, inner sep=0pt,minimum size=4pt] (3c) at (.43\octheight,.43\octheight) {};
			\node[circle,draw=darkcandyapp, fill=darkcandyapp, inner sep=0pt,minimum size=4pt] (4c) at (-.705\octheight,.705\octheight) {};
			\draw[darkcandyapp] plot [smooth,tension=1] coordinates { (3) (.3\octheight,.75\octheight) (4)};
			\draw[dashed, gray] (2,0) -- (2.6,0) node[right] {${}^{\mathrm{sc}}\Sigma_{\mathsf{m},+}\cap \mathrm{bf}$ };
			\draw[dashed, gray] (-1,0) -- (-2.6,0) node[left] {${}^{\mathrm{sc}}\Sigma_{\mathsf{m},+}\cap \mathrm{df}$ };
			\node[gray] () at (-3.6,-.4) {(right moving)};
			\draw[dashed, ->] (2.2,.05) to[out=90, in=-65] (1.98,1) node[right] {$\operatorname{arctan}(t/x)$};
		\end{tikzpicture}
	\end{center}
	\caption{The sc-Hamiltonian flow within one sheet ${}^{\mathrm{sc}}\Sigma_{\mathsf{m},+}$ of the sc-characteristic set, depicted in the case $d=1$. The central disk (dark gray) represents one component of ${}^{\mathrm{sc}}\Sigma_{\mathsf{m},+} \cap {}^{\mathrm{sc}} \bbS^* \bbM$ (which is disconnected if $d=1$), i.e.\ one half of the portion of ${}^{\mathrm{sc}}\Sigma_{\mathsf{m},+}$ at fiber infinity, labeled df. The other half is hidden from view; it attaches to the outer circle. (When $d=1$, the characteristic set of the wave operator over a point in $\bbR^{1,d}$ consists of four points in the cosphere bundle. Two of those lie in ${}^{\mathrm{sc}}\Sigma_{\mathsf{m},-}$.) As in \Cref{fig:M}, time is oriented upwards, and the spatial coordinate $x$ is horizontal. The lighter gray annulus depicts the portion of ${}^{\mathrm{sc}}\Sigma_{\mathsf{m},+}$ over base infinity (labeled bf); this is one sheet of a hyperboloid fibered over base infinity. (The other sheet would be in ${}^{\mathrm{sc}}\Sigma_{\mathsf{m},-}$.) The radial sets, at which the properly scaled Hamiltonian vector field vanishes, are colored red. The important point is that the radial sets hit fiber infinity over null infinity (located at $45^\circ,135^\circ,225^\circ,315^\circ$ in the figure). }
	\label{fig:sc}
\end{figure}

That the sc- radial set hits fiber infinity correctly suggests that the solution to the Klein--Gordon initial value problem with generic Schwartz initial data and forcing has sc-wavefront set in the corner 
\begin{equation}
	{}^{\mathrm{sc}}\bbS^*_{\partial \bbM}\bbM = \partial\,{}^{\mathrm{sc}}\overline{T}^*_{\partial \bbM} \bbM \subseteq {}^{\mathrm{sc}}\overline{T}^* \bbM
\end{equation}
of the radially compactified sc-cotangent bundle. 
This is simply a consequence of the fact that wavefront sets (like singular supports) are closed; if the endpoints of the radial set (again, see \Cref{fig:sc}) were not included in the sc-wavefront set, then a neighborhood thereof would \emph{also} be absent. This would imply that $u$ is Schwartz in a neighborhood of $\scrI\subset \bbM$, which is certainly not generic, as can be seen easily in the constant-coefficient case. 

So, a typical solution of the IVP possesses sc-wavefront set at the corner of the compactified sc-cotangent bundle. Even to the user of the sc-calculus, the interpretation of such wavefront set might not be as familiar as the interpretation of sc-wavefront set in the interiors of the fibers, or over the interior of the base. Over the interior of the base, sc-wavefront set is just ordinary wavefront set, so captures failures of smoothness. In the interiors of the fibers over $\partial \bbM$, sc-wavefront set is just the ordinary wavefront set of the Fourier transform --- it detects oscillatory terms of the form 
\begin{equation}
	A(t,\bfx) e^{ i \omega t + i \bfk\cdot \bfx} , \quad \omega\in \bbR,\,\bfk\in \bbR^d,\, A\in C^\infty(\bbM)\backslash \calS.
\end{equation}
Put differently, this sc-wavefront set detects failures of decay at finite frequency. Instead, sc-wavefront set at the corner of the compactified sc-cotangent bundle captures what can roughly be thought of as failures of decay at \emph{infinite} frequency. 
\begin{example}[$e^{ix^2}$]
	A simple example of a function $u\in \calS'(\bbR_x)$ in 1D whose sc-wavefront set is entirely at the corner of the sc-cotangent bundle is $u(x)=\exp(ix^2)$. This is smooth, so free of ordinary wavefront set, but so is its Fourier transform. This implies that the sc-wavefront set is a subset of the corner. But, since $u$ is not Schwartz, it must have \emph{some} sc-wavefront set, which then must be at the corner. Indeed, as $x\to\infty$, $\exp(i x^2)$ oscillates faster than any finite frequency term $e^{i\sigma x},\sigma \in \bbR$, hence lies at ``infinite frequency.''
\end{example}
This example does not shed much light on the oscillations $e^{\pm i\mathsf{m}\sqrt{t^2-r^2}}$ which \cref{eq:u_decomp_initial} says are present in the long-time asymptotics of solutions of the Klein--Gordon IVP. Indeed, if we fix $v=|t|-r$ and then follow $u$ along a level set of $v$, 
\begin{equation}
	e^{\pm i \mathsf{m} \sqrt{t^2-r^2}} =e^{\pm i \mathsf{m} v \sqrt{|t|+r} }
	\label{eq:misc_016} 
\end{equation}
is oscillating, as $|t|\to\infty $, \textit{slower} than any finite frequency term $e^{i \sigma |t|}$. So, it may not be clear why such oscillations should be associated with infinite frequency. The following example may help:
\begin{example}[$e^{ixy}$]
	On $\bbR^{2}_{x,y}$, the function $u=e^{ixy}$ has sc-wavefront set at the corner. Indeed, it is smooth, and a simple argument using microlocalized elliptic estimates for $\partial_x-iy$ and $\partial_y - ix$ shows that there is no wavefront set in the interiors of the fibers. However, $u$ is certainly not Schwartz, so there must be some sc-wavefront set at the corner. 
	
	In fact, since $u$ is not Schwartz in any conic region, there must be some sc-wavefront set at the corner over every point of $\partial \overline{\bbR^2}$.
	In particular, this holds for the point $(\infty,0)\in \partial \overline{\bbR^2}$ where the positive $x$-axis hits the boundary of the compactification.
	This may be confusing at first, since, if we fix $y_0\in \bbR$ and send $x\to\infty$, it might appear that $u(x,y_0)=\exp(ixy_0)$ is oscillating with finite frequency $y_0$. This might lead to the expectation of a whole line's worth of sc-wavefront set over $(\infty,0)$, passing through the fiber, one point on the line for each value of $y_0$. (And then points at fiber infinity would be included because $y_0$ can be arbitrarily large.) But, we already know that the sc-wavefront set of $u$ is entirely at fiber infinity. To resolve the paradox, we must remember that the fibers of the sc-cotangent bundle are two-dimensional. We can talk not just about the radial frequency, but also a tangential frequency. Indeed, if $X\gg 1$, then, as $y$ varies, $u(X,y) = e^{i Xy}$ is oscillating with frequency $X$. Since $X$ can be arbitrarily large, this suggests infinite frequency in the $y$ direction.
\end{example}

So, $e^{\pm i \mathsf{m} \sqrt{t^2-r^2}}$ is infinite frequency at the light cone not in the direction along the light cone but in the direction \emph{across} the light cone. This is essentially what the singularity of \cref{eq:012} at the light cone means.

Regardless of its interpretation, sc-wavefront set is an obstruction to decay.
But, as is well-known at least in the exact Minkowski case \cite{Winicour}\cite{Klainerman}\cite{HormanderNL}, massive waves (unlike massless waves) do not have an associated ``radiation field'': the solution $u$ to the IVP is rapidly decaying at null infinity, even though there exists sc-wavefront set over it. 
Indeed, \Cref{thm:easy} tells us that $u$ is, in an appropriate sense, rapidly decaying at null infinity.
\textit{What, then, is the sc-wavefront set detecting} in this case? 
One answer is that it is detecting that any neighborhood of null infinity in $\bbM$ contains points at timelike infinity, at which solutions to the IVP do not decay rapidly. Thus, the wavefront set at the corner is a technical artifact of the fact that sc-wavefront sets are closed.

This suggests the following key idea:
\emph{in order to study massive wave propagation along null geodesics, we should work with a compactification of $\smash{\mathbb{R}^{1,d}}$ that separates individual null geodesics from timelike infinity.} The compactification $\bbM$ does not suffice.

\subsection{A better compactification}
\label{subsec:main}

One compactification that does the trick is the usual Penrose diagram $\smash{\bbP\hookleftarrow \bbR^{1,d}}$ of Minkowski space. 
But, the Penrose diagram does not offer adequate resolution at timelike infinity, where solutions to Klein--Gordon display their oscillatory asymptotic tails. Rather, as in \cite{BasinVasyWunsch}\cite{HintzVasyScriEB}, we use a third compactification $\bbO\hookleftarrow \bbR^{1,d}$ that refines both the radial and Penrose compactifications in the sense that one has compatible blowdown maps $\bbO\to \bbP,\bbM$. 
The space $\bbO$ can thus be constructed in two equivalent ways: by performing a polar blowup of $\scrI\subseteq \bbM$, in which case we write
\begin{equation} 
	\bbO=[\bbM;\scrI;1/2],
\end{equation}
or by blowing up spacelike and timelike infinity in $\bbP$ in an appropriate way. 
The space $\bbO$ is a manifold-with-corners (mwc) with corners of codimension two.

\begin{remark}[The `$1/2$']
	After blowing up $\scrI\subseteq \bbM$, it is convenient to modify the smooth structure at the front faces of the blowup so that the original boundary-defining-functions (bdfs) $\varrho_{\mathrm{Nf}}$ of the front faces of $\bbO_0=[\bbM;\scrI]$ become the squares 
	\begin{equation} 
		\varrho_{\mathrm{Nf}}=\varrho_{\mathrm{nf}}^2
	\end{equation} 
	of the new bdfs $\varrho_{\mathrm{nf}}$. This is the `$1/2$' in ``$\bbO=[\bbM;\scrI;1/2]$.'' This does not change the essential features of the compactification, so the reader can ignore it for now.
\end{remark}

\begin{figure}[h!]
	\begin{tikzpicture}
		\draw[dashed] (-2,-2) rectangle (2,2);
		\coordinate (i) at (.75,.75); 
		\fill[lightgray!20] (-1.8,-1.8) -- (1.8,-1.8) arc(0:90:3.6) -- cycle;
		\draw[gray] (-1.2,-1.8) -- (i);
		\draw[gray] (-.6,-1.8) -- (i);
		\draw[gray] (-1.8,-1.2) -- (i);
		\draw[gray] (-1.8,-.6) -- (i);
		\draw (1.8,-1.8) arc(0:90:3.6);
		\fill[black] (i) circle (.07);
		\node[above right] () at (i) {$\scrI^+$};
		\node () at (1,-1.2) {$\bbM$};
		\draw[darkcandyapp, ->] (1,.2) to[out=125, in=-35] (.2,1) node[left] {$\frac{t-r}{t+r}$};
	\end{tikzpicture}
	\begin{tikzpicture}
		\draw[dashed] (-2,-2) rectangle (2,2);
		\coordinate (i) at (.75,.75); 
		\fill[lightgray!20] (-1.8,-1.8) -- (1.8,-1.8) arc(0:90:3.6) -- cycle;
		\draw (1.8,-1.8) arc(0:90:3.6);
		\draw[gray] (-1.2,-1.8) -- (1.3,1);
		\draw[gray] (-.6,-1.8) -- (1.6,1.1);
		\draw[gray] (-1.8,-1.2) -- (1,1.3);
		\draw[gray] (-1.8,-.6) -- (1.1,1.6);
		\fill[white] (i) circle (1);
		\begin{scope}
			\clip (-1.81,-1.81) -- (1.81,-1.81) arc(0:90:3.62) -- cycle;
			\filldraw[fill=white] (i) circle (1);
		\end{scope}
		\node () at (1,-1.2) {$\bbO_0$};
		\draw[darkcandyapp, ->] (.6,-.07) to[out=165, in=-75] (-.07,.6) node[right] {$t-r$}; 
		\draw[darkcandyapp, ->] (0,0) -- (-.4,-.4) node[below left] {$\frac{1}{t}$};
	\end{tikzpicture}
	\begin{tikzpicture}
	\draw[dashed] (-2,-2) rectangle (2,2);
	\coordinate (i) at (.75,.75); 
	\fill[lightgray!20] (-1.8,-1.8) -- (1.8,-1.8) arc(0:90:3.6) -- cycle;
	\draw (1.8,-1.8) arc(0:90:3.6);
	\draw[gray] (-1.2,-1.8) -- (1.3,1);
	\draw[gray] (-.6,-1.8) -- (1.6,1.1);
	\draw[gray] (-1.8,-1.2) -- (1,1.3);
	\draw[gray] (-1.8,-.6) -- (1.1,1.6);
	\fill[white] (i) circle (1);
	\begin{scope}
		\clip (-1.81,-1.81) -- (1.81,-1.81) arc(0:90:3.62) -- cycle;
		\filldraw[fill=white] (i) circle (1);
	\end{scope}
	\node () at (1,-1.2) {$\bbO$};
	\draw[darkcandyapp, ->] (.6,-.07) to[out=165, in=-75] (-.07,.6) node[right] {$t-r$}; 
	\draw[darkcandyapp, ->] (0,0) -- (-.4,-.4) node[below left] {$\frac{1}{\sqrt{t}}$};
	\end{tikzpicture}
	\caption{
		The polar blowup procedure to construct $\bbO$, shown near $\scrI^+$. Parallel null lines $\gamma_{\bfv,\bullet}$ which, in $\bbM$, hit $\scrI^+$ instead asymptote to different points at the front face of the blowup, $\mathrm{nFf}$.
	}
	\label{fig:blowup}
\end{figure}
 
The manifold-with-corners $\bbO$ is depicted in \Cref{fig:o}, where we have labeled its faces Pf for past timelike infinity, nPf for past null infinity, Sf for spacelike infinity, nFf for future null infinity, and Ff for future timelike infinity. 
We will refer to $\bbO$  as the \emph{octagonal} compactification of Minkowski spacetime, as in the $d=1$ case it is literally an octagon, and the faces nPf, Sf, and nFf are disconnected, each consisting of two components. In this case, it is a slight abuse of terminology to refer to nPf, Sf, and nFf as faces (rather they are a union of faces), but it is a harmless one.

\begin{figure}[t]
	\begin{tikzpicture}[scale=.75]
		\coordinate (one) at (.413\octheight,\octheight) {};
		\coordinate (two) at (\octheight,.413\octheight) {}; 
		\coordinate (three) at (\octheight,-.413\octheight) {};
		\coordinate (four) at (.413\octheight,-\octheight) {};
		\coordinate (five) at (-.413\octheight,-\octheight) {};
		\coordinate (six) at (-\octheight,-.413\octheight) {};
		\coordinate (seven) at (-\octheight,.413\octheight) {}; 
		\coordinate (eight) at (-.413\octheight,\octheight) {};
		\draw[fill=gray!10] (one) -- (two) -- (three) -- (four) -- (five) -- (six) -- (seven) -- (eight) -- cycle;
		\node (Ff) at (0,1.2\octheight) {$\mathrm{Ff}$};
		\node (nFf) at (-.96\octheight,.81\octheight) {$\mathrm{nFf}$};
		\node (Sf) at (-1.20\octheight,0) {$\mathrm{Sf}$};
		\node (Pf) at (0,-1.2\octheight) {$\mathrm{Pf}$};
		\node (nPf) at (-.96\octheight,-.81\octheight) {$\mathrm{nPf}$};
		\draw[dashed] (0,-.75\octheight) ellipse (39pt and 4pt);
		\begin{scope}
			\clip (-\octheight,-.5\octheight) rectangle (+\octheight,-.413\octheight);
			\draw (0,-.413\octheight) ellipse (59.5pt and 4pt);
		\end{scope}
		\draw[dashed] (0,-.413\octheight) ellipse (59pt and 4pt);
		\draw[dashed] (0,0) ellipse (60pt and 4pt);
		\begin{scope}
			\clip (-\octheight,+.3\octheight) rectangle (+\octheight,+.413\octheight);
			\draw (0,+.413\octheight) ellipse (59.5pt and 4pt);
		\end{scope}
		\draw[dashed] (0,+.413\octheight) ellipse (59pt and 4pt);
		\draw[dashed] (0,+.75\octheight) ellipse (39pt and 4pt);
		\draw[fill=gray!10] (0,\octheight) ellipse (24.5pt and 4pt);
		\draw[fill=gray!20] (0,-\octheight) ellipse (24.5pt and 4pt);
	\end{tikzpicture}
	\qquad
	\begin{tikzpicture}[scale=.75] 
		\coordinate (one) at (.413\octheight,\octheight) {};
		\coordinate (two) at (\octheight,.413\octheight) {}; 
		\coordinate (three) at (\octheight,-.413\octheight) {};
		\coordinate (four) at (.413\octheight,-\octheight) {};
		\coordinate (five) at (-.413\octheight,-\octheight) {};
		\coordinate (six) at (-\octheight,-.413\octheight) {};
		\coordinate (seven) at (-\octheight,.413\octheight) {}; 
		\coordinate (eight) at (-.413\octheight,\octheight) {};
		\coordinate (seven_eight_half) at (-.7\octheight,.7\octheight) {};
		\coordinate (one_two_half) at (.7\octheight,.7\octheight) {};
		\coordinate (seven_eight_third) at (-.6\octheight,.8\octheight) {};
		\coordinate (one_two_third) at (.6\octheight,.8\octheight) {};
		\coordinate (seven_eight_half_ref) at (-.7\octheight,-.7\octheight) {};
		\coordinate (one_two_half_ref) at (.7\octheight,-.7\octheight) {};
		\coordinate (seven_eight_third_ref) at (-.6\octheight,-.8\octheight) {};
		\coordinate (one_two_third_ref) at (.6\octheight,-.8\octheight) {};
		\draw[fill=gray!10] (one) -- (two) -- (three) -- (four) -- (five) -- (six) -- (seven) -- (eight) -- cycle;
		\fill[fill=gray!20] (eight) --  (-.75\octheight,.65\octheight) -- (0,0) -- (.75\octheight,.65\octheight) -- (one) -- cycle;
		\fill[fill=gray!20] (five) --  (-.75\octheight,-.65\octheight) -- (0,0) -- (.75\octheight,-.65\octheight) -- (four) -- cycle;
		\fill[pattern=crosshatch, pattern color=darkcandyapp, opacity=.3] (one) -- (one_two_half) .. controls (-.1\octheight,.5\octheight) and (.1\octheight,.5\octheight) .. (seven_eight_half) -- (eight) -- cycle;
		\fill[fill=red!50, opacity=.7] (one) -- (one_two_third) .. controls (-.1\octheight,.65\octheight) and (.1\octheight,.65\octheight) .. (seven_eight_third) -- (eight) -- cycle;
		\fill[pattern=crosshatch, pattern color=darkcandyapp, opacity=.3] (four) -- (one_two_half_ref) .. controls (-.1\octheight,-.5\octheight) and (.1\octheight,-.5\octheight) .. (seven_eight_half_ref) -- (five) -- cycle;
		\fill[fill=red!50, opacity=.7] (four) -- (one_two_third_ref) .. controls (-.1\octheight,-.65\octheight) and (.1\octheight,-.65\octheight) .. (seven_eight_third_ref) -- (five) -- cycle;
		\node (Ff) at (0,1.2\octheight) {$\mathrm{Ff}$};
		\node (nFf) at (.96\octheight,.81\octheight) {$\mathrm{nFf}$};
		\node[color=darkcandyapp] (??) at (-1.4\octheight,.7\octheight) {$\operatorname{supp} \chi$};
		\node[color=red!50] (???) at (-1.1\octheight,1.1\octheight) {$\chi^{-1}(\{1\})$};
		\node (Sf) at (1.20\octheight,0) {$\mathrm{Sf}$};
		\node (Pf) at (0,-1.2\octheight) {$\mathrm{Pf}$};
		\node (nPf) at (.96\octheight,-.81\octheight) {$\mathrm{nPf}$};
	\end{tikzpicture}
	\caption{A mwc diffeomorphic to $\bbO$ when $d=2$, with labeled faces (\textit{left}). The union $\mathrm{nPf}\cup\mathrm{nFf}$ is the lift of $\scrI$ to $\bbO$. The lift of future infinity is $\mathrm{Ff}$, the lift of past infinity is $\mathrm{Pf}$, and the lift of spacelike infinity is $\mathrm{Sf}$. The support conditions on $\chi$ in \Cref{thm:main} (\textit{right}); $\operatorname{supp} \chi$ is denoted in red crosshatch, and $\chi=1$ identically on the solid red region. The set $\mathrm{cl}_\bbO \{t^2>r^2\}^\circ$ is a slightly darker gray.}
	\label{fig:o}
\end{figure}

Now, a key point is that $|t|-r$ is a smooth coordinate along and near the interiors of $\mathrm{nPf},\mathrm{nFf}$. One way to see why this should be the case is that it implies that, fixing a unit vector $\bfv\in \bbR^d$ and looking at the lines
\begin{equation} 
\gamma_{\bfv,\bfx_0}=\{(t,\bfx)\in \bbR^{1,d} : \bfx = \bfx_0+ \bfv t\},
\end{equation} 
the endpoints of $\gamma_{\bfv,\bfx_0}$ in $\mathrm{nPf}\cup \mathrm{nFf}$ are different for different values of $\bfx_0\cdot \bfv$. This is in contrast to the situation in $\bbM$, where \Cref{fig:M} shows that these all asymptote to the same point in $\scrI$. However, as depicted in that figure, different values of $ \bfx_0 \cdot \bfv$ lead to $\gamma_{\bfv,\bfx_0}$ hitting $\scrI$ at different \emph{angles}. It is precisely the different angles that are resolved by performing a polar blowup. This is depicted in \Cref{fig:blowup}.

For each face $\mathrm{f}$ of $\bbO$, let $\varrho_{\mathrm{f}} \in C^\infty(\bbO;\bbR^{\geq 0})$ denote a bdf of $\mathrm{f}$. The statements below will mostly not depend on the particular choices of bdfs.

We can now state our main theorem, in its cleanest formulation:
\begin{theorem}
	Given the setup above, and given any $\chi \in C^\infty(\bbO)$ supported in $\mathrm{cl}_{\bbO}\{t^2 \geq r^2\}^\circ =  \mathrm{cl}_{\bbO}\{t^2 \geq r^2\} \setminus \mathrm{cl}_{\bbO}\{t^2 = r^2\}$ and identically equal to $1$ in some neighborhood of $\mathrm{Pf}\cup \mathrm{Ff}$, $u$ has the form 
	\begin{equation}
		u = u_0 + \chi \varrho_{\mathrm{Pf}}^{d/2}\varrho_{\mathrm{Ff}}^{d/2} e^{-i \mathsf{m} \sqrt{t^2-r^2}} u_- + \chi \varrho_{\mathrm{Pf}}^{d/2}\varrho_{\mathrm{Ff}}^{d/2} e^{+i \mathsf{m} \sqrt{t^2-r^2}} u_+
	\end{equation}
	for some $u_0 \in \calS(\bbR^{1,d})$ and some  $u_\pm \in \varrho_{\mathrm{nPf}}^\infty\varrho_{\mathrm{Sf}}^\infty\varrho_{\mathrm{nFf}}^\infty C^\infty(\bbO) = \bigcap_{k\in \bbN} \varrho_{\mathrm{nPf}}^k\varrho_{\mathrm{Sf}}^k\varrho_{\mathrm{nFf}}^k C^\infty(\bbO)$.
	\label{thm:main}
\end{theorem}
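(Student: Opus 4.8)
The strategy is to carry the standard positive-commutator and module-regularity machinery of scattering theory over to the de,sc-calculus $\Psi_{\mathrm{de,sc}}(\bbO)$, exploiting that passing from $\bbM$ to $\bbO$ both lifts the radial sets of the Klein--Gordon operator off of fiber infinity and separates them from $\scrI$. First I would realize $P=\square_g+Q+\mathsf{m}^2$ in the calculus: admissibility of $g$ (classical, short-range error) and short-rangeness of $Q$ show that $P$ is, up to an explicit weight, a de,sc-differential operator of order two whose leading symbolic data (interior de,sc-principal symbol and boundary symbols at every face) coincide with those of $\square+\mathsf{m}^2$ — the $g$- and $Q$-corrections being strictly lower-order and faster-decaying, so that the model operators at the faces of $\bbO$ are those of exact Minkowski space. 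Computing the de,sc-characteristic set and the rescaled de,sc-Hamilton flow then yields the structure that drives everything: (i) the characteristic set has two components over $\mathrm{cl}_\bbO\{t^2\ge r^2\}$, carrying radial sets which, unlike in the sc-picture, sit at finite points of the de,sc-cotangent fibers and only over $\mathrm{Pf}\cup\mathrm{Ff}$, parametrized by $\pm\mathsf{m}\,\mathrm{d}\sqrt{t^2-r^2}$; (ii) over the interiors of $\mathrm{nPf}$ and $\mathrm{nFf}$ the mass term $\mathsf{m}^2$ provides ellipticity and favorable thresholds in the double-edge structure adapted to $\scrI$, overwhelming the null-covector-annihilating leading part of $\square_g$ — this is the microlocal form of the absence of a radiation field for massive fields, and is precisely where $\mathsf{m}\ne 0$ and the `de' structure enter; (iii) over $\mathrm{Sf}$ there are no radial points and, by the hypotheses on $g$, the flow is non-trapping.

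Granting these facts and the cited sc-theoretic conclusions ($u$ is Schwartz away from $\overline{C}_\pm$, and on $\{|t|>r\}$ equals $|t|^{-d/2}e^{-i\mathsf{m}\sqrt{t^2-r^2}}u_-+|t|^{-d/2}e^{+i\mathsf{m}\sqrt{t^2-r^2}}u_+$ with $u_\pm\in C^\infty(\bbM\setminus\scrI)$), the remaining work is entirely at $\scrI=\mathrm{nPf}\cup\mathrm{nFf}$ and in globalizing, and I would organize it as follows. (A) De,sc-elliptic estimates together with propagation/radial-point estimates at $\mathrm{nPf}^\circ$ and $\mathrm{nFf}^\circ$ — where, thanks to the mass term, the thresholds force rather than permit vanishing — improve the a priori weighted-Sobolev control on $u$ to infinite-order vanishing there; combined with the known Schwartz behavior over $\mathrm{Sf}^\circ$, this extends up to the corners $\mathrm{nPf}\cap\mathrm{Sf}$ and $\mathrm{nFf}\cap\mathrm{Sf}$. (B) Radial-point estimates over $\mathrm{Pf}\cup\mathrm{Ff}$ — source type on one characteristic component, sink type on the other — give the initial regularity and decay of the two microlocal pieces $u_{(\pm)}$ of $u$ there, the indicial/threshold data at the radial set accounting for the weight $\varrho_{\mathrm{Pf}}^{d/2}\varrho_{\mathrm{Ff}}^{d/2}$. (C) Iterating regularity under the test module of $\Psi^1_{\mathrm{de,sc}}(\bbO)$-operators characteristic on the radial sets (generated by the de,sc-rescaled vector fields, among them a conjugate of \cref{eq:misc_010}) promotes this to conormality, whereupon inverting the normal operator of $P$ at $\mathrm{Pf}$ and $\mathrm{Ff}$ — which, after factoring out $e^{\pm i\mathsf{m}\sqrt{t^2-r^2}}$ and the power $\varrho_{\mathrm{Pf}}^{d/2}\varrho_{\mathrm{Ff}}^{d/2}$, reduces to the transport operator \cref{eq:misc_010}, nondegenerate precisely because $\mathsf{m}\ne 0$ — produces full asymptotic expansions, so that near $\mathrm{Pf}\cup\mathrm{Ff}$ one has $u_{(\pm)}=\varrho_{\mathrm{Pf}}^{d/2}\varrho_{\mathrm{Ff}}^{d/2}e^{\pm i\mathsf{m}\sqrt{t^2-r^2}}v_\pm$ with $v_\pm$ smooth up to $\mathrm{Pf}$ and $\mathrm{Ff}$. (D) Finally, extend $v_\pm$ to all of $\bbO$ so that it vanishes to infinite order at $\mathrm{nPf},\mathrm{Sf},\mathrm{nFf}$ — by zero where $\operatorname{supp}\chi$ permits, and near the corners $\mathrm{nFf}\cap\mathrm{Ff}$, $\mathrm{nPf}\cap\mathrm{Pf}$ using that the corner analysis of (A)--(C) already gives $v_\pm\in\varrho_{\mathrm{nFf}}^\infty\varrho_{\mathrm{nPf}}^\infty C^\infty$ there — and set $u_\pm=v_\pm$, $u_0=u-\chi\varrho_{\mathrm{Pf}}^{d/2}\varrho_{\mathrm{Ff}}^{d/2}(e^{-i\mathsf{m}\sqrt{t^2-r^2}}u_-+e^{+i\mathsf{m}\sqrt{t^2-r^2}}u_+)$, which by (A)--(C) lies in $\calS(\bbR^{1,d})$. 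The cutoff $\chi$ is harmless, its only role being to localize the oscillatory representation away from $\mathrm{cl}_\bbO\{t^2=r^2\}$, where $\sqrt{t^2-r^2}$ degenerates; consistency of $u_\pm$ across $\{0<\chi<1\}$ is automatic, since there the expansion of (C) holds exactly modulo a Schwartz term.

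I expect the main obstacle to lie at the corners $\mathrm{nFf}\cap\mathrm{Ff}$ and $\mathrm{nPf}\cap\mathrm{Pf}$, where the mass-dominated regime at null infinity abuts the radial-point/transport regime at timelike infinity: there one must push the infinite-order decay out of $\mathrm{nFf}^\circ$ into the corner while simultaneously running the module-regularity and indicial-operator arguments, and in this regime the phase is $e^{\pm i\mathsf{m}/(\varrho_{\mathrm{Ff}}\varrho_{\mathrm{nFf}})}$, oscillating as one approaches \emph{either} face; controlling the interaction of this oscillation with the corner of $\bbO$ is the delicate point, and is presumably what forces the `$1/2$'-modification of the smooth structure. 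A secondary, more bookkeeping-heavy difficulty is pinning down the precise symbolic and threshold data of $P$ in $\Psi_{\mathrm{de,sc}}(\bbO)$ globally — verifying the flow and threshold inequalities at every face and corner, and that the $g$- and $Q$-dependent corrections are genuinely subleading everywhere — which should follow from admissibility but requires care.
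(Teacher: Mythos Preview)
Your outline is broadly right in its architecture --- de,sc-calculus, radial-point estimates at $\calR$ over $\mathrm{Pf}\cup\mathrm{Ff}$, module regularity, then asymptotic expansion via the transport operator \cref{eq:misc_010} --- and steps (B), (C), (D) match the paper closely (its \S3 and \S6). But step (A) contains a genuine error that, if uncorrected, leaves the main gap in the argument unfilled.

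You claim that over $\mathrm{nPf}^\circ\cup\mathrm{nFf}^\circ$ ``the mass term $\mathsf{m}^2$ provides ellipticity \ldots\ overwhelming the null-covector-annihilating leading part of $\square_g$.'' This is false. In the coordinates of \S\ref{sec:geometry}, the de,sc-principal symbol of $\square+\mathsf{m}^2$ over $\mathrm{nf}$ is $\xi^2-2\xi\zeta+\eta^2+\mathsf{m}^2$ (resp.\ $-\xi^2+2\xi\zeta+\eta^2+\mathsf{m}^2$), whose zero set in each fiber is a genuine two-sheeted hyperboloid, not empty; see \cref{eq:p_nfTf} and \Cref{fig:Sigma}. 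The characteristic set $\Sigma_{\mathsf{m}}$ is therefore nonempty over all of null infinity, and the Klein--Gordon operator is \emph{not} de,sc-elliptic there. What the mass term does buy you is that the radial sets $\calR^\varsigma_\sigma$ over the timelike caps no longer touch fiber infinity (this is the key improvement over the sc-picture), but it does not make the problem elliptic at $\scrI$.

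Consequently, establishing $\operatorname{WF}_{\mathrm{de,sc}}(u)\subseteq\calR$ --- equivalently, infinite-order decay at $\mathrm{nPf}\cup\mathrm{Sf}\cup\mathrm{nFf}$ --- requires a full dynamical analysis of $\mathsf{H}_p$ on $\Sigma_{\mathsf{m}}$ over null infinity. The paper finds, in addition to $\calR$, four further families of radial sets $\calN,\calC,\calK,\calA$ (\S\ref{sec:dynamics}, \Cref{fig:O}, \Cref{fig:flowplot}), all saddles, lying over $\mathrm{nf}$ and its corners with $\mathrm{Sf}$ and $\mathrm{Tf}$; one must prove a separate radial-point estimate at each and thread them together in the correct order (see the chain displayed in the introduction and executed in \S\ref{sec:propagation}) to obtain \Cref{thm:propagation_through_scrI_1}. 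Only then can one feed the resulting absence of wavefront set into the module-regularity argument at $\calR$. Your instinct that the corners $\mathrm{nf}\cap\mathrm{Tf}$ are delicate is right --- $\calC$ lives there --- but the analysis is needed over all of $\mathrm{nf}$, not just at the corners, and the mechanism is propagation through saddles rather than ellipticity or threshold arguments.
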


From this, \Cref{thm:easy} follows immediately. (And, in fact, the two theorems are equivalent.)

The support of $\chi$ is chosen such that $(t^2-r^2)^{1/2}$ is a (one-step) polyhomogeneous function on a neighborhood of $\operatorname{supp} \chi$. 
\Cref{thm:main} therefore shows that $u$ is of exponential-polyhomogeneous type on $\bbO$, which is a precise way of saying that the five boundary hypersurfaces of $\bbO$ give a complete set of asymptotic regimes.

The proof of the theorem is in \S\ref{sec:proofs}, using the results of \S\ref{sec:asymptotics}, \S\ref{sec:propagation}, \S\ref{sec:radialpoint}. 

One globally-defined choice of $\varrho_{\mathrm{nPf}},\varrho_{\mathrm{nFf}}$ is 
\begin{equation}
	\varrho_\pm = \Big( \Big( \frac{t}{\sqrt{1+t^2+r^2}} \mp \frac{1}{\sqrt{2}}  \Big)^2 + \frac{1}{1+t^2+r^2} \Big)^{1/4}, 
\end{equation}
where $\varrho_-=\varrho_{\mathrm{nPf}}$ and $\varrho_+ = \varrho_{\mathrm{nFf}}$. These can then be used to construct globally-defined bdfs of the other three faces. Indeed, if $\varrho=1/(1+t^2+r^2)^{1/2} \in C^\infty(\bbM)$ denotes a bdf of $\partial \bbM$, then $\varrho/(\varrho_-^2\varrho_+^2)$ is a bdf of $\mathrm{Pf}\cup\mathrm{Sf}\cup \mathrm{Ff}$, and for each $\mathrm{f} \in \{\mathrm{Pf},\mathrm{Sf},\mathrm{Ff}\}$, we can modify this function near the other two faces to yield $\varrho_{\mathrm{f}}$. 
Alternatively, the bdfs $\varrho_{\mathrm{Pf}},\varrho_{\mathrm{nPf}},\varrho_{\mathrm{nFf}},\varrho_{\mathrm{Ff}}$ can be chosen such that, near $\mathrm{Ff}$ and away from $\mathrm{cl}_\bbO\{r=0\}$, 
\begin{equation}
	\varrho_{\mathrm{nFf}}  = \sqrt{\frac{t-r}{t+r}}, \qquad
	\varrho_{\mathrm{Ff}} = \frac{1}{t-r}, 
	\label{eq:misc_019}
\end{equation}
and similarly near $\mathrm{Pf}$, with $t$ replaced by $-t$. The same applies near $\mathrm{Sf}$, except $r$ and $t$ should be switched:
\begin{equation}
	\varrho_{\mathrm{nFf}}  = \sqrt{\frac{r-t}{t+r}}, \qquad
	\varrho_{\mathrm{Sf}} = \frac{1}{r-t} 
\end{equation}
near $\mathrm{nFf}\cap \mathrm{Sf}$. This is summarized in \Cref{fig:blowup_extended}.

For the reader who has not seen the octagonal blowup before, we include a proof of the claims above. It is a straightforward and elementary computation regarding polar coordinates:
\begin{proof}[Proof.]
	Working near one of the corners of $\bbO$, we can form $\bbO$ by working in $\mathrm{cl}_\bbM \{|t|\geq r\}$ or $\mathrm{cl}_\bbM \{ |t|\leq r\}$ and then blowing up $\scrI$. (The results then need to be stitched together at the light cone.) We illustrate this in  $\mathrm{cl}_\bbM \{t\geq r\}$, the other cases being similar. Near $\scrI$, 
	\begin{equation}
		\mathrm{cl}_\bbM \{t\geq r\} \cong [0,1)_{1/(t+r)}\times [0,1)_{(t-r)/(t+r)}\times  \bbS^{d-1}_\theta, 
	\end{equation}
	where $\theta=\bfx/r$. The function $1/(t+r)$ is a local bdf of $\overline{C}_+$ and $(t-r)/(t+r)$ is a local bdf of the light cone. It follows that, performing a polar blowup of $\scrI$, and staying away from the light cone, $(t-r)/(t+r)$ serves as a local bdf of the front face $\mathrm{nFf}$ of the blowup and the ratio $(1/(t+r))/((t-r)/(t+r))=1/(t-r)$ serves as a local bdf of $\mathrm{Ff}$. This is the exact same computation as one regarding polar coordinates on $\bbR^2$: when performing a polar blowup 
	\begin{equation}
		[0,\infty)^2_{x,y} \rightsquigarrow [0,\infty)_{r=\sqrt{x^2+y^2}}\times [0,\pi/2]_{\varphi=\operatorname{tan}^{-1}(y/x)}
		\label{eq:polar_example}
	\end{equation}
	of the origin, on the blown up space $y =r\sin \varphi$ serves as a local bdf of the front face $\{r=0\}$ away from $[0,\infty)_r \times \{0\}_\varphi$ and $x/y=\cot\varphi$ serves as a local bdf of $[0,\infty)_r\times \{\pi/2\}_\varphi$ in the same set. Just replace $y$ with $(t-r)/(t+r)$ and $x$ with $1/(t+r)$.  
	
	Finally, recalling that $\bbO = [\bbM;\scrI;1/2]$ and not $[\bbM;\scrI]$, to get $\varrho_{\mathrm{nFf}}$ we take the square root of $\varrho_{\mathrm{NFf}} = (t-r)/(t+r)$ to get \cref{eq:misc_019}. 
	
	As for the claim above that $t-r$ is smooth near any point in the interior of $\mathrm{nFf}$, this is proven using a slightly different computation: near $\scrI^+$, 
	\begin{equation}
		\bbM \cong  [0,1)_{1/(t+r)}\times (-1,1)_{(t-r)/(t+r)}\times  \bbS^{d-1}_\theta.
	\end{equation}
	So, the claim is equivalent to the observation that, in the polar blowup 
	\begin{equation}
		[0,\infty)_x\times \bbR_y \rightsquigarrow [0,\infty)_{r=\sqrt{x^2+y^2}}\times [-\pi/2,\pi/2]_{\varphi=\operatorname{tan}^{-1}(y/x)},
	\end{equation}
	the ratio $y/x=\tan\varphi$ is smooth away from $\{\varphi=\pm \pi/2\}$. Just replace $y$ with $(t-r)/(t+r)$ and $x$ with $1/(t+r)$ as before.
\end{proof}
So, to say that some function is smooth at the corner $\mathrm{Ff}\cap \mathrm{nFf}$ means that it admits a joint Taylor series in the coordinates in \cref{eq:misc_019}. Similar statements apply regarding the other corners of $\bbO$.  
Near any point in the interior of $\mathrm{nPf}$ or $\mathrm{nFf}$, $1/(r+|t|)^{1/2}$ can be taken as a local bdf. Thus, smoothness at $\mathrm{nPf}^\circ\cup \mathrm{nFf}^\circ$ is closely related to the existence of asymptotic expansions with respect to light cone coordinates.

We discuss $\bbO$ further in \S\ref{sec:geometry}.

\subsection{Irregularity obstructs decay at null infinity}

If the initial data is not rapidly decaying, then the solution to the IVP is not necessarily rapidly decaying at $\mathrm{Sf}$, nor at $\mathrm{nPf}\cup \mathrm{nFf}$. Conversely, if the initial data $(u^{(0)}, u^{(1)})$ and forcing $f$ satisfy 
\begin{align}
	\begin{split}
	f &\in H_{\mathrm{sc}}^{m-1,s+1}(\bbR^{1+d}) = (1+r^2+t^2)^{-(s+1)/2} H^{m-1}(\bbR^{1+d}) \\
	(u^{(0)}, u^{(1)}) &\in H_{\mathrm{sc}}^{m,s+1}(\bbR^d) \times H_{\mathrm{sc}}^{m-1,s+1}(\bbR^d)
	\end{split}
\end{align}
for $m\in \bbN$ and $s\in \bbR$, where $H_{\mathrm{sc}}^{m,s}(\bbR^d) = \langle r \rangle^{-s} H^m(\bbR^d)$,  
then one expects $u\in H_{\mathrm{sc}}^{m,s}(\mathbb{R}^{1+d})$ near the interior of $\mathrm{Sf}$.
So the amount of decay of $u$ in the spacelike region is controlled by the amount of decay of the initial data and the forcing. 

At null infinity, a lack of \emph{regularity} (i.e.\ smoothness) also obstructs decay.
For example, using the vector-field method, Klainerman \cite[Theorems 2 \& 3]{Klainerman} shows that, in the exact Minkowski case with zero forcing (and under an assumption about supports), there exists some $c=c_{\mathsf{m},d}>0$ such that 
\begin{equation}
	|u(t,\bfx)| \leq 
	c t^{-d/2}
	\begin{cases}
		 (t+r)^{-k/2} I^0_{\bbP,k+\lceil d/2 \rceil}(u,\Sigma_0)  &( t> 0, r\geq t), \\ 
		 (t-r+1)^{k/2} (t+r)^{-k/2} \log (t-r+1) I^0_{\bbP,k+\lceil d/2 \rceil}(u,H_1) & (t> 0, t>r), 
	\end{cases}
\end{equation}
where 
\begin{equation} 
	I^0_{\bbP,k+\lceil d/2 \rceil}(u,\Sigma_0) = O ( \lVert u^{(0)} \rVert_{ H^{k+\lceil d/2 \rceil,k+\lceil d/2 \rceil}_{\mathrm{sc}}(\bbR^d) } + \lVert u^{(1)} \rVert_{H^{k+\lceil d/2 \rceil-1,k+\lceil d/2 \rceil-1}_{\mathrm{sc}}(\bbR^d) } ) 
\end{equation} 
is a quantity depending on the $L^2(\bbR^d)$ norms of $u$ and its derivatives up to order $k+\lceil d/2 \rceil$ on the Cauchy hypersurface $\Sigma_0 = \{(t,\bfx):t=0\}$, and similarly for $I^0_{\bbP,k+\lceil d/2 \rceil}(u,H_1)$ on $H_1 = \{(t,\bfx) : t^2 - r^2 = 1\}$. So, 
\begin{equation}
	|u(t,\bfx)| =  O\bigg(
	\begin{cases}
		\varrho_{\mathrm{Sf}}^{(d+k)/2} \varrho_{\mathrm{nFf}}^{d+k}  & (r\geq t \geq r/2)  \\
		\varrho_{\mathrm{nFf}}^{d+k-} \varrho_{\mathrm{Ff}}^{d/2-}   & (t\geq 1,t>r)
	\end{cases}
	\bigg) . 
\end{equation}
If our initial data only has a finite amount of Sobolev regularity, we can only conclude decay at null infinity to some corresponding finite order, with one extra order of decay for every extra order of regularity.

As an instructive example of what can happen when our forcing is not smooth: 
\begin{example} \label{ex:Green}
Consider the advanced and retarded propagators $D_+,D_-\in \smash{\calS'(\bbR^{1,d})}$ for $\square+\mathsf{m}^2$. Let us recall how these arise from the solution of the forward and reverse problems. These read
\begin{equation}
	\begin{cases}
		\square u (t,\bfx) +\mathsf{m}^2 u(t,\bfx) = \delta(t) f(\bfx) \\
		u(t,\bfx) = 0 \text{ for $\pm t<0$},
	\end{cases}
	\label{eq:forward/reverse}
\end{equation}
for $f\in \calS(\bbR^d_{\bfx})$, where the positive choice of sign gives the forward problem and the negative choice gives the reverse problem. The distributions $D_\pm$ are the Green's functions for these problems in the sense that the unique $u\in \calD'(\bbR^{1,d})$ satisfying \cref{eq:forward/reverse} is $f*D_\pm$, where the convolution is in the spatial variables only. 
Then, $D_\pm$ solves
\begin{equation} 
	(\square +\mathsf{m}^2) D_\pm(t,\bfx)= \delta,
\end{equation} 
where $\delta=\delta(t)\delta^d(\bfx) \in \calS'(\bbR^{1,d})$ is a Dirac $\delta$-function located at the spacetime origin. So, $D_\pm$ solves the Klein--Gordon equation with a non-smooth forcing.

A straightforward but somewhat nontrivial calculation (that can be found in e.g.\ \cite[\S2.3]{ScharfFiniteQED}) reveals that $D_\pm$ is given by 
\begin{equation}
	D_\pm(t,\bfx) =  \pm \frac{1}{2\pi}\Theta(\pm t) \delta(t^2-r^2) \mp \frac{\mathsf{m}}{4\pi} \Theta(\pm t)\Theta(t^2-r^2) \frac{1}{\sqrt{t^2-r^2} } J_1\big(\mathsf{m}\sqrt{t^2-r^2}\big)
	\label{eq:D3d}
\end{equation}
if $d=3$, where $J_1$ denotes the Bessel function of the first kind of order one and $\Theta$ denotes a Heaviside step function. A similar formula holds for other $d\in \bbN^+$. 
The Heaviside step function $\Theta(t)=1_{t\geq 0}$ in \cref{eq:D3d} guarantees 
\begin{equation}
	\operatorname{supp }D_\pm(t,\bfx) \subseteq \{(t,\bfx)\in \bbR^{1,d} : \pm t \geq 0\} .
\end{equation}
We highlight the following features of $D_\pm$ which can be read off \cref{eq:D3d}:
\begin{itemize}
	\item Outside of any neighborhood $U\subseteq \bbM$ of $\mathrm{cl}_\bbM\{|t|\leq r\}$ (and in particular away from null infinity), it follows from the large argument asymptotics of the Bessel function \cite[\S9.2]{SpecialFunctions} that 
	\begin{equation}
		D_\pm =  |t|^{-3/2} e^{-i\mathsf{m}\sqrt{t^2-r^2}} d_{\pm,-} + |t|^{-3/2}e^{+i\mathsf{m}\sqrt{t^2-r^2}} d_{\pm,+}
	\end{equation}
	for some $d_{\pm,-},d_{\pm,+} \in C^\infty(\bbM)$, just as in \cref{eq:u_decomp_initial}. In particular, the nonsmoothness of the forcing does not obstruct decay at timelike infinity, as can already be proven using the sc-calculus.
	\item Though the convolution of $D_\pm$ with any Schwartz function is rapidly decaying at null infinity, $D_\pm$ are themselves not rapidly decaying there: 
	\begin{equation}
		 D_\pm = \mp \sqrt{ \frac{\mathsf{m} \varrho^3}{ 8\pi^3}} \frac{1}{v^{3/4}} \cos \Big( \frac{\mathsf{m} v^{1/2}}{\varrho} - \frac{3\pi}{4} \Big) + o_v(\varrho^{3/2}), 
		 \label{eq:dpmosc}
	\end{equation} 
	 $v = |t|-r$ and $\varrho = (|t|+r)^{-1/2} $, where the decay rate of the $o_v(\varrho^{3/2})$ term is uniformly bounded in $v\gg 0$, with a complete asymptotic expansion in $\varrho$. We therefore have precisely $O(\varrho^{3/2})$ decay at null infinity. 
	 So, it is indeed the case that the irregularity of the forcing leads to a lack of decay at null infinity.

	It is  not at all apparent from \cref{eq:D3d} why solutions to the forward and reverse problems with $f\in \calS(\bbR^d$) should be rapidly decaying at null infinity, though this does hold. 
	
	\item  
	The oscillations in \cref{eq:dpmosc} take the form 
	\begin{equation} 
		\sim \exp(\pm i \mathsf{m}v^{1/2} \varrho^{-1}),
		\label{eq:misc_n3n}
	\end{equation} 
	which implies the presence of a certain sort of wavefront set (at finite frequencies) on the Penrose diagram $\bbP$. 
	Notice that if we use $1/r \sim \varrho^2$ as a boundary-defining-function here, this being what is usually done, then \cref{eq:misc_n3n} is an oscillation at \emph{zero} sc-frequency. However, the form of the oscillations suggests instead using what we called $\varrho \sim 1/r^{1/2}$ as a bdf. Then, \cref{eq:misc_n3n} is at finite sc-frequency \emph{with respect to this choice of smooth structure}. (This is why we work with $\bbO$ instead of $\bbO_0$.)
	
	Note that, as $v\to 0^+$, it is \emph{not} the case that the sc-frequency 
	\begin{equation} 
		\mathrm{d} \Big( \frac{v^{1/2}}{\varrho} \Big)  = \frac{1}{2} \frac{\mathrm{d} v}{ \varrho v^{1/2}} - \frac{v^{1/2}}{\varrho^2} \mathrm{d} \varrho = \zeta \frac{\mathrm{d} v}{\varrho} + \xi \frac{\mathrm{d} \varrho}{\varrho^2}
	\end{equation} 
	in \cref{eq:misc_n3n} converges to the zero section of the sc-cotangent bundle. The opposite is true --- the sc-frequency approaches fiber infinity. As $v\to 0^+$, the component of the frequency dual to $\varrho$, this being $\xi = - v^{1/2}$, converges to zero in the relevant sense, but the component $\zeta = v^{-1/2}/2$ dual to $v$ blows up, so the overall effect is that the frequency gets large. Again, we see that the oscillations present in solutions of the Klein--Gordon equation oscillate slowly along the light cone and rapidly across the light cone.
\end{itemize}

Since $D_\pm$ is identically zero and therefore free of any sort of wavefront set over $\mathrm{cl}_\bbO\{|t|<r-\varepsilon\}$ for each $\varepsilon>0$, this example suggests a form of propagation over null infinity, in which singularities in the interior of the spacetime travel along null geodesics, hit the corner of the appropriate radially compactified cotangent bundle (see below) over null infinity, and then propagate down into the fibers while propagating forwards along null infinity. This is shown in \Cref{fig:Greens}.
\end{example} 

\subsection{Summary of methods}

Consider now the form of $\square$ on $\bbO$: 
\begin{itemize}
	\item Since the interiors of the timelike and spacelike caps of $\bbM$ are canonically diffeomorphic with the interiors of $\mathrm{Pf},\mathrm{Sf},\mathrm{Ff}$, the operator $\square+\mathsf{m}^2$ is a sc-differential operator there.
	\item At the interior of null infinity on the Penrose diagram, $\square$ has the form $\varrho^2 \square_0$ for an (unweighted) edge operator $\square_0$ \cite{HintzVasyScriEB}. The same can not be said for $\square+\mathsf{m}^2$, as though we can write 
	\begin{equation}
		\square + \mathsf{m}^2 = \varrho^2 (\square_0 + \varrho^{-2} \mathsf{m}^2), 
	\end{equation}
	$\varrho^{-2} \mathsf{m}^2$ is too large as $\varrho \to 0$ to be an unweighted edge operator. 
	Nevertheless, $\square+\mathsf{m}^2$ can be regarded as an unweighted ``double edge'' (abbreviated ``de'' for short) operator. 
\end{itemize}
The double edge operators were introduced by Lauter \& Moroianu in \cite{LauterMoroianu}, and we refer to this work for a discussion of the double edge calculus in a setting without corners. 
A key feature is that the de-calculus is, like the sc-calculus, under symbolic control. This means that de-$\Psi$DOs are controlled via a suitable notion of principal symbol modulo compact errors. Standard symbolic constructions from the theory of Kohn--Nirenberg pseudodifferential operators on compact manifolds go through with straightforward modifications.

In the $d=1$ case, the de- structure at null infinity is just the sc- structure at null infinity:
\begin{equation}
	\operatorname{Diff}_{\mathrm{de}}(\bbO\backslash (\mathrm{Pf}\cup \mathrm{Sf}\cup \mathrm{Ff})) = \operatorname{Diff}_{\mathrm{sc}}(\bbO\backslash (\mathrm{Pf}\cup \mathrm{Sf}\cup \mathrm{Ff})) \text{ if }d=1. 
\end{equation} 
(Note that this is not the same thing as the sc- structure on $\bbM$.)
However, 
\begin{equation}
	\operatorname{Diff}_{\mathrm{de}}(\bbO\backslash (\mathrm{Pf}\cup \mathrm{Sf}\cup \mathrm{Ff})) \subsetneq  \operatorname{Diff}_{\mathrm{sc}}(\bbO\backslash (\mathrm{Pf}\cup \mathrm{Sf}\cup \mathrm{Ff})) \text{ if }d\geq 2.
\end{equation}
The reason is that, in the $d\geq 2$ case, the angular derivatives are required to vanish to an extra order versus the sc-differential operators:
\begin{equation}
	\varrho_{\mathrm{nf}}\partial_\theta \in \operatorname{Diff}_{\mathrm{sc}}(\bbO\backslash (\mathrm{Pf}\cup \mathrm{Sf}\cup \mathrm{Ff})), \quad 	\varrho_{\mathrm{nf}}^2\partial_\theta \in \operatorname{Diff}_{\mathrm{de}}(\bbO\backslash (\mathrm{Pf}\cup \mathrm{Sf}\cup \mathrm{Ff})),
\end{equation} 
where $\varrho_{\mathrm{nf}}=\varrho_{\mathrm{nPf}}\varrho_{\mathrm{nFf}}$, 
but
\begin{equation}
	\varrho_{\mathrm{nf}}\partial_\theta \not\in  \operatorname{Diff}_{\mathrm{de}}(\bbO\backslash (\mathrm{Pf}\cup \mathrm{Sf}\cup \mathrm{Ff})).
\end{equation} 
However, it is actually $\varrho_{\mathrm{nf}}^2 \partial_\theta$ that one finds in elements of $\operatorname{Diff}_{\mathrm{sc}}(\bbM)$, like $\square+\mathsf{m}^2$. This is the ultimate reason why one must work with the de-calculus instead of the sc-calculus at null infinity; $\square+\mathsf{m}^2$ is in the sc-calculus there, but the vanishing of the angular derivatives make it degenerate from that perspective. In the example above, the oscillations present in the examined functions $D_\pm$ were present only at zero angular momentum; this effectively reduced us to the $d=1$ case. This is why it sufficed to talk about ``sc-frequencies'' and not ``de-frequencies'' while discussing the oscillations at null infinity. Going forwards, only the latter will be referenced vis-a-vis the situation in the interior of null infinity.

The structure of $\square+\mathsf{m}^2$ suggests that, in order to analyze the Klein--Gordon equation everywhere on $\bbO$, including the corners, we define a pseudodifferential calculus 
\begin{equation}
	\Psi_{\mathrm{de,sc}} = \Psi_{\mathrm{de,sc}}(\bbO) =  \bigcup_{m\in \bbR}\bigcup_{\mathsf{s} \in \bbR^5} \Psi_{\mathrm{de,sc}}^{m,\mathsf{s}}
	\label{eq:misc_t31}
\end{equation}
consisting of pseudodifferential operators ($\Psi$DOs) that are sc-$\Psi$DOs at $\mathrm{Pf},\mathrm{Sf},\mathrm{Ff}$ and de-$\Psi$DOs at $\mathrm{nPf},\mathrm{nFf}$, being in an appropriate sense both simultaneously at the corners of $\bbO$. This calculus will arise by ``quantizing'' a $C^\infty(\bbO)$-module $\calV_{\mathrm{de,sc}}$ of \emph{de,sc-vector fields} on $\bbO$. Roughly, these are smooth vector fields on $\bbR^{1,d}$ which are sc-vector fields at $\mathrm{Pf},\mathrm{Sf},\mathrm{Ff}$ and de-vector fields at $\mathrm{nPf},\mathrm{nFf}$. A precise version of this definition appears later (\cref{eq:Vdef}, \cref{eq:Vdef_local}). It turns out that an equivalent, but less transparent, global definition is 
\begin{equation}
	\calV_{\mathrm{de,sc}} = \operatorname{span}_{C^\infty(\bbO)}\{ \rho_{\mathrm{nPf}}\rho_{\mathrm{nFf}} \partial_t,\rho_{\mathrm{nPf}}\rho_{\mathrm{nFf}} \partial_{x_j}, \rho_{\mathrm{nPf}}^{-1}\rho_{\mathrm{nFf}}^{-1} \chi_0 (\partial_{|t|} + \partial_r), \chi_0 r^{-1} \partial_{\theta_k} :j\leq d,k\leq d-1  \},
	\label{eq:Vdef_global}
\end{equation}
where $\chi_0\in C^\infty(\bbM)$ is some fixed function supported away from $\operatorname{cl}_\bbM\{tr=0\}$ and identically equal to $1$ near null infinity (so that $\chi_0(\partial_{|t|}+\partial_r)$, $\chi_0 r^{-1}\partial_{\theta_k}$ are smooth vector fields, defined using the coordinate system $t,r,\theta_1,\dots,\theta_{d-1}$).

Of course, we have a corresponding algebra $\operatorname{Diff}_{\mathrm{de,sc}}(\bbO)$ of de,sc-differential operators with smooth coefficients.  

In \cref{eq:misc_t31}, 
\begin{equation}
	\Psi^{m,(s_{\mathrm{Pf}},s_{\mathrm{nPf}},s_{\mathrm{Sf}},s_{\mathrm{nFf}},s_{\mathrm{Ff}} )}_{\mathrm{de,sc}} =  \varrho_{\mathrm{Pf}}^{-s_{\mathrm{Pf}}}\varrho_{\mathrm{nPf}}^{-s_{\mathrm{nPf}}}\varrho_{\mathrm{Sf}}^{-s_{\mathrm{Sf}}}\varrho_{\mathrm{nFf}}^{-s_{\mathrm{nFf}}}\varrho_{\mathrm{Ff}}^{-s_{\mathrm{Ff}}} \Psi^{m,\mathsf{0}}_{\mathrm{de,sc}}, 
\end{equation}
so $m$ is the ``differential order'' and $\mathsf{s}\in \bbR^5$ measures decay at the five different faces of $\bbO$. Like the constituent de- and sc- calculi, the de,sc-calculus is under symbolic control. The relevant symbols are precisely conormal functions on a compactification 
\begin{equation}
	{}^{\mathrm{de,sc}} \overline{T}^* \bbO \hookleftarrow T^* \bbR^{1,d} 
\end{equation}
of the cotangent bundle of Minkowski space. This is the entire space of a $\bbB^{1+d}$-bundle ${}^{\mathrm{de,sc}} \pi : {}^{\mathrm{de,sc}} \overline{T}^* \bbO\to \bbO$ over $\bbO$. It is canonically diffeomorphic to ${}^{\mathrm{sc}}\overline{T}^* \bbM$ away from null infinity and ${}^{\mathrm{de}}\overline{T}^* \bbO$ away from timelike and spacelike infinity. See \S\ref{sec:geometry}. 

The connection between the geometric setup here and the hyperbolic coordinates employed in \cite{Klainerman} is discussed in \S\ref{sec:asymptotics}. As far as asymptotic expansions are concerned, the two are not equivalent in general, but for the application to \Cref{thm:main} we need only consider functions decaying rapidly at null infinity, for which the distinction is not important. One selling point of $\bbO$ is that, like the Klein--Gordon equation itself, it is Poincar\'e invariant in the sense that the elements of the Poincar\'e group lift to diffeomorphisms of $\bbO$ (\Cref{prop:invariance}). In contrast, hyperbolic coordinate systems do not interact well with translations (\Cref{rem:parabolic_non-invariant}). The Poincar\'e invariance of the approach here is therefore a feature, though we still use hyperbolic coordinates to extract the asymptotic expansions at $\mathrm{Pf}\cup \mathrm{Ff}$. 

The d'Alembertian $\square=\square_{g_{\bbM}}$ lies in $\operatorname{Diff}_{\mathrm{de,sc}}^{2,\mathsf{0}} \subseteq \Psi_{\mathrm{de,sc}}^{2,\mathsf{0}}$. 
This is a consequence of the fact that the Minkowski metric is a de,sc-metric. Later, we check this claim directly (\Cref{prop:dAlembertian_calculation}). A complication is that 
\begin{equation}
	\partial_t, \partial_{x_i} \in \operatorname{Diff}_{\mathrm{de,sc}}^{1,(0,1,0,1,0)} \setminus \operatorname{Diff}_{\mathrm{de,sc}}^{1,\mathsf{0}},
	\label{eq:misc_035}
\end{equation}
and not $\partial_t, \partial_{x_i} \in \operatorname{Diff}_{\mathrm{de,sc}}^{1,\mathsf{0}}$.
The particular linear combination of derivatives appearing in $\square$ has cancellations at null infinity, and so one gets 
\begin{equation} 
	\square \in \smash{\Psi_{\mathrm{de,sc}}^{2,\mathsf{0}}}
\end{equation} 
and not merely $\square \in \Psi_{\mathrm{de,sc}}^{2,(0,2,0,2,0)}$.  We will perform these computations in \S\ref{sec:geometry}.
The function $p\in C^\infty(T^* \bbR^{1,d})$ defined  by 
\begin{equation}
		p :  \tau \dd t + \sum_{i=1}^d \xi_i \dd x_i \mapsto -\tau^2 + \sum_{i=1}^d \xi_i^2 + \mathsf{m}^2
\end{equation}
defines an element of $\smash{\sigma_{\mathrm{de,sc}}^{2,\mathsf{0}}(\square + \mathsf{m}^2)}$, where $\smash{\sigma_{\mathrm{de,sc}}^{2,\mathsf{0}}}$ denotes a to-be-defined ``de,sc- (joint) principal symbol map.'' This will also be checked later (\Cref{prop:symbol}). 
Of course, $p$ is the full symbol of $\square + \mathsf{m}^2$ in the uniform Kohn--Nirenberg calculus, and thus $p\in \sigma_{\mathrm{sc}}^{2,0}(\square + \mathsf{m}^2)$, but neither of these obviously imply that $p$ is sufficient to represent the principal de,sc-symbol. A priori, it is not even obvious that $p$ is a symbol on the de,sc- phase space. These statements must be checked. 
	
A consequence of $p\in\smash{ \sigma_{\mathrm{de,sc}}^{2,\mathsf{0}}(\square + \mathsf{m}^2)}$ is that commutators of $\square + \mathsf{m}^2$ with de,sc-$\Psi$DOs have de,sc- principal symbols given by Poisson brackets of their symbols with $p$. 
We can therefore prove propagation estimates in the usual way, via the construction of a positive commutator, for which one constructs symbols that are monotone along the (appropriately scaled) de,sc-Hamiltonian flow 
\begin{equation} 
		\mathsf{H}_p = \frac{\varrho_{\mathrm{df}} H_p}{\varrho_{\mathrm{Pf}}\varrho_{\mathrm{nPf}}\varrho_{\mathrm{Sf}}\varrho_{\mathrm{nFf}}\varrho_{\mathrm{Ff}} } =
		\frac{2\varrho_{\mathrm{df}}}{\varrho_{\mathrm{Pf}}\varrho_{\mathrm{nPf}}\varrho_{\mathrm{Sf}}\varrho_{\mathrm{nFf}}\varrho_{\mathrm{Ff}} } \Big[  \tau \frac{\partial}{\partial t} - \sum_{i=1}^d \xi_i \frac{\partial}{\partial x_i}\Big]  \in   \calV_{\mathrm{b}}({}^{\mathrm{de,sc}}\overline{T}^* \bbO), 
		\label{eq:Hp_rescaled}
\end{equation} 
on the de,sc-characteristic set 
\begin{equation} 
	\Sigma_{\mathsf{m}} = \operatorname{Char}_{\mathrm{de,sc}}^{2,\mathsf{0}}(\square+\mathsf{m}^2) = \tilde{p}^{-1}(\{0\}) \cap (\partial\,{}^{\mathrm{de,sc}}\overline{T}^* \bbO). 
\end{equation} 
Here, $\xi$ is the frequency coordinate dual to $x$, and $\tau$ is the frequency coordinate dual to $t$; additionally, $\tilde{p} \in C^\infty({}^{\mathrm{de,sc}}\overline{T}^* \bbO;\bbR )$ is the function $\tilde{p}= \varrho_{\mathrm{df}}^2 p$, where $\varrho_{\mathrm{df}}$ denotes a defining function of fiber infinity  
\begin{equation} 
	\mathrm{df}={}^{\mathrm{de,sc}} \bbS^* \bbO\subset {}^{\mathrm{de,sc}}\overline{T}^* \bbO.
\end{equation}  
We will study the structure of the Hamiltonian flow 
\begin{equation} 
	\Phi_\bullet=\exp(\mathsf{H}_p\bullet) :{}^{\mathrm{de,sc}}\overline{T}^* \bbO \to {}^{\mathrm{de,sc}}\overline{T}^* \bbO 
\end{equation} 
in \S\ref{sec:dynamics}.
In the $d=1$ case, the flow, restricted to one component $\Sigma_{\mathsf{m},+}$ of $\Sigma_{\mathsf{m}}$, is depicted in \Cref{fig:O}. 
More specifically, $\Sigma_{\mathsf{m},\pm} \subseteq \smash{{}^{\mathrm{de,sc}} \overline{T}^* \bbO}$ is the sheet of $\Sigma_{\mathsf{m}}$ on which the temporal frequency $\tau$ satisfies $\pm \tau>0$. 

As seen in the figure, $\mathsf{H}_p$ vanishes at several points on $\Sigma_{\mathsf{m},\pm}$.\footnote{When we speak of b-vector fields vanishing, we always mean vanishing only as smooth vector fields, i.e.\ the usual sense. For example, $x\partial_x \in \calV_{\mathrm{b}}[0,\infty)_x$ is vanishing at $x=0$, even though it is a nonvanishing section of the b-tangent bundle ${}^{\mathrm{b}} T [0,\infty)_x$. So, a vanishing point of $\mathsf{H}_p$ on a boundary component $\mathrm{f}$ of the de,sc-phase space is a vanishing point of the induced flow on $\mathrm{f}$.}  We split the vanishing set of $\mathsf{H}_p$ into several components, 
\begin{equation}
	\calR^{\pm}_{+},\calR^{\pm}_{-}, \calN^{\pm}_{+}, \calN^{\pm}_{-}, \calC^{\pm}_{+}, \calC^{\pm}_{-}, \calK^{\pm}_{+}, \calK^{\pm}_{-}, \calA^\pm_+,\calA^\pm_- \subseteq \Sigma_{\mathsf{m},\pm}
	\label{eq:radial_sets}
\end{equation}
our \emph{radial sets}. We abbreviate $\calR = \calR^+_+\cup \calR^-_-\cup \calR^+_-\cup \calR^-_+$ and likewise for the other radial sets.
The radial sets $\calR^+_+$, $\calR^-_-$, $\calR^+_-$, $\calR^-_+$ depend on $\mathsf{m}$, but we omit this from the notation. The sign in the superscript denotes which sheet of the characteristic set the component lies in, with a positive sign denoting positive $\tau$ component, and the sign in the subscript denotes which half-space $\mathrm{cl}_\bbO\{\pm t>0\}$ the component lies in. 
The interpretation of the different radial sets is as follows: 
\begin{itemize}
	\item The radial sets $\smash{\calR^\pm_+,\calR^\pm_-}$, located over the timelike caps, are where the de,sc-wavefront set associated with the oscillations of the asymptotic tails of solutions of the Klein--Gordon IVP lives,
	\item $\calN^\pm_+,\calN^\pm_-$ are the endpoints of the Hamiltonian flow along which singularities in the interior of the spacetime (including singularities in initial data) propagate, thus are the entryway for singularities in the interior to the fibers over null infinity, 
	\item $\calC^\pm_+,\calC^\pm_-, \calK^\pm_+,\calK^\pm_-$ are the parts of the corners of the de,sc-phase space lying in the portion of the characteristic set $\Sigma_{\mathsf{m}}$ with zero momentum in the directions dual to the angular coordinates, and not already in one of the $\calN$'s; $\calC$ is over the corner with timelike infinity and $\calK$ is over the corner with spacelike infinity,  and
	\item $\calA^\pm_+,\calA^\pm_-$ are additional radial sets that show up only in the (1+$d$)-dimensional case for $d\geq 2$ and are therefore not depicted in \Cref{fig:O} (but see \Cref{fig:flowplot} and \Cref{fig:globalflowplot}). These can be probed via families of null geodesics with \textit{large} angular momentum. 
\end{itemize}
The simplest of the radial sets to define are $\calR^\pm_-,\calR^\pm_+$. Identifying $\smash{{}^{\mathrm{de,sc}}T^*_{\mathrm{Pf}^\circ\cup \mathrm{Tf}^\circ} \bbO}$ with $\smash{{}^{\mathrm{sc}}T^*_{C_-\cup C_+} \bbM}$, 
\begin{align}
	\begin{split} 
	\calR^\pm_- &= \mathrm{cl}_{{}^{\mathrm{de,sc}} T^* \bbO } (\calR^\pm_{0} \cap {}^{\mathrm{sc}} \pi^{-1}(C_-) )  \\ 
	\calR^\pm_+ &= \mathrm{cl}_{{}^{\mathrm{de,sc}} T^* \bbO } (\calR^\pm_{0} \cap {}^{\mathrm{sc}} \pi^{-1}(C_+) ),
	\label{eq:R_def}
	\end{split}
\end{align}
where 
\begin{equation} 
	\calR^\pm_0 = \operatorname{Graph}(\pm \mathsf{m} \dd \sqrt{t^2-r^2}|_{C_-\cup C_+} )
\end{equation} 
are the two (disconnected) radial sets of the usual sc-Hamiltonian flow on ${}^{\mathrm{sc}}\pi:{}^{\mathrm{sc}} \overline{T}^* \bbM \to \bbM$, one in each sheet, depicted in \Cref{fig:sc}. In other words, for each $\sigma \in \{-,+\}$, 
\begin{equation}
	\calR^\pm_\sigma  = \operatorname{cl}_{{}^{\mathrm{de,sc}} T^* \bbO }( \!\!\!\!\!\!\!\!\!\! \underbrace{\operatorname{WF}_{\mathrm{sc}}(e^{\pm i\mathsf{m} \sqrt{t^2-r^2}} ) \cap {}^{\mathrm{sc}} \pi^{-1}(C_\sigma )}_{\text{portion of sc-characteristic set over base infinity}}\!\!\!\!\!\!\!\!\!\!\! ).
\end{equation}
This substantiates the description above: the radial set $\calR$ is where the de,sc-wavefront set associated with the oscillations of the asymptotic tails of solutions of the Klein--Gordon IVP lives.
In contrast to $\calR_0$, the radial set $\calR$ \emph{does not hit fiber infinity}. See \Cref{fig:O}, where this is indicated. Consequently, we have well-behaved notions of module regularity associated with $\calR$. These are discussed below, in \S\ref{subsec:test_modules}, and put to work elsewhere in the paper.

A quick way of seeing that $\calR$ does not hit fiber infinity is the following.
Since $(t^2-r^2)^{1/2} = 1/(\varrho_{\mathrm{nFf}}\varrho_{\mathrm{Ff}})$ near $\mathrm{nFf}\cap\mathrm{Ff}$, 
\begin{equation}
	\pm  \mathsf{m} \dd \sqrt{t^2-r^2} = \mp  \mathsf{m} \Big(  \frac{\dd \varrho_{\mathrm{nFf}}}{\varrho_{\mathrm{nFf}}^2 \varrho_{\mathrm{Ff}}} + \frac{\dd \varrho_{\mathrm{Ff}}}{\varrho_{\mathrm{nFf}} \varrho_{\mathrm{Ff}}^2 } \Big) .
	\label{eq:misc_061}
\end{equation}
The right-hand side is a typical example of a nonvanishing and nonsingular de,sc- one-form (see \S\ref{subsec:phasespace}). Since the sc- radial set $\calR_0$ is the graph of the left-hand side over the timelike caps of $\bbM$, it must be the case that $\calR$ is the graph of the left-hand side of \cref{eq:misc_061} over $\mathrm{Pf}\cup \mathrm{Ff}$: 
\begin{equation}
	\calR_\sigma^\pm = \operatorname{Graph}\Big\{\mp  \mathsf{m} \Big(  \frac{\dd \varrho_{\mathrm{nFf}}}{\varrho_{\mathrm{nFf}}^2 \varrho_{\mathrm{Ff}}} + \frac{\dd \varrho_{\mathrm{Ff}}}{\varrho_{\mathrm{nFf}} \varrho_{\mathrm{Ff}}^2 } \Big)\Big|_{\mathrm{Tf} } \Big\} \subset {}^{\mathrm{de,sc}}\overline{T}^*_{\mathrm{Tf}} \bbO,
	\label{eq:R_alt}
\end{equation}
$\mathrm{Tf}\in \{\mathrm{Pf},\mathrm{Ff}\}$.
The difference is that, since the right-hand side of \cref{eq:misc_061} is nonsingular as a de,sc- one-form, $\calR$ does not hit fiber infinity. 

Note that $\calR$ can be identified with $\calR_0$ over the interior of the timelike caps.

When studying the IVP in \S\ref{subsec:IVP}, control is propagated through the radial sets (starting at a neighborhood of the Cauchy hypersurface $\Sigma_0$ and ending at $\calR$) in the following order: 
\begin{equation}
	\calA, \calN\backslash {}^{\mathrm{de,sc}}\pi^{-1}(\mathrm{Tf}\cap \mathrm{nf} ) , \calK,\calC,\calN,\calR.
	\label{eq:Cauchy_order}
\end{equation}
In \S\ref{subsec:scattering}, we will also study the scattering problem, which consists of specifying incoming data at $t=-\infty$. When doing so, control is propagated through the radial sets (starting at $\calR_-$ and ending at a neighborhood of $\Sigma_0$) in the following order:
\begin{align}
	&\calR^+_-, \calN_-^+ \backslash {}^{\mathrm{de,sc}} \pi^{-1}(\mathrm{Sf}\cap \mathrm{nPf}) , \calC_-^+, \calK_-^+, \calN_-^+, \calA_-^+, 
	\label{eq:scat_order_1}
	\intertext{on the $\tau>0$ sheet (\Cref{fig:O}) and}
	&\calR^-_-, \calN_-^- \backslash {}^{\mathrm{de,sc}} \pi^{-1}(\mathrm{Sf}\cap \mathrm{nPf}) , \calC_-^-, \calK_-^-, \calN_-^-, \calA_-^-, 
	\label{eq:scat_order_2}
\end{align}
on the other. Then, once control is known near $\Sigma_0$, control can be propagated forwards as in the Cauchy problem, in the same order as \cref{eq:Cauchy_order}, ending at $\calR_+$.
The backwards scattering problem, in which outgoing data is specified, is of course similar.

\begin{remark} Note the flow segments, the darker arrows in \Cref{fig:O}, connecting the two endpoints of each component of $\calN$. As a consequence of the existence of this $\calN$-to-$\calN$ path, we are forced to prove two separate radial point estimates at $\calN$: one in which control is propagated into a proper portion (a ``ray,'' beginning over spacelike or timelike infinity, stopping short of the other corner) and another in which the whole is controlled altogether. 
For unsurprising technical reasons, the former is somewhat subtle. We only prove the estimates needed here, though we do not rule out that more can be said.
\end{remark}

\begin{figure}[t]
	\begin{center}
		\begin{tikzpicture}[scale=.8]
			\coordinate (one) at (-.413\octheight,\octheight) {};
			\coordinate (two) at (-\octheight,.413\octheight) {}; 
			\coordinate (three) at (-\octheight,-.413\octheight) {};
			\coordinate (four) at (-.413\octheight,-\octheight) {};
			\coordinate (five) at (.413\octheight,-\octheight) {};
			\coordinate (six) at (\octheight,-.413\octheight) {};
			\coordinate (seven) at (\octheight,.413\octheight) {}; 
			\coordinate (eight) at (.413\octheight,\octheight) {};
			\coordinate (oner) at (-.413\octheightr,\octheightr) {};
			\coordinate (twor) at (-\octheightr,.413\octheightr) {}; 
			\coordinate (threer) at (-\octheightr,-.413\octheightr) {};
			\coordinate (fourr) at (-.413\octheightr,-\octheightr) {};
			\coordinate (fiver) at (.413\octheightr,-\octheightr) {};
			\coordinate (sixr) at (\octheightr,-.413\octheightr) {};
			\coordinate (sevenr) at (\octheightr,.413\octheightr) {}; 
			\coordinate (eightr) at (.413\octheightr,\octheightr) {};
			\coordinate (oneint) at (-1.25*.413\octheight,1.25\octheight) {};
			\coordinate (eightint) at (.413*1.25\octheight,1.25\octheight) {};
			\coordinate (fourint) at (-.413*1.25\octheight,-1.25\octheight) {};
			\coordinate (fiveint) at (.413*1.25\octheight,-1.25\octheight) {};
			\draw[fill=gray!10] (oner) -- (twor) -- (threer) -- (fourr) -- (fiver) -- (sixr) -- (sevenr) -- (eightr) -- cycle;
			\draw[fill=gray!30] (one) -- (two) -- (three) -- (four) -- (five) -- (six) -- (seven) -- (eight) -- cycle;
			\draw (one) -- (oner);
			\draw (two) -- (twor);
			\draw (three) -- (threer);
			\draw (four) -- (fourr);
			\draw (five) -- (fiver);
			\draw (six) -- (sixr);
			\draw (seven) -- (sevenr);		
			\draw (eight) -- (eightr);
			\begin{scope}[decoration={
					markings,
					mark=at position 0.51 with {\arrow[scale=1.5,>=latex, color=gray]{>}}}]
				\draw[gray, postaction={decorate}] (-.706\octheight,-.706\octheight) -- (.706\octheight,.706\octheight); 
				\draw[gray, postaction={decorate}] plot[smooth,tension=.1] coordinates {
					(-.636\octheight,-.776\octheight) (.125\octheight,-.125\octheight) (.776\octheight,.636\octheight)};
				\draw[gray, postaction={decorate}] plot[smooth,tension=.1] coordinates {
					(-.776\octheight,-.636\octheight) (-.125\octheight,+.125\octheight) (.636\octheight, .776\octheight)};
				\draw[gray, postaction={decorate}]  plot[smooth,tension=.3] coordinates{ 
					(-.566\octheight,-.846\octheight) (-.456\octheight,-.756\octheight) (.275\octheight,-.275\octheight) (.756\octheight, .456\octheight)  (.846\octheight,.566\octheight) };
				\draw[gray, postaction={decorate}]  plot[smooth,tension=.3] coordinates{ 
					(-.846\octheight,-.566\octheight) (-.756\octheight,-.456\octheight) (-.275\octheight,.275\octheight) (.456\octheight,.756\octheight)  (.566\octheight,.846\octheight) };
				\draw[gray, postaction={decorate}]  plot[smooth,tension=.4] coordinates{ 
					(-.526\octheight,-.886\octheight) (-.146\octheight,-.786\octheight) (.45\octheight,-.45\octheight) (.786\octheight,.146\octheight) (.886\octheight,.526\octheight)    }; 
				\draw[gray, postaction={decorate}]  plot[smooth,tension=.4] coordinates{ 
					(-.886\octheight,-.526\octheight) (-.786\octheight,-.146\octheight) (-.45\octheight,.45\octheight) (.146\octheight,.786\octheight) (.526\octheight,.886\octheight)    }; 
				\draw[postaction={decorate}] (three) -- (two);
				\draw[postaction={decorate}] (threer) -- (three); 
				\draw[postaction={decorate}] (seven) -- (sevenr); 
				\draw[postaction={decorate}] (threer) -- (twor);
				\draw[postaction={decorate}] (fourr) -- (threer);
				\draw[postaction={decorate}] (two) -- (one);
				\draw[postaction={decorate}] (fiver) -- (fourr);
				\draw[postaction={decorate}] (six) -- (seven);
				\draw[postaction={decorate}] (sixr) -- (sevenr);
				\draw[postaction={decorate}] (sevenr) -- (eightr);
				\draw[postaction={decorate}] (eightr) -- (oner);
				\draw[gray, postaction={decorate}]  plot[smooth,tension=.5] coordinates{ (fourint) (-.513\octheightr,-.85\octheightr)  (-.75\octheightr,-.443\octheightr) 	(-.886\octheight,-.526\octheight) };
				\draw[gray, postaction={decorate}]  plot[smooth,tension=.5] coordinates{ 	(.886\octheight,.526\octheight) (.75\octheightr,.443\octheightr)  (.513\octheightr,.85\octheightr)  (eightint)};
				\draw[gray, postaction={decorate}]  plot[smooth,tension=.5] coordinates{ (fiveint) (.413\octheightr,-.65\octheightr)  (.75\octheightr,-.443\octheightr) 	(.886\octheightr,-.526\octheightr) };
				\draw[gray, postaction={decorate}]  plot[smooth,tension=.5] coordinates{ 	(-.886\octheightr,.526\octheightr) (-.75\octheightr,.443\octheightr) (-.413\octheightr,.65\octheightr)  (oneint)};
				\draw[gray, postaction={decorate}] (threer) -- (two);
				\draw[gray, postaction={decorate}] (six) -- (sevenr);
				\draw[gray, postaction={decorate}] (-.2\octheight,-1.25\octheight) -- (five);
				\draw[gray, postaction={decorate}] (.2\octheight,-1.25\octheight) -- (fourr);
				\draw[gray, postaction={decorate}] (one) -- (.2\octheight,1.25\octheight);
				\draw[gray, postaction={decorate}] (eightr) -- (-.2\octheight,1.25\octheight);
			\end{scope}
		\begin{scope}[decoration={
				markings,
				mark=at position 0.51 with {\arrow[scale=1.5,>=latex, color=black]{>}}}] 
				\draw[postaction={decorate}] (one) -- (eight);
				\draw[postaction={decorate}] (two) -- (one); 
				\draw[postaction={decorate}] (twor) -- (two); 
				\draw[postaction={decorate}] (four) -- (five);
				\draw[postaction={decorate}] (six) -- (sixr); 
				\draw[postaction={decorate}] (five) -- (six);
		\end{scope} 
			\begin{scope}[decoration={
					markings,
					mark=at position 0.75 with {\arrow[scale=1.5,>=latex, color=gray]{>}}}]
				\draw[postaction={decorate}] (fourint) -- (four);
				\draw[postaction={decorate}] (fourint) -- (fourr);
				\draw[postaction={decorate}] (fiveint) -- (five);
				\draw[postaction={decorate}] (fiveint) -- (fiver);
				\draw[postaction={decorate}] (one) -- (oneint);
				\draw[postaction={decorate}] (eight) -- (eightint);
				\draw[postaction={decorate}] (oner) -- (oneint);
				\draw[postaction={decorate}] (eightr) -- (eightint);
				\draw[gray, postaction={decorate}]  plot[smooth,tension=.6] coordinates{ (fourint) (-.63\octheight,-1.1\octheight) (-.526\octheight,-.886\octheight) };
				\draw[gray, postaction={decorate}]  plot[smooth,tension=.6] coordinates{(-.616\octheightr,.796\octheightr) (-.73\octheight,1.19\octheight) (oneint) };
				\draw[gray, postaction={decorate}]  plot[smooth,tension=.6] coordinates{ (fiveint) (.73\octheight,-1.19\octheight) (.616\octheightr,-.796\octheightr) };
				\draw[gray, postaction={decorate}]  plot[smooth,tension=.6] coordinates{(.526\octheight,.886\octheight) (.66\octheight,1.0\octheight) (eightint) };
			\end{scope}
			\draw[thick, darkblue] (oner) -- (twor);
			\filldraw[color=darkblue] (oner) circle (2pt);
			\filldraw[color=darkblue] (twor) circle (2pt);
			\draw[thick, darkblue] (eight) -- (seven);
			\filldraw[color=darkblue] (eight) circle (2pt);
			\filldraw[color=darkblue] (seven) circle (2pt);
			\draw[thick, darkblue] (four) -- (three);
			\filldraw[color=darkblue] (four) circle (2pt);
			\filldraw[color=darkblue] (three) circle (2pt);
			\draw[thick, darkblue] (fiver) -- (sixr);
			\filldraw[color=darkblue] (fiver) circle (2pt);
			\filldraw[color=darkblue] (sixr) circle (2pt);
			\filldraw[color=darkgreen] (one) circle (2pt);
			\filldraw[color=darkgreen] (five) circle (2pt);
			\filldraw[color=mygreen] (two) circle (2pt);
			\filldraw[color=mygreen] (six) circle (2pt);
			\filldraw[color=mygreen] (threer) circle (2pt);
			\filldraw[color=mygreen] (sevenr) circle (2pt);
			\filldraw[color=darkgreen] (fourr) circle (2pt);
			\filldraw[color=darkgreen] (eightr) circle (2pt);
			\filldraw[color=darkcandyapp] (oneint) circle (2pt);
			\filldraw[color=darkcandyapp] (eightint) circle (2pt);
			\draw[thick, darkcandyapp] (oneint) -- (eightint);
			\filldraw[color=darkcandyapp] (fourint) circle (2pt);
			\filldraw[color=darkcandyapp] (fiveint) circle (2pt);
			\draw[thick, darkcandyapp] (fourint) -- (fiveint);
			\node[color=darkblue] (nm) at (.82\octheightr,-.82\octheightr) {$\calN^+_-$};
			\node[color=darkblue] (np) at (-.82\octheightr,.82\octheightr) {$\calN^+_+$};
			\node[color=mygreen] (kp) at (+1.175\octheightr,.42\octheightr) {$\calK^+_+$};
			\node[color=mygreen] (km) at (-1.175\octheightr,-.42\octheightr) {$\calK^+_-$};
			\node[color=darkgreen] (cp) at (+.59\octheightr,+.995\octheightr) {$\calC^+_+$};
			\node[color=darkgreen] (cm) at (-.59\octheightr,-.995\octheightr) {$\calC^+_-$};
			\node[color=darkcandyapp] (bb) at (.22\octheightr,-.93\octheightr) {$\calR^+_-$};
			\node[color=darkcandyapp] (bc) at (-.22\octheightr,.91\octheightr) {$\calR^+_+$};
		\end{tikzpicture}
		\begin{tikzpicture}[scale=.8, xscale=-1]
			\coordinate (one) at (-.413\octheight,\octheight) {};
			\coordinate (two) at (-\octheight,.413\octheight) {}; 
			\coordinate (three) at (-\octheight,-.413\octheight) {};
			\coordinate (four) at (-.413\octheight,-\octheight) {};
			\coordinate (five) at (.413\octheight,-\octheight) {};
			\coordinate (six) at (\octheight,-.413\octheight) {};
			\coordinate (seven) at (\octheight,.413\octheight) {}; 
			\coordinate (eight) at (.413\octheight,\octheight) {};
			\coordinate (oner) at (-.413\octheightr,\octheightr) {};
			\coordinate (twor) at (-\octheightr,.413\octheightr) {}; 
			\coordinate (threer) at (-\octheightr,-.413\octheightr) {};
			\coordinate (fourr) at (-.413\octheightr,-\octheightr) {};
			\coordinate (fiver) at (.413\octheightr,-\octheightr) {};
			\coordinate (sixr) at (\octheightr,-.413\octheightr) {};
			\coordinate (sevenr) at (\octheightr,.413\octheightr) {}; 
			\coordinate (eightr) at (.413\octheightr,\octheightr) {};
			\coordinate (oneint) at (-1.25*.413\octheight,1.25\octheight) {};
			\coordinate (eightint) at (.413*1.25\octheight,1.25\octheight) {};
			\coordinate (fourint) at (-.413*1.25\octheight,-1.25\octheight) {};
			\coordinate (fiveint) at (.413*1.25\octheight,-1.25\octheight) {};
			\draw[fill=gray!10] (oner) -- (twor) -- (threer) -- (fourr) -- (fiver) -- (sixr) -- (sevenr) -- (eightr) -- cycle;
			\draw[fill=gray!30] (one) -- (two) -- (three) -- (four) -- (five) -- (six) -- (seven) -- (eight) -- cycle;
			\draw (one) -- (oner);
			\draw (two) -- (twor);
			\draw (three) -- (threer);
			\draw (four) -- (fourr);
			\draw (five) -- (fiver);
			\draw (six) -- (sixr);
			\draw (seven) -- (sevenr);		
			\draw (eight) -- (eightr);
			\begin{scope}[decoration={
					markings,
					mark=at position 0.51 with {\arrow[scale=1.5,>=latex, color=gray]{>}}}]
				\draw[gray, postaction={decorate}] (-.706\octheight,-.706\octheight) -- (.706\octheight,.706\octheight); 
				\draw[gray, postaction={decorate}] plot[smooth,tension=.1] coordinates {
					(-.636\octheight,-.776\octheight) (.125\octheight,-.125\octheight) (.776\octheight,.636\octheight)};
				\draw[gray, postaction={decorate}] plot[smooth,tension=.1] coordinates {
					(-.776\octheight,-.636\octheight) (-.125\octheight,+.125\octheight) (.636\octheight, .776\octheight)};
				\draw[gray, postaction={decorate}]  plot[smooth,tension=.3] coordinates{ 
					(-.566\octheight,-.846\octheight) (-.456\octheight,-.756\octheight) (.275\octheight,-.275\octheight) (.756\octheight, .456\octheight)  (.846\octheight,.566\octheight) };
				\draw[gray, postaction={decorate}]  plot[smooth,tension=.3] coordinates{ 
					(-.846\octheight,-.566\octheight) (-.756\octheight,-.456\octheight) (-.275\octheight,.275\octheight) (.456\octheight,.756\octheight)  (.566\octheight,.846\octheight) };
				\draw[gray, postaction={decorate}]  plot[smooth,tension=.4] coordinates{ 
					(-.526\octheight,-.886\octheight) (-.146\octheight,-.786\octheight) (.45\octheight,-.45\octheight) (.786\octheight,.146\octheight) (.886\octheight,.526\octheight)    }; 
				\draw[gray, postaction={decorate}]  plot[smooth,tension=.4] coordinates{ 
					(-.886\octheight,-.526\octheight) (-.786\octheight,-.146\octheight) (-.45\octheight,.45\octheight) (.146\octheight,.786\octheight) (.526\octheight,.886\octheight)    }; 
				\draw[postaction={decorate}] (three) -- (two);
				\draw[postaction={decorate}] (threer) -- (three); 
				\draw[postaction={decorate}] (seven) -- (sevenr); 
				\draw[postaction={decorate}] (threer) -- (twor);
				\draw[postaction={decorate}] (fourr) -- (threer);
				\draw[postaction={decorate}] (two) -- (one);
				\draw[postaction={decorate}] (fiver) -- (fourr);
				\draw[postaction={decorate}] (six) -- (seven);
				\draw[postaction={decorate}] (sixr) -- (sevenr);
				\draw[postaction={decorate}] (sevenr) -- (eightr);
				\draw[postaction={decorate}] (eightr) -- (oner);
				\draw[gray, postaction={decorate}]  plot[smooth,tension=.5] coordinates{ (fourint) (-.513\octheightr,-.85\octheightr)  (-.75\octheightr,-.443\octheightr) 	(-.886\octheight,-.526\octheight) };
				\draw[gray, postaction={decorate}]  plot[smooth,tension=.5] coordinates{ 	(.886\octheight,.526\octheight) (.75\octheightr,.443\octheightr)  (.513\octheightr,.85\octheightr)  (eightint)};
				\draw[gray, postaction={decorate}]  plot[smooth,tension=.5] coordinates{ (fiveint) (.413\octheightr,-.65\octheightr)  (.75\octheightr,-.443\octheightr) 	(.886\octheightr,-.526\octheightr) };
				\draw[gray, postaction={decorate}]  plot[smooth,tension=.5] coordinates{ 	(-.886\octheightr,.526\octheightr) (-.75\octheightr,.443\octheightr) (-.413\octheightr,.65\octheightr)  (oneint)};
				\draw[gray, postaction={decorate}] (threer) -- (two);
				\draw[gray, postaction={decorate}] (six) -- (sevenr);
				\draw[gray, postaction={decorate}] (-.2\octheight,-1.25\octheight) -- (five);
				\draw[gray, postaction={decorate}] (.2\octheight,-1.25\octheight) -- (fourr);
				\draw[gray, postaction={decorate}] (one) -- (.2\octheight,1.25\octheight);
				\draw[gray, postaction={decorate}] (eightr) -- (-.2\octheight,1.25\octheight);
			\end{scope}
			\begin{scope}[decoration={
					markings,
					mark=at position 0.51 with {\arrow[scale=1.5,>=latex, color=gray]{>}}}] 
				\draw[postaction={decorate}] (one) -- (eight);
				\draw[postaction={decorate}] (two) -- (one); 
				\draw[postaction={decorate}] (twor) -- (two); 
				\draw[postaction={decorate}] (four) -- (five);
				\draw[postaction={decorate}] (six) -- (sixr); 
				\draw[postaction={decorate}] (five) -- (six);
			\end{scope} 
			\begin{scope}[decoration={
					markings,
					mark=at position 0.75 with {\arrow[scale=1.5,>=latex, color=gray]{>}}}]
				\draw[postaction={decorate}] (fourint) -- (four);
				\draw[postaction={decorate}] (fourint) -- (fourr);
				\draw[postaction={decorate}] (fiveint) -- (five);
				\draw[postaction={decorate}] (fiveint) -- (fiver);
				\draw[postaction={decorate}] (one) -- (oneint);
				\draw[postaction={decorate}] (eight) -- (eightint);
				\draw[postaction={decorate}] (oner) -- (oneint);
				\draw[postaction={decorate}] (eightr) -- (eightint);
				\draw[gray, postaction={decorate}]  plot[smooth,tension=.6] coordinates{ (fourint) (-.63\octheight,-1.1\octheight) (-.526\octheight,-.886\octheight) };
				\draw[gray, postaction={decorate}]  plot[smooth,tension=.6] coordinates{(-.616\octheightr,.796\octheightr) (-.73\octheight,1.19\octheight) (oneint) };
				\draw[gray, postaction={decorate}]  plot[smooth,tension=.6] coordinates{ (fiveint) (.73\octheight,-1.19\octheight) (.616\octheightr,-.796\octheightr) };
				\draw[gray, postaction={decorate}]  plot[smooth,tension=.6] coordinates{(.526\octheight,.886\octheight) (.66\octheight,1.0\octheight) (eightint) };
			\end{scope}
			\draw[thick, darkblue] (oner) -- (twor);
			\filldraw[color=darkblue] (oner) circle (2pt);
			\filldraw[color=darkblue] (twor) circle (2pt);
			\draw[thick, darkblue] (eight) -- (seven);
			\filldraw[color=darkblue] (eight) circle (2pt);
			\filldraw[color=darkblue] (seven) circle (2pt);
			\draw[thick, darkblue] (four) -- (three);
			\filldraw[color=darkblue] (four) circle (2pt);
			\filldraw[color=darkblue] (three) circle (2pt);
			\draw[thick, darkblue] (fiver) -- (sixr);
			\filldraw[color=darkblue] (fiver) circle (2pt);
			\filldraw[color=darkblue] (sixr) circle (2pt);
			\filldraw[color=darkgreen] (one) circle (2pt);
			\filldraw[color=darkgreen] (five) circle (2pt);
			\filldraw[color=mygreen] (two) circle (2pt);
			\filldraw[color=mygreen] (six) circle (2pt);
			\filldraw[color=mygreen] (threer) circle (2pt);
			\filldraw[color=mygreen] (sevenr) circle (2pt);
			\filldraw[color=darkgreen] (fourr) circle (2pt);
			\filldraw[color=darkgreen] (eightr) circle (2pt);
			\filldraw[color=darkcandyapp] (oneint) circle (2pt);
			\filldraw[color=darkcandyapp] (eightint) circle (2pt);
			\draw[thick, darkcandyapp] (oneint) -- (eightint);
			\filldraw[color=darkcandyapp] (fourint) circle (2pt);
			\filldraw[color=darkcandyapp] (fiveint) circle (2pt);
			\draw[thick, darkcandyapp] (fourint) -- (fiveint);
			\node[color=darkblue] (nm) at (.82\octheightr,-.82\octheightr) {$\calN^+_-$};
			\node[color=darkblue] (np) at (-.82\octheightr,.82\octheightr) {$\calN^+_+$};
			\node[color=mygreen] (kp) at (+1.175\octheightr,.42\octheightr) {$\calK^+_+$};
			\node[color=mygreen] (km) at (-1.175\octheightr,-.42\octheightr) {$\calK^+_-$};
			\node[color=darkgreen] (cp) at (+.59\octheightr,+.995\octheightr) {$\calC^+_+$};
			\node[color=darkgreen] (cm) at (-.59\octheightr,-.995\octheightr) {$\calC^+_-$};
			\node[color=darkcandyapp] (bb) at (.22\octheightr,-.93\octheightr) {$\calR^+_-$};
			\node[color=darkcandyapp] (bc) at (-.22\octheightr,.91\octheightr) {$\calR^+_+$};
		\end{tikzpicture}
	\end{center}
	\caption{The de,sc-Hamiltonian flow within one sheet of the de,sc-characteristic set $\Sigma_{\mathsf{m},+}$, when $d=1$ (in which case de,sc- means sc,sc-). When $d=1$, $\Sigma_{\mathsf{m},+}$ consists of two octagons, one of which contains the left-moving points at fiber infinity and the other of which contains the right-moving points, connected by the characteristic set over the boundary. 
	Conventions are mostly as in \Cref{fig:sc};
	due to difficulties with perspective, we have depicted $\Sigma_{\mathsf{m},+}$ from two points of view, with one component of fiber infinity hidden in each perspective. 
	Fiber infinity is depicted in dark gray. Each of the lighter quadrilateral panels depicts the portion of $\Sigma_{\mathsf{m},+}$ over one of the faces $\mathrm{f} \in \{\mathrm{Pf},\mathrm{nPF},\mathrm{Sf},\mathrm{nFf},\mathrm{Ff}\}$. (Since $d=1$, $\mathrm{nPf},\mathrm{Sf},\mathrm{nFf}$ each consist of two connected components.) The radial sets, $\color{darkcandyapp}\calR$, $\color{darkgreen}\calC$, $\color{darkblue}\calN$, $\color{mygreen}\calK$, are depicted in various colors. The source of the flow is $\calR^+_-$, and, correspondingly, the sink is $\calR^+_+$. The set $\calR$ is, away from its endpoints, identifiable with the sc-radial set $\calR_0$ depicted in \Cref{fig:sc}. This is why they are both colored red. The other six radial sets (each of which, because $d=1$, consists of two connected components, only one of which is labeled above) are saddle points. The flow in $\Sigma_{\mathsf{m},-}$ looks similar, with the arrows reversed.}
	\label{fig:O}
\end{figure}

Hintz and Vasy \cite{HintzVasyScriEB} have recently investigated massless wave propagation near null infinity using fully microlocal tools very similar to those used here. 
In contrast to the de,sc-calculus employed below, their e,b-calculus is not symbolic, for the same reason that the e- (``edge'') and b- (``boundary'') calculi are not symbolic. While this is necessary when studying mass\textit{less} wave propagation, for which radiation must be understood, this means that Hintz and Vasy do not study propagation at finite frequencies. For the reasons sketched above, finite frequencies are important in understanding massive wave propagation. 
The purely symbolic de,sc-calculus turns out to be well-suited for this purpose.

The radial sets $\calN,\calC,\calK,\calA$ previously appeared in \cite{HintzVasyScriEB} under different aliases. 
The inclusion $\smash{\bbS^* \bbR^{1,d}} \hookrightarrow \smash{{}^{\mathrm{de,sc}}\bbS^* \bbO}$ extends to a diffeomorphism  
\begin{equation} 
	{}^{\mathrm{e,b}} \bbS^* \bbO \to {}^{\mathrm{de,sc}}\bbS^* \bbO, 
\end{equation}
so fiber infinity of the de,sc-cotangent bundle is canonically identifiable with fiber infinity of the e,b-cotangent bundle. So, besides the fiber radial direction, the situation at fiber infinity is the same here as in \cite{HintzVasyScriEB}.
In \cite{HintzVasyScriEB}, the authors use terminology which, while fitting for the analysis at fiber infinity, is misleading when the flow in the de,sc-fiber radial directions is considered. From their e,b-perspective, the components of $\calN$ are global sources and sinks. From the de,sc-perspective, this role is instead played by $\calR$. More confusingly, de,sc-singularities can propagate from $\calN \cap {}^{\mathrm{de,sc}} \pi^{-1}(\mathrm{Sf})$ through the interior of the fibers of the de,sc-cotangent bundle back up to $\calK$ (as can be seen in \Cref{fig:O}), so $\calN$ is not even a source/sink for the flow between the radial sets already studied in \cite{HintzVasyScriEB} once one considers the fiber radial component of the flow.  So, some terminological change is necessary. 

Finally, we point out recent work \cite{HassellNL} of Gell-Redman, Gomes, and Hassell on the nonlinear Schr\"odinger equation. Their regularity theory bears some similarities to the test modules used below, but it does not appear possible to straightforwardly apply their approach to the Klein--Gordon equation.

\begin{remark}[Comparing $\Psi_{\mathrm{de,sc}}(\bbO)$- and $\Psi_{\mathrm{sc}}(\bbM)$- frequency variables]
	The reader may wonder how de,sc- frequencies compare to sc-frequencies (where by ``sc-frequency'' we mean ordinary frequencies, i.e.\ coordinates in the usual phase space $\smash{{}^{\mathrm{sc}} T^* \bbM}$, not $\smash{{}^{\mathrm{sc,sc}}T^* \bbO}$).
	After all, our main selling point for the de,sc- framework is that the oscillations present in solutions of the Cauchy problem for the Klein--Gordon equation lie at finite de,sc- frequency, whereas we saw above and in \Cref{fig:sc} that the same is not true in the original sc-framework. Thus, some finite de,sc-frequencies correspond to infinite sc-frequencies. On the other hand, \cref{eq:misc_035} indicates that a typical sc-frequency lies at infinite de,sc-frequency. Another manifestation of this is that, in the de,sc-phase space, all bicharacteristics of the Hamiltonian flow over spacelike infinity end at (de,sc-)fiber infinity (see \Cref{fig:O}), whereas this is not true in the sc-phase space. So, de,sc-frequencies are, in general, \emph{neither larger nor smaller} than sc-frequencies; they are larger in one direction, but smaller in another. 
	
	Let us make this concrete, at least in 1+1D, where de- just means sc- (so de,sc- means sc,sc-). So, we are comparing ${}^{\mathrm{sc}}T^* \bbM$ with ${}^{\mathrm{sc,sc}}T^* \bbO$ and asking how functions whose oscillations correspond to points in these two bundles differ. To avoid confusion, we will speak of $\Psi_{\mathrm{sc}}$- and $\Psi_{\mathrm{de,sc}}=\Psi_{\mathrm{sc,sc}}$- frequencies. We will only discuss the situation near points in the interior of null infinity. Recall that functions on $\bbR^{1,1}$ that oscillate with finite $\Psi_{\mathrm{sc}}$-frequency look like plane waves $\exp(i(t \omega+x\Xi ))$, for $\omega,\Xi\in \bbR$.  Written in terms of the coordinates $v=t-x$ and $\varrho=1/\smash{(|t|+x)^{1/2}}$, such a plane wave takes the form 
	\begin{equation}
		\exp \Big[ \frac{i}{2} \Big( (\omega-\Xi) v + \frac{\omega+\Xi}{ \varrho^2} \Big)  \Big] . 
		\label{eq:sc_oscillations_example}
	\end{equation}
	On the other hand, a function whose oscillations lie at finite $\Psi_{\mathrm{de,sc}}$-frequency looks like 
	\begin{equation}
		\exp \Big[ \frac{i}{\varrho} (\zeta v+\xi) \Big] 	
		\label{eq:desc_oscillations_example}
	\end{equation}
	near points in the interior of null infinity, 
	for some $\xi,\zeta \in \bbR$. This is analogous to how the plane wave $\exp(i\xi x+i\zeta y)$  on $\bbR^2_{x,y}$ looks when written in terms of $1/r$ and $\varphi = \operatorname{arctan}(y/x)$: 
	\begin{equation} 
		\exp(i \xi x + i \zeta y ) = \exp(i  r (\xi \cos\varphi+\zeta \sin \varphi )) \sim \exp\Big[ \frac{i}{1/r} (\zeta \varphi + \xi)   \Big]
	\end{equation} 
	for small $\varphi$; just replace $1/r$ with $\varrho$ and $\varphi$ with $v$. Comparing \cref{eq:sc_oscillations_example} and \cref{eq:desc_oscillations_example}, we see that the function with finite $\Psi_{\mathrm{de,sc}}$- frequency is oscillating more slowly as $\varrho\to 0^+$ for $v$ fixed, and is oscillating more rapidly as $v$ varies for $\varrho$ fixed but very small. 
	This comparison is summarized  in \Cref{fig:comp}. 
	At the end of \S\ref{subsec:limitations}, we saw that the oscillations $\exp(\pm i \mathsf{m} \sqrt{t^2-r^2})$ present in solutions of the Klein--Gordon equation are, when compared to plane waves, oscillating slowly along the light cone and oscillating more rapidly in the direction across it. These are exactly the sorts of oscillations that the de,sc-calculus is apparently designed to detect (as we already saw in \cref{eq:misc_061}).

	With regards to the $d\geq 2$ case, we just mention that oscillations in the angular variables $\theta=\bfx/r$ have finite $\Psi_{\mathrm{de,sc}}$-frequency if and only if they have finite $\Psi_{\mathrm{sc}}$-frequency in the ordinary sense. Thus, an oscillation such as $\exp(i \varrho^{-1} \eta \theta_j )$, for $\eta\in \bbR$, is typical of both frameworks. In the previous sentence, $\theta_1,\dots,\theta_{d-1}$ is some local coordinate chart on $\smash{\bbS^{d-1}_\theta} = \smash{\bbS^{d-1}_{\bfx/r}}$. So, in summary: compared to $\Psi_{\mathrm{sc}}(\bbM)$, the de,sc- calculus detects oscillations that are slower along the light cone, faster across the light cone, and comparable in the (spatial) angular directions.
	\label{rem:reviewer}
\end{remark}

\begin{figure}
	\begin{tikzpicture} 
		\draw[dashed] (-3,-3) -- (-3,3.5) -- (3,3.5) -- (3,-3) -- cycle;
		\node (0) at (0,3.2) {Oscillations detected by $\Psi_{\mathrm{sc}}(\bbM)$ }; 
		\filldraw[fill=gray!10] (0,2.25) -- (2.25,0) -- (0,-2.25) -- (-2.25,0) -- cycle;
		\node (1) at (0,2.5) {Ff};
		\node (1) at (1.5,1.4) {nFf};
		\node (1) at (2.6,0) {Sf};
		\node (1) at (1.5,-1.4) {nPf};
		\node (1) at (0,-2.5) {Pf};
		\draw[orange, decorate, decoration={snake, segment length=1.2mm, amplitude=1mm}] (-.3,.3) -- (-1,1);
		\draw[orange, decorate, decoration={snake, segment length=4mm, amplitude=1mm}] (-1.8,.2) -- (-.2,1.8);
	\end{tikzpicture}
	\quad 
	\begin{tikzpicture} 
		\draw[dashed] (-3,-3) -- (-3,3.5) -- (3,3.5) -- (3,-3) -- cycle;
		\node (0) at (0,3.2) {Oscillations detected by $\Psi_{\mathrm{de,sc}}(\bbO)$ }; 
		\filldraw[fill=gray!10] (0,2.25) -- (2.25,0) -- (0,-2.25) -- (-2.25,0) -- cycle;
		\node (1) at (0,2.5) {Ff};
		\node (1) at (1.5,1.4) {nFf};
		\node (1) at (2.6,0) {Sf};
		\node (1) at (1.5,-1.4) {nPf};
		\node (1) at (0,-2.5) {Pf};
		\draw[orange, decorate, decoration={snake, segment length=4mm, amplitude=1mm}] (-.3,.3) -- (-1,1);
		\draw[orange, decorate, decoration={snake, segment length=1.02mm, amplitude=1mm}] (-1.8,.2) -- (-.2,1.8);
	\end{tikzpicture}
	\caption{A comparison, drawn on the Penrose diagram $\bbP\hookleftarrow \bbR^{1,1}$, of the sorts of oscillations measured by $\Psi_{\mathrm{sc}}(\bbM)$ (left), fast in the outgoing direction and slow in the tangential direction, and the sorts of oscillations measured by $\Psi_{\mathrm{de,sc}}(\bbO)$ (right), fast in the tangential direction and slow in the outgoing direction. In each case, the oscillations get faster as they approach the boundary, and it is really how fast this occurs that matters, but this is not depicted. }
	\label{fig:comp}
\end{figure}

\begin{remark}[Diffraction at null infinity]
	One defect of the de,sc- framework for analyzing the Klein--Gordon equation when the coefficients are well-behaved already on $\bbM$ is that, because the de,sc- Hamiltonian flow over $\mathrm{Sf}^\circ$ tends to fiber infinity as it approaches null infinity, different sc-frequencies over $\mathrm{Sf}^\circ$ are scrambled by the flow. Consequently, it is not possible in the de,sc-framework to track individual frequencies from $\mathrm{Sf}^\circ$ to $\mathrm{Pf}^\circ,\mathrm{Ff}^\circ$. In contrast, this is possible when working with $\Psi_{\mathrm{sc}}(\bbM)$; see \Cref{fig:sc}. Instead, the de,sc- propagation results in \S\ref{sec:propagation} predict that a single frequency over $\mathrm{Sf}^\circ$ propagates to infinitely many frequencies over $\mathrm{Pf}^\circ,\mathrm{Ff}^\circ$. While this possibility cannot be realized when the PDE is analyzable in $\Psi_{\mathrm{sc}}(\bbM)$, we expect it to be realized when the coefficients of the PDE are only well-behaved on $\bbO$ and not on $\bbM$, as is the case when the metric is that of a radiative spacetime.  
	This phenomenon is reminiscent of diffraction, the generation of spherical waves when an incoming wave hits a singular point in a manifold. In this case, the singular ``point'' is $\scrI\subset \partial \bbM$.
\end{remark}

\subsection*{Index of notation}
The following notation is used in more than one section:
\begin{itemize}
	\item $\bbM = \overline{\bbR^{1,d}}$, the radial compactification of spacetime. When we want to indicate the dimension, we use a superscript. For example, $\bbM^{1,1}$ is the radial compactification of $\bbR^{1,1}$. 
	
	The (open) timelike caps of $\bbM$ are denoted $C_\pm$, with $C_+$ the future timelike cap and $C_-$ the past timelike cap. The components are null infinity are $\scrI^\pm$, and spacelike infinity is $i^0$. We typically use subscripts to denote whether an object is related to the future or the past. One exception is $\scrI^\pm$, for which a superscript is standard.
	\item $\bbO \hookleftarrow \bbR^{1,d}$, the octagonal compactification of spacetime, defined in \S\ref{subsec:basespace} by blowing up null infinity in $\bbM$ and then modifying the smooth structure. 
	We use $\bbO_0$ to denote the same manifold-with-corners but without the change of smooth structure. 
	\item $\mathrm{Pf},\mathrm{nPf},\mathrm{Sf},\mathrm{nFf},\mathrm{Ff}$ various boundary hypersurfaces of $\bbO$. In each case, $\mathrm{f}$ stands for face. 
	\item $\mathrm{Tf}$, used to denote one of $\mathrm{Pf},\mathrm{Ff}$, for ``timelike face.''  $\mathrm{nf}$, used to denote one of $\mathrm{nPf},\mathrm{nFf}$, for ``null face.''  $\mathrm{Of}$, used to denote one of $\mathrm{Pf},\mathrm{Sf},\mathrm{Ff}$, for ``other face'' (in contrast to $\mathrm{nf}$).  
	\item $\Omega_{\mathrm{nfTf},\pm , T}$, for $T>0$, used to denote a particular coordinate chart (really, the domain thereof), defined in \S\ref{sec:geometry}, near the timelike corner of null infinity, $\mathrm{Tf}\cap \mathrm{nf}$. Similarly, $\Omega_{\mathrm{nfSf},\pm , R}$, for $R>0$, is the domain of a coordinate chart over the spacelike corner of null infinity. By varying $T,R$, we can cover all of null infinity.
	\item $\varrho_{\mathrm{f}}$, used to denote a boundary-defining-function of the face $\mathrm{f}$. 
	\item $\calV_{\mathrm{b}}(M)$, the $C^\infty(M)$-module of b-vector fields on a manifold-with-corners $M$, i.e.\ the smooth vector fields (where smooth means extendable to a bigger manifold-without-boundary that $M$ can be assumed to be embedded in) tangent to all of the boundary hypersurfaces thereof. The algebra of differential operators generated over $C^\infty(M)$ by the b-vector fields is denoted $\operatorname{Diff}_{\mathrm{b}}(M)$.
	\item $\calV_{\mathrm{sc}}$, the $C^\infty(\bbM)$-module of vector fields on $\bbM$ of the form $(1+r^2+t^2)^{-1/2} V$ for $V\in \calV_{\mathrm{b}}(\bbM)$. The ``scattering vector fields.''
	\item $\calV_{\mathrm{de,sc}}$, the $C^\infty(\bbO)$-module of de,sc-vector fields on $\bbO$, defined locally in \cref{eq:Vdef}, \cref{eq:Vdef_local}, and globally in \cref{eq:Vdef_global}.  
	\item $\Psi_{\mathrm{de,sc}}$, the calculus of pseudodifferential operators arising via quantizing $\calV_{\mathrm{de,sc}}$; see \S\ref{subsec:Psi}. We use $H_{\mathrm{de,sc}}^{m,\mathsf{s}}$ to denote the corresponding scale of Sobolev spaces, where $m\in \bbR,\mathsf{s}\in \bbR^5$. If instead of `$\Psi$' we write $\operatorname{Diff}$, then this indicates the subset consisting of differential operators.

	$\operatorname{WF}_{\mathrm{de,sc}}^{m,\mathsf{s}}$ denotes wavefront set relative to $H_{\mathrm{de,sc}}^{m,\mathsf{s}}$, and $\operatorname{WF}'_{\mathrm{de,sc}}(A)$ denotes the de,sc-notion of essential support of a pseudodifferential operator $A$. $\sigma_{\mathrm{de,sc}}$ is the map taking de,sc- pseudodifferential operators to their principal symbols.
	
	Throughout this work, we use sans-serif typestyle to denote pentuples of real numbers. For example, 
	\begin{equation}
		\Psi_{\mathrm{de,sc}}^{1,\mathsf{1}} = \Psi_{\mathrm{de,sc}}^{1,(1,1,1,1,1)} 
	\end{equation}
	is the set of de,sc- operators that are first order in every sense. Similarly, $\mathsf{0}=(0,0,0,0,0)$, so $\smash{\Psi^{0,\mathsf{0}}_{\mathrm{de,sc}}}$ is the space of de,sc- operators that are zeroth order in every sense, i.e.\ de,sc- \emph{microlocalizers}. The reason for using sans-serif here is that, in microlocal work on the Klein--Gordon and Helmholtz equations following \cite{VasyGrenoble} --- see e.g.\ \cite{VasyKG}\cite{HassellETAL, HassellNL} --- sans-serif is used to denote variable orders. 
	We do not use variable orders here, but in $\smash{\Psi_{\mathrm{de,sc}}^{m,\mathsf{s}}}$, the pentuple $\mathsf{s} \in \bbR^5$ functions a bit like a variable order, allowing us to consider different decay rates at different faces of $\bbO$.
	\item $\operatorname{Diff}_{\mathrm{de,sc}}^{m,\mathsf{s}}$, the elements of $\Psi_{\mathrm{de,sc}}^{m,\mathsf{s}}$ that are differential operators.
	\item ${}^{\mathrm{de,sc}}\overline{T}^* \bbO \hookleftarrow T^* \bbR^{1,d}$, the manifold-with-corners arising from compactifying the fibers of the bundle dual to that whose smooth sections are $\calV_{\mathrm{de,sc}}$. See \S\ref{subsec:phasespace}. The line in $\overline{T}$ indicates the radial compactification in the fibers.
	\item Fiber infinity in  ${}^{\mathrm{de,sc}}\overline{T}^* \bbO \hookleftarrow T^* \bbR^{1,d}$ will be denoted $\mathrm{df}$ or ${}^{\mathrm{de,sc}}\bbS^* \bbO$. 
	\item Throughout, ``$S^\bullet$'' is used to denote various function spaces of symbols, whereas ``$\calA^\bullet$'' is used to denote function spaces of partially polyhomogeneous functions; see the beginning of \S\ref{subsec:asymptotics} for a summary of our notation regarding the latter.
	\item $\zeta,\xi,\eta$ denote various smooth frequency coordinates  on ${}^{\mathrm{de,sc}}T^* \bbO$. Various such choices are in \S\ref{subsec:phasespace}. Always, $\eta$ denotes a frequency variable dual to an angular variable, whereas $\zeta,\xi$ mix the frequency variables dual to $t,r$. 
	\item $\Sigma_{\mathsf{m}}$, the de,sc-characteristic set of the Minkowski Klein--Gordon operator with mass $\mathsf{m}>0$. This set has two connected components, one corresponding to positive-energy solutions and one corresponding to negative-energy solutions. These are distinguished by writing $\Sigma_{\mathsf{m},\pm}$. 
	
	When writing $\Sigma_{\mathsf{m}}[g]$, this means the characteristic set of the Klein--Gordon operator with an asymptotically Minkowski metric $g$ instead of the exact Minkowski metric. These differ only in $\mathrm{df}$, the boundary of  ${}^{\mathrm{de,sc}}\overline{T}^* \bbO$ consisting of ``fiber infinity.'' More generally, when we write ``$\bullet[g]$,'' this denotes that the Minkowski metric should be replaced by $g$ in the definition of $\bullet$. 
	\item $P=P[g]$, the Klein--Gordon operator under consideration. When we write $P$, we do \emph{not} (necessarily) mean $\square+\mathsf{m}^2$. This is therefore an exception to the previous rule: ``$P$'' is not an abbreviation for $P[g_{\bbM}]$. 
	\item $p[g]$, the de,sc- principal symbol of the Klein--Gordon operator under consideration; $p$ just means the symbol of $\square+\mathsf{m}^2$. 
	\item $\tilde{p}[g] = \varrho_{\mathrm{df}}^2 p[g]\in C^\infty({}^{\mathrm{de,sc}}\overline{T}^* \bbO ) $.
	\item $H_p \in \calV(T^* \bbR^{1,d})$, the Hamiltonian vector field associated to a symbol $p$. 
	\item $\mathsf{H}_p$ the Hamiltonian vector field, but multiplied by some boundary-defining-functions so as to become a b-vector field on ${}^{\mathrm{de,sc}}\overline{T}^* \bbO$. See \cref{eq:Hp_rescaled}.
	\item $\calR,\calC,\calN,\calK,\calA$, various subsets of the characteristic set where $\mathsf{H}_p$ vanishes (in the ordinary sense, meaning as a section of the extendable cotangent bundle on the compactified phase space); see \S\ref{sec:dynamics}. The ultimate sources and sinks of the flow are $\calR$, and these are traditionally denoted `$\calR$.' The sets $\calC,\calK$ lie in corners in the $d=1$ case, and the notation stands for ``corner.'' $\calN$ lies over all of null infinity; the notation stands for ``null.'' Finally, $\calA$, the radial set at high angular momentum; the notation can be taken to stand for ``angular.''
	
	In $\calR^\varsigma_\sigma$, for $\varsigma,\sigma$ signs, the subscript denotes whether the radial set is over future or past timelike infinity, and the superscript denotes which component of the two-sheeted characteristic set the radial set is in. Omitting a sign means taking a union over the two possibilities. 
	Similar notation is used for $\calC,\calN,\calK,\calA$. 
	\item $\frakM,\frakN$, modules of operators characteristic on $\calR$ or related sets. These are defined in \cref{eq:misc_mmm}, \cref{eq:misc_nnn}; a few others are defined later in \S\ref{subsec:test_modules}. For example, adding ``$\varsigma,\sigma$'' as a sub or superscript means that these modules are defined using only $\calR^\varsigma_\sigma$. 
	
	 $\frakM^{\kappa,k}_{\varsigma,\sigma}$ denotes the space of differential operators formed by composing $\kappa$ elements of $\frakM$ and $k$ elements of $\frakN$. 
	
	$H_{\mathrm{de,sc}}^{m,\mathsf{s};\kappa,k}$ denotes the Sobolev space of distributions with $\kappa \in \bbN$ units of $\frakM$-regularity and and $k\in \bbN$ units of $\frakN$-regularity relative to $H_{\mathrm{de,sc}}^{m,\mathsf{s}}$, which means 
	\begin{equation}
		u\in H_{\mathrm{de,sc}}^{m,\mathsf{s};\kappa,k} \iff \frakM^{\kappa,k} u \in H_{\mathrm{de,sc}}^{m,\mathsf{s}}.
		\label{eq:final_sob_def}
	\end{equation} 
	Since $1\in  \frakM^{\kappa,k}$, $H_{\mathrm{de,sc}}^{m,\mathsf{s};\kappa,k}\subseteq H_{\mathrm{de,sc}}^{m,\mathsf{s}}$.
	\item $\operatorname{cl}_X\{\bullet\}$, used throughout to denote the closure of $\{\bullet\}$ in the space $X$. 
\end{itemize}

In a few places in this paper, we will have occasion to refer to boundary fibration structures besides the b-,  sc-, and de,sc- structures. For each, we will use the standard Melrosian notation \cite{Melrose}, always analogous to the conventions explained here for the de,sc- case. None of these other structures are essential for understanding the thrust of this paper. Moreover, no b-$\Psi$DOs are required. 

\section{The octagonal compactification $\bbO$}
\label{sec:geometry}

We now discuss the octagonal compactification of $\bbR^{1,d}$. In \S\ref{subsec:basespace}, we describe the compactification itself. In \S\ref{subsec:phasespace}, we briefly discuss the de,sc-phase space --- here, we will describe some coordinate systems that will be used in \S\ref{sec:dynamics}. In \S\ref{subsec:Psi}, we outline the construction and features of the symbolic $\Psi$DO calculus whose symbols live on that phase space. This includes a discussion of de,sc-based Sobolev spaces and associated wavefront sets, with respect to which the analysis in \S\ref{sec:propagation} will be phrased.

\subsection{The Base Space}
\label{subsec:basespace}

We repeat the definitions of some of the important subsets of $\partial \bbM$. 
Let 
\begin{itemize}
	\item $\scrI^\pm = \partial \bbM\cap \mathrm{cl}_\bbM\{(t,\bfx)\in \bbR^{1,d}:\pm t=r\}$,
	\item $C_\pm = (\partial \bbM \cap \mathrm{cl}_\bbM \{(t,\bfx)\in \bbR^{1,d}:\pm t>r\})\backslash \scrI_\pm$, 
	\item and $i^0 = (\partial \bbM \cap \mathrm{cl}_\bbM \{(t,\bfx)\in \bbR^{1,d}:t^2<r^2\}) \backslash (\scrI^- \cup \scrI^+)$.
\end{itemize}
(Note that $i^0,C_\pm$ are relatively open subsets of $\partial \bbM$.) 
``Null infinity,'' $\scrI$, when referring to a subset of $\bbM$, then refers to $\scrI^-\cup \scrI^+$, and timelike infinity refers to $C_-\cup C_+$. Spacelike infinity is $i^0$.  
The sets $\scrI_-,\scrI_+$ are Poincar\'e invariant in the sense that, if $\Lambda:\bbR^{1,d}\to \bbR^{1,d}$ is an element of the Poincar\'e group (i.e.\ the group of affine maps preserving the metric), then $\Lambda$ extends to a diffeomorphism of $\bbM$, under which $\scrI_\pm$ are closed.

In the introduction, we defined the octagonal compactification $\bbR^{1+d}\hookrightarrow \bbO = \bbO^{1,d}$ by
\begin{equation}
\bbO = [\bbM ; \{\scrI_-,\scrI_+\}; 1/2] = [\bbM ; \scrI; 1/2],
\label{eq:Odef}
\end{equation}
i.e.\ we first perform a polar blowup of the boundary submanifolds $\scrI_-, \scrI_+$ (in whichever order -- the two possible orders give rise to equivalent compactifications) and then modify the smooth structure at the front face(s) of the blowups using the coordinate change $\varrho \mapsto \smash{\varrho^{1/2}}$. 
In other words, if we set
\begin{equation} 
	\bbO_0=[\bbM;\{\scrI_-, \scrI_+\}],
\end{equation} 
then $\bbO=\bbO_0$ at the level of sets, and if $\varrho$ denotes a bdf of the front face (or a front face) of this blowup, then $\smash{\varrho^{1/2}}$ denotes a bdf of the corresponding face of $\bbO$.

\begin{figure}[h]
	\begin{tikzpicture}
		\draw[dashed] (-2,-2.2) rectangle (2,2.2); 
		\begin{scope}[xshift=-8pt]
			\fill[lightgray!20] (-1.5,-1.9) rectangle (1.5,1.9);
			\draw[dashed] (-1.5,0) -- (1.5,0);
			\draw (1.5,-1.9) -- (1.5,1.9);
			\node () at (-1,-1) {$\bbM$};
			\fill[black] (1.5,0) circle (.07) node[right] {$\scrI^+$};
			\node[right] () at (1.5,-1) {$i^0$};
			\node[right] () at (1.5,1) {$C_+$};
			\draw[darkcandyapp, ->] (1.4,.1) -- (1.4,1) node[left] {$\frac{t-r}{t+r}$};
			\draw[darkcandyapp, ->] (1.4,.1) -- (.5,.1) node[above] {$\frac{1}{t+r}$};
		\end{scope}
	\end{tikzpicture}
	\begin{tikzpicture}
		\draw[dashed] (-2,-2.2) rectangle (2,2.2); 
		\begin{scope}[xshift=-3pt]
			\fill[lightgray!20] (-1.5,-1.9) rectangle (1.5,1.9);
			\draw[dashed] (-1.5,0) -- (1.5,0);
			\fill[white] (1.5,.75) arc(90:270:.75) -- cycle;
			\draw (1.5,1.9) -- (1.5,.75) arc(90:270:.75) -- (1.5,-1.9);
			\node () at (-1,-1) {$\bbO_0$};
			\draw[darkcandyapp, ->] (1.4,.85) -- (1.4,1.6) node[left] {$\frac{t-r}{t+r} $};
			\draw[darkcandyapp, ->] (1.4,.85) to[out=180, in=75] (.7,.3) node[above left] {$\frac{1}{t-r} $};
			\draw[darkcandyapp, ->] (1.4,-.85) -- (1.4,-1.6) node[left] {$\frac{r-t}{t+r} $};
			\draw[darkcandyapp, ->] (1.4,-.85) to[out=180, in=-75] (.7,-.3) node[below left] {$\frac{1}{r-t} $};
		\end{scope}
	\end{tikzpicture}
	\begin{tikzpicture}
	\draw[dashed] (-2,-2.2) rectangle (2,2.2); 
	\begin{scope}[xshift=-6pt]
		\fill[lightgray!20] (-1.5,-1.9) rectangle (1.5,1.9);
		\draw[dashed] (-1.5,0) -- (1.5,0);
		\fill[white] (1.5,.75) arc(90:270:.75) -- cycle;
		\draw (1.5,1.9) -- (1.5,.75) arc(90:270:.75) -- (1.5,-1.9);
		\node () at (-1,-1) {$\bbO$};
		\node () at (1.2,0) {nFf};
		\node () at (1.8,1.2) {Ff};
		\node () at (1.8,-1.2) {Sf};
		\draw[darkcandyapp, ->] (1.4,.85) -- (1.4,1.5) node[left] {$\sqrt{\frac{t-r}{t+r}} $};
		\draw[darkcandyapp, ->] (1.4,.85) to[out=180, in=75] (.7,.3) node[above left] {$\frac{1}{t-r} $};
		\draw[darkcandyapp, ->] (1.4,-.85) -- (1.4,-1.5) node[left] {$\sqrt{\frac{r-t}{t+r}} $};
		\draw[darkcandyapp, ->] (1.4,-.85) to[out=180, in=-75] (.7,-.3) node[below left] {$\frac{1}{r-t} $};
	\end{scope}
	\end{tikzpicture}
	\caption{A closeup, near $\scrI^+$, of the polar blowup used to create $\bbO$.}
	\label{fig:blowup_extended}
\end{figure}
 
We will only write the ``$1,d$'' label on $\bbO^{1,d}$ when necessary.
Otherwise, $d \geq 1$ should be assumed to be arbitrary. 

We use $\mathrm{bd} : \bbO \to \bbM$ to denote the blowdown map. 

For convenience, we can take $\bbO^\circ=\bbM^\circ = \bbR^{1+d}$, with $\mathrm{bd}|_{\bbO^\circ} = \mathrm{id}_{\bbR^{1+d}}$, along with 
\begin{equation} 
	\bbO\backslash (\mathrm{nPf}\cup \mathrm{nFf}) = \bbM\backslash (\scrI_-\cup \scrI_+).
\end{equation} 
Before, these equalities were instead canonical diffeomorphisms. It is a matter of convenience to promote them to equalities at the level of sets, 
this just making literal some conventional abuses of notation. 

The octagonal compactification is Poincar\'e invariant:
\begin{proposition}
	If $\Lambda$ is any element of the Poincar\'e group, then $\Lambda$ extends to an automorphism of $\bbO$ under which each boundary hypersurface is a closed set. 
	\label{prop:invariance}
\end{proposition}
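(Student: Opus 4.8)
The plan is to follow the two‑stage construction of $\bbO$ and verify at each stage that the relevant structure is preserved: first extend $\Lambda$ to $\bbM=\overline{\bbR^{1+d}}$, then lift to $\bbO_0=[\bbM;\{S_-,S_+\}]$, then check compatibility with the $1/2$‑modification that turns $\bbO_0$ into $\bbO$. It is enough to treat an arbitrary element $\Lambda(z)=\mathsf{L}z+v$ of the Poincar\'e group ($\mathsf{L}\in O(1,d)$, $v\in\bbR^{1,d}$); the map $\Lambda\mapsto\tilde\Lambda$ will then automatically be a homomorphism, since a continuous extension of a map from the dense subset $\bbR^{1,d}\subseteq\bbO$ is unique (so $\widetilde{\Lambda_1}\circ\widetilde{\Lambda_2}$ and $\widetilde{\Lambda_1\Lambda_2}$ agree).

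\textit{Stage 1.} I would invoke the standard fact that every invertible affine transformation of $\bbR^{1+d}$ extends to a diffeomorphism of the radial compactification: a linear map $\mathsf{L}$ acts on $\partial\bbM=\infty\bbS^{d}$ by $\omega\mapsto \mathsf{L}\omega/|\mathsf{L}\omega|$, which is smooth because $|\mathsf{L}\omega|$ is bounded below on the sphere, and it scales a bdf of $\partial\bbM$ by the positive smooth factor $|\mathsf{L}\omega|^{-1}$; a translation $z\mapsto z+v$ extends as the identity on $\partial\bbM$ (in the inverted coordinate $w=z/|z|^2$ it becomes $w\mapsto (w+|w|^2v)/(1+2\langle w,v\rangle+|w|^2|v|^2)=w+O(|w|^2)$). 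So $\Lambda$ extends to a diffeomorphism of $\bbM$; only Stage 2 will use that $\mathsf{L}\in O(1,d)$.

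\textit{Stage 2.} Because $\mathsf{L}$ preserves the Minkowski form $-t^2+r^2$, its action on $\partial\bbM=\bbS^{d}$ preserves the null quadric $\{-\omega_0^2+|\omega'|^2=0\}$ and the signs of $-\omega_0^2+|\omega'|^2$; hence it preserves $\scrI=S_-\cup S_+$, maps $C_0$ (where $-\omega_0^2+|\omega'|^2>0$) to itself, and maps $C_\pm$ (where $-\omega_0^2+|\omega'|^2<0$, $\pm\omega_0>0$) among themselves, fixing each $S_\pm$ and each $C_\pm$ when $\mathsf{L}$ is orthochronous and interchanging them otherwise; translations act trivially on $\partial\bbM$, so the same holds for $\Lambda$. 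Since $S_-,S_+$ are closed boundary p-submanifolds of $\bbM$ and $\Lambda$ maps the unordered pair $\{S_-,S_+\}$ to itself, naturality of the polar blowup provides a unique diffeomorphism $\hat\Lambda$ of $\bbO_0$ lifting $\Lambda$; it carries the two front faces to the two front faces and the lifts of $C_0,C_-,C_+$ among themselves, so it already permutes the five faces of $\bbO_0$.

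\textit{Stage 3 — the main obstacle.} One must check that $\hat\Lambda$ remains smooth after the bdfs $\varrho$ of the front faces $\mathrm{nPf},\mathrm{nFf}$ are replaced by $\varrho^{1/2}$. The key structural point is that, by naturality of the blowup, $\hat\Lambda$ rescales a front-face bdf only by a positive smooth factor: if $\varrho'$ is a bdf of a front face $F'$ and $F=\hat\Lambda^{-1}(F')$ has bdf $\varrho$, then $\hat\Lambda^{*}\varrho'=a\varrho$ with $a\in C^\infty(\bbO_0)$, $a>0$. Given $g\in C^\infty(\bbO)$ supported near $F'$, write $g=G(\varrho'^{1/2},y')$ in local $\bbO_0$-coordinates $(\varrho',y')$ adapted to $F'$; then $\hat\Lambda^{*}g=G(a^{1/2}\varrho^{1/2},\hat\Lambda^{*}y')$, which lies in $C^\infty(\bbO)$ because positive smooth functions on a manifold with corners have smooth square roots (so $a^{1/2}\in C^\infty(\bbO_0)\subseteq C^\infty(\bbO)$), $\varrho^{1/2}\in C^\infty(\bbO)$ by construction, and $\hat\Lambda^{*}y'\in C^\infty(\bbO_0)\subseteq C^\infty(\bbO)$; the same computation handles the corners of $\bbO$, carrying along the unmodified sc-type bdf, and away from $\mathrm{nPf}\cup\mathrm{nFf}$ there is nothing to check since $\bbO$ and $\bbO_0$ agree there and $\hat\Lambda$ preserves that region. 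Running the argument for $\hat\Lambda^{-1}$ as well shows $\hat\Lambda^{*}$ is a bijection of $C^\infty(\bbO)$, so $\hat\Lambda$ descends to a diffeomorphism $\tilde\Lambda$ of $\bbO$. Being a diffeomorphism of the manifold with corners $\bbO$, $\tilde\Lambda$ permutes the five closed faces $\mathrm{Pf},\mathrm{nPf},\mathrm{Sf},\mathrm{nFf},\mathrm{Ff}$ (fixing each when $\mathsf{L}$ is orthochronous, interchanging $\mathrm{Pf}\leftrightarrow\mathrm{Ff}$ and $\mathrm{nPf}\leftrightarrow\mathrm{nFf}$ otherwise, and possibly permuting the two components of a disconnected face when $d=1$), which is the assertion. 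All the genuine content is the smooth-square-root fact used in Stage 3; Stages 1 and 2 are the standard functoriality of radial compactification and of blowup.
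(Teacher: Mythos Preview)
Your proof is correct and follows essentially the same three-stage strategy as the paper: extend $\Lambda$ to a diffeomorphism of $\bbM$, lift to $\bbO_0=[\bbM;\scrI]$ via functoriality of blowup, and then verify compatibility with the $1/2$-modification using that a front-face bdf pulls back to a positive smooth multiple of a front-face bdf, so its square root pulls back to a smooth multiple of the square root. The paper organizes Stage~3 slightly differently---it first reduces the whole problem to $\bbO_0$ by the square-root argument and only then invokes the blowup lemma---but the content is the same; your treatment is in fact more explicit about the orthochronous versus non-orthochronous distinction (the paper's phrasing ``each of $C_0,C_\pm,S_\pm$ is closed under the action'' tacitly assumes orthochronous elements, whereas you correctly allow the faces to be permuted).
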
 
\begin{proof}
	We observe, first of all, that it suffices to prove the proposition for $\bbO_0$ in place of $\bbO$. Indeed, suppose that $\Lambda$ extends to an automorphism $\Lambda_{\mathrm{ext}}:\bbO_0\to \bbO_0$ fixing each boundary hypersurface. We only need to check that this map is actually smooth with respect to the smooth structure of $\bbO$. (Then, after applying the same reasoning to the inverse, we can conclude that $\Lambda_{\mathrm{ext}}\in \operatorname{Aut}(\bbO)$.) 
	To see this, note that, letting $\varrho_{\mathrm{f},0}$ denote any bdf for $\mathrm{f} \in \{\mathrm{Pf},\mathrm{nPf},\mathrm{Sf},\mathrm{nFf},\mathrm{Ff}\}$ \textit{in $\bbO_0$},
	\begin{equation}
		\varrho_{\mathrm{f},0} \circ \Lambda_{\mathrm{ext}} \in \varrho_{\mathrm{f},0} C^\infty(\bbO_0;\bbR^+).
		\label{eq:misc_vf0}
	\end{equation} 
	Since we can take $\varrho_{\mathrm{f},0} = \varrho_{\mathrm{f}}$  unless $\mathrm{f} \in \{\mathrm{nPf},\mathrm{nFf}\}$, $\varrho_{\mathrm{f}} \circ \Lambda_{\mathrm{ext}} \in \varrho_{\mathrm{f}} C^\infty(\bbO_0;\bbR^+) \subseteq C^\infty(\bbO)$ for each $\mathrm{f} \in \{\mathrm{Pf},\mathrm{Sf},\mathrm{Ff}\}$. Taking square roots of \cref{eq:misc_vf0}, 
	\begin{equation}
		\varrho_{\mathrm{f}}\circ \Lambda_{\mathrm{ext}} \in \varrho_{\mathrm{f}} C^\infty(\bbO_0;\bbR^+) \subseteq \varrho_{\mathrm{f}} C^\infty(\bbO) 
	\end{equation}
	for $\mathrm{f} \in \{\mathrm{nPf},\mathrm{nFf}\}$. So, indeed, it suffices to prove the claim made in the proposition for $\bbO_0$ in place of $\bbO$.

	We recall \cite[Lemma 1]{MelroseSC} that any invertible affine transformation $\bbR^{1+d}\to \bbR^{1+d}$ extends to a diffeomorphism of $\bbM$ and that any translation extends to a diffeomorphism under which every point of $\partial \bbM$ is fixed.
	Of course, each of $i^0,C_\pm,\scrI^\pm$ is also closed under the action of any element of the Lorentz group. 
	The claim of the proposition (for $\bbO_0$) then follows from the lemma that any diffeomorphism of any manifold-with-boundary $X$ fixing (but not necessarily acting as the identity on) a submanifold $S\subseteq \partial X$ and each of the components of $\partial X \backslash S$ lifts to a diffeomorphism of the mwc $[X;S]$, with the lift fixing each boundary hypersurface. This is a reformulation of the coordinate invariance of polar blowups, which follows from the fact that $[X;S]$ is, in a neighborhood of the lift of $S$, diffeomorphic to the outward pointed normal bundle ${}^+ N^* S$ of $S$ \cite{MelroseCorners}.
\end{proof}

\begin{example}
	Consider the $d=1$ case, and the Lorentz boost 
	\begin{equation}
		\Lambda = \begin{pmatrix}
			\cosh(\beta) & \sinh(\beta) \\ 
			\sinh(\beta) & \cosh(\beta)
		\end{pmatrix}
	\end{equation}
	with rapidity $\beta\in \bbR $. Then, using the local bdfs for $\mathrm{nFf},\mathrm{Ff}$ given in \cref{eq:misc_019}, one calculates 
	$\varrho_{\mathrm{nFf}}\circ \Lambda = \exp(-\beta) \varrho_{\mathrm{nFf}}$ and $\varrho_{\mathrm{Ff}}\circ \Lambda = \exp(\beta) \varrho_{\mathrm{Ff}}$. So, $\Lambda$ extends to a diffeomorphism of some neighborhood of $\mathrm{nFf}\cap\mathrm{Ff}$, where, in the coordinates $(\varrho_{\mathrm{nFf}},\varrho_{\mathrm{Ff}})$, it is linear. 
\end{example}

In the previous proposition, it is key that $\scrI$ was constructed using a \emph{polar} blowup and not some other quasihomogeneous blowup. Indeed, we will see in the next section (see \Cref{rem:parabolic_non-invariant}) that the conclusion of \Cref{prop:invariance} is false for the parabolic blowup $[\bbM;\scrI]_{\mathrm{par}}$, which is related to hyperbolic coordinates.

Next, we define the algebra $\operatorname{Diff}_{\mathrm{de,sc}}(\bbO)$ of de,sc-differential operators.
First, we define $\calV_{\mathrm{de,sc}}$ to be the $C^\infty(\bbO)$-module of smooth vector fields on $\smash{\bbR^{1,d}}$ which, away from null infinity, are just sc-vector fields (i.e.\ in the $C^\infty(\bbM)$-module generated by constant-coefficient vector fields), and which, near a corner $\mathrm{nf}\cap \mathrm{Of}$ of $\bbO$, are of the form 
\begin{equation}
	\psi \varrho_{\mathrm{nf}}^2 \varrho_{\mathrm{Of}} \partial_{\varrho_{\mathrm{nf}} } , \psi \varrho_{\mathrm{nf}} \varrho_{\mathrm{Of}}^2 \partial_{\varrho_{\mathrm{Of}} } ,\psi \varrho_{\mathrm{nf}}^2 \varrho_{\mathrm{Of}} V: V\in \calV(\bbS^{d-1}) ,
	\label{eq:Vdef}
\end{equation}
where $\psi\in C^\infty(\bbO)$ contains no other corners of $\bbO$ in its support. Here, $\mathrm{nf}\in \{\mathrm{nPf},\mathrm{nFf}\}$, $\mathrm{Of} \in \{\mathrm{Pf},\mathrm{Sf},\mathrm{Ff} \}$, depending on which corner is under examination.  

We will have to check that this definition of $\calV_{\mathrm{de,sc}}$ is consistent with \cref{eq:Vdef_global}.

Now define $\operatorname{Diff}_{\mathrm{de,sc}}^{m,\mathsf{0}}(\bbO)$ to be the $C^\infty(\bbO)$-submodule of $\operatorname{Diff}^m(\bbR^{1,d})$ spanned by products of de,sc-vector fields, and let
\begin{align}
\begin{split} 
\operatorname{Diff}_{\mathrm{de,sc}}^{m,\mathsf{s}} &= \rho^{-\mathsf{s}} \operatorname{Diff}_{\mathrm{de,sc}}^{m,\mathsf{0}}, \quad \mathsf{s} = (s_{\mathrm{Pf}},s_{\mathrm{nPf}},s_{\mathrm{Sf}},s_{\mathrm{nFf}},s_{\mathrm{Ff}} ) \in \bbR^5, \\ \rho^{-\mathsf{s}} &= \rho_{\mathrm{Pf}}^{-s_{\mathrm{Pf}} } \rho_{\mathrm{nPf}}^{-s_{\mathrm{nPf}} }\rho_{\mathrm{Sf}}^{-s_{\mathrm{Sf}} }\rho_{\mathrm{nFf}}^{-s_{\mathrm{nFf}} }\rho_{\mathrm{Ff}}^{-s_{\mathrm{Ff}} }.
\end{split} 
\end{align}

For the most part, we work away from $\mathrm{cl}_\bbM \{r=0\} = \mathrm{cl}_\bbO \{r=0\}$. This allows us to work with (spatial) polar coordinates. 
Let $\smash{\dot{\bbO}}=\bbO\backslash \mathrm{cl}_\bbO\{r=0\}$.
This is canonically diffeomorphic to $\hat{\bbO}\times \bbS^{d-1}_{\bfx/r}$, where 
\begin{equation}
	\hat{\bbO} = \bbO^{1,1} \backslash \mathrm{cl}_{\bbO^{1,1}}\{(t,r) \in \bbR^{1,1}_{t,r} : r\leq 0\}. 
	\label{eq:hatO}
\end{equation}
This mwc is noncompact (we do not add a boundary face corresponding to $r=0$). The interior is equal to $\{(t,r)\in \bbR^{1,1} : r> 0\}$. Then, we have a diffeomorphism 
\begin{equation}
	 \bbR\times \bbR^+ \times \bbS^{d-1} \to \dot{\bbO}^\circ = \bbR^{1,d}\backslash \{r=0\}, 
	\label{eq:plr}
\end{equation}
$(t,r,\theta)\mapsto (t,r\theta)$, which extends to a diffeomorphism $\hat{\bbO}\times \bbS^{d-1} \to \dot{\bbO}$. We will abuse notation below and conflate $\hat{\bbO}\times \bbS^{d-1}$ with $\dot{\bbO}$.

We will make use of the following coordinate charts for $\hat{\bbO}$. In the following, if $S\subset \hat{\bbO}$, then $S^\circ$ denotes the usual topological notion of interior, 
\begin{equation} 
	S^\circ=S\backslash \mathrm{cl}_{\hat{\bbO}} (\hat{\bbO} \backslash S).
\end{equation} 
In particular, $S^\circ$ is allowed to have points in $\partial \hat{\bbO}$.
\begin{itemize}
	\item (The timelike corner of null infinity.) For each $T>0$, let 
	\begin{equation} 
		\hat{\Omega}_{\mathrm{nfTf},\pm,T} = (\mathrm{cl}_{\hat{\bbO}} \{ |t|+T>r , \pm t>0\})^\circ,
	\end{equation} 
	and let $\varrho_{\mathrm{nf}} : \hat{\Omega}_{\mathrm{nfTf},\pm,T} \to [0,\infty)$ and $\varrho_{\mathrm{Tf}} : \hat{\Omega}_{\mathrm{nfTf},\pm,T} \to [0,\infty)$ be defined by $\varrho_{\mathrm{nf}} = (|t|-r+T)^{1/2} / (|t|+r+T)^{1/2}$ and $\varrho_{\mathrm{Tf}} = (|t|-r+T)^{-1}$. 
	Then, $(\varrho_{\mathrm{nf}},\varrho_{\mathrm{Tf}}) : \hat{\Omega}_{\mathrm{nfTf},\pm,T} \to [0,\infty)^2$ is a coordinate chart on $\hat{\bbO}$. Solving for $r,t$ in terms of $\varrho_{\mathrm{nf}},\varrho_{\mathrm{Tf}}$, 
	\begin{align}
		\begin{split} 
		t &= \pm ( (2\varrho_{\mathrm{nf}}^{2}\varrho_{\mathrm{Tf}})^{-1}(1+\varrho_{\mathrm{nf}}^{2})-T ), \\
		r &=  (2\varrho_{\mathrm{nf}}^{2}\varrho_{\mathrm{Tf}})^{-1} (1-\varrho_{\mathrm{nf}}^{2} ).
		\end{split}
		\label{eq:tr_inv_def}
	\end{align} 
	For later use, we record the partial derivatives 
	\begin{equation}
		\frac{\partial \varrho_{\mathrm{Tf}}}{\partial t} = \mp \varrho_{\mathrm{Tf}}^2, \quad 
		\frac{\partial \varrho_{\mathrm{nf}}}{\partial t} = \pm \frac{1}{2}  (1-\varrho_{\mathrm{nf}}^2)\varrho_{\mathrm{nf}}\varrho_{\mathrm{Tf}}, 
		\label{eq:misc_83a}
	\end{equation}
	\begin{equation}
		\frac{\partial \varrho_{\mathrm{Tf}}}{\partial r} = \varrho_{\mathrm{Tf}}^2,\quad \frac{\partial \varrho_{\mathrm{nf}}}{\partial r} = - \frac{1}{2} (1+\varrho_{\mathrm{nf}}^2)\varrho_{\mathrm{nf}} \varrho_{\mathrm{Tf}}.
		\label{eq:misc_83b}
	\end{equation}
	\item  (The spacelike corner of null infinity.) For each $R>0$, let 
	\begin{equation} 
		\hat{\Omega}_{\mathrm{nfSf},\pm,R} = (\mathrm{cl}_\bbO \{ |t|<r+R , \pm t>0\})^\circ
	\end{equation}	
	and let $\varrho_{\mathrm{nf}} : \hat{\Omega}_{\mathrm{nfSf},\pm,R} \to [0,\infty)$ and $\varrho_{\mathrm{Sf}} : \hat{\Omega}_{\mathrm{nfSf},\pm,R} \to [0,\infty)$ be defined by $\varrho_{\mathrm{nf}} = (r-|t|+R)^{1/2} / (r+|t|+R)^{1/2}$ and $\varrho_{\mathrm{Sf}} = (r-|t|+R)^{-1}$. 
	Then, $(\varrho_{\mathrm{nf}},\varrho_{\mathrm{Sf}}) : \hat{\Omega}_{\mathrm{nfSf},\pm,R} \to [0,\infty)^2$ is a coordinate chart on $\hat{\bbO}$. Solving for $r,t$ in terms of $\varrho_{\mathrm{nf}},\varrho_{\mathrm{Sf}}$, we have
	\begin{align}
		r&=(2\varrho_{\mathrm{nf}}^{2}\varrho_{\mathrm{Sf}})^{-1}(1+\varrho_{\mathrm{nf}}^{2})-R \\
		t&=\pm (2\varrho_{\mathrm{nf}}^{2}\varrho_{\mathrm{Sf}})^{-1}(1-\varrho_{\mathrm{nf}}^{2}).
	\end{align}
	The partial derivatives are 
	\begin{equation}
		\frac{\partial \varrho_{\mathrm{Sf}}}{\partial t} = \pm \varrho_{\mathrm{Sf}}^2, \quad \frac{\partial \varrho_{\mathrm{nf}}}{\partial t} = \mp \frac{1}{2} (1+\varrho_{\mathrm{nf}}^2)  \varrho_{\mathrm{nf}}\varrho_{\mathrm{Sf}} 
		\label{eq:misc_83c}
	\end{equation}
	\begin{equation}
		\frac{\partial \varrho_{\mathrm{Sf}}}{\partial r} = - \varrho_{\mathrm{Sf}}^2, \quad 
		\frac{\partial \varrho_{\mathrm{nf}}}{\partial r} = \frac{1}{2} (1-\varrho_{\mathrm{nf}}^2) \varrho_{\mathrm{nf}} \varrho_{\mathrm{Sf}}.
		\label{eq:misc_83d}
	\end{equation}
\end{itemize}

Let $\Omega_\bullet =\hat{\Omega}_\bullet\times \bbS^{d-1}$.  

\begin{proposition}
	On $\hat{\bbO}$, the d'Alembertian $\square$ is given by the following:
	\begin{itemize}
		\item in $\Omega_{\mathrm{nfTf},\pm,T}$, 
		\begin{equation}
			\square =- \varrho_{\mathrm{nf}}^4 \varrho_{\mathrm{Tf}}^2 \frac{\partial^2}{\partial \varrho_{\mathrm{nf}}^2} + 2 \varrho_{\mathrm{nf}}^3 \varrho_{\mathrm{Tf}}^3 \frac{\partial^2}{\partial \varrho_{\mathrm{nf}} \partial \varrho_{\mathrm{Tf}} } - \varrho_{\mathrm{nf}}^3 \varrho_{\mathrm{Tf}}^2 \frac{\partial}{\partial \varrho_{\mathrm{nf}}},
			\label{eq:misc_090}
		\end{equation}
		and 
		\item in $\Omega_{\mathrm{nfSf},\pm,R }$, 
		\begin{equation}
			\square =+ \varrho_{\mathrm{nf}}^4 \varrho_{\mathrm{Sf}}^2 \frac{\partial^2}{\partial \varrho_{\mathrm{nf}}^2} - 2 \varrho_{\mathrm{nf}}^3 \varrho_{\mathrm{Sf}}^3 \frac{\partial^2}{\partial \varrho_{\mathrm{nf}} \partial \varrho_{\mathrm{Sf}} } +  \varrho_{\mathrm{nf}}^3 \varrho_{\mathrm{Sf}}^2 \frac{\partial}{\partial \varrho_{\mathrm{nf}}},
			\label{eq:misc_091}
		\end{equation} 
	\end{itemize}
	where we are identifying the coordinate patches $\hat{\Omega}_{\mathrm{nfTf},\pm,T},\hat{\Omega}_{\mathrm{nfSf},\pm,R}$ and their images in $\bbR^2$ under the coordinate charts above. 
	\label{prop:dAlembertian_calculation}
\end{proposition}
\begin{proof}
	The first formula is the result of using \cref{eq:misc_83a}, \cref{eq:misc_83b} to replace
	\begin{align}
		\begin{split} 
		\frac{\partial}{\partial t} = \frac{\partial \varrho_{\mathrm{nf}}}{\partial t} \frac{\partial}{\partial \varrho_{\mathrm{nf}}} + \frac{\partial \varrho_{\mathrm{Tf}}}{\partial t} \frac{\partial}{\partial \varrho_{\mathrm{Tf}}} 
		&= \pm \frac{1}{2} (1-\varrho_{\mathrm{nf}}^2)\varrho_{\mathrm{nf}}\varrho_{\mathrm{Tf}} \frac{\partial}{\partial \varrho_{\mathrm{nf}}} \mp \varrho_{\mathrm{Tf}}^2 \frac{\partial}{\partial \varrho_{\mathrm{Tf}}} \\ 
		\frac{\partial}{\partial r} = \frac{\partial \varrho_{\mathrm{nf}}}{\partial r} \frac{\partial}{\partial \varrho_{\mathrm{nf}}} + \frac{\partial \varrho_{\mathrm{Tf}}}{\partial r} \frac{\partial}{\partial \varrho_{\mathrm{Tf}}} 
		 &= -\frac{1}{2} (1+\varrho_{\mathrm{nf}}^2)\varrho_{\mathrm{nf}}\varrho_{\mathrm{Tf}} \frac{\partial}{\partial \varrho_{\mathrm{nf}}} + \varrho_{\mathrm{Tf}}^2 \frac{\partial}{\partial \varrho_{\mathrm{Tf}}}
		\end{split} 
	\label{eq:misc_062}
	\end{align}
	in $\square = \partial_t^2 - \partial_r^2$. 
	The second formula is the result of using \cref{eq:misc_83c}, \cref{eq:misc_83d} to replace
	\begin{align}
		\begin{split}
			\frac{\partial}{\partial t} = \frac{\partial \varrho_{\mathrm{nf}}}{\partial t} \frac{\partial}{\partial \varrho_{\mathrm{nf}}} + \frac{\partial \varrho_{\mathrm{Sf}}}{\partial t} \frac{\partial}{\partial \varrho_{\mathrm{Sf}}} 
			&= \mp \frac{1}{2} (1+\varrho_{\mathrm{nf}}^2)\varrho_{\mathrm{nf}}\varrho_{\mathrm{Sf}} \frac{\partial}{\partial \varrho_{\mathrm{nf}}} \pm \varrho_{\mathrm{Sf}}^2 \frac{\partial}{\partial \varrho_{\mathrm{Sf}}} \\ 
			\frac{\partial}{\partial r} = \frac{\partial \varrho_{\mathrm{nf}}}{\partial r} \frac{\partial}{\partial \varrho_{\mathrm{nf}}} + \frac{\partial \varrho_{\mathrm{Sf}}}{\partial r} \frac{\partial}{\partial \varrho_{\mathrm{Sf}}} 
			&= +\frac{1}{2} (1-\varrho_{\mathrm{nf}}^2)\varrho_{\mathrm{nf}}\varrho_{\mathrm{Sf}} \frac{\partial}{\partial \varrho_{\mathrm{nf}}} - \varrho_{\mathrm{Sf}}^2 \frac{\partial}{\partial \varrho_{\mathrm{Sf}}}.
		\end{split}
		\label{eq:misc_064}
	\end{align}
\end{proof}
So, in the $d=1$ case, $\square \in \operatorname{Diff}_{\mathrm{de,sc}}^{2,\mathsf{0}}(\bbO)$. 
The first derivative terms in \cref{eq:misc_090}, \cref{eq:misc_091} are subprincipal, being subleading by one order in every possible sense.

\begin{proposition}
	For any $m,s\in \bbR$, $\operatorname{Diff}^{m,s}_{\mathrm{sc}}(\bbM) \subseteq \operatorname{Diff}_{\mathrm{de,sc}}^{m,(s,2s+m,s,2s+m,s)}(\bbO)$.
	\label{prop:sc-desc_conv}
\end{proposition}
\begin{proof}
	From \cref{eq:misc_062}, \cref{eq:misc_064}.
\end{proof}

\begin{proposition}
	$\square \in \operatorname{Diff}^{2,\mathsf{0}}_{\mathrm{de,sc}}(\bbO)$.
\end{proposition}
\begin{proof}
	Since $\square \in \operatorname{Diff}^{2,0}_{\mathrm{sc}}(\bbM)$, it suffices to restrict attention to a neighborhood of null infinity, where we can use spherical coordinates in the spatial variables. Then, the computation above shows that the $\partial_t^2-\partial_r^2$ terms are in $ \operatorname{Diff}^{2,\mathsf{0}}_{\mathrm{de,sc}}(\bbO)$. The angular derivatives have the form $r^{-1}\partial_{\theta_j}$, where $\theta_1,\dots,\theta_{d-1}$ is a coordinate chart on $\bbS^{d-1}$. Since \begin{equation} 
		r^{-1} \in \varrho_{\mathrm{Pf}}\varrho_{\mathrm{nPf}}^2\varrho_{\mathrm{Sf}}\varrho_{\mathrm{nFf}}^2\varrho_{\mathrm{Ff}}C^\infty(\bbO) 
		\label{eq:misc_094}
	\end{equation} 
	near null infinity, this yields $r^{-1}\partial_{\theta_j} \in \calV_{\mathrm{de,sc}}$. Their contribution to $\square$ is therefore in $\operatorname{Diff}^{2,\mathsf{0}}_{\mathrm{de,sc}}(\bbO)$.
	Finally, the spatial Laplacian $\triangle$ in $\square$ has a term $(d-1)r^{-1} \partial_r \in \operatorname{Diff}_{\mathrm{sc}}^{1,-1}$. By \Cref{prop:sc-desc_conv}, this lies in $\operatorname{Diff}_{\mathrm{de,sc}}^{1,-\mathsf{1}}$.
\end{proof}

Recall that if $M$ is a compact mwc, $\calF(M)$ is the set of its faces, and $\varrho_{\mathrm{f}}$ denotes a bdf of $\mathrm{f}\in \calF(M)$, then we have an LCTVS (Hausdorff locally convex topological vector space)
\begin{equation}
	\calS(M) = \bigcap_{s\in \bbR} \Big[ \prod_{\mathrm{f}\in \calF(M)} \varrho_{\mathrm{f}}^s \Big] C^\infty(M) =  \bigcap_{s\in \bbR} \bigcap_{k\in \bbN}\Big[ \prod_{\mathrm{f}\in \calF(M)} \varrho_{\mathrm{f}}^s \Big] C^k(M)
\end{equation}
of ``Schwartz'' functions. When $M= \bbM$, then this is just the usual set of Schwartz functions on $\bbR^{1,d}=\bbR^{1+d}$. 
Identifying smooth functions on mwcs with their restrictions to interiors, $\calS(\bbR^{1,d})=\calS(\bbO)$. Indeed:
\begin{proof}
\begin{itemize}
	\item $\calS(\bbR^{1,d})\subseteq \calS(\bbO)$: The blowdown map $\bbO\to \bbM$ is smooth, so any Schwartz function on $\bbR^{1,d}$ extends to a smooth function on $\bbO$. If $\varrho \in C^\infty(\bbM)$ is a bdf of $\partial \bbM$ in $\bbM$, then $\varrho \in \varrho_{\mathrm{Pf}}\varrho_{\mathrm{nPf}}^2\varrho_{\mathrm{Sf}}\varrho_{\mathrm{nFf}}^2 \varrho_{\mathrm{Ff}} C^\infty(\bbO)$. Consequently, 
	\begin{equation}
		\bigcap_{s\in \bbR} \varrho^s C^\infty(\bbM) \subseteq \calS(\bbO). 
	\end{equation}
	\item $\calS(\bbO)\subseteq \calS(\bbR^{1,d})$:
	Conversely, if $u\in \calS(\bbO)$, then $\operatorname{Diff}_{\mathrm{de,sc}}^{m,\mathsf{s}} u \in L^2(\bbR^{1,d})$ for all $m\in \bbR,\mathsf{s}\in \bbR^5$. By \Cref{prop:sc-desc_conv}, this implies that $\operatorname{Diff}_{\mathrm{sc}}^{m,s}(\bbM)u\in L^2$ for all $m,s\in \bbR$. By the Schwartz representation theorem, this implies that $u\in \calS(\bbR^{1,d})$.  
\end{itemize}
\end{proof}
Consequently, going forwards, we will simply write $\calS$ to refer to the space of Schwartz functions, and we do not need to specify whether we mean on $\bbO$ or $\bbM$. 
Moreover, this holds at the level of TVSs, as the same argument shows. 
Consequently, a tempered distribution on $\bbO$, meaning an element of $\calS'(\bbO)= \calS(\bbO)^*$, is just a tempered distribution on $\bbR^{1,d}$, and vice versa, and so we can unambiguously write $\calS'$ to refer to the space of tempered distributions.

The fact that the two definitions $\calV_{\mathrm{de,sc}}$ given so far, \cref{eq:Vdef_global} and \cref{eq:Vdef}, are equivalent follows from combining the formulas in \cref{eq:misc_062}, \cref{eq:misc_064}:
\begin{proof}
	\begin{itemize}
		\item First, we check that the vector fields in \cref{eq:Vdef_global} are de,sc-vector fields according to \cref{eq:Vdef}. Indeed, \Cref{prop:sc-desc_conv} gives
		\begin{equation}
			\varrho_{\mathrm{nPf}}\varrho_{\mathrm{nFf}}\partial_t, \varrho_{\mathrm{nPf}}\varrho_{\mathrm{nFf}}\partial_{x_j}\in \operatorname{Diff}_{\mathrm{de,sc}}^{1,\mathsf{0}} = \calV_{\mathrm{de,sc}}.
		\end{equation}
		 On the other hand, \cref{eq:misc_062} yields
		 \begin{equation}
		 	\partial_{|t|} + \partial_r = - \varrho_{\mathrm{nf}}^3 \varrho_{\mathrm{Tf}} \frac{\partial}{\partial \varrho_{\mathrm{nf}}} \in \operatorname{Diff}_{\mathrm{de,sc}}^{1,(0,-1,0,-1,0)} = \varrho_{\mathrm{nPf}}\varrho_{\mathrm{nFf}} \calV_{\mathrm{de,sc}}
		 \end{equation}
		 near $\mathrm{nFf}\cap\mathrm{Ff}$, and likewise over the other corners of $\bbO$. Finally, it follows from \cref{eq:misc_094} that $r^{-1}\partial_{\theta_k} \in \calV_{\mathrm{de,sc}}$ where this vector field is defined.
		 \item Because we have included in \cref{eq:Vdef_global} the angular derivatives by hand, it suffices to check the $d=1$ case. Using the formulas above, near $\mathrm{nFf}\cap\mathrm{Ff}$ we can write
		 $\partial_t = \varrho_{\mathrm{nf}}^{-1} V+\varrho_{\mathrm{nf}} W$ and $\partial_r = -\varrho_{\mathrm{nf}}^{-1} V+\varrho_{\mathrm{nf}} W$ for $V,W \in \calV_{\mathrm{de,sc}}$ given by 
		 \begin{align}
		 	\begin{split} 
		 		V &= 2^{-1} \varrho_{\mathrm{nf}}^2\varrho_{\mathrm{Tf}} \partial_{\varrho_{\mathrm{nf}}} - \varrho_{\mathrm{nf}}\varrho_{\mathrm{Tf}}^2 \partial_{\varrho_{\mathrm{Tf}}} \\ 
		 		W &= - \varrho_{\mathrm{nf}}^2 \varrho_{\mathrm{Tf}}\partial_{\varrho_{\mathrm{nf}}}.
		 	\end{split} 
		 	\label{eq:misc_99}
		 \end{align}
		 Notice that $V,W$ are in the right-hand side of \cref{eq:Vdef_global}. Moreover, it follows from \cref{eq:misc_99} that $V,W$ span $\calV_{\mathrm{de,sc}}$ over $C^\infty(\bbO)$, locally. 
	\end{itemize}
\end{proof}
\subsection{The de,sc-Cotangent Bundle}
\label{subsec:phasespace}

We now define the de,sc-tangent bundle $\pi_{\mathrm{de,sc}}:{}^{\mathrm{de,sc}} T \bbO \to \bbO$. As an indexed set, this is ${}^{\mathrm{de,sc}} T \bbO=\{{}^{\mathrm{de,sc}} T_p \bbO \}_{p\in \bbO}$, whose elements are the vector spaces 
\begin{equation}
	{}^{\mathrm{de,sc}} T_p \bbO = \calV_{\mathrm{de,sc}}(\bbO;\bbR) / \calI_p \calV_{\mathrm{de,sc}}(\bbO;\bbR), 
\end{equation}
where $\smash{\calI_p} \subset C^\infty(\bbO;\bbR)$ is the ideal of smooth real-valued functions on $\bbO$ vanishing at $p$. 
Naturally, we can regard $\pi_{\mathrm{de,sc}}:{}^{\mathrm{de,sc}} T \bbO \to \bbO$
as a real vector bundle over $\bbO$. The entire space ${}^{\mathrm{de,sc}} T \bbO$ is a mwc diffeomorphic to $\bbO\times \bbR^{1+d}$.

Then, the de,sc-cotangent bundle 
\begin{equation} 
	{}^{\mathrm{de,sc}}\pi:{}^{\mathrm{de,sc}}T^* \bbO\to \bbO 
\end{equation}
is just defined to be the dual vector bundle to $\pi_{\mathrm{de,sc}}:{}^{\mathrm{de,sc}}T \bbO \to \bbO$. 
For convenience, we can arrange that ${}^{\mathrm{de,sc}}T^*_{\bbR^{1,d}}\bbO = T^* \bbR^{1,d}$ at the level of indexed sets (in which case this is identification is a bundle isomorphism). Typical smooth sections of ${}^{\mathrm{de,sc}}T^* \bbO$ near $\mathrm{nFf}\cap \mathrm{Ff}$ are
\begin{equation}
	\frac{\dd \varrho_{\mathrm{nf}}}{\varrho_{\mathrm{nf}}^2\varrho_{\mathrm{Tf}} }, \quad 
	\frac{\dd \varrho_{\mathrm{Tf}}}{\varrho_{\mathrm{nf}}\varrho_{\mathrm{Tf}}^2 },\quad 
	\frac{\dd \theta_k }{\varrho_{\mathrm{nf}}^2\varrho_{\mathrm{Tf}} }.
\end{equation}
In fact, near $\mathrm{nFf}\cap \mathrm{Ff}$, every smooth section of ${}^{\mathrm{de,sc}}T^* \bbO$ is a linear combination of these de,sc- 1-forms over $C^\infty(\bbO)$. 
The other corners are similar.

Away from null infinity, ${}^{\mathrm{de,sc}}T^* \bbO$ is canonically diffeomorphic to ${}^{\mathrm{sc}}T^* \bbM$.

\begin{remark}[Precise definition of angular frequency]
	Let ${}^{\mathrm{sc}}\pi:{}^{\mathrm{sc}}T^* \bbM\to \bbM$ denote the sc-cotangent bundle --- see \cite{MelroseSC,Melrose, VasyGrenoble}. It can be shown that there exists a diffeomorphism 
	\begin{equation} 
		{}^{\mathrm{sc}}\mathrm{plr} : \hat{\bbM}=\bbM^{1,1}\backslash \mathrm{cl}_{\bbM^{1,1}}\{(t,x)\in \bbR^{1,1}:x\leq 0\} \times \bbR\times \bbR  \times T^* \bbS^{d-1} \to {}^{\mathrm{sc}}T^* \bbM \backslash {}^{\mathrm{sc}} \pi^{-1} \mathrm{cl}_\bbM \{r=0\}
		\label{eq:scplr}
	\end{equation} 
	such that, for all $t,\tau,\Xi \in \bbR$, $r\in \bbR^+$, and $\eta_{\mathrm{sc}} \in T^* \bbS^{d-1}$, 
	\begin{equation}
		{}^{\mathrm{sc}}\mathrm{plr}((t,r),\tau,\Xi,\eta_{\mathrm{sc}}) = \tau \dd t +  \Xi  \dd r + r\operatorname{eulr}^*(\eta_{\mathrm{sc}}), 
		\label{eq:scplrdef}
	\end{equation}
	where $\operatorname{eulr} : \bbR^{1,d}_{t,\bfx} \backslash \{r=0\} \to \bbS^{d-1}_\theta$ is the map $(t,\bfx)\mapsto \bfx/r$.  For the comparison with the de,sc-cotangent bundle, it is slightly better to work with $\mu = \tau+\Xi$ and $\nu = \tau - \Xi$, in terms of which $\tau \dd t + \Xi \dd r = \mu (\mathrm{d} t + \dd r)+\nu(\mathrm{d} t-\dd r)$. 
	
	Similarly, it can be shown that there exists a diffeomorphism 
	\begin{equation} 
		{}^{\mathrm{de,sc}}\mathrm{plr}:\hat{\bbO} \times \bbR\times \bbR \times T^* \bbS^{d-1} \to {}^{\mathrm{de,sc}}T^* \bbO \backslash {}^{\mathrm{de,sc}}\pi^{-1} \operatorname{cl}_{\bbO}\{r=0\}
	\end{equation} 
	such that, for all $t,\mu,\nu \in \bbR$, $r\in \bbR^+$, and $\eta_{\mathrm{sc}} \in T^* \bbS^{d-1}$, 
	\begin{equation}
		{}^{\mathrm{de,sc}}\mathrm{plr}((t,r),\mu,\nu,\eta_{\mathrm{sc}} ) = \frac{\varrho_{\mathrm{nFf}}}{\varrho_{\mathrm{nPf}}} \mu( \mathrm{d} t + \dd r)   + \frac{\varrho_{\mathrm{nPf}}}{\varrho_{\mathrm{nFf}}} \nu(\mathrm{d} t- \dd r)  + r  \operatorname{eulr}^*(\eta_{\mathrm{sc}}). 
	\end{equation}
	(As the subscript indicates, the coordinate $\eta_{\mathrm{sc}}$ should be thought of as keeping track of the spatial angle and of the angular component of sc-frequency. We will drop the subscript `sc' in later sections.) 
	With this diffeomorphism in mind, we set 
	\begin{equation}
		{}^{\mathrm{de,sc}} T^* \hat{\bbO} = \hat{\bbO} \times \bbR_\mu\times \bbR_\nu. 
	\end{equation}
	So, away from ${}^{\mathrm{de,sc}}\pi^{-1}(\mathrm{cl}_\bbO\{r=0\})$, ${}^{\mathrm{de,sc}} T^* \hat{\bbO}\times (T^* \bbS^{d-1})_{\eta_{\mathrm{sc}}} \cong {}^{\mathrm{de,sc}}T^* \bbO$ ``canonically.'' 
	Thus, $(t,r)\in \hat{\bbO}, \mu,\nu\in \bbR, \eta_{\mathrm{sc}} \in T^* \bbS^{d-1}$ serve as coordinates on ${}^{\mathrm{de,sc}}T^* \bbO$ away from ${}^{\mathrm{de,sc}}\pi^{-1}(\mathrm{cl}_\bbO\{r=0\})$, at which spherical coordinates break down.
\end{remark}

In \S\ref{sec:dynamics}, we will use coordinates $\xi,\zeta$, which, over $\hat{\Omega}_{\mathrm{nfTf},\pm,T}$, are associated to points in ${}^{\mathrm{de,sc}}T^* \hat{\bbO}$ via 
\begin{equation}
	(\varrho_{\mathrm{nf}},\varrho_{\mathrm{Tf}}, \xi,\zeta) \mapsto \frac{\xi \dd \varrho_{\mathrm{nf}}}{\varrho_{\mathrm{nf}}^2 \varrho_{\mathrm{Tf}} } + \frac{\zeta \dd \varrho_{\mathrm{Tf}}}{\varrho_{\mathrm{nf}}\varrho_{\mathrm{Tf}}^2 }.
	\label{eq:nfTf_coord_full}
\end{equation}
Over $\hat{\Omega}_{\mathrm{nfSf},\pm,R}$, we use $\xi,\zeta$ to denote the coordinates 
\begin{equation}
	(\varrho_{\mathrm{nf}},\varrho_{\mathrm{Sf}}, \xi,\zeta) \mapsto \frac{\xi \dd \varrho_{\mathrm{nf}}}{\varrho_{\mathrm{nf}}^2 \varrho_{\mathrm{Sf}} } + \frac{\zeta \dd \varrho_{\mathrm{Sf}}}{\varrho_{\mathrm{nf}}\varrho_{\mathrm{Sf}}^2 }
	\label{eq:nfSf_coord_full}
\end{equation} 
on ${}^{\mathrm{de,sc}}T^* \hat{\bbO}$. Because $\varrho_{\mathrm{nf}}$ means something different near the spacelike corner vs.\ the timelike corner of null infinity, the same applies to $\xi,\zeta$. 

Define $\smash{{}^{\mathrm{de,sc}}\overline{T}^*} \bbO$ to be the ball bundle that results from radially compactifying the fibers of ${}^{\mathrm{de,sc}}T^* \bbO$. Going forwards, let 
\begin{equation}
	{}^{\mathrm{de,sc}} \pi : {}^{\mathrm{de,sc}} \overline{T}^* \bbO \to \bbO 
\end{equation}
denote the extension of ${}^{\mathrm{de,sc}}\pi$ to the radial compactified bundle. So, e.g.\ ${}^{\mathrm{de,sc}} \pi^{-1}(\mathrm{nf})$ will denote the union of all compactified fibers over null infinity (or over one component of null infinity, depending on context).
Let $\varrho_{\mathrm{df}}\in \smash{C^\infty({}^{\mathrm{de,sc}}\overline{T}^* \bbO;\bbR^+)}$ denote a bdf for the new face at fiber infinity, which we label df. (We will also consider the bdfs $\varrho_{\mathrm{f}}$ of the faces of $\bbO$ as bdfs of their lifts to the de,sc-cotangent bundle and the radial compactification thereof. That is, we conflate $\varrho_{\mathrm{f}}$ and $\varrho_{\mathrm{f}} \circ \smash{{}^{\mathrm{de,sc}} \pi}$.)

The diffeomorphisms discussed above extend to radial compactifications. 
They (and their extensions) will be left implicit below.

Given $m\in \bbR$ and $\mathsf{s}=(s_{\mathrm{Pf}},s_{\mathrm{nPf}},s_{\mathrm{Sf}},s_{\mathrm{nFf}},s_{\mathrm{Ff}} )\in \bbR^5$, Let 
\begin{equation} 
	S_{\mathrm{de,sc}}^{m,\mathsf{s}}=S_{\mathrm{de,sc}}^{m,\mathsf{s}}({}^{\mathrm{de,sc}}\overline{T}^* \bbO) = \varrho_{\mathrm{df}}^{-m}\varrho^{-\mathsf{s}} S_{\mathrm{de,sc}}^{0,\mathsf{0}},
\end{equation} 
where $S_{\mathrm{de,sc}}^{0,\mathsf{0}}$ is the Fr\'echet space of conormal functions on ${}^{\mathrm{de,sc}}\overline{T}^*\bbO$. These are ``de,sc-symbols,'' and, as usual, the space 
\begin{equation}
	S_{\mathrm{de,sc}} = \bigcup_{m\in \bbR,\mathsf{s}\in \bbR^5}S_{\mathrm{de,sc}}^{m,\mathsf{s}}
\end{equation} 
has the structure of a multigraded Fr\'echet algebra.  

If $L\in \operatorname{Diff}_{\mathrm{de,sc}}^{m,\mathsf{0}}$, define $\sigma_{\mathrm{de,sc}}^{m,\mathsf{0}}(L)$ to be the equivalence class in $S_{\mathrm{de,sc}}^{m,\mathsf{0}} / S_{\mathrm{de,sc}}^{m-1,-\mathsf{1}}$ constructed as follows:
\begin{itemize}
	\item away from null infinity, $\sigma_{\mathrm{de,sc}}^{m,\mathsf{0}}(L)$ is the usual sc-principal symbol; 
	\item near the corners of $\bbO$, this is the equivalence class of functions that results from replacing $\partial_{\varrho_{\mathrm{nf}}}$ by $i\xi$, $\partial_{\varrho_{\mathrm{Of}}} \in \{\partial_{\varrho_{\mathrm{Tf}}},\partial_{\varrho_{\mathrm{Sf}}}\}$ by $i\zeta$, and $r^{-1} Q$ for $Q$ a vector field on $\bbS^{d-1}_\theta$ by its principal symbol.
\end{itemize}
This is a well-defined element of $S_{\mathrm{de,sc}}^{m,\mathsf{0}} / S_{\mathrm{de,sc}}^{m-1,-\mathsf{1}}$. The definition of $\sigma_{\mathrm{de,sc}}^{m,\mathsf{s}}$ for general $\mathsf{s}\in \bbR^5$ is similar.

\begin{proposition}
	The function 
	\begin{equation} 
		p_0: \tau \dd t + \sum_{j=1}^d \Xi_j \dd x_j \mapsto -\tau^2 + \sum_{j=1}^d \Xi_j^2 \in C^\infty(T^* \bbR^{1,d}) 
	\end{equation} 
	is a representative of $\sigma_{\mathrm{de,sc}}^{2,\mathsf{0}}(\square)$. 
	\label{prop:symbol}
\end{proposition}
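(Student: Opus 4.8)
The plan is to unwind what ``$p_0$ represents $\sigma^{2,\mathsf{0}}_{\mathrm{de,sc}}(\square)$'' means and verify it in two parts: first, that $p_0$ — a priori only a polynomial on the interior $T^*\bbR^{1,d} = {}^{\mathrm{de,sc}}T^*_{\bbR^{1,d}}\bbO$ — extends to a conormal function on ${}^{\mathrm{de,sc}}\overline{T}^*\bbO$ lying in the symbol class $S^{2,\mathsf{0}}$; and second, that some (hence every) de,sc-quantization of $p_0$ differs from $\square$ by an operator of one order lower, either in the differential order at fiber infinity or in the weight at one of the five faces of $\bbO$. Since $\square \in \operatorname{Diff}^{2,\mathsf{0}}_{\mathrm{de,sc}}(\bbO)$ is already known — this was extracted from \Cref{prop:dAlembertian_calculation} together with \cref{eq:misc_57} — the joint symbol $\sigma^{2,\mathsf{0}}_{\mathrm{de,sc}}(\square)$ certainly exists, and as $\square$ is differential it has a canonical quantization; so the whole task reduces to computing, chart by chart, the top-order part of $\square$ in the de,sc frame and matching it against the push-forward of $p_0$ under the phase-space coordinate change.

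Away from $\mathrm{nPf}\cup\mathrm{nFf}$ there is nothing new: the blowdown identifies $\bbO$ with $\bbM$ and ${}^{\mathrm{de,sc}}\overline{T}^*\bbO$ with ${}^{\mathrm{sc}}\overline{T}^*\bbM$, under which $\square$ is the scattering differential operator $\partial_t^2 - \sum_i\partial_{x_i}^2 \in \operatorname{Diff}^{2,0}_{\mathrm{sc}}(\bbM)$ (the $\partial_t,\partial_{x_i}$ being weight-$0$ sc vector fields), whose sc-principal symbol is classically (cf.\ \cite{MelroseSC}) represented by $p_0$ itself, which in the parametrization \cref{eq:scplr} reads $-\tau^2 + \Xi^2 + |\eta_{\mathrm{sc}}|^2$; here $|\eta_{\mathrm{sc}}|^2$ is the contribution of $-r^{-2}\triangle_{\bbS^{d-1}}$ and the $(d-1)r^{-1}\partial_r$ term is subprincipal. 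So the content of the proposition is near null infinity, and there, by rotational symmetry, it suffices to work over the charts $\hat{\Omega}_{\mathrm{nfTf},\pm,T}$ and $\hat{\Omega}_{\mathrm{nfSf},\pm,R}$ of \S\ref{subsec:basespace}, with the phase-space coordinates $\xi,\zeta$ of \cref{eq:nfTf_coord_full,eq:nfSf_coord_full}.

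I would carry out the model computation in $\hat{\Omega}_{\mathrm{nfTf},\pm,T}$. The vector fields $V_1 = \varrho_{\mathrm{nf}}^2\varrho_{\mathrm{Tf}}\partial_{\varrho_{\mathrm{nf}}}$ and $V_2 = \varrho_{\mathrm{nf}}\varrho_{\mathrm{Tf}}^2\partial_{\varrho_{\mathrm{Tf}}}$ form a weight-$\mathsf{0}$ de,sc frame dual to $\xi,\zeta$, and the Leibniz rule gives $\varrho_{\mathrm{nf}}^4\varrho_{\mathrm{Tf}}^2\partial_{\varrho_{\mathrm{nf}}}^2 = V_1^2 - 2\varrho_{\mathrm{nf}}\varrho_{\mathrm{Tf}}V_1$, $\varrho_{\mathrm{nf}}^3\varrho_{\mathrm{Tf}}^3\partial_{\varrho_{\mathrm{nf}}}\partial_{\varrho_{\mathrm{Tf}}} = V_1V_2 - \varrho_{\mathrm{nf}}\varrho_{\mathrm{Tf}}V_2$, and $\varrho_{\mathrm{nf}}^3\varrho_{\mathrm{Tf}}^3\partial_{\varrho_{\mathrm{nf}}} = \varrho_{\mathrm{nf}}\varrho_{\mathrm{Tf}}^2 V_1$, so by \Cref{prop:dAlembertian_calculation},
\begin{equation}
	\square = -V_1^2 + 2V_1V_2 + \varrho_{\mathrm{nf}}\varrho_{\mathrm{Tf}}\,W, \qquad W \in \operatorname{Diff}^{1,\mathsf{0}}_{\mathrm{de,sc}},
\end{equation}
and the remainder $\varrho_{\mathrm{nf}}\varrho_{\mathrm{Tf}}W$ is of strictly lower de,sc-order (first order, with positive extra weight at $\mathrm{nPf}$ and the adjacent timelike face). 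Hence in this chart $\sigma^{2,\mathsf{0}}_{\mathrm{de,sc}}(\square)$ is represented by the symbol of $-V_1^2 + 2V_1V_2$, i.e.\ (up to the usual factor of $i$) by $\xi^2 - 2\xi\zeta$. On the other side, substituting the partial derivatives of $\varrho_{\mathrm{nf}},\varrho_{\mathrm{Tf}}$ in $t,r$ recorded in \S\ref{subsec:basespace} into \cref{eq:nfTf_coord_full} and comparing with $\tau\dd t + \Xi\dd r$ expresses $\tau$ and $\Xi$ as explicit functions of $(\varrho_{\mathrm{nf}},\xi,\zeta)$ (independent of $\varrho_{\mathrm{Tf}}$), and a one-line computation gives $-\tau^2 + \Xi^2 = \xi^2 - 2\xi\zeta$ — that is, $p_0$ (in its $d=1$ reduction) pulls back to exactly the representative found from $\square$. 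The chart $\hat{\Omega}_{\mathrm{nfSf},\pm,R}$ is handled identically after the sign changes displayed in \Cref{prop:dAlembertian_calculation} and \S\ref{subsec:basespace}, and the angular completion $-r^{-2}\triangle_{\bbS^{d-1}} - (d-1)r^{-1}\partial_r$ of $\square$ for $d\ge 2$ is of order $(2,\mathsf{0})$ — as is implicit in $\square \in \operatorname{Diff}^{2,\mathsf{0}}_{\mathrm{de,sc}}$ — with de,sc-symbol $|\eta_{\mathrm{sc}}|^2$ and subprincipal $r^{-1}\partial_r$-part, matching the $|\eta_{\mathrm{sc}}|^2$ summand of $p_0$.

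Finally, the local representatives glue to the single globally defined $p_0$ precisely because $p_0$ lives invariantly on $T^*\bbR^{1,d}$, which is dense in ${}^{\mathrm{de,sc}}\overline{T}^*\bbO$; what remains is to confirm that $p_0$ really extends across all of $\partial({}^{\mathrm{de,sc}}\overline{T}^*\bbO)$ — including fiber infinity ${}^{\mathrm{de,sc}}\bbS^*\bbO$ and the corners of $\bbO$ — as a symbol of order $(2,\mathsf{0})$. At fiber infinity this is automatic: the degree-$2$ homogeneous leading part $-\tau^2 + \sum_i\xi_i^2$ of $p_0$ is the classical wave-operator symbol, and fiber infinity of ${}^{\mathrm{de,sc}}\overline{T}^*\bbO$ is canonically that of ${}^{\mathrm{sc}}\overline{T}^*\bbM$ (equivalently of ${}^{\mathrm{e,b}}\overline{T}^*\bbO$), so smoothness there is inherited. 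I expect the only real obstacle to be the book-keeping at the corners where the ``de'' structure at $\mathrm{nPf},\mathrm{nFf}$ abuts the ``sc'' structure at the neighbouring faces, and — for $d\ge 2$ — checking that the angular part of $\square$ contributes at order $(2,\mathsf{0})$ and no worse as one nears $\mathrm{nPf}\cup\mathrm{nFf}$, i.e.\ that the extra vanishing of angular derivatives which places $\square$ in the double-edge calculus is exactly enough to make $|\eta_{\mathrm{sc}}|^2$ a genuine weight-$\mathsf{0}$ de,sc-symbol there with nothing left over; granting this, the identification of $\sigma^{2,\mathsf{0}}_{\mathrm{de,sc}}(\square)$ with $p_0$ follows from the chart computations above.
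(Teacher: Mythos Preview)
Your proposal is correct and follows essentially the same route as the paper: reduce to a neighborhood of null infinity (citing the sc-calculus elsewhere), pass to polar coordinates to reduce to $\hat{\bbO}$, and in each of the two chart types compute both the push-forward of $p_0$ and the leading part of $\square$ in the de,sc frame, obtaining $\xi^2 - 2\xi\zeta$ (resp.\ $-\xi^2 + 2\xi\zeta$) on both sides. Your explicit rewriting of $\square$ as $-V_1^2 + 2V_1V_2 + \varrho_{\mathrm{nf}}\varrho_{\mathrm{Tf}}W$ is just a spelled-out version of what the paper compresses into ``comparing with \Cref{prop:dAlembertian_calculation}''; the hedging in your final paragraph about the corners and the angular part is unnecessary, since the corners are exactly what the nfTf and nfSf charts cover, and you already correctly noted that the angular contribution being order $(2,\mathsf{0})$ with symbol $|\eta_{\mathrm{sc}}|^2$ is implicit in $\square \in \operatorname{Diff}^{2,\mathsf{0}}_{\mathrm{de,sc}}$.
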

\begin{proof}
	We already know that $p_0$ is a representative for the sc-principal symbol of $\square$, so it suffices to work near null infinity. 
	Passing to polar coordinates, it suffices to consider the $d=1$ case, working on $\hat{\bbO}$. (We saw above that the $(d-1)r^{-1} \partial_r$ term in the spatial Laplacian lies in $\operatorname{Diff}_{\mathrm{de,sc}}^{1,-\mathsf{1}}$ and therefore does not affect the principal symbol.)
	\begin{itemize}
		\item  In terms of the coordinates $(\varrho_{\mathrm{nf}},\varrho_{\mathrm{Tf}}, \xi,\zeta)$ on $\hat{\Omega}_{\mathrm{nfTf},\pm,T}$, solving for $\xi$ and $\zeta$ in $\varrho_{\mathrm{nf}}^{-2} \varrho_{\mathrm{Tf}}^{-1} \xi \dd \varrho_{\mathrm{nf}} +  \varrho_{\mathrm{nf}}^{-1} \varrho_{\mathrm{Tf}}^{-2} \zeta \dd \varrho_{\mathrm{Tf}} = \tau \dd t + \Xi \dd r$ yields 
		\begin{align}
			\begin{split}
			\pm \tau &= \frac{1}{2\varrho_{\mathrm{nf}}} (1-\varrho_{\mathrm{nf}}^2)  \xi - \frac{\zeta}{\varrho_{\mathrm{nf}}}, \\ 
			\Xi &= - \frac{1}{2 \varrho_{\mathrm{nf}}} (1+\varrho_{\mathrm{nf}}^2) \ \xi + \frac{\zeta}{\varrho_{\mathrm{nf}}}. 
			\end{split}
		\end{align}
		Thus, the symbol $\Xi^2 - \tau^2$ is given by $\xi^2 - 2 \xi \zeta$ with respect to this coordinate system. This is exactly what \cref{eq:misc_090} gives for the de,sc- principal symbol of $\square$ locally. 
		\item In terms of the coordinates $(\varrho_{\mathrm{nf}},\varrho_{\mathrm{Sf}}, \xi,\zeta)$ on $\hat{\Omega}_{\mathrm{nfSf},\pm,R}$, solving for $\xi$ and $\zeta$ in $\varrho_{\mathrm{nf}}^{-2} \varrho_{\mathrm{Sf}}^{-1} \xi \dd \varrho_{\mathrm{nf}} +  \varrho_{\mathrm{nf}}^{-1} \varrho_{\mathrm{Sf}}^{-2} \zeta \dd \varrho_{\mathrm{Sf}} = \tau \dd t + \Xi \dd r$ yields 
		\begin{align}
			\begin{split}
			\pm \tau &= -\frac{1}{2 \varrho_{\mathrm{nf}}}(1+\varrho_{\mathrm{nf}}^2) \xi + \frac{\zeta}{\varrho_{\mathrm{nf}}}, \\ 
			\Xi &= \frac{1}{2\varrho_{\mathrm{nf}}}(1-\varrho_{\mathrm{nf}}^2) \xi - \frac{\zeta}{\varrho_{\mathrm{nf}}}.
			\end{split}
		\end{align}
		Thus, $\Xi^2 - \tau^2$ is given by $-\xi^2 +2 \xi \zeta$ with respect to this coordinate system. 
		This is exactly what \cref{eq:misc_091} gives for the de,sc- principal symbol of $\square$ locally. 
	\end{itemize}
	So, $\Xi^2-\tau^2 \in S_{\mathrm{de,sc}}^{2,\mathsf{0}}$ and is a representative of $\sigma_{\mathrm{de,sc}}^{2,\mathsf{0}}(\square)$.
\end{proof}

\subsection{The de,sc-calculus}
\label{subsec:Psi}

We have already discussed de,sc- differential operators.
Here, we summarize the basic properties of the pseudodifferential calculus $\Psi_{\mathrm{de,sc}}$. 
The details are analogous to those in the construction of the sc-calculus, so we concentrate on the main points. (So, for instance, we will not talk about the topologies of de,sc-pseudodifferential operators, nor about uniform families of operators.)

\begin{remark}
	The results in this section are all special cases of facts from the theory of pseudodifferential operators associated to \emph{scaled bounded geometries}, a concept due to Hintz, who expounded their theory in work \cite{HintzSBG} announced after the completion of the first version of this paper. Indeed, the de,sc- calculus is an example he gives \cite[\S1.2.4]{HintzSBG}, associated to a particular scaled bounded geometry on $\bbR^{1,d}$. In the initial version of this work, we only sketched proofs of the propositions in this subsection. Now, we refer to Hintz's work for the proofs, in much greater generality. Specifically, \cite[Thm.\ 1.4]{HintzSBG} contains the properties of the calculus that we need.
\end{remark}

Since the relevant calculi end up being coordinate invariant, and since the de- and sc-calculi are constructed in \cite{LauterMoroianu}\cite{MelroseSC} respectively, the main order of business is to construct the calculus near the corners of $\bbO$, which we model (using local coordinates $\bmtheta=(\theta_1,\ldots,\theta_{d-1})$ on $\bbS^{d-1}_{\bmtheta}$) by 
\begin{equation}
	\bbR^{d+1}_2 = [0,\infty)_{\varrho_{\mathrm{nf}}} \times [0,\infty)_{\varrho_{\mathrm{Of}}}\times \bbR^{d-1}_{\bmtheta}, 
\end{equation} 
with the face $\{\varrho_{\mathrm{nf}}=0\}$ of the right-hand side corresponding to null infinity. Here `Of' stands for ``other face,'' meaning any of $\mathrm{Pf},\mathrm{Sf},\mathrm{Ff}$, depending on which corner of $\bbO$ is under consideration. Thus, we discuss the construction of 
\begin{equation} 
	\Psi_{\mathrm{de,sc,c}}(\bbR^{d+1}_2) = \bigcup_{m,s,\varsigma\in \bbR}\Psi_{\mathrm{de,sc,c}}^{m,(s,\varsigma)}(\bbR^{d+1}_2), 
\end{equation} 
where $s$ is the ``de-decay order'' at $\{\varrho_{\mathrm{nf}}=0\}$ and $\varsigma$ is the ``sc-decay order'' at $\{\varrho_{\mathrm{Of}}=0\}$. The extra `c' denotes that these operators will have properly supported Schwartz kernels $K$, so that $K(-,\chi) \in \calE'(\bbR_2^{d+1})$ whenever $\chi \in C_{\mathrm{c}}^\infty(\bbR_2^{d+1})$. Roughly speaking, this local de,sc-calculus is the result of quantizing 
\begin{equation}
	\calV_{\mathrm{de,sc}}(\bbR^{d+1}_2) = \operatorname{span}_{C_{\mathrm{c}}^\infty(\bbR^{d+1}_2)}\Big\{ \varrho_{\mathrm{nf}}^2 \varrho_{\mathrm{Of}} \frac{\partial}{\partial \varrho_{\mathrm{nf}}},\varrho_{\mathrm{nf}} \varrho_{\mathrm{Of}}^2 \frac{\partial}{\partial \varrho_{\mathrm{Of}}} , \varrho_{\mathrm{nf}}^2\varrho_{\mathrm{Of}} V : V\in \calV(\bbR^{d-1}_{\bmtheta}) \Big\}. 
	\label{eq:Vdef_local}
\end{equation}
From this Lie algebra, we get the coball-bundle ${}^{\mathrm{de,sc}}\overline{T}^* \bbR^{d+1}_2$ in the usual way. 

For (conormal) symbols $a$ on ${}^{\mathrm{de,sc}}\overline{T}^* \bbR^{d+1}_2$ of sufficiently low order, we can define an element $\operatorname{Op}(a)$ of $\Psi_{\mathrm{de,sc}}(\bbR^{d+1}_2)$ via its Schwartz kernel, 
\begin{equation} 
	K_a \smash{\in \calS'(\bbR_2^{d+1}\times \bbR_2^{d+1})},
\end{equation} 
given by 
\begin{multline}
	K_a(x_{\mathrm{L}},x_{\mathrm{R}}) = \frac{\chi}{(2\pi)^{d+1}}\int_{\bbR^{d+1}} \Big[\exp\Big(\pm i \frac{\zeta}{\varrho_{\mathrm{nf}}^2\varrho_{\mathrm{Of}} }(\varrho_{\mathrm{nf}}-\varrho_{\mathrm{nf}}' ) \pm i \frac{\xi}{\varrho_{\mathrm{nf}}\varrho_{\mathrm{Of}}^2}(\varrho_{\mathrm{Of}}-\varrho_{\mathrm{Of}}' )  \pm \sum_{j=1}^{d-1} \frac{i\eta_j}{\varrho_{\mathrm{nf}}^2\varrho_{\mathrm{Of}} }(\theta_j-\theta_j')  \Big) \\ \times a(\varrho_{\mathrm{nf}},\varrho_{\mathrm{Of}},\bmtheta,\zeta,\xi,\eta )\Big]\dd \zeta \dd \xi \dd^{d-1} \eta, 
	\label{eq:misc_112}
\end{multline}
where $\chi \in C^\infty((\bbR^{d+1}_2)^2_{\mathrm{b}})$ is identically equal to $1$ near the diagonal of the b-double space 
\begin{equation} 
	(\bbR^{d+1}_2)^2_{\mathrm{b}}\cong ([0,\infty)^2_{\mathrm{b}})^2\times \bbR^{2d-2}
\end{equation} 
and identically $0$ near boundary faces disjoint from the diagonal. Here, $x_{\mathrm{L}} = (\varrho_{\mathrm{nf}},\varrho_{\mathrm{Of}},\bmtheta)$ and $x_{\mathrm{R}} = (\varrho_{\mathrm{nf}}',\varrho_{\mathrm{Of}}',\bmtheta')$. 
The choice of sign in the exponent in \cref{eq:misc_112} is to be fixed as a convention.
Actually, in order to establish the basic properties of the calculus, it is useful to introduce spaces of symbols (``two-sided symbols'') which depend on both $x_{\mathrm{L}}$ and $x_{\mathrm{R}}$, these being quantized in the same manner. 
These definitions are extended to symbols of arbitrary order using slightly modified versions of the standard estimates for oscillatory integrals. The initial restriction to $a$ of sufficiently good order is to guarantee that the integral above converges, but standard estimates show this restriction to be unnecessary.

For each $m,s,\varsigma \in \bbR$ and (compactly supported) 
\begin{equation} 
	a \in S^{m,s,\varsigma}({}^{\mathrm{de,sc}}\overline{T}^* \bbR_2^{d+1}),
\end{equation} 
let $\operatorname{Op}(a)=K_a$ as above, and let  $\Psi_{\mathrm{de,sc,c}}^{m,(s,\varsigma)}(\bbR^{d+1}_2)$ denote the set of operators whose Schwartz kernels have the form $K_a + R$ for some properly supported remainder kernel 
\begin{equation}
	R \in \calS(\bbR_2^{d+1}\times \bbR_2^{d+1}). 
\end{equation}
Elements of $\Psi_{\mathrm{de,sc,c}}(\bbR_2^{d+1})$ are initially defined as maps $C_{\mathrm{c}}^\infty(\bbR_2^{d+1}) \to \calS'(\bbR_2^{d+1})$, but they extend (uniquely) to maps 
\begin{equation} 
	\calS'(\bbR_2^{d+1})\to \calS'(\bbR_2^{d+1}),
\end{equation} 
and elements of $\Psi_{\mathrm{de,sc,c}}(\bbR^{d+1}_2)$ can be identified with the corresponding maps. 
This completes our sketch of the definition of $\Psi_{\mathrm{de,sc,c}}(\bbR^{d+1}_2)$. 

We now return to $\bbO$. 
The calculus $\Psi_{\mathrm{de,sc}}=\Psi_{\mathrm{de,sc}}(\bbO)$ behaves very similarly to the sc-calculus. This is because we have \emph{principal symbol maps} 
\begin{equation}
	\sigma_{\mathrm{de,sc}}^{m,\mathsf{s}} : \Psi_{\mathrm{de,sc}}^{m,\mathsf{s}}(\bbO) \to S_{\mathrm{de,sc}}^{[m],[\mathsf{s}]}= S_{\mathrm{de,sc}}^{m,\mathsf{s}}(\bbO)/S_{\mathrm{de,sc}}^{m-1,\mathsf{s}-\mathsf{1}}(\bbO)
\end{equation}
fitting into a short exact sequence 
\begin{equation}
	0 \to \Psi_{\mathrm{de,sc}}^{m-1,\mathsf{s}-1} \hookrightarrow \Psi_{\mathrm{de,sc}}^{m,\mathsf{s}} \to S_{\mathrm{de,sc}}^{[m],[\mathsf{s}]} \to 0
\end{equation}
of vector spaces. 
This interacts with $\operatorname{Op}$ in the expected way: 
\begin{equation} 
	\sigma_{\mathrm{de,sc}}^{m,\mathsf{s}}(\operatorname{Op}(a)+R) = a \bmod S_{\mathrm{de,sc}}^{m-1,\mathsf{s}-\mathsf{1}},
\end{equation} 
whenever $R$ is as above. We have already given the definition of $\sigma_{\mathrm{de,sc}}$ on elements of $\operatorname{Diff}_{\mathrm{de,sc}}$, and it can be checked that this is a special case of the general definition. For example, if we evaluate \cref{eq:misc_112} for $a=\chi_0(x_{\mathrm{L}})\zeta$ for $\chi_0 \in C_{\mathrm{c}}^\infty(\bbR^{d+1}_2)$, then 
\begin{multline}
	K_a(x_{\mathrm{L}},x_{\mathrm{R}} )=K_\zeta(x_{\mathrm{L}},x_{\mathrm{R}} ) 
	= \pm\frac{i\chi \chi_0(x_{\mathrm{L}}) }{(2\pi)^{d+1}} \varrho_{\mathrm{nf}}^2\varrho_{\mathrm{Of}} \frac{\partial}{\partial \varrho_{\mathrm{nf}}'} \int_{\bbR^{d+1}}\Big[ \\ \exp\Big(\pm i \frac{\zeta}{\varrho_{\mathrm{nf}}^2\varrho_{\mathrm{Of}} }(\varrho_{\mathrm{nf}}-\varrho_{\mathrm{nf}}' ) \pm i \frac{\xi}{\varrho_{\mathrm{nf}}\varrho_{\mathrm{Of}}^2}(\varrho_{\mathrm{Of}}-\varrho_{\mathrm{Of}}' )  \pm \sum_{j=1}^{d-1} \frac{i\eta_j}{\varrho_{\mathrm{nf}}^2\varrho_{\mathrm{Of}} }(\theta_j-\theta_j')  \Big)   \dd \zeta \dd \xi \dd^{d-1} \eta \\ 
	\propto  \chi_0(x_{\mathrm{L}})  \varrho_{\mathrm{nf}}^2\varrho_{\mathrm{Of}} \frac{\partial}{\partial \varrho_{\mathrm{nf}}'} \delta(x_{\mathrm{L}},x_{\mathrm{R}} ).
\end{multline}
This is just the Schwartz kernel of $i\chi_0 \partial_{\varrho_{\mathrm{nf}}}$, up to some $i$'s. So, the principal symbol map involves replacing $\partial_{\varrho_{\mathrm{nf}}}$ with $i\zeta$, as described in the previous subsection. The partial derivatives in the other directions are similar.

The set $\bigcup_{m\in \bbR,\mathsf{s}\in \bbR^5} \Psi_{\mathrm{de,sc}}^{m,\mathsf{s}}$ is a a multi-graded algebra. In particular, this means that 
\begin{equation}
	AB \in \Psi_{\mathrm{de,sc}}^{m+m',\mathsf{s}+\mathsf{s}'}
\end{equation}
whenever $A\in \Psi_{\mathrm{de,sc}}^{m,\mathsf{s}}$ and $B\in \Psi_{\mathrm{de,sc}}^{m',\mathsf{s}'}$. That this is true if one ignores the situation at the corners follows from the analogous result for the sc- and de-calculi. However, since the combination of the two has not been studied before, we say a few words on how one can prove this. One way is to use a reduction formula for two-sided symbols, in analogy with what is done in the sc-calculus in \cite{VasyGrenoble}; the quantization of any two-sided symbol is the quantization of a one-sided symbol modulo residual terms. The composition of two de,sc-pseudodifferential operators, one given as the quantization of a left symbol and one as the quantization of the right symbol, is seen to be the quantization of a two-sided symbol. Thus, it is a pseudodifferential operator of the expected orders. Applying the reduction formula gives a formula for the symbol of $AB$ similar to the usual Moyal formula, except written using a different coordinate system. 
One consequence of the Moyal formula is that the principal symbol map is an algebra homomorphism to leading order, in the sense that 
\begin{equation}
	\sigma_{\mathrm{de,sc}}^{m+m',\mathsf{s}+\mathsf{s}'}(AB) = \sigma_{\mathrm{de,sc}}^{m,\mathsf{s}}(A)\sigma_{\mathrm{de,sc}}^{m',\mathsf{s}'}(B) \overset{\mathrm{def}}{=} ab \bmod S_{\mathrm{de,sc}}^{m+m'-1,\mathsf{s}+\mathsf{s}'-\mathsf{1}}, 
	\label{eq:misc_652}
\end{equation}
where $a,b$ are any representatives of $\smash{\sigma_{\mathrm{de,sc}}^{m,\mathsf{s}}(A)}$ and $\smash{\sigma_{\mathrm{de,sc}}^{m',\mathsf{s}'}(B)}$, respectively (the equivalence class of $ab$ not depending on the choice of $a,b$). See \cite[Thm.\ 1.4]{HintzSBG} for details.

We also have a notion of ``de,sc--essential support:'' 
\begin{equation}
	\operatorname{WF}'_{\mathrm{de,sc}}(A) \overset{\mathrm{def}}{=} \operatorname{esssupp}(a) \cap \partial\, {}^{\mathrm{de,sc}}\overline{T}^* \bbO
\end{equation}
whenever $A = \operatorname{Op}(a) + R$ for $R$ as above. (That is, $\operatorname{WF}'_{\mathrm{de,sc}}(A)$ is the closure of the set of points on the boundary of the radially-compactified de,sc-cotangent bundle at which $a$ is not rapidly decaying.) This is well-defined, meaning that if we have $\operatorname{Op}(a)+R = \operatorname{Op}(a')+R'$ for some $a',R'$, then $a,a'$ have the same essential support. We have 
\begin{align}
	\operatorname{WF}'_{\mathrm{de,sc}}(AB) &\subseteq \operatorname{WF}'_{\mathrm{de,sc}}(A) \cap\operatorname{WF}'_{\mathrm{de,sc}}(B) \\ 
	\operatorname{WF}'_{\mathrm{de,sc}}(A+B) &\subseteq \operatorname{WF}'_{\mathrm{de,sc}}(A) \cup\operatorname{WF}'_{\mathrm{de,sc}}(B)
\end{align}
for any $A,B\in \Psi_{\mathrm{de,sc}}$. In particular, $\operatorname{WF}'_{\mathrm{de,sc}}([A,B]) \subseteq \operatorname{WF}'_{\mathrm{de,sc}}(A) \cap\operatorname{WF}'_{\mathrm{de,sc}}(B)$.

The \emph{de,sc-characteristic set} $\operatorname{Char}_{\mathrm{de,sc}}^{m,\mathsf{s}}(A)$ of $A\in \Psi_{\mathrm{de,sc}}^{m,\mathsf{s}}$ is defined as 
\begin{equation}
	\operatorname{Char}_{\mathrm{de,sc}}^{m,\mathsf{s}}(A) = \operatorname{char}_{\mathrm{de,sc}}^{m,\mathsf{s}}(a) = \partial\, {}^{\mathrm{de,sc}}\overline{T}^* \bbO \backslash \operatorname{ell}_{\mathrm{de,sc}}^{m,\mathsf{s}}(a), 
\end{equation}
where the elliptic set is defined in the usual way. We mainly care about the case when $a$ is a classical symbol. Considering the case $m=0$, $\mathsf{s}=\mathsf{0}$, this means that $a$ is a smooth function on the radially-compactified cotangent bundle, in which case the de,sc-characteristic set of $A=\operatorname{Op}(a)$ is just
\begin{equation}
	\operatorname{Char}_{\mathrm{de,sc}}^{0,\mathsf{0}}(A) = a^{-1}(\{0\}) \cap ( \partial\, {}^{\mathrm{de,sc}}\overline{T}^* \bbO),
\end{equation}
the vanishing set of $a$ at the boundary of the compactified de,sc- phase space.

An operator $A\in \Psi_{\mathrm{de,sc}}^{m,\mathsf{s}}$ is said to be \emph{elliptic} (with respect to the de,sc-calculus) if $\smash{\operatorname{Char}_{\mathrm{de,sc}}^{m,\mathsf{s}}(A)} = \varnothing$. (Note that this depends implicitly on $m,\mathsf{s}$.) It should be emphasized that, just as sc-ellipticity is stronger than ordinary ellipticity, de,sc- ellipticity is a stronger notion than the usual notion of ellipticity in $\bbO^\circ = \bbR^{1,d}$, which is equivalent to
\begin{equation}
	\operatorname{Char}_{\mathrm{de,sc}}^{m,\mathsf{s}}(A) \cap \overline{T}^* \bbR^{1,d} = \varnothing. 
\end{equation}
I.e.,\ the ordinary notion of ellipticity in the interior is ellipticity at fiber infinity over the interior. De,sc-ellipticity requires ellipticity in the fibers of the de,sc-cotangent bundle over the boundary of $\bbO$, as well as at fiber infinity over the boundary.

It follows from the principal symbol short exact sequence and the leading order commutativity of the principal symbol map that 
\begin{equation}
	A\in \Psi_{\mathrm{de,sc}}^{m,\mathsf{s}},B\in \Psi_{\mathrm{de,sc}}^{m',\mathsf{s}'} \Rightarrow [A,B] \in \Psi_{\mathrm{de,sc}}^{m+m'-1,\mathsf{s}+\mathsf{s}'-1}. 
\end{equation}
It can be shown (using e.g. the Moyal formula) that 
\begin{equation} 
	\sigma_{\mathrm{de,sc}}^{m+m'-1,\mathsf{s}+\mathsf{s}'-\mathsf{1}}([A,B]) = \pm i \{a,b\} \bmod S_{\mathrm{de,sc}}^{m+m'-2,\mathsf{s}+\mathsf{s}'-\mathsf{2}}, 
\end{equation} 
where the sign depends on our sign convention in defining $\operatorname{Op}$ in \cref{eq:misc_112}. The right-hand side is just the usual Poisson bracket on $T^* \bbR^{1,d}$. (It must, of course, be checked that $\{a,b\}$ is actually a de,sc-symbol of the claimed orders.) This is also part of \cite[Thm.\ 1.4]{HintzSBG}.

It can be shown directly, that, if 
\begin{equation} 
	R\in \Psi_{\mathrm{de,sc}}^{-\infty,-\infty} = \cap_{m,\mathsf{s}}\Psi_{\mathrm{de,sc}}^{m,\mathsf{s}},
\end{equation} 
then $R$ defines a bounded linear map on $L^2(\bbR^{1,d})$. (Indeed, $R\in \Psi_{\mathrm{sc}}^{-\infty,-\infty}(\bbR^{1,d})$.)

H\"ormander's square root trick can be used to extend this to $A \in \smash{\Psi_{\mathrm{de,sc}}^{0,\mathsf{0}}}$. 

The quantization procedure yields, for each $m\in \bbR, \mathsf{s}\in \bbR^5$, a plethora of elliptic elements of $\Psi_{\mathrm{de,sc}}^{m,\mathsf{s}}$. Pick one and call it $\Lambda^{m,\mathsf{s}}$. 
For $m=0$, we can take 
\begin{equation}
\Lambda^{0,\mathsf{s}} = \varrho_{\mathrm{Pf}}^{-s_1}\varrho_{\mathrm{nPf}}^{-s_2}\varrho_{\mathrm{Sf}}^{-s_3}\varrho_{\mathrm{nFf}}^{-s_4}\varrho_{\mathrm{Ff}}^{-s_5} = \varrho^{-\mathsf{s}}. 
\end{equation}
We now define $H^{0,0}_{\mathrm{de,sc}} = L^2(\bbR^{1,d})$ and, for each $\mathsf{s}\in \bbR^5$,  $H^{0,\mathsf{s}}_{\mathrm{de,sc}} = \varrho^{\mathsf{s}} H^{0,0}_{\mathrm{de,sc}}$. 
We can now define Sobolev spaces $H_{\mathrm{de,sc}}^{m,\mathsf{s}}$ as follows: 
\begin{itemize}
	\item if $m> 0$, then we define 
	\begin{align} 
	H^{m,\mathsf{s}}_{\mathrm{de,sc}} &= \{ u \in H^{0,\mathsf{s}}_{\mathrm{de,sc}} : Au\in L^2(\bbR^{1,d}) \text{ for all } A \in \Psi^{m,\mathsf{s}}_{\mathrm{de,sc}} \} \label{eq:misc_hms} \\
	&= \{ u \in H^{0,\mathsf{s}}_{\mathrm{de,sc}} : \Lambda^{m,\mathsf{s}} u\in L^2(\bbR^{1,d}) \}, \label{eq:misc_ter}
	\end{align} 
	with norm $\lVert u \rVert_{H^{m,\mathsf{s}}_{\mathrm{de,sc}}} = \lVert \varrho^{-\mathsf{s}} u \rVert_{L^2} + \lVert \Lambda^{m,\mathsf{s}} u \rVert_{L^2}$, and 
	\item if $m<0$, then we define 
	\begin{align} 
	H^{m,\mathsf{s}}_{\mathrm{de,sc}} &= \{ \Lambda^{-m,-\mathsf{s}} u +v : u \in L^2(\bbR^{1,d}),v \in H_{\mathrm{de,sc}}^{0,\mathsf{s}}  \} \label{eq:misc_hjs} \\
	&= \{ Au : u\in L^2(\bbR^{1,d}) , A \in \Psi_{\mathrm{de,sc}}^{-m,-\mathsf{s}} \}, \label{eq:misc_tjr}
	\end{align} 
	with a corresponding norm. 
\end{itemize}
Each element of $\Psi_{\mathrm{de,sc}}^{m,\mathsf{s}}$ defines a bounded map $H_{\mathrm{de,sc}}^{m',\mathsf{s}'} \to H_{\mathrm{de,sc}}^{m'-m,\mathsf{s}'-\mathsf{s}}$ for any $m\in \bbR$ and $\mathsf{s}'\in \bbR^5$. See \cite[Thm.\ 3.44]{HintzSBG}.

The failure of a $u\in \calS'$ to lie in $H_{\mathrm{de,sc}}^{m,\mathsf{s}}$ is measured by a notion of de,sc-wavefront set, 
\begin{equation} 
	\operatorname{WF}_{\mathrm{de,sc}}^{m,\mathsf{s}}(u) = \bigcap_{A \in \Psi_{\mathrm{de,sc}}^{0,\mathsf{0}}\text{ s.t. } Au\in H_{\mathrm{de,sc}}^{m,\mathsf{s}} } \operatorname{Char}_{\mathrm{de,sc}}^{m,\mathsf{s}}(A). 
\end{equation} 
Thus, $u\in H_{\mathrm{de,sc}}^{m,\mathsf{s}}  \iff 	\operatorname{WF}_{\mathrm{de,sc}}^{m,\mathsf{s}}(u) = \varnothing$. (The $\Rightarrow$ direction is trivial, and the $\Leftarrow$ direction follows via the standard patching argument.) 
Also, let 
\begin{equation} 
	\operatorname{WF}_{\mathrm{de,sc}}(u) = \mathrm{cl}_{{}^{\mathrm{de,sc}}\overline{T}^* \bbO } \Big[\bigcup_{m,\mathsf{s}}\operatorname{WF}_{\mathrm{de,sc}}^{m,\mathsf{s}}(u)\Big].
	\label{eq:WF_def}
\end{equation} 
It can be shown that $u\in \calS \iff \operatorname{WF}_{\mathrm{de,sc}}(u) = \varnothing$, so de,sc-wavefront set measures microlocal obstructions to being Schwartz on $\bbR^{1,d}$, as $\operatorname{WF}_{\mathrm{sc}}(u)$ does. 

The portion of the de,sc- wavefront set of the Green's functions $D_\pm$ (discussed in \Cref{ex:Green}) in the de,sc-characteristic set of $\square+\mathsf{m}^2$ is depicted in \Cref{fig:Greens}. 
\begin{figure}[t]
	\begin{tikzpicture}[scale=.8]
		\coordinate (one) at (-.413\octheight,\octheight) {};
		\coordinate (two) at (-\octheight,.413\octheight) {}; 
		\coordinate (three) at (-\octheight,-.413\octheight) {};
		\coordinate (four) at (-.413\octheight,-\octheight) {};
		\coordinate (five) at (.413\octheight,-\octheight) {};
		\coordinate (six) at (\octheight,-.413\octheight) {};
		\coordinate (seven) at (\octheight,.413\octheight) {}; 
		\coordinate (eight) at (.413\octheight,\octheight) {};
		\coordinate (oner) at (-.413\octheightr,\octheightr) {};
		\coordinate (twor) at (-\octheightr,.413\octheightr) {}; 
		\coordinate (threer) at (-\octheightr,-.413\octheightr) {};
		\coordinate (fourr) at (-.413\octheightr,-\octheightr) {};
		\coordinate (fiver) at (.413\octheightr,-\octheightr) {};
		\coordinate (sixr) at (\octheightr,-.413\octheightr) {};
		\coordinate (sevenr) at (\octheightr,.413\octheightr) {}; 
		\coordinate (eightr) at (.413\octheightr,\octheightr) {};
		\coordinate (oneint) at (-1.25*.413\octheight,1.25\octheight) {};
		\coordinate (eightint) at (.413*1.25\octheight,1.25\octheight) {};
		\coordinate (fourint) at (-.413*1.25\octheight,-1.25\octheight) {};
		\coordinate (fiveint) at (.413*1.25\octheight,-1.25\octheight) {};
		\draw[fill=gray!10] (oner) -- (twor) -- (threer) -- (fourr) -- (fiver) -- (sixr) -- (sevenr) -- (eightr) -- cycle;
		\draw[fill=gray!30] (one) -- (two) -- (three) -- (four) -- (five) -- (six) -- (seven) -- (eight) -- cycle;
		\draw (one) -- (oner);
		\draw (two) -- (twor);
		\draw (three) -- (threer);
		\draw (four) -- (fourr);
		\draw (five) -- (fiver);
		\draw (six) -- (sixr);
		\draw (seven) -- (sevenr);		
		\draw (eight) -- (eightr);
		\draw[gray] (-.706\octheight,-.706\octheight) -- (.706\octheight,.706\octheight); 
		\draw[darkcandyapp] (0,0) -- (.706\octheight,.706\octheight) to[out=45, in=0] (eightint); 
		\draw[darkcandyapp] (-.706\octheightr,.706\octheightr) to[out=-45,in=245] (oneint);
		\begin{scope}[decoration={
				markings,
				mark=at position 0.51 with {\arrow[scale=1.5,>=latex, color=gray]{>}}}]
			\draw[gray, postaction={decorate}] plot[smooth,tension=.1] coordinates {
				(-.636\octheight,-.776\octheight) (.125\octheight,-.125\octheight) (.776\octheight,.636\octheight)};
			\draw[gray, postaction={decorate}] plot[smooth,tension=.1] coordinates {
				(-.776\octheight,-.636\octheight) (-.125\octheight,+.125\octheight) (.636\octheight, .776\octheight)};
			\draw[gray, postaction={decorate}]  plot[smooth,tension=.3] coordinates{ 
				(-.566\octheight,-.846\octheight) (-.456\octheight,-.756\octheight) (.275\octheight,-.275\octheight) (.756\octheight, .456\octheight)  (.846\octheight,.566\octheight) };
			\draw[gray, postaction={decorate}]  plot[smooth,tension=.3] coordinates{ 
				(-.846\octheight,-.566\octheight) (-.756\octheight,-.456\octheight) (-.275\octheight,.275\octheight) (.456\octheight,.756\octheight)  (.566\octheight,.846\octheight) };
			\draw[gray, postaction={decorate}]  plot[smooth,tension=.4] coordinates{ 
				(-.526\octheight,-.886\octheight) (-.146\octheight,-.786\octheight) (.45\octheight,-.45\octheight) (.786\octheight,.146\octheight) (.886\octheight,.526\octheight)    }; 
			\draw[gray, postaction={decorate}]  plot[smooth,tension=.4] coordinates{ 
				(-.886\octheight,-.526\octheight) (-.786\octheight,-.146\octheight) (-.45\octheight,.45\octheight) (.146\octheight,.786\octheight) (.526\octheight,.886\octheight)    }; 
			\draw[postaction={decorate}] (three) -- (two);
			\draw[postaction={decorate}] (threer) -- (three); 
			\draw[postaction={decorate}] (seven) -- (sevenr); 
			\draw[postaction={decorate}] (threer) -- (twor);
			\draw[postaction={decorate}] (fourr) -- (threer);
			\draw[postaction={decorate}] (two) -- (one);
			\draw[postaction={decorate}] (fiver) -- (fourr);
			\draw[postaction={decorate}] (six) -- (seven);
			\draw[postaction={decorate}] (sixr) -- (sevenr);
			\draw[postaction={decorate}] (sevenr) -- (eightr);
			\draw[postaction={decorate}] (eightr) -- (oner);
			\draw[gray, postaction={decorate}]  plot[smooth,tension=.5] coordinates{ (fourint) (-.513\octheightr,-.85\octheightr)  (-.75\octheightr,-.443\octheightr) 	(-.886\octheight,-.526\octheight) };
			\draw[gray, postaction={decorate}]  plot[smooth,tension=.5] coordinates{ 	(.886\octheight,.526\octheight) (.75\octheightr,.443\octheightr)  (.513\octheightr,.85\octheightr)  (eightint)};
			\draw[gray, postaction={decorate}]  plot[smooth,tension=.5] coordinates{ (fiveint) (.413\octheightr,-.65\octheightr)  (.75\octheightr,-.443\octheightr) 	(.886\octheightr,-.526\octheightr) };
			\draw[gray, postaction={decorate}]  plot[smooth,tension=.5] coordinates{ 	(-.886\octheightr,.526\octheightr) (-.75\octheightr,.443\octheightr) (-.413\octheightr,.65\octheightr)  (oneint)};
			\draw[gray, postaction={decorate}] (threer) -- (two);
			\draw[gray, postaction={decorate}] (six) -- (sevenr);
			\draw[gray, postaction={decorate}] (-.2\octheight,-1.25\octheight) -- (five);
			\draw[gray, postaction={decorate}] (.2\octheight,-1.25\octheight) -- (fourr);
			\draw[gray, postaction={decorate}] (one) -- (.2\octheight,1.25\octheight);
			\draw[gray, postaction={decorate}] (eightr) -- (-.2\octheight,1.25\octheight);
		\end{scope}
		\begin{scope}[decoration={
				markings,
				mark=at position 0.51 with {\arrow[scale=1.5,>=latex, color=gray]{>}}}] 
			\draw[postaction={decorate}] (one) -- (eight);
			\draw[postaction={decorate}] (two) -- (one); 
			\draw[postaction={decorate}] (twor) -- (two); 
			\draw[postaction={decorate}] (four) -- (five);
			\draw[postaction={decorate}] (six) -- (sixr); 
			\draw[postaction={decorate}] (five) -- (six);
		\end{scope} 
		\begin{scope}[decoration={
				markings,
				mark=at position 0.75 with {\arrow[scale=1.5,>=latex, color=gray]{>}}}]
			\draw[postaction={decorate}] (fourint) -- (four);
			\draw[postaction={decorate}] (fourint) -- (fourr);
			\draw[postaction={decorate}] (fiveint) -- (five);
			\draw[postaction={decorate}] (fiveint) -- (fiver);
			\draw[postaction={decorate}] (one) -- (oneint);
			\draw[postaction={decorate}] (eight) -- (eightint);
			\draw[postaction={decorate}] (oner) -- (oneint);
			\draw[postaction={decorate}] (eightr) -- (eightint);
			\draw[gray, postaction={decorate}]  plot[smooth,tension=.6] coordinates{ (fourint) (-.63\octheight,-1.1\octheight) (-.526\octheight,-.886\octheight) };
			\draw[gray, postaction={decorate}]  plot[smooth,tension=.6] coordinates{(-.616\octheightr,.796\octheightr) (-.73\octheight,1.19\octheight) (oneint) };
			\draw[gray, postaction={decorate}]  plot[smooth,tension=.6] coordinates{ (fiveint) (.73\octheight,-1.19\octheight) (.616\octheightr,-.796\octheightr) };
			\draw[gray, postaction={decorate}]  plot[smooth,tension=.6] coordinates{(.526\octheight,.886\octheight) (.66\octheight,1.0\octheight) (eightint) };
			\filldraw[color=darkcandyapp] (oneint) circle (2pt);
			\filldraw[color=darkcandyapp] (eightint) circle (2pt);
			\draw[thick, darkcandyapp] (oneint) -- (eightint);
			\draw[gray] (fourint) -- (fiveint);
		\end{scope}
		\node[color=darkcandyapp] (bc) at (-.22\octheightr,.91\octheightr) {$\calR^+_+$};
	\end{tikzpicture}
	\caption{ 
		The de,sc-wavefront set $\color{darkcandyapp}\operatorname{WF}_{\mathrm{de,sc}}(D_+)$ of the forward propagator $D_+$ for $\square+\mathsf{m}^2$, shown together with the de,sc- Hamiltonian flow depicted in \Cref{fig:O}. 
		As we can see, the wavefront set at fiber infinity introduced by the $\delta$-function forcing propagates along null geodesics until it hints the interior of null infinity at fiber infinity. It then enters the interiors of the de-fibers and tends to the radial set $\calR_+$. Only the portion of the wavefront set in the characteristic set of $\square+\mathsf{m}^2$ is shown. Since $(\square+\mathsf{m}^2)D_\pm = \delta$, where $\delta$ is a Dirac function at the spacetime origin, and since $\operatorname{WF}(\delta)$ consists of the whole cosphere fiber over the spacetime origin, it must be the case (by microlocality -- \cref{eq:microlocality}) that $D_\pm$ has wavefront set in the elliptic set there. However, by micro-ellipticity in the de,sc-calculus (\cref{eq:micro-ellipticity}), this occurs only over the spacetime origin.}
	\label{fig:Greens}
\end{figure}

De,sc-$\Psi$DOs are microlocal, which means that 
\begin{equation}
	\operatorname{WF}_{\mathrm{de,sc}}^{m-m',\mathsf{s}-\mathsf{s}'}(Au) \subseteq \operatorname{WF}'_{\mathrm{de,sc}}(A) \cap \operatorname{WF}_{\mathrm{de,sc}}^{m,\mathsf{s}}(u) 
	\label{eq:microlocality}
\end{equation}
for any $u\in \calS'$ and $A\in \Psi_{\mathrm{de,sc}}^{m',\mathsf{s}'}$. On the other hand, the de,sc-version of microlocal elliptic regularity states that 
\begin{equation}
	\operatorname{WF}_{\mathrm{de,sc}}^{m,\mathsf{s}}(u) \subseteq \operatorname{WF}_{\mathrm{de,sc}}^{m-m',\mathsf{s}-\mathsf{s}'}(Au) \cup \operatorname{Char}_{\mathrm{de,sc}}^{m',\mathsf{s}'}(A). 
	\label{eq:micro-ellipticity}
\end{equation}
For example, if $u$ solves the Klein--Gordon equation $\square_gu +\mathsf{m}^2 u \in \calS$, then it follows immediately that 
\begin{equation}
	\operatorname{WF}_{\mathrm{de,sc}}(u) \subseteq \operatorname{Char}_{\mathrm{de,sc}}^{2,\mathsf{0}}(\square_g+\mathsf{m}^2).
\end{equation}
Thus, the de,sc-wavefront set of $u$ is highly restricted; it can only lie in the de,sc- characteristic set of the Klein--Gordon operator, this being a codimension one subset of the boundary of the de,sc- phase space.

To get information on the de,sc- wavefront set within the characteristic set, we will need to use propagation results, which will be discussed later, in \S\ref{sec:propagation}. For now, see \Cref{fig:Greens}, which is suggestive of how de,sc-wavefront set propagates along bicharacteristics of the PDE in question, in this case $\square+\mathsf{m}^2$.

\section{Asymptotics, module regularity, and the Poincar\'e cylinder}
\label{sec:asymptotics}

We now discuss module regularity at $\calR$ (which really means additional regularity \emph{outside} of $\calR$) and its relation to asymptotic expansions at timelike infinity. In \S\ref{subsec:Poincare_cylinder}, we discuss the Poincar\'e cylinder, which is the geometrization of the hyperbolic coordinate system (\Cref{prop:hyperbolic=Minkowski}). 
In \S\ref{subsec:test_modules}, we discuss the test modules of differential operators relevant to the rest of the paper. These are used to define refined Sobolev spaces 
\begin{equation}
	H_{\mathrm{de,sc}}^{m,\mathsf{s};\kappa,k} , \; m\in \bbR,\, \mathsf{s}\in \bbR^5,\, \kappa,k\in \bbN,
\end{equation}
the functions with $m$ orders of de,sc-regularity, $\mathsf{s}$ orders of de,sc-decay, and $\kappa,k$ orders of additional regularity with respect to the relevant test modules.
In \S\ref{subsec:asymptotics}, we prove the main result of this section (\Cref{prop:asymptotic_main}), which states that if $u\in \calS'$ solves the Klein--Gordon equation $Pu=f$ for $f\in \calS'$ with sufficient module regularity, then, if $u$ has sufficient module regularity as well, including rapid decay at null infinity (which at this stage of our analysis is still a hypothesis), then $u$ admits an asymptotic expansion to some specified order on $\bbO$. 

The main theme of this section is the relation between hyperbolic coordinates and the de,sc- machinery. One conceptual way of understanding this relation is that, whereas performing a polar blowup of $\scrI\subset \bbM$ resulted in a compactification $\bbO$ whose front faces $\mathrm{nPf},\mathrm{nFf}$ are parameterized by the light cone coordinate $|t|-r$, performing a \emph{parabolic} blowup of $\scrI$ results in a different compactification whose front face is instead parameterized by the hyperbolic coordinate $\tau^2=t^2-r^2$ in $\{|t|>r\}$. 
So, this second compactification is closely related to the Poincar\'e cylinder; in fact, the two are identifiable above the future-directed light cone. The side of the Poincar\'e cylinder corresponds to null infinity.

\begin{figure}[h!]
	\begin{tikzpicture}
		\draw[dashed] (-2,-2) rectangle (2,2);
		\coordinate (i) at (.75,.75); 
		\fill[lightgray!20] (-1.8,-1.8) -- (1.8,-1.8) arc(0:90:3.6) -- cycle;
		\draw (1.8,-1.8) arc(0:90:3.6);
		\fill[black] (i) circle (.07);
		\node[above right] () at (i) {$\scrI^+$};
		\node () at (1,-1.2) {$\bbM$};
		\draw[darkcandyapp, ->] (.65,.65) to[out=135, in=-35] (-.2,1.3) node[below left] {$\frac{t-r}{t+r}$};
		\draw[darkcandyapp, ->] (.65,.65) -- (0,0) node[right] {$\frac{1}{t+r}$};
	\end{tikzpicture}
	\begin{tikzpicture}
		\draw[dashed] (-2,-2) rectangle (2,2);
		\coordinate (i) at (.75,.75); 
		\fill[lightgray!20] (-1.8,-1.8) -- (1.8,-1.8) arc(0:90:3.6) -- cycle;
		\fill[lightgray!40] (-1.8,-1.8) -- (.745,.745) arc(45:90:3.6) -- cycle;
		\draw (1.8,-1.8) arc(0:90:3.6);
		\fill[white, rotate=45] (i) ellipse (1.4 and .6);
		\begin{scope}
			\clip (-1.81,-1.81) -- (1.81,-1.81) arc(0:90:3.62) -- cycle;
			\filldraw[fill=white, rotate=45] (i) ellipse (1.4 and .6);
		\end{scope}
		\draw[darkcandyapp, ->]  (-.3,-.3) to[out=145, in=240] (-.3,.5) node[left] {$t^2-r^2$}; 
		\node () at (.7,-1.2) {$[\bbM;\scrI]_{\mathrm{par}}$};
	\end{tikzpicture}
	\begin{tikzpicture}
	\draw[dashed] (-2,-2) rectangle (2,2);
	\begin{scope}[yshift=-5pt] 
	\filldraw[fill=lightgray!20] (0,-1.2) ellipse (1 and .3);
	\fill[lightgray!20] (-1,-1.2) rectangle (1,1.2);
	\fill[lightgray!40] (-1,0) rectangle (1,1.2);
	\filldraw[dashed, fill=lightgray!40] (0,0) ellipse (1 and .3);
	\draw (-1,-1.2) -- (-1,1.2);
	\draw (1,-1.2) -- (1,1.2); 
	\draw[darkcandyapp,->] (0,1.2) -- (0,.7) node[right] {$\tau^{-1}$};
	\filldraw[fill=lightgray!40] (0,1.2) ellipse (1 and .3);
	\node () at (0,1.2) {$\bbH^d$};
	\node () at (1.5,0) {$\overline{\bbR}_\tau$};
	\draw[darkcandyapp,->] (0,0) -- (0,.4) node[left] {$\tau$};
	\end{scope} 
	\node () at (0,1.65) {Poincar{\'e} cylinder};
	\end{tikzpicture}
	\caption{
		Whereas a polar blowup of $\scrI^+\subseteq \bbM$ (shown in \Cref{fig:blowup}) resolved the ratio $((t-r)/(t+r))/(1/(t+r)) = t-r$, a parabolic blowup (\textit{middle}) instead resolves the quadratic ratio $((t-r)/(t+r))/(1/(t+r))^2 = t^2-r^2=\tau^2$. The resulting compactification of $\bbR^{1,d}$ is closely related to the Poincar\'e cylinder (\textit{right}), with the shaded regions in $[\bbM;\scrI]_{\mathrm{par}}$ and the Poincar\'e cylinder being identifiable.
	}
	\label{fig:para_blowup}
\end{figure}

We can convert between de,sc-Sobolev regularity/decay on $\bbO$ and a suitable sort of Sobolev regularity on the Poincar\'e cylinder (\Cref{prop:Sobolev_conversion}, \Cref{prop:ultimate_sobolev_conversion}). Upon doing so, one encounters a loss of finitely many orders of decay at null infinity. 
But, if our functions of interest decay \emph{rapidly} at null infinity, then this loss does not matter, and we can pass between the Poincar\'e cylinder and the octagonal compactification freely. This will apply to solutions of the Klein--Gordon IVP with Schwartz forcing and data. Indeed, we saw in the introduction that such solutions decay rapidly at null infinity (though we have yet to prove this).

This is put to work in \S\ref{subsec:asymptotics} for the production of asymptotic expansions. The reason for the passage to hyperbolic coordinates is that the oscillations 
\begin{equation} 
	e^{\pm i \mathsf{m} \sqrt{t^2-r^2}} = e^{\pm i \mathsf{m} \tau }
\end{equation} 
seen in solutions $u$ of the Klein--Gordon equation depend only on the one coordinate $\tau$, so an appeal to ODE theory is possible. Indeed, in hyperbolic coordinates (see e.g.\ \cite{HormanderNL}) the Minkowski d'Alembertian can be written as 
\begin{equation}
	\square = \partial_\tau^2 + d \tau^{-1} \partial_\tau + \tau^{-2} \triangle_{\bbH^d}, 
	\label{eq:Minkowski_hyper}
\end{equation}
where $\triangle_{\bbH^d}$ is the (positive semidefinite) Laplace--Beltrami operator on the Poincar\'e ball $\bbH^d$ (the level sets of $\tau$ in the Poincar\'e cylinder). Module regularity of $u$ will imply that $\triangle_{\bbH^d} u$ is under control. Consequently, $\tau^{-2} \triangle_{\bbH^d}$ has two orders of decay as $\tau\to\infty$ relative to $u$. So, if $\square u + \mathsf{m}^2 u  = f$, where $f$ is (say) Schwartz, then we can understand the large-$\tau$ behavior of $u$ by inverting the ordinary differential operator on the left-hand side of 
\begin{equation}
	(\partial_\tau^2 + d \tau^{-1} \partial_\tau + \mathsf{m}^2) u = - \tau^{-2} \triangle_{\bbH^d} u + f.
\end{equation}
This yields the desired $e^{\pm i \mathsf{m} \tau}$ behavior. The same argument, with additional error terms, applies if we replace $\square+\mathsf{m}^2$ with the Klein--Gordon operators appearing in our main theorem (see \Cref{prop:integration}).

\begin{remark}
	Unlike the octagonal compactification (cf.\ \Cref{prop:invariance}), the parabolic blowup $[\bbM;\scrI]_{\mathrm{par}}$ is not Poincar\'e invariant, because e.g.\ time-shifts do not extend to homeomorphisms. This is a consequence of the fact that, among the shifted light cones $
	\{t =r + t_0\}$, the one $\{t=r\}$ emanating from the origin is distinguished by the property that it asymptotes to the \emph{interior} of the front face of the blowup in $[\bbM;\scrI]_{\mathrm{par}}$. All other shifted light cones asymptote either to the timelike corner or the spacelike corner of the front face, depending on the sign of $t_0$. Indeed, if $t=r+t_0$, then the hyperbolic coordinate $t^2-r^2$ asymptotes to $\pm \infty$ as $r\to\infty$ if $\pm t_0>0$. 
	\label{rem:parabolic_non-invariant}  
\end{remark}

\begin{remark}
	In understanding the relation between the octagonal compactification (we use $\bbO_0$ here instead of $\bbO$ for simplicity) and the Poincar\'e cylinder, it can be helpful to use a compactification that refines both (in the same way that the octagonal compactification refines both the radial compactification and the Penrose diagram). This compactification is 
	\begin{equation}
		[\bbO_0; \mathrm{cl}_{\bbO_0} \{|t|=r\} \cap \partial \bbO_0 ],
	\end{equation}
	i.e.\ perform a polar blowup of where the light cone hits the boundary of $\bbO_0$. To see that this gives the front face of the parabolic blowup, note that it resolves the ratio $(|t|-r)/(1/(|t|+r)) = t^2-r^2$. Alternatively, one can begin with the parabolic compactification and then perform a blowup of the corners. See \Cref{fig:mutual_refine}.
	\label{rem:mutual_refine}
\end{remark}

\begin{figure}[t!]
	\begin{tikzpicture}
		\draw[dashed] (-2.1,-2) rectangle (2.1,2.5);
		\coordinate (i) at (.75,.75); 
		\fill[lightgray!20] (-1.8,-1.8) -- (1.8,-1.8) arc(0:90:3.6) -- cycle;
		\draw (1.8,-1.8) arc(0:90:3.6);
		\fill[white] (i) circle (1);
		\begin{scope}
			\clip (-1.81,-1.81) -- (1.81,-1.81) arc(0:90:3.62) -- cycle;
			\filldraw[fill=white] (i) circle (1);
		\end{scope}
		\node () at (1,-1.2) {$\bbO_0$};
		\draw[dashed] (-1.8,-1.8) -- (.035,.035);
		\fill[black] (.035,.035) circle (.075);
		\draw[darkcandyapp, ->] (-.05,0.05) -- (-.4,-.3) node[left] {$\frac{1}{t+r}$};
		\draw[darkcandyapp, ->] (-.05,.05) to[out=125, in=280] (-.3,.6) node[left] {$t-r$};
	\end{tikzpicture}
	\begin{tikzpicture}
		\draw[dashed] (-2.1,-2) rectangle (2.1,2.5);
		\coordinate (i) at (.75,.75); 
		\coordinate (j) at (.5,.5); 
		\fill[lightgray!20] (-1.8,-1.8) -- (1.8,-1.8) arc(0:90:3.6) -- cycle;
		\draw (1.8,-1.8) arc(0:90:3.6);
		\begin{scope}
			\clip (-1.81,-1.81) -- (1.81,-1.81) arc(0:90:3.62) -- cycle;
			\filldraw[fill=white, rotate=45] (j) ellipse (1.4 and .4);
		\end{scope}
		\fill[white] (i) circle (1);
		\begin{scope}
			\clip (-1.81,-1.81) -- (1.81,-1.81) arc(0:90:3.62) -- cycle;
			\filldraw[fill=white] (i) circle (1);
		\end{scope}
		\fill[white, rotate=45] (j) ellipse (1.38 and .39);
		\node () at (0,2.1) {$[\bbO_0; \mathrm{cl} \{t=r\} \cap \partial \bbO_0 ]$};
		\draw[darkcandyapp, ->] (-.25,.35) to[out=110, in=-105] (-.3,1) node[left] {$t-r$}; 
		\draw[darkcandyapp, ->] (-.25,.35) to[out=230, in=60] (-.55,-.1) node[left] {$\frac{1}{t^2-r^2}$}; 
	\end{tikzpicture}
	\begin{tikzpicture}
	\draw[dashed] (-2.1,-2) rectangle (2.1,2.5);
	\coordinate (i) at (.75,.75); 
	\fill[lightgray!20] (-1.8,-1.8) -- (1.8,-1.8) arc(0:90:3.6) -- cycle;
	\draw (1.8,-1.8) arc(0:90:3.6);
	\fill[white, rotate=45] (i) ellipse (1.4 and .6);
	\begin{scope}
		\clip (-1.81,-1.81) -- (1.81,-1.81) arc(0:90:3.62) -- cycle;
		\filldraw[fill=white, rotate=45] (i) ellipse (1.4 and .6);
	\end{scope}
	\node () at (.7,-1.2) {$[\bbM;\scrI]_{\mathrm{par}}$};
	\fill[black] (.27,1.12) circle (.075);
	\fill[black] (1.13,.26) circle (.075);
	\draw[darkcandyapp, ->] (.15,1.1) to[out=225, in=65] (-.33,.4) node[left] {$\frac{1}{t^2-r^2}$};
	\draw[darkcandyapp, ->] (.15,1.1) to[out=135, in=-20] (-.5,1.46) node[ below] {$\frac{t-r}{t+r}$};
	\end{tikzpicture}
	\caption{A compactification (\textit{middle}) that refines both $\bbO_0=[\bbM;\scrI]$, $[\bbM;\scrI]_{\mathrm{par}}$, as described in \Cref{rem:mutual_refine}. It arises by blowing up the point at null infinity in $\bbO_0$ where the light cone (the one emanating from the origin) hits the front face (\textit{left}). It can also arise by blowing up the corners of $[\bbM;\scrI]_{\mathrm{par}}$ (\textit{right}). This makes it clear that rapid decay at null infinity means rapid decay at the front faces of $\bbO_0$, $[\bbM;\scrI]_{\mathrm{par}}$, these two notions being equivalent to rapid decay at all three faces of the middle compactification at null infinity. }
	\label{fig:mutual_refine}
\end{figure}

\subsection{The Poincar\'e Cylinder and hyperbolic coordinates}
\label{subsec:Poincare_cylinder}

By the (punctured) \emph{Poincar\'e cylinder}, we mean the mwc 
\begin{equation} 
	\overline{\bbR}\setminus \{0\}\times \bbB^d= ([-\infty,0)\cup (0,+\infty])_\tau \times \bbB^d_{\bfy},
\end{equation} 
equipped with the Lorentzian metric $g_{\mathrm{e,sc}}=-\dd \tau^2 + \tau^{2} g_{\bbH}$, where $\smash{g_{\bbH} \in {}^0\!\operatorname{Sym}^2 T^* \bbB}$ is the metric 
\begin{equation}
	g_{\bbH}(y_1,\ldots ,y_d) = \frac{4\sum_{j=1}^d \dd y_{j}^2}{(1-\sum_{j=1}^d y_{j}^2)^2}
\end{equation}
on the unit ball $\smash{\bbB^d_{\bfy}}$, which makes $\bbB^d$ into the Poincar\'e ball model of hyperbolic space. (Here, the `0' superscript in ${}^0\!\operatorname{Sym}^2 T^* \bbB$ refers to the 0-boundary fibration structure, the defining vector fields of which are those extendable vector fields that vanish at the boundary.) 
It is also useful to refer to the unpunctured cylinder $\overline{\bbR}\times \bbB^d$. 
As the subscript indicates, $g_{\mathrm{e,sc}}$ is an ``e,sc-metric'' on $\overline{\bbR}\times \bbB^d$, where 
\begin{itemize}
	\item the ``e'' (for ``edge'') refers to the structure of the metric at $\smash{\overline{\bbR}\times \partial \bbB^d}$, for which $1-y^2$ is a bdf, where $y^2 = \sum_{j=1}^d y_j^2$, and
	\item the ``sc'' refers to the structure at $\smash{\partial \overline{\bbR}\times \bbB^d}$, for which $\langle\tau\rangle^{-1}$ is a bdf. 
\end{itemize}

The set of \emph{e,sc-vector fields} on the unpunctured cylinder is defined by 
\begin{equation}
\calV_{\mathrm{e,sc}} = \operatorname{span}_{C^\infty(\overline{\bbR} \times \bbB^d)}[ \{\partial_\tau\} \cup \langle \tau \rangle^{-1} \calV_0(\bbB^d)] = \operatorname{span}_{C^\infty(\overline{\bbR}\times \bbB^d)}[\{\partial_\tau\} \cup \{\langle\tau\rangle^{-1} (1-y^2)\partial_{y_j} \}_{j=1}^d],
\end{equation}
where $\calV_0(\bbB^d)$ is the set of vector fields on $\bbB^d$ that vanish at $\partial \bbB^d$ (considered as vector fields on the Poincar\'e cylinder that are constant in $\tau$). Likewise for $\calV_{\mathrm{e,sc}}((\overline{\bbR}\setminus\{0\})\times \bbB^d)$.

Let $\bbX\subseteq \bbO$ denote the relatively open subset 
\begin{equation} 
	\bbX=(\operatorname{cl}_\bbO\{ (t,\bfx)\in \bbR^{1,d}:t^2\geq r^2\})^\circ \subseteq \bbO.
\end{equation} 
This is a non-compact sub-mwc (it includes part of the boundary of $\bbO$, but it is disjoint from $\mathrm{cl}_\bbO\{t^2=r^2\}$). Then, $\bbX^\circ = \{(t,\bfx)\in \bbR^{1,d}:t^2>r^2\}$.  In \Cref{fig:o}, $\bbX$ is the shaded region, including the points on the boundary but not including the light cone (and not including the points in $\bbO$ hit by the light cone).

Consider the map 
\begin{equation} 
	\iota:\bbX_{t,\bfx}^\circ \to \bbR_\tau\backslash \{0\} \times \bbB^{d\circ}_\bfy 
\end{equation}
given by 
\begin{align}
	\begin{split} 
		\tau &= ( t^2 - r^2)^{1/2} \operatorname{sign}(t) \in \bbR\backslash \{0\} , \\
		\bfy &= \bfx ( |t| + (t^2-r^2)^{1/2} )^{-1} \operatorname{sign}(t) \in \bbB^{d\circ}. 
	\end{split} 
	\label{eq:hyperbolic_coordinates_definition} 
\end{align}
The set-theoretic inverse $\iota^{-1}: \bbR_\tau \backslash \{0\}\times \bbB^{d\circ}_{\bfy}\to \bbX^\circ_{t,\bfx}$ of $\iota$ is given by 
\begin{equation}
	\bbR_\tau \backslash \{0\} \times \bbB^{d\circ}_{\bfy}\ni (\tau,\bfy)\mapsto \Big( \tau \Big( \frac{1+y^2}{1-y^2} \Big), \frac{2\tau \bfy}{1-y^2}   \Big).
	\label{eq:inverse_hyperbolic_coordinates}
\end{equation} 
It is apparent from \cref{eq:hyperbolic_coordinates_definition}, \cref{eq:inverse_hyperbolic_coordinates} that $\iota$ and and its set-theoretic inverse are both smooth. So, $\iota$ is a diffeomorphism.
So, given any smooth vector field $V$ on $\bbR_\tau\backslash \{0\} \times \bbB^{d\circ}_\bfy$ we can pull back $V$ by $\iota$ to form a vector field $\iota^* V$ on $\bbX_{t,\bfx}^\circ$, and likewise for differential operators. 

The map $\iota$ defines \emph{hyperbolic coordinates} on $\bbX$. The significance of the e,sc- metric $g_{\mathrm{e,sc}}$ defined above is the following standard fact:
\begin{proposition}
	$g_{\mathrm{e,sc}}=\iota_* g_\bbM$. That is, $g_{\mathrm{e,sc}}$ is just the Minkowski metric rewritten in hyperbolic coordinates.
	\label{prop:hyperbolic=Minkowski}
\end{proposition}
For the reader who has not seen this before, we include a direct computation.
\begin{proof}
	For simplicity, we work in $\{t>0\}$ (and omit $\iota$ from the notation). 
	Then, \cref{eq:inverse_hyperbolic_coordinates} gives 
	\begin{align}
		\dd t = \frac{1}{1-y^2}\Big( (1+y^2)  \dd \tau + \frac{4\tau y \dd y}{1-y^2} \Big), \quad 
		\dd x_j =  \frac{2}{1-y^2}\Big( y_j \dd \tau  + \tau \dd y_j +  \frac{2\tau y_j y \dd y}{1-y^2} \Big).
	\end{align}
	So, 
	\begin{equation}
		\dd t^2 = \frac{1}{(1-y^2)^2} \Big(  (1+y^2)^2  \dd \tau^2 +  8\tau y\frac{1+y^2}{1-y^2} \dd \tau \odot \dd y+ \frac{16\tau^2 y^2 \dd y^2}{(1-y^2)^2} \Big), 
	\end{equation}
	\begin{multline}
		\dd x_j^2 = \frac{4}{(1-y^2)^2} \Big[ y_j^2 \dd \tau^2 + \tau^2 \dd y_j^2 +  \frac{4\tau^2 y_j^2 y^2 \dd y^2}{(1-y^2)^2} + 2 y_j \tau \dd \tau \odot \dd y_j + \frac{4\tau y_j^2 y \dd \tau \odot \dd y}{1-y^2}   \\ +  \frac{4\tau^2 y_jy \dd y_j \odot \dd y}{1-y^2}\Big].
	\end{multline}
	Summing this over $j$ yields
	\begin{equation}
		\sum_{j=1}^d \dd x_j^2 = \frac{4}{(1-y^2)^2} \Big[ y^2 \dd \tau^2 + \tau^2 \sum_{j=1}^d\dd y_j^2 +  \frac{4\tau^2 y^2}{(1-y^2)^2} \dd y^2 + \frac{2\tau (1+y^2)}{1-y^2} \dd \tau \odot \dd y  \Big].
	\end{equation}
	So, 
	\begin{equation}
		g_{\bbM}=-\dd t^2 + \sum_{j=1}^d\dd x_j^2 = -\dd \tau^2  + \frac{4\tau^2}{(1-y^2)^2} \sum_{j=1}^d \dd y_j^2 = g_{\mathrm{e,sc}}.
	\end{equation}
\end{proof}
Consequently, 
$\square = \iota^* \square_{g_{\mathrm{e,sc}}}$, where 
\begin{equation}
	\square_{g_{\mathrm{e,sc}}}=\partial_\tau^2 + d \tau^{-1} \partial_\tau + \tau^{-2} \triangle_{\bbH^d} \in \operatorname{Diff}_{\mathrm{e,sc}}^{2,0,0}(\overline{\bbR} \backslash \{0\} \times \bbB^d)
	\label{eq:misc_34b}
\end{equation}
is the d'Alembertian of the Poincar\'e cylinder. (This is just a more precise way of saying \cref{eq:Minkowski_hyper}.)
Another way of deriving this is using
\begin{align}
	\iota^* \frac{\partial}{\partial \tau} &= \frac{t}{(t^2-r^2)^{1/2}} \partial_t + \frac{1}{(t^2-r^2)^{1/2}} \sum_{j=1}^d x_j \partial_{x_j}, 
	\label{eq:misc_ptf} \\ 
	\iota^*\frac{\partial}{\partial y_j} &= x_j  \Big(\frac{t+(t^2-r^2)^{1/2}}{(t^2-r^2)^{1/2}}\Big) \partial_t + (t+ (t^2-r^2)^{1/2}) \partial_{x_j} + \sum_{k=1}^d \frac{x_jx_k}{(t^2-r^2)^{1/2}} \partial_{x_k}.
	\label{eq:misc_pff}
\end{align}

\begin{proposition}
	The map $\iota$ extends to a smooth map $\smash{\bbX \to \overline{\bbR}\times \bbB^d}$. 
	\label{prop:hyperbolic_smoothness}
\end{proposition}
\begin{proof}
	It suffices to work in local coordinate charts near $\partial \bbO$: 
	\begin{enumerate}[label=(\Roman*)]
		\item (Away from null infinity.) For $c>0$ and $\sigma \in \{-1,+1\}$, in the set $\{t^2>(1+c)r^2,\sigma t>+ 1\}$ we can use the coordinates $\rho = 1/|t|$ and $\hat{\bfx} = \bfx/|t|$,  in terms of which 
		\begin{align}
			\tau^{-1} &= \sigma \rho (1-\lVert \hat{\bfx} \rVert^2)^{-1/2}, \\
			\bfy &= \sigma \hat{\bfx}(1+ (1-\lVert \hat{\bfx} \rVert^2)^{1/2})^{-1}.
		\end{align} 
		In $\{t^2>(1+c)r^2,\sigma t>+ 1\}$, we have $\rho < 1$ and $\lVert \hat{\bfx} \rVert < (1+c)^{-1/2}$, so $\tau^{-1}$ and $\bfy$ are smooth functions of $\rho$ and $\hat{\bfx}$, all the way down to $\rho = 0$. 
		\item (Near the timelike corner of null infinity.) In the set $\{t^2<(1+c)r^2, \sigma t>+1\}$, we instead work with the coordinates $\varrho_{\mathrm{Ff}}$, $\varrho_{\mathrm{nFf}}$, along with $\theta = \bfx/r \in \bbS^{d-1}$. In terms of these, 
		\begin{align}
			\tau^{-1} &= \sigma  \varrho_{\mathrm{nf}}\varrho_{\mathrm{nFf}} 
			\label{eq:misc_161}\\
			\bfy &= \sigma \theta (1-\varrho_{\mathrm{nf}})(1+\varrho_{\mathrm{nf}})^{-1}.
			\label{eq:misc_162}
		\end{align}
		Again, we see that, locally, $\tau^{-1}$ and $\bfy$ are smooth functions of $\varrho_{\mathrm{nf}}$ and $\varrho_{\mathrm{nFf}}$, now all the way to $\partial ([0,\infty)_{\varrho_{\mathrm{nf}}}\times [0,\infty)_{\varrho_{\mathrm{nFf}}})$.
	\end{enumerate}
\end{proof}

We denote the extension of $\iota$ using the same symbol.  

Note that, when $\varrho_{\mathrm{nf}}=0$, \cref{eq:misc_161} says that $\langle \tau \rangle^{-1}=0$ and \cref{eq:misc_162} says that $\lVert \bfy \rVert = 1$. So, $\iota$ maps null infinity to the \emph{corner} of the Poincar\'e cylinder. Compare with \Cref{rem:parabolic_non-invariant}.

As a corollary of the previous proposition,
\begin{equation}
f\in C^\infty(\overline{\bbR} \setminus \{0\}\times \bbB^d) \Rightarrow \iota^* f \in C^\infty(\bbX).
\label{eq:pullback}
\end{equation}
For such $f$, $\iota^* f$ is constant on each component of null infinity. Smoothness on $(\overline{\bbR}\setminus \{0\})\times \bbB^d$ is therefore more restrictive than smoothness on $\bbX$. 
If $f\in C^\infty(\bbX^\circ)$, then even though we may not have $f=\iota^* f_0$ for some $f_0\in C^\infty((\overline{\bbR}\backslash \{0\})\times \bbB^{d\circ} )$ we can still form 
\begin{equation} 
	\iota_* f \in C^\infty((\bbR\backslash \{0\})\times \bbB^{d\circ}),
\end{equation} 
since $\iota$ is a diffeomorphism in the interior. 

We read off of the proof of the previous proposition that
\begin{align}
	\iota^* \langle \tau \rangle^{-1} &\in \varrho_{\mathrm{Pf}} \varrho_{\mathrm{nPf}} \varrho_{\mathrm{nFf}} \varrho_{\mathrm{Ff}} C^\infty(\bbX ; \bbR^+ ).  \label{eq:misc_vr1}
	\intertext{Moreover, from the computation $\iota^*(1-y^2) = 4 \varrho_{\mathrm{nf}} (1+\varrho_{\mathrm{nf}})^{-2}$, one gets } 
	\iota^* (1-y^2) &\in  \varrho_{\mathrm{nPf}} \varrho_{\mathrm{nFf}}  C^\infty(\bbX ; \bbR^+ ).
	\label{eq:misc_vr2}
\end{align}
In $\Omega_{\mathrm{nfTf},\pm ,0}$, 
\begin{align}
	\iota^* \frac{\partial}{\partial \tau} &= \mp \varrho_{\mathrm{nf}} \varrho_{\mathrm{Tf}}^2 \frac{\partial}{\partial\varrho_{\mathrm{Tf}}} \label{eq:misc_613} \\ 
	\iota^* \frac{\partial}{\partial y} &= - \frac{(1+\varrho_{\mathrm{nf}})^2}{2} \frac{\partial}{\partial \varrho_{\mathrm{nf}}} + \frac{(1+\varrho_{\mathrm{nf}})^2}{2} \frac{\varrho_{\mathrm{Tf}}}{\varrho_{\mathrm{nf}}} \frac{\partial}{\partial \varrho_{\mathrm{Tf}}}, \label{eq:misc_614}
\end{align}
where $\partial_y$ is shorthand for $\partial_y = y^{-1}\sum_{j=1}^d y_j \partial_{y_j}$. 
From these formulas and the observation that angular derivatives in $\overline{\bbR}\times \bbB^d$ pull  back to angular derivatives on $\bbX$, we read:
\begin{proposition}
	If $V \in \calV_{\mathrm{e,sc}}(\overline{\bbR}\times \bbB^d)$, then $\iota^* V \in  \calV_{\mathrm{de,sc}}(\bbX)$, and 
	the elements of $\{\iota^* V: V\in \calV_{\mathrm{e,sc}}(\overline{\bbR}\times \bbB^d)\}$
	generate $\calV_{\mathrm{de,sc}}(\bbX)$ as a $C^\infty(\bbX)$-module.  
	\label{prop:pullback_structure}
\end{proposition}
\begin{proof}
	In the $d=1$ case, this follows from \cref{eq:misc_613}, \cref{eq:misc_614}. To handle the $d\geq 2$ case, it just needs to be seen that the angular derivatives 
	\begin{equation} 
		\langle \tau \rangle^{-1}(1-y^2)\partial_{\theta_k} \in \calV_{\mathrm{e,sc}}(\overline{\bbR}\times \dot{\bbB}^d)
	\end{equation} 
	(the dot in $\smash{\dot{\bbB}^d_{\bfy}}$ denoting that the ball has been punctured at $\bfy=0$)
	are, when pulled back via $\iota$, proportional to $\varrho_{\mathrm{Pf}}\varrho_{\mathrm{nPf}}^2\varrho_{\mathrm{nFf}}^2 \varrho_{\mathrm{Ff}} \partial_{\theta_k}$ (using \cref{eq:misc_vr1}, \cref{eq:misc_vr2}), where the constant of proportionality is nonvanishing at the boundary. 
\end{proof}
Let
\begin{equation} 
	\operatorname{Diff}_{\mathrm{e,sc}}^{m,s,\varsigma}(\overline{\bbR} \times \bbB^d) = \langle \tau \rangle^s (1-y^2)^{-\varsigma} \operatorname{Diff}_{\mathrm{e,sc}}^{m,0,0}(\overline{\bbR} \times \bbB^d)
\end{equation} 
denote the set of e,sc-differential operators of order at most $m$, weighted by $\langle \tau \rangle^s$ and $(1-y^2)^{-\varsigma}$. In addition, let 
\begin{equation}
	\operatorname{Diff}_{\mathrm{de,sc}}^{m,( s,s+\varsigma,s+\varsigma,s)}(\bbX) = \operatorname{span}_{C^\infty_{\mathrm{c}}(\bbX)} \operatorname{Diff}_{\mathrm{de,sc}}^{m,( s,s+\varsigma,\infty,s+\varsigma,s)}(\bbO).
\end{equation}
\begin{proposition}
	Given any $L\in \operatorname{Diff}_{\mathrm{e,sc}}^{m,s,\varsigma}(\overline{\bbR} \times \bbB^d)$, 
	\begin{equation} 
		\chi \iota^* L\in  \operatorname{Diff}_{\mathrm{de,sc}}^{m, (s,s+\varsigma,\infty,s+\varsigma,s)}(\bbO),
	\end{equation} 
	for any $\chi \in C_{\mathrm{c}}^\infty(\bbX)$.
	Moreover, the differential operators on $\bbX$ of this form
	generate the  $C^\infty_{\mathrm{c}}(\bbX)$-module $\operatorname{Diff}_{\mathrm{de,sc}}^{m,( s,s+\varsigma,s+\varsigma,s)}(\bbX)$.
	\label{prop:scedge->oct}
\end{proposition}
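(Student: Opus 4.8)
The plan is to reduce both assertions, via \Cref{prop:pullback_structure} together with the weight identities \cref{eq:misc_vr1,eq:misc_vr2}, to elementary statements about $\iota^\ast$ applied to single e,sc-vector fields and to powers of $\langle\tau\rangle$ and $1-y^2$. To dispatch the membership statement, write $L=\langle\tau\rangle^{s}(1-y^2)^{-\varsigma}L_0$ with $L_0\in\operatorname{Diff}^{m,0,0}_{\mathrm{e,sc}}(\overline{\bbR}\times\bbB^d)$ and expand $L_0=\sum_\alpha h_\alpha V_{\alpha,1}\cdots V_{\alpha,k_\alpha}$ with $h_\alpha\in C^\infty(\overline{\bbR}\times\bbB^d)$, $V_{\alpha,i}\in\calV_{\mathrm{e,sc}}$, and $k_\alpha\le m$. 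Since $\iota$ restricts to a diffeomorphism of interiors, pullback commutes with composition of differential operators, so $\iota^\ast L_0=\sum_\alpha(\iota^\ast h_\alpha)(\iota^\ast V_{\alpha,1})\cdots(\iota^\ast V_{\alpha,k_\alpha})$; by \cref{eq:pullback} the $\iota^\ast h_\alpha$ lie in $C^\infty(\bbX)$ and by \Cref{prop:pullback_structure} the $\iota^\ast V_{\alpha,i}$ lie in $\calV_{\mathrm{de,sc}}(\bbX)$, so $\iota^\ast L_0$ is a de,sc-differential operator of order $\le m$ on $\bbX$ with coefficients in $C^\infty(\bbX)$. Multiplying by the scalars $\iota^\ast\langle\tau\rangle^{s}$ and $\iota^\ast(1-y^2)^{-\varsigma}$ and invoking \cref{eq:misc_vr1,eq:misc_vr2}, which identify these, up to positive smooth factors, with $\varrho_{\mathrm{Pf}}^{-s}\varrho_{\mathrm{nPf}}^{-s}\varrho_{\mathrm{nFf}}^{-s}\varrho_{\mathrm{Ff}}^{-s}$ and $\varrho_{\mathrm{nPf}}^{-\varsigma}\varrho_{\mathrm{nFf}}^{-\varsigma}$, shows $\iota^\ast L$ has order $\le m$ with coefficients in $\varrho_{\mathrm{Pf}}^{-s}\varrho_{\mathrm{nPf}}^{-s-\varsigma}\varrho_{\mathrm{nFf}}^{-s-\varsigma}\varrho_{\mathrm{Ff}}^{-s}C^\infty(\bbX)$ ($\mathrm{Sf}$ does not enter, as $\bbX$ is disjoint from $\mathrm{Sf}$). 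Finally, $\operatorname{supp}\chi$ is a compact subset of the relatively open sub-mwc $\bbX\subseteq\bbO$, hence disjoint from $\mathrm{Sf}$ and from $\mathrm{cl}_\bbO\{t^2=r^2\}$, so $\chi\iota^\ast L$ extends by zero across $\mathrm{cl}_\bbO\{t^2=r^2\}$ to a de,sc-operator on all of $\bbO$ whose coefficients are supported in a compact subset of $\bbX$ and therefore vanish to infinite order at $\mathrm{Sf}$; this is precisely $\chi\iota^\ast L\in\operatorname{Diff}^{m,(s,s+\varsigma,-\infty,s+\varsigma,s)}_{\mathrm{de,sc}}(\bbO)$.

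For the generating statement, one direction is the membership just shown (and since multiplying by $C^\infty_{\mathrm{c}}(\bbX)$ only strengthens support away from $\mathrm{Sf}$, it is immaterial whether the ambient $\operatorname{Diff}_{\mathrm{de,sc}}(\bbO)$ carries the index $-\infty$ or $\infty$ at $\mathrm{Sf}$). For the converse, I would first reduce to $s=\varsigma=0$: given $P\in\operatorname{Diff}^{m,(s,s+\varsigma,s+\varsigma,s)}_{\mathrm{de,sc}}(\bbX)$, write $P=\sum_i\chi_i P_i$ with $\chi_i\in C^\infty_{\mathrm{c}}(\bbX)$ and $P_i=\varrho_{\mathrm{Pf}}^{-s}\varrho_{\mathrm{nPf}}^{-s-\varsigma}\varrho_{\mathrm{nFf}}^{-s-\varsigma}\varrho_{\mathrm{Ff}}^{-s}\tilde P_i$, $\tilde P_i$ of order $\le m$ and smooth away from $\mathrm{Sf}$; on $\operatorname{supp}\chi_i$ the weight prefactor equals $\iota^\ast(\langle\tau\rangle^{s}(1-y^2)^{-\varsigma})$ times a positive smooth factor (again \cref{eq:misc_vr1,eq:misc_vr2}), so, the prefactors being multiplication operators and hence commuting with the coefficients, $\chi_i P_i=\iota^\ast(\langle\tau\rangle^{s}(1-y^2)^{-\varsigma})\,Q_i$ for a compactly supported de,sc-operator $Q_i$ of order $\le m$ with $C^\infty_{\mathrm{c}}(\bbX)$ coefficients; once $Q_i$ is handled by the $s=\varsigma=0$ case, the scalar $\iota^\ast(\langle\tau\rangle^{s}(1-y^2)^{-\varsigma})$ is reabsorbed into the pullback using that $\iota^\ast$ is an algebra homomorphism, turning $\iota^\ast L_0$ into $\iota^\ast(\langle\tau\rangle^{s}(1-y^2)^{-\varsigma}L_0)=\iota^\ast L$. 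Thus it remains to show that any de,sc-operator $Q$ of order $\le m$ on $\bbX$ with $C^\infty(\bbX)$ coefficients is a finite $C^\infty(\bbX)$-combination of operators $\iota^\ast L_0$ with $L_0\in\operatorname{Diff}^{m,0,0}_{\mathrm{e,sc}}$. For this, expand $Q$ as a $C^\infty(\bbX)$-combination of products of at most $m$ elements of $\calV_{\mathrm{de,sc}}(\bbX)$, substitute for each factor its \Cref{prop:pullback_structure} expansion $\sum_l f_l\,\iota^\ast V_l$, and commute every $C^\infty(\bbX)$ coefficient to the left past the pulled-back vector fields — each commutation produces the coefficient times the operator plus a correction with one fewer vector field and (still) a smooth coefficient — so that downward induction on the number of factors yields $Q=\sum_\beta g_\beta\,\iota^\ast(V_{\beta,1}\cdots V_{\beta,j_\beta})$ with $g_\beta\in C^\infty(\bbX)$, $j_\beta\le m$. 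Since $P$ has coefficients compactly supported in $\bbX$, the same holds for every coefficient produced this way; inserting in each term $g_\beta\iota^\ast L_\beta$ a cutoff $\kappa_\beta\in C^\infty_{\mathrm{c}}(\bbX)$ equal to $1$ near $\operatorname{supp} g_\beta$ exhibits $P=\sum_\beta g_\beta(\kappa_\beta\iota^\ast L_\beta)$, of the required form.

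The genuinely delicate point, and the reason the generating statement is not a triviality, is that $\iota$ is only a diffeomorphism of \emph{interiors}: $\iota^\ast C^\infty(\overline{\bbR}\times\bbB^d)$ is a proper subspace of $C^\infty(\bbX)$ (smoothness on the Poincar\'e cylinder is strictly stronger, cf.\ the remarks after \cref{eq:pullback}), so one cannot simply push $P$ forward through $\iota$; the coefficients of $P$ that fail to be pullbacks must instead be carried by the $C^\infty_{\mathrm{c}}(\bbX)$-module structure, which is exactly why the cutoff $\chi$ is indispensable. The remaining bookkeeping — keeping track of the differential order through the commutations, and verifying that the various cutoffs can be inserted without changing the operator — is routine, using only locality of de,sc-differential operators and the fact that $\calV_{\mathrm{de,sc}}$ is a sheaf whose local structure on $\bbX$ agrees with that on $\bbO$ near $\operatorname{supp}\chi$.
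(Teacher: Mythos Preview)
Your proof is correct and follows essentially the same approach as the paper, relying on the same ingredients: \cref{eq:pullback}, \cref{eq:misc_vr1}, \cref{eq:misc_vr2}, and \Cref{prop:pullback_structure}. The only cosmetic difference is that for the membership claim the paper observes that the set of $L$ for which the conclusion holds is a subring of $\operatorname{Diff}_{\mathrm{e,sc}}^{\infty,\infty,\infty}$ and then checks ring generators, whereas you decompose $L$ directly as a weight times a sum of products of e,sc-vector fields; and for the generating claim the paper simply cites the second clause of \Cref{prop:pullback_structure} in one sentence, whereas you write out the commutation/induction argument explicitly (including the reduction to $s=\varsigma=0$ and the observation that $\iota^\ast C^\infty(\overline{\bbR}\times\bbB^d)\subsetneq C^\infty(\bbX)$ forces the extra coefficients into the $C^\infty_{\mathrm c}(\bbX)$-module structure).
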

\begin{proof}
	The subset of $\operatorname{Diff}_{\mathrm{e,sc}}^{\infty,\infty,\infty}(\overline{\bbR}\times \bbB^d)$ consisting of $L$ such that $\chi \iota^* L \in \operatorname{Diff}_{\mathrm{de,sc}}^{m, (s,s+\varsigma,s+\varsigma,s)}(\bbX)$ whenever $\smash{L \in \operatorname{Diff}_{\mathrm{e,sc}}^{m,s,\varsigma}(\overline{\bbR} \times \bbB^d)}$ is a subring of $\smash{\operatorname{Diff}_{\mathrm{e,sc}}^{\infty,\infty,\infty}(\overline{\bbR}\times \bbB^d)}$. So, in order to prove the first part of the proposition, it suffices to check the case
	\begin{equation}
		L \in C^\infty(\overline{\bbR}\times \bbB^d) \cup \{ \langle \tau \rangle^{-s} (1-y^2)^{-\varsigma} : s,\varsigma \in \bbR\} \cup \{\partial_\tau \} \cup \{ \langle \tau \rangle^{-1} (1-y^2) \partial_{y_j}\}_{j=1}^d, 
	\end{equation}
	as the elements of the set on the right-hand side generate $\smash{\operatorname{Diff}_{\mathrm{e,sc}}^{\infty,\infty,\infty}(\overline{\bbR}\times \bbB^d)}$ as a ring. 
	Each of these cases we have already checked, as recorded in \cref{eq:pullback}, \cref{eq:misc_vr1}, \cref{eq:misc_vr2}, and \Cref{prop:pullback_structure}.
	Likewise, the second part of the proposition follows from the second clause of \Cref{prop:pullback_structure}. 
\end{proof}

For each $m\in \bbN$ and $s,\varsigma \in \bbR$, let 
\begin{equation}
	H_{\mathrm{e,sc}}^{m,s,\varsigma}(\overline{\bbR}\times \bbB^d) 
	=\{u \in L^2(\overline{\bbR}\times \bbB^d) : Lu \in L^2(\overline{\bbR}\times \bbB^d,g_{\mathrm{e,sc}})\text{ for all }L\in\operatorname{Diff}_{\mathrm{e,sc}}^{m,s,\varsigma}(\overline{\bbR} \times \bbB^d)\}.
\end{equation}
Also, let $H_{\mathrm{de,sc,loc}}^{m,(s,s+\varsigma,s+\varsigma,s)}(\bbX)$ denote the set of distributions which lie in $H_{\mathrm{de,sc}}^{m,(s,s+\varsigma,\infty,s+\varsigma,s)}(\bbX)$ upon multiplication by an element of $C_{\mathrm{c}}^\infty(\bbX)$. 

\begin{proposition}
	For any $m\in \bbN$ and $s,\varsigma\in \bbR$, 
	\begin{equation} 
		\iota^*H_{\mathrm{e,sc}}^{m,s,\varsigma}(\overline{\bbR}\times \bbB^d) \subseteq H_{\mathrm{de,sc,loc}}^{m,(s,s+\varsigma,s+\varsigma,s)}(\bbX).
	\end{equation} 
	
	Conversely, if $\chi \in C_{\mathrm{c}}^\infty(\bbX)$, then $u\in H_{\mathrm{de,sc}}^{m,(s,s+\varsigma,\infty,s+\varsigma,s)}(\bbO) \Rightarrow  \iota_*  \chi u \in H_{\mathrm{e,sc}}^{m,s,\varsigma}(\overline{\bbR}\times \bbB^d)$.
	\label{prop:Sobolev_conversion}
\end{proposition}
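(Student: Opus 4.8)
The plan is to transport, along $\iota$, the fact that an integer-order Sobolev space is cut out by the differential operators of the corresponding order, using \Cref{prop:scedge->oct} to pass between e,sc- and de,sc-differential operators. Throughout, I use that $\iota$ is an isometry of $(\bbX^\circ,g_{\bbM})$ onto $(\bbR_\tau\setminus\{0\}\times\bbB^{d\circ},g_{\mathrm{e,sc}})$ (this is \cref{eq:misc_34b}) and, by \Cref{prop:hyperbolic_smoothness}, identifies $\bbX$ with the punctured cylinder $\overline{\bbR}\setminus\{0\}\times\bbB^d$. Since every distribution appearing below is cut off by an element of $C^\infty_{\mathrm{c}}(\bbX)$ and $\bbX$ is disjoint from $\mathrm{cl}_\bbO\{t^2=r^2\}$ (hence from $\mathrm{Sf}$, and its image from $\{\tau=0\}$), the $\mathrm{Sf}$-weight is everywhere immaterial --- this is why it is set to $-\infty$ (resp.\ $\infty$) in the statement.

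First I would reduce to $(s,\varsigma)=(0,0)$. Writing $u=\langle\tau\rangle^{-s}(1-y^2)^{\varsigma}v$ with $v\in H^{m,0,0}_{\mathrm{e,sc}}$ and applying \cref{eq:misc_vr1} and \cref{eq:misc_vr2} gives $\iota^*(\langle\tau\rangle^{-s}(1-y^2)^{\varsigma})\in\varrho_{\mathrm{Pf}}^{s}\varrho_{\mathrm{nPf}}^{s+\varsigma}\varrho_{\mathrm{nFf}}^{s+\varsigma}\varrho_{\mathrm{Ff}}^{s}C^\infty(\bbX;\bbR^+)$, and, reading the same relations the other way, $\iota_*(\varrho_{\mathrm{Pf}}^{s}\varrho_{\mathrm{nPf}}^{s+\varsigma}\varrho_{\mathrm{nFf}}^{s+\varsigma}\varrho_{\mathrm{Ff}}^{s})\in\langle\tau\rangle^{-s}(1-y^2)^{\varsigma}C^\infty(\overline{\bbR}\setminus\{0\}\times\bbB^d;\bbR^+)$; multiplying by these positive weights turns the weighted assertions into the case $s=\varsigma=0$. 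For the base case $m=0$: taking $L^2(\overline{\bbR}\times\bbB^d)$, as elsewhere, with respect to the metric density $|\mathrm{d}g_{\mathrm{e,sc}}|$ (so that $\square_{g_{\mathrm{e,sc}}}$ is formally self-adjoint), the isometry property of $\iota$ yields $\iota^*L^2(\overline{\bbR}\setminus\{0\}\times\bbB^d)=L^2(\bbX^\circ,|\mathrm{d}t\,\mathrm{d}\bfx|)$, which after multiplication by a $C^\infty_{\mathrm{c}}(\bbX)$ cutoff is precisely $H^{0,(0,0,0,0)}_{\mathrm{de,sc,loc}}(\bbX)$, and conversely; that is the proposition for $m=0$.

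For general $m\in\bbN$, recall (as in the sc-calculus) that membership in $H^{m,\mathsf{0}}_{\mathrm{de,sc}}(\bbO)$, in $H^{m,0,0}_{\mathrm{e,sc}}$, and --- after a $C^\infty_{\mathrm{c}}(\bbX)$ cutoff --- in $H^{m,(0,0,0,0)}_{\mathrm{de,sc,loc}}(\bbX)$ is detected by applying, respectively, the operators of $\operatorname{Diff}^{m,\mathsf{0}}_{\mathrm{de,sc}}(\bbO)$, $\operatorname{Diff}^{m,0,0}_{\mathrm{e,sc}}$, and $\operatorname{Diff}^{m,(0,0,0,0)}_{\mathrm{de,sc}}(\bbX)$ and asking for the result to lie in $L^2$. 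For the inclusion $\iota^*H^{m,0,0}_{\mathrm{e,sc}}\subseteq H^{m,(0,0,0,0)}_{\mathrm{de,sc,loc}}(\bbX)$, fix $\chi\in C^\infty_{\mathrm{c}}(\bbX)$ and $v\in H^{m,0,0}_{\mathrm{e,sc}}$; by the second clause of \Cref{prop:scedge->oct} it is enough to check $P(\chi\iota^*v)\in L^2(\bbR^{1,d})$ for $P=\psi\,\iota^*L$ with $\psi\in C^\infty_{\mathrm{c}}(\bbX)$ and $L\in\operatorname{Diff}^{m,0,0}_{\mathrm{e,sc}}$, and there $P(\chi\iota^*v)=\psi\,\iota^*(L((\iota_*\chi)v))$ with $L((\iota_*\chi)v)\in L^2$ by the definition of $H^{m,0,0}_{\mathrm{e,sc}}$, so the $m=0$ case closes it. The reverse inclusion is the mirror image: given $u\in H^{m,\mathsf{0}}_{\mathrm{de,sc}}(\bbO)$ (the $\mathrm{Sf}$-weight having been dropped as noted), $\chi\in C^\infty_{\mathrm{c}}(\bbX)$, and $L\in\operatorname{Diff}^{m,0,0}_{\mathrm{e,sc}}$, choose $\kappa'\in C^\infty_{\mathrm{c}}(\overline{\bbR}\setminus\{0\}\times\bbB^d)$ equal to $1$ near $\operatorname{supp}(\iota_*\chi u)$; then, $L$ being local, $L(\iota_*\chi u)=\kappa'L(\iota_*\chi u)=\iota_*((\iota^*(\kappa'L))(\chi u))$, and since $\iota^*\kappa'\in C^\infty_{\mathrm{c}}(\bbX)$ the first clause of \Cref{prop:scedge->oct} gives $\iota^*(\kappa'L)=(\iota^*\kappa')\iota^*L\in\operatorname{Diff}^{m,(0,0,0,0)}_{\mathrm{de,sc}}(\bbX)$, so $(\iota^*(\kappa'L))(\chi u)\in L^2(\bbR^{1,d})$, hence $L(\iota_*\chi u)\in L^2$; as $L$ was arbitrary, $\iota_*(\chi u)\in H^{m,0,0}_{\mathrm{e,sc}}$. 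Undoing the weight reduction gives both halves.

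I expect the only real friction to be bookkeeping: the $m=0$ identification of the two $L^2$ spaces (resting on the isometry \cref{eq:misc_34b} together with the precise weight shifts \cref{eq:misc_vr1}, \cref{eq:misc_vr2}), and the careful handling of $C^\infty_{\mathrm{c}}(\bbX)$ cutoffs that lets us replace global de,sc-operators by cut-off ones lying in the $C^\infty_{\mathrm{c}}(\bbX)$-module of \Cref{prop:scedge->oct} and that makes the $\mathrm{Sf}$-direction disappear. The substantive content --- that pullbacks of e,sc-differential operators are de,sc-differential operators, and conversely generate them --- is supplied entirely by \Cref{prop:scedge->oct}.
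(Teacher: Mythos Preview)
Your argument is correct and follows essentially the same route as the paper: both handle $m=0$ by comparing $L^2$ densities and then invoke \Cref{prop:scedge->oct} for higher $m$ (you reduce the weights first via \cref{eq:misc_vr1}--\cref{eq:misc_vr2}, whereas the paper folds them into the weighted version of \Cref{prop:scedge->oct}). One caveat worth tightening: \Cref{prop:hyperbolic_smoothness} only asserts that $\iota:\bbX\to\overline{\bbR}\times\bbB^d$ extends smoothly, not that it is a diffeomorphism onto the punctured cylinder (it is not, near the corners), so $\iota_*\chi\notin C^\infty(\overline{\bbR}\times\bbB^d)$ in general; your step ``$L((\iota_*\chi)v)\in L^2$ by the definition of $H^{m,0,0}_{\mathrm{e,sc}}$'' still goes through, but because \Cref{prop:pullback_structure} shows every e,sc-derivative of $\iota_*\chi$ is bounded (being $\iota_*$ of a de,sc-derivative of $\chi$), not because $\iota_*\chi$ is a smooth multiplier.
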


\begin{proof}
	The $m,s,\varsigma=0$ case follows immediately from \Cref{prop:hyperbolic=Minkowski}. 
	The case of general $m\in \bbN$ and $s,\varsigma \in \bbR$ follows from the already considered case along with \Cref{prop:scedge->oct}.
\end{proof}

\subsection{Test Modules}
\label{subsec:test_modules}

There are six test modules $\frakM^\varsigma_\sigma,\frakN_\sigma \subseteq \Psi_{\mathrm{de,sc}}^{1,\mathsf{1}}$ that we use, where $\varsigma,\sigma \in \{-,+\}$. These are defined by 
\begin{align}
	\frakM_{\sigma}^\varsigma &= \{A \in \Psi_{\mathrm{de,sc}}^{1,\mathsf{1}} : \operatorname{char}_{\mathrm{de,sc}}^{1,\mathsf{1}}(A) \supseteq \calR^\varsigma_\sigma \}, 
	\label{eq:misc_mmm}\\
	\frakN_{\sigma} &= \{A \in \Psi_{\mathrm{de,sc}}^{1,\mathsf{1}} : \operatorname{char}_{\mathrm{de,sc}}^{1,\mathsf{1}}(A) \supseteq \operatorname{span}_{\bbR}\calR^\varsigma_\sigma \} \subseteq \frakM_{\sigma}^-\cap \frakM_{\sigma}^+
	\label{eq:misc_nnn}
\end{align}
at the level of sets, and we consider them as $\Psi_{\mathrm{de,sc}}^{0,\mathsf{0}}$-bimodules. 

Recall that the sets $\calR_\sigma^\varsigma$ were defined in the introduction. A more concrete definition can be found in the next section. \Cref{eq:R_alt} gives each component of $\calR$ as the graph of a multiple of $\dd \tau$ over one of $\mathrm{Pf},\mathrm{Ff}$. 
So, in \cref{eq:misc_nnn}, 
\begin{equation} 
	\operatorname{span}_\bbR\calR^\varsigma_\sigma = \bbR  \iota^* \dd\tau( \mathrm{Tf}). 
	\label{eq:misc_bt4}
\end{equation} 
Here, we are interpreting the de,sc-1-form $\iota^*\dd \tau$ as a function $\bbX\to {}^{\mathrm{de,sc}}T^* \bbO$.

We have $\Psi_{\mathrm{de,sc}}^{0,\mathsf{0}}\subset \frakN_\sigma$, so $1\in \frakN_\sigma,\frakM_{\sigma}^\varsigma$.

Let $\frakN_0,\frakM_0$ denote the $C^\infty(\overline{\bbR}\times \bbB^d)$-submodules of $\operatorname{Diff}_{\mathrm{e,sc}}^{1,1,0}(\overline{\bbR}\times \bbB^d)$ given by 
\begin{align}
	\frakN_0 &= \operatorname{span}_{C^\infty(\overline{\bbR}\times \bbB^d)}(\{1,\partial_\tau\}\cup \{ (1-y^2)\partial_{y_j} \}_{j=1}^d ) \supset \operatorname{Diff}^{1,0,0}_{\mathrm{e,sc}}(\overline{\bbR}\times \bbB^d), \label{eq:N0_def} \\ 
	\frakM_0 &= \operatorname{span}_{C^\infty(\overline{\bbR}\times \bbB^d)}(\{1,\tau \partial_\tau\}\cup \{ (1-y^2)\partial_{y_j} \}_{j=1}^d ) = \operatorname{Diff}_{\mathrm{b}}^1(\overline{\bbR}\times \bbB^d).  \label{eq:M0_def}
\end{align}

Fix $\chi\in C_{\mathrm{c}}^\infty(\bbX)$ that is identically equal to $1$ near timelike infinity.
From the computations in the previous subsection, we get: 
\begin{propositionp} 
	For each choice of sign $\sigma \in \{-,+\}$, the $\Psi_{\mathrm{de,sc}}^{0,\mathsf{0}}$-module $\frakN_\sigma$ is generated as both a left and right module by the elements of 
	\begin{equation}
		\{1\}\cup \{ (1-1_{\sigma t>0}\chi) V : V\in \operatorname{Diff}_{\mathrm{de,sc}}^{1,\mathsf{1}} \} \cup \{ 1_{\sigma t>0}\chi \iota^* V_0 : V_0 \in \frakN_0 \}. 
	\end{equation}
	Consequently, $\frakN_\sigma$ admits a finite generating set consisting of differential operators. 
	For each choice of pair of signs $\sigma,\varsigma \in \{-,+\}$, $\frakM_{\sigma}^\varsigma$ is generated (as both a left and right module) by the elements of $\frakN_\sigma$ along with
	\begin{equation} 
		V_\pm =  \chi \tau(\partial_\tau \mp i \mathsf{m}),
	\end{equation} 
	where the sign depends on $\varsigma$.
	\label{prop:module_generators}
\end{propositionp}

By \cref{eq:misc_ptf}, $V_\pm$ is a weighted version of the vector field in \cref{eq:misc_010}.

We fix a finite generating set of $\frakN_\sigma$ consisting of differential operators. Let 
\begin{align}
	\begin{split} 
	A_0 &= 1 \\
	A_j &= 1_{\sigma t>0} \chi \iota^* (1-y^2)\partial_{y_j} \text{ for $j=1,\ldots,d$}, \\
	A_{1+d} &= 1_{\sigma t>0} \chi \iota^* \partial_\tau.	
	\end{split} 
\end{align}
In addition, taking $N\in \bbN$ sufficiently large, let $A_{2+d},\ldots,A_N$ be elements of $ \operatorname{Diff}_{\mathrm{de,sc}}^{1,\mathsf{1}}$ supported away from $\mathrm{Tf}$ that together with $A_0,\ldots,A_{1+d}$ generate $\frakN_\sigma$. 
We notationally suppress the $\sigma$ dependence.

As a consequence of the observation that the commutators of the generators above all lie in the selfsame modules, we get a direct proof of:
\begin{propositionp}
	Each of $\frakM^{\varsigma}_\sigma$ and $\frakN_\sigma$ is closed under the taking of commutators. 
	\label{prop:closure_under_commutators}
\end{propositionp}

For each $k,\kappa \in \bbN$,
\begin{itemize}
	\item let $\frakM^{\kappa,k}_{\varsigma,\sigma}$ denote the $\Psi_{\mathrm{de,sc}}^{0,\mathsf{0}}$-bimodule generated by the de,sc-$\Psi$DOs of the form $L_1 \cdots L_{k+\kappa}$, where $\kappa$ of the $L_\bullet$'s are in $\frakM_{\varsigma,\sigma}$ and the remainder are in $\frakN_\sigma$, and
	\item let $\frakN_\sigma^{k}$ denote the $\Psi_{\mathrm{de,sc}}^{0,\mathsf{0}}$-bimodule generated by the $k$-fold products of members of $\frakN_\sigma$. 
\end{itemize}
Products of the form $B_1\cdots B_k$ for $B_\bullet \in \{A_0,\ldots,A_N\}$ generate $\frakN^{k}_{\sigma}$, as an inductive argument shows. Similarly, products of the form $B_1\cdots B_{\kappa+k}$ for $B_\bullet \in \{A_0,\ldots,A_N,V_\varsigma\}$ with at most $\kappa$ of the $B_\bullet$ being $V_\varsigma$ generate $\frakM^{\kappa,k}_{\varsigma,\sigma}$. 
Let 
\begin{equation} 
	\frakN^k = \frakN_-^k\cap \frakN^k_+.
\end{equation} 
Conventionally, $\frakN^0 = \Psi_{\mathrm{de,sc}}^{0,\mathsf{0}}$. 

Similarly, let $\frakN_0^k,\frakM_0^k$ denote the sets of sums of $k$-fold products of elements of $\frakN_0,\frakM_0$, respectively.

\begin{lemma}
	Fix $\varsigma,\sigma \in \{-,+\}$.
	If $A\in \frakM_{\varsigma,\sigma}^{\kappa,k}$ and $B\in \frakM_{\varsigma,\sigma}^{\varkappa,j}$, then 
	\begin{equation} 
		[A,B] \in 1_{k+j>0}\frakM_{\varsigma,\sigma}^{\kappa+\varkappa,\max\{k+j-1,0\}} + 1_{\kappa+\varkappa>0}\frakM_{\varsigma,\sigma}^{\max\{\kappa+\varkappa-1,0\},k+j}  + \Psi_{\mathrm{de,sc}}^{0,\mathsf{0}}. 
		\label{eq:misc_lol}
	\end{equation} 	
	\label{prop:lol}
\end{lemma}
\begin{proof}
	We proceed inductively: 
	\begin{itemize}
		\item If all of $k,\kappa,j,\varkappa$ are $0$, then the result just states the fact that  $\Psi_{\mathrm{de,sc}}^{0,\mathsf{0}}$  is closed under the taking of commutators. 
		\item Suppose that $\kappa,k$ are both $0$ and $\varkappa+j = 1$ (and the case where $\varkappa,j$ are both $0$ and $\kappa+k=1$ is similar). In this case, the result states that $[A,B] \in \Psi_{\mathrm{de,sc}}^{0,\mathsf{0}}$ for all $A \in \Psi_{\mathrm{de,sc}}^{0,\mathsf{0}}$ and $B\in \frakM^{\varsigma}_\sigma$. This just holds because 
		\begin{equation}
			[\Psi_{\mathrm{de,sc}}^{1,\mathsf{1}},\Psi_{\mathrm{de,sc}}^{0,\mathsf{0}}] \subseteq \Psi_{\mathrm{de,sc}}^{0,\mathsf{0}}. 
		\end{equation}
		\item Suppose that $\kappa+k=1$ and $\varkappa+j=1$. There are three essentially different cases to consider.
		\begin{itemize}
			\item If $\kappa,\varkappa=1$, then the result states that $[A,B] \in \frakM_\sigma^\varsigma$ for all $A,B\in \frakM_\sigma^{\varsigma}$, which is part of \Cref{prop:closure_under_commutators}. 
			\item Likewise, if $k,j=1$, then the result states that $[A,B]\in \frakN_\sigma$ for all $A,B\in \frakN_\sigma$, which is also part of \Cref{prop:closure_under_commutators}. 
			\item If $\kappa,j=1$ (with the case $\varkappa,k=1$ being similar), then the result states that 
			\begin{equation}
				[A,B] \in \frakM_{\varsigma,\sigma}^{1,0} + \frakM^{0,1}_{\varsigma,\sigma} = \frakM^{1,0}_{\varsigma,\sigma} 
			\end{equation}
			for all $A\in \frakM^\varsigma_\sigma$ and $B\in \frakN_\sigma$. This is a weaker statement than the result in the $\kappa,\varkappa=1$ case. 
		\end{itemize}
		\item We now handle the case when $\kappa+k \geq 2$ or $\varkappa+j\geq 2$, proceeding inductively on $\kappa+\varkappa+k+j$. We discuss the case $\kappa+k\geq 2$, and the (overlapping) case $\varkappa+j\geq 2$ is similar. 
		
		Since  $\frakM_{\varsigma,\sigma}^{\kappa,k}$  is spanned by elements of the form
		\begin{equation} 
			A=  1_{\kappa>0} A_0A' + 1_{k>0} A_1 A''
			\label{eq:misc_mfa}
		\end{equation} 
		for $A_0\in \frakM^{\max\{\kappa-1,0\},k}_{\varsigma,\sigma}$, $A'\in \frakM_{\sigma}^\varsigma$, $A_1 \in  \frakM^{\kappa,\max\{k-1,0\}}_{\varsigma,\sigma}$, and $A''\in \frakN_\sigma$, it suffices to prove the claim for such $A$. We have
		\begin{equation}
			[A,B] = 1_{\kappa>0}(A_0[A',B]  +  [A_0,B]A' ) + 1_{k>0} (A_1[A'',B]  +  [A_1,B]A'').
		\end{equation}
		These satisfy 
		\begin{align}
			\begin{split} 
				1_{\kappa>0} A_0 [A',B] &\in 1_{\kappa>0} \frakM^{\max\{\kappa-1,0\},k}_{\varsigma,\sigma}(1_{j>0} \frakM_{\varsigma,\sigma}^{\varkappa+1,\max\{j-1,0\}} + \frakM_{\varsigma,\sigma}^{\varkappa,j} ) \\ 
				&\subseteq 1_{k+j>0} \frakM_{\varsigma,\sigma}^{\kappa+\varkappa,\max\{k+j-1,0\}} + 1_{\kappa+\varkappa>0} \frakM^{\max\{\kappa+\varkappa-1,0\},k+j}, \\
				1_{\kappa>0} [A_0,B] A' &\in 1_{\kappa>0} (1_{k+j>0}\frakM_{\varsigma,\sigma}^{\max\{\kappa+\varkappa-1,0\},\max\{k+j-1,0\} } + 1_{\kappa+\varkappa>1} \frakM_{\varsigma,\sigma}^{\max\{\kappa+\varkappa-2,0\},k+j} ) \frakM_\sigma^{\varsigma} \\ 
				&\subseteq 1_{k+j>0} \frakM^{\kappa+\varkappa,\max\{k+j-1,0\}} + 1_{\kappa+\varkappa>0} \frakM_{\varsigma,\sigma}^{\max\{\kappa+\varkappa-1,0\},k+j}, \\ 
				1_{k>0} A_1 [A'',B] &\in 1_{k>0} \frakM_{\varsigma,\sigma}^{\kappa,\max\{k-1,0\}} ( \frakM_{\varsigma,\sigma}^{\varkappa,j} + 1_{\varkappa>0} \frakM_{\varsigma,\sigma}^{\max\{\varkappa-1,0\},j+1 } ) \\ 
				&\subseteq 1_{k+j>0} \frakM_{\varsigma,\sigma}^{\kappa+\varkappa,\max\{k+j-1,0\}} + 1_{\kappa+\varkappa>0} \frakM_{\varsigma,\sigma}^{\max\{\kappa+\varkappa-1,0\},k+j} \\ 
				1_{k>0} [A_1,B]A'' &\in 1_{k>0} ( 1_{k+j>1} \frakM_{\varsigma,\sigma}^{\kappa+\varkappa,\max\{k+j-2,0\}} +1_{\kappa+\varkappa>0} \frakM_{\varsigma,\sigma}^{\max\{\kappa+\varkappa-1,0\}, \max\{k+j-1,0\} } ) \frakN_\sigma \\
				&\subseteq 1_{k+j>0} \frakM_{\varsigma,\sigma}^{\kappa+\varkappa,\max\{k+j-1,0\}} + 1_{\kappa+\varkappa>0} \frakM_{\varsigma,\sigma}^{\max\{\kappa+\varkappa-1,0\},k+j}. 
			\end{split} 
		\end{align}
		So, we can conclude that \cref{eq:misc_lol} holds. 
	\end{itemize} 
\end{proof}
Let 
\begin{equation} 
	\Psi_{\mathrm{de,sc}}^{m,\mathsf{s}} \frakM^{\kappa,k}_{\varsigma,\sigma} = \operatorname{span}_{\bbC} \{ AB: A\in \Psi_{\mathrm{de,sc}}^{m,\mathsf{s}}, B\in \frakM^{\kappa,k}_{\varsigma,\sigma} \},
\end{equation} 
and analogously  
\begin{equation}
	\frakM^{\kappa,k}_{\varsigma,\sigma} \Psi_{\mathrm{de,sc}}^{m,\mathsf{s}} = \operatorname{span}_{\bbC} \{ BA: A\in \Psi_{\mathrm{de,sc}}^{m,\mathsf{s}}, B\in \frakM^{\kappa,k}_{\varsigma,\sigma} \} .
\end{equation}

\begin{lemma}
	\hfill 
	\begin{itemize}
		\item If $A\in \Psi_{\mathrm{de,sc}}^{m,\mathsf{s}}$ and $B\in \frakM^{\kappa,k}_{\varsigma,\sigma}$, then $[A,B] \in \Psi_{\mathrm{de,sc}}^{m,\mathsf{s}} (1_{\kappa>0}\frakM^{\max\{\kappa-1,0\},k}_{\varsigma,\sigma} + 1_{k>0} \frakM_{\varsigma,\sigma}^{\kappa,\max\{k-1,0\}} + \Psi_{\mathrm{de,sc}}^{0,\mathsf{0}} )$. 
		\item $\Psi_{\mathrm{de,sc}}^{m,\mathsf{s}} \frakM^{\kappa,k}_{\varsigma,\sigma}=\frakM^{\kappa,k}_{\varsigma,\sigma}\Psi_{\mathrm{de,sc}}^{m,\mathsf{s}}$.		
	\end{itemize}
	\label{prop:module_commutator_lemma}
\end{lemma}
\begin{proof}
	We prove the result via simultaneous induction on $\kappa,k\in \bbN$. The case $\kappa+k\leq 1$ is an immediate consequence of the algebraic properties of the de,sc-calculus.

	Suppose now that $\kappa,k\in \bbN$ satisfy $\kappa+k\geq 2$, and suppose that we have proven the result for all pairs $\kappa_0,k_0$ such that $\kappa_0+ k_0 < \kappa+k$.
	\begin{itemize}
		\item For $B\in \frakM_{\varsigma,\sigma}^{\kappa,k}$, we can write, like \cref{eq:misc_mfa}, 
		\begin{equation} 
			B=  1_{\kappa>0} B_0B' + 1_{k>0} B_1 B''
			\label{eq:misc_mfr}
		\end{equation} 
		for $B_0\in \frakM^{\max\{\kappa-1,0\},k}_{\varsigma,\sigma}$, $B'\in \frakM_{\sigma}^\varsigma$, $B_1 \in  \frakM^{\kappa,\max\{k-1,0\}}_{\varsigma,\sigma}$, and $B''\in \frakN_\sigma$. For $A\in \Psi_{\mathrm{de,sc}}^{m,\mathsf{s}}$, 
		\begin{equation}
			[A,B] = 1_{\kappa>0}([A,B_0] B' + B_0 [A,B'] ) + 1_{k>0} ([A,B_1] B'' + B_1 [A,B'']).
		\end{equation}
		The terms on the right-hand side satisfy 
		\begin{align}
			\begin{split} 
				1_{\kappa>0}[A,B_0] B'&\in 1_{\kappa>0}\Psi_{\mathrm{de,sc}}^{m,\mathsf{s}}( 1_{\kappa>1}\frakM_{\varsigma,\sigma}^{\max\{\kappa-2,0\},k } + 1_{k>0}\frakM_{\varsigma,\sigma}^{\max\{\kappa-1,0\},\max\{k-1,0\} } ) \frakM_\sigma^{\varsigma}\\ &\subseteq 1_{\kappa>0} \Psi_{\mathrm{de,sc}}^{m,\mathsf{s}} \frakM_{\varsigma,\sigma}^{\max\{\kappa-1,0\}, k} + 1_{k>0}\Psi_{\mathrm{de,sc}}^{m,\mathsf{s}}\frakM_{\varsigma,\sigma}^{\kappa,\max\{k-1,0\}}, \\ 
				1_{\kappa>0} B_0 [A,B']&\in  1_{\kappa>0}\frakM_{\varsigma,\sigma}^{\max\{\kappa-1,0\},k} \Psi_{\mathrm{de,sc}}^{m,\mathsf{s}} = 1_{\kappa>0}\Psi_{\mathrm{de,sc}}^{m,\mathsf{s}} \frakM_{\varsigma,\sigma}^{\max\{\kappa-1,0\},k}, \\
				1_{k>0}[A,B_1] B''&\in 1_{k>0} \Psi_{\mathrm{de,sc}}^{m,\mathsf{s}} ( 1_{\kappa>0}\frakM_{\varsigma,\sigma}^{\max\{\kappa-1,0\},\max\{k-1,0\}} + 1_{k>1} \frakM_{\varsigma,\sigma}^{\kappa,\max\{k-2,0\}} ) \frakN_\sigma\\ 
				&\subseteq  1_{\kappa>0} \Psi_{\mathrm{de,sc}}^{m,\mathsf{s}} \frakM_{\varsigma,\sigma}^{\max\{\kappa-1,0\},k} + 1_{k>0} \Psi_{\mathrm{de,sc}}^{m,\mathsf{s}} \frakM_{\varsigma,\sigma}^{\kappa,\max\{k-1,0\}}, \\ 
				1_{k>0}B_1 [A,B'']&\in 1_{k>0} \frakM_{\varsigma,\sigma}^{\kappa,\max\{k-1,0\}} \Psi_{\mathrm{de,sc}}^{m,\mathsf{s}}= 1_{k>0} \Psi_{\mathrm{de,sc}}^{m,\mathsf{s}}  \frakM_{\varsigma,\sigma}^{\kappa,\max\{k-1,0\}},
			\end{split} 
		\end{align}
		where we used the inductive hypothesis. So, 
		\begin{equation}
			[A,B] \in \Psi_{\mathrm{de,sc}}^{m,\mathsf{s}} (1_{\kappa>0}\frakM^{\max\{\kappa-1,0\},k}_{\varsigma,\sigma} + 1_{k>0} \frakM_{\varsigma,\sigma}^{\kappa,\max\{k-1,0\}} + \Psi_{\mathrm{de,sc}}^{0,\mathsf{0}} ).
			\label{eq:misc_zoz}
		\end{equation}
		\item 
		If $A\in \Psi_{\mathrm{de,sc}}^{m,\mathsf{s}}$ and $B\in \frakM^{\kappa,k}_{\varsigma,\sigma}$, then, using \cref{eq:misc_zoz}, 
		\begin{align}
			AB = BA + [A,B] &\in \frakM^{\kappa,k}_{\varsigma,\sigma}\Psi_{\mathrm{de,sc}}^{m,\mathsf{s}} + \Psi_{\mathrm{de,sc}}^{m,\mathsf{s}} (\frakM^{\max\{\kappa-1,0\},k}_{\varsigma,\sigma}+\frakM^{\kappa,\max\{k-1,0\}}_{\varsigma,\sigma})=   \frakM^{\kappa,k}_{\varsigma,\sigma} \Psi_{\mathrm{de,sc}}^{m,\mathsf{s}} \\  
			BA = AB - [A,B] &\in \Psi_{\mathrm{de,sc}}^{m,\mathsf{s}} \frakM^{\kappa,k}_{\varsigma,\sigma}  + \Psi_{\mathrm{de,sc}}^{m,\mathsf{s}} (\frakM^{\max\{\kappa-1,0\},k}_{\varsigma,\sigma}+\frakM^{\kappa,\max\{k-1,0\}}_{\varsigma,\sigma}) = \Psi_{\mathrm{de,sc}}^{m,\mathsf{s}} \frakM^{\kappa,k}_{\varsigma,\sigma}, 
		\end{align} 
		which together show that 
		\begin{equation}
			\Psi_{\mathrm{de,sc}}^{m,\mathsf{s}} \frakM^{\kappa,k}_{\varsigma,\sigma}=\frakM^{\kappa,k}_{\varsigma,\sigma}\Psi_{\mathrm{de,sc}}^{m,\mathsf{s}}. 
		\end{equation}
	\end{itemize}	 
	
\end{proof}
Consequently, combining \Cref{prop:lol} and \Cref{prop:module_commutator_lemma}: 
\begin{corollary}
	If $A \in \Psi_{\mathrm{de,sc}}^{m,\mathsf{s}}$, $B\in \frakM_{\varsigma,\sigma}^{\kappa,k}$, and $C\in \frakM_{\varsigma,\sigma}^{\varkappa,j}$, then  
	\begin{multline}
		[AB,C] = A[B,C] + [A,C] B \in \Psi_{\mathrm{de,sc}}^{m,\mathsf{s}} (1_{k+j>0}\frakM_{\varsigma,\sigma}^{\kappa+\varkappa,\max\{k+j-1,0\}} + 1_{\kappa+\varkappa>0}\frakM_{\varsigma,\sigma}^{\max\{\kappa+\varkappa-1,0\},k+j} \\  + \Psi_{\mathrm{de,sc}}^{0,\mathsf{0}}).
	\end{multline}
	\label{cor:module_commutator_lemma} 
\end{corollary}

Let $P$ denote an arbitrary de,sc-$\Psi$DO of the form $P=\square+\lambda+R$
for some $R\in \Psi_{\mathrm{de,sc}}^{2,-\mathsf{1}}$ and $\lambda\in \bbC$. We now show that the module $\frakN_\pm$ is ``$P$-critical'' at $\smash{\calR_\pm = \calR_\pm^+\cup \calR_\pm^-}$:
\begin{proposition}
	There exists, for each $\sigma\in \{-,+\}$, a collection $\{C_{j,k}\}_{j,k=1}^N \subset \Psi_{\mathrm{de,sc}}^{1,\mathsf{0}}$, depending on $R$, such that 
	\begin{itemize}
		\item $i \varrho^{-1} [P,A_j] = \sum_{k=0}^N C_{j,k} A_k$, 
		\item if $k\neq 0$, $\sigma_{\mathrm{de,sc}}^{1,\mathsf{0}}(C_{j,k}) = 0$ on $\calR_\sigma$,
	\end{itemize}
	where $\varrho = \varrho_{\mathrm{Pf}}\varrho_{\mathrm{nPf}}\varrho_{\mathrm{Sf}}\varrho_{\mathrm{nFf}}\varrho_{\mathrm{Ff}}$.
\label{prop:criticality}
\end{proposition}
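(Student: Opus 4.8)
The plan is to reduce everything to a computation near timelike infinity and carry it out on the Poincar\'e cylinder. Write $\mathrm{Tf}=\mathrm{Ff}$ if $\sigma=+$ and $\mathrm{Tf}=\mathrm{Pf}$ if $\sigma=-$. Since $\operatorname{supp}\chi\subseteq\bbX$ it avoids the null and spacelike faces $\mathrm{nPf},\mathrm{nFf},\mathrm{Sf}$, so \eqref{eq:misc_vr1} gives $\iota^*\langle\tau\rangle^{-1}=\varrho\,g$ on $\operatorname{supp}\chi$ for some $g\in C^\infty(\operatorname{supp}\chi;\bbR^+)$; thus near $\mathrm{Tf}$ each surplus power of $\iota^*\langle\tau\rangle^{-1}$ can be traded for one power of $\varrho$. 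Also recall ${}^{\mathrm{de,sc}}\pi(\calR_\sigma)\subseteq\mathrm{Tf}$. For each $j$ I would split $[P,A_j]=[\square,A_j]+[R,A_j]$ (the scalar $\lambda$ drops out), and for $1\le j\le1+d$ write $A_j=1_{\sigma t>0}\chi\,\iota^*V_j$ with $V_j\in\frakN_0$ the corresponding generator, so that $[\square,A_j]=1_{\sigma t>0}\chi\,\iota^*[\square_{g_{\mathrm{e,sc}}},V_j]+[\square,1_{\sigma t>0}\chi]\,\iota^*V_j$.

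The heart of the matter is $[\square_{g_{\mathrm{e,sc}}},V_j]$ on the cylinder. From \eqref{eq:misc_34b}, $[\triangle_{\bbH^d},\partial_\tau]=0$ and $[f(\tau),\partial_\tau]=-f'(\tau)$,
\begin{equation}
  [\square_{g_{\mathrm{e,sc}}},\partial_\tau]=d\tau^{-2}\partial_\tau+2\tau^{-3}\triangle_{\bbH^d},\qquad
  [\square_{g_{\mathrm{e,sc}}},(1-y^2)\partial_{y_j}]=\tau^{-2}[\triangle_{\bbH^d},(1-y^2)\partial_{y_j}].
\end{equation}
The structural observation is that the top (weight-$0$) piece $\partial_\tau^2$ of $\square_{g_{\mathrm{e,sc}}}$ -- and, for the angular generators, also $d\tau^{-1}\partial_\tau$ -- commutes with $\frakN_0$, so $[\square_{g_{\mathrm{e,sc}}},V_j]$ carries at least one surplus power of $\tau^{-1}$ beyond the naive order-and-weight count. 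Since $\triangle_{\bbH^d}\in\operatorname{Diff}_0^2(\bbB^d)$ annihilates constants, one can write $\triangle_{\bbH^d}=\sum_k L_k(1-y^2)\partial_{y_k}$ and $[\triangle_{\bbH^d},(1-y^2)\partial_{y_j}]=\sum_k M_{j,k}(1-y^2)\partial_{y_k}$ with $L_k,M_{j,k}\in\operatorname{Diff}_0^1(\bbB^d)$, so in both cases $[\square_{g_{\mathrm{e,sc}}},V_j]=\sum_k N_{j,k}V_k$ with each $N_{j,k}$ an order-$1$ e,sc-operator carrying a surplus $\tau^{-1}$. Pulling back by $\iota$, multiplying by $i\varrho^{-1}$, trading $\iota^*\tau^{-1}$ for $\varrho$ on $\operatorname{supp}\chi$, using \Cref{prop:scedge->oct} to locate $\iota^*N_{j,k}$ in the de,sc-calculus, and identifying $1_{\sigma t>0}\chi\,\iota^*V_k$ with $A_k$, one gets $1_{\sigma t>0}\chi\,\iota^*[\square_{g_{\mathrm{e,sc}}},V_j]=\sum_k C_{j,k}A_k$, where the surplus power of $\varrho$ places each $C_{j,k}$ with $k\ge1$ in $\Psi_{\mathrm{de,sc}}^{1,\mathsf{0}}$ and forces $\sigma_{\mathrm{de,sc}}^{1,\mathsf{0}}(C_{j,k})$ to vanish on all of ${}^{\mathrm{de,sc}}\pi^{-1}(\mathrm{Tf})\supseteq\calR_\sigma$.

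It remains to treat the contributions supported away from $\mathrm{Tf}$ and the remainder $R$. The former -- the term $[\square,1_{\sigma t>0}\chi]\iota^*V_j$, the finitely many pieces above where $0<\chi<1$, and $[P,A_j]$ for $2+d\le j\le N$ -- is supported off $\mathrm{Tf}$, hence off $\calR_\sigma$; using $\Psi_{\mathrm{de,sc}}^{1,\mathsf{0}}\subseteq\frakN_\sigma$ (over $\mathrm{Tf}$ every weight-$\mathsf{0}$ symbol is non-elliptic at weight $\mathsf{1}$), hence $\Psi_{\mathrm{de,sc}}^{2,\mathsf{0}}\subseteq\frakN_\sigma\Psi_{\mathrm{de,sc}}^{1,\mathsf{0}}+\Psi_{\mathrm{de,sc}}^{-\infty,-\infty}$, together with the finite generating set of $\frakN_\sigma$ from the preceding proposition, such operators can be written $\sum_k C_{j,k}A_k$ with every $C_{j,k}$ ($k\ge1$) supported away from $\mathrm{Tf}$ (multiply the coefficients by a suitable cutoff), so their symbols vanish near $\calR_\sigma$ automatically. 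As for $R$: near $\mathrm{Tf}$ one has $A_{1+d}\in\Psi_{\mathrm{de,sc}}^{1,\mathsf{0}}$ and $A_j\in\Psi_{\mathrm{de,sc}}^{1,-\mathsf{1}}$ for $1\le j\le d$ by \Cref{prop:scedge->oct}, so $R\in\Psi_{\mathrm{de,sc}}^{2,-\mathsf{1}}$ yields $[R,A_j]\in\Psi_{\mathrm{de,sc}}^{2,-\mathsf{2}}$ near $\mathrm{Tf}$, hence $i\varrho^{-1}[R,A_j]\in\Psi_{\mathrm{de,sc}}^{2,-\mathsf{1}}$ there, which again carries a surplus $\varrho$; decomposing it as in the previous paragraph with that factor carried by the coefficients gives $C_{j,k}\in\Psi_{\mathrm{de,sc}}^{1,\mathsf{0}}$ whose symbols vanish at $\mathrm{Tf}\supseteq\calR_\sigma$. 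Collecting the three groups of terms yields the asserted collection.

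I expect the difficulty to be bookkeeping rather than conceptual. Once one notices $\square=\iota^*\square_{g_{\mathrm{e,sc}}}$ and that $\partial_\tau^2$ (together with $d\tau^{-1}\partial_\tau$ against the angular generators) commutes with $\frakN_0$, the criticality is essentially forced; the work is to track orders and weights carefully through \Cref{prop:scedge->oct} and the $\varrho^{-1}$-normalization so that every $C_{j,k}$ with $k\ge1$ lands in $\Psi_{\mathrm{de,sc}}^{1,\mathsf{0}}$ (not merely $\Psi_{\mathrm{de,sc}}^{1,\mathsf{1}}$) and its symbol vanishes on $\calR_\sigma$, and to handle the cutoff $\chi$ and the $\Psi_{\mathrm{de,sc}}^{2,-\mathsf{1}}$-term $R$ without slippage.
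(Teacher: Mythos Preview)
Your treatment of the $\square$-commutators via the Poincar\'e cylinder and of the terms supported away from $\mathrm{Tf}$ is essentially the paper's approach and is correct. However, there is a genuine error in your handling of the remainder $R\in\Psi_{\mathrm{de,sc}}^{2,-\mathsf{1}}$: the claim $A_j\in\Psi_{\mathrm{de,sc}}^{1,-\mathsf{1}}$ for $1\le j\le d$ is wrong. The generator $(1-y^2)\partial_{y_j}\in\frakN_0$ has e,sc-order $(1,1,0)$ (it equals $\langle\tau\rangle$ times the e,sc-vector field $\langle\tau\rangle^{-1}(1-y^2)\partial_{y_j}$), so \Cref{prop:scedge->oct} gives only $A_j\in\Psi_{\mathrm{de,sc}}^{1,\mathsf{1}}$ near $\mathrm{Tf}$, as one expects from $\frakN_\sigma\subset\Psi_{\mathrm{de,sc}}^{1,\mathsf{1}}$. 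Consequently $[R,A_j]\in\Psi_{\mathrm{de,sc}}^{2,-\mathsf{1}}$ only, and $R_j=i\varrho^{-1}[R,A_j]\in\Psi_{\mathrm{de,sc}}^{2,\mathsf{0}}$ carries no surplus $\varrho$. A bare decomposition of such a term via $\Psi_{\mathrm{de,sc}}^{1,\mathsf{0}}\subset\frakN_\sigma$ yields coefficients $C_{j,k}\in\Psi_{\mathrm{de,sc}}^{1,\mathsf{0}}$ with no reason for $\sigma_{\mathrm{de,sc}}^{1,\mathsf{0}}(C_{j,k})$ to vanish on $\calR_\sigma$.

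The paper's fix exploits that $\calR_\sigma$ is disjoint from fiber infinity: choose $G\in\Psi_{\mathrm{de,sc}}^{-\infty,\mathsf{0}}$ with $\operatorname{WF}'_{\mathrm{de,sc}}(1-G)\cap\calR_\sigma=\varnothing$ and an elliptic $\Lambda\in\Psi_{\mathrm{de,sc}}^{1,\mathsf{0}}$ with parametrix $\Lambda_{-1}$. Then $\Lambda_{-1}(1-G)R_j\in\Psi_{\mathrm{de,sc}}^{1,\mathsf{0}}\subset\frakN_\sigma$, so it can be written $\sum_k D_{j,k}A_k$ with $D_{j,k}\in\Psi_{\mathrm{de,sc}}^{0,\mathsf{0}}$; the resulting coefficients $(1-G)\Lambda D_{j,k}\in\Psi_{\mathrm{de,sc}}^{1,\mathsf{0}}$ are microsupported away from $\calR_\sigma$, hence have vanishing principal symbol there. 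The leftover piece $(1-(1-G)\Lambda\Lambda_{-1}(1-G))R_j$ lies in $\Psi_{\mathrm{de,sc}}^{-\infty,\mathsf{0}}$ and is absorbed into $C_{j,0}$, which is not required to vanish on $\calR_\sigma$. (Your argument is fine for $A_{1+d}$, where indeed $A_{1+d}\in\Psi_{\mathrm{de,sc}}^{1,\mathsf{0}}$; it is the angular generators $A_1,\ldots,A_d$ that require this microlocal cutoff.)
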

\begin{remark*}
	This version of ``$P$-criticality'' is a slightly weaker statement than the one offered in \cite{HassellETAL} in the context of the Schr\"odinger--Helmholtz equation, as they require  $\smash{\sigma_{\mathrm{de,sc}}^{1,\mathsf{0}}(C_{j,0})} = 0$ on $\calR_\pm$, but this is unnecessary for the proof of radial point and propagation estimates involving module regularity in later sections. Proving only this weaker statement seems to allow us to require slightly less of $R$. 
\end{remark*}
\begin{proof}
	We only consider the `$+$' case explicitly. 
	
	It suffices to consider the case $\lambda,R=0$. Indeed, $\lambda$ clearly does not matter. Reducing to the $R=0$ case requires an argument: 
	\begin{itemize}
		\item If we have found $C_{j,k}(\square)$ satisfying the conclusion of the proposition when $P=\square$, then for general $P=\square+ R$, 
		\begin{equation}
			i \varrho^{-1} [P,A_j] = i \varrho^{-1} [\square,A_j] + i \varrho^{-1} [R,A_j] = R_j+ \sum_{k=0}^N C_{j,k}(\square) A_k, 
			\label{eq:misc_iva}
		\end{equation}
		where $R_j = i \varrho^{-1} [R,A_j] \in  \Psi_{\mathrm{de,sc}}^{2,\mathsf{0}}$. Choose $G\in \Psi_{\mathrm{de,sc}}^{-\infty,\mathsf{0}}$ satisfying 
		\begin{equation}
			\operatorname{WF}_{\mathrm{de,sc}}'(1-G) \cap \calR^+_+=\varnothing,
		\end{equation} 
		which we can do because $\calR^+_+$ is disjoint from fiber infinity.  
		Let $\Lambda \in \Psi_{\mathrm{de,sc}}^{1,\mathsf{0}}$ be elliptic and $\Lambda_{-1} \in \Psi_{\mathrm{de,sc}}^{-1,\mathsf{0}}$ be a parametrix for $\Lambda$, so that $1=\Lambda \Lambda_{-1}+E$ for $E\in \smash{\Psi_{\mathrm{de,sc}}^{-\infty,-\infty}}$.
		We can rewrite \cref{eq:misc_iva} as
		\begin{align}
			\begin{split} 
			i \varrho^{-1} [P,A_j] = (1-G) \Lambda&\Lambda_{-1} (1-G) R_j \\ &+ (1-(1-G) \Lambda \Lambda_{-1} (1-G) )R_j+ \sum_{k=0}^N C_{j,k}(\square) A_k.
			\end{split}
			\label{eq:misc_ivb}
		\end{align}
		Since $\Lambda_{-1} (1-G) R_j \in \Psi_{\mathrm{de,sc}}^{1,\mathsf{0}}\subseteq \frakN_+$, we can write
		\begin{equation} 
			\Lambda_{-1} (1-G) R_j = \sum_{k=0}^N D_{j,k} A_k
			\label{eq:misc_eva}
		\end{equation} 
		for some $D_{j,k} \in \Psi_{\mathrm{de,sc}}^{0,\mathsf{0}}$. 
		Also, 
		\begin{align}
			\begin{split} 
			(1 - (1-G)\Lambda \Lambda_{-1} (1-G)) R_j &= ((1-G)E (1-G)  + 2G-G^2)R_j \\
			&\in \Psi_{\mathrm{de,sc}}^{-\infty,\mathsf{0}}. 
			\end{split}
		\end{align}
		By \cref{eq:misc_ivb} and \cref{eq:misc_eva}, $i \varrho^{-1} [P,A_j] = \sum_{k=0}^N C_{j,k}(P) A_k$ holds for 
		\begin{equation}
			C_{j,k}(P) = 
			\begin{cases}
				C_{j,0}(\square) + (1-G)\Lambda D_{j,0} + (1 - (1-G)\Lambda \Lambda_{-1} (1-G)) R_j   & (k=0) \\
				C_{j,k}(\square) + (1-G)\Lambda D_{j,k} & (k\neq 0),
			\end{cases}
		\end{equation}
		this being in $\Psi_{\mathrm{de,sc}}^{1,\mathsf{0}}$. 
		Since the essential support of $1-G$ is disjoint from $\calR_\sigma$, we have, if $k\neq 0$, $\sigma_{\mathrm{de,sc}}^{1,\mathsf{0}}(C_{j,k}(P)) = 0$ on $\calR_\sigma$. 
	\end{itemize}
	So, it suffices to consider the case $P=\square$.

	Suppose now that $P=\square$. We focus on the situation for $\frakN_+$ near $\mathrm{Ff}\cap \mathrm{nFf}$, with the situation in the other regions either similar (e.g.\ at $\mathrm{nPf}\cap \mathrm{Pf}$) or strictly easier (e.g.\ the spacelike corner of null infinity). 
	\begin{itemize}
		\item First consider $A\in \{A_{2+d},\ldots,A_N\}$. Then, $\varrho^{-1}[ P,A]\in \operatorname{Diff}_{\mathrm{de,sc}}^{2,\mathsf{1}}$ and is supported away from $\mathrm{Ff}$. We can write this as 
		\begin{equation}
			\Lambda (\Lambda_{-1} \varrho^{-1}[ P,A]) + E \varrho^{-1}[ P,A].
		\end{equation}
		As $\Lambda_{-1} \varrho^{-1}[ P,A] \in \Psi_{\mathrm{de,sc}}^{1,\mathsf{1}}$ and is microsupported away from $\calR_+$, it lies in $\frakN_+$. Thus, we can write 
		\begin{equation}
			\Lambda_{-1} \varrho^{-1}[ P,A] = \sum_{k=0}^N C^{(1)}_k A_k 
		\end{equation}
		for some $C^{(1)}_k \in \Psi_{\mathrm{de,sc}}^{0,\mathsf{0}}$ which we can choose to be microsupported away from $\calR_+$. Thus, $i\varrho^{-1}[ P,A] = \sum_{k=0}^N C_k A_k$
		for $C_k \in \Psi_{\mathrm{de,sc}}^{1,\mathsf{0}}$ given by 
		\begin{equation}
			C_k = 
			\begin{cases}
				i\Lambda C^{(1)}_0 + iE \varrho^{-1} [P,A] & (k=0), \\ 
				i\Lambda C^{(1)}_k & (k\neq 0),
			\end{cases}
		\end{equation}
		and these are microsupported away from $\calR_+$. 
		\item We now check $A_1,\ldots,A_{d}$ (leaving only $A_{1+d}$ to be done). Using \cref{eq:misc_34b} and the fact that $\triangle_{\bbH^d}$ is a 0-operator (and the fact that $A_1,\ldots,A_d$ are all 0-operators on the Poincar\'e ball as well), 
		\begin{equation}
			\varrho^{-1} [P,A_j] \in \varrho^{-1} \chi_0 \iota^* \tau^{-2} \operatorname{Diff}^2_{0}(\bbB^d) \subset \varrho_{\mathrm{Pf}}\varrho_{\mathrm{nPf}}\varrho_{\mathrm{Sf}}^\infty \varrho_{\mathrm{nFf}} \varrho_{\mathrm{Ff}} \chi_0 \iota^* \frakN_0^2
			\label{eq:misc_203}
		\end{equation}
		for each $j\in \{1,\ldots,d\}$, where $\chi_0 \in C_{\mathrm{c}}^\infty(\bbX)$ is identically $1$ on the support of $\chi$. 
		Observe that $\chi_0 \iota^*\frakN_0\subset \frakN_\sigma$.  
		This implies that, for $j\in \{1,\ldots,d\}$, we can write $i\varrho^{-1}[P,A_j] =\sum_{k=0}^N C_{j,k}^{(2)} A_k$ for 
		\begin{equation} 
			C_{j,k}^{(2)} \in \varrho_{\mathrm{Pf}}\varrho_{\mathrm{nPf}}\varrho_{\mathrm{Sf}}^\infty \varrho_{\mathrm{nFf}} \varrho_{\mathrm{Ff}}  \chi_0  \frakN_\sigma \subseteq \Psi_{\mathrm{de,sc}}^{1,\mathsf{0}}.
			\label{eq:misc_204}
		\end{equation} 
		The factor of $\frakN_\sigma$ in \cref{eq:misc_204} means that $\sigma_{\mathrm{de,sc}}^{1,\mathsf{0}}(C_{j,k})$ is vanishing on $\calR_\sigma$. 
		\item 
		On the other hand, 
		\begin{equation}
			[ P, \iota^* \partial_\tau] =   \iota^*[ \square_{g_{\mathrm{e,sc}}}, \partial_\tau] =  \iota^* \Big[\frac{d}{\tau^2} \frac{\partial}{\partial \tau}  + \frac{2}{\tau^2} \triangle_{\bbH^d} \Big].
		\end{equation}
		\begin{itemize}
			\item 	The contribution of $\iota^* (\tau^{-2} \triangle_{\bbH^d})$ to $\varrho^{-1} [P,A_{1+d}]$ has the same form as \cref{eq:misc_203}.
			\item On the other hand, the contribution from $\iota^* (\tau^{-2} \partial_\tau)$ lies in $\varrho_{\mathrm{Pf}}\varrho_{\mathrm{nPf}}\varrho_{\mathrm{Sf}}^\infty \varrho_{\mathrm{nFf}} \varrho_{\mathrm{Ff}}  \chi_0  \frakN_\sigma$. This then has the form $\sum_{k=0}^N C_{j,k}^{(3)} A_k$ for $C_{j,k}^{(3)} \in \Psi_{\mathrm{de,sc}}^{0,-\mathsf{1}}$, which satisfies 
			\begin{equation}
				\sigma_{\mathrm{de,sc}}^{1,\mathsf{0}}(C_{j,k})=0.
			\end{equation}
		\end{itemize}
		So, $\varrho^{-1} [P,A_{1+d}]$ has the desired form.
	\end{itemize}
\end{proof}

From the formula for the d'Alembertian in hyperbolic coordinates, we get the following formula for $\square+\mathsf{m}^2$ that will be used in \S\ref{sec:radialpoint}: 
\begin{propositionp}
	\begin{equation} 
		\square  + \mathsf{m}^2 = \chi\cdot  (\iota^*\tau^{-2}) [ V_{\pm} V_{\mp} +  (d-1) V_\pm\pm\tau i \mathsf{m} (d-2)] + \varrho^2 R_\pm
		\label{eq:misc_o8b}
	\end{equation} 
	for some $R_\pm\in \frakN^2$.
	\label{prop:square_module_form}
\end{propositionp}
\begin{proof} We check the $+$ case, the $-$ case following via complex conjugation.
	Let $\chi\in C_{\mathrm{c}}^\infty(\bbX)$ be identically $1$ near timelike infinity, and decompose  
	\begin{equation}
		\square+\mathsf{m}^2 = \chi(\square+\mathsf{m}^2) + (1-\chi) (\square+\mathsf{m}^2). 
	\end{equation}
	The second term satisfies $(1-\chi) (\square+\mathsf{m}^2) \in  \varrho^2 \frakN^2$.
	On the other hand, $\chi (\square+\mathsf{m}^2) = \chi \iota^* (\square_{g_{\mathrm{e,sc}}}+\mathsf{m}^2)$, and the right-hand side is computed as 
	\begin{equation}
		\chi \iota^* (\square_{g_{\mathrm{e,sc}}}+\mathsf{m}^2)= \chi \cdot (\iota^* \tau^{-2} )\Big[V_+ V_- + (d-1) V_+ + \iota^* \triangle_{\bbH^d} + \tau i \mathsf{m}(d-2)\Big]. 
	\end{equation}
	Using the computations in the previous subsection, $(\iota^* \tau^{-2}) \chi \iota^*\triangle_{\bbH^d} \in \varrho^2 \frakN^2$ as well. We conclude that \cref{eq:misc_o8b} holds with $R_+ = \varrho^{-2}( (1-\chi)(\square+\mathsf{m}^2) + (\iota^* \tau^{-2}) \chi\iota^* \triangle_{\bbH^d})$.
\end{proof}

For each $m\in \bbR$, $\mathsf{s}\in \bbR^5$, and $\kappa,k\in \bbN$, let 
\begin{equation} 
	H_{\mathrm{de,sc};\varsigma,\sigma }^{m,\mathsf{s};\kappa,k} = H_{\mathrm{de,sc};\varsigma,\sigma }^{m,\mathsf{s};\kappa,k}(\bbO)
\end{equation} 
denote the Sobolev space consisting of elements of $H_{\mathrm{de,sc}}^{m,\mathsf{s}}$ which remain in this space under the application of any element of $\frakM_{\varsigma,\sigma}^{\kappa,k}$; see \cref{eq:final_sob_def}. 

Correspondingly, for $m,s,\varsigma \in \bbR$ and $\kappa,k\in \bbN$, let 
\begin{equation} 
	H_{\mathrm{e,sc}}^{m,s,\varsigma;\kappa,k}= H_{\mathrm{e,sc}}^{m,s,\varsigma;\kappa,k}(\overline{\bbR}\times \bbB^d)
\end{equation}
be the set of elements $u\in H_{\mathrm{e,sc}}^{m,s,\varsigma}(\overline{\bbR}\times \bbB^d)$ such that $Au$ is also in this space when $A$ is a product of at most $\kappa$ elements of $\frakM_0$ and $k$ elements of $\frakN_0$. 

\begin{proposition}
	Fix a sign $\varepsilon \in \{-,+\}$. 
	If $\chi \in C_{\mathrm{c}}^\infty(\bbX)$ has support in $\mathrm{cl}_\bbX\{ \varepsilon t>0\}$ and $u\in \calS'$, then, for any $s,\varsigma,\sigma \in \bbR$, 
	\begin{equation} 
		\chi u \in H_{\mathrm{de,sc};\pm,\varepsilon }^{m,(s,s+\varsigma,\sigma,s+\varsigma,s);\kappa,k}(\bbO)
	\end{equation} 
	if and only if $e^{\mp i  \mathsf{m} \tau } \iota_* \chi u \in H_{\mathrm{e,sc}}^{m,s,\varsigma;\kappa,k}(\overline{\bbR}\times \bbB^d)$. 
	
	In particular, if $u\in H_{\mathrm{de,sc};\pm,\varepsilon }^{m,(s,\infty,\infty,\infty,s);\infty,\infty}(\bbO)$, then $e^{\mp i  \mathsf{m} \tau } \iota_* \chi u \in H_{\mathrm{e,sc}}^{m,s,\infty;\infty;\infty}(\overline{\bbR}\times \bbB^d)$.
	\label{prop:ultimate_sobolev_conversion}
\end{proposition}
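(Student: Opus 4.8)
\emph{Strategy.} The plan is to deduce the equivalence from \Cref{prop:Sobolev_conversion}, which is exactly the case $\kappa=k=0$, by an induction on $\kappa+k$; the inductive step is powered by the generation statements for $\frakN_\sigma$ and $\frakM^\varsigma_\sigma$ in \S\ref{subsec:test_modules}, together with a single conjugation identity. The ``in particular'' clause is then immediate upon intersecting the iff over all $\varsigma\in\bbR$ and all $\kappa,k\in\bbN$, using that $\operatorname{supp}\chi$ is a compact subset of $\bbX$ and hence bounded away from $\mathrm{Sf}$, so that the weight $\sigma$ at $\mathrm{Sf}$ is irrelevant for $\chi u$ and may be taken to be any value, in particular $+\infty$. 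We may assume $\varepsilon=+$, so $\mathrm{Tf}=\mathrm{Ff}$ and the upper sign is taken throughout; the case $\varepsilon=-$ follows after replacing $t$ by $-t$.

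\emph{Removing the twist.} By the support hypothesis on $\chi$ — which, as already used in \Cref{thm:main}, places $\operatorname{supp}\chi$ in the region where $(t^2-r^2)^{1/2}$ is polyhomogeneous, away from $\mathrm{cl}_\bbO\{t^2=r^2\}$ — the function $e^{\mp i\mathsf{m}(t^2-r^2)^{1/2}}$ is the restriction to a neighborhood of $\operatorname{supp}\chi$ of a unimodular element of $C^\infty(\bbO)$, and correspondingly $e^{\mp i\mathsf{m}\tau}$ is unimodular and smooth on a neighborhood of $\operatorname{supp}(\iota_*\chi u)$ in $\overline{\bbR}\times\bbB^d$ (disjoint from $\{\tau=0\}$). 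Multiplication by such a function preserves every $H_{\mathrm{de,sc}}^{m,\mathsf{s}}(\bbO)$ localized to a neighborhood of $\operatorname{supp}\chi$ and every $H_{\mathrm{e,sc}}^{m,s,\varsigma}$ localized away from $\tau=0$, so it suffices to track how the test modules transform under conjugation. One computes
\begin{equation}
	e^{\pm i\mathsf{m}\tau}\,\partial_\tau\, e^{\mp i\mathsf{m}\tau}=\partial_\tau\mp i\mathsf{m},\qquad e^{\pm i\mathsf{m}\tau}\,\tau\partial_\tau\, e^{\mp i\mathsf{m}\tau}=\tau(\partial_\tau\mp i\mathsf{m}),
	\label{eq:conjugation_identity}
\end{equation}
while conjugation fixes each $(1-y^2)\partial_{y_j}$ and each smooth multiplication operator. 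Hence conjugation by $e^{\mp i\mathsf{m}\tau}$ carries $\frakN_0$ into $\frakN_0+C^\infty(\overline{\bbR}\times\bbB^d)$ and back, and carries $\frakM_0$ onto the $C^\infty(\overline{\bbR}\times\bbB^d)$-module $\tilde{\frakM}_0$ generated by $\{\tau(\partial_\tau\mp i\mathsf{m})\}\cup\{(1-y^2)\partial_{y_j}\}_{j=1}^d$. Comparing with \cref{eq:misc_ptf} and the generation statements of \S\ref{subsec:test_modules}, the $\iota$-pullbacks of these conjugated generators, times the fixed cutoff, are precisely $V_\pm$ and $A_1,\dots,A_{1+d}$, so near $\mathrm{Tf}$ the correspondence $\chi u\mapsto e^{\mp i\mathsf{m}\tau}\iota_*\chi u$ intertwines the action of a generator of $\frakM_{\pm,+}$ (resp.\ $\frakN_+$) with that of the corresponding generator of $\frakM_0$ (resp.\ $\frakN_0$), by the naturality of $\iota_*$ on differential operators in $\bbX^\circ$ and its extension to $\bbX$ via \Cref{prop:hyperbolic_smoothness} and \Cref{prop:scedge->oct}. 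Away from $\mathrm{Tf}$ both $\frakM_{\pm,+}$ and $\frakN_+$ are generated by $\operatorname{Diff}_{\mathrm{de,sc}}^{1,\mathsf{1}}$-vector fields supported there (and, being over a region disjoint from $\calR$, agree there with all of $\operatorname{Diff}_{\mathrm{de,sc}}^{1,\mathsf{1}}$ modulo microsupport away from $\calR$), and likewise $\frakM_0,\frakN_0$ localized away from $\{\tau=\pm\infty\}$ are ordinary weighted e,sc-modules.

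\emph{Induction.} For $\kappa=k=0$ the claim is \Cref{prop:Sobolev_conversion} — its second clause gives the forward direction (the $\mathrm{Sf}$-weight being irrelevant, one may take it $+\infty$) and its first clause the reverse — combined with the invariance of $H_{\mathrm{e,sc}}^{m,s,\varsigma}$ under multiplication by $e^{\mp i\mathsf{m}\tau}$ near $\operatorname{supp}(\iota_*\chi u)$. Assume the equivalence for all $(\kappa',k')$ with $\kappa'+k'<\kappa+k$. Membership of $\chi u$ in $H_{\mathrm{de,sc};\pm,+}^{m,(s,s+\varsigma,\sigma,s+\varsigma,s);\kappa,k}(\bbO)$ is tested by applying the generating products $B_1\cdots B_{\kappa+k}$ with $B_\bullet\in\{A_0,\dots,A_N,V_\pm\}$ and at most $\kappa$ factors equal to $V_\pm$, and asking whether the result lies in $H_{\mathrm{de,sc}}^{m,(s,s+\varsigma,\sigma,s+\varsigma,s)}(\bbO)$. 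By \Cref{prop:module_commutator_lemma} and \cref{eq:module_commutator_lemma} one may reorder the factors up to strictly lower module order, so it is enough to handle such a product with, say, its innermost factor $B_{\kappa+k}$ peeled off: applying the inductive hypothesis to $B_1\cdots B_{\kappa+k-1}$ and the module-intertwining of the previous paragraph to the single application of $B_{\kappa+k}$, the assertion reduces to the statement that one application of a generator commutes with the conversion — for generators supported near $\mathrm{Tf}$ this is \cref{eq:conjugation_identity}, and for generators supported away from $\mathrm{Tf}$ (hence mapping $\chi u$ to something still supported compactly in $\bbX\setminus\mathrm{Tf}$) it follows from the $m\mapsto m-1$, $\mathsf{s}\mapsto\mathsf{s}-\mathsf{1}$ mapping properties in the two calculi together with the $\kappa=k=0$ case applied to that image. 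Running the argument in both directions gives the equivalence, and specializing $\varsigma=\infty$, $\kappa=k=\infty$ (and $\sigma=\infty$) yields the ``in particular'' clause.

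\emph{Main obstacle.} The delicate point is entirely the bookkeeping in the inductive step: one must keep the two cutoffs distinct — the arbitrary $\chi\in C_{\mathrm{c}}^\infty(\bbX)$ multiplying $u$, versus the fixed cutoff buried inside the generators $V_\pm$ and $A_1,\dots,A_N$ — split $\operatorname{supp}\chi u$ into a piece near $\mathrm{Tf}$ (where the conversion is exactly the conjugated $\iota$-pullback correspondence) and a piece away from $\mathrm{Tf}$ (where both module structures collapse to the underlying weighted $\Psi$DO calculi and only \Cref{prop:Sobolev_conversion} is needed), and verify that the commutator identities of \Cref{prop:module_commutator_lemma} permit interchanging these reductions with the application of generators without any loss of module order. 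Once this organization is set up, no analytic input beyond \Cref{prop:Sobolev_conversion}, the generation statements of \S\ref{subsec:test_modules}, and \cref{eq:conjugation_identity} is required.
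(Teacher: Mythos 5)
Your proposal is correct and follows essentially the same route as the paper: the base case is \Cref{prop:Sobolev_conversion} together with the fact that multiplication by $e^{\mp i\mathsf{m}\tau}$ preserves the e,sc-Sobolev scale, and the higher module orders are handled by observing that $\iota$-pushforward intertwines $\frakN_\varepsilon$ with $\frakN_0$ and, after conjugation by $e^{\mp i\mathsf{m}\tau}$, intertwines $V_\pm$ with $\tau\partial_\tau$. One small caveat: $e^{\mp i\mathsf{m}\tau}$ is \emph{not} the restriction of an element of $C^\infty(\bbO)$ (the phase $\mathsf{m}\tau$ blows up at the boundary), so the correct justification for the Sobolev invariance is, as in the paper, that $[L,e^{\mp i\mathsf{m}\tau}]$ is of uniformly lower order for e,sc-differential $L$ — equivalently that $\mathsf{m}\,\dd\tau$ is a smooth e,sc- (hence de,sc-) one-form — not smoothness of the exponential itself.
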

\begin{proof}
	The $\kappa=k=0$ case of this follows from \Cref{prop:Sobolev_conversion} and the observation that multiplication by $e^{\mp i \mathsf{m}\tau}$ defines an isomorphism of e,sc-Sobolev spaces. This latter fact follows from the $m,s,\varsigma=0$ case (which is just that multiplication by $e^{\mp i \mathsf{m} \tau}$ is an $L^2$-isometry) and the observation that if $V \in \calV_{\mathrm{e,sc}}$, then 
	\begin{equation} 
		e^{\pm i \mathsf{m} \tau} Ve^{\mp i \mathsf{m}\tau} \in \operatorname{Diff}_{\mathrm{e,sc}}^{1,0,0}(\overline{\bbR}\times \bbB^d). 
	\end{equation} 
	
	To deduce the case where $\kappa>0$ or $k>0$, it suffices to note that the elements of $\frakN_{\varepsilon}$ push forward to elements of $\frakN_0$ via $\iota$, and elements of $\frakM_{-,\varepsilon},\frakM_{+,\varepsilon}$, when conjugated by $\exp(\mp i \mathsf{m}\tau)$, push forward to elements of $\frakM_0$. For example, if 
	\begin{equation} 
		\chi u \in H_{\mathrm{de,sc};\pm,\varepsilon }^{m,(s,s+\varsigma,\sigma,s+\varsigma,s);1,0}(\bbO),
	\end{equation} 
	then, considering an arbitrary element 
	\begin{equation} 
		L = a \tau \partial_\tau + \sum_{j=1}^d a_j (1-y^2)\partial_{y_j}
	\end{equation} 
	of $\frakM_0$, where $a,a_j \in C^\infty(\overline{\bbR}\times \bbB^d)$: for any $a_0\in C^\infty(\overline{\bbR}\times \bbB^d)$,
	\begin{equation} 
		(a_0+L) e^{\mp i \mathsf{m} \tau} \iota_* \chi u = e^{\mp i \mathsf{m} \tau} \iota_* \Big[ \Big(\iota^* a_0+(\iota^* a) V_\pm  + \sum_{j=1}^d (\iota^* a_j) \iota^*\Big((1-y^2) \frac{\partial}{\partial y_j} \Big) \Big)\chi u \Big], 
	\end{equation}
	so that 
	\begin{equation}
		(a_0+L) e^{\mp i \mathsf{m} \tau} \iota_* \chi u \in e^{\mp i \mathsf{m} \tau} \iota_* H_{\mathrm{de,sc};\pm,\varepsilon}^{m,(s,s+\varsigma,\sigma,s+\varsigma,s)} \subseteq  e^{\mp i \mathsf{m} \tau}  H_{\mathrm{e,sc}}^{m,s,\varsigma} \subseteq  H_{\mathrm{e,sc}}^{m,s,\varsigma}.
	\end{equation}
	So, we can conclude that $e^{\mp i  \mathsf{m} \tau } \iota_* \chi u \in H_{\mathrm{e,sc}}^{m,s,\varsigma;1,0}(\overline{\bbR}\times \bbB^d)$. Conversely, if 
	\begin{equation}
		e^{\mp i  \mathsf{m} \tau } \iota_* \chi u \in H_{\mathrm{e,sc}}^{m,s,\varsigma;1,0}(\overline{\bbR}\times \bbB^d), 
	\end{equation}
	then, for any $a_0,a,a_1,\ldots,a_d \in C^\infty(\bbX)$, 
	\begin{equation}
		\Big[ a_0 + a V_\pm +\sum_{j=1}^d a_j A_j \Big] \chi u = e^{\pm i \mathsf{m}\iota^* \tau} \Big[ a_0 \iota^* + a \iota^* \Big( \tau \frac{\partial}{\partial \tau} \Big) +\sum_{j=1}^d a_j \iota^* (1-y^2)\partial_{y_j} \Big]  e^{\mp i \mathsf{m}\tau} \iota_* \chi u \Big], 
	\end{equation}
	so that 
	\begin{equation}
			\Big[ a_0 + a V_\pm +\sum_{j=1}^d a_j A_j \Big] \chi u  \in  C^\infty_{\mathrm{c}}(\bbX) \iota^*  H_{\mathrm{e,sc}}^{m,s,\varsigma} \subseteq H_{\mathrm{de,sc}}^{m,(s,s+\varsigma,\sigma,s+\varsigma,s)}.
	\end{equation}
	So, we conclude that $\chi u \in H_{\mathrm{de,sc};\pm,\varepsilon }^{m,(s,s+\varsigma,\sigma,s+\varsigma,s);1,0}(\bbO)$. 
	The proof for general $\kappa,k$ is analogous. 
\end{proof}

\subsection{Asymptotics from Module Regularity}
\label{subsec:asymptotics}

We apply standard notation to refer to spaces of extendable distributions on $\overline{\bbR}\times \bbB^d$ with partial polyhomogeneous expansions. So, 
\begin{align}
	\calA^{0,\infty}(\overline{\bbR}\times \bbB^d) &= \{u\in L^\infty : Lu \in  (1-y^2)^{\varsigma}L^\infty(\overline{\bbR}\times \bbB^d)\;\forall\;L\in \operatorname{Diff}_{\mathrm{b}}(\overline{\bbR}\times \bbB^d),\varsigma \in \bbR \} \\
	\calA^{s,\infty}(\overline{\bbR}\times \bbB^d) &= \langle \tau \rangle^{-s}  \calA^{0,\infty}(\overline{\bbR}\times \bbB^d).
\end{align}
(Note here that, in $\calA^{s,\infty}$, higher $s$ means higher decay as $|\tau|\to\infty$, and the $\infty$ means Schwartz behavior at the side of the cylinder.)
In general, if $\calE \subseteq \bbC\times \bbN$ is an index set, we write 
\begin{equation}
	\calA^{(\calE,\alpha), \infty }(\overline{\bbR}\times \bbB^d) 
\end{equation}
to denote the Fr\'echet space of distributions which have partial polyhomogeneous expansions with index set $\calE$ at $(\partial \overline{\bbR})\times \bbB^d$ (with a conormal remainder of order $\alpha$), the terms of which are Schwartz at $\overline{\bbR}\times \partial \bbB^d$. We can also work with LCTVSs of functions lying locally in one of these spaces, in specified open sets. For example, it is often convenient to exclude $\{\tau=0\}$, so we write 
\begin{equation}
	\calA^{(\calE,\alpha), \infty }_{\mathrm{loc}}((\overline{\bbR}\backslash \{0\})\times \bbB^d) 
\end{equation}
to denote the set of functions $u:(\overline{\bbR}\backslash \{0\})\times \bbB^d\to \bbC$ such that $\chi f \in \calA^{(\calE,\alpha), \infty }(\overline{\bbR}\times \bbB^d)$ for any $\chi \in C_{\mathrm{c}}^\infty((\overline{\bbR}\backslash \{0\})\times \bbB^d)$. 

Often, $(0,0)$ is used as an abbreviation for the index set $\{(n,0):n\in \bbN\}$ (i.e.\ the index set ``generated'' by $(0,0)$ ). This index set is significant because 
\begin{equation} 
	\calA^{(0,0),\infty}(\overline{\bbR}\times \bbB^d) = C^\infty(\overline{\bbR}_\tau ; \calS(\bbB^d_{\bfy}))
\end{equation} 
is just the set of smooth functions on the Poincar\'e cylinder that are Schwartz at the sides. We will not have much use for more exotic index sets here. In fact, below we only use the spaces 
\begin{equation}
	\calA^{((0,0),\alpha),\infty}(\overline{\bbR}\times \bbB^d) = \calA^{(0,0),\infty}(\overline{\bbR}\times \bbB^d) + \calA^{\alpha,\infty}(\overline{\bbR}\times \bbB^d), 
\end{equation}
the elements of which are just sums of elements of $ C^\infty(\overline{\bbR}_\tau ; \calS(\bbB^d_{\bfy}))$ and $ \calA^\alpha(\overline{\bbR}_\tau ; \calS(\bbB^d_{\bfy}))$. Note that the conormal spaces are indexed relative to $L^\infty$. So,  
\begin{equation}
	\calA^{((0,0),\alpha),\infty}(\overline{\bbR}\times \bbB^d) \subseteq \langle \tau \rangle^{-\min\{0,\alpha\}  } L^\infty.
	\label{eq:misc_247}
\end{equation}

\begin{lemma}
	If $f\in C^\infty_{\mathrm{c}}(\bbX)$ and $u\in \calA^{((0,0),\alpha),\infty}(\overline{\bbR}\times \bbB^d)$ for some $\alpha>0$, then $(\iota_* f)u\in \calA^{((0,0),\alpha),\infty}(\overline{\bbR}\times \bbB^d)$.
	\label{lem:corner_conversion}
\end{lemma}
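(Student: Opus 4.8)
The statement asserts that multiplication by $\iota_* f$, for $f \in C^\infty_{\mathrm c}(\bbX)$, preserves the space $\calA^{((0,0),\alpha),\infty}(\overline{\bbR}\times\bbB^d)$ of distributions with a partial polyhomogeneous expansion of index set $(0,0)$ (i.e.\ a leading constant term plus a conormal remainder of positive order $\alpha$) at $(\partial\overline{\bbR})\times\bbB^d$ whose terms are Schwartz at $\overline{\bbR}\times\partial\bbB^d$. The essential point is that $g := \iota_* f$ need \emph{not} itself be smooth on the whole Poincar\'e cylinder --- it is only smooth on $(\overline{\bbR}\setminus\{0\})\times\bbB^{d\circ}$ --- so one cannot naively invoke closure of the space under multiplication by $C^\infty$; rather one must exploit that $g$ is compactly supported \emph{in $\bbX$}, hence its pushforward is supported away from $\{\tau=0\}$ and away from $\overline{\bbR}\times\partial\bbB^d$, and that on its support it \emph{does} extend smoothly.

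\textbf{Plan of proof.} First I would record the support structure: since $f \in C_{\mathrm c}^\infty(\bbX)$ and $\iota : \bbX \to \overline{\bbR}\times\bbB^d$ is smooth (\Cref{prop:hyperbolic_smoothness}) with $\iota(\bbX^\circ) = (\bbR\setminus\{0\})\times\bbB^{d\circ}$, the pushforward $g = \iota_* f$ (extended by $0$) is a function on $(\overline{\bbR}\setminus\{0\})\times\bbB^d$, compactly supported as a subset of that mwc, in particular supported in a set of the form $\{|\tau|\geq c\}\cap\{1-y^2\geq c'\}$ for some $c,c'>0$, \emph{together with} the two ``ends'' $\tau=\pm\infty$. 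The only noncompact directions of $\operatorname{supp} g$ are thus toward $\{\tau=\pm\infty\}$, and there $\iota^{-1}$ of a compact-in-$\bbX$ set meets only the interiors $\mathrm{Pf}^\circ,\mathrm{Ff}^\circ$ by \Cref{prop:hyperbolic_smoothness}, so $f$ pulls back from a smooth function on $\overline{\bbR}\times\bbB^d$ there. Concretely, I would argue that there exists $\tilde g \in C^\infty(\overline{\bbR}\times\bbB^d)$ with $g = \tilde g$ on the region $\{1-y^2 \geq c'/2\}$: indeed, near $\tau=\pm\infty$ one uses the coordinates $\rho = 1/|t|$, $\hat{\bfx} = \bfx/|t|$ from case (I) of the proof of \Cref{prop:hyperbolic_smoothness} to see $\iota_* f$ is smooth in $(\rho,\hat{\bfx})$ down to $\rho=0$; away from $\tau=\pm\infty$, $g$ is compactly supported in the interior-in-$\tau$ and bounded away from $\partial\bbB^d$, so it is literally a compactly supported smooth function.

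\textbf{Conclusion from the reduction.} Once $g$ is replaced, on a neighborhood of $\operatorname{supp} u$'s relevant boundary (namely $(\partial\overline{\bbR})\times\bbB^d$) and of $\overline{\bbR}\times\partial\bbB^d$, by a genuine element $\tilde g$ of $C^\infty(\overline{\bbR}\times\bbB^d)$, the claim follows from the standard algebra properties of partially-polyhomogeneous conormal spaces. Precisely: write $g = \tilde g + (g - \tilde g)$ where $g-\tilde g$ is supported in $\{1-y^2 < c'/2\}$, hence \emph{Schwartz at $\overline{\bbR}\times\partial\bbB^d$} on $\operatorname{supp}(g-\tilde g)$ and smooth there (bounded away from $\{\tau=0\}$), so $(g-\tilde g)u \in \calA^{((0,0),\alpha),\infty}$ trivially --- in fact it lies in the ideal of elements Schwartz at the edge face, which is contained in the target space. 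For the main term: $\tilde g u$ — multiplication by a smooth function on the compact mwc $\overline{\bbR}\times\bbB^d$ preserves $\calA^{((0,0),\alpha),\infty}$, because (i) conormality is preserved by multiplication by $C^\infty$ (Leibniz applied to the defining b-vector-field estimates), (ii) at $(\partial\overline{\bbR})\times\bbB^d$, $\tilde g$ has its own Taylor expansion and multiplying the index-$(0,0)$ expansion of $u$ by it produces again an index-$(0,0)$ expansion (the leading coefficient of $\tilde g u$ is $\tilde g|_{\tau=\pm\infty}$ times that of $u$, a smooth function on $\bbB^d$, and the remainder stays conormal of order $\alpha$ since $\alpha>0$), and (iii) $\tilde g$ times a function Schwartz at $\overline{\bbR}\times\partial\bbB^d$ is again Schwartz there. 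Summing, $gu \in \calA^{((0,0),\alpha),\infty}(\overline{\bbR}\times\bbB^d)$.

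\textbf{Main obstacle.} The only real content — and where I would spend care — is the support/extension step: verifying that $\iota_* f$, though not a priori smooth across $\tau=\pm\infty$ as a function of $\tau$ directly, \emph{is} smooth in the correct boundary coordinates there, which is exactly the content of case (I) in the proof of \Cref{prop:hyperbolic_smoothness} (where $\tau^{-1}$ and $\bfy$ were shown to be smooth functions of $\rho,\hat{\bfx}$ down to $\rho=0$); and that compact support in $\bbX$ forces $\operatorname{supp}(\iota_* f)$ to avoid both $\{\tau=0\}$ and $\overline{\bbR}\times\partial\bbB^d$, which is immediate since $\iota$ maps $\mathrm{cl}_\bbO\{t^2=r^2\}$-avoiding compacta into $\{\tau\neq 0\}$-and-$\{1-y^2\neq 0\}$-avoiding compacta (as $1-y^2 = 4\varrho_{\mathrm{nf}}(1+\varrho_{\mathrm{nf}})^{-2}$ vanishes only at null infinity, cf.\ \cref{eq:misc_vr2}, and $\operatorname{supp} f$ is disjoint from null infinity). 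Everything after that reduction is a routine application of the algebra structure of the partially-polyhomogeneous conormal spaces, using only $\alpha>0$ to keep the remainder subordinate to the leading term.
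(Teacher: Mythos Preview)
Your approach is correct and genuinely different from the paper's. The paper never claims that $\iota_* f$ extends to an element of $C^\infty(\overline{\bbR}\times\bbB^d)$; instead it decomposes $u = \langle\tau\rangle^{-\alpha} r + \sum_{n<\alpha}\langle\tau\rangle^{-n} u_n$ and verifies the conormality estimates for each piece directly, using Leibniz together with \cref{eq:misc_613}--\cref{eq:misc_614} to see that b-vector fields on the cylinder pull back under $\iota$ to b-vector fields on $\bbX$, so that $L(\iota_* f)\in L^\infty$ for every $L\in\operatorname{Diff}_{\mathrm b}(\overline{\bbR}\times\bbB^d)$. This only shows $\iota_* f$ is \emph{conormal} (and supported away from the edge face), which already suffices. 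Your route proves the stronger statement that $\iota_* f$ is actually smooth, after which the result is immediate from the algebra of partially polyhomogeneous spaces; this is cleaner but requires an additional input the paper does not provide.

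That additional input is where your write-up has a small slip: you invoke case (I) of \Cref{prop:hyperbolic_smoothness} ``to see $\iota_* f$ is smooth in $(\rho,\hat{\bfx})$,'' but $(\rho,\hat{\bfx})$ are coordinates on $\bbX$, not on the cylinder, and that proposition establishes smoothness of $\iota$ (i.e.\ $(\tau^{-1},\bfy)$ smooth in $(\rho,\hat{\bfx})$), not of $\iota^{-1}$. What you need is the converse: that $(\rho,\hat{\bfx})$ are smooth functions of $(\tau^{-1},\bfy)$ on $\{1-y^2\geq c'\}$. This is true --- the explicit formulas invert to $\hat{\bfx}=2\bfy/(1+y^2)$ and $\rho=|\tau|^{-1}(1-y^2)/(1+y^2)$, manifestly smooth for $y$ bounded away from $1$ --- but you should state it. Once you do, the decomposition $g=\tilde g+(g-\tilde g)$ is unnecessary: you can take $\tilde g\in C^\infty(\overline{\bbR}\times\bbB^d)$ supported in $\{1-y^2\geq c'/2\}$ and equal to $g$ on $\operatorname{supp} g$, so that $gu=\tilde gu$ identically.
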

The reason this is not trivial is that $\iota_* f$ will not be smooth on the Poincar\'e cylinder (because $f$ cannot be constant at null infinity). However, since $u$ is Schwartz at the sides of the cylinder, this does not cause a problem:
\begin{proof}
	Write $u = \langle \tau \rangle^{-\alpha} r+ \sum_{n\in \bbN, n< \alpha} \langle \tau \rangle^{-n} u_n$ for $u_n \in \calA^{(0,0),\infty}(\overline{\bbR}\times \bbB^d)$ and $r \in \calA^{0,\infty}(\overline{\bbR}\times \bbB^d)$. 
	Via Leibniz and \cref{eq:misc_613} and \cref{eq:misc_614}, if $L\in \operatorname{Diff}_{\mathrm{b}}(\overline{\bbR}\times \bbB^d)$, then, for any $\varsigma \in \bbR$, 
	\begin{equation} 
		(1-y^2)^\varsigma L (r \iota_* f) \in L^\infty(\overline{\bbR}\times \bbB^d),
	\end{equation} 
	so $r\iota_* f \in \calA^{0,\infty}(\overline{\bbR}\times \bbB^d)$. Similarly, for any $L\in \operatorname{Diff}_{\mathrm{E}}(\overline{\bbR}\times \bbB^d)$, 
	\begin{equation} 
		(1-y^2)^\varsigma L (u_n \iota_* f) \in L^\infty(\overline{\bbR}\times \bbB^d),
	\end{equation} 
	so $u_n \iota_* f \in  \calA^{(0,0),\infty}(\overline{\bbR}\times \bbB^d)$. 
	From these observations, we conclude that $(\iota_* f)u\in \calA^{((0,0),\alpha),\infty}(\overline{\bbR}\times \bbB^d)$. 
\end{proof}

Another way of thinking about this is using the compactification $[\bbO_0; \mathrm{cl}_{\bbO_0}\{|t|=r\}\cap \partial \bbO]$ described in \Cref{rem:mutual_refine} (and depicted in \Cref{fig:mutual_refine}). If $u$ is fully polyhomogeneous (discussing the full case for simplicity) on $[\bbM;\scrI]_{\mathrm{par}}$, then it is also polyhomogeneous on this more elaborate compactification, since it comes from blowing up the corner of $[\bbM;\scrI]_{\mathrm{par}}$. Moreover, $u$ is Schwartz at all of the faces corresponding to null infinity. By similar reasoning, $f$ is smooth on this compactification. So, $uf$ is smooth on this compactification and Schwartz at both of the faces corresponding to null infinity. The Schwartz behavior allows us to blow down faces to conclude that $uf$ is polyhomogeneous on $[\bbM;\scrI]_{\mathrm{par}}$.

Let $P_{\mathrm{e,sc}}$ denote a differential operator on $\overline{\bbR} \backslash \{0\}\times \bbB^d$ of the form 
\begin{equation} 
	P_{\mathrm{e,sc}}= \partial_\tau^2 + d \tau^{-1} \partial_\tau + \tau^{-2} ( \iota_* \Upsilon )R  + \mathsf{m}^2
	\label{eq:misc_psc}
\end{equation} 
for $R\in \frakN^2_0$ and $\Upsilon \in C_{\mathrm{c}}^\infty(\bbX)$, which the Klein--Gordon operators studied in the rest of the paper have, up to pre-multiplication of the error term by an element of $C^\infty_{\mathrm{c}}(\bbX)$. The function $\iota_* \Upsilon$ appearing here can be fairly badly behaved as a function on the Poincar\'e cylinder, but as long as we are applying $P_{\mathrm{e,sc}}$ to functions that are Schwartz at the sides of the Poincar\'e cylinder (which is all we do), this will not matter. 

\begin{proposition}
	Suppose that $u=e^{\pm i \mathsf{m}\tau} \tau^{-d/2} u_0$ for 
	\begin{equation} 
		u_0 \in \calA^{((0,0),\alpha),\infty}_{\mathrm{loc}}(\overline{\bbR}\backslash \{0\}\times \bbB^d)
	\end{equation} 
	for $\alpha\in \bbR$ with $\alpha>-1$, and suppose that $Pu=f$ for $f$ of the form $f=e^{\pm i \mathsf{m} \tau} \tau^{-d/2-2}f_0$, where
	\begin{equation} 
		f_0 \in \calA^{((0,0),\alpha),\infty}_{\mathrm{loc}}(\overline{\bbR}\backslash \{0\}\times \bbB^d).
	\end{equation}
	Then $u_0 \in \calA^{((0,0),\alpha+1),\infty}_{\mathrm{loc}}(\overline{\bbR}\backslash \{0\}\times \bbB^d)$. 
	\label{prop:integration}
\end{proposition}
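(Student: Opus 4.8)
The plan is to conjugate $P_{\mathrm{e,sc}}$ so that its leading part becomes a first-order transport operator in $\tau$ with a single indicial root, and then to integrate. First I would compute $\tilde P := \tau^{d/2} e^{\mp i \mathsf{m} \tau}\circ P_{\mathrm{e,sc}}\circ e^{\pm i\mathsf{m} \tau}\tau^{-d/2}$ directly, exactly as in the calculation behind \cref{eq:misc_34b} and \Cref{prop:square_module_form}: writing $\partial_\tau^2 + d\tau^{-1}\partial_\tau = \tau^{-2}[(\tau\partial_\tau)^2 + (d-1)(\tau\partial_\tau)]$ and expanding, the $\mathsf{m}^2$ cancels against a piece of the conjugated $\partial_\tau^2$, and the remaining order-$\tau^{-1}$ term cancels after the $\tau^{-d/2}$ conjugation, leaving
\begin{equation}
	\tilde P = \pm 2 i \mathsf{m}\, \partial_\tau + \partial_\tau^2 + \Big(\tfrac{d}{2} - \tfrac{d^2}{4}\Big)\tau^{-2} + \tau^{-2}(\iota_*\Upsilon)\hat R,
\end{equation}
where $\hat R := \tau^{d/2}e^{\mp i\mathsf{m}\tau}Re^{\pm i\mathsf{m}\tau}\tau^{-d/2}$ is, like $R\in\frakN_0^2$, a sum of products of at most two of $\partial_\tau$ and the $0$-vector fields $(1-y^2)\partial_{y_j}$, with coefficients smooth on $\overline{\bbR}\setminus\{0\}\times\bbB^d$. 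Thus $Pu=f$ becomes the transport equation $\pm 2i\mathsf{m}\,\partial_\tau u_0 = g$ with
\begin{equation}
	g := f_0 - \partial_\tau^2 u_0 - \Big(\tfrac{d}{2} - \tfrac{d^2}{4}\Big)\tau^{-2}u_0 - \tau^{-2}(\iota_*\Upsilon)\hat R u_0 .
\end{equation}

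After localizing away from $\tau = 0$ (multiply by $\chi=\chi(\tau)\in C^\infty_{\mathrm{c}}(\overline{\bbR}\setminus\{0\})$ with $\chi\equiv 1$ near $\tau=\pm\infty$; the commutator term $[\tilde P,\chi]u_0$ is supported in a fixed compact subset of the interior and so does not affect the conclusion at $\tau=\pm\infty$), I would analyze $g$. Writing $u_0 = c_0(\bfy) + r_0$ and $f_0 = c^f_0(\bfy) + r^f$ with $r_0\in\calA^{\alpha,\infty}_{\mathrm{loc}}$ and $r^f\in\calA^{\alpha+2,\infty}_{\mathrm{loc}}$, and using that $\partial_\tau$ kills the $\tau$-independent leading coefficients and raises conormal order by one, that the $0$-vector fields $(1-y^2)\partial_{y_j}$ preserve $\calA^{\bullet,\infty}_{\mathrm{loc}}$ and rapid decay at $\partial\bbB^d$, that multiplication by $\tau^{-2}$ raises conormal order by two, and that multiplication by $\iota_*\Upsilon$ with $\Upsilon\in C^\infty_{\mathrm{c}}(\bbX)$ preserves $\calA^{((0,0),\beta),\infty}_{\mathrm{loc}}$ (the Lemma just above, applied in particular to the $\tau$-independent $0$-operator output $\hat R c_0$), I would conclude $g = c^f_0(\bfy) + \tau^{-2}e_0(\bfy) + \tilde r$ with $e_0$ rapidly decaying at $\partial\bbB^d$ and $\tilde r\in\calA^{\alpha+2,\infty}_{\mathrm{loc}}$. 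On the other hand, the hypothesis gives $g = \pm 2i\mathsf{m}\,\partial_\tau r_0\in\calA^{\alpha+1,\infty}_{\mathrm{loc}}$, which tends to $0$ as $\tau\to\pm\infty$ because $\alpha+1>0$; hence the $\tau^0$ coefficient $c^f_0$ vanishes and $g\in\calA^{((2,0),\alpha+2),\infty}_{\mathrm{loc}}$, i.e.\ $g = O(\langle\tau\rangle^{-2})$ there. Since $\pm 2i\mathsf{m}\,\partial_\tau = \pm 2i\mathsf{m}\,\tau^{-1}(\tau\partial_\tau)$ has indicial polynomial $\gamma\mapsto\mp 2i\mathsf{m}\,\gamma$ with the single root $\gamma=0$, and $\alpha+2>1$, integration from $\tau=\pm\infty$ gives $u_0 = c_0'(\bfy)\mp(2i\mathsf{m})^{-1}\int_\tau^{\pm\infty}g(\tau',\bfy)\dd\tau'$, where $c_0'(\bfy)=\lim_{\tau\to\pm\infty}u_0$ is again rapidly decaying at $\partial\bbB^d$, the integral of $\tilde r$ lies in $\calA^{\alpha+1,\infty}_{\mathrm{loc}}$, and the integral of $\tau^{-2}e_0$ contributes a $\tau^{-1}$ term (no logarithm appears, since by $\alpha>-1$ the forcing has no $\tau^{-1}$ component). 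Thus $u_0$ improves to the claimed partial expansion with conormal remainder order $\alpha+1$ and index set generated by $(0,0)$, which in the relevant range is precisely $u_0\in\calA^{((0,0),\alpha+1),\infty}_{\mathrm{loc}}$. Rapid decay at $\partial\bbB^d$ persists throughout, since each operation used — $\partial_\tau$, the $0$-vector fields, multiplication by $\iota_*\Upsilon$ (whose support is bounded away from $\partial\bbB^d$), and $\tau$-integration — preserves it.

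I expect the main difficulty to be controlling $\tau^{-2}(\iota_*\Upsilon)\hat R u_0$, since $\iota_*\Upsilon$ is genuinely non-smooth — only conormal — at $\tau=\pm\infty$. The point is to split $u_0 = c_0(\bfy)+r_0$ so that $\hat R$ acts on the $\tau$-independent part only through its purely spatial $0$-differential piece, landing in a space to which the multiplication Lemma applies cleanly, while on $r_0$ the explicit $\tau^{-2}$ supplies the needed two extra orders of decay regardless; some care is also needed passing between the e,sc- and b- descriptions of the $\tau$-behavior. A secondary bookkeeping point is identifying exactly which polyhomogeneous exponents the integration produces — only the indicial value $0$ (the homogeneous solution, i.e.\ the leading coefficient) and the $\tau^{-1}$ term forced by the $\tau^{-2}e_0$ piece of $g$ — with no logarithm precisely because $\alpha>-1$ keeps the lowest forcing exponent strictly above $1$.
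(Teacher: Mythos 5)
Your route is the same as the paper's: conjugate by $e^{\pm i\mathsf{m}\tau}\tau^{-d/2}$, reduce to the first-order transport equation $\pm 2i\mathsf{m}\,\partial_\tau u_0=f_1$ with $f_1=f_0-\partial_\tau^2u_0-\tau^{-2}(\iota_*\Upsilon)\tilde Ru_0$ (the paper absorbs your explicit $(\tfrac d2-\tfrac{d^2}4)\tau^{-2}$ scalar into the $\tilde R$ term; keeping it visible as you do is a small improvement), observe that $f_1$ lies in $\calA^{((0,0),\alpha+2),\infty}_{\mathrm{loc}}$, and integrate in $\tau$. One structural difference: the paper integrates from $\tau=1$, which avoids having to know in advance that the right-hand side is integrable at infinity; your integration from $\tau=\pm\infty$ needs $g=O(\tau^{-1-\delta})$ there, which is exactly the point at issue below.

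The step that does not hold as written is the decomposition $f_0=c_0^f(\bfy)+r^f$ with $r^f\in\calA^{\alpha+2,\infty}_{\mathrm{loc}}$. If the index set generated by $(0,0)$ is $\{(n,0):n\in\bbN\}$ (which is how the paper uses it in the subsequent induction, cf.\ the remark that $\calE_+=\calE$), then since $\alpha+2>1$ the partial expansion of $f_0$ also contains a term $\tau^{-1}c_1^f(\bfy)$, and nothing in the hypotheses forces $c_1^f=0$. Your assertion that ``the forcing has no $\tau^{-1}$ component'' therefore does not follow, and for $\alpha\le 0$ it cannot be recovered from the equation either: $\partial_\tau u_0\in\calA^{\alpha+1,\infty}$ with $\alpha+1\le 1$ is perfectly compatible with a nonzero $\tau^{-1}$ coefficient. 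If $c_1^f\neq 0$, the integration produces a genuine $\log\tau$ and the conclusion fails; e.g.\ $\Upsilon=0$, $u_0=\chi(\tau)\phi(\bfy)\log\tau$ with $\phi$ Schwartz on $\bbB^d$ and $\chi$ supported in $\tau\ge 2$, and $\alpha=-1/2$ satisfies every hypothesis but not the conclusion. To be fair, the paper's own one-line integration is silent on exactly this point, and the issue is vacuous where the proposition is actually applied (there $f_0$ is Schwartz, so the $\tau^{-1}$ coefficient of $f_1$ vanishes for precisely the structural reasons you give for the other terms). But as written, your passage from ``$c_0^f=0$'' to ``$g\in\calA^{((2,0),\alpha+2),\infty}$'' is a gap: you should either impose that the $\tau^{-1}$ coefficient of $f_0$ vanishes, or restrict to the rapidly decaying forcing actually needed downstream.
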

\begin{proof}
	Let $\tilde{P}_{\mathrm{e,sc}} = \tau^{d/2} e^{\mp i \mathsf{m}\tau} P_{\mathrm{e,sc}} e^{\pm i \mathsf{m} \tau} \tau^{-d/2}$ denote the result of conjugating $P_{\mathrm{e,sc}}$ by the multiplication operator $\exp(\pm i \mathsf{m} \tau) \tau^{-d/2}$. 
	
	Since $\frakN^2_0$ is closed under conjugations of this form (as seen from the definition \cref{eq:N0_def}), 
	\begin{equation}
		\tilde{P}_{\mathrm{e,sc}} = \partial_\tau^2 \pm 2 i \mathsf{m} \partial_\tau + \tau^{-2} ( \iota_* \Upsilon ) \tilde{R} 
		\label{eq:misc_tsc}
	\end{equation} 
	for some $\tilde{R} \in \frakN^2_0$. Since $P_{\mathrm{e,sc}}u=f$, we have $\tilde{P}_{\mathrm{e,sc}} u_0 = \tau^{-2} f_0$. Thus, 
	\begin{equation} 
		\pm 2 i \mathsf{m} \partial_\tau u_0 = \tau^{-2} f_1, \quad f_1 =  f_0 - \tau^2 \partial_\tau^2 u_0 -  (\iota_* \Upsilon )\tilde{R} u_0.
		\label{eq:misc_kh1}
	\end{equation} 
	Under the hypotheses above, 
	\begin{equation} 
		f_1 \in \calA^{((0,0),\alpha),\infty}_{\mathrm{loc}}(\overline{\bbR}\backslash \{0\}\times \bbB^d),
		\label{eq:misc_250}
	\end{equation} 
	just like $f_0$. Indeed, $\tau^2 \partial_\tau^2$ is b-differential operator, so it preserves $\calA_{\mathrm{loc}}^{((0,0),\alpha),\infty}$. Likewise, we can read off \cref{eq:N0_def} that 
	\begin{equation} 
		\frakN_0\subset \operatorname{Diff}^1_{\mathrm{b}}(\overline{\bbR}\times \bbB^d),
	\end{equation} 
	so $\tilde{R}$ preserves $\calA_{\mathrm{loc}}^{((0,0),\alpha),\infty}$ as well. \Cref{lem:corner_conversion} says that that multiplying by $\iota_* \Upsilon$ does not change the conclusion. So, \cref{eq:misc_250} is justified.
	 
	Integrating \cref{eq:misc_kh1}, 
	\begin{equation}
		u_0(\tau,\bfy)   = u_0(1,\bfy) \pm \frac{1}{2i \mathsf{m}} \int_1^\tau \frac{f_1(s,\bfy)}{s^2} \dd s .
	\end{equation}
	The term $u_0(1,\bfy)$ is Schwartz at $y=1$. 
	Using that $\alpha>-1$ (which means that $f_1(\tau,\bfy)=O(\tau^{1-\epsilon})$ as $\tau\to\infty$, for some $\epsilon>0$, and likewise for b-derivatives thereof), it follows that 
	\begin{equation} 
		u_0 \in \calA^{((0,0),\alpha+1),\infty}_{\mathrm{loc}}(\overline{\bbR}\backslash \{0\}\times \bbB^d),
	\end{equation} 
	because integration $f_1\mapsto \int_1^\tau s^{-2} f_1(s) \dd s$  maps conormal functions (with sufficient decay as $\tau\to\infty$ to avoid a logarithmic or worse divergence) to partially polyhomogeneous functions with a conormal error with one less order of decay. We are integrating $f_1(\tau,\bfy)/\tau^2$, which has two orders of decay relative to what $u_0$ is assumed to have, so overall we gain an order of decay in the conormal part of $u_0$. 
\end{proof}

This proposition is the key to extracting $\tau\to\infty$ asymptotics; using it inductively allows us to deduce, starting from the conormality of $u_0$, that $u_0$ possesses an expansion in powers of $1/\tau$.  Indeed:

\begin{proposition}
	Let $\epsilon>0$.
	Suppose that $u_0 \in \calA_{\mathrm{loc}}^{-1+\epsilon,\infty}(\overline{\bbR}\backslash \{0\}\times \bbB^d)$ satisfies 
	\begin{equation} 
		P_{\mathrm{e,sc}} (e^{\pm i \mathsf{m} \tau} \tau^{-d/2} u_0)=f
	\end{equation} 
	for some $f\in \calA_{\mathrm{loc}}^{\infty,\infty}(\overline{\bbR}\backslash \{0\}\times \bbB^d)$. Then, $
		u_0 \in \calA_{\mathrm{loc}}^{(0,0),\infty}(\overline{\bbR}\backslash \{0\}\times \bbB^d)$.
	\label{prop:asymptotic_extraction}
\end{proposition}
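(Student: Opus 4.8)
The plan is to obtain \Cref{prop:asymptotic_extraction} by iterating the preceding proposition, bootstrapping the conormal remainder order up to $\infty$. First I would arrange its hypotheses: write $f=e^{\pm i\mathsf{m}\tau}\tau^{-d/2}f_0$ with $f_0=\tau^{d/2}e^{\mp i\mathsf{m}\tau}f$. Multiplication by $e^{\mp i\mathsf{m}\tau}$ does \emph{not} preserve conormality of finite order --- each application of $\tau\partial_\tau$ to $e^{\mp i\mathsf{m}\tau}g$ produces a factor of $\tau$ --- but it \emph{does} preserve conormality of order $\infty$, since $(\tau\partial_\tau)^k(e^{\mp i\mathsf{m}\tau}g)$ equals $e^{\mp i\mathsf{m}\tau}$ times a finite sum of terms $\tau^{a}(\tau\partial_\tau)^{b}g$ with $a+b\leq k$, each of which is rapidly decaying at $\partial\overline{\bbR}$ when $g$ is; this is essentially the observation underlying the isomorphism used in the proof of \Cref{prop:ultimate_sobolev_conversion}. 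Hence $f_0\in\calA^{\infty,\infty}_{\mathrm{loc}}(\overline{\bbR}\backslash\{0\}\times\bbB^d)$, and in particular $f_0\in\calA^{((0,0),\beta),\infty}_{\mathrm{loc}}$ for every $\beta\in\bbR$. Also, $\calA^{-1+\epsilon,\infty}_{\mathrm{loc}}\subseteq\calA^{((0,0),-1+\epsilon),\infty}_{\mathrm{loc}}$ trivially, taking the leading coefficient to be $0$.

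Now set $\alpha_0=-1+\epsilon$, so $\alpha_0>-1$, and $\alpha_{n+1}=\alpha_n+1$, so $\alpha_n=\alpha_0+n>-1$ for every $n\in\bbN$. We have $u_0\in\calA^{((0,0),\alpha_0),\infty}_{\mathrm{loc}}$ and $P_{\mathrm{e,sc}}(e^{\pm i\mathsf{m}\tau}\tau^{-d/2}u_0)=e^{\pm i\mathsf{m}\tau}\tau^{-d/2}f_0$ with $f_0\in\calA^{((0,0),\alpha_0+2),\infty}_{\mathrm{loc}}$, so the preceding proposition yields $u_0\in\calA^{((0,0),\alpha_0+1),\infty}_{\mathrm{loc}}$. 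Since $f_0$ is Schwartz it lies in $\calA^{((0,0),\alpha_n+2),\infty}_{\mathrm{loc}}$ for every $n$, so the hypotheses of the preceding proposition are reinstated at each stage, and induction gives $u_0\in\calA^{((0,0),\alpha_n),\infty}_{\mathrm{loc}}$ for all $n$. Writing $u_0=c_0^{(n)}(\bfy)+r_n$ with $r_n$ conormal of order $\alpha_n$ at $\partial\overline{\bbR}$ (and Schwartz at $\partial\bbB^d$), note that for $n\geq 1$ we have $\alpha_n>0$, hence $r_n\to0$ as $|\tau|\to\infty$; since $c_0^{(n+1)}-c_0^{(n)}=r_n-r_{n+1}$ is constant in $\tau$ and tends to $0$, it vanishes, so the leading coefficient stabilizes to a single $c_0=c_0^{(n)}$ for $n\geq1$. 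Consequently $u_0-c_0\in\bigcap_{n\in\bbN}\calA^{\alpha_n,\infty}_{\mathrm{loc}}=\calA^{\infty,\infty}_{\mathrm{loc}}$, that is, $u_0\in\calA^{(0,0),\infty}_{\mathrm{loc}}(\overline{\bbR}\backslash\{0\}\times\bbB^d)$, as claimed. (No logarithmic terms appear because $\alpha_n+2>1$ throughout, so the integration step of the preceding proposition never produces a $\log\tau$.)

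The essential work is the one-step improvement already proved; the present argument is just the induction. The two points needing care are that the oscillatory prefactor $e^{\mp i\mathsf{m}\tau}$ is harmless here --- which is precisely why the hypothesis demands $f$ Schwartz rather than merely finitely decaying --- and that the leading asymptotic coefficient is the same at every stage, so that the remainder genuinely improves and $\bigcap_n\calA^{((0,0),\alpha_n),\infty}_{\mathrm{loc}}$ collapses to $\calA^{(0,0),\infty}_{\mathrm{loc}}$. I do not anticipate any serious obstacle.
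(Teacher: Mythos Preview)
Your proof is correct and follows the same approach as the paper: both arguments iterate the preceding one-step improvement proposition, starting from $\alpha=-1+\epsilon$ and using that $f_0=\tau^{d/2}e^{\mp i\mathsf m\tau}f$ remains Schwartz. The paper phrases the bookkeeping slightly differently—working with the full index set $\calE=\{(n,0):n\in\bbN\}$ and invoking $\calE_+=\calE$ to ensure no new terms (in particular no logarithms) are generated at each step—whereas you track only the leading coefficient and verify its stabilization directly; these are equivalent ways of organizing the same induction.
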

Here, $\calA_{\mathrm{loc}}^{\infty,\infty}(\overline{\bbR}\backslash \{0\}\times \bbB^d)$ is essentially the space $\calS(\overline{\bbR}\times \bbB^d)$ of Schwartz functions, except we are not restricting $\tau\to 0$ behavior in the former.
\begin{proof}
	Since $f$ is Schwartz, $f_0 = e^{\mp i \mathsf{m} \tau} \tau^{d/2+2} f$ is in $\calA_{\mathrm{loc}}^{\infty,\infty}(\overline{\bbR}\backslash \{0\}\times \bbB^d)$ as well. This result therefore follows from \Cref{prop:integration}, 
	 via an inductive argument on $\alpha$, starting with $\alpha = -1+\epsilon$. 
\end{proof}

By the Sobolev embedding theorem, we can relate the conormal function spaces to e,sc-Sobolev spaces with module regularity:
\begin{proposition}
	For any $m\in \bbN$ and $s,\varsigma \in \bbR$,  $
		H_{\mathrm{e,sc}}^{m,s+,\infty;\infty,\infty} (\overline{\bbR}\times \bbB^d) \subseteq \calA^{s+(d+1)/2,\infty}(\overline{\bbR}\times \bbB^d) \subseteq H_{\mathrm{e,sc}}^{m,s-,\infty;\infty,\infty}(\overline{\bbR}\times \bbB^d)$.
	\label{prop:Sobolev_embedding}
\end{proposition}
Here,  $H_{\mathrm{e,sc}}^{m,s,\infty;\infty,\infty} =\bigcap_{k,\kappa\in \bbN} \bigcap_{\varsigma\in \bbR} H_{\mathrm{e,sc}}^{m,s,\varsigma;\kappa,k}$, $H_{\mathrm{e,sc}}^{m,s+,\infty;\infty,\infty} = \bigcap_{k,\kappa \in \bbN}\bigcap_{\varsigma_0\in \bbR}\bigcup_{s_0>s} H_{\mathrm{e,sc}}^{m,s_0,\varsigma_0;\infty,\infty}$, and $H_{\mathrm{e,sc}}^{m,s-,\infty;\infty,\infty} = \bigcap_{k,\kappa \in \bbN}\bigcap_{\varsigma_0\in \bbR}\bigcap_{s_0<s} H_{\mathrm{e,sc}}^{m,s_0,\varsigma_0;\infty,\infty}$. Similar notation will be used for other hierarchies of function spaces below.
\begin{proof}
	It suffices to consider the $s=0$ case. 
	\begin{itemize}
		\item Let $u\in H_{\mathrm{e,sc}}^{m,0+,\infty;\infty,\infty}(\overline{\bbR}\times \bbB^d)$. 
		Since \cref{eq:M0_def} gives
		\begin{equation} 
			\operatorname{Diff}_{\mathrm{b}}(\overline{\bbR}\times \bbB^d) \subseteq \bigcup_{\ell\in \bbN} \frakM_0^\ell, 
		\end{equation} 
		$L u\in H_{\mathrm{e,sc}}^{m,0+,\infty;\infty,\infty}(\overline{\bbR}\times \bbB^d) \subseteq (1-y^2)^{\varsigma} L^2(\overline{\bbR}\times \bbB^d,g_{\mathrm{e,sc}})$ for each  $L\in \operatorname{Diff}_{\mathrm{b}}(\overline{\bbR}\times \bbB^d)$ and $\varsigma \in \bbR$. Since 
		\begin{equation} 
			L^2(\overline{\bbR}\times \bbB^d,g_{\mathrm{e,sc}}) = \langle \tau \rangle^{-(d+1)/2} (1-y^2)^{(d-1)/2} L^2_{\mathrm{b}}(\overline{\bbR}\times \bbB^d),
		\end{equation} 
		we deduce that $u$ lies in the $L^2_{\mathrm{b}}$-based conormal space
		\begin{multline}
			\calI^{(d+1)/2+,\infty}(\overline{\bbR}\times \bbB^d) = \{u\in L^2_{\mathrm{b}}(\overline{\bbR}\times \bbB^d) : Lu \in  \langle \tau \rangle^{-\epsilon-(d+1)/2} (1-y^2)^{\varsigma}L^2_{\mathrm{b}}(\overline{\bbR}\times \bbB^d)\\ \text{ for all }L\in \operatorname{Diff}_{\mathrm{b}}(\overline{\bbR}\times \bbB^d)\text{ and }\varsigma \in \bbR,\epsilon>0 \}.
		\end{multline}
		The Sobolev embedding theorem implies that $\calI^{(d+1)/2+,\infty}(\overline{\bbR}\times \bbB^d) \subseteq \calA^{(d+1)/2,\infty}(\overline{\bbR}\times \bbB^d)$.
		\item Conversely, suppose that $u\in \calA^{(d+1)/2,\infty}(\overline{\bbR}\times \bbB^d)$. Then, because $L^\infty(\overline{\bbR}\times \bbB^d)\subseteq \langle \tau \rangle^{\epsilon}(1-y^2)^{-\epsilon} L^2_{\mathrm{b}}(\overline{\bbR}\times \bbB^d)$ for any $\epsilon>0$, 
		\begin{equation}
			Lu \in \langle \tau \rangle^{\epsilon-(d+1)/2} (1-y^2)^{\varsigma} L^2_{\mathrm{b}}(\overline{\bbR}\times \bbB^d)  = \langle \tau \rangle^{\epsilon} (1-y^2)^{\varsigma -(d-1)/2} L^2(\overline{\bbR}\times \bbB^d,g_{\mathrm{e,sc}})
		\end{equation}
		for any $\varsigma \in \bbR$ and $L\in (1-y^2)^{\varsigma_0} \operatorname{Diff}_{\mathrm{b}}(\overline{\bbR}\times \bbB^d)$, for any $\varsigma_0\in \bbR$.  
		Since $\partial_\tau,\langle \tau \rangle^{-1} (1-y^2) \partial_{y_j} \in \operatorname{Diff}_{\mathrm{b}}(\overline{\bbR}\times \bbB^d )$, 
		\begin{equation}
			\operatorname{Diff}_{\mathrm{e,sc}}(\overline{\bbR}\times \bbB^d) \subseteq  \operatorname{Diff}_{\mathrm{b}} (\overline{\bbR}\times \bbB^d). 
		\end{equation}
		Combining these observations, $L_0 L u \in \langle \tau \rangle^{\epsilon} (1-y^2)^{\varsigma } L^2(\overline{\bbR}\times \bbB^d,g_{\mathrm{e,sc}})$ for any $L_0 \in \operatorname{Diff}_{\mathrm{e,sc}}(\overline{\bbR}\times \bbB^d)$ and $L\in \operatorname{Diff}_{\mathrm{b}}(\overline{\bbR}\times \bbB^d)$. That is, 
		\begin{equation}
			L u \in H_{\mathrm{e,sc}}^{m,-\epsilon ,\varsigma}(\overline{\bbR}\times \bbB^d),
		\end{equation}
		for any $m\in \bbN$. 
		Since $\frakM_0 \subset \operatorname{Diff}_{\mathrm{b}} (\overline{\bbR}\times \bbB^d)$, we can apply this for all $L\in \bigcup_{\ell\in \bbN} \frakM^\ell_0$ to conclude that $u\in H_{\mathrm{e,sc}}^{m,-\epsilon,\varsigma;\infty,\infty}$. Taking $\epsilon \to 0^+$ and $\varsigma \to \infty$, we conclude the result. 
	\end{itemize}
\end{proof}

Now let $P = \square + \mathsf{m}^2 + R_0$ for some $R_0\in \operatorname{Diff}_{\mathrm{de,sc}}^{2,-\mathsf{2}}(\bbO)$. 

As in \Cref{thm:main}, let $\chi \in C^\infty(\bbO)$ denote a function supported in $\bbX$ and identically equal to $1$ in some neighborhood of $\mathrm{Pf}\cup \mathrm{Ff}$.
\begin{proposition}
	Let $m\geq 0$ and $s>-3/2$. Suppose that $u\in \calS'(\bbR^{1,d})$ satisfies $Pu=f$ for some $f\in \calS(\bbR^{1,d})$. Then:
	\begin{itemize}
		\item If $u\in H_{\mathrm{de,sc};\pm ,-}^{m,(s,\infty,\infty,\infty,\infty);\infty,\infty}(\bbO)$, then $u$ has the form $u=w + \chi e^{\pm i \mathsf{m} \tau} \langle \tau \rangle^{-d/2} v$
		for some $w\in \calS(\bbR^{1,d})$ and $v\in \varrho_{\mathrm{nPf}}^\infty \varrho_{\mathrm{Sf}}^\infty \varrho_{\mathrm{nFf}}^\infty \varrho_{\mathrm{Ff}}^\infty C^\infty(\bbO)$. 
		\item If $u\in H_{\mathrm{de,sc};\pm ,+}^{m,(\infty,\infty,\infty,\infty,s);\infty,\infty}(\bbO)$, then $u$ has the form $u=w + \chi e^{\pm i \mathsf{m} \tau} \langle \tau \rangle^{-d/2} v$
		for some $w\in \calS(\bbR^{1,d})$ and $v\in \varrho_{\mathrm{Pf}}^\infty \varrho_{\mathrm{nPf}}^\infty \varrho_{\mathrm{Sf}}^\infty \varrho_{\mathrm{nFf}}^\infty C^\infty(\bbO)$. 
	\end{itemize}
	\label{prop:asymptotic_main}
\end{proposition}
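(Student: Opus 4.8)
The plan is to prove both bullets simultaneously — they are interchanged by the time reversal $t\mapsto -t$ — so I fix a sign $\pm$ once and for all, so that the hypothesis reads $u\in H^{m,(s,\infty,\infty,\infty,\infty);\infty,\infty}_{\mathrm{de,sc};\pm,-}(\bbO)$ and the asserted expansion carries the phase $e^{\pm i\mathsf m\tau}$; every $\pm,\mp$ below is correlated with this choice. Two consequences of the module hypothesis are used throughout. Since over $\bbO^\circ$, and more generally microlocally away from $\smash{{}^{\mathrm{de,sc}}\pi^{-1}(\mathrm{Pf})}$, the module $\frakN_-$ contains elements elliptic in $\Psi^{1,\mathsf 1}_{\mathrm{de,sc}}$ (its characteristic set being constrained only over $\mathrm{Pf}$), infinite $\frakN_-$-regularity together with $u\in H^{m,\cdot}_{\mathrm{de,sc}}$ yields first $u\in C^\infty(\bbO^\circ)$, and second — using additionally that every component of the weight $(s,\infty,\infty,\infty,\infty)$ except the $\mathrm{Pf}$-component is $\infty$ — that $u$ is Schwartz microlocally away from $\smash{{}^{\mathrm{de,sc}}\pi^{-1}(\mathrm{Pf})}$. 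Consequently, writing $\chi=\chi_{\mathrm P}+\chi_0+\chi_{\mathrm F}$ with $\chi_{\mathrm P}\in C_{\mathrm c}^\infty(\bbX)$ equal to $1$ near $\mathrm{Pf}$ and supported in $\{\chi=1\}\cap\mathrm{cl}_\bbO\{t<0\}$, $\chi_0$ supported in a compact subset of $\bbO^\circ$, and $\chi_{\mathrm F}$ supported near $\mathrm{Ff}$, the pieces $(1-\chi)u$, $\chi_0 u$, $\chi_{\mathrm F}u$ are all Schwartz; so it suffices to show $\chi_{\mathrm P}u=\chi\,e^{\pm i\mathsf m\tau}\langle\tau\rangle^{-d/2}v$ for some $v\in\varrho_{\mathrm{nPf}}^\infty\varrho_{\mathrm{nFf}}^\infty\varrho_{\mathrm{Ff}}^\infty C^\infty(\bbO)$, since then $w:=(1-\chi)u+\chi_0 u+\chi_{\mathrm F}u\in\calS$ and this $v$ furnish the decomposition.

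Next I transfer $\chi_{\mathrm P}u$ to the Poincar\'e cylinder. As $\chi_{\mathrm P}u\in H^{m,(s,\infty,\infty,\infty,\infty);\infty,\infty}_{\mathrm{de,sc};\pm,-}(\bbO)\subseteq H^{m,(s,s+\varsigma,\sigma,s+\varsigma,s);\infty,\infty}_{\mathrm{de,sc};\pm,-}(\bbO)$ for every $\sigma\in\bbR$ with $\varsigma=\infty$, \Cref{prop:ultimate_sobolev_conversion} gives $e^{\mp i\mathsf m\tau}\iota_*(\chi_{\mathrm P}u)\in H^{m,s,\infty;\infty,\infty}_{\mathrm{e,sc}}(\overline\bbR\times\bbB^d)$, which by \Cref{prop:Sobolev_embedding} lies in $\calA^{s+(d+1)/2,\infty}(\overline\bbR\times\bbB^d)$. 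Setting $u_0:=\tau^{d/2}e^{\mp i\mathsf m\tau}\iota_*(\chi_{\mathrm P}u)$, so that $\iota_*(\chi_{\mathrm P}u)=e^{\pm i\mathsf m\tau}\tau^{-d/2}u_0$, we obtain $u_0\in\calA^{s+1/2,\infty}_{\mathrm{loc}}((\overline\bbR\setminus\{0\})\times\bbB^d)$, supported near $\tau=-\infty$; crucially $s+1/2>-1$ precisely because $s>-3/2$, so $u_0\in\calA^{-1+\epsilon,\infty}_{\mathrm{loc}}$ for some $\epsilon>0$.

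It remains to check the other hypothesis of \Cref{prop:asymptotic_extraction} and conclude. From $Pu=f$ we get $P(\chi_{\mathrm P}u)=\chi_{\mathrm P}f+[P,\chi_{\mathrm P}]u$; the first term is Schwartz, and since $\operatorname{supp}\mathrm d\chi_{\mathrm P}$ is a compact subset of $\bbO^\circ$ and $u\in C^\infty(\bbO^\circ)$, the second lies in $C_{\mathrm c}^\infty(\bbR^{1,d})$, whence $P(\chi_{\mathrm P}u)\in\calS(\bbR^{1,d})$. On a neighbourhood of $\operatorname{supp}\chi_{\mathrm P}$ the operator $P=\square+\mathsf m^2+R_0$ equals $\iota^*P_{\mathrm{e,sc}}$ for some $P_{\mathrm{e,sc}}$ of the form \cref{eq:misc_psc}: this is \cref{eq:misc_34b} for the $\square$-part and \Cref{prop:scedge->oct} together with \Cref{prop:square_module_form} for the $\operatorname{Diff}^{2,-\mathsf 2}_{\mathrm{de,sc}}$-error, which pushes forward into $\tau^{-2}(\iota_*\Upsilon)\frakN_0^2$. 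Hence $P_{\mathrm{e,sc}}(e^{\pm i\mathsf m\tau}\tau^{-d/2}u_0)=\iota_*(P\chi_{\mathrm P}u)\in\calA^{\infty,\infty}_{\mathrm{loc}}$, and \Cref{prop:asymptotic_extraction} gives $u_0\in\calA^{(0,0),\infty}_{\mathrm{loc}}((\overline\bbR\setminus\{0\})\times\bbB^d)$. Pulling back through $\iota$, smooth down to $\partial\bbO$ by \Cref{prop:hyperbolic_smoothness} with $\iota^*\langle\tau\rangle^{-1}\in\varrho_{\mathrm{Pf}}\varrho_{\mathrm{nPf}}\varrho_{\mathrm{nFf}}\varrho_{\mathrm{Ff}}C^\infty(\bbX;\bbR^+)$ and $\iota^*(1-y^2)\in\varrho_{\mathrm{nPf}}\varrho_{\mathrm{nFf}}C^\infty(\bbX;\bbR^+)$ (\cref{eq:misc_vr1}, \cref{eq:misc_vr2}), gives $\iota^*u_0\in\varrho_{\mathrm{nPf}}^\infty C^\infty(\bbX)$, supported near $\mathrm{Pf}$. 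Absorbing the factor $(\iota^*\tau/\langle\iota^*\tau\rangle)^{\mp d/2}$ (which is polyhomogeneous and smooth near $\mathrm{Pf}$ because $(t^2-r^2)^{1/2}$ is polyhomogeneous near $\operatorname{supp}\chi$) and the branch constant of $\tau^{-d/2}$ into the remainder, we get $\chi_{\mathrm P}u=\iota^*(e^{\pm i\mathsf m\tau}\tau^{-d/2}u_0)=\chi\,e^{\pm i\mathsf m\tau}\langle\tau\rangle^{-d/2}v$ with $v\in\varrho_{\mathrm{nPf}}^\infty\varrho_{\mathrm{nFf}}^\infty\varrho_{\mathrm{Ff}}^\infty C^\infty(\bbO)$, the $\mathrm{nFf}$- and $\mathrm{Ff}$-conditions holding vacuously as $\operatorname{supp}v$ is near $\mathrm{Pf}$. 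With the first paragraph this proves the claim.

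The substantive points are (a) the second item above — that the only obstruction to $u$ being Schwartz sits over $\mathrm{Pf}$ — which rests on $\frakN_-$ being elliptic off $\smash{{}^{\mathrm{de,sc}}\pi^{-1}(\mathrm{Pf})}$ together with all but the $\mathrm{Pf}$-weight being $\infty$; and (b) the order bookkeeping across the three conversions, where conjugation by $\tau^{d/2}$ costs $d/2$ orders of conormal decay, Sobolev embedding on the $(1+d)$-dimensional cylinder returns $(d+1)/2$, and the residual order $s+1/2$ must exceed $-1$ for \Cref{prop:asymptotic_extraction} — which is exactly where $s>-3/2$ is forced. Verifying that $P$ restricts near $\operatorname{supp}\chi_{\mathrm P}$ to precisely the shape \cref{eq:misc_psc} and that the cutoff-commutator error is Schwartz (which is why one needs $u\in C^\infty(\bbO^\circ)$ in hand first) is the most delicate bookkeeping, but is routine given the structural results of the preceding sections.
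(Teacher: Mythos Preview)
Your argument follows the same route as the paper's: localize near the relevant timelike cap, push forward to the Poincar\'e cylinder via \Cref{prop:ultimate_sobolev_conversion}, apply \Cref{prop:Sobolev_embedding} to land in $\calA^{s+1/2,\infty}_{\mathrm{loc}}$, and invoke \Cref{prop:asymptotic_extraction} (the condition $s>-3/2$ entering exactly where you say). The paper localizes with the full $\chi$ rather than your $\chi_{\mathrm P}$, but this is cosmetic.

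There is one genuine slip. You assert that $\operatorname{supp}\mathrm d\chi_{\mathrm P}$ is a compact subset of $\bbO^\circ$, and from this and $u\in C^\infty(\bbO^\circ)$ alone conclude $[P,\chi_{\mathrm P}]u\in C_{\mathrm c}^\infty(\bbR^{1,d})$. But no such $\chi_{\mathrm P}$ exists when $d\geq 2$: since $\chi_{\mathrm P}=1$ near the corner $\mathrm{Pf}\cap\mathrm{nPf}$ and $\chi_{\mathrm P}=0$ near $\mathrm{Sf}\cap\mathrm{nPf}$, and $\mathrm{nPf}$ is connected, the transition from $1$ to $0$ must occur along $\mathrm{nPf}$, so $\operatorname{supp}\mathrm d\chi_{\mathrm P}$ meets $\partial\bbO$. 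The conclusion $[P,\chi_{\mathrm P}]u\in\calS$ is nevertheless correct, and you already have the right tool in hand: you established in your first paragraph that $u$ is Schwartz microlocally away from ${}^{\mathrm{de,sc}}\pi^{-1}(\mathrm{Pf})$, and since $\chi_{\mathrm P}=1$ in a neighborhood of $\mathrm{Pf}$, the support of $\mathrm d\chi_{\mathrm P}$ is disjoint from that neighborhood, so $[P,\chi_{\mathrm P}]u\in\calS$. This is precisely the paper's justification (``$[P,\chi]$ is supported away from timelike infinity, where the de,sc-wavefront set of $u$ is''). Replace your $C^\infty(\bbO^\circ)$ sentence with this and the proof is complete.
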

\begin{proof}
	Note that, if we let $P_0 = \square+\mathsf{m}^2+\psi R_0$ for $\psi\in C_{\mathrm{c}}^\infty(\bbX)$ such that $\psi=1$ identically on $\operatorname{supp} \chi$, then $u$ satisfies $P_0 u =f_0$ for some $f_0\in \calS(\bbR^{1,d})$. 
	Now observe that $P_0$ can be written as the pullback $P=\iota^* P_{\mathrm{e,sc}}$ for an e,sc-operator $P_{\mathrm{e,sc}}$ on the Poincar\'e cylinder, with $P_{\mathrm{e,sc}}$ satisfying the conditions above. Indeed, we just need that pulling back $R_0$ results in a linear combination of elements of $\tau^{-2} \frakN_0^2$ multiplied by pullbacks of elements of $C_{\mathrm{c}}^\infty(\bbX)$. Indeed, \Cref{prop:scedge->oct} tells us that $R_0$ is a sum of operators of the form 
	\begin{equation} 
		C_{\mathrm{c}}^\infty(\bbX)\iota_* \operatorname{Diff}_{\mathrm{e,sc}}^{2,-2,0},
	\end{equation} 
	so the pullback of $R_0$ is a sum of operators of the form $(\iota^* C^\infty_{\mathrm{c}}(\bbX)) \operatorname{Diff}_{\mathrm{e,sc}}^{2,-2,0}$. Now, 
	\begin{equation}
		\operatorname{Diff}_{\mathrm{e,sc}}^{2,-2,0} = \tau^{-2} \operatorname{Diff}_{\mathrm{e,sc}}^{2,0,0} \subseteq \tau^{-2} \frakN_0^2,
		\label{eq:misc_262}
	\end{equation}
	where the final $\subseteq$ used $\operatorname{Diff}_{\mathrm{e,sc}}^{1,0,0}\subset \frakN_0$ (\cref{eq:N0_def}).

	We consider the case when
	\begin{equation}
		u \in H_{\mathrm{de,sc};+,+}^{m,(\infty,\infty,\infty,\infty,s_{\mathrm{Tf}});\infty,\infty}(\bbO),
	\end{equation}
	and the others are similar. 
	Let $g = \chi_0 f + [P_0,\chi_0] u$ for $\chi_0\in C_{\mathrm{c}}^\infty(\bbX)$ such that $\operatorname{supp} \chi_0 \Subset \chi^{-1}(\{1\})$. We have $P_0 (\chi_0 u) =g$. Since $[P_0,\chi_0]$ is supported away from timelike infinity, where the de,sc-wavefront set of $u$ is, we have $g\in \calS(\bbR^{1,d})$. Pushing forward to the Poincar\'e cylinder, 
	\begin{equation}
		P_{\mathrm{e,sc}}(\iota_* \chi_0 u) =  (\iota_* P_0) (\iota_* \chi_0 u) = \iota_* (P_0(\chi_0 u)) = \iota_* g.
	\end{equation}
	By the hypothesis and \Cref{prop:ultimate_sobolev_conversion}, $\iota_* \chi_0 u = e^{+i \mathsf{m} \tau} \langle \tau\rangle^{-d/2} u_0$ for 
	\begin{equation} 
		u_0 \in H_{\mathrm{e,sc}}^{m,s-d/2,\infty;\infty,\infty}(\overline{\bbR}\times \bbB^d).
	\end{equation} 
	Likewise, $\iota_* g \in \calS^\infty(\overline{\bbR}\times \bbB^d)$.
	
	By \Cref{prop:Sobolev_embedding}, $u_0 \in \calA^{s+1/2-,\infty}(\overline{\bbR}\times \bbB^d)$.
	Since $s>-3/2$, we can appeal to \Cref{prop:asymptotic_extraction} to deduce that 
	\begin{equation} 
		u_0 \in \calA^{(0,0),\infty}(\overline{\bbR}\times \bbB^d).
	\end{equation} 
	Letting $v =\chi^{-1} \iota^*    u_0$, this being well-defined because of the support condition on $\chi_0$, we have $\chi_0 u = \chi e^{+i\mathsf{m}\tau} \langle \tau \rangle^{-d/2} v$, so setting $w= (1-\chi_0)u \in \calS(\bbR^{1,d})$, we have $u = w + \chi e^{+ i \mathsf{m} \tau} \langle \tau \rangle^{-d/2} v$. 
\end{proof}

The following result will be useful when discussing the scattering problem in \S\ref{sec:proofs}:
\begin{proposition}
	Suppose that $v_\pm$ are Schwartz functions on either the the past or the future timelike cap of $\bbM$, not necessarily the same cap. Then, there exist some 
	\begin{equation} 
		u_\pm \in \varrho_{\mathrm{nPf}}^\infty \varrho_{\mathrm{Sf}}^\infty \varrho_{\mathrm{nFf}}^\infty C^\infty(\bbO)
	\end{equation} 
	such that 
	\begin{itemize}
		\item $u_\pm$ has support  disjoint from all of the faces of $\bbO$ except the cap on which $v_\pm$ is given and the adjacent component of $\mathrm{nf}$, 
		\item $u_\pm$, when restricted to that cap, is $v_\pm$, and 
		\item $ P( \chi \varrho_{\mathrm{Pf}}^{d/2}\varrho_{\mathrm{Ff}}^{d/2} e^{\pm i \mathsf{m} \sqrt{t^2-r^2}} u_{\pm}) \in \calS(\bbR^{1,d})$, 
	\end{itemize} 
	for each choice of sign.
	\label{prop:borel} 
\end{proposition}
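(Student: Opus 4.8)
The plan is to work on the Poincar\'e cylinder via the diffeomorphism $\iota$ of \Cref{prop:hyperbolic_smoothness}, to construct a formal solution near the relevant timelike cap by a transport--equation recursion, to Borel--sum it, cut it off, and transfer the result back to $\bbO$. I describe the construction when $v_+$ is given on the future cap $\mathrm{Ff}$; the case of $v_-$, and the case where the data sits on the past cap, are identical up to signs and to the choice of which end of $\overline{\bbR}$ one works near.

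First I would reduce to the cylinder. By \Cref{prop:hyperbolic_smoothness}, $\iota$ carries a neighborhood of $\mathrm{Ff}$ in $\bbX$ onto a neighborhood of $\{+\infty\}\times\bbB^d$, taking $\mathrm{Ff}$ to $\{+\infty\}\times\bbB^d$ and the adjacent component of $\mathrm{nFf}$ to $\overline{\bbR}\times\partial\bbB^d$, and $P$ to an e,sc--operator $P_{\mathrm{e,sc}}$ of the form \eqref{eq:misc_psc}. From \eqref{eq:misc_vr1} one gets $\iota^*\langle\tau\rangle^{-1}\in\varrho_{\mathrm{nFf}}\varrho_{\mathrm{Ff}}C^\infty(\bbX;\bbR^+)$ near $\mathrm{Ff}$ (since $\varrho_{\mathrm{Pf}},\varrho_{\mathrm{nPf}}$ are then positive), and on $\operatorname{supp}\chi$ one has $\sqrt{t^2-r^2}=\tau$; hence for $\tilde u\in C^\infty$ supported near $\mathrm{Ff}$, the pullback $\iota^*(e^{+i\mathsf{m}\tau}\tau^{-d/2}\tilde u)$ equals $\varrho_{\mathrm{Pf}}^{d/2}\varrho_{\mathrm{Ff}}^{d/2}e^{+i\mathsf{m}\sqrt{t^2-r^2}}$ times $g\,\iota^*\tilde u$ for a fixed positive $g\in\varrho_{\mathrm{nFf}}^{d/2}C^\infty$. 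So it suffices to produce $\tilde u_+\in C^\infty(\overline{\bbR}\times\bbB^d)$, Schwartz at $\partial\bbB^d$, supported in $\{\tau\ge T\}$ for some large $T$, with $P_{\mathrm{e,sc}}(e^{+i\mathsf{m}\tau}\tau^{-d/2}\tilde u_+)\in\calS(\overline{\bbR}\times\bbB^d)$ and with $\tilde u_+|_{\tau=+\infty}$ equal to $v_+$ divided by the restriction of $g$ to $\mathrm{Ff}$ --- the latter being positive and vanishing only to order $d/2$ at $\partial\bbB^d$, so the quotient is again Schwartz there since $v_+$ is flat. The function $u_+$ is then $g\,\iota^*\tilde u_+$ multiplied by a cutoff identically $1$ near $\mathrm{Ff}$ and supported in $\bbX$ away from $\mathrm{cl}_\bbO\{t^2=r^2\}$; it lies in $\varrho_{\mathrm{nPf}}^\infty\varrho_{\mathrm{Sf}}^\infty\varrho_{\mathrm{nFf}}^\infty C^\infty(\bbO)$, is supported away from every face but $\mathrm{Ff}$ and the adjacent component of $\mathrm{nFf}$, restricts to $v_+$ on $\mathrm{Ff}$, and --- since $P$ is a differential operator and $\chi\equiv1$ near $\mathrm{Ff}\supseteq\operatorname{supp}u_+$ --- satisfies $P(\chi\varrho_{\mathrm{Pf}}^{d/2}\varrho_{\mathrm{Ff}}^{d/2}e^{+i\mathsf{m}\sqrt{t^2-r^2}}u_+)\in\calS(\bbR^{1,d})$.

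To build $\tilde u_+$ I would conjugate: with $\tilde P=\tau^{d/2}e^{-i\mathsf{m}\tau}P_{\mathrm{e,sc}}e^{+i\mathsf{m}\tau}\tau^{-d/2}=\partial_\tau^2+2i\mathsf{m}\partial_\tau+\tau^{-2}(\iota_*\Upsilon)\tilde R$, $\tilde R\in\frakN^2_0$, as in \eqref{eq:misc_tsc}. In the variable $\rho=1/\tau$, the generator $\partial_\tau=-\rho^2\partial_\rho$ of $\frakN_0$ raises the order of vanishing in $\rho$ by two while the edge generators $(1-y^2)\partial_{y_j}$ preserve it; hence feeding $\tilde P$ a term $\rho^j a_j(y)$ gives output with lowest $\rho$--power $\rho^{j+1}$ for $j\ge1$ (and $\rho^2$ for $j=0$), with coefficient $2i\mathsf{m}j\,a_j$ when $j\ge1$. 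Since $2i\mathsf{m}j\neq0$ for $j\ge1$, the equation $\tilde P(\sum_j\rho^j a_j)\equiv0$, solved order by order in $\rho$, determines $a_1,a_2,\dots$ uniquely from the prescribed $a_0$; each $a_j$ is obtained from $a_0,\dots,a_{j-1}$ by applying edge differential operators in $y$ and multiplying by Taylor coefficients at $\tau=+\infty$ of $\iota_*\Upsilon$, all of which preserve $\dot{C}^\infty(\bbB^d)$ (for the last, by the remark following \eqref{eq:misc_psc}), so $a_j\in\dot{C}^\infty(\bbB^d)$ for all $j$. Borel summation in $\rho$ with values in the Fr\'echet space $\dot{C}^\infty(\bbB^d)$ yields $\tilde v\in C^\infty$ near $\tau=+\infty$, Schwartz at $\partial\bbB^d$, with $\tilde v-\sum_{j<N}\rho^j a_j=O(\rho^N)$ for every $N$, whence $\tilde P\tilde v\in\calS$. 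Setting $\tilde u_+=\psi\tilde v$ for $\psi\in C^\infty(\overline{\bbR})$ with $\psi\equiv1$ on $\{\tau\ge2T\}$ and $\psi\equiv0$ on $\{\tau\le T\}$, one gets $\tilde P\tilde u_+=\psi\tilde P\tilde v+[\tilde P,\psi]\tilde v\in\calS$ (the commutator supported in the compact band $T\le\tau\le2T$ and Schwartz at $\partial\bbB^d$), while $\tilde u_+|_{\tau=+\infty}=a_0$. Undoing the conjugation gives $P_{\mathrm{e,sc}}(e^{+i\mathsf{m}\tau}\tau^{-d/2}\tilde u_+)=e^{+i\mathsf{m}\tau}\tau^{-d/2}\tilde P\tilde u_+\in\calS$, which is what Step~1 needs.

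The main obstacle is the content of the third paragraph: one must verify precisely that $\tilde P$ has the claimed triangular structure in powers of $\rho=1/\tau$, with nonvanishing indicial coefficients $2i\mathsf{m}j$ for $j\ge1$, so that the transport recursion is unobstructed, and that every coefficient so produced stays in $\dot{C}^\infty(\bbB^d)$; this rests on the structure of $\frakN^2_0$ (edge operators in $y$ together with $\tau$--derivatives) and on the benign behaviour of $\iota_*\Upsilon$ against Schwartz functions supported away from $\tau=0$. Everything else --- tracking boundary--defining--function weights under $\iota$, choosing the cutoffs, and running the Borel summation --- is routine, but must be carried out carefully to certify the stated support, decay, and restriction properties of $u_\pm$.
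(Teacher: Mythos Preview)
Your proposal is correct and follows essentially the same approach as the paper: transfer to the Poincar\'e cylinder via $\iota$, conjugate $P_{\mathrm{e,sc}}$ by $e^{+i\mathsf{m}\tau}\tau^{-d/2}$ to obtain $\tilde P=\partial_\tau^2+2i\mathsf{m}\partial_\tau+\tau^{-2}(\iota_*\Upsilon)\tilde R$, solve the resulting transport hierarchy in powers of $\tau^{-1}$ recursively (the nonvanishing of the indicial coefficient $\pm 2i\mathsf{m}j$ for $j\ge1$ being the key point), Borel--sum with values in $\dot C^\infty(\bbB^d)$, cut off in $\tau$, and pull back. Two cosmetic remarks: the leading coefficient you quote is $-2i\mathsf{m}j$ rather than $+2i\mathsf{m}j$ (harmless, since only nonvanishing matters), and the paper absorbs your prefactor $g$ directly into the definition $u_+=\varrho_{\mathrm{Pf}}^{-d/2}(\varrho_{\mathrm{Ff}}\iota^*\tau)^{-d/2}\iota^*w_+$ rather than dividing the initial datum by $g|_{\mathrm{Ff}}$ --- your handling of the restriction condition is in fact slightly more careful.
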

\begin{proof}
	We consider the case where the timelike cap is the future one, with the past case being analogous, and we consider only the plus case of the theorem, the minus case being analogous. 
	We work on $\overline{\bbR}\times \bbB^d$, considering $v_+ \in \calS(\bbB^d_{\bfy})$. It suffices to construct 
	\begin{equation} 
		w_+ \in (1-y^2)^\infty C^\infty(\overline{\bbR}\times \bbB^d)
	\end{equation} 
	supported in $[1,\infty]_\tau\times \bbB^d$ such that $w_+|_{\{\infty\}\times \bbB^d} = v_+$ and $P_{\mathrm{e,sc}}(\tau^{-d/2} e^{+i\mathsf{m}\tau } w_+ ) \in \calS(\overline{\bbR}\times \bbB^d)$. Indeed, given this, set 
	\begin{equation} 
		u_+ = \varrho_{\mathrm{Pf}}^{-d/2} (\varrho_{\mathrm{Ff}} \iota^* \tau)^{-d/2} \iota^* w_+ \in C^\infty(\bbO) 
	\end{equation}
	(which is supported away from $\mathrm{Pf}\cup \mathrm{nPf}\cup \mathrm{Sf}$). Then, since $P_0 = \iota^* P_{\mathrm{e,sc}}$, where $P_0$ is as in the proof of the previous proposition, 
	\begin{multline}
		 P( \chi \varrho_{\mathrm{Pf}}^{d/2}\varrho_{\mathrm{Ff}}^{d/2} e^{+ i \mathsf{m} \sqrt{t^2-r^2}} u_{+})  = [P_0,\chi ] ( \iota^* \tau^{-d/2} e^{+i\mathsf{m}\tau} w_+ ) + \chi \iota^*  P_{\mathrm{e,sc}}(\tau^{-d/2} e^{+i\mathsf{m}\tau } w_+ ) \\ 
		 + (1-\psi) R_0 ( \chi \varrho_{\mathrm{Pf}}^{d/2}\varrho_{\mathrm{Ff}}^{d/2} e^{+ i \mathsf{m} \sqrt{t^2-r^2}} u_{+})  \in \calS(\bbR^{1,d}). 
	\end{multline}

	The construction of $w_+$ is a straightforward term-by-term construction using the structure of $P_{\mathrm{e,sc}}$ described in \cref{eq:misc_psc}. 	
	Consider a formal series
	\begin{equation}
		w_{+,\Sigma}(\tau,\bfy) = \sum_{k=0}^\infty w_{+,k}(\bfy) \tau^{-k} \in \calS(\bbB^d_y)[[1/\tau]]. 
		\label{eq:misc_224}
	\end{equation}
	Formally applying $P_{\mathrm{e,sc}}$ to $\tau^{-d/2} e^{+i\mathsf{m} \tau} w_{+,\Sigma}$ yields $\tau^{-d/2} e^{+i\mathsf{m} \tau} \tilde{P}_{\mathrm{e,sc}} w_{+,\Sigma}$, where $\tilde{P}_{\mathrm{e,sc}} = \partial_\tau^2 + 2i \mathsf{m} \partial_\tau + \tau^{-2} (\iota_* \Upsilon)\tilde{R}$, as in \cref{eq:misc_tsc}. 
	In order to make sense of $\tilde{R} w_{+,\Sigma}$, we consider the Taylor expansion 
	\begin{multline}
		(\iota_* \Upsilon) \tilde{R}  \sim \sum_{k=0}^\infty \tau^{-k} \Big( c_k \partial_\tau^2 + \sum_{j=1}^d c_{k,j} (1-y^2)\partial_\tau \partial_{y_j} \\ + \sum_{j,\ell=1}^d (1-y^2)^2 c_{k,j,\ell}\partial_{y_j} \partial_{y_k}+  d_k \partial_\tau + \sum_{j=1}^d d_{k,j} (1-y^2)\partial_{y_j}  + e_k  \Big), 
	\end{multline}
	where $c_k,c_{k,j},c_{k,j,\ell},d_k,d_{k,j},e_k \in C^\infty(\bbB^{d\circ})$, with a polynomial rate of growth at the boundary.  
	Applying $\tilde{P}_{\mathrm{e,sc}}$ to $w_{+,\Sigma}$, the result is the formal series in $1/\tau$, the $k$th term of which is a linear combination of the $w_{+,0},\ldots,w_{+,k-1}$ with coefficients in $C^\infty(\bbB^{d\circ})$ having polynomial growth at the boundary, and with the coefficient of $w_{+,k-1}$  being $-2i\mathsf{m}(k-1) w_{+,k-1}$. Thus, we can recursively define a sequence 
	\begin{equation} 
		\{w_{+,k}\}_{k=0}^\infty \subset \calS(\bbB^d)
	\end{equation} 
	such that $w_{+,0} = v_+$ and such that $\tilde{P}_{\mathrm{e,sc}} w_{+,\Sigma} = 0$, formally. Via the smooth Borel summation lemma, there exists a $w_+ \in (1-y^2)^\infty C^\infty(\overline{\bbR}\times \bbB^d)$ whose Taylor series at $\tau=\infty$ is given by \cref{eq:misc_224}. Multiplying by a cutoff, we can assume that $w_+$ is supported in $[1,\infty]_\tau \times \bbB^d$. The formal manipulations above make sense at the level of computing the Taylor series of $f=P_{\mathrm{e,sc}} (\tau^{-d/2} e^{+i\mathsf{m}\tau} w_{+,\Sigma})$, which is a priori in $\tau^{-d/2} e^{+i\mathsf{m} \tau} (1-y^2)^\infty C^\infty(\overline{\bbR}\times \bbB^d)$. The formal manipulations show that the Taylor series of $f$ at $\tau=\infty$ vanishes, which suffices to conclude that $f$ is actually Schwartz. 
\end{proof}

\section{Classical dynamics on the de,sc- phase space}
\label{sec:dynamics}
We now study the (appropriately scaled) Hamiltonian flow of the d'Alembertian -- i.e.\ the null geodesic flow -- of an admissible metric on the de,sc-phase space, near null infinity. Attention is restricted to the characteristic set of the Klein--Gordon operator, which depends on $\mathsf{m}$. 
As seen above, the symbol 
\begin{equation} 
	p = p[g_{\bbM}]= - \tau^2 + \Xi^2 + \eta^2 + \mathsf{m}^2 \in C^\infty(T^* \bbR^{1,d} ) 
\end{equation} 
of the Minkowski d'Alembertian is a classical symbol on the de,sc-cotangent bundle of order zero at each face except df, where it is second order (that is, growing quadratically).

Let $p[g]$ denote a representative of the principal symbol of $\square_g$. If $g$ is admissible (see \S\ref{sec:proofs}), then 
\begin{equation} 
	p[g]=p+\varrho_{\mathrm{Pf}}\varrho_{\mathrm{nPf}}^2\varrho_{\mathrm{Sf}}\varrho_{\mathrm{nFf}}^2 \varrho_{\mathrm{Ff}}   p_{\mathrm{lo}}
	\label{eq:p1}
\end{equation} 
for some 
\begin{equation} 
	p_{\mathrm{lo}} \in S_{\mathrm{de,sc}}^{2,\mathsf{0}}(\bbO).
	\label{eq:p1_z}
\end{equation} 
The `lo' stands for ``lower order.'' However, $p_{\mathrm{lo}}$ is not subleading at $\mathrm{df}$, so it does enter the principal symbol. This would be true even if $g_\bbM-g$ were Schwartz.
\begin{remark*}
	Actually, the $g$ considered in \S\ref{sec:proofs} satisfy 
	\begin{equation} 
		p[g]=p+ S_{\mathrm{de,sc}}^{2,-\mathsf{2}}(\bbO).
	\end{equation} 
	However, in this section (and therefore in \S\ref{sec:propagation}), the weaker \cref{eq:p1} suffices. We highlight this because 
	\begin{equation} 
		\varrho_{\mathrm{Pf}}\varrho_{\mathrm{nPf}}^2\varrho_{\mathrm{Sf}}\varrho_{\mathrm{nFf}}^2 \varrho_{\mathrm{Ff}}\sim 1/(1+r^2+t^2)^{-1/2} ,
	\end{equation} 
	so \cref{eq:p1} allows natural long-range terms which are excluded later.

	In fact, most of the discussion in this section goes through for 
	\begin{equation}
		p[g]=p+ S_{\mathrm{de,sc}}^{2,-\mathsf{1}}(\bbO).
		\label{eq:weakest}
	\end{equation}
	The only exceptions are the results relating to the source/sink structure of $\calN$. Thus, while discussing $\calA,\calC,\calK$, we will only use \cref{eq:weakest}.
	In work-in-progress, Molodyk--Vasy \cite{M-V-Feynman} are extending the analysis in this and the next section to even more general long-range metrics than those satisfying \cref{eq:weakest}. What ultimately matters for the arguments in \S\ref{sec:propagation} is the source/sink structure of the flow at null infinity, which is not affected by adding to $p$ decaying terms. However, if only \cref{eq:weakest} (or something weaker) holds, then it may be necessary to use different symbols than those defined below to probe the source/sink structure.
\end{remark*}

Let 
\begin{equation} 
	\tilde{p}[g] = \varrho_{\mathrm{df}}^2 p[g] \in C^\infty({}^{\mathrm{de,sc}}\overline{T}^* \bbO ).
	\label{eq:ptilde}
\end{equation} 
Then, the characteristic set $\operatorname{Char}_{\mathrm{de,sc}}^{2,\mathsf{0}}(P)=\Sigma_{\mathsf{m}}[g]$ of any $P\in \operatorname{Diff}_{\mathrm{de,sc}}^{2,\mathsf{0}}$ with principal symbol $p[g]$ is given by
\begin{equation}
	\Sigma_{\mathsf{m}}[g] = \tilde{p}[g]^{-1}(0) \cap \partial ({}^{\mathrm{de,sc}}\overline{T}^* \bbO ), 
\end{equation}
that is the portion of the vanishing set of $\tilde{p}[g]$ contained in the boundary of the de,sc-phase space.
Over the boundary of $\bbO$, $\Sigma_{\mathsf{m}}[g]$ does not depend on $g$. In each fiber over the boundary $\Sigma_{\mathsf{m}}[g]$ consists of a two-sheeted hyperboloid (note that this notion does not depend on the choice of coordinates in the base). By the admissibility criteria, which imply time orientability, $\Sigma_{\mathsf{m}}[g]$ has two connected components,
\begin{equation}
	\Sigma_{\mathsf{m},\pm}[g] = \Sigma_{\mathsf{m}}[g] \cap \mathrm{cl}_{{}^{\mathrm{de,sc}}\overline{T}^* \bbO} \{\pm \tau \geq 0\}. 
\end{equation}
Recall that if we drop `$[g]$', then this just means evaluated for $g=g_{\bbM}$ the Minkowski metric; this notation applies throughout this section.

\begin{figure}[t]
	\centering
	\begin{tikzpicture}[scale=.85]
		\def\mm{5.5}
		\def\fram{7}
		
		\def\a{1}
		\def\b{1} 	
		\begin{scope}[shift={(-5,0)}]
			\begin{axis}[
				hide axis,
				axis equal image,
				xmin=-\fram,xmax=\fram,
				ymin=-\fram,ymax=\fram]
				
				\draw[fill=gray!10] (0,0) circle (\mm);
				\draw[dotted] (0,-\mm) -- (0, +\mm);
				\draw[dotted] (-\mm,0) -- (+\mm,0);
				
				\begin{scope}[rotate around={58:(0,0)}]
					\draw[darkcandyapp] (0,2.95) ellipse (53.2pt and 3pt);
					\draw[darkcandyapp,dashed] (0,2.2) ellipse (37pt and 2pt);
					\draw[darkcandyapp,dashed] (0,1.5) ellipse (21pt and 2pt);
					\draw[darkblue] (0,-2.95) ellipse (53.2pt and 3pt);
					\draw[darkblue,dashed] (0,-2.2) ellipse (37.5pt and 2pt);
					\draw[darkblue,dashed] (0,-1.5) ellipse (21.5pt and 2pt);
				\end{scope}
				
				\addplot [domain=-2.03:1.28, color=darkcandyapp] ({-1.41\b*cosh(\x)-1.41\b*sinh(\x)+.15},{-1.41\b*sinh(\x)});
				\addplot [domain=-2.03:1.28, color=darkblue] ({1.41\b*cosh(\x)+1.41\b*sinh(\x)-.15},{1.41\b*sinh(\x)});
				\draw[->] (0,0) -- (0,2.5);
				\node[above right] (nu) at (0,1.7) {$+\zeta$};
				\draw[->] (0,0) -- (2.5,0);
				\node[above right] (aleph) at (.5,0) {$\xi$};
				\draw[->] (0,0) -- (-1.2,-1.2);
				\node[below left] (eta) at (-1.2,-1.2) {$\eta $};
				
				\filldraw[black] (0,\mm) circle (2pt); 
				\filldraw[black] (0,-\mm) circle (2pt); 
			\end{axis}
		\end{scope}
		\begin{scope}[shift={(1,0)}]
			\begin{axis}[
				hide axis,
				axis equal image,
				xmin=-\fram,xmax=\fram,
				ymin=-\fram,ymax=\fram]

				\draw[fill=gray!10] (0,0) circle (\mm);
				\draw[dotted] (0,-\mm) -- (0, +\mm);
				\draw[dotted] (-\mm,0) -- (+\mm,0);
				
				\begin{scope}[rotate around={30:(0,0)}]
					\draw[darkcandyapp] (0,4.74) ellipse (32.5pt and 3pt);
					\draw[darkcandyapp,dashed] (0,3.5) ellipse (21.5pt and 3pt);
					\draw[darkcandyapp,dashed] (0,2.3) ellipse (10pt and 2pt);
					\draw[darkblue] (0,-4.74) ellipse (32.5pt and 3pt);
					\draw[darkblue,dashed] (0,-3.5) ellipse (21.2pt and 3pt);
					\draw[darkblue,dashed] (0,-2.3) ellipse (10pt and 2pt);
				\end{scope}
				\coordinate (ar1) at (0,0);
				\addplot [domain=-2.03:1.25, color=darkblue] ({1.41\b*cosh(\x)+1.41\b*sinh(\x)-.15},{-1.41\b*cosh(\x)});
				\addplot [domain=-2.03:1.25, color=darkcandyapp] ({-1.41\b*cosh(\x)-1.41\b*sinh(\x)+.15},{1.41\b*cosh(\x)});
				\draw[->] (0,0) -- (0,2.5);
				\node[above right] (nu) at (0,1.7) {$-\zeta$};
				\draw[->] (0,0) -- (2.5,0);
				\node[above right] (aleph) at (2,0) {$\xi$};
				\draw[->] (0,0) -- (-1.2,-1.2);
				\node[below left] (eta) at (-1.2,-1.2) {$\eta $};
				\filldraw[black] (0,\mm) circle (2pt); 
				\filldraw[black] (0,-\mm) circle (2pt); 
			\end{axis}
		\end{scope}
	\end{tikzpicture}
	\caption{The characteristic set $\Sigma_{\mathsf{m}} = {\color{darkblue}\Sigma_{\mathsf{m},-}}\cup {\color{darkcandyapp}\Sigma_{\mathsf{m},+}}$, over $\alpha \in \mathrm{nFf}$, depicted using the momenta dual to $(\varrho_{\mathrm{nf}},\varrho_{\mathrm{Sf}})$ (\emph{left}, if $\alpha \in \Omega_{\mathrm{nfSf},+,R}$) and the momenta dual to $(\varrho_{\mathrm{nf}},\varrho_{\mathrm{Tf}})$ (\emph{right}, if $\alpha \in \Omega_{\mathrm{nfTf},\pm,T}$). 
		The vertical axis is oriented so that page-up corresponds to positive timelike momentum.	
		The `$\bullet$' marks the submanifolds $\calN^+_+,\calN^-_+$. Over $\mathrm{nPf}$, the situation is similar. Warning: in each of the coordinate charts $\Omega_{\bullet,+,1}$, we use ``$\xi,\zeta$'' to label momenta coordinates, but the meaning depends on whether $\bullet$ reads nfTf or nfSf; otherwise, the two plots above would be the same. }
	\label{fig:Sigma}
\end{figure}

Recapping the proof of \Cref{prop:symbol}, and adding back in the $\eta$ (the sc-angular momentum coordinate) dependence: 
\begin{itemize}
	\item in $\Omega_{\mathrm{nfTf},\pm,T}$, where we can use the coordinate system $(\varrho_{\mathrm{nf}},\varrho_{\mathrm{Tf}},\theta,\xi,\zeta,\eta) \mapsto \varrho_{\mathrm{nf}}^{-2} \varrho_{\mathrm{Tf}}^{-1} \xi \mathrm{d} \varrho_{\mathrm{nf}} + \varrho_{\mathrm{nf}}^{-1} \varrho_{\mathrm{Tf}}^{-2} \zeta \mathrm{d} \varrho_{\mathrm{Tf}} + \varrho_{\mathrm{nf}}^{-2}\varrho_{\mathrm{Tf}}^{-1} \eta \dd \theta$, \begin{equation}
		p=\xi^2 - 2 \xi \zeta  + \eta^2 + \mathsf{m}^2, 
		\label{eq:misc_258}
	\end{equation} 
	and 
	\item in $\Omega_{\mathrm{nfSf},\pm,R}$, using the coordinates $(\varrho_{\mathrm{nf}},\varrho_{\mathrm{Sf}},\theta,\xi,\zeta,\eta) \mapsto \varrho_{\mathrm{nf}}^{-2} \varrho_{\mathrm{Sf}}^{-1} \xi \mathrm{d} \varrho_{\mathrm{nf}} + \varrho_{\mathrm{nf}}^{-1} \varrho_{\mathrm{Sf}}^{-2} \zeta \mathrm{d} \varrho_{\mathrm{Sf}} + \varrho_{\mathrm{nf}}^{-2}\varrho_{\mathrm{Sf}}^{-1} \eta \dd \theta$, 
	\begin{equation} 
		p= -\xi^2 + 2\xi \zeta + \eta^2 + \mathsf{m}^2.
		\label{eq:misc_259}
	\end{equation}  
\end{itemize}
Thus, over $\mathrm{nf}\in \{\mathrm{nPf},\mathrm{nFf}\}$, and letting $\sigma \in \{-1,+1\}$ be defined by $\sigma = +1$ if $\mathrm{nf}=\mathrm{nFf}$ and $\sigma = -1$ if $\mathrm{nf}=\mathrm{nPf}$, the set $\Sigma_{\mathsf{m},\pm}[g] \cap {}^{\mathrm{de,sc}}\pi^{-1}(\mathrm{nf})$ is given by 
\begin{align}
	\Sigma_{\mathsf{m},\pm}[g] \cap \Omega_{\mathrm{nfTf},\sigma,T} \cap {}^{\mathrm{de,sc}}\pi^{-1}(\mathrm{nf}) &= \mathrm{cl}_{{}^{\mathrm{de,sc}}\overline{T}^*_{\mathrm{nf}} \bbO } \{ \zeta = (2\xi)^{-1} (\xi^2 + \eta^2+\mathsf{m}^2), \mp \sigma \xi > 0  \} \label{eq:p_nfTf}
	\intertext{with respect to the first coordinate system and}
	\Sigma_{\mathsf{m},\pm}[g] \cap \Omega_{\mathrm{nfSf},\sigma,R} \cap {}^{\mathrm{de,sc}}\pi^{-1}(\mathrm{nf})  &= \mathrm{cl}_{{}^{\mathrm{de,sc}}\overline{T}^*_{\mathrm{nf}} \bbO } \{ \zeta = (2\xi)^{-1} (\xi^2 - \eta^2-\mathsf{m}^2), \mp \sigma \xi > 0  \} 
\end{align}
with respect to the second. These hyperboloids are depicted in \Cref{fig:Sigma} in the $d=2$ case. 

\subsection{The de,sc- Hamiltonian vector field}

We now discuss the (properly scaled) Hamiltonian vector field $\mathsf{H}_{p[g]}$. 
Defining $\mathsf{H}_{p[g]} = \varrho_{\mathrm{df}} (\varrho_{\mathrm{Pf}}\varrho_{\mathrm{nPf}}\varrho_{\mathrm{Sf}}\varrho_{\mathrm{nFf}}\varrho_{\mathrm{Ff}})^{-1}H_{p[g]}$, 
\begin{equation}
	\mathsf{H}_{p[g]} = \mathsf{H}_{p} \bmod \varrho_{\mathrm{Pf}}\varrho_{\mathrm{nPf}}^2\varrho_{\mathrm{Sf}}\varrho_{\mathrm{nFf}}^2\varrho_{\mathrm{Ff}} \calV_{\mathrm{b}}({}^{\mathrm{de,sc}}\overline{T}^* \bbO).
	\label{eq:misc_255}
\end{equation}
Since our focus is on the situation over $\partial \bbO$, we will calculate the Minkowski case $\mathsf{H}_p$ explicitly, and the error $\mathsf{H}_{p[g]}-\mathsf{H}_p$ will turn out to be negligible.

In the Cartesian coordinate system $(t,\bfx,\tau,\bmxi)\mapsto \tau \dd t + \sum_{i=1}^d \xi_i \dd x_i \in T^* \bbR^{1,d}$, the Hamiltonian vector field of $p$ is
\begin{equation}
 	H_p = 2 \tau \frac{\partial}{\partial t} - 2 \sum_{i=1}^d \xi_i \frac{\partial}{\partial x_i}
\end{equation}
using our sign convention. 

\begin{remark*}
	The reader may see the Hamiltonian vector field defined elsewhere using the opposite sign convention. This does not affect any arguments, and the sign choice is merely conventional. Our convention is the one such that, on the positive frequency sheet $\Sigma_{\mathsf{m},+}$, the Hamiltonian flow propagates \emph{forwards} in time (as depicted in \Cref{fig:O}).
\end{remark*}

With respect to the coordinate system $(\varrho_{\mathrm{nf}},\varrho_{\mathrm{Tf}},\theta,\xi,\zeta,\eta) \mapsto \varrho_{\mathrm{nf}}^{-2} \varrho_{\mathrm{Tf}}^{-1} \xi \mathrm{d} \varrho_{\mathrm{nf}} + \varrho_{\mathrm{nf}}^{-1} \varrho_{\mathrm{Tf}}^{-2} \zeta \mathrm{d} \varrho_{\mathrm{Tf}} + \varrho_{\mathrm{nf}}^{-2}\varrho_{\mathrm{Tf}}^{-1} \eta \dd \theta $, the rescaled Hamiltonian flow $\mathsf{H}_p = \mathsf{H}_{p[g_{\bbM}]}$, defined by \cref{eq:Hp_rescaled}, is given by  
\begin{equation}
	2^{-1} \varrho_{\mathrm{df}}^{-1} \mathsf{H}_p = (\zeta - \xi) \varrho_{\mathrm{nf}}\frac{\partial}{\partial \varrho_{\mathrm{nf}}} + \xi \varrho_{\mathrm{Tf}} \frac{\partial}{\partial \varrho_{\mathrm{Tf}}} + ( 2\eta^2  + \xi^2 - \xi \zeta) \frac{\partial}{\partial \xi} + (\eta^2 + (\xi-\zeta)^2 ) \frac{\partial}{\partial \zeta} + (2\zeta-\xi) V_{\bbS^{d-1}}, 
	\label{eq:H_nfTf}
\end{equation}
where $V_{\bbS^{d-1}}$ is the generator of dilations on $T^* \bbS^{d-1}$. (I.e.\ $V_{\bbS^{d-1}} = \sum_{i=1}^{d-1} \eta_i \partial_{\eta_i}$ with respect to any local coordinate system $\theta_1,\ldots,\theta_{d-1}$ on the sphere at infinity.)

On the other hand, with respect to the coordinate system 
$(\varrho_{\mathrm{nf}},\varrho_{\mathrm{Sf}},\theta,\xi,\zeta,\eta) \mapsto \varrho_{\mathrm{nf}}^{-2} \varrho_{\mathrm{Sf}}^{-1} \xi \mathrm{d} \varrho_{\mathrm{nf}} + \varrho_{\mathrm{nf}}^{-1} \varrho_{\mathrm{Sf}}^{-2} \zeta \mathrm{d} \varrho_{\mathrm{Sf}} + \varrho_{\mathrm{nf}}^{-2}\varrho_{\mathrm{Sf}}^{-1} \eta \dd \theta $,
$\mathsf{H}_p$ is given by 
\begin{equation}
	 2^{-1}\varrho_{\mathrm{df}}^{-1} \mathsf{H}_p = (\xi-\zeta)\varrho_{\mathrm{nf}} \frac{\partial}{\partial \varrho_{\mathrm{nf}}} - \xi \varrho_{\mathrm{Sf}}\frac{\partial}{\partial \varrho_{\mathrm{Sf}}} + ( 2\eta^2  - \xi^2 + \xi \zeta) \frac{\partial}{\partial \xi} + (\eta^2 - (\xi-\zeta)^2 ) \frac{\partial}{\partial \zeta} + (\xi-2\zeta) V_{\bbS^{d-1}}.
	\label{eq:H_nfSf}
\end{equation}

The radial set 
$\calR^-_+ \cup \calR^+_+ \subseteq {}^{\mathrm{de,sc}}T^*_{\mathrm{Ff}}\bbO$ defined by \cref{eq:R_def} is given over $\mathrm{nFf}\cap \mathrm{Ff}$ by $\{\xi=\zeta , |\xi| = \mathsf{m}, \eta=0\}$, and likewise $\calR^-_- \cup \calR^+_- \subseteq {}^{\mathrm{de,sc}}T^*_{\mathrm{Pf}}\bbO $ is given over $\mathrm{nPf}\cap \mathrm{Pf}$ by $\{\xi=\zeta, |\xi|=\mathsf{m}, \eta=0\}$. 

Likewise, for $\varsigma \in \{-,+\}$, the radial sets $\calN_\pm^\varsigma$ are the subsets of $\Sigma_{\mathsf{m},\varsigma}$ defined by
\begin{equation}
	\calN^{\varsigma}_\pm \cap {}^{\mathrm{de,sc}}\overline{T}^*_\alpha \bbO  = 
	\begin{cases}
		\Sigma_{\mathsf{m},\varsigma}\cap {}^{\mathrm{de,sc}}\bbS^*_\alpha \bbO \cap \mathrm{cl}_{{}^{\mathrm{de,sc}}\overline{T}^*_\alpha \bbO }\{\xi=0\} & (\alpha \in \Omega_{\mathrm{nfTf},\pm,T } ), \\
		\Sigma_{\mathsf{m},\varsigma} \cap {}^{\mathrm{de,sc}}\bbS^*_\alpha \bbO \cap \mathrm{cl}_{{}^{\mathrm{de,sc}}\overline{T}^*_\alpha \bbO }\{\xi=0\} & (\alpha \in \Omega_{\mathrm{nfSf},\pm,R } ),
	\end{cases}
\end{equation}
these two definitions agreeing on their overlap. Here $\alpha\in \mathrm{nf}$. The radial sets $\calC_\pm^\varsigma, \calK^\varsigma_\pm \subset \Sigma_{\mathsf{m},\varsigma}$ are 
\begin{align}
	\calC^\varsigma_\pm  &= (\Sigma_{\mathsf{m},\varsigma}  \cap {}^{\mathrm{de,sc}}\bbS^* \bbO \cap {}^{\mathrm{de,sc}}\pi^{-1} (\mathrm{nf} \cap \mathrm{Tf}) \cap \mathrm{cl}_{{}^{\mathrm{de,sc}}\overline{T}^*_{\mathrm{nf}} \bbO }\{\eta=0\})\backslash \calN^\varsigma_\pm, \\
	\calK^\varsigma_\pm  &= (\Sigma_{\mathsf{m},\varsigma} \cap {}^{\mathrm{de,sc}}\bbS^* \bbO \cap {}^{\mathrm{de,sc}}\pi^{-1} (\mathrm{nf} \cap \mathrm{Sf}) \cap \mathrm{cl}_{{}^{\mathrm{de,sc}}\overline{T}^*_{\mathrm{nf} } \bbO }\{\eta=0\})\backslash \calN^\varsigma_\pm,
\end{align}
for 
\begin{itemize}
	\item $\mathrm{Tf} = \mathrm{Ff}$ and $\mathrm{nf} = \mathrm{nFf}$ in the $+$ case ($\sigma=+$ in $\calN^\varsigma_\sigma,\calC^\varsigma_\sigma,\dots$),
	\item $\mathrm{Tf}=\mathrm{Pf}$  and $\mathrm{nf} = \mathrm{nPf}$ in the $-$ case.
\end{itemize}
We can now define the final radial sets $\calA_\pm^\varsigma \subset \Sigma_{\mathsf{m},\varsigma}$ to be the components of the remaining vanishing set of $\mathsf{H}_p$ in $\Sigma_{\mathsf{m},\varsigma}$, which can be seen to lie at fiber infinity. We will compute shortly that these are nice submanifolds.

Consider fiber infinity over $\mathrm{nFf} \cap \mathrm{Ff}$, using the coordinate system in the half-space $\{\zeta<0\}$ over $\Omega_{\mathrm{nfTf},+,T}$ given by 
\begin{equation}
	\rho = -\frac{1}{\zeta }, \qquad s =   \frac{\xi}{\zeta} , \qquad \hat{\eta} = -\frac{\eta}{\zeta}. 
	\label{eq:misc_co1}
\end{equation}
Rewriting the formula \cref{eq:misc_258} for $\tilde{p}$ in these coordinates, 
\begin{equation} 
	\tilde{p} = \rho^{-2} \varrho_{\mathrm{df}}^2 ( s^2-2s+\hat{\eta}^2 + \rho^2 \mathsf{m}^2),
\end{equation} 
which is $s^2-2s+\hat{\eta}^2+\rho^2\mathsf{m}^2 = (s-1)^2 + \hat{\eta}^2 -1 + \rho^2 \mathsf{m}^2$ up to a smooth, nonvanishing factor in a neighborhood of the part of the characteristic set under consideration. We therefore have
\begin{equation} 
	\Sigma_{\mathsf{m},+} = \{ (s-1)^2 + \hat{\eta}^2 = 1 - \rho^2 \mathsf{m}^2 \}
\end{equation} 
locally.
Rewriting \cref{eq:H_nfTf}, we get 
\begin{multline}
	2^{-1} \rho \varrho_{\mathrm{df}}^{-1} \mathsf{H}_p = - (1 - s)  \varrho_{\mathrm{nf}}\frac{\partial}{\partial \varrho_{\mathrm{nf}}} - s  \varrho_{\mathrm{Tf}} \frac{\partial}{\partial \varrho_{\mathrm{Tf}}} + \Big[  \hat{\eta}^2  + (s-1)^2 \Big] \rho \frac{\partial}{\partial \rho} - (2-s)\Big[ \hat{\eta}^2 + s(s-1) \Big] \frac{\partial}{\partial s}  \\+ \Big( \hat{\eta}^2 +s^2-s-1 \Big) V_{\bbS^{d-1}}   .
	\label{eq:H_nfTf_df}
\end{multline}
(In local coordinates for the sphere at spatial infinity, $V_{\bbS^{d-1}}=\sum_{j=1}^{d-1} \hat{\eta}_j \partial_{\hat{\eta}_j}$.)
This only vanishes over the boundary of $\bbO$. 

Let us examine the situation over future null infinity, $\mathrm{nFf}$, where $\varrho_{\mathrm{nf}} = \varrho_{\mathrm{nFf}}= 0$. Then:
\begin{itemize}
	\item if $\varrho_{\mathrm{Tf}}\neq 0$, then $\mathsf{H}_p$ only vanishes on $\Sigma_{\mathsf{m},+}$ at $\Sigma_{\mathsf{m},+}\cap \{s=0\} = \Sigma_{\mathsf{m},+}\cap \{s=0=\rho,\hat{\eta}\}$, which is just the set $\calN_+^+$, 
	\item over the corner $\mathrm{nFf} \cap \mathrm{Ff}$, if $\rho\neq 0$ then $\mathsf{H}_p$ can only vanish on $\Sigma_{\mathsf{m},+}$ if $\hat{\eta} = 0$ and $s=1$, which corresponds to $\calR_+^+$, 
	\item at $\rho = 0$, $\mathsf{H}_p$ vanishes on $\Sigma_{\mathsf{m},+}$ only if $s=2$ or $s=0$ (the latter as already noted), in which case $\hat{\eta}=0$. The former possibility corresponds to $\calC^+_+$. 
\end{itemize}
So, in these coordinates, 
\begin{align}
	\begin{split} 
	\calN^+_+ &= \{\varrho_{\mathrm{nf}}  = \rho = \hat{\eta} = s = 0 \}, \\
	\calR^+_+ &= \{\varrho_{\mathrm{Tf}}  = \hat{\eta} =0, s=1, \rho=\mathsf{m}^{-1} \}, \\
	\calC^+_+ &= \{\varrho_{\mathrm{nf}} =\varrho_{\mathrm{Tf}} = \rho = \hat{\eta} = 0, s=2\}.
	\end{split} 
\end{align}
In the case $d=2$, $\mathsf{H}_p$ restricted to $\Sigma_{\mathsf{m},+} \cap {}^{\mathrm{de,sc}}\pi^{-1}(\mathrm{nFf}\cap \mathrm{Ff})$ is depicted in \Cref{fig:flowplot}.

The situation on $\Sigma_{\mathsf{m},-}$, and over past null infinity, is similar.

\begin{figure}[t]
	\centering 
	\includegraphics[scale=.6]{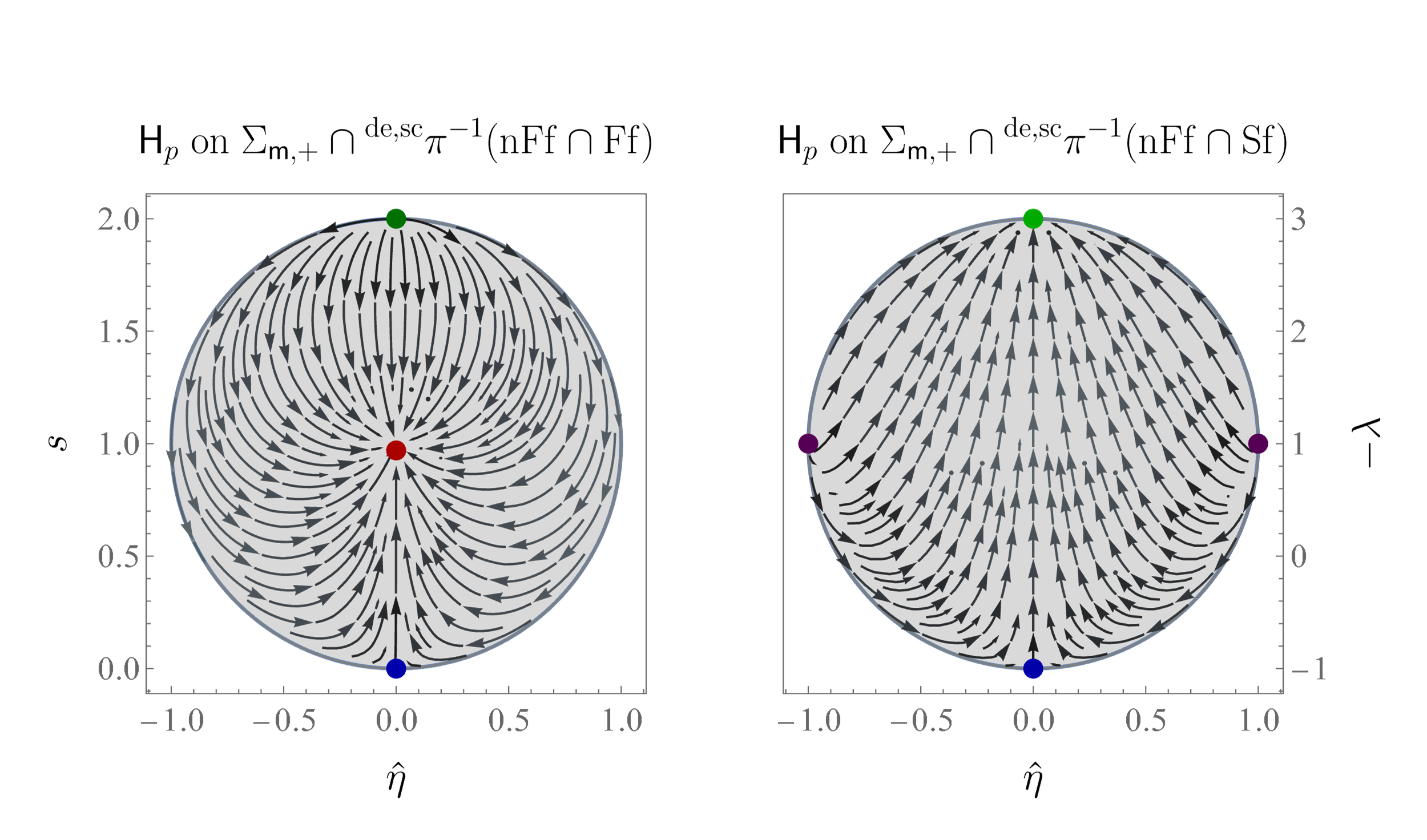}
	\caption{The vector field $\mathsf{H}_p$ plotted (in the case $d=2$) on the hyperboloid $\Sigma_{\mathsf{m},+}$ over $\mathrm{nFf}\cap \mathrm{Ff}$ (\emph{left}) and $\mathrm{nFf}\cap \mathrm{Sf}$ (\emph{right}), versus the coordinates $\hat{\eta}$ and $s$ or $\hat{\eta}$ and $-\lambda$. Increasing $\rho$ corresponds to decreasing radii from the plot origin, and the boundary of the disk lies at fiber infinity; thus, the gray disks in the figure are the compactified hyperboloid viewed ``from above.'' In the left plot, we can see the portions over $\mathrm{nFf}\cap \mathrm{Ff}$ of the radial sets ${\color{darkblue}\calN^+_+},{\color{darkcandyapp}\calR^+_+},{\color{darkgreen}\calC^+_+}$, located at $\hat{\eta} = 0$ and $s=0,1,2$, respectively. In the right plot, we can see the portions of $\mathrm{nFf}\cap \mathrm{Sf}$ of the radial sets ${\color{darkblue}\calN^+_+},{\color{darkpurple}\calA^+_+},{\color{mygreen}\calK^+_+}$, located at $\hat{\eta} = 0,\pm 1,0$ and $-\lambda=-1,1,3$, respectively. Restricting to the invariant subset $\{\hat{\eta}=0\}$, the plots show the same thing as \Cref{fig:O} over the relevant corners of $\bbO$.}
	\label{fig:flowplot}
\end{figure}

We now discuss the situation near the spacelike corner of null infinity.
Using instead the $\xi,\zeta$ coordinates defined over $\Omega_{\mathrm{nfSf},\pm,R}$,  the characteristic set crosses $\{\zeta=0\}$, as shown in
\Cref{fig:Sigma}. 
Consequently, $\rho=1/\zeta$ is not well-defined on the whole characteristic set.
Instead, consider the coordinate system in the half-space $\{\zeta-\xi >0\}$ over $\Omega_{\mathrm{nfSf},+,R}$ given by 
\begin{equation}
	\rho = \frac{1}{\zeta-\xi}, \qquad \lambda = \frac{\zeta+\xi}{\zeta-\xi}, \qquad \hat{\eta} = \frac{\eta}{\zeta-\xi}. 
	\label{eq:misc_co2}
\end{equation}
In terms of these coordinates, $\tilde{p} = \rho^2 \varrho_{\mathrm{df}}^2( -4^{-1}(1-\lambda)(3+\lambda) + \hat{\eta}^2 + \rho^2 \mathsf{m}^2)$, which is $-4^{-1}(1-\lambda)(3+\lambda) + \hat{\eta}^2 + \rho^2 \mathsf{m}^2 = 4^{-1}(\lambda+1)^2 + \hat{\eta}^2 -1 + \rho^2 \mathsf{m}^2$ up to a smooth, nonvanishing factor. Therefore, 
\begin{equation}
	\Sigma_{\mathsf{m},+} = \{ 4^{-1}(\lambda+1)^2 + \hat{\eta}^2 = 1 - \rho^2 \mathsf{m}^2 \}
\end{equation}
locally. Rewriting \cref{eq:H_nfSf} in terms of these coordinates, 
\begin{multline}
		2^{-1} \rho \varrho_{\mathrm{df}}^{-1} \mathsf{H}_p  = - \varrho_{\mathrm{nf}} \frac{\partial}{\partial \varrho_{\mathrm{nf}}} + \frac{1}{2}(1-\lambda) \varrho_{\mathrm{Sf}}\frac{\partial}{\partial \varrho_{\mathrm{Sf}}} + \frac{1}{2} \Big[ 2 \hat{\eta}^2+\lambda+1 \Big]\rho \frac{\partial}{\partial \rho} + \frac{1}{2}(\lambda+3)\Big[ 2 \hat{\eta}^2+\lambda-1 \Big] \frac{\partial}{\partial \lambda} \\ 
			+ ( \hat{\eta}^2 -1 ) V_{\bbS^{d-1}}  . 
		\label{eq:H_nfSf_df}
\end{multline}
Then: 
\begin{itemize}
	\item if $\varrho_{\mathrm{nf}} \neq 0$, then $\mathsf{H}_p$ is nonvanishing, so we have no radial set over the interior of $\mathrm{Sf}$. We could also have seen this from the fact that the situation over $\mathrm{Sf}^\circ$ is canonically identifiable with the situation in the sc-phase space, where no radial set lies over spacelike infinity.  
	\item Moreover, if $\rho \neq 0$ then $\mathsf{H}_p$ is also nonvanishing. This is because, since $\lambda+3>0$ on $\Sigma_{\mathsf{m},+}\backslash \mathrm{df}$, the coefficients of the $\partial_\lambda,\partial_\rho$ terms in \cref{eq:H_nfSf_df}  do not vanish simultaneously outside of $\mathrm{df}$; an alternative justification is that the $\partial_{\hat{\eta}}$ terms vanish outside of $\mathrm{df}$ only if $\hat{\eta}=0$ (in the right panel of \Cref{fig:flowplot}, the vector field is vertical only when $\lVert \hat{\eta} \rVert \in \{0,1\}$, with the latter possibility occurring only at fiber infinity), and then the coefficient of the $\partial_\lambda$ term is nonvanishing, since $\lambda<1$ outside of $\mathrm{df}$. 
	
	So, the radial set must be at fiber infinity. 
	\item At fiber infinity, $\mathsf{H}_p$ vanishes only if $\lambda \in\{ 1,-1,-3\}$. If $\lambda =1,-3$, then $\hat{\eta} = 0$, and, if $\lambda=-1$, then $\lVert \hat{\eta} \rVert^2= 1$. These possibilities correspond to $\calN^+_+$, $\calK^+_+$, and $\calA^+_+$, respectively, where $\mathsf{H}_p$ does, in fact, vanish. 
\end{itemize}
Thus,
\begin{align}
	\begin{split} 
	\calN^+_+ &= \{\varrho_{\mathrm{nf}}  = \rho =0, \lambda=1,\hat{\eta}=0  \}, \\
	\calA^+_+ &= \{\varrho_{\mathrm{nf}} = \varrho_{\mathrm{Sf}} = \rho =0, \lambda=-1,\lVert \hat{\eta} \rVert =1  \}, \\
	\calK^+_+ &= \{\varrho_{\mathrm{nf}} = \varrho_{\mathrm{Sf}} = \rho =0, \lambda=-3,\hat{\eta}=0  \}. 
	\end{split} 
	\label{eq:NAK_defs}
\end{align}
The situation on $\Sigma_{\mathsf{m},-}$, and over past null infinity, is similar. The $d=2$ case is depicted in \Cref{fig:flowplot}.

\begin{remark*}
	One feature of the dynamics that can be seen from \Cref{fig:flowplot} and \Cref{fig:globalflowplot} is the flow from $\calN^+_+ \cap {}^{\mathrm{de,sc}}\pi^{-1}(\mathrm{Sf}\cap \mathrm{nFf})$ to $\calK^+_+$ through finite de,sc-frequencies, across $\mathrm{nFf}$ along fiber infinity to $\calC^+_+$, and then around to $\calN^+_+ \cap {}^{\mathrm{de,sc}}\pi^{-1}(\mathrm{Ff}\cap \mathrm{nFf})$ along fiber infinity (in the $\hat{\eta}$ direction). (This is in addition to the other sort of path from one end of $\calN$ to the other shown in \Cref{fig:O}. That path, however, crosses over the timelike caps.)
	
	This bolsters the conclusion, forewarned in the introduction, that in order to control our solution at $\calN^+_+ \cap {}^{\mathrm{de,sc}}\pi^{-1}(\mathrm{Ff}\cap \mathrm{nFf})$, we need to already have control at $\calN^+_+ \cap {}^{\mathrm{de,sc}}\pi^{-1}(\mathrm{Sf}\cap \mathrm{nFf})$. 
\end{remark*}

We now proceed with a few elementary computations in preparation for the propagation and radial point estimates in the next section. 

\subsection{Flow across null infinity}
The most basic of these, which captures the fact that the Hamiltonian flow moves us along $\mathrm{nf}$ (except at $\calN$), is:

\begin{proposition}
	On $\Sigma_{\mathsf{m},\varsigma} \cap {}^{\mathrm{de,sc}}\pi^{-1}(\mathrm{nf}^\circ ) \backslash \calN^\varsigma_\pm$, $\alpha = |t| - r$ satisfies $\pm \varsigma  \mathsf{H}_{p[g]} \alpha >0$. 
	\label{prop:propagation_lemma}
\end{proposition}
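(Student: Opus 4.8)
The plan is to reduce the statement to an explicit sign computation in the two coordinate charts $\Omega_{\mathrm{nfTf},\varsigma,T}$ and $\Omega_{\mathrm{nfSf},\varsigma,R}$ introduced in \S\ref{subsec:basespace} and \S\ref{subsec:phasespace}. First I would remove the metric dependence. By \cref{eq:Hp_rescaled} and the congruence $\mathsf{H}_{p[g]} = \mathsf{H}_p \bmod \varrho_{\mathrm{Pf}}\varrho_{\mathrm{nPf}}\varrho_{\mathrm{Sf}}\varrho_{\mathrm{nFf}}\varrho_{\mathrm{Ff}}\calV_{\mathrm{b}}({}^{\mathrm{de,sc}}\overline{T}^* \bbO)$, and since the weight $\varrho_{\mathrm{Pf}}\varrho_{\mathrm{nPf}}\varrho_{\mathrm{Sf}}\varrho_{\mathrm{nFf}}\varrho_{\mathrm{Ff}}$ contains the boundary-defining function $\varrho_{\mathrm{nf}}$ of $\mathrm{nf}$, which vanishes identically on ${}^{\mathrm{de,sc}}\pi^{-1}(\mathrm{nf}^\circ)$, the vector fields $\mathsf{H}_{p[g]}$ and $\mathsf{H}_p$ have the same restriction to ${}^{\mathrm{de,sc}}\pi^{-1}(\mathrm{nf}^\circ)$. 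Hence $\mathsf{H}_{p[g]}\alpha = \mathsf{H}_p\alpha$ there, and we may take $g = g_{\bbM}$. Since $\alpha = \varsigma t - r$ is independent of the spatial angles and the angular term $(\cdots)V_{\bbS^{d-1}}$ of $\mathsf{H}_p$ in \cref{eq:H_nfTf}, \cref{eq:H_nfSf} kills angle-independent functions, we may pass to polar coordinates and work on $\hat{\bbO}$, i.e.\ reduce to $d=1$.

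The two charts together cover $\mathrm{nf}^\circ$ ($\hat{\Omega}_{\mathrm{nfTf},\varsigma,T}\cap\{\varrho_{\mathrm{nf}}=0\}$ is $\{\alpha>-T\}$ and $\hat{\Omega}_{\mathrm{nfSf},\varsigma,R}\cap\{\varrho_{\mathrm{nf}}=0\}$ is $\{\alpha<R\}$), and on them $\alpha = \varrho_{\mathrm{Tf}}^{-1}-T$, resp.\ $\alpha = R-\varrho_{\mathrm{Sf}}^{-1}$. Substituting into \cref{eq:H_nfTf}, only the $\xi\varrho_{\mathrm{Tf}}\partial_{\varrho_{\mathrm{Tf}}}$-term contributes and gives $\mathsf{H}_p\alpha = -2\varrho_{\mathrm{df}}\xi\,\varrho_{\mathrm{Tf}}^{-1}$; likewise \cref{eq:H_nfSf} gives $\mathsf{H}_p\alpha = -2\varrho_{\mathrm{df}}\xi\,\varrho_{\mathrm{Sf}}^{-1}$. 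In either chart $\mathsf{H}_p\alpha$ is $-2\varrho_{\mathrm{df}}\xi$ times a function strictly positive on $\mathrm{nf}^\circ$, so the claim reduces to pinning down the sign of $\varrho_{\mathrm{df}}\xi$ on $\Sigma_{\mathsf{m},\pm}\cap{}^{\mathrm{de,sc}}\pi^{-1}(\mathrm{nf}^\circ)\setminus\calN^\pm_\varsigma$. At a point over a finite fibre, where $\varrho_{\mathrm{df}}>0$, this is immediate from \cref{eq:p_nfTf} (and its $\mathrm{Sf}$-chart counterpart): the half-space condition $\mp\sigma\xi>0$ fixes $\operatorname{sign}(\xi)$, and $\xi=0$ is impossible there because the constraint $\zeta = (2\xi)^{-1}(\xi^2+\eta^2+\mathsf{m}^2)$ (or its $\mathrm{Sf}$-variant) forces $\xi\to 0$ to occur at fibre infinity, which over $\mathrm{nf}$ is exactly $\calN^\pm_\varsigma$. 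The opposite signs of $\xi$ on the two sheets are what produce the $\pm$ in the conclusion.

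It remains to treat the part of $\Sigma_{\mathsf{m},\pm}\cap{}^{\mathrm{de,sc}}\pi^{-1}(\mathrm{nf}^\circ)$ at fibre infinity, where $\varrho_{\mathrm{df}}\to 0$, $\xi\to\infty$, and the product $\varrho_{\mathrm{df}}\xi$ must be evaluated by hand. For this I would pass to the projective coordinates \cref{eq:misc_co1}, $\rho = -\zeta^{-1}$, $s = \xi/\zeta$, on the portion of the sheet with $\zeta<0$, and to \cref{eq:misc_co2} on the portion with $\zeta-\xi$ of fixed sign; two such charts are needed because, as in \Cref{fig:Sigma}, each $\Sigma_{\mathsf{m},\pm}$ crosses $\{\zeta=0\}$ over $\mathrm{nf}\cap\mathrm{Sf}$. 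Since $\rho$ is a boundary-defining function of fibre infinity in these charts, $\varrho_{\mathrm{df}}$ is $\rho$ times a positive smooth factor, so $\varrho_{\mathrm{df}}\xi$ agrees up to a positive factor with $\rho\xi$, which equals $-s$ in the chart \cref{eq:misc_co1} and an affine function of $\lambda$ in \cref{eq:misc_co2}; the local forms of $\Sigma_{\mathsf{m},\pm}$ in these coordinates, e.g.\ $(s-1)^2 + \hat{\eta}^2 = 1 - \rho^2\mathsf{m}^2$ near $\mathrm{nf}\cap\mathrm{Ff}$, then show that this quantity keeps a fixed sign, vanishing only on $\calN^\pm_\varsigma$ and matching the finite-frequency computation across the chart overlaps. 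Assembling the cases yields $\pm\mathsf{H}_{p[g]}\alpha>0$. The one genuinely delicate step is exactly this fibre-infinity bookkeeping and the matching of the projective charts; everything else is direct substitution into \cref{eq:H_nfTf}, \cref{eq:H_nfSf}.
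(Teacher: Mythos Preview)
Your proof is correct and follows essentially the same approach as the paper: compute $\mathsf{H}_p\alpha$ in the two charts $\Omega_{\mathrm{nfTf},\varsigma,T}$ and $\Omega_{\mathrm{nfSf},\varsigma,R}$, reduce to the sign of $\varrho_{\mathrm{df}}\xi$ on $\Sigma_{\mathsf{m},\pm}\setminus\calN^\pm_\varsigma$, and pass from $\mathsf{H}_p$ to $\mathsf{H}_{p[g]}$ using that the difference vanishes on $\mathrm{nf}^\circ$. The paper simply asserts $\pm\varrho_{\mathrm{df}}\xi<0$ on the relevant set without separating out fibre infinity, whereas you unpack this via the projective charts \cref{eq:misc_co1}, \cref{eq:misc_co2}; this is a presentational difference rather than a different argument, and your explicit check that $\rho\xi=-s$ (resp.\ $(\lambda-1)/2$) has fixed sign on the local form of $\Sigma_{\mathsf{m},\pm}$ away from $\calN$ is exactly what justifies the paper's terser assertion.
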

\begin{proof}
	We only discuss $\calN^+_+$, as the others differ by sign switches.
	We cover $\Sigma_{\mathsf{m},+} \cap {}^{\mathrm{de,sc}}\pi^{-1}(\mathrm{nf}^\circ )$ by ${}^{\mathrm{de,sc}}\pi^{-1}(\Omega_{\mathrm{nfTf},+,T } ) \cup {}^{\mathrm{de,sc}}\pi^{-1}(\Omega_{\mathrm{nfSf},+,R } )$. In the former, we can write $\alpha = \varrho_{\mathrm{Tf}}^{-1}-T$, so, by \cref{eq:H_nfTf},
	\begin{equation}
		\mathsf{H}_p \alpha = - \frac{2 \varrho_{\mathrm{df}} \xi }{\varrho_{\mathrm{Tf}}} 
		\label{eq:misc_278}
	\end{equation}
	over $\mathrm{nf}^\circ$, and $ \xi < 0$ on $\Sigma_{\mathsf{m},+} \cap {}^{\mathrm{de,sc}}\pi^{-1}( \Omega_{\mathrm{nfTf},+,T} ) \backslash \calN^+_+$ (see \Cref{fig:Sigma}). So, the right-hand side of \cref{eq:misc_278} is positive.
	
	On the other hand, over $\Omega_{\mathrm{nfSf},+,R}$, we write $\alpha = - \varrho_{\mathrm{Sf}}^{-1}+R$, so by \cref{eq:H_nfSf} we have 
	\begin{equation}
		\mathsf{H}_p \alpha = -  \frac{2\xi \varrho_{\mathrm{df}}  }{\varrho_{\mathrm{Sf}}}
	\end{equation}
	over $\mathrm{nf}^\circ$. It is also the case that, using the definition of $\xi$ relevant to $\Omega_{\mathrm{nfSf},+,R}$, $\xi<0$ on $\Sigma_{\mathsf{m},+} \cap {}^{\mathrm{de,sc}}\pi^{-1}( \Omega_{\mathrm{nfSf},+,R} ) \backslash \calN^+_+$ (see \Cref{fig:Sigma} again). So, $\mathsf{H}_{p} \alpha >0$ on $\Sigma_{\mathsf{m},+} \cap {}^{\mathrm{de,sc}}\pi^{-1}(\mathrm{nf}^\circ ) \backslash \calN^+_+$. The same therefore holds for $\mathsf{H}_{p[g]}$. 
\end{proof}

\begin{figure}
	\floatbox[{\capbeside\thisfloatsetup{capbesideposition={left,bottom},capbesidewidth=8cm,capbesidesep=none}}]{figure}[\FBwidth]
	{\caption{The vector field $\mathsf{H}_p$ plotted (in the case $d=2$) at the part of fiber infinity in $\Sigma_{\mathsf{m},+}$ over $\mathrm{nFf}$. 
			Only the $\hat{\eta}>0$ (corresponding to, say, clockwise motion in $\bbS^1_\theta$) half is shown. The horizontal axis is parametrized by $\operatorname{arctan}(t-r)$, so the left end $\{\operatorname{arctan}(t-r) = -\pi/2\}$ is over $\mathrm{nFf}\cap \mathrm{Sf}$ and the right end $\{\operatorname{arctan}(t-r) = +\pi/2\}$ is over $\mathrm{nFf}\cap \mathrm{Ff}$, and the vertical axis is parametrized by an appropriate coordinate interpolating between the coordinates $s$ and $-\lambda$ used in \Cref{fig:flowplot}.	
			The radial sets are colored as in \Cref{fig:flowplot}:  ${\color{darkblue}\calN^+_+}$ along the bottom, ${\color{mygreen}\calK^+_+}$ in the top left, ${\color{darkgreen}\calC^+_+}$ in the top right, and ${\color{darkpurple}\calA^+_+}$ in the center left.}
		\label{fig:globalflowplot}}
	{\includegraphics[scale=.45]{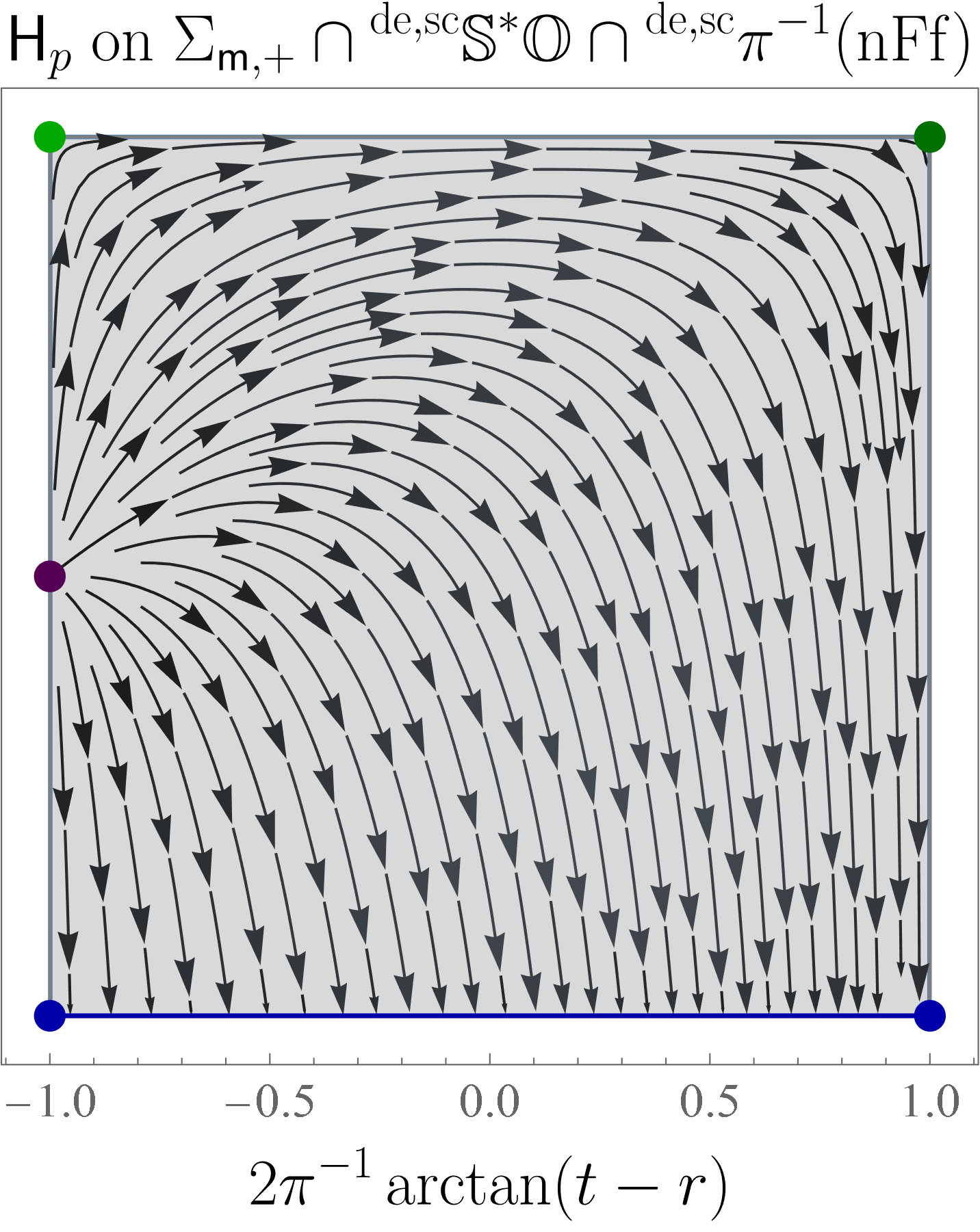}}
\end{figure}

Next is the source/sink structure of the flow at the various radial sets. 

To simplify the discussions of the radial sets $\calA,\calK,\calC$ over the corners of $\bbO$, we can assume that $\varrho_{\mathrm{df}}$ is given by the coordinates labeled $\rho$ in \cref{eq:misc_co1} and \cref{eq:misc_co2} near the corners of $\bbO$. Changing to another bdf multiplies $\mathsf{H}_{p[g]}$ by a nonvanishing factor, so does not change the conclusions of the propositions below. Discussing $\calN$ requires a bit more care --- the independence of the result on $\varrho_{\mathrm{df}}$ will require proof.

\subsection{$\calA$}

Consider the linearization of $\mathsf{H}_p[g]$ at $\calA^+_+$. This is the same as the linearization of $\mathsf{H}_p$ at that set, which recall is given in local coordinates by \cref{eq:NAK_defs} (so $\lambda=-1$, $\lVert \hat{\eta} \rVert=1$, and $\varrho_{\mathrm{nf}},\varrho_{\mathrm{Sf}},\rho=0$). \Cref{eq:H_nfSf_df} says 
\begin{multline}
2^{-1} \mathsf{H}_p = -\varrho_{\mathrm{nf}} \frac{\partial}{\partial \varrho_{\mathrm{nf}}} + \frac{1}{2}(2-{\color{lightgray}(\lambda+1)}) \varrho_{\mathrm{Sf}}\frac{\partial}{\partial \varrho_{\mathrm{Sf}}} + \frac{1}{2} \Big[{\color{lightgray} 2 (\hat{\eta}^2-1)+(\lambda+1)}+2 \Big]\rho \frac{\partial}{\partial \rho} \\ + \frac{1}{2}({\color{lightgray}(\lambda+1)}+2)\Big[ 2 (\lVert\hat{\eta}\rVert-1)({(\color{lightgray}\lVert\hat{\eta}\rVert -1)} +2)+\lambda+1 \Big] \frac{\partial}{\partial \lambda} 
+( \lVert \hat{\eta} \rVert-1 )( {\color{lightgray}(\lVert\hat{\eta}\rVert-1)} +2 ) ({\color{lightgray}(\lVert \hat{\eta} \rVert -1)} + 1) \frac{\partial}{\partial \lVert \hat{\eta} \rVert} , 
\end{multline}
where we have swapped from using $\hat{\eta}$ as a coordinate to using $\lVert \hat{\eta} \rVert$ and $\hat{\eta} / \lVert \hat{\eta} \rVert \in \bbS^{d-2}$ (the latter only if $d\geq 3$). 
The terms in light gray are those that are dropped when linearizing: 
\begin{equation}
	2^{-1} \mathsf{H}_p \approx -\varrho_{\mathrm{nf}} \frac{\partial}{\partial \varrho_{\mathrm{nf}}} +  \varrho_{\mathrm{Sf}}\frac{\partial}{\partial \varrho_{\mathrm{Sf}}} +  \rho \frac{\partial}{\partial \rho} \\ + \Big[ -4 (1-\lVert\hat{\eta}\rVert)+\lambda+1 \Big] \frac{\partial}{\partial \lambda} 
	+2(\lVert \hat{\eta} \rVert -1) \frac{\partial}{\partial \lVert \hat{\eta} \rVert}
	\label{eq:linearization_at_A}
\end{equation}
near $\calA^+_+$. The right-hand side can be written 
\begin{equation}
	\begin{pmatrix}
		\partial_{\varrho_{\mathrm{nf}}} \\ \partial_{\varrho_{\mathrm{Sf}}} \\ \partial_{\rho} \\ \partial_{\lambda+1} \\ \partial_{1-\lVert \hat{\eta} \rVert} 
	\end{pmatrix}^\intercal 
	\begin{pmatrix}
		- 1 & 0 & 0 & 0 & 0 \\ 
		0 & 1 & 0 & 0 & 0 \\
		0 & 0 & 1 & 0 & 0 \\
		0 & 0 & 0 & 1 &  -4 \\ 
		0 & 0 & 0 & 0 & 2
	\end{pmatrix} \begin{pmatrix}
	\varrho_{\mathrm{nf}} \\ \varrho_{\mathrm{Sf}} \\ \rho \\ \lambda+1 \\ 1-\lVert \hat{\eta} \rVert 
\end{pmatrix}
	.
\end{equation}
If we ignore the upper-left entry, the matrix in the middle has all positive eigenvalues.
So, $\calA^+_+$ is a source in all of the coordinate directions except $\varrho_{\mathrm{nf}}$, in which it is a sink, as depicted in the various figures above.

Precisely:
\begin{proposition}
	Fix signs $\varsigma,\sigma \in \{-,+\}$. 
	Letting $\aleph \in C^\infty({}^{\mathrm{de,sc}}\overline{T}^* \bbO)$ satisfy $\aleph = \varrho_{\mathrm{Sf}}^2 + \rho^2 + (\lambda+1)^2$ near $\calA^\varsigma_\sigma$, 
	the symbol 
	\begin{equation} 
		F_1 =F_1[g] = \mathsf{H}_{p[g]} \aleph - 4\varsigma \sigma \aleph   \in C^\infty({}^{\mathrm{de,sc}}\overline{T}^* \bbO)
	\end{equation} 
	vanishes cubically at $\calA^\varsigma_\sigma$ within $\Sigma_{\mathsf{m}}[g]$, in the sense that 
	\begin{equation} 
		F_1 \in \aleph^{3/2} L^\infty + \varrho_{\mathrm{nf}} \aleph L^\infty+ \tilde{p}[g] \aleph^{1/2} L^\infty
		\label{eq:misc_m7o}
	\end{equation} 
	locally.
	\label{prop:aleph}
\end{proposition}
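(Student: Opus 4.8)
The plan is to reduce everything to the exact metric $g_{\bbM}$ (for which $p[g_{\bbM}] = p$, $\mathsf{H}_{p[g_{\bbM}]} = \mathsf{H}_p$, and all quantities are explicit in the coordinate system \cref{eq:misc_co2}) and then absorb the admissible perturbation into the $\varrho_{\mathrm{nf}}\aleph L^\infty$ term. Concretely, I would first work near $\calA^\varsigma_\sigma$ using the chart \cref{eq:misc_co2} over $\Omega_{\mathrm{nfSf},\varsigma,R}$, where $\calA^\varsigma_\sigma$ sits at fiber infinity ($\rho = 0$), over the corner $\mathrm{nf}\cap\mathrm{Sf}$ ($\varrho_{\mathrm{nf}} = 0$, $\varrho_{\mathrm{Sf}} = 0$), at $\lambda = -1$, $\lVert\hat{\eta}\rVert^2 = 1$. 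The three functions $\varrho_{\mathrm{Sf}}$, $\rho$, $\lambda+1$ cut out $\calA^\varsigma_\sigma$ together with the constraint $\lVert\hat\eta\rVert^2=1$ coming from $\Sigma_{\mathsf m}$; since $\aleph = \varrho_{\mathrm{Sf}}^2 + \rho^2 + (\lambda+1)^2$, vanishing ``cubically within $\Sigma_{\mathsf m}$'' means $F_1 = O(\aleph^{3/2})$ after using $\tilde p[g]$ to eliminate the transverse-to-$\Sigma_{\mathsf m}$ direction and using $\varrho_{\mathrm{nf}}$ as an extra admissible weight — which is exactly the membership claim \cref{eq:misc_m7o}.

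Second, I would carry out the model computation with $g = g_{\bbM}$. Using \cref{eq:H_nfSf_df}, $\mathsf{H}_p$ acts on $\varrho_{\mathrm{Sf}}^2$ by $\varrho_{\mathrm{df}}\rho^{-1}(1-\lambda)\varrho_{\mathrm{Sf}}^2$, on $\rho^2$ by $\varrho_{\mathrm{df}}\rho^{-1}(2\hat\eta^2 + \lambda + 1)\rho^2$, and on $(\lambda+1)^2$ by $\varrho_{\mathrm{df}}\rho^{-1}(\lambda+3)(2\hat\eta^2+\lambda-1)(\lambda+1)$; note the overall factor $\varrho_{\mathrm{df}}\rho^{-1}$ is smooth and nonvanishing near $\calA^\varsigma_\sigma$ by our normalization $\varrho_{\mathrm{df}} = \rho$. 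The key algebraic point is that at $\calA^\varsigma_\sigma$ one has $1-\lambda = 2$ and $2\hat\eta^2 + \lambda+1 = 2$, so to leading order $\mathsf{H}_p\aleph$ is $\varrho_{\mathrm{df}}\rho^{-1}$ times $(2\varrho_{\mathrm{Sf}}^2 + 2\rho^2) + (\text{terms in }(\lambda+1))$, and I expect the coefficient $4\varsigma\sigma$ (the radial-set ``eigenvalue'') to be exactly designed so that $\mathsf{H}_p\aleph - 4\varsigma\sigma\aleph$ kills the quadratic part: the $\varrho_{\mathrm{Sf}}^2$ and $\rho^2$ coefficients must match $4\varsigma\sigma$ after the rescaling, and the $(\lambda+1)^2$ part must vanish to second order modulo $\tilde p[g]$ — here is where I substitute $4^{-1}(\lambda+1)^2 + \hat\eta^2 - 1 = -\rho^2\mathsf{m}^2$ (i.e.\ add a multiple of $\tilde p[g]$, recalling $\tilde p[g_{\bbM}]$ is $\rho^2\varrho_{\mathrm{df}}^2$ times this expression up to a smooth nonvanishing factor) to trade $\hat\eta^2 - 1$ for $-4^{-1}(\lambda+1)^2 - \rho^2\mathsf{m}^2$. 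Carrying out the bookkeeping, the remainder is a sum of terms each of which is either a product of three of $\{\varrho_{\mathrm{Sf}},\rho,\lambda+1\}$ (hence in $\aleph^{3/2}L^\infty$) or carries a factor of $\tilde p[g]$. I must also handle the sign $\varsigma\sigma$ carefully: $\sigma = \pm1$ according to $\mathrm{nf} = \mathrm{nFf}/\mathrm{nPf}$ and the sheet sign $\varsigma$ flips which half-space \cref{eq:misc_co2} is valid in; by the symmetry remarks already used repeatedly (e.g.\ in the proof of \Cref{prop:propagation_lemma}) it suffices to do one case explicitly, say $\varsigma = \sigma = +$, and quote symmetry.

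Third, I would pass from $g_{\bbM}$ to a general admissible $g$. We have $p[g] = p + \varrho_{\mathrm{Pf}}\varrho_{\mathrm{nPf}}\varrho_{\mathrm{Sf}}\varrho_{\mathrm{nFf}}\varrho_{\mathrm{Ff}}\,p_1[g]$ with $p_1[g] \in S^{2,\mathsf0}_{\mathrm{de,sc}}$, hence $\mathsf{H}_{p[g]} = \mathsf{H}_p \bmod \varrho_{\mathrm{Pf}}\varrho_{\mathrm{nPf}}\varrho_{\mathrm{Sf}}\varrho_{\mathrm{nFf}}\varrho_{\mathrm{Ff}}\calV_{\mathrm b}$, and similarly $\tilde p[g] = \tilde p[g_{\bbM}] + \varrho_{\mathrm{Pf}}\varrho_{\mathrm{nPf}}\varrho_{\mathrm{Sf}}\varrho_{\mathrm{nFf}}\varrho_{\mathrm{Ff}}\,\tilde p_1$. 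So $F_1[g] - F_1[g_{\bbM}]$ is $(\text{perturbation vector field})\aleph - 4\varsigma\sigma(\tilde p[g] - \tilde p[g_{\bbM}])\cdot(\ldots)$; since $\aleph$ vanishes quadratically, a b-vector field applied to it is $O(\aleph)$, and the extra weight $\varrho_{\mathrm{Pf}}\varrho_{\mathrm{nPf}}\varrho_{\mathrm{Sf}}\varrho_{\mathrm{nFf}}\varrho_{\mathrm{Ff}}$ contains $\varrho_{\mathrm{nf}}$ as a factor (near $\calA^\varsigma_\sigma$ the other bdfs are bounded), so this piece lands in $\varrho_{\mathrm{nf}}\aleph L^\infty$; the correction coming from $\tilde p[g] \ne \tilde p[g_{\bbM}]$ on $\Sigma_{\mathsf m}[g]$ is absorbed into the $\tilde p[g]L^\infty$ term. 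The main obstacle I anticipate is purely the bookkeeping in the model computation — tracking exactly which cross-terms in expanding $\mathsf{H}_p(\varrho_{\mathrm{Sf}}^2 + \rho^2 + (\lambda+1)^2)$ survive after subtracting $4\varsigma\sigma\aleph$ and reducing modulo $\tilde p[g]$, and verifying that the constant $4\varsigma\sigma$ (rather than some other value) is forced — but this is a finite, explicit polynomial identity in the variables $\rho, \lambda, \hat\eta, \varrho_{\mathrm{Sf}}$, so it should go through cleanly; no conceptual difficulty is expected beyond getting the normalization of $\varrho_{\mathrm{df}}$ and the sign conventions right.
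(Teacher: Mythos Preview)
Your approach is essentially the paper's: reduce to the exact Minkowski case by absorbing $(\mathsf{H}_{p[g]}-\mathsf{H}_p)\aleph$ into $\varrho_{\mathrm{nf}}\aleph L^\infty$, then compute explicitly with \cref{eq:H_nfSf_df} and reduce modulo $\tilde p$. The model computation and the use of the characteristic-set relation to trade $\hat\eta^2-1$ for $-(\lambda+1)^2/4 - \mathsf{m}^2\rho^2$ are exactly what the paper does.

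There is, however, a genuine (if easily repaired) gap in your reduction step, consisting of two errors that happen to cancel. First, $\calA^\varsigma_\sigma$ sits over the corner $\mathrm{nf}\cap\mathrm{Sf}$, so $\varrho_{\mathrm{Sf}}$ is \emph{not} among ``the other bdfs [which] are bounded''; the weight $\varrho_{\mathrm{Pf}}\varrho_{\mathrm{nPf}}\varrho_{\mathrm{Sf}}\varrho_{\mathrm{nFf}}\varrho_{\mathrm{Ff}}$ contributes both a factor $\varrho_{\mathrm{nf}}$ \emph{and} a factor $\varrho_{\mathrm{Sf}}$ near $\calA^\varsigma_\sigma$. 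Second, your claim that a b-vector field applied to the quadratically vanishing $\aleph$ is $O(\aleph)$ is false: $\lambda$ is an interior fiber coordinate, so $\partial_\lambda\in\calV_{\mathrm b}$, yet $\partial_\lambda\aleph = 2(\lambda+1)$ is only $O(\aleph^{1/2})$. The correct argument (which the paper carries out) is that the perturbation vector field applied to $\aleph$ yields a combination of $\varrho_{\mathrm{Sf}}^2$, $\rho^2$, and $(\lambda+1)$; after multiplying by the full weight $\sim\varrho_{\mathrm{nf}}\varrho_{\mathrm{Sf}}$, the dangerous linear term becomes $\varrho_{\mathrm{nf}}\cdot\varrho_{\mathrm{Sf}}(\lambda+1)$, and since $\varrho_{\mathrm{Sf}}|\lambda+1|\le\tfrac12(\varrho_{\mathrm{Sf}}^2+(\lambda+1)^2)\le\tfrac12\aleph$, this does land in $\varrho_{\mathrm{nf}}\aleph L^\infty$. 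So your conclusion is right, but the mechanism you describe for it is not.
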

\begin{proof}
	We only consider the case of $\calA^+_+$, the other three cases being analogous. Before doing so, it is useful to reduce to the case where $g$ is the Minkowski metric. Working in some local coordinate chart $\theta_1,\ldots,\theta_{d-1}$ on $\smash{\bbS^{d-1}_{\bmtheta}}$, we can write 
		\begin{equation}
			\frac{\mathsf{H}_{p[g]} - \mathsf{H}_p}{\varrho_{\mathrm{Pf}}\varrho_{\mathrm{nPf}}\varrho_{\mathrm{Sf}}\varrho_{\mathrm{nFf}}\varrho_{\mathrm{Ff}} } = V_{\mathrm{nf}} \varrho_{\mathrm{nf}}\frac{\partial}{\partial \varrho_{\mathrm{nf}}} + V_{\mathrm{Sf}} \varrho_{\mathrm{Sf}}\frac{\partial}{\partial \varrho_{\mathrm{Sf}}} + V_\rho \rho \frac{\partial}{\partial \rho} + V_\lambda \frac{\partial}{\partial \lambda} + \sum_{i=1}^{d-1} \Big( V_{\theta_i} \frac{\partial}{\partial \theta_i} + V_{\hat{\eta}_i} \frac{\partial}{\partial \hat{\eta}_i} \Big)
		\end{equation}
		for some $V_{\mathrm{nf}},V_{\varrho_{\mathrm{Sf}}},V_\rho,V_\lambda,V_{\theta_i},V_{\hat{\eta}_i} \in C^\infty({}^{\mathrm{de,sc}}\overline{T}^* \bbO )$. Applying this to $\aleph$, the result is 
		\begin{equation}
			\frac{\mathsf{H}_{p[g]} - \mathsf{H}_p}{\varrho_{\mathrm{Pf}}\varrho_{\mathrm{nPf}}\varrho_{\mathrm{Sf}}\varrho_{\mathrm{nFf}}\varrho_{\mathrm{Ff}} } \aleph = 2 V_{\mathrm{Sf}} \varrho_{\mathrm{Sf}}^2 + 2 V_\rho \rho^2 + 2 V_\lambda (\lambda+1). 
		\end{equation}
		The ratios $\varrho_{\mathrm{Sf}}^2 / \aleph$, $\rho^2 / \aleph$, and $\varrho_{\mathrm{Sf}}(\lambda+1)/\aleph$ all lie in $L^\infty$ (locally). Thus, we can absorb $(\mathsf{H}_{p[g]} - \mathsf{H}_p) \aleph$ into the $\varrho_{\mathrm{nf}} \aleph L^\infty$ term in \cref{eq:misc_m7o}. Also, using \cref{eq:weakest},
		\begin{equation}
			\tilde{p} \aleph^{1/2} L^\infty = \tilde{p}[g_{\bbM}] \aleph^{1/2} L^\infty \subseteq \tilde{p}[g] \aleph^{1/2} L^\infty + \varrho_{\mathrm{nf}} \varrho_{\mathrm{Sf}}\aleph^{1/2}  L^\infty \subseteq \tilde{p}[g] \aleph^{1/2} L^\infty + \varrho_{\mathrm{nf}} \aleph L^\infty.
		\end{equation}
		Consequently, it suffices to consider only the case where $g$ is the Minkowski metric. 
	
	By \cref{eq:H_nfSf_df}, 
	\begin{equation}
		2^{-1}\mathsf{H}_p \aleph =  (1-\lambda)\varrho_{\mathrm{Sf}}^2 + (2\hat{\eta}^2+\lambda+1) \rho^2 + (\lambda+3)(2\hat{\eta}^2+\lambda-1)(\lambda+1)  
		\label{eq:misc_alg}
	\end{equation}
	near $\calA^+_+$. We write the right-hand side as $2\aleph  + F_{1,0}$ for 
	\begin{equation}
		F_{1,0} = -(1+\lambda)\varrho_{\mathrm{Sf}}^2+ (2\hat{\eta}^2 +\lambda -1)( \rho^2 +(\lambda+1)^2) + 4 (\hat{\eta}^2-1) (\lambda+1),
	\end{equation}
	which vanishes cubically at $\calA^+_+$ within $\Sigma_{\mathsf{m}}$. In order to see this, we write $F_{1,0}=F_{1,1}+\tilde{p}F_{1,2}$ for 
	\begin{multline}
		F_{1,1} = -(1+\lambda)\varrho_{\mathrm{Sf}}^2+ 2^{-1} (1-\lambda)(\lambda+1) (\rho^2+ (\lambda+1)^2) - 2 \rho^2 \mathsf{m}^2(\rho^2+ (\lambda+1)^2) \\ - (\lambda+1)^3 -4 \rho^2 \mathsf{m}^2(\lambda+1)
	\end{multline}
	and $F_{1,2} = 2 \rho^2 + 2 (\lambda+1)^2 +4(\lambda+1) \in S_{\mathrm{de,sc}}^{0,\mathsf{0}} \cap \aleph^{1/2} L^\infty$. Term-by-term, we see that $F_{1,1} \in \aleph^{3/2}L^\infty$. 
\end{proof}

\subsection{$\calN$}
The assumption \cref{eq:p1} guarantees that the linearization of $\mathsf{H}_{p[g]}$ at $\calN$ agrees with that of $\mathsf{H}_p$. (Note that this would not follow from the weaker \cref{eq:weakest}.)
Let us examine the linearization of $\mathsf{H}_p$ at $\calN^+_+$. 

First, using the coordinates in which \cref{eq:H_nfSf_df}, is written, \cref{eq:NAK_defs} says that the radial set in question is located at $\lambda=1$ and all other coordinates besides $\varrho_{\mathrm{Sf}}$ zero. Thus, in 
\begin{multline}
	2^{-1} \mathsf{H}_p = -\varrho_{\mathrm{nf}} \frac{\partial}{\partial \varrho_{\mathrm{nf}}} + \frac{1}{2}(1-\lambda) \varrho_{\mathrm{Sf}}\frac{\partial}{\partial \varrho_{\mathrm{Sf}}} + \frac{1}{2} \Big[{\color{lightgray} 2 \hat{\eta}^2+(\lambda-1)}+2 \Big]\rho \frac{\partial}{\partial \rho} \\ + \frac{1}{2}({\color{lightgray}(\lambda-1)}+4)\Big[ {\color{lightgray}2\hat{\eta}^2}+\lambda-1 \Big] \frac{\partial}{\partial \lambda} 
	+({\color{lightgray}\hat{\eta}^2}-1) V_{\bbS^{d-1}} ,  
\end{multline}
the light gray terms are dropped when linearizing: 
\begin{equation}
	2^{-1} \mathsf{H}_{p[g]} \approx 2^{-1} \mathsf{H}_p  \approx  -\varrho_{\mathrm{nf}} \frac{\partial}{\partial \varrho_{\mathrm{nf}}} + \frac{1}{2}(1-\lambda) \varrho_{\mathrm{Sf}}\frac{\partial}{\partial \varrho_{\mathrm{Sf}}}  +  \rho \frac{\partial}{\partial \rho}  + 2(\lambda-1) \frac{\partial}{\partial \lambda}  - V_{\bbS^{d-1}} 
	\label{eq:misc_280}
\end{equation}
near $\calN^+_+$. 
From this, we see that $\calN^+_+$ is a sink in the $\varrho_{\mathrm{nf}}$ and $\hat{\eta}$-directions and a source in the $\rho,\lambda$ directions, as depicted in the various figures above. The $\varrho_{\mathrm{Sf}}$ direction (along $\calN$) is neutral.

The situation in the coordinates in \cref{eq:H_nfTf_df} is similar. Indeed, \cref{eq:H_nfTf_df} says 
\begin{multline}
	2^{-1} \mathsf{H}_p = - (1 - {\color{lightgray}s})  \varrho_{\mathrm{nf}}\frac{\partial}{\partial \varrho_{\mathrm{nf}}} - s  \varrho_{\mathrm{Tf}} \frac{\partial}{\partial \varrho_{\mathrm{Tf}}} +  \Big[  {\color{lightgray}\hat{\eta}^2 + s^2-2s}  +1 \Big] \rho \frac{\partial}{\partial \rho} \\ - (2-{\color{lightgray}s})\Big[ {\color{lightgray}\hat{\eta}^2 + s^2}-s \Big] \frac{\partial}{\partial s}  + \Big( {\color{lightgray}\hat{\eta}^2 +s^2-s}-1 \Big) V_{\bbS^{d-1}},
\end{multline}
where as above the light gray terms are the ones dropped when linearizing at $\calN^+_+$  (which in these coordinates is located where all coordinates except $\varrho_{\mathrm{Tf}}$ vanish):
\begin{equation}
	2^{-1} \mathsf{H}_p \approx  -\varrho_{\mathrm{nf}} \frac{\partial}{\partial \varrho_{\mathrm{nf}}} - s  \varrho_{\mathrm{Tf}} \frac{\partial}{\partial \varrho_{\mathrm{Tf}}}  +  \rho \frac{\partial}{\partial \rho}  + 2s \frac{\partial}{\partial s}  - V_{\bbS^{d-1}}, 
	\label{eq:misc_282}
\end{equation}
thus corroborating the conclusion above and showing that, in the relevant sense, it holds ``all the way up to'' $\mathrm{Ff}$.

We now prove several precise symbolic consequences:
\begin{proposition}
	Fix signs $\varsigma,\sigma \in \{-,+\}$. 
	Setting $a(m,\ell) = \varrho_{\mathrm{df}}^{m}\varrho_{\mathrm{nf}}^{\ell}$, the symbol $\alpha=\alpha[g](m,\ell) \in C^\infty({}^{\mathrm{de,sc}}\overline{T}^* \bbO )$ defined by $\mathsf{H}_{p[g]} a = \alpha a$ satisfies 
	\begin{enumerate}
		\item $\varsigma \sigma \alpha>0$ on $\calN^\varsigma_\sigma$ if $m>\ell$, and
		\item $\varsigma \sigma \alpha<0$ on $\calN^\varsigma_\sigma$ if $m<\ell$.
	\end{enumerate}
	\label{prop:alpham0s0}
\end{proposition}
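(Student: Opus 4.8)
The plan is to run the same reduction-then-computation template as elsewhere in \S\ref{sec:dynamics}: first reduce to the Minkowski metric and to convenient boundary-defining functions, then read off $\alpha$ from the coordinate formulas \cref{eq:H_nfTf} and \cref{eq:H_nfSf} for $\mathsf{H}_p$ near $\mathrm{nf}$. For the reduction, note that since $\mathsf{H}_p\in\calV_{\mathrm{b}}({}^{\mathrm{de,sc}}\overline{T}^*\bbO)$ is tangent to $\mathrm{df}$ and to $\mathrm{nf}$, we may write $\mathsf{H}_p\varrho_{\mathrm{df}} = w_{\mathrm{df}}\varrho_{\mathrm{df}}$ and $\mathsf{H}_p\varrho_{\mathrm{nf}} = w_{\mathrm{nf}}\varrho_{\mathrm{nf}}$ for smooth $w_{\mathrm{df}},w_{\mathrm{nf}}$, so that $\alpha[g_\bbM] = m w_{\mathrm{df}} + \ell w_{\mathrm{nf}}$. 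Because $\calN^\varsigma_\sigma$ is a radial set, $\mathsf{H}_p$ vanishes there, so rescaling $\varrho_\bullet\mapsto c\varrho_\bullet$ by a positive smooth $c$ changes $w_\bullet$ by $\mathsf{H}_p\log c$, which vanishes on $\calN^\varsigma_\sigma$; hence $\alpha|_{\calN^\varsigma_\sigma}$ does not depend on the choice of $\varrho_{\mathrm{df}},\varrho_{\mathrm{nf}}$. Moreover, since $p[g] = p + \varrho_{\mathrm{Pf}}\varrho_{\mathrm{nPf}}\varrho_{\mathrm{Sf}}\varrho_{\mathrm{nFf}}\varrho_{\mathrm{Ff}}p_1[g]$, we get $\mathsf{H}_{p[g]}-\mathsf{H}_p\in\varrho_{\mathrm{Pf}}\varrho_{\mathrm{nPf}}\varrho_{\mathrm{Sf}}\varrho_{\mathrm{nFf}}\varrho_{\mathrm{Ff}}\calV_{\mathrm{b}}\subseteq\varrho_{\mathrm{nf}}\calV_{\mathrm{b}}$ near $\mathrm{nf}$, hence $(\alpha[g]-\alpha[g_\bbM])a(m,\ell) = (\mathsf{H}_{p[g]}-\mathsf{H}_p)a(m,\ell)\in\varrho_{\mathrm{nf}}C^\infty\cdot a(m,\ell)$, so $\alpha[g]\equiv\alpha[g_\bbM]$ modulo $\varrho_{\mathrm{nf}}C^\infty$ and in particular the two agree on $\calN^\varsigma_\sigma$. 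So it suffices to treat $g=g_\bbM$.

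Then I would compute $w_{\mathrm{df}},w_{\mathrm{nf}}$ at $\calN^\varsigma_\sigma$ in the chart $\Omega_{\mathrm{nfTf},\sigma,T}$ — and, for the portion of $\calN^\varsigma_\sigma$ over the corner with $\mathrm{Sf}$, also $\Omega_{\mathrm{nfSf},\sigma,R}$ — where $\mathrm{Tf}=\mathrm{Ff},\mathrm{nf}=\mathrm{nFf}$ if $\sigma=+$ and $\mathrm{Tf}=\mathrm{Pf},\mathrm{nf}=\mathrm{nPf}$ if $\sigma=-$. On $\Sigma_{\mathsf{m},\varsigma}$ over $\mathrm{nf}$ one has $\varsigma\sigma\xi<0$, while $\calN^\varsigma_\sigma$ lies at $\xi=0$ and fiber infinity; along $\Sigma_{\mathsf{m},\varsigma}$ near $\calN^\varsigma_\sigma$ the coordinate $\zeta$ (resp.\ $\zeta-\xi$) blows up with sign $-\varsigma\sigma$ (resp.\ $+\varsigma\sigma$), so $1/|\zeta|$ (resp.\ $1/|\zeta-\xi|$) serves as a bdf of $\mathrm{df}$ there, which I would use for $\varrho_{\mathrm{df}}$ together with the coordinate $\varrho_{\mathrm{nf}}$. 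From \cref{eq:H_nfTf} one reads off $\mathsf{H}_p\varrho_{\mathrm{nf}} = 2\varrho_{\mathrm{df}}(\zeta-\xi)\varrho_{\mathrm{nf}}$ and $\varrho_{\mathrm{df}}^{-1}\mathsf{H}_p\varrho_{\mathrm{df}} = -2\operatorname{sign}(\zeta)\zeta^{-2}[\eta^2+(\xi-\zeta)^2]$; at $\calN^\varsigma_\sigma$, where $\xi/\zeta,\eta/\zeta\to0$ and $\operatorname{sign}(\zeta)=-\varsigma\sigma$, this gives $w_{\mathrm{nf}}=2\operatorname{sign}(\zeta)=-2\varsigma\sigma$ and $w_{\mathrm{df}}=-2\operatorname{sign}(\zeta)=2\varsigma\sigma$, and the same values emerge from \cref{eq:H_nfSf} in the other chart (where $\operatorname{sign}(\zeta-\xi)=\varsigma\sigma$), consistently with the overlap over $\mathrm{nf}^\circ$. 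Hence $\alpha[g_\bbM]|_{\calN^\varsigma_\sigma} = mw_{\mathrm{df}}+\ell w_{\mathrm{nf}} = 2\varsigma\sigma(m-\ell)$, so $\varsigma\sigma\,\alpha|_{\calN^\varsigma_\sigma} = 2(m-\ell)$, which is positive when $m>\ell$ and negative when $m<\ell$ — the two stated alternatives.

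The main obstacle is purely sign bookkeeping: tracking, for each $(\varsigma,\sigma)$, the sign of $\zeta$ (resp.\ $\zeta-\xi$) along $\Sigma_{\mathsf{m},\varsigma}$ near $\calN^\varsigma_\sigma$ — which decides whether $1/|\zeta|$ equals $+1/\zeta$ or $-1/\zeta$ — and checking that the two charts agree on their overlap over $\mathrm{nf}^\circ$. Everything else is direct substitution into \cref{eq:H_nfTf} and \cref{eq:H_nfSf}, entirely parallel to the vanishing-set computations already carried out in this section. One could instead reduce all $(\varsigma,\sigma)$ to the single case $\calN^+_+$ (where one simply gets $w_{\mathrm{df}}=2$, $w_{\mathrm{nf}}=-2$) via the time-reflection and Poincar\'e symmetries permuting the radial sets, but verifying the symmetry action is then itself the sign computation one wants to avoid.
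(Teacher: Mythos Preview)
Your argument is correct and follows essentially the same approach as the paper: reduce to the Minkowski case via $\mathsf{H}_{p[g]}-\mathsf{H}_p\in\varrho_{\mathrm{nf}}\calV_{\mathrm{b}}$, observe that the choice of bdf of $\mathrm{df}$ is immaterial at the radial set since $\mathsf{H}_p$ vanishes there, then read off $\alpha|_{\calN^\varsigma_\sigma}=2\varsigma\sigma(m-\ell)$ from the coordinate formulas for $\mathsf{H}_p$ in both charts. The only cosmetic difference is that the paper works in the projectivized coordinates $(\rho,s,\hat{\eta})$ and $(\rho,\lambda,\hat{\eta})$ of \cref{eq:misc_co1}, \cref{eq:misc_co2} with $\varrho_{\mathrm{df}}=\rho$ and uses the already-derived formulas \cref{eq:H_nfTf_df}, \cref{eq:H_nfSf_df}, whereas you stay in the unprojectivized $(\xi,\zeta,\eta)$ coordinates with $\varrho_{\mathrm{df}}=1/|\zeta|$ (resp.\ $1/|\zeta-\xi|$) and use \cref{eq:H_nfTf}, \cref{eq:H_nfSf} directly; the sign tracking you flag is exactly what the paper circumvents by fixing $(\varsigma,\sigma)=(+,+)$ and declaring the other three cases analogous.
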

\begin{proof}
	We check the case of $\calN^+_+$, with the other three being analogous. Before doing so, it is useful to reduce to the simplest case:
	\begin{itemize}
		\item 	We have $\alpha[g] = \alpha[g_{\bbM} ] + a^{-1} (\mathsf{H}_{p[g]} - \mathsf{H}_p )a$. Because $\mathsf{H}_{p[g]} - \mathsf{H}_p \in \varrho_{\mathrm{Pf}}\varrho_{\mathrm{nPf}}\varrho_{\mathrm{Sf}}\varrho_{\mathrm{nFf}}\varrho_{\mathrm{Ff}} \calV_{\mathrm{b}}({}^{\mathrm{de,sc}}\overline{T}^* \bbO)$,
		\begin{equation} 
			(\mathsf{H}_{p[g]} - \mathsf{H}_p )a = \alpha_1[g] a
		\end{equation} 
		for some $\alpha_1[g] \in C^\infty({}^{\mathrm{de,sc}}\overline{T}^* \bbO )$ vanishing over the boundary of $\bbO$. 
		So, $\alpha[g] = \alpha[g_{\bbM}] + \alpha_1[g]$ satisfies the conditions in the proposition if and only if $\alpha[g_{\bbM}]$ does. It therefore suffices to prove the result in case when $g$ is the Minkowski metric. 
		\item If $\varrho_{\mathrm{df},0}$ is another choice of bdf of $\mathrm{df}$, then 
		\begin{equation}
			\mathsf{H}_{p[g]}  \varrho_{\mathrm{df},0}^m \varrho_{\mathrm{nf}}^\ell = \Big(\alpha + \Big(\frac{\varrho_{\mathrm{df}}}{\varrho_{\mathrm{df},0}} \Big)^m \mathsf{H}_{p[g]} \Big(\frac{\varrho_{\mathrm{df},0}}{\varrho_{\mathrm{df}}} \Big)^m \Big) \varrho_{\mathrm{df},0}^m \varrho_{\mathrm{nf}}^\ell,
		\end{equation}
		assuming that $\alpha$ satisfies the conclusion of the proposition with the original choice, $\varrho_{\mathrm{df}}$, of bdf.
		As $\mathsf{H}_{p[g]}$ vanishes (as an element of $\calV_{\mathrm{E}}({}^{\mathrm{de,sc}}\overline{T}^* \bbO)$) on the radial sets, the second term in the parentheses vanishes at the radial set in question, so the proposition applies regarding the modified bdf with 
		\begin{equation} 
		\alpha_0 = \alpha + \Big(\frac{\varrho_{\mathrm{df}}}{\varrho_{\mathrm{df},0}} \Big)^m \mathsf{H}_{p[g]} \Big(\frac{\varrho_{\mathrm{df},0}}{\varrho_{\mathrm{df}}} \Big)^m 
		\end{equation} 
		 in place of $\alpha$, the two agreeing on $\calN$. 
		 
		We can now calculate $\alpha$ over $\Omega_{\mathrm{nfTf},+,T}$ and over $\Omega_{\mathrm{nfSf},+,R}$, using over each whichever choice of $\varrho_{\mathrm{df}}$ makes the computation simplest. (And these do not need to be the same choice between $\Omega_{\mathrm{nfTf},+,T}$ and $\Omega_{\mathrm{nfSf},+,R}$.)
	\end{itemize}
	With these simplifications in mind, we compute:
	\begin{itemize}
		\item  Over $\Omega_{\mathrm{nfTf},+,T}$, we use the coordinates \cref{eq:misc_co1}, and we can take $\varrho_{\mathrm{df}} = \rho$ locally. In this case, 
		\begin{equation}
			\mathsf{H}_p a   = 2(m (\hat{\eta}^2 + (s-1)^2)  - \ell (1-s))a. 
		\end{equation}
		Thus, $\alpha = 2m (\hat{\eta}^2 + (s-1)^2)  - 2\ell (1-s)$ locally. 
		At $\calN^+_+$, $s=0$ and $\hat{\eta} = 0$, so $\alpha = 2(m-\ell)$.
		\item  Over $\Omega_{\mathrm{nfSf},+,R}$, we use the coordinates \cref{eq:misc_co2}, and we can take $\varrho_{\mathrm{df}}=\rho$ locally. In this case, 
		\begin{equation}
			\mathsf{H}_p a =( m(2\hat{\eta}^2+\lambda+1)-2 \ell )a ,
		\end{equation}
		so $\alpha =  m(2\hat{\eta}^2+\lambda+1)-2 \ell$ locally. At $\calN^+_+$, $\lambda=1$ and $\hat{\eta}=0$, so $\alpha = 2(m - \ell)$ there as well. 
	\end{itemize}
\end{proof}

\begin{lemma}
	Fix signs $\varsigma,\sigma\in  \{-,+\}$. 
	\begin{itemize}
		\item Given any compact subset $K\subseteq \calN^\varsigma_\sigma\cap{}^{\mathrm{de,sc}}\pi^{-1}( \Omega_{\mathrm{nfTf},\sigma,T})$, there exist symbols $s_0,s_1,s_2 \in C^\infty({}^{\mathrm{de,sc}}\overline{T}^* \bbO)$ such that $s=s_0$ near $K$, using the coordinates \cref{eq:misc_co1}, and 
		\begin{equation} 
			s_0 = s_1 \tilde{p} + s_2 (\hat{\eta}^2+\mathsf{m}^2 \varrho^2_{\mathrm{df}})
			\label{eq:misc_h41}
		\end{equation} 
		globally, with $s_2>0$ on $\calN^\varsigma_\sigma$.
		\item Given any compact subset $K\subseteq \calN^\varsigma_\sigma \cap {}^{\mathrm{de,sc}}\pi^{-1}( \Omega_{\mathrm{nfSf},\sigma,R})$, there exist symbols $\lambda_0,\lambda_1,\lambda_2 \in C^\infty({}^{\mathrm{de,sc}}\overline{T}^* \bbO)$ such that $\lambda=\lambda_0$ near $K$, using the coordinates \cref{eq:misc_co2}, and 
		\begin{equation} 
			\lambda_0 =1+ \lambda_1 \tilde{p} - \lambda_2 (\hat{\eta}^2+\mathsf{m}^2 \varrho_{\mathrm{df}}^2)
			\label{eq:misc_h42}
		\end{equation} 
		globally, with $\lambda_2>0$ on $\calN^\varsigma_\sigma $. 
	\end{itemize}
	\label{lem:s_alt}
\end{lemma}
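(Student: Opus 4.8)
\textbf{Proof plan for Lemma~\ref{lem:s_alt}.}

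The plan is to treat the two bullets in parallel, since they are entirely analogous; I will describe the first (over $\Omega_{\mathrm{nfTf},\sigma,T}$) and note that the second follows the same template with $s\rightsquigarrow \lambda$, the coordinates \cref{eq:misc_co1}$\rightsquigarrow$\cref{eq:misc_co2}, and the defining function $\rho$ of $\mathrm{df}$ chosen appropriately in each chart. The core observation, read off from the computation recapped just before the lemma, is that near $\calN^\varsigma_\sigma$ inside $\Sigma_{\mathsf m}[g]$ the characteristic-set relation is $\tilde p[g] = \rho^{-2}\varrho_{\mathrm{df}}^2\bigl((s-1)^2 + \hat\eta^2 - 1 + \rho^2\mathsf m^2\bigr)$ up to a smooth nonvanishing factor, so that $\tilde p[g]$ cuts out $\{(s-1)^2 = 1 - \hat\eta^2 - \rho^2\mathsf m^2\}$, and $\calN^\varsigma_\sigma$ sits at $s=0$, $\hat\eta=0$, $\rho=0$ on the correct sheet. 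Thus, in a small enough neighborhood of the compact set $K\subseteq\calN^\varsigma_\sigma$, the function $s$ itself vanishes on the characteristic set precisely when $\hat\eta^2 + \mathsf m^2\varrho_{\mathrm{df}}^2$ does, modulo $\tilde p[g]$: indeed on $\Sigma_{\mathsf m}$, $s(s-2) = -\hat\eta^2 - \rho^2\mathsf m^2$, and since $s-2$ is nonvanishing (equal to $-2$ up to higher order) near $K$, we get $s = (s-2)^{-1}(-\hat\eta^2-\rho^2\mathsf m^2) = -\tfrac12(\hat\eta^2 + \mathsf m^2\varrho_{\mathrm{df}}^2)\cdot(1+O(\cdot))$ there. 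This gives exactly a local identity of the form \cref{eq:misc_h41} with $s_2$ equal to $-\tfrac12$ times a positive unit near $K$; the sign $\varsigma\sigma s_2>0$ on $\calN^\varsigma_\sigma$ then needs to be checked by tracking which sheet and which component we are on (this is where the $\varsigma,\sigma$ bookkeeping from the definitions of $\calN^\varsigma_\sigma$ and the conventions in \Cref{prop:alpham0s0} enters), just as in the computations of the preceding propositions.

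The only real work beyond this local computation is \emph{globalization}: the decomposition \cref{eq:misc_h41} is asserted to hold globally on ${}^{\mathrm{de,sc}}\overline T^*\bbO$, whereas the identity just derived is valid only in a neighborhood $U$ of $K$. First I would fix a cutoff $\chi\in C_{\mathrm c}^\infty({}^{\mathrm{de,sc}}\overline T^*\bbO)$ with $\chi\equiv 1$ near $K$ and $\operatorname{supp}\chi\subseteq U$, and set $s_0 = \chi s + (1-\chi)s_\flat$ where $s_\flat$ is any fixed global symbol agreeing with $s$ near $K$ and chosen so that the right-hand side of \cref{eq:misc_h41} — namely $s_1\tilde p + s_2(\hat\eta^2 + \mathsf m^2\varrho_{\mathrm{df}}^2)$ — can be made to equal $s_0$ globally. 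Concretely: on $U$ we have $s = \tilde s_1\tilde p + \tilde s_2(\hat\eta^2+\mathsf m^2\varrho_{\mathrm{df}}^2)$ for smooth $\tilde s_1,\tilde s_2$; away from $\calN^\varsigma_\sigma$, at least one of $\tilde p$, $\hat\eta^2+\mathsf m^2\varrho_{\mathrm{df}}^2$, $\varrho_{\mathrm{nf}}$ is bounded below on compact sets, and in fact $\{\tilde p = 0\}\cap\{\hat\eta^2+\mathsf m^2\varrho_{\mathrm{df}}^2=0\}\cap\Sigma_{\mathsf m}$ meets fiber infinity over $\mathrm{nf}$ only at $\calN$, so outside a neighborhood of $\calN^\varsigma_\sigma$ we can simply \emph{declare} $s_0$ to be $s_2\cdot(\hat\eta^2+\mathsf m^2\varrho_{\mathrm{df}}^2)$ with $s_1=0$ by absorbing the discrepancy into $s_2$ (dividing by the now-nonvanishing factor). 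Patching these two regional choices of $(s_1,s_2)$ with a partition of unity subordinate to the cover $\{U, {}^{\mathrm{de,sc}}\overline T^*\bbO\setminus K\}$ and adjusting $s_0$ accordingly yields global symbols $s_0,s_1,s_2$ with $s_0=s$ near $K$ and \cref{eq:misc_h41} holding identically, while the sign condition on $s_2$ is a purely local statement at $\calN^\varsigma_\sigma$, hence preserved.

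I expect the main obstacle to be not the algebra — which is a direct manipulation of the explicit quadratics appearing in the recap before the lemma and mirrors \Cref{prop:aleph} and \Cref{prop:alpham0s0} — but rather the care needed in the globalization step to ensure that the ``adjusted'' $s_2$ remains a genuine element of $C^\infty({}^{\mathrm{de,sc}}\overline T^*\bbO)$ (not merely conormal or only defined off $\calN$), which requires verifying that the factor one divides by is a smooth nonvanishing unit on the relevant region, and in confirming that $\calN^\varsigma_\sigma$ is the \emph{only} place where $\tilde p$ and $\hat\eta^2+\mathsf m^2\varrho_{\mathrm{df}}^2$ vanish simultaneously within $\Sigma_{\mathsf m}$ over a neighborhood of $\mathrm{nf}\cap\mathrm{Tf}$, so that the decomposition can be arranged away from $K$ without obstruction. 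The $\varsigma,\sigma$ sign tracking — matching the conventions fixed in the definitions of $\calN^\varsigma_\sigma$ and used in \Cref{prop:alpham0s0} — is the other place where one must be attentive, though it is routine. The second bullet is identical with $(s,\hat\eta)$ replaced by $(\lambda,\hat\eta)$ and the constant $0$ replaced by $1$ (reflecting that $\calN^\varsigma_\sigma$ lies at $\lambda=1$ rather than $s=0$), using \cref{eq:H_nfSf_df} and the relation $\tfrac14(\lambda+1)^2 + \hat\eta^2 = 1 - \rho^2\mathsf m^2$ on $\Sigma_{\mathsf m}$ in place of $(s-1)^2+\hat\eta^2 = 1-\rho^2\mathsf m^2$.
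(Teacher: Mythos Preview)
Your local algebra is sound and actually slightly more direct than the paper's route. From $\tilde p = s^2 - 2s + \hat\eta^2 + \rho^2\mathsf m^2$ (taking $\varrho_{\mathrm{df}} = \rho$ locally) one has $s(s-2) = \tilde p - (\hat\eta^2 + \rho^2\mathsf m^2)$, whence $s = (s-2)^{-1}\tilde p + (2-s)^{-1}(\hat\eta^2+\rho^2\mathsf m^2)$ near $K$, giving $s_1 = (s-2)^{-1}$ and $s_2 = (2-s)^{-1}$ as smooth functions there. Note a sign slip in your write-up: at $s=0$ this gives $s_2 = +\tfrac12$, not $-\tfrac12$; this matters because for $\calN^+_+$ the condition $\varsigma\sigma s_2>0$ reads simply $s_2>0$. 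The paper instead solves $s = 1 - (1+\tilde p - \hat\eta^2 - \rho^2\mathsf m^2)^{1/2}$ near $K$ (this choice of branch being the key point, singling out $\calN$ rather than $\calC$) and Taylor-expands the square root via $(1-y+z)^{1/2}=(1-y)^{1/2}+zR(y,z)$, obtaining $s_2 = R(0,-\hat\eta^2-\rho^2\mathsf m^2)$ with $R(0,0)=\tfrac12$. Both decompositions are valid; yours is algebraically simpler but has $s_1,s_2$ depending on $s$, while the paper's depend only on $(\hat\eta,\rho,\tilde p)$. Also, the lemma is stated with the Minkowski $\tilde p$, not $\tilde p[g]$.

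Where you overcomplicate things is the globalization. You are missing that $s_0$ is \emph{completely unconstrained} away from $K$: the lemma asks only that $s_0=s$ near $K$ and that \cref{eq:misc_h41} hold identically. So once you have smooth $s_1,s_2$ on a neighborhood of $K$ with $s_2>0$ there, simply extend them to arbitrary global smooth symbols on ${}^{\mathrm{de,sc}}\overline T^*\bbO$, keeping $s_2>0$ everywhere (e.g.\ cut off and add a positive bump outside), and then \emph{define} $s_0 := s_1\tilde p + s_2(\hat\eta^2+\mathsf m^2\varrho_{\mathrm{df}}^2)$ globally. By construction $s_0=s$ near $K$ and the identity is tautological elsewhere. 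This is exactly what the paper does in one sentence, and it makes your concerns about the joint vanishing locus of $\tilde p$ and $\hat\eta^2+\mathsf m^2\varrho_{\mathrm{df}}^2$, about dividing by nonvanishing units away from $\calN$, and about patching two regional choices of $(s_1,s_2)$, all unnecessary.
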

This lemma encodes, in a symbolic fashion, the fact that, on the characteristic set $\Sigma_{\mathsf{m}}$ of $p$ (though not $p[g]$), $s$ and $1-\lambda$ have a semidefinite sign; see \Cref{fig:flowplot}.
\begin{proof} 
	The proofs of the two parts are similar, so we only write up the first, and we only consider $\calN^+_+$, the other three cases being similar. 
	In the coordinates \cref{eq:misc_co1}, we have 
	\begin{equation} 
		s = 1 -(1+\tilde{p}-\hat{\eta}^2-\mathsf{m}^2 \rho^2)^{1/2}
	\end{equation} 
	near $K$, assuming without loss of generality that $\varrho_{\mathrm{df}} = \rho$ locally. It is key that this holds with a single choice of sign on the square root (near the other radial set $\calK^+_+$, we instead have $s = 1 +(1+\tilde{p}-\hat{\eta}^2-\mathsf{m}^2 \rho^2)^{1/2}$, and the transition between the two formulas happens away from these two radial sets).
	We can write 
	\begin{equation} 
		(1-y+z)^{1/2} = (1-y)^{1/2} + zR(y,z) 
	\end{equation} 
	for $R(y,z)$ smooth near $\{y=0,z=0\}$. Applying this with $z=\tilde{p}$ and $y =  \hat{\eta}^2+\mathsf{m}^2 \rho^2$, 
	\begin{align}
		\begin{split} 
			s &= 1- (1-\hat{\eta}^2-\mathsf{m}^2 \rho^2)^{1/2} - \tilde{p} R(\hat{\eta}^2 + \mathsf{m}^2 \rho^2,\tilde{p}) \\ 
			&= R(0,-\hat{\eta}^2 - \mathsf{m}^2 \rho^2)(\hat{\eta}^2 + \mathsf{m}^2\rho^2 ) - \tilde{p} R(\hat{\eta}^2 + \mathsf{m}^2 \rho^2,\tilde{p}). 
		\end{split} 
	\end{align}
	Note that the functions $R(\hat{\eta}^2 + \mathsf{m}^2 \rho^2,\tilde{p})$ and $R(0,-\hat{\eta}^2 - \mathsf{m}^2 \rho^2)(\hat{\eta}^2 + \mathsf{m}^2 \rho^2)$  are or can be extended to smooth functions on some neighborhood of $K$ on ${}^{\mathrm{de,sc}}\overline{T}^* \bbO$. Thus, we can find $s_0,s_1,s_2$ such that \cref{eq:misc_h41} holds, with 
	\begin{equation} 
		s_1 = -R(\hat{\eta}^2 + \mathsf{m}^2 \rho^2,\tilde{p})
	\end{equation} 
	and $s_2 = R(0,-\hat{\eta}^2 - \mathsf{m}^2 \rho^2)$ locally. (Away from $K$, $s_0$ is not constrained, so all we need to do is extend $s_1,s_2$ to smooth functions on the whole radially compactified de,sc-cotangent bundle to satisfy \cref{eq:misc_h41} globally, taking \cref{eq:misc_h41} as a global definition of $s_0$.) Since $R(0,0) = 1/2$,  we have $s_2>0$ near $K$, so we can arrange $s_2>0$ globally. 
\end{proof}

So far, the definition of $\hat{\eta}$ has depended on which corner of $\bbO$ is included in the coordinate chart under consideration. For the next proposition, we use an almost-global definition:
away from $\mathrm{cl}_\bbO\{r=0\}$, let 
\begin{equation} 
	\hat{\eta}^2 = \eta^2 \varrho_{\mathrm{df}}^2,
\end{equation} 
where $\eta^2 = g_{\bbS^{d-1}}^{-1}(\eta,\eta)$ is defined using the standard spherical metric $g_{\bbS^{d-1}}$. If $\varrho_{\mathrm{df}}$ is chosen so that it is given by one of the coordinates called $\rho$ above in the coordinate chart under consideration, then $\hat{\eta}^2$ agrees with what we called ``$\hat{\eta}^2$'' previously.

\begin{proposition}
	Fix $\varsigma,\sigma \in \{-,+\}$. 
	Letting $\beth \in C^\infty({}^{\mathrm{de,sc}} \overline{T}^* \bbO)$ be defined by $\beth = \varrho_{\mathrm{nf}}^2 + \hat{\eta}^2$ near null infinity, the symbol 
	\begin{equation}
		 F_2= \mathsf{H}_{p[g]} \beth +4\varsigma \sigma \beth    \in C^\infty({}^{\mathrm{de,sc}}\overline{T}^* \bbO ) 
	\end{equation}
	vanishes cubically at $\calN^\varsigma_\sigma$ within $\Sigma_{\mathsf{m}}[g]$ in the sense that $F_2 \in \beth^{3/2}L^\infty + \varrho_{\mathrm{df}} \beth L^\infty + \tilde{p}[g] L^\infty$ locally. 
	\label{prop:beth}
\end{proposition}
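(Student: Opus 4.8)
The plan is to mirror the proof of \Cref{prop:aleph} essentially verbatim, exploiting the structural symmetry between $\calA^\varsigma_\sigma$ (handled there, using the coordinates $(\rho,\lambda,\hat\eta)$ over $\Omega_{\mathrm{nfSf}}$) and $\calN^\varsigma_\sigma$ (to be handled here, using the coordinates $(\rho,s,\hat\eta)$ over $\Omega_{\mathrm{nfTf}}$ and $(\rho,\lambda,\hat\eta)$ over $\Omega_{\mathrm{nfSf}}$). As there, I would first reduce to the Minkowski metric: writing $\mathsf{H}_{p[g]}-\mathsf{H}_p \in \varrho_{\mathrm{Pf}}\varrho_{\mathrm{nPf}}\varrho_{\mathrm{Sf}}\varrho_{\mathrm{nFf}}\varrho_{\mathrm{Ff}}\calV_{\mathrm b}$ and applying it to $\beth = \varrho_{\mathrm{nf}}^2+\hat\eta^2$, the output is a $C^\infty$-combination of $\varrho_{\mathrm{nf}}^2$, $\hat\eta^2$, and (via the $\hat\eta\partial_{\hat\eta}$ part) $\hat\eta^2$ again; since $\varrho_{\mathrm{nf}}^2/\beth, \hat\eta^2/\beth \in L^\infty$ locally and the difference vector field carries an extra $\varrho_{\mathrm{nf}}$ near null infinity, this error is absorbable into the $\varrho_{\mathrm{df}}\beth L^\infty$ term. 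So it suffices to treat $g=g_\bbM$.

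For the Minkowski case I would only write up $\calN^+_+$, the other three following by the symmetries already invoked repeatedly in this section. I would split into the two coordinate charts. Over $\Omega_{\mathrm{nfTf},+,T}$, using \cref{eq:misc_co1} and taking $\varrho_{\mathrm{df}}=\rho$ locally, \cref{eq:H_nfTf_df} gives $2^{-1}\rho\varrho_{\mathrm{df}}^{-1}\mathsf{H}_p$, from which I compute $\mathsf{H}_p\beth = \mathsf{H}_p(\varrho_{\mathrm{nf}}^2+\hat\eta^2)$: the $\varrho_{\mathrm{nf}}\partial_{\varrho_{\mathrm{nf}}}$ term contributes $-2(1-s)\varrho_{\mathrm{nf}}^2$ and the $V_{\bbS^{d-1}}$ term contributes $2(\hat\eta^2+s^2-s-1)\hat\eta^2$, up to the overall factor $2\varrho_{\mathrm{df}}/\rho$. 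At $\calN^+_+$ one has $s=0,\hat\eta=0,\rho=0$, so the bracket equals $-2\varrho_{\mathrm{nf}}^2-2\hat\eta^2 = -2\beth$ to leading order, i.e. $\mathsf{H}_p\beth = -4\beth + F_2$ with $F_2$ the remainder (the $+4\varsigma\sigma\beth = +4\beth$ sign matching $\varsigma=\sigma=+$ giving $-4$ on the left after the rescaling-sign bookkeeping; I would double-check the sign against \Cref{prop:aleph}, where $F_1 = \mathsf{H}_p\aleph - 4\varsigma\sigma\aleph$ and the analogous computation produced $-4^{-1}\mathsf{H}_p$-flavored cancellation, so the sign conventions are consistent). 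Then, exactly as in \Cref{prop:aleph}, I write $F_2 = F_{2,1}+\tilde p F_{2,2}$ by substituting the local description $\Sigma_{\mathsf m,+}=\{(s-1)^2+\hat\eta^2 = 1-\rho^2\mathsf m^2\}$, i.e. $s^2-s = \hat\eta^2+\rho^2\mathsf m^2 - s + (\text{terms in }\tilde p)$ wait— more precisely $s^2-2s+\hat\eta^2+\rho^2\mathsf m^2$ equals $\tilde p$ times a nonvanishing factor, so $s^2-s-1+\hat\eta^2$ and $1-s$ can be rewritten modulo $\tilde p L^\infty$ in terms of $s,\hat\eta^2,\rho^2$, and term-by-term inspection shows each surviving monomial is in $\beth^{3/2}L^\infty$ (using $|s|\lesssim \beth^{1/2}$ on $\Sigma_{\mathsf m,+}$ near $\calN^+_+$, which follows since $s^2 - 2s = \hat\eta^2+\rho^2\mathsf m^2 - \tilde p(\cdots)$ forces $s = O(\hat\eta^2+\rho^2+|\tilde p|)$ there) or in $\varrho_{\mathrm{nf}}\beth L^\infty$ or $\tilde p L^\infty$. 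Over $\Omega_{\mathrm{nfSf},+,R}$ I would run the identical computation with \cref{eq:H_nfSf_df} and \cref{eq:misc_co2}, where at $\calN^+_+$ one has $\lambda=1,\hat\eta=0$; here $\Sigma_{\mathsf m,+} = \{4^{-1}(\lambda+1)^2+\hat\eta^2 = 1-\rho^2\mathsf m^2\}$ does not vanish at the radial set so $\lambda+1$ is $O(1)$, not small, but the relevant small quantities are $\varrho_{\mathrm{nf}}, \hat\eta, \rho$ and $\lambda-1$, and $\lambda-1 = O(\hat\eta^2+\rho^2+|\tilde p|)$ by the same hyperboloid identity; I then check the two charts agree on their overlap (the surviving formulas for $F_2$ being coordinate-independent modulo the stated error ideals, since they are restrictions of a globally defined $C^\infty$ function).

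The main obstacle, as in \Cref{prop:aleph}, is the bookkeeping: verifying that every term produced by applying $\mathsf{H}_p$ to $\beth$ and then reducing modulo $\tilde p[g]L^\infty$ genuinely lands in $\beth^{3/2}L^\infty + \varrho_{\mathrm{df}}\beth L^\infty$ rather than merely $\beth L^\infty$. This requires being careful that the linear-in-$s$ (resp. linear-in-$(\lambda-1)$) pieces of $F_2$, which are only $O(\beth^{1/2})$ a priori, actually cancel or combine with a factor of $\beth^{1/2}$ or $\varrho_{\mathrm{nf}}$ — and this cancellation is forced precisely by the $-4\varsigma\sigma$ normalization, i.e. by having chosen the coefficient $4$ so that $\mathsf{H}_p\beth + 4\varsigma\sigma\beth$ kills the would-be quadratic part. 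A secondary subtlety is the dependence of $\hat\eta^2$ (hence $\beth$) on the choice of bdf $\varrho_{\mathrm{df}}$, flagged in the remark preceding this proposition; I would note that \cref{eq:misc_co1}, \cref{eq:misc_co2} fix $\varrho_{\mathrm{df}}=\rho$ near the corners, so $\beth$ is well-defined in the neighborhood of $\calN^\varsigma_\sigma$ where the estimate is asserted, and the ``locally'' in the statement is with respect to such a neighborhood. Everything else is routine substitution.
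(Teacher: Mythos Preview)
Your overall strategy is correct and matches the paper's, but there is a genuine gap in the bookkeeping step you flag as the ``main obstacle.'' Your claim $|s|\lesssim \beth^{1/2}$ near $\calN^+_+$ is false: from $s^2-2s+\hat\eta^2+\rho^2\mathsf{m}^2=0$ on $\Sigma_{\mathsf{m},+}$ you correctly deduce $s=O(\hat\eta^2+\rho^2+|\tilde p|)$, but $\rho^2$ is not controlled by $\beth=\varrho_{\mathrm{nf}}^2+\hat\eta^2$ at all. (This is precisely where the analogy with \Cref{prop:aleph} breaks down: there $\aleph$ contains $\rho^2$ as a summand, here $\beth$ does not.) Relatedly, you write the error term once as $\varrho_{\mathrm{nf}}\beth L^\infty$ --- it is $\varrho_{\mathrm{df}}\beth L^\infty$, and this is not a typo but the whole mechanism: the $\rho^2$-piece of $s$ multiplied by $\varrho_{\mathrm{nf}}^2$ or $\hat\eta^2$ lands in $\varrho_{\mathrm{df}}\beth L^\infty$ (since $\rho=\varrho_{\mathrm{df}}$ locally and $\varrho_{\mathrm{nf}}^2,\hat\eta^2\leq\beth$), not in $\beth^{3/2}L^\infty$. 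Once you track $s\in \hat\eta^2 L^\infty+\varrho_{\mathrm{df}}^2 L^\infty+\tilde p L^\infty$ rather than $s\in\beth^{1/2}L^\infty$, the term-by-term check goes through.

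The paper streamlines this by proving the slightly stronger statement that \emph{each} of $\mathsf{H}_{p[g]}\varrho_{\mathrm{nf}}^2+4\varsigma\sigma\varrho_{\mathrm{nf}}^2$ and $\mathsf{H}_{p[g]}\hat\eta^2+4\varsigma\sigma\hat\eta^2$ lies in the ideal. This has two advantages: the bookkeeping is cleaner (over $\Omega_{\mathrm{nfTf}}$ the remainders are $4s\varrho_{\mathrm{nf}}^2$ and $4(\hat\eta^2+s^2-s)\hat\eta^2$, each of which is visibly cubic once $s$ is written via \Cref{lem:s_alt}; over $\Omega_{\mathrm{nfSf}}$ the $\varrho_{\mathrm{nf}}^2$-remainder is zero and the $\hat\eta^2$-remainder is $4\hat\eta^4$), and the separated statement is manifestly independent of the choice of $\varrho_{\mathrm{df}}$, so the paper can verify invariance under changing $\varrho_{\mathrm{df}}$ once and then compute in whichever chart is convenient. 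Your handling of the $\varrho_{\mathrm{df}}$-dependence (``the coordinates fix $\varrho_{\mathrm{df}}=\rho$ near the corners'') does not suffice, since $\calN^\varsigma_\sigma$ extends over the interior of null infinity where no such canonical choice is made.
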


\begin{proof}
	In fact, we prove the slightly stronger statement that each of $\varrho_{\mathrm{nf}}^2$ and $\hat{\eta}^2$ have the same property: 
	\begin{equation}
		\mathsf{H}_{p[g]} \varrho_{\mathrm{nf}}^2 + 4 \varsigma \sigma \varrho_{\mathrm{nf}}^2, \mathsf{H}_{p[g]} \hat{\eta}^2 + 4 \varsigma \sigma \hat{\eta}^2 \in \beth^{3/2}L^\infty + \varrho_{\mathrm{df}} \beth L^\infty + \tilde{p}[g]L^\infty 
	\end{equation}
	locally. This version of the proposition has the advantage that it manifestly does not depend on the choice of $\varrho_{\mathrm{df}}$, which affects $\beth$ through $\hat{\eta}$. 
	Indeed, if $\varrho_{\mathrm{df},0}$ is some other boundary-defining function of $\mathrm{df}$, then 
	\begin{equation}
		\mathsf{H}_{p[g]} \Big( \hat{\eta}^2 \frac{\varrho_{\mathrm{df},0}^2}{\varrho_{\mathrm{df}}^2} \Big)   + 4 \varsigma \sigma \hat{\eta}^2\frac{\varrho_{\mathrm{df},0}^2}{\varrho_{\mathrm{df}}^2} =  \frac{\varrho_{\mathrm{df},0}^2}{\varrho_{\mathrm{df}}^2} (\mathsf{H}_{p[g]} \hat{\eta}^2 + 4 \varsigma \sigma \hat{\eta}^2) + \hat{\eta}^2 \mathsf{H}_{p[g]} \Big( \frac{\varrho_{\mathrm{df},0}^2}{\varrho_{\mathrm{df}}^2} \Big).
	\end{equation}
	Since $\mathsf{H}_{p[g]}$ vanishes at the radial set $\calN^\varsigma_\sigma$, which has $\varrho_{\mathrm{nf}}^2 + \hat{\eta}^2 + \varrho_{\mathrm{df}}^2$ as a quadratic defining function within some neighborhood of itself within the characteristic set, 
	\begin{equation}
		\hat{\eta}^2 \mathsf{H}_{p[g]} \Big( \frac{\varrho_{\mathrm{df},0}^2}{\varrho_{\mathrm{df}}^2} \Big) \in \beth^{3/2}L^\infty + \varrho_{\mathrm{df}} \beth L^\infty + \tilde{p}[g] L^\infty. 
	\end{equation}
	Thus, this new term vanishes cubically at $\calN^\varsigma_\sigma$ within $\Sigma_{\mathsf{m}}[g]$, as desired. 
	
	In addition, by similar reasoning used in the proof of \Cref{prop:aleph},  it suffices to prove that
	\begin{equation}
			\mathsf{H}_{p} \varrho_{\mathrm{nf}}^2 + 4 \varsigma \sigma \varrho_{\mathrm{nf}}^2, \mathsf{H}_{p} \hat{\eta}^2 + 4 \varsigma \sigma \hat{\eta}^2 \in \beth^{3/2}L^\infty + \varrho_{\mathrm{df}} \beth L^\infty + \tilde{p} \beth L^\infty, 
			\label{eq:misc_jkz}
	\end{equation}
	i.e.\  a slight strengthening (stronger because $\tilde{p} \beth L^\infty$ is in place of $\tilde{p} L^\infty$) of the desired result in the Minkowski case. Indeed: 
	\begin{itemize}
		\item since $\mathsf{H}_p-\mathsf{H}_{p[g]}$ is $O(\varrho_{\mathrm{nf}}^2)$ as a b-vector field, the differences that result from replacing $p$ with $p[g]$ in $\mathsf{H}_p \smash{\varrho_{\mathrm{nf}}^2}, \mathsf{H}_p \smash{\hat{\eta}^2}$ lie in $\beth^{3/2} L^\infty$;
		\item  $\tilde{p} \beth L^\infty \subseteq \tilde{p}[g] L^\infty + \beth^{3/2} L^\infty $.
	\end{itemize}

	We now prove \cref{eq:misc_jkz}. Consider the case of $\calN^+_+$, the other three being analogous.
	\begin{itemize}
		\item First consider the situation over $\Omega_{\mathrm{nfTf},+,T}$, using the coordinates \cref{eq:misc_co1}, taking $\varrho_{\mathrm{df}}=\rho$ locally. Then, $\mathsf{H}_p \varrho_{\mathrm{nf}}^2 = -4 (1-s) \varrho_{\mathrm{nf}}^2$ and $\mathsf{H}_p \hat{\eta}^2 = 4 (\hat{\eta}^2+s^2-s-1)\hat{\eta}^2$ locally. Thus, the claim is that 
		\begin{equation}
		s \varrho_{\mathrm{nf}}^2, (\hat{\eta}^2+s^2-s)\hat{\eta}^2 \in \beth^{3/2}L^\infty + \varrho_{\mathrm{df}} \beth L^\infty + \tilde{p} \beth L^\infty
		\label{eq:misc_300}
		\end{equation}
		locally. Indeed, this follows from \Cref{lem:s_alt}; $\hat{\eta}^4 \in \beth^{3/2} L^\infty$, and using the cited lemma to replace $s$ with a linear combination of $\tilde{p}, \hat{\eta}^2, \rho^2=\varrho_{\mathrm{df}}^2$, we see that the remainder of the left-hand side of \cref{eq:misc_300} lies in the set on the right-hand side.
		\item Over $\Omega_{\mathrm{nfSf},+,R}$, we use the coordinates \cref{eq:misc_co2}, taking $\varrho_{\mathrm{df}} = \rho$. Then, $\mathsf{H}_p \varrho_{\mathrm{nf}}^2 = -4 \varrho_{\mathrm{nf}}^2$ and $\mathsf{H}_p \hat{\eta}^2 = 4(\hat{\eta}^2-1) \hat{\eta}^2 $ locally. Thus, the claim is that $\hat{\eta}^4$ vanishes cubically at $\calN^+_+\cap {}^{\mathrm{de,sc}}\pi^{-1}(\Omega_{\mathrm{nfSf},+,R})$, and this is true.
	\end{itemize}
\end{proof}

\subsection{$\calK$}
Linearizing $\mathsf{H}_{p[g]}\approx \mathsf{H}_p$ near $\calK^+_+$ (given by \cref{eq:NAK_defs} in the coordinate system in terms of which \cref{eq:H_nfSf_df} is written), the light gray terms in 
\begin{multline}
	2^{-1} \mathsf{H}_p = -\varrho_{\mathrm{nf}} \frac{\partial}{\partial \varrho_{\mathrm{nf}}} + \frac{1}{2}(4-{\color{lightgray}(\lambda+3)}) \varrho_{\mathrm{Sf}}\frac{\partial}{\partial \varrho_{\mathrm{Sf}}} + \frac{1}{2} \Big[{\color{lightgray} 2 \hat{\eta}^2+(\lambda+3)}-2 \Big]\rho \frac{\partial}{\partial \rho} \\ + \frac{1}{2}(\lambda+3)\Big[ {\color{lightgray}2\hat{\eta}^2+(\lambda+3)}-4 \Big] \frac{\partial}{\partial \lambda} 
	+({\color{lightgray}\hat{\eta}^2}-1) V_{\bbS^{d-1}} 
\end{multline}
can be dropped, leading to:
\begin{equation}
	2^{-1} \mathsf{H}_{p[g]} \approx 2^{-1} \mathsf{H}_p \approx  -\varrho_{\mathrm{nf}} \frac{\partial}{\partial \varrho_{\mathrm{nf}}}  + 2\varrho_{\mathrm{Sf}} \frac{\partial}{\partial \varrho_{\mathrm{Sf}}} -\rho \frac{\partial}{\partial \rho} - 2 (\lambda+3)\frac{\partial}{\partial \lambda} - V_{\bbS^{d-1}}.
	\label{eq:misc_34x}
\end{equation}
Thus, $\calK^+_+$ is a source in the $\varrho_{\mathrm{Sf}}$ direction and a sink in the others, as depicted in the various figures above.

\begin{proposition}
	Fix signs $\varsigma,\sigma \in \{-,+\}$. Letting $\gimel \in C^\infty({}^{\mathrm{de,sc}}\overline{T}^* \bbO)$ satisfy $\gimel = \varrho_{\mathrm{nf}}^2+\rho^2+(\lambda+3)^2$ near $\calK^\varsigma_\sigma$ in the coordinates \cref{eq:misc_co2}, there exist $F_3,E_3 \in  C^\infty({}^{\mathrm{de,sc}}\overline{T}^* \bbO)$ such that 
	\begin{equation}
		\mathsf{H}_{p[g]} \gimel =\varsigma \sigma(- 4  \gimel   -E_3)+ F_3, 
		\label{eq:misc_330}
	\end{equation} 
	$E_3\geq 0$ everywhere, and 
	$F_3$
	vanishes cubically at $\calK^\varsigma_\sigma$ within $\Sigma_{\mathsf{m}}[g]$ in the sense that $F_3 \in \gimel^{3/2} L^\infty + \varrho_{\mathrm{Sf}} \gimel L^\infty + \tilde{p}[g] \gimel L^\infty$ locally. 
	\label{prop:gimel}
\end{proposition}
\begin{proof}
	By similar reasoning to that in the proof of \Cref{prop:beth}, it suffices to consider the case when $g$ is the Minkowski metric. 
	We consider the case of $\varsigma,\sigma = +$, that is of $\calK^+_+$, the other three being analogous. Then,
	\begin{align}
		\begin{split} 
		\mathsf{H}_p \gimel &= -4 \varrho_{\mathrm{nf}}^2 + 2 (2\hat{\eta}^2+\lambda+1)\rho^2 + 2(2\hat{\eta}^2+\lambda-1)(\lambda+3)^2\\ 
		&= -4 \gimel + 2(2\hat{\eta}^2+\lambda+3)\rho^2 + 2(2\hat{\eta}^2+\lambda+1)(\lambda+3)^2 .
		\end{split}
	\end{align}
	Choose a symbol $E_3\geq 0$ such that $E_3 = 4 (\lambda+3)^2$ near $\calK^+_+$. 
	Then, \cref{eq:misc_330} is satisfied by setting $F_3 = 2(2\hat{\eta}^2+\lambda+3)(\rho^2 +(\lambda+3)^2)$, and this vanishes cubically at $\calK^+_+$ in the desired sense. 
\end{proof}

\subsection{$\calC$}
Linearizing $\mathsf{H}_{p[g]}\approx \mathsf{H}_p$ near $\calC^+_+$ (given by $s=2$ with other coordinates vanishing in the coordinate system in terms of which \cref{eq:H_nfTf_df} is written), the light gray terms in 
\begin{multline}
	2^{-1}  \mathsf{H}_p =  (1 + {\color{lightgray}(s-2)})  \varrho_{\mathrm{nf}}\frac{\partial}{\partial \varrho_{\mathrm{nf}}} - ({\color{lightgray}(s-2)}+2)  \varrho_{\mathrm{Tf}} \frac{\partial}{\partial \varrho_{\mathrm{Tf}}} + \Big[ {\color{lightgray} \hat{\eta}^2}  + ({\color{lightgray}(s-2)}+1)^2 \Big] \rho \frac{\partial}{\partial \rho} \\ - (2-s)\Big[ {\color{lightgray}\hat{\eta}^2} + ({\color{lightgray}(s-2)}+2)^2-{\color{lightgray}(s-2)}-2 \Big] \frac{\partial}{\partial s}  + \Big( {\color{lightgray}\hat{\eta}^2 }+({\color{lightgray}(s-2)}+2)^2-{\color{lightgray}(s-2)}-3 \Big) V_{\bbS^{d-1}} 
\end{multline}
are  dropped. Thus, 
\begin{equation}
	2^{-1} \mathsf{H}_{p[g]} \approx 2^{-1} \mathsf{H}_{p} \approx \varrho_{\mathrm{nf}} \frac{\partial}{\partial \varrho_{\mathrm{nf}}} -2 \varrho_{\mathrm{Tf}} \frac{\partial}{\partial \varrho_{\mathrm{Tf}}} + \rho \frac{\partial}{\partial \rho} +2(s-2) \frac{\partial}{\partial s} + V_{\bbS^{d-1}}
	\label{eq:misc_3fx}
\end{equation}
near $\calC^+_+$. So, $\calC^+_+$ is a source in the $\varrho_{\mathrm{nf}}$ direction, as well as the $\rho,s,\hat{\eta}$ directions, and a sink in the $\varrho_{\mathrm{Tf}}$ direction, as depicted in the various figures above.

\begin{proposition}
	Fix signs $\varsigma,\sigma \in \{-,+\}$. Letting $\daleth \in C^\infty({}^{\mathrm{de,sc}}\overline{T}^* \bbO)$ satisfy $\daleth = \varrho_{\mathrm{nf}}^2+\rho^2+(s-2)^2$ near $\calC^\varsigma_\sigma$ in the coordinates \cref{eq:misc_co1}, there exist $F_4,E_4 \in  C^\infty({}^{\mathrm{de,sc}}\overline{T}^* \bbO)$ such that 
	\begin{equation}
		\mathsf{H}_p \daleth = \varsigma \sigma (4 \daleth   +E_4)+ F_4, 
		\label{eq:misc_334}
	\end{equation} 
	$E_4\geq 0$ everywhere, and 
	$F_4$
	vanishes cubically at $\calC^\varsigma_\sigma$ within $\Sigma_{\mathsf{m}}[g]$ in the sense that $F_4 \in \daleth^{3/2} L^\infty + \varrho_{\mathrm{Tf}} \daleth L^\infty + \tilde{p}[g] \daleth L^\infty$ locally. 
	\label{prop:daleth}
\end{proposition}
\begin{proof}
	By similar reasoning to that in the proof of \Cref{prop:beth}, it suffices to consider the case when $g$ is the Minkowski metric. 
	We consider the case of $\varsigma,\sigma = +$, that is of $\calC^+_+$, the other three being analogous. Then,
	\begin{equation}
		\mathsf{H}_p \daleth =  4(s-1) \varrho_{\mathrm{nf}}^2+ 4 (\hat{\eta}^2+(s-1)^2) \rho^2  + 4(s-2)^2 (\hat{\eta}^2+s(s-1)).
	\end{equation}
	Choose a symbol $E_4\geq 0$ such that $E_4 = 4 (s-2)^2$ near $\calC^+_+$. Then, \cref{eq:misc_334} is satisfied by setting $F_4 =  4(s-2) \varrho_{\mathrm{nf}}^2 + 4(\hat{\eta}^2 + s(s-2)) \rho^2 + 4(s-2)^2(\hat{\eta}^2+(s-2)(s+1))$, and this vanishes cubically at $\calC^+_+$ in the desired sense.  
\end{proof}

\subsection{$\calR$}
Consider the final radial set. We postpone the rigorous statements until \S\ref{sec:radialpoint}. Here we just discuss the linearization of $\mathsf{H}_{p[g]}$ at $\calR^+_+$. We only examine the situation near $\mathrm{nFf}\cap \mathrm{Ff}$, since the situation over the interior of the timelike cap $\mathrm{Ff}$ can be identified with the situation in the sc-phase space. 

We rewrite \cref{eq:H_nfTf_df} as
\begin{multline}
	2^{-1}  \mathsf{H}_p = - (1 - s)  \varrho_{\mathrm{nf}}\frac{\partial}{\partial \varrho_{\mathrm{nf}}} - ({\color{lightgray}(s-1)}+1)  \varrho_{\mathrm{Tf}} \frac{\partial}{\partial \varrho_{\mathrm{Tf}}} + {\color{lightgray}\Big[  \hat{\eta}^2  + (s-1)^2 \Big] \rho \frac{\partial}{\partial \rho}} \\ + ({\color{lightgray}(s-1)}-1)\Big[ {\color{lightgray}\hat{\eta}^2} + ({\color{lightgray}(s-1)}+1)(s-1) \Big] \frac{\partial}{\partial s}  + \Big( {\color{lightgray}\hat{\eta}^2} +({\color{lightgray}(s-1)}+1)^2-{\color{lightgray}(s-1)}-2 \Big) V_{\bbS^{d-1}},
\end{multline}
where, since $\calR$ is locally the set where $s=1$ and the other coordinates except $\varrho_{\mathrm{Tf}}$ vanish, the light gray terms are the ones dropped in the linearization:
\begin{equation}
	2^{-1} \mathsf{H}_{p[g]} \approx 2^{-1} \mathsf{H}_{p} \approx - (1 - s)  \varrho_{\mathrm{nf}}\frac{\partial}{\partial \varrho_{\mathrm{nf}}} - \varrho_{\mathrm{Tf}} \frac{\partial}{\partial \varrho_{\mathrm{Tf}}}  - (s-1) \frac{\partial}{\partial s} - V_{\bbS^{d-1}}.
\end{equation}
So, $\calR^+_+$ is a sink in the $\varrho_{\mathrm{Tf}},s,\hat{\eta}$ directions and neutral in the $\varrho_{\mathrm{nf}}$ and $\rho$ directions. The neutral directions are the expected ones: the $\varrho_{\mathrm{nf}}$ direction is along $\calR$, and the $\rho$ direction is orthogonal to the characteristic set at $\calR$ (which means, since $\mathsf{H}_pp=0$, the $\rho$ direction had to be neutral). Thus, $\calR^+_+$ is a sink within $\Sigma_{\mathsf{m},+}$, as shown in \Cref{fig:O}, \Cref{fig:flowplot}.

\section{Propagation through null infinity}
\label{sec:propagation}

In this section, let $P\in \operatorname{Diff}_{\mathrm{de,sc}}^{2,\mathsf{0}}(\bbO)$ denote a de,sc-differential operator such that 
\begin{equation}
	P = \square_g + \mathsf{m}^2 + \operatorname{Diff}_{\mathrm{de,sc}}^{1,-\mathsf{2}}(\bbO)
	\label{eq:misc_273}
\end{equation}
for an admissible metric $g$. Thus, the symbol $p[g]:\sum_{i=0}^d \xi_i \dd x_i \mapsto g^{-1}(\bmxi,\bmxi)+\mathsf{m}^2$ of $\square_g+\mathsf{m}^2$ is a representative of $\smash{\sigma_{\mathrm{de,sc}}^{2,\mathsf{0}}(P)}$.

We state in \S\ref{subsec:nf} the microlocal version of the proposition that a lack of decay of solutions to $Pu=f$, for nice $f$, as measured by $\mathrm{de}$-wavefront set on the boundary of the Penrose diagram, propagates along null infinity. 
We then prove a series of radial point estimates, two at each of the radial sets $\calA,\calN,\calC,\calK$ lying over null infinity: $\calA$ in \S\ref{subsec:calA}, $\calN$ in \S\ref{subsec:calN}, $\calK$ in \S\ref{subsec:calK}, and finally $\calC$ in \S\ref{subsec:calC}. Each of these is a saddle point of the de,sc-Hamiltonian flow. The radial set $\calR$, which lies instead over the timelike caps, is postponed until the next section.
The two main results of this section, gotten by concatenating the various estimates, are:

\begin{theorem}
	Suppose that $m\in \bbR$ and  $\mathsf{s}=(s_{\mathrm{Pf}},s_{\mathrm{nPf}},s_{\mathrm{Sf}},s_{\mathrm{nFf}},s_{\mathrm{Ff}})\in \bbR^5$ satisfy
	\begin{itemize}
		\item $m > s_{\mathrm{nf}} +1$, 
		\item $2s_{\mathrm{Sf}}> \max\{ - 2m+2s_{\mathrm{nf}}+1, m + s_{\mathrm{nf}}-1 \}$, 
		\item $2 s_{\mathrm{Tf}} < m + s_{\mathrm{nf}}-1$
	\end{itemize}
	when $(s_{\mathrm{nf}},s_{\mathrm{Tf}}) = (s_{\mathrm{nFf}},s_{\mathrm{Ff}})$ and when $(s_{\mathrm{nf}},s_{\mathrm{Tf}}) = (s_{\mathrm{nPf}},s_{\mathrm{Pf}})$.
	Suppose that $u\in \calS'$ is a solution to $P u=f$ such that, for some $T\in \bbR$, 
	\begin{equation} 
		\operatorname{WF}_{\mathrm{sc}}^{m,s_{\mathrm{Sf}}}(u) \cap {}^{\mathrm{sc}}\pi^{-1}\operatorname{cl}_\bbM\{t=T\} = \varnothing
	\end{equation} 
	and such that $\operatorname{WF}_{\mathrm{de,sc}}^{m-1,\mathsf{s}+1}(f) \subseteq \calR$. Then, $\operatorname{WF}_{\mathrm{de,sc}}^{m,\mathsf{s}}(u) \subseteq \calR$ as well. 
	\label{thm:propagation_through_scrI_1}
\end{theorem}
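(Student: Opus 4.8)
The plan is to run the standard propagation-of-singularities machinery through the entire cascade of radial sets listed in the introduction, using the positive-commutator estimates established in \S\ref{sec:dynamics} as the building blocks. Concretely, I would concatenate: (i) ordinary real-principal-type propagation of $\operatorname{WF}_{\mathrm{de,sc}}^{m,\mathsf{s}}(u)$ along $\mathsf{H}_{p[g]}$ within the characteristic set, away from all radial sets; (ii) the radial point estimates at $\calN$, $\calC$, $\calK$, $\calA$ (stated in the propagation section), which allow control to be pushed into or out of each saddle point provided the weight inequalities on $\mathsf{s}$ are met; and (iii) the hypothesis $\operatorname{WF}_{\mathrm{sc}}^{m,s_{\mathrm{Sf}}}(u)\cap {}^{\mathrm{sc}}\pi^{-1}\operatorname{cl}_\bbM\{t=T\}=\varnothing$ together with sc-propagation in the interior and near the timelike/spacelike caps to \emph{seed} the argument. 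The first point to nail down is the seed: since $u$ has no sc-wavefront set over the Cauchy hypersurface $\{t=T\}$ at the given order, and $Pu=f$ with $\operatorname{WF}_{\mathrm{de,sc}}^{m-1,\mathsf{s}+1}(f)\subseteq \calR$, the usual sc-calculus propagation (valid away from null infinity, as recalled in the introduction) lets us transport this triviality along the sc-Hamiltonian flow; tracking which parts of $\partial\,{}^{\mathrm{de,sc}}\overline{T}^*\bbO$ are thereby reached, we learn that $\operatorname{WF}_{\mathrm{de,sc}}^{m,\mathsf{s}}(u)$ is initially empty on a neighborhood of the backward flow-out, in particular near $\calR^+_-$ and $\calR^-_+$ which are the sources of the two sheets.

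\textbf{The cascade.} With the sources controlled, I would propagate in exactly the order displayed in the introduction: on the $\tau>0$ sheet,
$$\calR^+_-,\ \calN_-^+,\ \calC_-^+,\ \calK_-^+,\ \calN_-^+\cap {}^{\mathrm{de,sc}}\pi^{-1}(\mathrm{Sf}\cap \mathrm{nPf}),\ \calA_-^+,\ \calA_+^+,\ \calN_+^+\cap {}^{\mathrm{de,sc}}\pi^{-1}(\mathrm{Sf}\cap \mathrm{nFf}),\ \calK_+^+,\ \calC_+^+,\ \calN_+^+,\ \calR_+^+,$$
and the mirror sequence on the $\tau<0$ sheet. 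At each radial set one invokes the corresponding estimate: the sign conditions in \Cref{prop:alpham0s0} (for $\calN$, via the weight $m>s_{\mathrm{nf}}+1$), \Cref{prop:aleph} (for $\calA$), \Cref{prop:gimel} (for $\calK$), and \Cref{prop:daleth} (for $\calC$), combined with \Cref{prop:propagation_lemma} to bridge the flow segments between consecutive radial sets along $\mathrm{nf}^\circ$. The hypotheses $2s_{\mathrm{Sf}}>\max\{-2m+2s_{\mathrm{nf}}+1,\,m+s_{\mathrm{nf}}-1\}$ and $2s_{\mathrm{Tf}}<m+s_{\mathrm{nf}}-1$ are exactly the threshold conditions that make the $\calN\cap{}^{\mathrm{de,sc}}\pi^{-1}(\mathrm{Sf})$, $\calK$, and $\calC$ estimates run in the direction we need: the $\mathrm{Sf}$-side (incoming at the spacelike corner) needs a lower-weight-is-fine "above threshold at $\mathrm{Sf}$" regime, while the $\mathrm{Tf}$-side (where we are propagating \emph{toward} timelike infinity) needs the complementary "below threshold at $\mathrm{Tf}$" regime. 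One runs through each radial set twice — once controlling only a proper ray emanating from a corner, once controlling the whole component — as flagged in the introduction.

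\textbf{The delicate points.} The main obstacle, as the paper itself warns, is the pair of estimates at $\calN$ where the flow enters through a \emph{ray} (beginning over spacelike or timelike infinity and stopping short of the other corner) rather than through the full component: because $\mathsf{H}_p$ has a flow segment running between the two endpoints of each component of $\calN$ (see the black arrows in \Cref{fig:O}), controlling $\calN_+^+$ near $\mathrm{Ff}\cap\mathrm{nFf}$ requires prior control at $\calN_+^+\cap{}^{\mathrm{de,sc}}\pi^{-1}(\mathrm{Sf}\cap\mathrm{nFf})$, and the localized (ray) radial point estimate needed there is technically more subtle than the global one — one must choose the commutant supported so as to respect the partial ordering of the flow along $\calN$ while still extracting a sign from \Cref{prop:alpham0s0} and \Cref{prop:beth}. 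I would handle this by a careful choice of commutant $a = \chi\,\varrho_{\mathrm{df}}^{-(m-1/2)}\varrho^{-(\mathsf{s}-\mathsf{1}/2)}$ with $\chi$ a product of functions monotone along $\mathsf{H}_p$ cutting off to the desired ray, using \Cref{prop:beth} to show that the bad term $\mathsf{H}_p\beth + 4\varsigma\sigma\beth$ vanishes cubically at $\calN$ so it is absorbed by the main term, exactly as in a standard radial-point argument; the regularization (inserting $(1+\epsilon\beth^{-1})^{-N}$-type weights and letting $\epsilon\to 0$) is routine. The remaining ingredient is elliptic regularity off $\Sigma_{\mathsf{m}}$ plus the microlocal elliptic and microlocality statements of \S\ref{subsec:Psi}, which handle all points of $\partial\,{}^{\mathrm{de,sc}}\overline{T}^*\bbO$ outside the characteristic set and outside the flow-out from the source, so that at the end $\operatorname{WF}_{\mathrm{de,sc}}^{m,\mathsf{s}}(u)$ can only survive on $\calR = \calR^+_+\cup\calR^-_-\cup\calR^+_-\cup\calR^-_+$, as claimed.
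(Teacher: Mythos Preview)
Your proposal is correct and follows essentially the same route as the paper: the theorem is obtained by concatenating the sc-propagation seed (giving $\operatorname{WF}_{\mathrm{de,sc}}^{m,\mathsf{s}}(u)\subseteq \calR\cup{}^{\mathrm{de,sc}}\pi^{-1}(\mathrm{nf})$, as the paper notes in the remark following the theorem), the real-principal-type propagation along $\mathrm{nf}$ (\S\ref{subsec:nf}), and the radial-point estimates at $\calA$, $\calN$, $\calK$, $\calC$ (\S\ref{subsec:calA}--\S\ref{subsec:calC}), run through the cascade in exactly the order you list, with the three inequalities on $(m,\mathsf{s})$ being precisely the threshold conditions at $\calN$ (\Cref{prop:beth_propagation_two}, \Cref{prop:beth_propagation_three}), at $\calA$ (\Cref{prop:A_first_propagation}), at $\calK$, and at $\calC$ (\Cref{prop:C_second_propagation}) respectively. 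Your handling of the ray estimate at $\calN$ (localizing the commutant to a ray via an extra cutoff in $\varrho_{\mathrm{Tf}}$ or $\varrho_{\mathrm{Sf}}$, and absorbing the resulting extra term using \Cref{lem:s_alt} to write it as a sum of a nonnegative piece and a $\tilde p$-multiple) is exactly what \Cref{prop:beth_propagation_one} and \Cref{prop:beth_propagation_two} do.
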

\begin{remark*}
	Using the ordinary sc-calculus, it can be shown that, given the setup of the theorem, $\operatorname{WF}_{\mathrm{de,sc}}^{m,\mathsf{s}}(u) \subseteq \calR \cup {}^{\mathrm{de,sc}}\pi^{-1}(\mathrm{nf})$. Of course, the refinement is only over null infinity.
\end{remark*}
\begin{theorem}
		Suppose that $m\in \bbR$ and  $\mathsf{s}=(s_{\mathrm{Pf}},s_{\mathrm{nPf}},s_{\mathrm{Sf}},s_{\mathrm{nFf}},s_{\mathrm{Ff}})\in \bbR^5$ satisfy
	\begin{itemize}
		\item $m<s_{\mathrm{nf}}+1$, 
		\item $2s_{\mathrm{Sf}}< \min\{ - 2m+2s_{\mathrm{nf}}+1, m + s_{\mathrm{nf}}-1 \}$,
		\item $2 s_{\mathrm{Tf}} > m + s_{\mathrm{nf}}-1$
	\end{itemize}
	when $(s_{\mathrm{nf}},s_{\mathrm{Tf}}) = (s_{\mathrm{nFf}},s_{\mathrm{Ff}})$ and when $(s_{\mathrm{nf}},s_{\mathrm{Tf}}) = (s_{\mathrm{nPf}},s_{\mathrm{Pf}})$.
	Suppose that $u\in \calS'$ is a solution to $P u=f$ such that, for some neighborhood $U\subseteq {}^{\mathrm{de,sc}}\overline{T}^* \bbO$ of $\calR$,  
	\begin{equation} 
		\operatorname{WF}_{\mathrm{de,sc}}^{m,\mathsf{s}}(u)\cap U\subseteq \calR,
	\end{equation} 
	and likewise suppose that $\operatorname{WF}_{\mathrm{de,sc}}^{m-1,\mathsf{s}+1}(f) \subseteq \calR$. Then, $\operatorname{WF}_{\mathrm{de,sc}}^{m,\mathsf{s}}(u) \subseteq \calR$ as well.
	\label{thm:propagation_through_scrI_2}
\end{theorem}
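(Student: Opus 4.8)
The plan is to prove Theorem~\ref{thm:propagation_through_scrI_2} in the same way as Theorem~\ref{thm:propagation_through_scrI_1}, namely by concatenating the radial point estimates at $\calA,\calN,\calK,\calC$ established in \S\ref{subsec:calA}--\S\ref{subsec:calC} with the sc- and de,sc-propagation-of-singularities estimates of \S\ref{subsec:nf}, the only structural difference being that every one of these estimates is now used on the opposite threshold side, since the inequalities imposed on $m$ and $\mathsf{s}$ here are precisely the negations of those in Theorem~\ref{thm:propagation_through_scrI_1}. The first step is a reduction to a statement localized at null infinity. Because $\calR$ is disjoint from ${}^{\mathrm{de,sc}}\pi^{-1}(\mathrm{nf})$ and from fiber infinity, only the ordinary sc-calculus is needed over the timelike and spacelike caps: sc-propagation of singularities together with the sc-radial point estimate at $\calR_\pm$, which in the regime $2s_{\mathrm{Tf}}>m+s_{\mathrm{nf}}-1$ is the estimate requiring as an a priori hypothesis exactly the microlocal control $\operatorname{WF}_{\mathrm{de,sc}}^{m,\mathsf{s}}(u)\cap U\subseteq \calR$ near $\calR$ that is assumed, yields $\operatorname{WF}_{\mathrm{de,sc}}^{m,\mathsf{s}}(u)\subseteq \calR\cup {}^{\mathrm{de,sc}}\pi^{-1}(\mathrm{nf})$. (This is the analogue of the remark following Theorem~\ref{thm:propagation_through_scrI_1}, with the Cauchy-slice hypothesis replaced by the neighborhood-of-$\calR$ hypothesis.) It therefore remains to show that $\operatorname{WF}_{\mathrm{de,sc}}^{m,\mathsf{s}}(u)$ contains no point of ${}^{\mathrm{de,sc}}\pi^{-1}(\mathrm{nf})$.

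For this I would traverse the radial sets over null infinity along the Hamiltonian flow depicted in Figure~\ref{fig:O}, in the order displayed in the introduction: on the $\tau>0$ sheet, through $\calN^+_-,\calC^+_-,\calK^+_-,\calN^+_-\cap{}^{\mathrm{de,sc}}\pi^{-1}(\mathrm{Sf}\cap\mathrm{nPf}),\calA^+_-,\calA^+_+,\calN^+_+\cap{}^{\mathrm{de,sc}}\pi^{-1}(\mathrm{Sf}\cap\mathrm{nFf}),\calK^+_+,\calC^+_+,\calN^+_+$, starting from the control near $\calR^+_-$ and $\calR^+_+$ secured in the first step, and symmetrically on the $\tau<0$ sheet. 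At each saddle $\calA^\varsigma_\sigma$, $\calN^\varsigma_\sigma$, $\calK^\varsigma_\sigma$, $\calC^\varsigma_\sigma$ one invokes the radial point estimate proved in the relevant subsection; the hypotheses on $m$ and $\mathsf{s}$ in Theorem~\ref{thm:propagation_through_scrI_2} are exactly those under which the estimate licenses propagation of control through that saddle in the direction needed for the chain, taking as a priori input the control established at the preceding stage. Between consecutive radial sets one uses de,sc-propagation of singularities along the connecting bicharacteristic arcs over $\mathrm{nf}^\circ$, Proposition~\ref{prop:propagation_lemma} ensuring that $\varsigma t-r$ is strictly monotone along these arcs so that they genuinely join the indicated radial sets, and the forcing causing no trouble since $\operatorname{WF}_{\mathrm{de,sc}}^{m-1,\mathsf{s}+1}(f)\subseteq\calR$ is disjoint from everything over $\mathrm{nf}$. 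Iterating empties $\operatorname{WF}_{\mathrm{de,sc}}^{m,\mathsf{s}}(u)$ of all of ${}^{\mathrm{de,sc}}\pi^{-1}(\mathrm{nf})$, which together with the first step gives $\operatorname{WF}_{\mathrm{de,sc}}^{m,\mathsf{s}}(u)\subseteq\calR$.

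The main obstacle, as already signalled in the introduction, is the passage through $\calN$. Because there are flow segments along fiber infinity over $\mathrm{nf}$ joining the component $\calN^\varsigma_\sigma\cap{}^{\mathrm{de,sc}}\pi^{-1}(\mathrm{Sf})$ to $\calN^\varsigma_\sigma\cap{}^{\mathrm{de,sc}}\pi^{-1}(\mathrm{Tf})$ (see Figure~\ref{fig:globalflowplot}), one cannot treat $\calN^\varsigma_\sigma$ as a single radial point in one stroke: one must first apply the ``ray'' version of the $\calN$-estimate --- controlling a proper sub-ray of $\calN^\varsigma_\sigma$ that begins over the corner where control is already in hand and stops short of the other corner --- then propagate along the fiber-infinity arc using the monotonicity of the appropriate coordinate (e.g.\ $\operatorname{arctan}(\varsigma t-r)$, whose behavior under $\mathsf{H}_p$ is read off from the computations of \S\ref{sec:dynamics}) together with the control already available at the far corner, and only then apply the ``whole of $\calN$'' version to close up. Keeping straight which of the two $\calN$-estimates, and which threshold side of each saddle estimate, is admissible under the Theorem~\ref{thm:propagation_through_scrI_2} inequalities, together with the bookkeeping of the several corners $\mathrm{Sf}\cap\mathrm{nf}$ and $\mathrm{Tf}\cap\mathrm{nf}$ on both sheets, is where the real care lies; the individual microlocal estimates are all in hand by the end of \S\ref{subsec:calC}. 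Finally I would note, as after Theorem~\ref{thm:propagation_through_scrI_1}, that the content of the result is genuinely the refinement over null infinity.
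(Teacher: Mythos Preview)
Your overall plan—concatenate the de,sc-propagation results of \S\ref{subsec:nf} with the saddle estimates of \S\ref{subsec:calA}--\S\ref{subsec:calC}, invoking at each saddle the threshold side opposite to Theorem~\ref{thm:propagation_through_scrI_1}—is exactly what the paper intends. Two points need correction, one cosmetic and one substantive.

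First, no radial point estimate at $\calR$ is needed in the reduction step. The hypothesis already gives control on $U\setminus\calR$, and ordinary sc-propagation of singularities (every sc-bicharacteristic on $\Sigma_{\mathsf{m}}$ over $\mathrm{Pf}^\circ\cup\mathrm{Sf}^\circ\cup\mathrm{Ff}^\circ$ limits to $\calR$) already yields $\operatorname{WF}^{m,\mathsf{s}}_{\mathrm{de,sc}}(u)\subseteq\calR\cup{}^{\mathrm{de,sc}}\pi^{-1}(\mathrm{nf})$. The inequality $2s_{\mathrm{Tf}}>m+s_{\mathrm{nf}}-1$ you cite is the $\calC$-threshold (Proposition~\ref{prop:C_first_propagation}), not an $\calR$-threshold; the $\calR$ estimates of \S\ref{sec:radialpoint} play no role here.

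Second, traversing the saddles ``in the order displayed in the introduction'' will not work: that is the Theorem~\ref{thm:propagation_through_scrI_1} order, and flipping every threshold reverses the admissible direction at each saddle, so one must run the chain from $\calR$ inward rather than outward. Concretely, on the $\mathrm{nFf}$ side your list places $\calC^+_+$ before $\calN^+_+$, but the $\calC$-estimate valid in the regime $m+s_{\mathrm{nFf}}-2s_{\mathrm{Ff}}<1$ (Proposition~\ref{prop:C_first_propagation}) demands a priori control in the $\daleth$-annulus, which includes the fiber-infinity arc from $\calC^+_+$ toward $\calN^+_+$; that arc is unavailable until $\calN^+_+$ has been handled. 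The correct sequence there is $\calR^+_+\to\calN^+_+$ (timelike-adjacent ray, Proposition~\ref{prop:beth_propagation_one}, whose finite-$\rho$ a priori region is reached by de,sc-propagation back from $U$) $\to\calC^+_+\to$ along $\mathrm{nFf}^\circ\to\calK^+_+,\calA^+_+$, and symmetrically from $\calR^+_-$. With this reversal the concatenation goes through as you describe.
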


\Cref{thm:propagation_through_scrI_1} is proven by propagating control through the radial sets in the order described in \cref{eq:Cauchy_order} (except for $\calR$, which we will not discuss until \S\ref{sec:radialpoint}). \Cref{thm:propagation_through_scrI_2} is proven by instead propagating control through the radial sets in the order described in \cref{eq:scat_order_1}, \cref{eq:scat_order_2}. (As it is written, \Cref{thm:propagation_through_scrI_2} is for propagating control from all of $\calR$, not just $\calR_-$. However, if we only know that $u$ is under control near say $\calR_-$, then we can cut off $u$ near $\calR_+$ and apply the theorem to the cutoff version.)

The e,b-analogues of the results in this section can be found in \cite[\S4]{HintzVasyScriEB}. As the arguments below are very similar to those there (and de,sc-analogues of the standard sc-results described in \cite{VasyGrenoble} anyways), we will only sketch the key points.
To handle the situation away from null infinity, we can simply cite the propagation results established using the sc-calculus, and so this will be described in even less detail.

Note that, by the definition of admissibility, $\square_g$ differs from the Minkowski d'Alembertian $\square$ by an element of $\operatorname{Diff}^{2,-\mathsf{2}}_{\mathrm{de,sc}}(\bbO)$ with real principal symbol. A consequence is that 
\begin{equation}
	P-P^* \in \operatorname{Diff}^{1,-\mathsf{2}}_{\mathrm{de,sc}}(\bbO),
	\label{eq:misc_276}
\end{equation}
where $P^*$ is the formal adjoint with respect to the $L^2(\bbR^{1+d})$ inner product, with respect to which $\square=\square^*$. This restriction on $P-P^*$ simplifies the radial point estimates, which, as in in \cite{VasyGrenoble}, would otherwise depend on the values of 
\begin{equation} 
	\varrho_{\mathrm{df}}^1 \varrho_{\mathrm{Pf}}^{-1}\varrho_{\mathrm{nPf}}^{-1}\varrho_{\mathrm{Sf}}^{-1}\varrho_{\mathrm{nFf}}^{-1}\varrho_{\mathrm{Ff}}^{-1} \cdot \sigma_{\mathrm{de,sc}}^{1,-\mathsf{1}}(P-P^*) \in S_{\mathrm{de,sc}}^{[0,\mathsf{0}]} (\bbO) 
\end{equation} 
along the various radial sets. 
Let 
\begin{equation}
	p_1 \in \smash{S_{\mathrm{de,sc}}^{1,-\mathsf{2}}(\bbO)} \in i\smash{\sigma_{\mathrm{de,sc}}^{1,-\mathsf{2}}(P-P^*)}.
	\label{eq:p1def}
\end{equation}
We can choose $p_1$ to be real-valued, since $P-P^* = -(P-P^*)^*$. While $p_1$ will be insignificant for the proposition statements below, we must keep track of it in the proofs, since it is subprincipal (i.e.\ only subleading by one order) in the differential sense, and subprincipal symbols enter positive commutator arguments.

\subsection{Propagation Between the Radial Sets}
\label{subsec:nf}

Using the nonzero component of $\mathsf{H}_p$ along the punctured fibers ${}^{\mathrm{de,sc}}\pi^{-1}(\mathrm{nf})\backslash \calN$, we get the following: 
\begin{proposition}
	Suppose that $u\in \calS'$ satisfies $\operatorname{WF}_{\mathrm{de,sc}}^{m,\mathsf{s}}(u)\cap {}^{\mathrm{de,sc}}\pi^{-1}(\mathrm{nf}^\circ)\cap (\calN \cup  {}^{\mathrm{de,sc}}\pi^{-1} (\operatorname{cl}_{\bbO}\{|t|-r= v\}) ) = \varnothing$ for some $v\in \bbR$. Suppose further that, for some $v_1,v_2\in \bbR$ satisfying $v_1<v<v_2$, 
	\begin{equation}
		\operatorname{WF}_{\mathrm{de,sc}}^{m-1,\mathsf{s}+1}(P u)  \cap {}^{\mathrm{de,sc}}\pi^{-1}( \operatorname{cl}_{\bbO}\{v_1\leq |t|-r\leq v_2\}) \subseteq \calN.
	\end{equation}
	Then, $\operatorname{WF}_{\mathrm{de,sc}}^{m,\mathsf{s}}(u) \cap {}^{\mathrm{de,sc}}\pi^{-1} (\operatorname{cl}_{\bbO}\{t_1\leq |t|-r\leq v_2\}) =\varnothing$.
\end{proposition}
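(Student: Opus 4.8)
The plan is to prove this by a standard real-principal-type propagation of singularities argument via a positive commutator estimate, phrased microlocally on the de,sc-phase space. The geometric input is \Cref{prop:propagation_lemma}: on $\Sigma_{\mathsf{m},\pm}\cap {}^{\mathrm{de,sc}}\pi^{-1}(\mathrm{nf}^\circ)\setminus \calN^\pm_\varsigma$ the function $\alpha=\varsigma t-r$ (equivalently $|t|-r$ on the relevant sheet) satisfies $\pm \mathsf{H}_{p[g]}\alpha>0$, so $\mathsf{H}_{p[g]}$ has a definite sign along the flow parameter given by $|t|-r$. Thus the characteristic set over the slab $\{v_1<|t|-r<v_2\}$, away from $\calN$, is foliated by flow segments each of which enters through the hypersurface $\{|t|-r=v\}$ (using $v_1<v<v_2$ and the flow direction), on which we are given the a priori control $\operatorname{WF}_{\mathrm{de,sc}}^{m,\mathsf{s}}(u)\cap {}^{\mathrm{de,sc}}\pi^{-1}(\operatorname{cl}_\bbO\{|t|-r=v\})\subseteq \calN$, and along which $\operatorname{WF}_{\mathrm{de,sc}}^{m-1,\mathsf{s}+1}(Pu)\subseteq \calN$. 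This is exactly the setup in which one propagates $H_{\mathrm{de,sc}}^{m,\mathsf{s}}$-regularity forward (or backward) along the flow.

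The concrete steps I would carry out: (i) Localize. Given a point $q\in \Sigma_{\mathsf{m},\pm}\cap {}^{\mathrm{de,sc}}\pi^{-1}(\operatorname{cl}_\bbO\{v_1<|t|-r<v_2\})\setminus \calN$, flow it backward along $\mathsf{H}_{p[g]}$; since $\mathsf{H}_{p[g]}(|t|-r)$ has a fixed sign and $q\notin\calN$ (the only place the flow can stagnate over $\mathrm{nf}^\circ$, by \Cref{prop:propagation_lemma} and the discussion in \S\ref{sec:dynamics}), the backward trajectory reaches $\{|t|-r=v\}$ — or exits through a region where $u$ is already controlled — in finite flow time, staying in a compact subset of the slab. (ii) Construct the commutant. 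Choose $A=\operatorname{Op}(a)\in \Psi_{\mathrm{de,sc}}^{m-1/2,\mathsf{s}-1/2}$ with symbol $a$ of the form $a = \varrho_{\mathrm{df}}^{-(m-1/2)}\varrho^{-(\mathsf{s}-1/2)}\chi_0(\tilde p[g])\chi_1(|t|-r)\chi_2(\text{transverse cutoff})$, monotone decreasing along $\mathsf{H}_{p[g]}$, microsupported in the small neighborhood of the chosen trajectory and vanishing where the a priori hypotheses already give control; arrange that $\mathsf{H}_{p[g]}a^2 = -b^2 + e$ where $b$ is elliptic near $q$ and $e$ is supported in the "good" region (near $\{|t|-r=v\}$ or near $\calN$, which is excised). (iii) Positive commutator / energy estimate. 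Compute $\langle i[P,A^*A]u,u\rangle$ two ways: using $Pu=f$ and the mapping properties of the de,sc-calculus on $H_{\mathrm{de,sc}}^{m,\mathsf{s}}$, and using the symbolic expansion $\sigma_{\mathrm{de,sc}}^{2m-1,2\mathsf{s}-1}(i[P,A^*A]) = \mathsf{H}_{p[g]}a^2$ up to lower order (with the $P-P^*\in\operatorname{Diff}^{1,-\mathsf{2}}$ hypothesis \eqref{eq:misc_276} ensuring the subprincipal/skew-adjoint term is lower order and contributes no obstruction). A standard regularization argument — inserting $\varrho_{\mathrm{df}}$-smoothing factors and taking a limit, exactly as for the sc-calculus in \cite{VasyGrenoble} — converts this into the estimate $\|Bu\|_{H_{\mathrm{de,sc}}^{m-1/2,\mathsf{s}-1/2}}\lesssim \|f\|_{H_{\mathrm{de,sc}}^{m-1,\mathsf{s}+1}, \text{microlocally}} + \|u\|_{\text{a priori controlled}} + \|u\|_{H_{\mathrm{de,sc}}^{m-1,\mathsf{s}-1}}$, whence $q\notin \operatorname{WF}_{\mathrm{de,sc}}^{m,\mathsf{s}}(u)$; an inductive bootstrap on the regularity order removes the background term. (iv) Patch: since $q$ was arbitrary in the slab away from $\calN$, conclude $\operatorname{WF}_{\mathrm{de,sc}}^{m,\mathsf{s}}(u)\cap {}^{\mathrm{de,sc}}\pi^{-1}(\operatorname{cl}_\bbO\{v_1<|t|-r<v_2\})\subseteq \calN$.

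The main obstacle is the careful construction of the commutant symbol $a$ as a genuine de,sc-symbol that is both monotone along $\mathsf{H}_{p[g]}$ and adapted to the slab $\{v_1<|t|-r<v_2\}$ over $\mathrm{nf}^\circ$ — in particular, making sure its microsupport stays away from $\calN$, away from the corners $\mathrm{nf}\cap\mathrm{Tf}$ and $\mathrm{nf}\cap\mathrm{Sf}$, and away from fiber infinity except as controlled, so that the hypotheses on $\operatorname{WF}_{\mathrm{de,sc}}(u)$ and $\operatorname{WF}_{\mathrm{de,sc}}(Pu)$ over $\{|t|-r=v\}$ actually feed the estimate. Because one is working over the open face $\mathrm{nf}^\circ$ and not at a compact radial set, one must handle the non-compactness of the flow segments; this is where the finite-flow-time claim from step (i), and the given hypothesis that $\operatorname{WF}_{\mathrm{de,sc}}(u)$ is already confined to $\calN$ on a full transversal $\{|t|-r=v\}$, are essential. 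The remaining ingredients — the symbol calculus, $L^2$-boundedness of order-zero operators, the Poisson-bracket formula for principal symbols of commutators, and the regularization scheme — are all available from \S\ref{subsec:Psi} and are routine modifications of the sc-case, so I would only sketch them, as the paper's preamble to this section explicitly invites.
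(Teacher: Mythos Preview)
Your proposal is correct and follows essentially the same approach as the paper: the paper's proof is itself only a sketch, invoking the monotonicity of $|t|-r$ under $\mathsf{H}_{p[g]}$ from \Cref{prop:propagation_lemma} and then appealing to ``the usual proof of Duistermaat--H\"ormander type estimates, using elliptic regularity off of the characteristic set.'' Your write-up simply fills in the standard details (commutant construction, positive commutator estimate, regularization, bootstrap) that the paper leaves to the reader; the one small point you leave implicit is the use of de,sc-elliptic regularity to handle points off $\Sigma_{\mathsf{m}}$, but this is routine.
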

\begin{proof}[Proof sketch]
	As seen by rewriting it in terms of $\varrho_{\mathrm{Sf}}$ in $\Omega_{\mathrm{nfSf},\sigma,R}$ and in terms of $\varrho_{\mathrm{Tf}}$ in $\Omega_{\mathrm{nfTf},\sigma,T}$, the function $|t|-r$ is monotone under $\mathsf{H}_{p[g]}$ on each component of 
	\begin{equation} 
		(\Sigma_{\mathsf{m},\pm}  \cap {}^{\mathrm{de,sc}}\pi^{-1} (\operatorname{cl}_{\bbO}\{v_1<|t|-r<v_2\}) )\backslash \calN
	\end{equation} 
	(see \Cref{prop:propagation_lemma}).  
	The proposition therefore follows via the usual proof of Duistermaat--H\"ormander type estimates, using elliptic regularity off of the characteristic set. 
	The point is that any integral curve of $\mathsf{H}_{p[g]}$ in the relevant region of the characteristic set has to have one end (see \Cref{fig:O}) at one of the sets $\calN$ or 
	\begin{equation} 
		{}^{\mathrm{de,sc}}\pi^{-1} (\operatorname{cl}_{\bbO}\{|t|-r= v\}),
	\end{equation} 
	where we are assuming control.
\end{proof}

\begin{proposition}
	Let $m\in \bbR$ and $\mathsf{s}\in \bbR^5$, and suppose that $u\in \calS'$ satisfies  $\operatorname{WF}_{\mathrm{de,sc}}^{m,\mathsf{s}}(u)\cap \calA = \varnothing$. Then, if $\operatorname{WF}_{\mathrm{de,sc}}^{m-1,\mathsf{s}+1}(P u)\cap ({}^{\mathrm{de,sc}}\pi^{-1}(\mathrm{nf} \backslash\mathrm{Tf})) \subseteq  \{\hat{\eta}=0\}$,
	\begin{equation}
		\operatorname{WF}_{\mathrm{de,sc}}^{m,\mathsf{s}}(u)\cap
		({}^{\mathrm{de,sc}}\pi^{-1}(\mathrm{nf} \backslash  \mathrm{Tf}))
		\subseteq  \{\hat{\eta}=0\}. 
		\label{eq:h411}
	\end{equation}
	Moreover, if $\operatorname{WF}_{\mathrm{de,sc}}^{m,\mathsf{s}}(u)\cap (\calA\cup \calN) \cap {}^{\mathrm{de,sc}}\pi^{-1}(\mathrm{nf}\cap \mathrm{Sf}) = \varnothing$ and $\operatorname{WF}_{\mathrm{de,sc}}^{m-1,\mathsf{s}+1}(P u)\cap {}^{\mathrm{de,sc}}\pi^{-1}(\mathrm{nf}\cap \mathrm{Sf}) \subseteq  \calK$, then $\operatorname{WF}_{\mathrm{de,sc}}^{m,\mathsf{s}}(u)\cap {}^{\mathrm{de,sc}}\pi^{-1}(\mathrm{nf}\cap \mathrm{Sf}) \subseteq  \calK$.
\end{proposition}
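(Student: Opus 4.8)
The plan is to run two successive positive-commutator arguments on the hyperboloid $\Sigma_{\mathsf m,\pm}$ over $\mathrm{nf}\cap\mathrm{Sf}$, using the structure of $\mathsf{H}_{p[g]}$ near $\calA^\varsigma_\sigma$ and near $\calK^\varsigma_\sigma$ recorded in Proposition~\ref{prop:aleph} and Proposition~\ref{prop:gimel}, together with the propagation estimate between the radial sets (the preceding proposition) to carry control from the source region into the sinks. Since $P$ is of the form \cref{eq:misc_273}, Proposition~\ref{prop:symbol} and the commutator formula $\sigma_{\mathrm{de,sc}}^{m+m'-1,\mathsf s+\mathsf s'-\mathsf 1}([A,B])=\pm i\{a,b\}$ let us compute the symbol of $i[P,A]$ in terms of $\mathsf{H}_{p[g]}$ applied to the symbol of $A$; the contribution of $p_1$ (the subprincipal part) is lower order by \cref{eq:misc_276} and can be absorbed. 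First I would handle the $\calA$-estimate: since $\calA^\varsigma_\sigma$ is a saddle point of $\mathsf{H}_{p[g]}$ sitting at fiber infinity (where $\lambda=-1$, $\lVert\hat\eta\rVert^2=1$ in the coordinates \cref{eq:misc_co2}), and $\aleph=\varrho_{\mathrm{Sf}}^2+\rho^2+(\lambda+1)^2$ satisfies $\mathsf{H}_{p[g]}\aleph=4\varsigma\sigma\aleph + F_1$ with $F_1$ vanishing cubically on $\Sigma_{\mathsf m}$ (Proposition~\ref{prop:aleph}), the function $\aleph$ is (up to the negligible cubic error and the characteristic factor) expanding or contracting along the flow. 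Choosing a commutant supported in a small neighborhood of $\calA^\varsigma_\sigma$ inside $\{\varrho_{\mathrm{nf}}<\delta\}$ whose principal symbol is built from a power of $\aleph$ times a cutoff, the cubic vanishing of $F_1$ guarantees that the error terms are controlled by the main term with a favorable sign once the neighborhood is small, exactly as in the standard radial-point argument of \cite{VasyGrenoble}. This gives the first conclusion: from $\operatorname{WF}_{\mathrm{de,sc}}^{m,\mathsf s}(u)\cap\calA=\varnothing$ and the hypothesis on $\operatorname{WF}_{\mathrm{de,sc}}^{m-1,\mathsf s+1}(Pu)$ over $\mathrm{nf}\cap\mathrm{Sf}$, we propagate the absence of wavefront set off a punctured neighborhood of $\calA^\varsigma_\sigma$ onto $\{\hat\eta=0\}$, using the flow structure in \cref{eq:H_nfSf_df}: away from $\calN,\calA,\calK$ the flow moves points toward $\{\hat\eta=0\}$, so Duistermaat--H\"ormander propagation (again elliptic regularity off the characteristic set, plus the monotonicity of $\hat\eta$) does the rest.

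For the second, stronger conclusion I would then assume additionally that $u$ has no de,sc-wavefront set at $(\calA\cup\calN)\cap{}^{\mathrm{de,sc}}\pi^{-1}(\mathrm{nf}\cap\mathrm{Sf})$, so that by the first part the only possible wavefront set of $u$ over $\mathrm{nf}\cap\mathrm{Sf}$ lying on the characteristic set and on $\{\hat\eta=0\}$ is at $\calK^\varsigma_\sigma$ (the other point $\{\hat\eta=0\}$ on $\Sigma_{\mathsf m,\pm}$ over the corner being $\calN^\varsigma_\sigma$, already excluded by hypothesis, plus interior points of the fibers which are handled by propagation from the source). Now $\calK^\varsigma_\sigma$ is a sink-type radial set: Proposition~\ref{prop:gimel} gives $\mathsf{H}_{p[g]}\gimel=-4\varsigma\sigma\gimel-E_3+F_3$ with $E_3\ge 0$ and $F_3$ cubically vanishing, so $\gimel=\varrho_{\mathrm{nf}}^2+\rho^2+(\lambda+3)^2$ is contracting along the flow near $\calK^\varsigma_\sigma$, with the extra nonnegative $E_3$ only helping. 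The positive-commutator scheme here is the "into the radial set" direction: we take a commutant whose symbol is a negative power of $\gimel$ times a localizer, so that the main commutator term has the sign needed to dominate, and the cubic vanishing of $F_3$ plus the sign of $E_3$ let all error terms be absorbed. The regularization (inserting $\varrho_{\mathrm{df}}^{-\epsilon}$ weights and letting $\epsilon\to 0^+$, plus a standard approximation of the a priori-infinite-regularity distribution) proceeds as usual; the threshold inequalities are automatically in force because, over $\mathrm{nf}\cap\mathrm{Sf}$, we are not at a timelike cap and $\calK$ is at fiber infinity. Combining: control from the source side (propagated by the preceding proposition along $\mathsf{H}_{p[g]}$, using \cref{eq:H_nfSf_df} and Proposition~\ref{prop:propagation_lemma} for monotonicity of $|t|-r$) reaches a punctured neighborhood of $\calK^\varsigma_\sigma$, the $\calK$-radial-point estimate then removes the tip, and we conclude $\operatorname{WF}_{\mathrm{de,sc}}^{m,\mathsf s}(u)\cap{}^{\mathrm{de,sc}}\pi^{-1}(\mathrm{nf}\cap\mathrm{Sf})\subseteq\calK$.

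The main obstacle I anticipate is bookkeeping of the various localizations so that the positive-commutator argument at $\calK^\varsigma_\sigma$ is actually \emph{microlocal} in the strong sense: one must arrange the commutant's essential support to stay in a neighborhood of the corner $\mathrm{nf}\cap\mathrm{Sf}$ while still containing enough of the inflow of the Hamiltonian flow that the propagation hypotheses feed correctly into the estimate, and one must make sure the error terms generated at the "lateral" boundary of the commutant's support (where $\mathsf{H}_{p[g]}$ is transverse) are controlled by already-established absence of wavefront set away from $\calA\cup\calN\cup\calK$. The presence of the genuinely sixth radial set $\calA$ only in dimension $d\ge 2$ means the $d=1$ picture (Figure~\ref{fig:O}) is misleading here, so the first conclusion (pushing wavefront set to $\{\hat\eta=0\}$) is essential input and must be used cleanly before attempting the $\calK$-estimate. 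All of this is, however, structurally identical to the e,b-analogues in \cite[\S4]{HintzVasyScriEB} and to the standard sc-radial-point estimates, so beyond the careful choice of commutants the computations are routine.
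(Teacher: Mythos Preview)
Your proposal over-engineers the argument and, for the second part, would prove the wrong statement. This proposition is a pure Duistermaat--H\"ormander propagation result over the corner $\mathrm{nf}\cap\mathrm{Sf}$; no radial-point estimates enter. The paper's proof simply reads off from \cref{eq:H_nfSf_df} that $\hat\eta^2$ is monotone under $\mathsf{H}_{p[g]}$ on $\Sigma_{\mathsf m,\pm}\cap{}^{\mathrm{de,sc}}\pi^{-1}(\mathrm{nf}\cap\mathrm{Sf})\setminus(\calA\cup\{\hat\eta=0\})$ (the $V_{\bbS^{d-1}}$ coefficient is $\hat\eta^2-1$, which has a sign on the characteristic set), so the assumed absence of wavefront set at $\calA$ propagates to all of $\{\hat\eta\neq 0\}$; that is part one. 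For part two one uses a second monotone function, $\lambda$, on $\{\hat\eta=0\}\cap{}^{\mathrm{de,sc}}\pi^{-1}(\mathrm{nf}\cap\mathrm{Sf})\setminus\calN$ (again read off from \cref{eq:H_nfSf_df}), so the assumed absence of wavefront set at $\calN$ over the corner propagates along $\{\hat\eta=0\}$ up to, but not through, $\calK$. That is the entire argument.

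Your positive-commutator construction near $\calA$ using $\aleph$ is superfluous: the hypothesis already hands you $\operatorname{WF}_{\mathrm{de,sc}}^{m,\mathsf s}(u)\cap\calA=\varnothing$, so there is nothing to establish there. More seriously, your plan for part two is to run an ``into the radial set'' estimate at $\calK$ using $\gimel$ and ``remove the tip.'' But the conclusion is $\operatorname{WF}_{\mathrm{de,sc}}^{m,\mathsf s}(u)\subseteq\calK$, which \emph{allows} wavefront set at $\calK$; you are not asked to remove it. A genuine radial-point estimate at $\calK$ (see \Cref{prop:K_first_propagation} and the following proposition) requires the threshold inequality $m+s_{\mathrm{nf}}-2s_{\mathrm{Sf}}\gtrless 1$, whereas the present proposition has no threshold condition at all --- this is the tell that no radial-point argument is involved. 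Your claim that ``the threshold inequalities are automatically in force'' is false. Finally, \Cref{prop:propagation_lemma} and the monotonicity of $|t|-r$ concern propagation along $\mathrm{nf}^\circ$, not over the corner; they are not relevant here.
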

\begin{proof}[Proof sketch]
	By \cref{eq:H_nfSf_df}, $\rho_{\mathrm{Sf}}$ is monotone with respect to $\mathsf{H}_{p[g]}$ along the invariant set $\{\lVert \hat{\eta}\rVert =1\}\cap \Sigma_{\mathsf{m},\pm}$. (This invariant set is one of the integral curves in \Cref{fig:globalflowplot}. Which it is depends on the parameter $R$ in the definition of the coordinate system.) So the assumption $\operatorname{WF}_{\mathrm{de,sc}}^{m,\mathsf{s}}(u)\cap \calA = \varnothing$ allows us to conclude, using a Duistermaat--H\"ormander estimate, that 
	\begin{equation} 
		\operatorname{WF}_{\mathrm{de,sc}}^{m,\mathsf{s}}(u)\cap \{\lVert \hat{\eta}\rVert=1\} = \varnothing,
	\end{equation} 
	using an elliptic estimate off $\Sigma_{\mathsf{m},\pm}$.

	By \cref{eq:H_nfSf_df}, the function $\hat{\eta}^2$ is monotone under $\mathsf{H}_{p[g]}$ on 
	$\Sigma_{\mathsf{m},\pm} \cap  ({}^{\mathrm{de,sc}}\pi^{-1}(\mathrm{nf} \backslash \mathrm{Tf}) )\backslash \{\lVert\hat{\eta}\rVert =0,1\}$ (see \Cref{fig:flowplot}). 
	We can therefore propagate the control on  $\operatorname{WF}_{\mathrm{de,sc}}^{m,\mathsf{s}}(u)\cap \{\lVert|\hat{\eta}\rVert|=1\}$ to conclude \cref{eq:h411}.
	
	To get the second part of the proposition, we use another propagation estimate, this time based on the monotonicity of $\lambda$ under $\mathsf{H}_{p[g]}$ on $\{\hat{\eta}=0\} \cap {}^{\mathrm{de,sc}}\pi^{-1}(\mathrm{nf}\cap \mathrm{Sf}) \backslash (\calN\cup \calK)$, which we also read off \cref{eq:H_nfSf_df} (again see see \Cref{fig:flowplot}).
\end{proof}
Propagating in the reverse direction:
\begin{propositionp}
	Let $m\in \bbR$ and $\mathsf{s}\in \bbR^5$, and suppose that $u\in \calS'$ satisfies  $\operatorname{WF}_{\mathrm{de,sc}}^{m,\mathsf{s}}(u)\cap \calK = \varnothing$. Then, if $\operatorname{WF}_{\mathrm{de,sc}}^{m-1,\mathsf{s}+1}(P u)\cap {}^{\mathrm{de,sc}}\pi^{-1}(\mathrm{nf}\cap \mathrm{Sf}) \subseteq  \{\rho = 0  \text{ and } \lambda \in [-1,+1]\}$, then
	\begin{equation}
		\operatorname{WF}_{\mathrm{de,sc}}^{m,\mathsf{s}}(u)\cap {}^{\mathrm{de,sc}}\pi^{-1}(\mathrm{nf}\cap \mathrm{Sf}) \subseteq  \{\rho = 0 \text{ and } \lambda \in [-1,+1]\}. 
	\end{equation}
	If, in addition, $\operatorname{WF}_{\mathrm{de,sc}}^{m,\mathsf{s}}(u)\cap \calN \cap {}^{\mathrm{de,sc}}\pi^{-1}(\mathrm{nf}\cap \mathrm{Sf}) = \varnothing$ and $\operatorname{WF}_{\mathrm{de,sc}}^{m-1,\mathsf{s}+1}(P u)\cap {}^{\mathrm{de,sc}}\pi^{-1}(\mathrm{nf}\cap \mathrm{Sf}) \subseteq  \calA$, then $\operatorname{WF}_{\mathrm{de,sc}}^{m,\mathsf{s}}(u)\cap {}^{\mathrm{de,sc}}\pi^{-1}(\mathrm{nf}\cap \mathrm{Sf}) \subseteq  \calA$.
\end{propositionp}

Over the other corner:
\begin{proposition}
	Let $m\in \bbR$ and $\mathsf{s} \in \bbR^5$, and suppose that $u\in \calS'$ satisfies $\operatorname{WF}_{\mathrm{de,sc}}^{m,\mathsf{s}}(u) \cap \calC = \varnothing$. Then, if 
	\begin{equation} 
		\operatorname{WF}_{\mathrm{de,sc}}^{m-1,\mathsf{s}+1}(P u) \cap {}^{\mathrm{de,sc}}\pi^{-1}(\mathrm{nf}\cap \mathrm{Tf}) \subseteq \{ \hat{\eta}=0, s\leq 1\},
	\end{equation} 
	then $\operatorname{WF}_{\mathrm{de,sc}}^{m,\mathsf{s}}(u) \cap {}^{\mathrm{de,sc}}\pi^{-1}(\mathrm{nf}\cap \mathrm{Tf}) \subseteq \{ \hat{\eta}=0, s\leq 1\}$ as well. If, in addition, $\operatorname{WF}_{\mathrm{de,sc}}^{m,\mathsf{s}}(u) \cap \calN \cap  {}^{\mathrm{de,sc}}\pi^{-1}(\mathrm{nf}\cap \mathrm{Tf}) = \varnothing$ and 
	\begin{equation}
		\operatorname{WF}_{\mathrm{de,sc}}^{m-1,\mathsf{s}+1}(P u) \cap {}^{\mathrm{de,sc}}\pi^{-1}(\mathrm{nf}\cap \mathrm{Tf}) \subseteq \calR, 
	\end{equation}
	then $\operatorname{WF}_{\mathrm{de,sc}}^{m,\mathsf{s}}(u) \cap {}^{\mathrm{de,sc}}\pi^{-1}(\mathrm{nf}\cap \mathrm{Tf}) \subseteq \calR$. 
\end{proposition}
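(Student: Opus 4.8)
The plan is to prove both assertions by concatenating Duistermaat--H\"ormander-type propagation estimates with the radial point estimates established earlier in this section --- at $\calC$ in \S\ref{subsec:calC} and at $\calN$ in \S\ref{subsec:calN} --- all localized to the $\mathsf{H}_{p[g]}$-invariant set ${}^{\mathrm{de,sc}}\pi^{-1}(\mathrm{nf}\cap \mathrm{Tf})$. By symmetry it suffices to treat $\mathrm{nf}\cap\mathrm{Tf} = \mathrm{nFf}\cap\mathrm{Ff}$, the past corner being identical after running the flow in the opposite sense. First I would record, from \cref{eq:H_nfTf} and \cref{eq:H_nfTf_df}, the structure of $\mathsf{H}_{p[g]}$ on $\Sigma_{\mathsf{m},\pm}\cap{}^{\mathrm{de,sc}}\pi^{-1}(\mathrm{nFf}\cap\mathrm{Ff})$: the only radial sets over this corner are $\calN$ (at $s=0$, fiber infinity), $\calR$ (at $\hat\eta=0$, $s=1$, finite frequency), and $\calC$ (at $s=2$, fiber infinity); the locus $\{\hat\eta=0\}$ is $\mathsf{H}_{p[g]}$-invariant and, intersected with $\Sigma_{\mathsf{m},\pm}$, consists of two arcs, one joining $\calN$ to $\calR$ and one joining $\calR$ to $\calC$, on each of which $\varrho_{\mathrm{df}}$ (equivalently the coordinate $\rho$ of \cref{eq:misc_co1}) is strictly monotone; off $\{\hat\eta=0\}$ one computes, on $\Sigma_{\mathsf{m},\pm}$, that $\mathsf{H}_{p[g]}\hat\eta^2 \propto (s-1-\mathsf{m}^2\varrho_{\mathrm{df}}^2)\hat\eta^2$ modulo $\tilde p[g]$, so that $\hat\eta^2$ is monotone along the flow in each of the regions $\{s>1+\mathsf{m}^2\varrho_{\mathrm{df}}^2\}$ and $\{s<1+\mathsf{m}^2\varrho_{\mathrm{df}}^2\}$, the flow as a whole emanating from $\calC$ and terminating at $\calR$ or at $\calN$. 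Elliptic regularity disposes of ${}^{\mathrm{de,sc}}\pi^{-1}(\mathrm{nFf}\cap\mathrm{Ff})\setminus\Sigma_{\mathsf{m},\pm}$ as usual.

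For the first assertion I would use that $\operatorname{WF}_{\mathrm{de,sc}}^{m,\mathsf{s}}(u)\setminus\operatorname{WF}_{\mathrm{de,sc}}^{m-1,\mathsf{s}+1}(Pu)$ is, inside $\Sigma_{\mathsf{m},\pm}$ over the corner, closed and $\mathsf{H}_{p[g]}$-invariant, and avoids $\calC$ --- here the radial point estimate at $\calC$ upgrades the hypothesis $\operatorname{WF}_{\mathrm{de,sc}}^{m,\mathsf{s}}(u)\cap\calC=\varnothing$ to control in a punctured neighborhood of $\calC$. Since $\operatorname{WF}_{\mathrm{de,sc}}^{m-1,\mathsf{s}+1}(Pu)\cap{}^{\mathrm{de,sc}}\pi^{-1}(\mathrm{nFf}\cap\mathrm{Ff})\subseteq\{\hat\eta=0,s\leq1\}$, every point of $\Sigma_{\mathsf{m},\pm}$ over the corner with $\hat\eta\neq0$, or with $\hat\eta=0$ and $s>1$, has its entire backward bicharacteristic disjoint from $\{\hat\eta=0,s\leq1\}$ and limiting to $\calC$; propagating the control out of $\calC$ --- using the monotonicity of $\hat\eta^2$ on the two $s$-regions and of $\varrho_{\mathrm{df}}$ on the arc $\{\hat\eta=0,s\geq1\}$ --- then shows such a point is not in $\operatorname{WF}_{\mathrm{de,sc}}^{m,\mathsf{s}}(u)$, which gives $\operatorname{WF}_{\mathrm{de,sc}}^{m,\mathsf{s}}(u)\cap{}^{\mathrm{de,sc}}\pi^{-1}(\mathrm{nFf}\cap\mathrm{Ff})\subseteq\{\hat\eta=0,s\leq1\}$. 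For the second assertion, with the extra hypothesis $\operatorname{WF}_{\mathrm{de,sc}}^{m,\mathsf{s}}(u)\cap\calN\cap{}^{\mathrm{de,sc}}\pi^{-1}(\mathrm{nFf}\cap\mathrm{Ff})=\varnothing$, it remains to clear the arc $\{\hat\eta=0,s\leq1\}\cap\Sigma_{\mathsf{m},\pm}$ joining $\calN$ to $\calR$, on which $\varrho_{\mathrm{df}}$ is strictly monotone with the flow running from $\calN$ toward $\calR$: I would invoke the radial point estimate at $\calN$ --- specifically the ``ray'' version flagged in the introduction and in \S\ref{sec:dynamics}, controlling a segment emanating from $\calN$ over timelike infinity and stopping short of the spacelike corner --- to obtain control near $\calN$ along this arc, and then propagate along it toward $\calR$, where $\operatorname{WF}_{\mathrm{de,sc}}^{m-1,\mathsf{s}+1}(Pu)\subseteq\calR$ obstructs nothing away from the endpoint; this leaves $\operatorname{WF}_{\mathrm{de,sc}}^{m,\mathsf{s}}(u)\cap{}^{\mathrm{de,sc}}\pi^{-1}(\mathrm{nFf}\cap\mathrm{Ff})\subseteq\calR$.

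The main obstacle, in contrast to the corresponding statement at the spacelike corner $\mathrm{nf}\cap\mathrm{Sf}$ where $\hat\eta^2$ alone is globally monotone on the relevant characteristic set (cf.\ \cref{eq:H_nfSf_df}), is that no single function is monotone along $\mathsf{H}_{p[g]}$ over $\mathrm{nf}\cap\mathrm{Tf}$: the sign of $\mathsf{H}_{p[g]}\hat\eta^2$ flips across $\{s=1+\mathsf{m}^2\varrho_{\mathrm{df}}^2\}$, and $s$ itself acquires a critical locus inside $\Sigma_{\mathsf{m},\pm}$. One therefore has to patch the estimate together from the three local monotonicities above while carefully tracking the saddle structure at $\calN$ and feeding in the delicate ``ray'' radial point estimate there; the positive-commutator constructions underlying each local step are the standard ones, mirroring \cite[\S4]{HintzVasyScriEB} and the sc-theory of \cite{VasyGrenoble}, so I would present the argument only at the level of these monotonicities, as in the proof of \Cref{prop:propagation_lemma} and the preceding sketches.
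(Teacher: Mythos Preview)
Your overall instinct---propagate along $\mathsf{H}_{p[g]}$ inside $\Sigma_{\mathsf{m},\pm}\cap{}^{\mathrm{de,sc}}\pi^{-1}(\mathrm{nf}\cap\mathrm{Tf})$ from the assumed control at $\calC$ (and, for the second part, at $\calN$)---is right, but you have overcomplicated the argument and, more importantly, invoked machinery that should not appear here.

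\textbf{The paper's route is simpler.} The paper uses two globally monotone functions, rather than your region-by-region splitting via $\hat\eta^2$. First, $s$ is monotone along the flow on the fiber-infinity circle $\Sigma_{\mathsf{m},+}\cap{}^{\mathrm{de,sc}}\bbS^*\bbO$ over the corner, away from $\calC\cup\calN$ (indeed, on the characteristic set at $\rho=0$ one has $\mathsf{H}_p s \propto -(2-s)s$), so one propagates around the circle from $\calC$ to a punctured neighborhood of $\calN$. Second, $\rho$ is monotone on all of $\Sigma_{\mathsf{m},+}\cap\{\rho>0\}$ over the corner away from $\calR$ (the coefficient $\hat\eta^2+(s-1)^2=1-\mathsf{m}^2\rho^2$ is positive there), and one propagates from fiber infinity inward. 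In Part~1 this reaches every interior point whose backward trajectory does not limit to $\calN$, i.e.\ everything off the arc $\{\hat\eta=0,\,s\le 1\}$. In Part~2 the extra hypothesis gives control at $\calN$ as well, so all of fiber infinity is controlled after the first step, and the second step sweeps out everything except $\calR$. Your patchwork with $\hat\eta^2$ on $\{s\gtrless 1+\mathsf{m}^2\rho^2\}$ is not wrong, but it is unnecessary once one sees that $s$ and $\rho$ suffice.

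\textbf{The radial-point invocations are both unnecessary and problematic.} This proposition sits in \S\ref{subsec:nf}, \emph{before} the radial-point estimates at $\calN$ (\S\ref{subsec:calN}) and $\calC$ (\S\ref{subsec:calC}), and it carries \emph{no} threshold condition on $(m,\mathsf{s})$. The hypothesis $\operatorname{WF}_{\mathrm{de,sc}}^{m,\mathsf{s}}(u)\cap\calC=\varnothing$ already gives, by closedness of the wavefront set, control in a full (not punctured) neighborhood of $\calC$; there is nothing to ``upgrade'', and appealing to \Cref{prop:C_first_propagation} or its companion would illicitly impose $m+s_{\mathrm{nf}}-2s_{\mathrm{Tf}}\lessgtr 1$. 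The same remark applies to $\calN$ in Part~2: the hypothesis $\operatorname{WF}_{\mathrm{de,sc}}^{m,\mathsf{s}}(u)\cap\calN\cap{}^{\mathrm{de,sc}}\pi^{-1}(\mathrm{nf}\cap\mathrm{Tf})=\varnothing$ already gives a neighborhood of that point, and ordinary propagation along the arc (using $\rho$) finishes the job. Your appeal to the ``ray'' estimates at $\calN$ is, moreover, a misreading: the rays $\calN_I$ of \S\ref{subsec:calN} are subsets of $\calN$ itself, parametrized by $|t|-r$ along $\mathrm{nf}$, not arcs emanating from $\calN$ into the fiber; and those estimates again carry the threshold $m\lessgtr s_{\mathrm{nf}}+1$ which is absent here.
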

\begin{proof}[Proof sketch]
	The argument is slightly different than the previous in that each part involves two propagation steps. For the first step, we propagate control to the rest of 
	\begin{equation} 
		\Sigma_{\mathsf{m},+} \cap {}^{\mathrm{de,sc}}\pi^{-1}(\mathrm{nf} \cap \mathrm{Tf}) \cap {}^{\mathrm{de,sc}}\bbS^* \bbO
	\end{equation} 
	using $s$ as monotone function, which, according to \cref{eq:H_nfTf_df}, is monotone under $\mathsf{H}_{p[g]}$ on 
	\begin{equation} 
		(\Sigma_{\mathsf{m},+} \cap {}^{\mathrm{de,sc}}\pi^{-1}(\mathrm{nf} \cap \mathrm{Tf}) \cap {}^{\mathrm{de,sc}}\bbS^* \bbO)\backslash (\calC\cup \calN). 
	\end{equation} 
	Having done this, we conclude an absence of wavefront set \emph{at fiber infinity} except possibly at $\calN$. Next, this control can be propagated to the rest of $\Sigma_{\mathsf{m},+} \cap {}^{\mathrm{de,sc}}\pi^{-1}(\mathrm{nf} \cap \mathrm{Tf}) \backslash \{\hat{\eta}=0,s \leq 1\}$ using $\rho$ as a monotone function, which is monotone in the interior of the fibers according to \cref{eq:H_nfTf_df}. (See \Cref{fig:flowplot}.)
	For the second part of the proposition, the argument is the same, except after the first step we conclude an absence of wavefront set at all of fiber infinity, including at $\calN$, and then the second step propagates control to everywhere except $\calR$.  
\end{proof}

Propagating in the reverse direction:
\begin{propositionp}
	Let $m\in \bbR$ and $\mathsf{s}\in \bbR^5$, and suppose that $u\in \calS'$ satisfies $\operatorname{WF}_{\mathrm{de,sc}}^{m,\mathsf{s}}(u) \cap \calR = \varnothing$. Then, if 
	\begin{equation}
		\operatorname{WF}_{\mathrm{de,sc}}^{m-1,\mathsf{s}+1}(P u) \cap {}^{\mathrm{de,sc}} \pi^{-1}(\mathrm{nf}\cap \mathrm{Tf}) \cap {}^{\mathrm{de,sc}}T^* \bbO = \varnothing 
	\end{equation}
	(that is, the de,sc-wavefront over $\mathrm{nf}\cap \mathrm{Tf}$ is at fiber infinity), then $\operatorname{WF}_{\mathrm{de,sc}}^{m,\mathsf{s}}(u) \cap {}^{\mathrm{de,sc}} \pi^{-1}(\mathrm{nf}\cap \mathrm{Tf}) \cap {}^{\mathrm{de,sc}}T^* \bbO = \varnothing$ as well. If, in addition, $\operatorname{WF}_{\mathrm{de,sc}}^{m,\mathsf{s}}(u) \cap \calN \cap {}^{\mathrm{de,sc}}\pi^{-1}(\mathrm{nf}\cap \mathrm{Tf}) = \varnothing$ and 
	\begin{equation} 
		\operatorname{WF}_{\mathrm{de,sc}}^{m-1,\mathsf{s}+1}(P u) \cap {}^{\mathrm{de,sc}} \pi^{-1}(\mathrm{nf}\cap \mathrm{Tf}) \subseteq \calC,
	\end{equation} 
	then $\operatorname{WF}_{\mathrm{de,sc}}^{m,\mathsf{s}}(u) \cap {}^{\mathrm{de,sc}} \pi^{-1}(\mathrm{nf}\cap \mathrm{Tf})\subseteq \calC$ as well. 
\end{propositionp}

\subsection{Propagation Through $\calA$}
\label{subsec:calA}

As seen above, $\calA^\pm_\pm$ (with the signs the same) is a source in the fiberwise directions and with respect to the direction along the null face, but it is a sink in the $\partial_{\varrho_{\mathrm{nf}}}$ direction. The same holds for $\smash{\calA^\pm_\mp}$ (with the signs opposite), with ``source'' and ``sink'' switched. Thus, we can prove two estimates:
\begin{enumerate}
	\item propagation from a band $\{\epsilon_1 < \varrho_{\mathrm{nf}}<\epsilon_2 \}$ hitting spacelike infinity into $\calA$, and 
	\item propagation from an appropriate annular set defined using the other coordinates around $\calA$ into $\calA$. 
\end{enumerate}

\begin{proposition}
	Fix signs $\varsigma,\sigma \in \{-,+\}$. 
	Suppose that $m\in \bbR$ and $\mathsf{s} = (s_{\mathrm{Pf}},s_{\mathrm{nPf}},s_{\mathrm{Sf}},s_{\mathrm{nFf}},s_{\mathrm{Ff}})\in \bbR^5$ satisfy $m-s_{\mathrm{nf}}+s_{\mathrm{Sf}}>1/2$, where $s_{\mathrm{nf}}\in \{s_{\mathrm{nPf}},s_{\mathrm{nFf}}\}$, depending on $\sigma$ in the usual way. 
	For any $\epsilon_1>0$, there exists some $\epsilon_0 \in (0,\epsilon_1)$ such that, if $u\in \calS'$ satisfies 
	\begin{itemize}
		\item $\operatorname{WF}_{\mathrm{de,sc}}^{m-1,\mathsf{s}+1}(P u) \cap \calA^\varsigma_\sigma = \varnothing$, 
		\item $\operatorname{WF}_{\mathrm{de,sc}}^{m,\mathsf{s}}(u) \cap \{ \rho^2 +(\lVert\hat{\eta}\rVert-1)^2+(\lambda+1)^2 + \varrho_{\mathrm{Sf}}^2< \epsilon_1, \epsilon_2<\varrho_{\mathrm{nf}} < \epsilon_1 \}=\varnothing $ for some $\epsilon_2 \in (0,\epsilon_0)$, 
	\end{itemize}
	it is the case that $\operatorname{WF}_{\mathrm{de,sc}}^{m,\mathsf{s}}(u)\cap \calA^\varsigma_\sigma = \varnothing$. 
	\label{prop:A_first_propagation}
\end{proposition}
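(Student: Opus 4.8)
The plan is to prove this by a positive commutator (radial point) argument, following the scheme of \cite{VasyGrenoble} and its e,b-analogue in \cite[\S4]{HintzVasyScriEB}, so I would only need to record the choice of commutant and the sign bookkeeping. First I would reduce: by the symmetry of the radial sets under reversal of the time orientation it suffices to treat $\varsigma=\sigma=+$, so that $\calA^+_+$ lies at fiber infinity over the corner $\mathrm{nFf}\cap\mathrm{Sf}$; writing $\mathrm{nf}=\mathrm{nFf}$, I would work in the coordinates \cref{eq:misc_co2} with $\varrho_{\mathrm{df}}=\rho$ locally, so that $\calA^+_+=\{\rho=0,\ \varrho_{\mathrm{nf}}=0,\ \varrho_{\mathrm{Sf}}=0,\ \lambda=-1,\ \lVert\hat{\eta}\rVert=1\}$. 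As in the proofs of \Cref{prop:aleph} and \Cref{prop:beth}, the difference $\mathsf{H}_{p[g]}-\mathsf{H}_p$ lies in $\varrho_{\mathrm{Pf}}\varrho_{\mathrm{nPf}}\varrho_{\mathrm{Sf}}\varrho_{\mathrm{nFf}}\varrho_{\mathrm{Ff}}\calV_{\mathrm{b}}({}^{\mathrm{de,sc}}\overline{T}^* \bbO)$, and by admissibility (\cref{eq:misc_276}) the symbol $p_1$ measuring $P-P^*\in\operatorname{Diff}_{\mathrm{de,sc}}^{1,-\mathsf{2}}(\bbO)$ is bounded at $\calA^+_+$; both therefore contribute only errors subleading there, so I would carry out the computation for the Minkowski d'Alembertian and restore these terms at the end. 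From \cref{eq:H_nfSf_df} I would record that at $\calA^+_+$ the rescaled field $\mathsf{H}_p$ acts on $\varrho_{\mathrm{nf}}$, $\rho$, $\varrho_{\mathrm{Sf}}$ by the multipliers $-2$, $+2$, $+2$ (a sink in $\partial_{\varrho_{\mathrm{nf}}}$, a source otherwise), and on $\aleph=\varrho_{\mathrm{Sf}}^2+\rho^2+(\lambda+1)^2$ by $4\aleph$ up to a cubically vanishing error by \Cref{prop:aleph}; note also that $\rho^2+(\lVert\hat{\eta}\rVert-1)^2+(\lambda+1)^2+\varrho_{\mathrm{Sf}}^2$ is comparable to $\aleph$ on $\Sigma_{\mathsf{m},+}$ near $\calA^+_+$.

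Next I would build the commutant. Fix a small $\delta>0$ and a nonincreasing $\phi\in C_{\mathrm{c}}^\infty([0,\infty))$ with $\phi\equiv1$ near $0$ and $\operatorname{supp}\phi\subseteq[0,\delta)$, a nonincreasing $\psi\in C^\infty([0,\infty))$ with $\psi\equiv1$ on $[0,\epsilon_0]$ and $\operatorname{supp}\psi\subseteq[0,\epsilon_1)$, and an elliptic $\Lambda\in\Psi_{\mathrm{de,sc}}^{m-1/2,\mathsf{s}-\tfrac12\mathsf{1}}$ with $\operatorname{WF}'_{\mathrm{de,sc}}(\Lambda)$ in a small neighborhood of $\calA^+_+$; then take $A=\operatorname{Op}(a)$ with $a=\phi(\aleph)\,\psi(\varrho_{\mathrm{nf}})\,\chi_0\,\sigma_{\mathrm{de,sc}}(\Lambda)^2$, where $\chi_0$ is a microlocal cutoff to a neighborhood of $\Sigma_{\mathsf{m},+}$ near $\calA^+_+$. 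Choosing $\epsilon_0\in(0,\epsilon_1)$ appropriately, in terms of $\epsilon_1$, arranges that the $\psi'(\varrho_{\mathrm{nf}})$-term below is supported where $u$ is assumed controlled and that the $\aleph$-localization is preserved along the relevant backward $\mathsf{H}_p$-flow segments issuing from $\{\aleph\le\delta,\ \varrho_{\mathrm{nf}}\le\epsilon_0\}\cap\Sigma_{\mathsf{m},+}$; this is the role of the $\epsilon_1$-dependence of $\epsilon_0$. Using the eigenvalues above together with \Cref{prop:aleph}, I would obtain a decomposition
\[
  \mathsf{H}_{p[g]}a = -b^2 + e_{\mathrm{src}} + e_{\mathrm{band}} + \tilde{p}[g]\,e_P + e_\infty ,
\]
in which $b$ is elliptic near $\calA^+_+$, its square carrying the weight eigenvalue $-4(m-s_{\mathrm{nf}}+s_{\mathrm{Sf}})+2=-4\bigl(m-s_{\mathrm{nf}}+s_{\mathrm{Sf}}-\tfrac12\bigr)$, which is negative exactly under the hypothesis $m-s_{\mathrm{nf}}+s_{\mathrm{Sf}}>1/2$ (after $\delta$ is taken small enough to absorb the cubic $\aleph$-error and the $\mathsf{H}_{p[g]}-\mathsf{H}_p$ contribution); $e_{\mathrm{src}}=\psi(\varrho_{\mathrm{nf}})\chi_0\,\phi'(\aleph)(\mathsf{H}_{p[g]}\aleph)\,\sigma_{\mathrm{de,sc}}(\Lambda)^2\le0$ since $\phi'\le0$ and $\mathsf{H}_{p[g]}\aleph>0$ near $\calA^+_+$ on the characteristic set, so it carries the same sign as $-b^2$ and needs no a priori control in the directions where $\calA^+_+$ is a source; $e_{\mathrm{band}}$ is supported in $\{\epsilon_0<\varrho_{\mathrm{nf}}<\epsilon_1\}$, where its sign is irrelevant because $\operatorname{WF}^{m,\mathsf{s}}_{\mathrm{de,sc}}(u)$ avoids that band by hypothesis; $\tilde{p}[g]\,e_P$ is a multiple of $\tilde{p}[g]$, handled through $Pu=f$ and microlocal elliptic regularity using $\operatorname{WF}^{m-1,\mathsf{s}+1}_{\mathrm{de,sc}}(f)\cap\calA^+_+=\varnothing$; and $e_\infty$ is a sum of smoothing and strictly lower-order remainders.

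Finally I would run the usual regularization: replace $a$ by $a_\varepsilon=a\,(1+\varepsilon\varrho_{\mathrm{df}}^{-1})^{-2K}$ for $\varepsilon\in(0,1]$ and $K$ large, so that $\langle i[P,A_\varepsilon^*A_\varepsilon]u,u\rangle$ is a priori finite given $u\in\calS'$ and the a priori microlocal regularity of $u$ away from $\calA^+_+$; expand this pairing in the two standard ways, distribute half the de,sc-derivatives onto each factor, bound the $\tilde{p}[g]$-term against $f$ and the band term against the hypothesis on $u$, discard the nonpositive source term, and let $\varepsilon\to0^+$ using the uniform-in-$\varepsilon$ boundedness of the regularized commutants to conclude $Bu\in H^{0,\mathsf{0}}_{\mathrm{de,sc}}$ with $B$ elliptic near $\calA^+_+$, i.e.\ $\operatorname{WF}^{m,\mathsf{s}}_{\mathrm{de,sc}}(u)\cap\calA^+_+=\varnothing$. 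The main obstacle I anticipate is the sign bookkeeping at fiber infinity over the codimension-two corner $\mathrm{nFf}\cap\mathrm{Sf}$, where the three boundary-defining functions $\varrho_{\mathrm{df}}$, $\varrho_{\mathrm{nf}}$, $\varrho_{\mathrm{Sf}}$ degenerate simultaneously: one must track all three weight eigenvalues and the $\aleph$-localization carefully enough to see that they combine into precisely the threshold $m-s_{\mathrm{nf}}+s_{\mathrm{Sf}}>1/2$, and that the $\phi'$- and $\psi'$-terms land with the claimed signs and supports; the remainder of the argument is routine given \Cref{prop:aleph} and \cref{eq:H_nfSf_df}.
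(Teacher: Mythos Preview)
Your overall strategy is the same as the paper's: a positive commutator argument at the saddle $\calA^+_+$, with a commutant localized via $\aleph$, $\varrho_{\mathrm{nf}}$, and a cutoff to the characteristic set, weighted to the Sobolev orders. Your sign bookkeeping is correct: the eigenvalues $-2,+2,+2$ on $\varrho_{\mathrm{nf}},\rho,\varrho_{\mathrm{Sf}}$, the $\aleph$-derivative from \Cref{prop:aleph}, and the threshold $m-s_{\mathrm{nf}}+s_{\mathrm{Sf}}>1/2$ all match the paper. The decomposition into the main negative term, the helpful $\phi'(\aleph)$-term (your $e_{\mathrm{src}}$, the paper's $-f^2$), the band term controlled by hypothesis (your $e_{\mathrm{band}}$, the paper's $+e^2\varrho_{\mathrm{nf}}$), and the $\tilde p[g]$-term is also correct.

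The gap is in your regularization. You regularize only in $\varrho_{\mathrm{df}}$, writing $a_\varepsilon=a\,(1+\varepsilon\varrho_{\mathrm{df}}^{-1})^{-2K}$ and asserting that the commutator pairing is then a priori finite ``given $u\in\calS'$ and the a priori microlocal regularity of $u$ away from $\calA^+_+$.'' But $\calA^+_+$ sits at a codimension-three corner where $\varrho_{\mathrm{df}},\varrho_{\mathrm{nf}},\varrho_{\mathrm{Sf}}$ all vanish; regularizing only at fiber infinity leaves $A_\varepsilon$ with fixed finite decay orders at $\mathrm{nFf}$ and $\mathrm{Sf}$, so for a general $u\in H_{\mathrm{de,sc}}^{-N,-N}$ the pairing $\langle i[P,A_\varepsilon^*A_\varepsilon]u,u\rangle$ is \emph{not} a priori finite. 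Moreover, since $\rho$ is a \emph{source} direction, the contribution $\varphi_\varepsilon^{-1}\mathsf H_p\varphi_\varepsilon$ of your regularizer is nonnegative (bounded by $4K$ at $\calA^+_+$), which cuts into the positivity of $\alpha$ and limits $K$; you cannot simply take $K$ large.

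The paper resolves this in two equivalent ways. Either one regularizes in two steps: first in $\varrho_{\mathrm{nf}}$ alone---this is the \emph{sink} direction, so the regularizer contribution carries the correct sign and one may take arbitrarily many orders---and then, having reduced to $u$ with fixed decay at $\mathrm{nFf}$, regularizes finitely in $m$ and $s_{\mathrm{Sf}}$ subject to the threshold. Or one uses a single joint regularizer $\varphi_{\varepsilon,K}=(1+\varepsilon\,\rho^{-m_1}\varrho_{\mathrm{nf}}^{-s_1}\varrho_{\mathrm{Sf}}^{-\ell_1})^{-K}$ with exponents chosen so that the net eigenvalue $m_1-s_1+\ell_1$ at $\calA^+_+$ is \emph{negative} (the paper takes $s_1=2$, $m_1=\ell_1=1/2$), ensuring the regularizer contribution reinforces rather than spoils the sign. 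In either case one finishes with an inductive argument on the orders, improving by a quarter step at a time. Your sketch omits both the multi-face regularization and the inductive bootstrap; these are not mere bookkeeping here, since the threshold condition genuinely constrains how much one can regularize in the source directions.
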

\begin{remark*}
	Here, $\{ \rho^2 +(\lVert\hat{\eta}\rVert-1)^2+(\lambda+1)^2 + \varrho_{\mathrm{Sf}}^2< \epsilon_1, \epsilon_2<\varrho_{\mathrm{nf}} < \epsilon_1 \}$ denotes a subset of ${}^{\mathrm{de,sc}} \pi^{-1}(\Omega_{\mathrm{nfSf},\sigma,R})$. Similar notational conventions will be used below. 
\end{remark*}

Note that the condition 
\begin{equation} 
	\operatorname{WF}_{\mathrm{de,sc}}^{m,\mathsf{s}}(u) \cap \{ \rho^2 +(\lVert\hat{\eta}\rVert-1)^2+(\lambda+1)^2 + \varrho_{\mathrm{Sf}}^2< \epsilon_1, \epsilon_2<\varrho_{\mathrm{nf}} < \epsilon_1 \} = \varnothing
\end{equation} 
can be rewritten in terms of $\operatorname{WF}_{\mathrm{sc}}^{m,s_{\mathrm{Sf}}}(u)$, as the second set is disjoint from null infinity.

The linear combination of $m,s_{\mathrm{nf}},s_{\mathrm{Sf}}$ showing up in the threshold hypothesis
\begin{equation}
	m-s_{\mathrm{nf}}+s_{\mathrm{Sf}}>1/2
	\label{eq:A_threshold}
\end{equation}
in \Cref{prop:A_first_propagation}
can be read off of the linearization of $\mathsf{H}_{p[g]}$ at $\calA$ presented in \cref{eq:linearization_at_A} (remembering that $\rho=\varrho_{\mathrm{df}}$). Indeed, the coefficients of the linear combination are exactly those in \cref{eq:linearization_at_A}. The same sort of observation (\textit{mutatis mutandis}) will apply to all of the propositions in this section. This can be used as a simple sanity check.

The direction of the threshold condition \cref{eq:A_threshold} can be figured out using the heuristic that, when propagating control through a saddle point, we need to assume more ``incoming'' regularity/decay than outgoing regularity/decay. Since \Cref{prop:A_first_propagation} allows us to propagate control from fiber infinity and $\mathrm{Sf}$ into $\mathrm{nf}$, this means that we must assume that $m+s_{\mathrm{Sf}}$ is sufficiently large relative to $s_{\mathrm{nf}}$. 

\begin{proof}
	We handle the case $\varsigma,\sigma = +$, the other three being analogous. 
	Consider the symbol 
	\begin{equation} 
		a_0 = \varrho_{\mathrm{df}}^{m_0}\varrho_{\mathrm{nFf}}^{s_0}\varrho_{\mathrm{Sf}}^{\ell_0}
	\end{equation} 
	for  $m_0,s_0,\ell_0\in \bbR$ given by $m_0 = 1-2m$, $s_0 = -1-2s_{\mathrm{nFf}}$, and $\ell_0 = -1-2s_{\mathrm{Sf}}$, where, as above, we arrange for convenience that near $\smash{\calA^+_+}$, $\varrho_{\mathrm{df}}=\rho$, $\varrho_{\mathrm{nFf}}=\varrho_{\mathrm{nf}}$ (we will always do this below).   
	Then, by \cref{eq:H_nfSf_df}, 
	\begin{equation}
		\mathsf{H}_p a_0 = ( m_0 (2\hat{\eta}^2+\lambda+1) - 2s_0 + \ell_0  (1-\lambda) )a_0.
		\label{eq:misc_hpa}
	\end{equation}
	Thus, we can write $\mathsf{H}_{p[g]} a_0 = -\alpha a_0$ for a symbol $\alpha$ given by $\alpha =  -m_0 (2\hat{\eta}^2+\lambda+1) + 2s_0 - \ell_0  (1-\lambda)$ over null infinity. 
	Exactly at $\calA^+_+$, $\lambda=-1$ and $\hat{\eta}^2 = 1$, so 
	\begin{equation}
		\alpha|_{\calA^+_+}  =2 (-m_0 + s_0 - \ell_0) = 2(-1+2m -2 s_{\mathrm{nFf}}  +2s_{\mathrm{Sf}} )>0, 
	\end{equation}
	with the last inequality coming from our assumption that $m-s_{\mathrm{nFf}}+s_{\mathrm{Sf}}>1/2$.

	Let $\chi \in C_{\mathrm{c}}^\infty$ be such that $- \operatorname{sgn}(t) \chi'(t)\chi(t) = \chi_0^2(t)$ for some $\chi_0 \in C_{\mathrm{c}}^\infty(\bbR)$ and such that $\chi=1$ identically in some neighborhood of the origin (the construction by modifying $e^{-1/t}$ is standard --- see e.g.\ \cite{VasyGrenoble}). For $\digamma\in \bbR^+$, let $\chi_\digamma(t)=\chi(\digamma t)$, and correspondingly let 
	\begin{equation} 
		\chi_{0,\digamma}(t)=\smash{\digamma^{1/2} \chi_0(\digamma t)},
	\end{equation} 
	so that $- \operatorname{sgn}(t) \chi_\digamma'(t) \chi_\digamma(t) = \chi_{0,\digamma}(t)^2$. 
	Modify $a_0$ by using the $\chi_\digamma(t)$ to localize near $\smash{\calA^+_+}$: 
	define $a\in C^\infty({}^{\mathrm{de,sc}}\overline{T}^* \bbO)$ by 
	\begin{equation} 
		a = \chi_{\digamma'}(\tilde{p}[g])^2 \chi_\digamma(\varrho_{\mathrm{nf}})^2 \chi_\digamma(\aleph)^2 a_0
		\label{eq:misc_adf}
	\end{equation} 
	near $\calA^+_+$, where $\aleph$ is as in \Cref{prop:aleph}. 
	(For convenience, taking $\digamma$ sufficiently large, we arrange that $a$ is identically zero outside of the region for which the definition \cref{eq:misc_adf} is taken.) This does indeed localize near $\smash{\calA^+_+}$, in the sense that given any open neighborhood $U \supset \calA^+_+$, we can choose $\digamma,\digamma'>0$ sufficiently large so that $\operatorname{supp} a \subseteq U$.

	Letting $F_1$ be as in \Cref{prop:aleph}, 
	\begin{multline}
		\mathsf{H}_{p[g]} a = -\alpha a + 2(1+\varrho_{\mathrm{nf}} g)\chi_{\digamma'}(\tilde{p}[g])^2\chi_{0,\digamma}(\varrho_{\mathrm{nf}})^2 \chi(\aleph)^2 \varrho_{\mathrm{nf}} a_{0}   - 2\chi_{\digamma'}(\tilde{p}[g])^2\chi_\digamma(\varrho_{\mathrm{nf}})^2\chi_{0,\digamma}(\aleph)^2 ( 4\aleph  + F_1 ) a_0 \\ +2 \chi_{\digamma'}'(\tilde{p}[g])\chi_{\digamma'}(\tilde{p}[g])\chi_\digamma(\varrho_{\mathrm{nf}})^2\chi_\digamma(\aleph)^2  \tilde{q} \tilde{p}[g] a_0,
		\label{eq:misc_291}
	\end{multline}
	where $\tilde{q} = \varrho_{\mathrm{df}}^{-2} \mathsf{H}_{p[g]} \varrho_{\mathrm{df}}^2 \in C^\infty({}^{\mathrm{de,sc}}\overline{T}^* \bbO)$, so that $\mathsf{H}_{p[g]} \tilde{p}[g] = \tilde{q} \tilde{p}[g]$, and 
	\begin{equation} 
		g = - \varrho_{\mathrm{nf}}^{-2}(\mathsf{H}_{p[g]}-\mathsf{H}_p ) \varrho_{\mathrm{nf}} \in C^\infty({}^{\mathrm{de,sc}} \overline{T}^* \bbO).
	\end{equation} 
	Let $w= \varrho_{\mathrm{df}}^{-1}\varrho_{\mathrm{Pf}}\varrho_{\mathrm{nPf}}\varrho_{\mathrm{Sf}}\varrho_{\mathrm{nFf}}\varrho_{\mathrm{Ff}}$, so that $\mathsf{H}_{p[g]} = w^{-1} H_{p[g]}$.
	
	For all $\digamma>0$ sufficiently large, for $\digamma'>0$ sufficiently large, for $\delta$ sufficiently small we can define symbols $b,e,f,h,z\in S_{\mathrm{de,sc}}^{0,\mathsf{0}}$ such that 
	\begin{align}
		\begin{split} 
		\mathsf{H}_{p[g]} a + w^{-1} p_1 a &=  (-\delta a_0^{-2}a^2-b^2+e^2 \varrho_{\mathrm{nf}} - f^2 + h)a_0, \\
		H_{p[g]} a+p_1 a &=   (-\delta a_0^{-2} a^2-b^2+e^2 \varrho_{\mathrm{nf}} - f^2 + h )wa_0 
		\end{split}
		\label{eq:misc_hpp}
	\end{align}
	everywhere (where recall that $p_1$ was defined in \cref{eq:p1def}), with $b = \chi_{\digamma'}(\tilde{p}[g]) \chi_\digamma(\varrho_{\mathrm{nf}}) \chi_\digamma(\aleph) (\alpha-\delta a_0^{-1} a - w^{-1} p_1)^{1/2}$, 
	\begin{align} 
		\begin{split} 
		e &= \sqrt{2(1+\varrho_{\mathrm{nf}} g )} \chi_{\digamma'}(\tilde{p}[g]) \chi_{0,\digamma}(\varrho_{\mathrm{nf}}) \chi_\digamma(\aleph), \\ 
		f &= \sqrt{2} \chi_{\digamma'}(\tilde{p}[g]) \chi_\digamma(\varrho_{\mathrm{nf}}) \chi_{0,\digamma}(\aleph) (4\aleph+F_1)^{1/2},
		\end{split} 
	\end{align} 
	and 
	\begin{equation} 
		h=2 \chi'_{\digamma'}(\tilde{p}[g])\chi_\digamma(\tilde{p}[g]) \chi_\digamma(\varrho_{\mathrm{nf}})^2\chi_\digamma(\aleph)^2 \tilde{q} \tilde{p}[g]
		\label{eq:misc_377}
	\end{equation} 
	near $\calA^+_+$. It is because 
	\begin{equation} 
		w^{-1} p_1 \in S_{\mathrm{de,sc}}^{0,-\mathsf{1}}
	\end{equation} 
	vanishes at $\calA^+_+$ (and in fact, over all of the faces of $\bbO$) that we can take $\digamma,\digamma'$ sufficiently large so that the $-w^{-1} p_1$ term under the square root in the definition of $b$ is guaranteed to not spoil the sign. Likewise, as long as $\digamma$ is sufficiently large, and $\digamma'$ is sufficiently large relative to $\digamma$, then, from the description of $F_1$ in  \Cref{prop:aleph}, $4\aleph > F_1$ on the support of $f$, which is why $f$ is well-defined. Similarly, as long as $\digamma$ is sufficiently large, $e$ is well-defined.
	
	Quantizing, we get $A = (1/2)(\operatorname{Op}(a)+\operatorname{Op}(a)^*) \in  \Psi_{\mathrm{de,sc}}^{-m_0,(-\infty,-\infty,-\ell_0,-s_0,-\infty)}$, this being self-adjoint (here, we are just using the $L^2(\bbR^{1,d})$ inner product, not $L^2(\bbR^{1,d},g)$), and 
	\begin{align}
		\begin{split} 
			B = \operatorname{Op}(w^{1/2} a_0^{1/2} b) &\in  \Psi_{\mathrm{de,sc}}^{m,(-\infty,-\infty,s_{\mathrm{Sf}},s_{\mathrm{nFf}},-\infty)}, \\
			E = \operatorname{Op}(w^{1/2} a_0^{1/2} \varrho_{\mathrm{nf}}^{1/2} e) &\in \Psi_{\mathrm{de,sc}}^{m,(-\infty,-\infty,s_{\mathrm{Sf}},-\infty,-\infty)}, \\
			F = \operatorname{Op}(w^{1/2} a_0^{1/2} f) &\in  \Psi_{\mathrm{de,sc}}^{m,(-\infty,-\infty,s_{\mathrm{Sf}},s_{\mathrm{nFf}},-\infty)}, 
		\end{split}
	\label{eq:misc_295}
	\end{align}
	and $H = \operatorname{Op}(wa_0 h) \in \Psi_{\mathrm{de,sc}}^{2m,(-\infty,-\infty,2s_{\mathrm{Sf}},2s_{\mathrm{nFf}},-\infty)}$, such that
	\begin{equation}
		-i [ P, A] + i(P-P^*) A = -\delta A\Lambda^2 A - B^*B +E^*E - F^*F +H  + R 
		\label{eq:misc_k31}
	\end{equation}
	for some $R\in \Psi_{\mathrm{de,sc}}^{-m_0,(-\infty,-\infty,-2-\ell_0,-2-s_0,-\infty)}$. Above, 
	\begin{equation} 
		\Lambda = (1/2)( \operatorname{Op}(w^{1/2} a_0^{-1/2})+\operatorname{Op}(w^{1/2} a_0^{-1/2})^*) \in \Psi_{\mathrm{de,sc}}^{1-m,(-1/2,-1/2,-1-s_{\mathrm{Sf}},-1-s_{\mathrm{nFf}},-1/2)}.
	\end{equation} 
	 The quantization procedure can be arranged so as to preserve essential supports, so that 
	 \begin{equation}
	 	\operatorname{WF}'_{\mathrm{de,sc}}(B),\operatorname{WF}'_{\mathrm{de,sc}}(E), \operatorname{WF}'_{\mathrm{de,sc}}(F), \operatorname{WF}'_{\mathrm{de,sc}}(H) \subseteq \operatorname{WF}'_{\mathrm{de,sc}}(A) \subseteq \operatorname{supp}(a) 
	 \end{equation}
	 which, via the definition \cref{eq:misc_k31} of $R$, also forces $\operatorname{WF}'_{\mathrm{de,sc}}(R)\subseteq \operatorname{WF}'_{\mathrm{de,sc}}(A)$. 
	 We have $\operatorname{WF}'_{\mathrm{de,sc}} (E) \subseteq \operatorname{supp}(\chi_\digamma(\tilde{p}[g]) \chi_{0,\digamma}(\varrho_{\mathrm{nf}}) \chi_\digamma(\aleph))$ specifically.
	 For each $\epsilon_2>0$, by taking $\digamma$ sufficiently large, 
	 \begin{equation} 
	 	\operatorname{WF}'_{\mathrm{de,sc}} (E)  \subseteq \{ \rho^2 +(\lVert\hat{\eta}\rVert-1)^2+(\lambda+1)^2 + \varrho_{\mathrm{Sf}}^2< \epsilon_1, \epsilon_2<\varrho_{\mathrm{nf}} < \epsilon_1 \}
	 \end{equation}
 	as long as $\epsilon_1$ is sufficiently small relative to $\digamma$. 
	
	Computing
	\begin{equation}
		2i \Im \langle Au,Pu \rangle_{L^2} = \langle P^* Au,u \rangle_{L^2} - \langle APu, u \rangle_{L^2} = \langle ([P,A] + (P^* - P) A) u,u \rangle_{L^2}
	\end{equation}
	and assuming temporarily that $u$ is Schwartz, we get 
	\begin{align}
		\begin{split}
		-2 \Im \langle Au,P u \rangle_{L^2} &= \langle (-i[P,A]+i(P-P^*) A) u,u \rangle_{L^2}  \\ &=  -\delta \lVert \Lambda A u\rVert_{L^2}^2 - \lVert Bu \rVert_{L^2}^2 + \lVert Eu \rVert_{L^2}^2 - \lVert F u \rVert_{L^2}^2   + \langle Hu,u \rangle_{L^2} + \langle Ru ,u \rangle_{L^2} . 
		\end{split} 
		\label{eq:misc_z32}
	\end{align}
	Thus, 
	\begin{equation}
		\lVert Bu \rVert_{L^2}^2 + \delta \lVert \Lambda A u \rVert_{L^2}^2 \leq \lVert E u \rVert_{L^2}^2 + |\langle Hu,u\rangle_{L^2}| + |\langle Ru,u \rangle_{L^2}|  +2 |\langle Au,Pu \rangle_{L^2}|.
		\label{eq:misc_301}
	\end{equation}
 	From this, it can be deduced that, for each $N\in \bbN$, for some $\tilde{B}\in \Psi_{\mathrm{de,sc}}^{0,\mathsf{0}}$ elliptic at $\calA^+_+$ and $\tilde{E}\in \Psi_{\mathrm{de,sc}}^{0,\mathsf{0}}$ with 
 	\begin{equation}
 		\operatorname{WF}'_{\mathrm{de,sc}}(\tilde{E})=\operatorname{WF}'_{\mathrm{de,sc}}(E) ,
 	\end{equation}
 	the estimate 
	\begin{multline}
		\lVert \tilde{B} u \rVert_{H_{\mathrm{de,sc}}^{m,(N,N,s_{\mathrm{Sf}},s_{\mathrm{nFf}},-N)} }^2 \lesssim  \lVert \tilde{E} u \rVert_{H_{\mathrm{de,sc}}^{m,(-N,-N,s_{\mathrm{Sf}},-N,-N)} }^2 + \lVert GP u \rVert_{H_{\mathrm{de,sc}}^{m-1,(-N,-N, s_{\mathrm{Sf}}+1,s_{\mathrm{nFf}}+1,-N )} }^2  \\ 
		+ \lVert G u \rVert_{H_{\mathrm{de,sc}}^{m-1/2,(-N,-N,s_{\mathrm{Sf}}-1/2,s_{\mathrm{nFf}}-1/2,-N)} }^2 + \lVert u \rVert_{H_{\mathrm{de,sc}}^{-N,-N}}^2, 
		\label{eq:Aest}
	\end{multline}
	holds
	for some $G\in \Psi_{\mathrm{de,sc}}^{0,\mathsf{0}}$ having essential support in a small neighborhood of $\smash{\calA^+_+}$ (that can be taken arbitrarily small by making $\digamma$ arbitrarily large, but also at the cost of making the essential support of $\tilde{B},\tilde{E}$ smaller), chosen so that 
	\begin{equation}
		\operatorname{WF}'_{\mathrm{de,sc}}(1-G) \cap \operatorname{WF}'_{\mathrm{de,sc}}(A)   = \varnothing.
		\label{eq:misc_388}
	\end{equation} 
	Since we will use (and leave implicit) similar arguments below, we explain once the details of the deduction of \cref{eq:Aest}.
	Indeed, we can choose $\tilde{B},\tilde{E}$ to differ from $B,E$ by some elliptic factors, so that
	\begin{align}
		\lVert \tilde{B} u \rVert_{H_{\mathrm{de,sc}}^{m,(N,N,s_{\mathrm{Sf}},s_{\mathrm{nFf}},N)} } &\lesssim \lVert Bu \rVert_{L^2} + \lVert u \rVert_{H_{\mathrm{de,sc}}^{-N,-N}}, \\ 
		\lVert E u \rVert_{L^2} &\lesssim \lVert \tilde{E} u \rVert_{H_{\mathrm{de,sc}}^{m,(-N,-N,s_{\mathrm{Sf}},-N,-N)} }  + \lVert u \rVert_{H_{\mathrm{de,sc}}^{-N,-N}}.
	\end{align}
	Referring to \cref{eq:misc_377}, the essential support of $H$ is in the elliptic set of $P$, so an elliptic estimate controls the $\langle Hu,u \rangle_{L^2}$ term in \cref{eq:misc_301}:
	\begin{align}
		\begin{split} 
		|\langle Hu,u \rangle_{L^2}| &\lesssim  \lVert GP u \rVert^2_{H_{\mathrm{de,sc}}^{m-2,(-N,-N,s_{\mathrm{Sf}},s_{\mathrm{nFf}},-N )} } + \lVert u \rVert^2_{H_{\mathrm{de,sc}}^{-N,-N} } \\
		&\lesssim   \lVert GP u \rVert^2_{H_{\mathrm{de,sc}}^{m-1,(-N,-N,s_{\mathrm{Sf}}+1,s_{\mathrm{nFf}}+1,-N )} } + \lVert u \rVert^2_{H_{\mathrm{de,sc}}^{-N,-N} } .
		\end{split} 
	\end{align}
	The $\langle Ru,u\rangle_{L^2}$ term in \cref{eq:misc_301} is just estimated with Cauchy--Schwarz (and an elliptic estimate):
	\begin{equation}
		|\langle Ru,u \rangle_{L^2}| \lesssim \lVert G u \rVert^2_{H_{\mathrm{de,sc}}^{m-1/2,(-N,-N,s_{\mathrm{Sf}}-1/2,s_{\mathrm{nFf}}-1/2,-N )} } + \lVert u \rVert^2_{H_{\mathrm{de,sc}}^{-N,-N} } .
	\end{equation}
	Finally, we can choose $\tilde{G} \in \Psi_{\mathrm{de,sc}}^{0,\mathsf{0}}$ also satisfying \cref{eq:misc_388} such that
	\begin{equation} 
		\lVert \Lambda_{-1} \tilde{G} P u \rVert_{L^2} \lesssim \lVert G P u \rVert_{H_{\mathrm{de,sc} }^{m-1,(-N,-N,s_{\mathrm{Sf}}+1,s_{\mathrm{nFf}}+1,-N )}}, 
	\end{equation} 
	where $\Lambda_{-1}$ is a parametrix for $\Lambda$, and then we can bound the $\langle A u,Pu \rangle_{L^2}$ term in \cref{eq:misc_301} as follows: 
	\begin{equation} 
		|\langle Au,Pu\rangle_{L^2}| \leq |\langle Au,\tilde{G} Pu\rangle_{L^2}|+ |\langle Au,(1-\tilde{G})Pu\rangle_{L^2}|,
	\end{equation} 
	\begin{align}
		|\langle Au,(1-\tilde{G})Pu\rangle_{L^2}| &\lesssim \lVert u \rVert_{H_{\mathrm{de,sc}}^{-N,-N} }^2,  \\
		\begin{split}
			|\langle Au,\tilde{G} Pu\rangle_{L^2}| &\lesssim \lVert u \rVert_{H_{\mathrm{de,sc}}^{-N,-N} }^2+ \lVert \Lambda A u \rVert_{L^2} \lVert \Lambda_{-1} \tilde{G} P u \rVert_{L^2} \\
			\lVert \Lambda A u \rVert_{L^2} \lVert \Lambda_{-1} \tilde{G} P u \rVert_{L^2}, 
			&\leq 2^{-1} \varepsilon \lVert \Lambda A u \rVert_{L^2}^2 + 2^{-1} \varepsilon^{-1} \lVert \Lambda_{-1} \tilde{G} P u \rVert_{L^2}^2 \\ 
			&\lesssim  2^{-1} \varepsilon \lVert \Lambda A u \rVert_{L^2}^2 + 2^{-1} \varepsilon^{-1} \lVert G P u \rVert_{H_{\mathrm{de,sc} }^{m-1,(-N,-N,s_{\mathrm{Sf}}+1,s_{\mathrm{nFf}}+1,-N )}}^2  
		\end{split}
	\end{align}
	for any $\varepsilon>0$, where the bound is independent of $\varepsilon$. 
	If $\varepsilon$ is sufficiently small, then we can absorb the $2^{-1} \varepsilon \lVert \Lambda A u\rVert_{L^2}$ term into the $\delta \lVert \Lambda A u \rVert_{L^2}$ term in \cref{eq:misc_301}, yielding \cref{eq:Aest}, as claimed. The constant implicit in \cref{eq:Aest} depends on all of the operators involved, and on $\delta$ and $N$, but does not depend on $u$. Thus, assuming that $u$ is Schwartz, we have quantitatively controlled $u$ microlocally near $\calA^+_+$ in terms of the quantities on the right-hand side of the estimate. 
	
	The standard regularization argument \cite{VasyGrenoble}\cite{HintzVasyScriEB} allows us to make sense of the estimate for general $u\in \calS'$, with the conclusion being that if the right-hand side of \cref{eq:Aest} is finite, then the left-hand side is too, with the stated inequality holding. 
	One key point is that we can regularize in both the differential sense and the decay sense:
	\begin{itemize}
		\item first regularizing only in $s_{\mathrm{nFf}}$ (which we can do by an arbitrarily large number of orders because, in the threshold inequality \cref{eq:A_threshold}, decreasing $s_{\mathrm{nFf}}$ does not break the inequality), we conclude the desired estimate under the assumption
		\begin{equation}
			Gu\in H_{\mathrm{de,sc}}^{m,(\infty, \infty,s_{\mathrm{Sf}},-N_0,\infty )}, 
		\end{equation}
		where $N_0$ can be arbitrarily large.
		\item Apply the same basic argument, but regularize in $m$ and $s_{\mathrm{Sf}}$ instead to control
		\begin{equation} 
			\lVert Gu \rVert_{H_{\mathrm{de,sc}}^{m,(0,0,s_{\mathrm{Sf}},-N_0,0 )}}.
		\end{equation} 
		For each value of $N_0$, we can only regularize by finitely many orders, essentially because decreasing $m,s_{\mathrm{Sf}}$ can break \cref{eq:A_threshold}: in order to not spoil the signs involved in the construction of $b$, we must assume that 
		\begin{equation} 
			Gu \in H_{\mathrm{de,sc}}^{-N_1,(\infty,\infty,-N_1,-N_0,\infty)}
		\end{equation} 
		for $N_1$ satisfying $-2N_1 + N_0 >1/2$,
		which is the threshold condition for the regularized orders. 
	\end{itemize} 
	Combining the two regularization steps, we end up with the estimate 
	\begin{multline}
		\lVert \tilde{B} u \rVert_{L^2}^2 \lesssim  \lVert \tilde{E} u \rVert_{L^2}^2 + \lVert GP u \rVert_{H_{\mathrm{de,sc}}^{m-1,(-N,-N, s_{\mathrm{Sf}}+1,s_{\mathrm{nFf}}+1,-N )} }^2  + \lVert G u \rVert_{H_{\mathrm{de,sc}}^{m-1/2,(-N,-N,s_{\mathrm{Sf}}-1/2,s_{\mathrm{nFf}}-1/2,-N)} }^2 \\ +
		\lVert u \rVert_{H_{\mathrm{de,sc}}^{-N_1,(-N,-N,-N_1,-N_0,-N)}}^2, 
	\end{multline}
	for any $N,N_0\in \bbR$ and $N_1$ satisfying $-2N_1+N_0 > 1/2$, this holding in the strong sense that, if $u\in \calS'$ is such that the right-hand side is finite, then the left-hand side is as well. 
	By taking $N_0$ sufficiently large, we can choose $N_1$ such that $\min\{N_0,N_1\}>N$.
	Hence, \cref{eq:Aest} holds for all $u\in \calS'$. 
	
	Alternatively, we can regularize in both senses simultaneously with a careful choice of regularizer: for each $\varepsilon,K>0$, consider the locally-defined symbol 
	\begin{equation}
		\varphi_{\varepsilon,K} =  \Big(1 + \frac{\varepsilon}{\rho^{m_1} \varrho_{\mathrm{nf}}^{s_1}\varrho_{\mathrm{Sf}}^{\ell_1} } \Big)^{-K}, 
		\label{eq:regularizer}
	\end{equation}
	for to-be-decided $m_1,s_1,\ell_1>0$. 
	We can then define a symbol $a_{\varepsilon,K} = \smash{\varphi_{\varepsilon,K}^2}a$. The Lie bracket $\mathsf{H}_{p[g]} a_{\varepsilon,K}$ is the same as \cref{eq:misc_291}, with an extra factor of $\smash{\varphi_{\varepsilon,K}^2}$ on the right-hand side, except we have to add the term $2 \varphi_{\varepsilon,K} a \mathsf{H}_{p[g]} \varphi_{\varepsilon,K}$, which is equal to 
	\begin{equation}
		 \frac{4 K \varepsilon}{\varepsilon + \rho^{m_1} \varrho_{\mathrm{nf}}^{s_1}\varrho_{\mathrm{Sf}}^{\ell_1} } \varphi_{\varepsilon,K}^2  a \Big(\frac{\ell_1}{2} (1-\lambda) + \frac{m_1}{2} (2\hat{\eta}^2+\lambda+1)-s_1 \Big) 
		 \label{eq:misc_318}
	\end{equation}
	at $\mathrm{nFf}$. Note that, at $\calA^+_+$, the bracketed term is given by $\ell_1 + m_1 - s_1$. Choose $s_1=2$ and $m_1,\ell_1=1/2$. 
	Then, 
	\begin{equation} 
		 \varphi_{\varepsilon,K}^{-1} \mathsf{H}_{p[g]}\varphi_{\varepsilon,K} <0
		\end{equation}  
	in a neighborhood of $\calA^+_+$ which does not depend on $\varepsilon,K$. 
	Then, for all $\digamma>0$ sufficiently large, for $\delta$ sufficiently small (neither depending on $\varepsilon,K$), 
	\begin{equation}
		b_{\varepsilon}  = \varphi_{\varepsilon,K}   \chi_{\digamma'}(\tilde{p}[g]) \chi_\digamma(\varrho_{\mathrm{nf}}) \chi_\digamma(\aleph) \sqrt{\alpha-\delta a_0^{-1} a_\varepsilon - w^{-1}  p_1   - \frac{2}{\varphi_{\varepsilon,K}} \mathsf{H}_{p[g]} \varphi_{\varepsilon,K}}
	\end{equation}
	is a well-defined symbol near $\calA^+_+$. Defining $e_\varepsilon = \varphi_{\varepsilon,K} e$, $f_\varepsilon = \varphi_{\varepsilon,K} f$, and so on, 
	\begin{equation}
		\mathsf{H}_{p[g]} a_\varepsilon + w^{-1} p_1 a_\varepsilon =  (-\delta a_0^{-2}a_\varepsilon^2-b^2_\varepsilon+e^2_\varepsilon \varrho_{\mathrm{nf}} - f^2_\varepsilon + h_\varepsilon)a_0.
	\end{equation}
	Quantizing (with the extra weights of powers of $w,a_0$ thrown in as in \cref{eq:misc_295}) we get operators $A_\varepsilon,B_\varepsilon,F_\varepsilon,H_\varepsilon,R_\varepsilon$, with similar essential support properties to their non-regularized counterparts, such that 
	\begin{equation}
		-i [ P, A_\varepsilon] + i(P-P^*) A_\varepsilon = -\delta A_\varepsilon\Lambda^2 A_\varepsilon - B^*_\varepsilon B_\varepsilon +E_\varepsilon^*E_\varepsilon - F_\varepsilon^*F_\varepsilon +H_\varepsilon  + R_\varepsilon.
	\end{equation}
	Each of these is a uniform family of de,sc-operators with the same orders as their non-regularized counterparts. But, for each individual $\varepsilon>0$, they are regularizing operators. 
	For each $N\in \bbR$ and tempered distribution 
	\begin{equation} 
		u\in \smash{H_{\mathrm{de,sc}}^{-N,-N}} = H_{\mathrm{de,sc}}^{-N,(-N,-N,-N,-N,-N)},
		\label{eq:misc_411}
	\end{equation} 
	we can choose $K$ sufficiently large such that the algebraic manipulations above are all justified, and via the usual strong convergence argument the estimate \cref{eq:Aest} follows, now contingent only on the weak hypothesis that \cref{eq:misc_411} holds.
	Since $N$ was arbitrary, we can conclude that \cref{eq:Aest} holds in the strong sense, for any $u\in \calS'$. 
	
	We now finish the conclusion of the proposition from the strong estimate \cref{eq:Aest}. 
	Suppose that $u\in \calS'$ satisfies the hypotheses:
	\begin{itemize}
		\item $\operatorname{WF}_{\mathrm{de,sc}}^{m-1,\mathsf{s}+1}(P u) \cap \calA^\varsigma_\sigma = \varnothing$, 
		\item $\operatorname{WF}_{\mathrm{de,sc}}^{m,\mathsf{s}}(u) \cap \{ \rho^2 +(\lVert\hat{\eta}\rVert-1)^2+(\lambda+1)^2 + \varrho_{\mathrm{Sf}}^2< \epsilon_1, \epsilon_2<\varrho_{\mathrm{nf}} < \epsilon_1 \} $. 
	\end{itemize}
	Then, the first two terms on the right-hand side of \cref{eq:Aest} are finite, for any $N$, as long as 
	\begin{itemize}
		\item 
		$\digamma,\digamma'$ are sufficiently large (and correspondingly $\operatorname{WF}'_{\mathrm{de,sc}}(G)$ is taken sufficiently small) and
		\item $\epsilon_0$ is sufficiently small. 
	\end{itemize}
	The second assumption is required to control the $\tilde{E} u$ term.
	If $N$ is sufficiently large then, $u\in\smash{H_{\mathrm{de,sc}}^{-N,-N}}$, so the final term is finite as well. If
	\begin{equation}
		\operatorname{WF}_{\mathrm{de,sc}}^{m-1/2,(-N,-N,s_{\mathrm{Sf}}-1/2,s_{\mathrm{nFf}}-1/2,-N)} (u)\cap \calA^+_+ = \varnothing,
		\label{eq:misc_z1z}
	\end{equation}
	then, for $G$ with sufficiently small essential support (which we can arrange by taking $\digamma'\digamma$ sufficiently large), the third term on the right-hand side of \cref{eq:Aest} is finite. 
	Having checked that each term on the right-hand side of \cref{eq:Aest} is finite, we conclude that the left-hand side is finite as well. Since $\tilde{B}$ is elliptic at the radial set, we conclude that 
	\begin{equation} 
		\operatorname{WF}_{\mathrm{de,sc}}^{m,\mathsf{s}}(u)\cap \calA^+_+ = \varnothing.
		\label{eq:misc_z1y}
	\end{equation} 
	
	The condition 
	\begin{equation}
		\operatorname{WF}_{\mathrm{de,sc}}^{m-1/4,(-N,-N,s_{\mathrm{Sf}}-1/4,s_{\mathrm{nFf}}-1/2,-N)} (u)\cap \calA^+_+ = \varnothing
		\label{eq:misc_z1a}
	\end{equation}
	implies \cref{eq:misc_z1z}, but has the advantage that the orders in \cref{eq:misc_z1a} satisfy the threshold condition if and only if the originals do (as we are assuming as a hypothesis of the proposition). 
	The orders in \cref{eq:misc_z1a} are (for $N$ sufficiently large) a quarter-order smaller than those in \cref{eq:misc_z1y}, so the proposition follows via an inductive argument (taking the case when all of the orders are $\leq -N$ as the base case). 
\end{proof}

Similarly:
\begin{propositionp}
	Fix signs $\varsigma,\sigma \in \{-,+\}$. 
	Suppose that $m\in \bbR$ and $\mathsf{s} = (s_{\mathrm{Pf}},s_{\mathrm{nPf}},s_{\mathrm{Sf}},s_{\mathrm{nFf}},s_{\mathrm{Ff}})\in \bbR^5$ satisfying $m-s_{\mathrm{nf}}+s_{\mathrm{Sf}}<1/2$, where $s_{\mathrm{nf}}\in \{s_{\mathrm{nPf}},s_{\mathrm{nFf}}\}$, depending on $\sigma$. For any $\epsilon_1>0$, there exists some $\epsilon_0 \in (0,\epsilon_1)$ such that, if $u\in \calS'$ satisfies $\epsilon_2 \in (0,\epsilon_1)$, then, if $u\in \calS'$ satisfies 
	\begin{itemize}
		\item $\operatorname{WF}_{\mathrm{de,sc}}^{m-1,\mathsf{s}+1}(P u) \cap \calA^\varsigma_\sigma = \varnothing$, 
		\item $\operatorname{WF}_{\mathrm{de,sc}}^{m,\mathsf{s}}(u) \cap \{ \epsilon_2<\rho^2 +(\lVert\hat{\eta}\rVert-1)^2+(\lambda+1)^2 + \varrho_{\mathrm{Sf}}^2< \epsilon_1,  \varrho_{\mathrm{nf}} < \epsilon_1 \} = \varnothing$ for some $\epsilon_1>0$ and $\epsilon_2 \in (0,\epsilon_0)$, 
	\end{itemize}
	it is the case that $\operatorname{WF}_{\mathrm{de,sc}}^{m,\mathsf{s}}(u)\cap \calA^\varsigma_\sigma = \varnothing$. 
	\label{prop:A_second_propagation}
\end{propositionp}
	The threshold condition  $m-s_{\mathrm{nf}}+s_{\mathrm{Sf}}<1/2$ is the reverse of that, \cref{eq:A_threshold}, from the previous proposition, because we are propagating control in the opposite direction, so, compared to the previous proposition, the notions of incoming and outgoing regularity/decay are switched.

	The proof is the same as that in the previous proposition, with a few sign switches, which result in switching the signs of the $b,B$-terms terms. (The signs of the terms proportional to $\delta$ then have to be switched as well.) Thus, instead of the $\lVert E u \rVert_{L^2}$ term in \cref{eq:Aest}, we need to keep the $\lVert F u \rVert_{L^2}$ term, since it has the opposite sign of $\lVert Bu \rVert_{L^2}$. This results, in the $\smash{\calA^+_+}$ case, in an estimate (holding in the strong sense, for all $u\in \calS'$) of the form 
	\begin{multline}
		\lVert \tilde{B} u \rVert_{H_{\mathrm{de,sc}}^{m,(N,N,s_{\mathrm{Sf}},s_{\mathrm{nFf}},N)} }^2 \lesssim  \lVert \tilde{F} u \rVert_{H_{\mathrm{de,sc}}^{m,(-N,-N,s_{\mathrm{Sf}},s_{\mathrm{nFf}},-N)} }^2 + \lVert GP u \rVert_{H_{\mathrm{de,sc}}^{m-1,(-N,-N, s_{\mathrm{Sf}}+1,s_{\mathrm{nFf}}+1,-N )} }^2  \\ 
		+ \lVert G u \rVert_{H_{\mathrm{de,sc}}^{m-1/2,(-N,-N,s_{\mathrm{Sf}}-1/2,s_{\mathrm{nFf}}-1/2,-N)} }^2 + \lVert u \rVert_{H_{\mathrm{de,sc}}^{-N,-N}}^2, 
		\label{eq:misc_325}
	\end{multline}
	for $\tilde{F}\in \Psi_{\mathrm{de,sc}}^{0,\mathsf{0}}$ with \begin{equation} 
		\operatorname{WF}'_{\mathrm{de,sc}}(\tilde{F})= \operatorname{WF}'_{\mathrm{de,sc}}(F).
	\end{equation}
	The argument is analogous, with a few minor modifications. For instance, instead of taking $s_1=2$ and $m_1,\ell_1=1/2$ in the regularizer \cref{eq:regularizer}, we can take $m_1,s_1,\ell_1=1$, so that \cref{eq:misc_318} has the opposite sign, which matches the switched signs of the $b,B$-terms. From \cref{eq:misc_325} (with the parameters $\digamma,\digamma',G$ chosen appropriately, as above), the statement of the proposition follows.

\subsection{Propagation Through $\calN$}
\label{subsec:calN}

We now prove two different radial point estimates at $\calN$. By a ray, we mean a subset of $\calN^\varsigma_\sigma$ of the form 
\begin{equation}
	\calN_{I} = \calN^\varsigma_\sigma \cap \mathrm{cl}_{{}^{\mathrm{de,sc}}\overline{T}^* \bbO }\{|t|-r \in I \} 
\end{equation}
for some closed interval $I\subseteq [-\infty,+\infty]$ such that at least one of $\pm\infty$ in $I$.  So, for instance, $\calN_{\{-\infty\}} = \calN \cap {}^{\mathrm{de,sc}}\pi^{-1}(\mathrm{Sf})$ is a ray.
If $-\infty\in I$, we will call $\calN_I$ \emph{spacelike-adjacent}, and if $+\infty\in I$, we will call $\calN_I$ \emph{timelike-adjacent}. As we will see below, we can only propagate in one direction on each type of ray for each pair of admissible Sobolev orders $(m,\mathsf{s}) \in \bbR\times \bbR^5$ (without some more complicated argument). The one exception is $\calN^\varsigma_\sigma=\calN_{[-\infty,+\infty]}$ itself, which is both spacelike-adjacent and timelike-adjacent. We say that $\calN_I$ is strictly spacelike-adjacent or strictly timelike-adjacent if $\calN_I\neq \calN$. 

The timelike-adjacent case (which we use when studying the scattering problem) is:
\begin{proposition}
	Fix signs $\varsigma,\sigma \in \{-,+\}$, and let $\calN_I$ denote a strictly timelike-adjacent ray of $\calN^\varsigma_\sigma$, which we can write using the coordinates \cref{eq:misc_co1} (over $\Omega_{\mathrm{nfTf},\sigma,T}$, for some large $T>0$) as 
\begin{equation}
	\calN_I = \{\varrho_{\mathrm{Tf}} \leq \bar{\varrho}_{\mathrm{Tf}}, \rho = 0, s= 0 ,\beth=0\}
\end{equation} 
for some $\bar{\varrho}_{\mathrm{Tf}}>0$, where $\beth$ is as in \Cref{prop:beth}.  
Let $m\in \bbR$ and $\mathsf{s}\in \bbR^5$ satisfy $m<s_{\mathrm{nf}}+1$, where $s\in \{s_{\mathrm{nPf}},s_{\mathrm{nFf}}\}$, depending on the sign $\sigma$. 
Suppose that $u\in \calS'$ satisfies 
\begin{itemize}
	\item $\operatorname{WF}_{\mathrm{de,sc} }^{m-1,\mathsf{s}+1}(P u) \cap \calN_I =\varnothing$ and 
	\item $\operatorname{WF}_{\mathrm{de,sc}}^{m,\mathsf{s}}(u) \cap \{ \beth, s^2 < \epsilon_1 , \varrho_{\mathrm{Tf}}\leq \bar{\varrho}_{\mathrm{Tf}}+\epsilon_1, \epsilon_2< \rho < \epsilon_1 \} = \varnothing$ 
\end{itemize}
for some $\epsilon_1>0$ and sufficiently small $\epsilon_2 \in (0, \epsilon_1)$. Then, $\operatorname{WF}_{\mathrm{de,sc} }^{m,\mathsf{s}}(u) \cap \calN_I = \varnothing$.
\label{prop:beth_propagation_one}
\end{proposition}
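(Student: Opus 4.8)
This is a standard radial-point (positive-commutator) estimate at a saddle-type radial set, so the plan follows the same architecture used for \Cref{prop:A_first_propagation}, with the roles of the relevant monotone quantities played by $\rho$ (the fiber-radial defining function near $\mathrm{df}$, here the coordinate in \cref{eq:misc_co1}) and $\beth = \varrho_{\mathrm{nf}}^2 + \hat\eta^2$ from \Cref{prop:beth}. The first step is to choose the commutant. We work over $\Omega_{\mathrm{nfTf},\sigma,T}$, arrange as usual that $\varrho_{\mathrm{df}} = \rho$ and $\varrho_{\mathrm{nf}} = \varrho_{\mathrm{nFf}}$ locally, and set $a_0 = \varrho_{\mathrm{df}}^{m_0}\varrho_{\mathrm{nf}}^{s_0}\varrho_{\mathrm{Tf}}^{\ell_0}$ with $m_0 = 1-2m$, $s_0 = -1-2s_{\mathrm{nf}}$, and $\ell_0 = -1-2s_{\mathrm{Tf}}$ (any choice of $\ell_0$ works since we are localizing in $\varrho_{\mathrm{Tf}}$, but this keeps the weights bookkept uniformly). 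Using \cref{eq:H_nfTf_df} one computes $\mathsf{H}_{p[g]} a_0 = -\alpha a_0$ with $\alpha = -m_0(\hat\eta^2 + (s-1)^2) - s_0 \cdot 2 \cdot (\text{the coefficient of }\varrho_{\mathrm{nf}}\partial_{\varrho_{\mathrm{nf}}}) + \cdots$; restricted to $\calN^\varsigma_\sigma$, where $s=0$, $\hat\eta=0$, $\rho=0$, the term structure of \Cref{prop:alpham0s0} (applied with exponents $m_0$ on $\varrho_{\mathrm{df}}$ and $s_0$ on $\varrho_{\mathrm{nf}}$) gives $\alpha|_{\calN^\varsigma_\sigma} = -2(m_0 - s_0)/(\text{sign}) = $ a quantity whose sign is governed by the threshold $m < s_{\mathrm{nf}} + 1$. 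This is the algebraic heart of the estimate: under the stated hypothesis $m < s_{\mathrm{nf}}+1$ we get $\varsigma\sigma \alpha > 0$ on $\calN_I$ (matching the ``sink in the $\partial_{\varrho_{\mathrm{nf}}}$ direction but source at $\calN$'' picture described just above), so the positive-commutator argument propagates control \emph{from} a region where $\rho$ is bounded below (i.e.\ from finite de,sc-frequencies over the corner) \emph{into} $\calN_I$, which is exactly the direction claimed.

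\textbf{Localizing and assembling the commutant.} Next I would multiply $a_0$ by cutoffs to localize near $\calN_I$: take $a = \chi_\digamma(\tilde p[g])^2\,\chi_\digamma(\varrho_{\mathrm{nf}})^2\,\chi_\digamma(\beth)^2\,\chi_\digamma(s^2)^2\,\psi(\varrho_{\mathrm{Tf}})\,a_0$, where $\chi_\digamma$ is the usual one-sided cutoff of \cite{VasyGrenoble} (with $-\operatorname{sgn}(t)\chi'_\digamma\chi_\digamma = \chi_{0,\digamma}^2$) and $\psi$ is a fixed cutoff in $\varrho_{\mathrm{Tf}}$ supported in $\{\varrho_{\mathrm{Tf}} \le \bar\varrho_{\mathrm{Tf}} + \epsilon_1\}$ and $\equiv 1$ near $\{\varrho_{\mathrm{Tf}}\le\bar\varrho_{\mathrm{Tf}}\}$ — the factor $\psi$ is what turns the full radial set into the ray $\calN_I$, and since $\varrho_{\mathrm{Tf}}$ is monotone under $\mathsf{H}_{p[g]}$ along $\calN$ by \Cref{prop:propagation_lemma}, $\mathsf{H}_{p[g]}\psi(\varrho_{\mathrm{Tf}})$ has a \emph{favorable} sign along the relevant portion of the flow (it only produces error terms microsupported where $\psi' \ne 0$, i.e.\ at $\varrho_{\mathrm{Tf}}\approx \bar\varrho_{\mathrm{Tf}}$, away from the ``cap end'' of $\calN_I$). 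Differentiating $a$ along $\mathsf{H}_{p[g]}$ and using \Cref{prop:beth} to absorb $\mathsf{H}_{p[g]}\beth + 4\varsigma\sigma\beth$ into $\beth^{3/2}L^\infty + \varrho_{\mathrm{df}}\beth L^\infty + \tilde p[g]L^\infty$, I would write $\mathsf{H}_{p[g]} a + w^{-1}p_1 a = (-\delta a_0^{-1}a^{?} - b^2 + e^2\varrho_{\mathrm{nf}} - f^2 - f_2^2 + h)a_0$ exactly as in \cref{eq:misc_hpp}, where $b$ carries the main $\alpha$-term (good sign by the threshold condition, with $w^{-1}p_1$ vanishing at $\calN_I$ by \cref{eq:misc_276}, hence harmless for $\digamma$ large), $e$ comes from the $\chi_{0,\digamma}(\varrho_{\mathrm{nf}})$ factor (this is the term microsupported in the ``glancing band'' $\{\epsilon_2 < \varrho_{\mathrm{nf}} < \epsilon_1\}$ where control is assumed — note $\varrho_{\mathrm{nf}}$ is monotone, decreasing toward $\calN$, so the cutoff in $\varrho_{\mathrm{nf}}$ behaves like the $\varrho_{\mathrm{Sf}}$ cutoff did in \Cref{prop:A_first_propagation}), $f, f_2$ come from the $\chi_{0,\digamma}(\beth)$ and $\chi_{0,\digamma}(s^2)$ factors (negative-definite contributions, absorbed), and $h$ from the $\chi'_\digamma(\tilde p[g])$ factor (supported off $\Sigma_{\mathsf{m}}$, elliptically controlled).

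\textbf{Quantization, positivity, regularization, and conclusion.} I then quantize to self-adjoint $A = \tfrac12(\operatorname{Op}(a)+\operatorname{Op}(a)^*)$ with the appropriate weights and obtain an operator identity $-i[P,A] + i(P-P^*)A = -\delta A\Lambda^2 A - B^*B + E^*E - F^*F - F_2^*F_2 + H + R$ with $\operatorname{WF}'_{\mathrm{de,sc}}$ of everything contained in $\operatorname{supp} a$, and with $\operatorname{WF}'_{\mathrm{de,sc}}(E)$ pushable into $\{\beth, s^2 < \epsilon_1, \varrho_{\mathrm{Tf}}\le\bar\varrho_{\mathrm{Tf}}+\epsilon_1, \epsilon_2 < \rho < \epsilon_1\}$ by taking $\digamma$ large (reconciling $\rho$ and $\varrho_{\mathrm{nf}}$: near $\calN_I$ over the corner, smallness of $\varrho_{\mathrm{nf}}$ with $\rho$ bounded below is the relevant region — one checks the cutoff supports are arranged so that the $E$-term lands exactly in the assumed-empty-wavefront-set set). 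Pairing against $u$ (first Schwartz, then general via the standard two-step regularization of \cite{VasyGrenoble}\cite{HintzVasyScriEB}, or equivalently the single regularizer $\varphi_{\varepsilon,K}$ of \cref{eq:regularizer} with exponents chosen so that $\varphi_{\varepsilon,K}^{-1}\mathsf{H}_{p[g]}\varphi_{\varepsilon,K}$ has the sign matching $\alpha$ at $\calN_I$, i.e.\ the analogue of choosing $s_1=2$, $m_1,\ell_1 = 1/2$ there — here the threshold $m < s_{\mathrm{nf}}+1$ dictates the admissible regularization orders and the condition ``$-2N_1 + N_0 > $ const'' emerges as before) yields the strong estimate
\begin{multline}
	\lVert \tilde{B} u \rVert_{H_{\mathrm{de,sc}}^{m,(N,N,s_{\mathrm{Sf}},s_{\mathrm{nFf}},N)} }^2 \preceq  \lVert \tilde{E} u \rVert_{H_{\mathrm{de,sc}}^{m,(-N,-N,-N,s_{\mathrm{nFf}},-N)} }^2 + \lVert G P u \rVert_{H_{\mathrm{de,sc}}^{m-1,\mathsf{s}+1} }^2  \\
	+ \lVert G u \rVert_{H_{\mathrm{de,sc}}^{m-1/2,(-N,-N,-N,s_{\mathrm{nFf}}-1/2,-N)} }^2 + \lVert u \rVert_{H_{\mathrm{de,sc}}^{-N,-N}}^2,
	\label{eq:bethNestplan}
\end{multline}
with $\tilde B$ elliptic on $\calN_I$ and $\tilde E$ microsupported in the band; the hypotheses make the first two right-hand terms finite, $u\in\calS'$ makes the last finite, and an inductive argument quarter-order-by-quarter-order (using that the shifted orders still satisfy the threshold iff the originals do, exactly as in the last paragraph of the proof of \Cref{prop:A_first_propagation}) removes the third. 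Hence $\operatorname{WF}_{\mathrm{de,sc}}^{m,\mathsf{s}}(u)\cap\calN_I = \varnothing$. The main obstacle — and the only place real care beyond transcription is needed — is the bookkeeping of \emph{which} direction along the ray the flow, and hence the propagation, actually goes: one must verify that with the one-sided cutoffs $\chi_\digamma$ oriented correctly, the sign produced by $\alpha|_{\calN_I}$ under the threshold $m < s_{\mathrm{nf}}+1$ is consistent with the sign of $\mathsf{H}_{p[g]}\varrho_{\mathrm{Tf}}$ (monotonicity toward smaller $\varrho_{\mathrm{Tf}}$, i.e.\ toward the timelike cap) and with the placement of the $\psi(\varrho_{\mathrm{Tf}})$ and $\chi_{0,\digamma}(\varrho_{\mathrm{nf}})$ cutoff supports, so that the ``source'' term $E^*E$ genuinely sits in the region where control is hypothesized and the ``sink'' contributions all carry the right sign; getting a sign backwards here would give an estimate propagating the wrong way, which is precisely the subtlety the paper flags for the strictly-adjacent rays.
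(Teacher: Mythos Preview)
Your overall architecture (positive commutator at a saddle radial set, with weighted commutant localized by $\chi_\digamma$ cutoffs) is right, but the specific construction has a real error in the choice of cutoffs, and this error propagates to a mismatch with the hypothesis.

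\textbf{The wrong cutoff variable.} You put a cutoff $\chi_\digamma(\varrho_{\mathrm{nf}})^2$ in the commutant and claim the resulting $E$ term, coming from $\chi_{0,\digamma}(\varrho_{\mathrm{nf}})$, is the bad-sign term microsupported in the band where control is assumed. But the hypothesis is a band in $\rho$ (the fiber-radial coordinate), not $\varrho_{\mathrm{nf}}$. More importantly, the sign is wrong: over $\Omega_{\mathrm{nfTf},\sigma,T}$ near $\calN^+_+$ one has (from \cref{eq:H_nfTf_df}) $\mathsf{H}_p\varrho_{\mathrm{nf}} = -2(1-s)\varrho_{\mathrm{nf}}$ and $\mathsf{H}_p\rho = +2(\hat\eta^2+(s-1)^2)\rho$, so both $\varrho_{\mathrm{nf}}$ and $\beth$ are \emph{decreasing} while $\rho$ is \emph{increasing} under the flow. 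With the threshold $m<s_{\mathrm{nf}}+1$ one gets $\mathsf{H}_{p[g]}a_0 = +\tilde\alpha a_0$, $\tilde\alpha>0$ (the paper's sign convention here is opposite to that at $\calA$, where it writes $\mathsf{H}_{p[g]}a_0 = -\alpha a_0$). Hence both the $\chi_\digamma(\varrho_{\mathrm{nf}})$ and $\chi_\digamma(\beth)$ factors produce contributions with the \emph{same} sign as $b^2$, and cannot play the role of $E$. The genuinely opposite-sign term comes from a cutoff $\chi_\digamma(\rho)^2$, which you have omitted; its derivative term is microsupported in a $\rho$-band, exactly matching the hypothesis. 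The paper's commutant is $a = \chi_\digamma(\tilde p[g])^2\chi_\digamma(\rho)^2\chi_\digamma(\beth)^2\chi_\digamma(\max\{0,\varrho_{\mathrm{Tf}}-\bar\varrho_{\mathrm{Tf}}\})^2 a_0$: the $\rho$ cutoff gives the $-\varrho_{\mathrm{df}}e^2$ term, while the $\beth$ cutoff gives $+f^2$ with good sign.

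\textbf{The ray localizer is subtler than you indicate.} You say $\mathsf{H}_{p[g]}\psi(\varrho_{\mathrm{Tf}})$ has a favorable sign; but $\mathsf{H}_p\varrho_{\mathrm{Tf}} = -2s\varrho_{\mathrm{Tf}}$ \emph{vanishes} at $\calN$ since $s=0$ there, so no definite sign emerges directly. The paper handles this by invoking \Cref{lem:s_alt} to write $s = s_1\tilde p + s_2(\hat\eta^2+\mathsf{m}^2\rho^2)$ with $s_2>0$ on $\calN$; the $\varrho_{\mathrm{Tf}}$-cutoff contribution then splits into good-sign terms $g_i$ (built from $\hat\eta_i$ and $\rho$) plus a piece proportional to $\tilde p$. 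That last piece is absorbed into $h$, but now $h$ is \emph{not} supported off the characteristic set, so one cannot use elliptic regularity for it; instead one estimates $|\langle u, H\tilde P u\rangle| \preceq \lVert Ou\rVert_{H^{m-1,\mathsf{s}-1}}^2 + \lVert OPu\rVert_{H^{m-1,\mathsf{s}+1}}^2 + \ldots$ directly via the wavefront hypothesis on $Pu$. Your plan does not account for either of these steps.
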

The fact that the threshold condition involves the linear combination $m-s_{\mathrm{nf}}$ of orders can be read off \cref{eq:misc_282}.
\begin{proof}
	We handle the case $\varsigma,\sigma = +$, the others being analogous. 
	Let $a_0 = \varrho_{\mathrm{df}}^{m_0}\varrho_{\mathrm{nFf}}^{s_0}\varrho_{\mathrm{Tf}}^{\ell_0}$, where $m_0 = 1-2m$, $s_0 = -1-2s_{\mathrm{nf}}$, $\ell_0 = -1-2s_{\mathrm{Ff}}$. 
	Then, by \Cref{prop:alpham0s0}, we have 
	\begin{equation} 
		\mathsf{H}_{p[g]} a_0 = \tilde{\alpha}  a_0 
	\end{equation} 
	for a symbol $\tilde{\alpha}$ such that $\tilde{\alpha} = \alpha$ at $\calN$, where $\alpha=\alpha(m_0,s_0)$ is as defined in that proposition.
	The difference $\tilde{\alpha}-\alpha$ comes from hitting the $\varrho_{\mathrm{Tf}}$ term in $a_0$ by $\mathsf{H}_{p[g]}$. But, since the coefficient in $\mathsf{H}_p$ of $\varrho_{\mathrm{Tf}}\partial_{\varrho_{\mathrm{Tf}}}$ (see \cref{eq:H_nfTf_df}) vanishes at $\calN$ (where $s=0$), and since $\mathsf{H}_{p[g]}-\mathsf{H}_p$ vanishes as a b-vector field over $\partial \bbO$, the difference $\tilde{\alpha}-\alpha$ vanishes at $\calN$. 
	
	By \Cref{prop:alpham0s0} (and the observation that $m_0>s_0 \iff m<s_{\mathrm{nFf}}+1 $), $\tilde{\alpha}>0$ at $\calN_I$.

	Let $\Upsilon\in\bbR$, $\digamma,\digamma',\digamma''>0$, and  $\tilde{\varrho}_{\mathrm{Tf}}=\Upsilon \varrho_{\mathrm{nf}} + \varrho_{\mathrm{Tf}}$.
	Define $a\in C^\infty({}^{\mathrm{de,sc}}\overline{T}^* \bbO )$ by 
	\begin{equation}
		a= \chi_{\digamma''}(\tilde{p}[g])^2  \chi_{\digamma}(\rho_{\mathrm{df}})^2\chi_{\digamma'}(\beth)^2 \chi_\digamma( \max\{0, \tilde{\varrho}_{\mathrm{Tf}} - \bar{\varrho}_{\mathrm{Tf}} \} )^2 a_0
	\end{equation}
	near $\calN_I$, and we can take $a$ to be supported nearby. Then, near $\calN_I$, 
	\begin{multline}
		\mathsf{H}_{p[g]} a = \tilde{\alpha} a - 2\alpha_{\mathrm{df}} \chi_{\digamma''}(\tilde{p}[g])^2 \chi_{0,\digamma}(\varrho_{\mathrm{df}})^2 \chi_{\digamma'}(\beth)^2 \chi_\digamma(\max\{0,\tilde{\varrho}_{\mathrm{Tf}}-\bar{\varrho}_{\mathrm{Tf}}\})^2  \varrho_{\mathrm{df}} a_0 \\ +2 \chi_{\digamma''}(\tilde{p}[g])^2 \chi_{\digamma}(\varrho_{\mathrm{df}})^2 \chi_{0,\digamma'}(\beth)^2 \chi_\digamma(\max\{0,\tilde{\varrho}_{\mathrm{Tf}}-\bar{\varrho}_{\mathrm{Tf}}\})^2  (4\beth - F_2) a_0  
		\\
		+4( s\varrho_{\mathrm{Tf}}+(1-s)\Upsilon \varrho_{\mathrm{nf}}+\varrho_{\mathrm{nf}}^2  \varrho_{\mathrm{Tf}}^2 c)  \chi_{\digamma''}(\tilde{p}[g])^2  \chi_{\digamma}(\varrho_{\mathrm{df}})^2\chi_{\digamma'}(\beth)^2  \chi_{0,\digamma}(\max\{0,\tilde{\varrho}_{\mathrm{Tf}}-\bar{\varrho}_{\mathrm{Tf}}\})^2 a_0
		\\ +2 \chi'_{\digamma''}(\tilde{p}[g])\chi_{\digamma''}(\tilde{p}[g]) \chi_{\digamma}(\varrho_{\mathrm{df}})^2 \chi_{\digamma'}(\beth)^2 \chi_\digamma(\max\{0,\tilde{\varrho}_{\mathrm{Tf}}-\bar{\varrho}_{\mathrm{Tf}}\})^2  \tilde{q} \tilde{p}[g] a_0, 
	\end{multline}
	where 
	\begin{itemize}
		\item 	$\alpha_{\mathrm{df}} = \alpha[g](1,0)$,  $\alpha_{\mathrm{nf}} = \alpha[g](0,1)$, where $\alpha[g]$ is as in \Cref{prop:alpham0s0},
		\item $F_2$ is as in \Cref{prop:beth}, 
		\item  and $c\in C^\infty({}^{\mathrm{de,sc}}\overline{T}^* \bbO)$ comes from applying $\varrho_{\mathrm{nf}}^{-1}\varrho_{\mathrm{Tf}}^{-1}(\mathsf{H}_{p[g]}-\mathsf{H}_p)$ to $\tilde{\varrho}_{\mathrm{Tf}}$. The main $s\varrho_{\mathrm{Tf}}$ term there is read off \cref{eq:H_nfTf_df}, 
		\item $\tilde{q}$ is as in the proof of \Cref{prop:A_first_propagation}.
	\end{itemize}

	By \Cref{prop:alpham0s0}, $\alpha_{\mathrm{df}}>0$ near $\calN^+_+$.  By choosing $\digamma,\digamma',\digamma''$ sufficiently large, we can, by \Cref{lem:s_alt}, write $s=s_1 \tilde{p} + s_2(\hat{\eta}^2 + \mathsf{m}^2 \rho^2)$ nearby, where $s_1,s_2$ are as in the lemma.
	We want to work with $\tilde{p}[g]$, not $\tilde{p}$, so we will write this as 
	\begin{equation}
	s=s_1 \tilde{p}[g]+ \varrho_{\mathrm{nf}}^2\varrho_{\mathrm{Tf}}^2 c_2 + s_2(\hat{\eta}^2 + \mathsf{m}^2 \rho^2)
	\label{eq:441}
	\end{equation}
	for $c_2\in C^\infty({}^{\mathrm{de,sc}}\overline{T}^* \bbO)$ defined by $c_2 = s_1 (\tilde{p} -\tilde{p}[g])  \varrho_{\mathrm{nf}}^{-2}\varrho_{\mathrm{Tf}}^{-2}$. The key feature of \cref{eq:441} is that each term on the right-hand side is amenable to the positive commutator argument: 
	\begin{itemize}
		\item The term proportional to $\tilde{p}[g]$, when quantized and applied to $u$, will yield a term involving the forcing and therefore under control.
		\item The term involving $c_2$ is suppressed by a positive integer power of $\varrho_{\mathrm{nf}}$, which we will be able to dominate by a term of semidefinite sign by choosing $\Upsilon\neq 0$.
		\item The terms $s_2\hat{\eta}^2 ,s_2 \mathsf{m}^2 \rho^2$ have a semidefinite sign, because $s_2$ does (as part of \Cref{lem:s_alt}).
	\end{itemize}

	For all $\Upsilon>0$ sufficiently large and $\digamma,\digamma'>0$ sufficiently large, for $\digamma''>0$ sufficiently large, for $\delta$ sufficiently small (depending on $\digamma,\digamma'$), we can define symbols 
	\begin{equation} 
		b,e,f,g,h,z \in S_{\mathrm{de,sc}}^{0,\mathsf{0}}
	\end{equation} 
	such that 
	\begin{equation}
		\mathsf{H}_{p[g]} a + w^{-1} p_1 a = \Big(\delta a_0^{-2} a^2+ b^2 -\varrho_{\mathrm{df}} e^2  + f^2 + \rho_{\mathrm{nf}} z^2+ \varrho_{\mathrm{Tf}} \sum_{i=1}^d g^2_i+ h\tilde{p}[g]\Big)a_0 
	\end{equation}
	everywhere, with the following definitions:
	\begin{align}
		\begin{split}
		b &= \chi_{\digamma''}(\tilde{p}[g])\chi_{\digamma}(\varrho_{\mathrm{df}})\chi_{\digamma'}(\beth)\chi_\digamma(\max\{0,\tilde{\varrho}_{\mathrm{Tf}}-\bar{\varrho}_{\mathrm{Tf}}\}) (\tilde{\alpha}-\delta a_0^{-1} a + w^{-1} p_1 )^{1/2}, \\
		e &= \sqrt{2} \alpha_{\mathrm{df}}^{1/2} \chi_{\digamma''}(\tilde{p}[g])\chi_{0,\digamma}(\varrho_{\mathrm{df}}) \chi_{\digamma'}(\beth)\chi_\digamma(\max\{0,\tilde{\varrho}_{\mathrm{Tf}}-\bar{\varrho}_{\mathrm{Tf}}\}), \\
		f &= \sqrt{2} \chi_{\digamma''}(\tilde{p}[g]) \chi_{\digamma}(\varrho_{\mathrm{df}})\chi_{0,\digamma'}(\beth) \chi(\max\{0,\tilde{\varrho}_{\mathrm{Tf}}-\bar{\varrho}_{\mathrm{Tf}}\}) (4\beth - F_2)^{1/2}, 
		\end{split}
	\end{align}
	and, for $i=1,\ldots,d-1$, 
	\begin{align}
		g_i &= 2 \sqrt{s_2 } \chi_{\digamma''}(\tilde{p}[g])  \chi_{\digamma}(\varrho_{\mathrm{df}})\chi_{\digamma'}(\beth) \chi_{0,\digamma}(\max\{0,\tilde{\varrho}_{\mathrm{Tf}}-\bar{\varrho}_{\mathrm{Tf}}\}) \hat{\eta}_i \\ 
		g_{d} &= 2 \sqrt{s_2 } \chi_{\digamma''}(\tilde{p}[g])  \chi_{\digamma}(\varrho_{\mathrm{df}})\chi_{\digamma'}(\beth) \chi_{0,\digamma}(\max\{0,\tilde{\varrho}_{\mathrm{Tf}}-\bar{\varrho}_{\mathrm{Tf}}\}) \mathsf{m} \rho, 
	\end{align} 
	(well-defined because $s_2>0$ on $\calN^+_+$), 
	and, finally,
	\begin{equation}
	z = 2 \sqrt{(1-s)\Upsilon+ \varrho_{\mathrm{nf}}\varrho_{\mathrm{Tf}}^2 (c+c_2) } \cdot\chi_{\digamma''}(\tilde{p}[g]) \chi_{\digamma}(\varrho_{\mathrm{df}})\chi_{\digamma'}(\beth)  \chi_{0,\digamma}(\max\{0,\tilde{\varrho}_{\mathrm{Tf}}-\bar{\varrho}_{\mathrm{Tf}}\})
	\end{equation}
	(as long as $\Upsilon>0$, then $(1-s)\Upsilon+ \varrho_{\mathrm{nf}}\varrho_{\mathrm{Tf}}^2 (c+c_2) = \Upsilon>0$ on $\calN$)
	and 
	\begin{multline}
		h =  2 \chi'_{\digamma''}(\tilde{p}[g])\chi_{\digamma''}(\tilde{p}[g]) \chi_{\digamma}(\varrho_{\mathrm{df}})^2\chi_{\digamma'}(\beth)^2  \chi_\digamma (\max\{0,\tilde{\varrho}_{\mathrm{Tf}}-\bar{\varrho}_{\mathrm{Tf}}\})^2  \tilde{q} \\ + 4 s_1\varrho_{\mathrm{Tf}} \chi_\digamma(\tilde{p}[g])^2 \chi_{\digamma}(\varrho_{\mathrm{df}})^2\chi_{\digamma'}(\beth)^2 \chi_{0,\digamma}(\max\{0,\tilde{\varrho}_{\mathrm{Tf}}-\bar{\varrho}_{\mathrm{Tf}}\})^2
	\end{multline}
	near $\calN^+_+$. For instance, the properties of $F_2$ as specified in \Cref{prop:beth} give that $4\beth > F_2$ if $\digamma,\digamma',\digamma''$ are sufficiently large and $\digamma''$ is sufficiently large relative to these.

	If $\digamma,\digamma',\digamma''$, are sufficiently large, then $\operatorname{WF}_{\mathrm{de,sc}}'(e)\subseteq \{ \beth, s^2 < \epsilon_1 , \varrho_{\mathrm{Tf}}\leq \bar{\varrho}_{\mathrm{Tf}}+\epsilon_1, \epsilon_2< \rho < \epsilon_1 \}$, as long as $\epsilon_2$ is sufficiently small relative to $\digamma,\digamma',\digamma''$.

	Quantizing, we get $A = (1/2)(\operatorname{Op}(a)+\operatorname{Op}(a)^*) \in  \Psi_{\mathrm{de,sc}}^{-m_0,(-\infty,-\infty,-\infty,-s_0,-\ell_0)},$
	\begin{align}
		\begin{split} 
			B = \operatorname{Op}(w^{1/2} a_0^{1/2} b) &\in  \Psi_{\mathrm{de,sc}}^{m,(-\infty,-\infty,-\infty,s_{\mathrm{nFf}},s_{\mathrm{Ff}}) }, \\
			E = \operatorname{Op}(w^{1/2} a_0^{1/2} \varrho_{\mathrm{df}}^{1/2} e) &\in \Psi_{\mathrm{de,sc}}^{-\infty,(-\infty,-\infty,-\infty,s_{\mathrm{nFf}},s_{\mathrm{Ff}} )}, \\
			F = \operatorname{Op}(w^{1/2} a_0^{1/2} f) &\in  \Psi_{\mathrm{de,sc}}^{m,(-\infty,-\infty,-\infty, s_{\mathrm{nFf}},s_{\mathrm{Ff}} )}, \\
			G_i = \operatorname{Op}(w^{1/2} a_0^{1/2} \varrho_{\mathrm{Tf}}^{1/2} g_i) &\in  \Psi_{\mathrm{de,sc}}^{m,(-\infty,-\infty,-\infty,s_{\mathrm{nFf}}, -\infty)}, \\
			Z= \operatorname{Op}(w^{1/2} a_0^{1/2} \varrho_{\mathrm{nf}}^{1/2} z ) &\in \Psi_{\mathrm{de,sc}}^{m,(-\infty,-\infty,-\infty,s_{\mathrm{nFf}}-1/2,-\infty)}
		\end{split}
	\end{align}
	$H = \operatorname{Op}(wa_0 h) \in \Psi_{\mathrm{de,sc}}^{2m,(-\infty,-\infty,-\infty,2s_{\mathrm{nFf}},2s_{\mathrm{Ff}} )}$, 
	and $R\in \Psi_{\mathrm{de,sc}}^{-m_0,(-\infty,-\infty,-\infty,-2-s_0,-2-\ell_0)}$ such that 
	\begin{equation}
		-i [P, A] + i (P^* - P)= \delta A\Lambda^2 A + B^*B -E^*E + F^*F  + \sum_{i=1}^d G_i^* G_i + Z^* Z +H  \tilde{P}  + R 
		\label{eq:misc_432}
	\end{equation}
	for 
	\begin{equation} 
		\Lambda = (1/2)( \operatorname{Op}(w^{1/2} a_0^{-1/2})+\operatorname{Op}(w^{1/2} a_0^{-1/2})) \in  \Psi_{\mathrm{de,sc}}^{1-m,(-1/2,-1/2,-1/2,-1-s_{\mathrm{nFf}},-1-s_{\mathrm{Ff}})},
	\end{equation} 
	with the operators $A,B,E,F,G_i,Z,H$ all having essential supports contained within $\operatorname{supp} a$. Here, $\tilde{P}=\operatorname{Op}(\tilde{p}[g]) \in \Psi_{\mathrm{de,sc}}^{0,\mathsf{0}}$.
	
	The argument proceeds as usual from here, where the key observation is that the $F^*F$ term, $G_i^* G_i$, $Z^* Z$ terms have the same sign as the $B^* B$ term (and therefore can ultimately be discarded from the estimate) except we need a different estimate for the $H$ term (because $H\tilde{P}$ appears in \cref{eq:misc_432} instead of just $H$). Its contribution $\smash{\langle u, H\tilde{P} u \rangle}$ is estimated in the following way: for $u\in \calS$ and $N\in \bbN$, 
	\begin{equation} 
		|\langle u, H\tilde{P} u \rangle| \lesssim \lVert O u \rVert_{H_{\mathrm{de,sc}}^{m-1,\mathsf{s}-1}}^2 + \lVert O P u \rVert_{H_{\mathrm{de,sc}}^{m-1,\mathsf{s}+1}}^2 + \lVert P u \rVert^2_{H_{\mathrm{de,sc}}^{-N,-N} } 
	\end{equation} 
	for some $O\in \Psi_{\mathrm{de,sc}}^{0,\mathsf{0}}$ with essential support contained near $\calN^+_+$. Thus, rather than controlling this term with elliptic regularity as before, we use the assumption 
	\begin{equation} 
		\operatorname{WF}_{\mathrm{de,sc} }^{m-1,\mathsf{s}+1}(P u) \cap \calN_I =\varnothing,
	\end{equation}
	which implies that 
	\begin{equation} 
		\lVert O P u \rVert_{H_{\mathrm{de,sc}}^{m-1,\mathsf{s}+1}}<\infty
	\end{equation} 
	as long as $\digamma$ is sufficiently large. (And the $\smash{\lVert O u \rVert_{H_{\mathrm{de,sc}}^{m-1,\mathsf{s}-1}}}$ term will eventually be controlled using an inductive argument.)
	
	The end result, after carrying out the regularization argument and the typical inductive argument, is the estimate, holding in the strong sense for all $u\in \calS'$, 
	\begin{multline}
		\lVert \tilde{B} u \rVert_{H_{\mathrm{de,sc}}^{m,(N,N,N,s_{\mathrm{nFf}},s_{\mathrm{Ff}})} }^2 \lesssim  \lVert \tilde{E} u \rVert_{H_{\mathrm{de,sc}}^{m,(-N,-N,-N,s_{\mathrm{nFf}},s_{\mathrm{Ff}})} }^2 + \lVert QP u \rVert_{H_{\mathrm{de,sc}}^{m-1,(-N,-N,-N,s_{\mathrm{nFf}}+1,s_{\mathrm{Ff}}+1 )} }^2  \\ 
		 +\lVert u \rVert_{H_{\mathrm{de,sc}}^{-N,-N}}^2, 
		 \label{eq:misc_344}
	\end{multline}
	for some $\tilde{B} \in \smash{\Psi_{\mathrm{de,sc}}^{0,\mathsf{0}}}$ elliptic along $\calN_I$, $\tilde{E} \in  \smash{\Psi_{\mathrm{de,sc}}^{0,\mathsf{0}}}$ satisfying
	\begin{equation}
		\operatorname{WF}'_{\mathrm{de,sc}}(\tilde{E}) = \operatorname{WF}'_{\mathrm{de,sc}}(E),
	\end{equation}
	and some $Q$ dependent on the other operators whose essential support can be made to be an arbitrarily small neighborhood of $\calN_I$ by making $\digamma,\digamma',\digamma''$ larger. The estimate \cref{eq:misc_344} finishes the proof. 
\end{proof}
\begin{propositionp}
	Fix signs $\varsigma,\sigma \in \{-,+\}$, and let $\calN_I$ denote a strictly spacelike-adjacent ray of $\calN^\varsigma_\sigma$, which we can write using the coordinates \cref{eq:misc_co2} (over $\Omega_{\mathrm{nfSf},\sigma,R}$, for some large $R>0$) as 
	\begin{equation}
		\calN_I = \{\varrho_{\mathrm{Sf}} \leq \bar{\varrho}_{\mathrm{Sf}}, \rho = 0, \lambda = 1 ,\beth=0\}
	\end{equation} 
	for some $\bar{\varrho}_{\mathrm{Sf}}>0$, where $\beth$ is as in \Cref{prop:beth}.  
	Let $m\in \bbR$ and $\mathsf{s}\in \bbR^5$ satisfy $m>s_{\mathrm{nFf}}+1$, where $s\in \{s_{\mathrm{nPf}},s_{\mathrm{nFf}}\}$, depending on the sign $\sigma$. 
	Suppose that $u\in \calS'$ satisfies 
	\begin{itemize}
		\item $\operatorname{WF}_{\mathrm{de,sc} }^{m-1,\mathsf{s}+1}(P u) \cap \calN_I =\varnothing$ and 
		\item $\operatorname{WF}_{\mathrm{de,sc}}^{m,\mathsf{s}}(u) \cap \{\epsilon_2 < \beth < \epsilon_1 , \varrho_{\mathrm{Sf}}\leq \bar{\varrho}_{\mathrm{Sf}}+\epsilon_1, \rho^2 + (\lambda-1)^2 < \epsilon_1 \} = \varnothing$ 
	\end{itemize}
	for some $\epsilon_1>0$ and sufficiently small $\epsilon_2 \in (0, \epsilon_1)$. Then, $\operatorname{WF}_{\mathrm{de,sc} }^{m,\mathsf{s}}(u) \cap \calN_I = \varnothing$.
	\label{prop:beth_propagation_two}
\end{propositionp}
The proof is analogous to that above, with the usual sign switches.

For the special case of the full ray, the conclusions of both propositions hold:
\begin{propositionp}
	Fix signs $\varsigma,\sigma \in \{-,+\}$. 
	Suppose that $u\in \calS'$ satisfies $\operatorname{WF}_{\mathrm{de,sc} }^{m-1,\mathsf{s}+1}(P u) \cap \calN^\varsigma_\sigma =\varnothing$ and at least one of 
	\begin{itemize}
		\item $m<s_{\mathrm{nFf}}+1$ and $\operatorname{WF}_{\mathrm{de,sc}}^{m,\mathsf{s}}(u)$ is disjoint from an annulus around $\calN^\varsigma_\sigma$ of the form $\{ \beth, s^2 < \epsilon_1 ,  \epsilon_2< \rho < \epsilon_1 \}$ nearby, 
		\item $m>s_{\mathrm{nFf}}+1$ and $\operatorname{WF}_{\mathrm{de,sc}}^{m,\mathsf{s}}(u)$ is disjoint from an annulus around $\calN^\varsigma_\sigma$ of the form $\{\epsilon_2 < \beth < \epsilon_1 , \rho^2 + (\lambda-1)^2 < \epsilon_1 \}$ nearby, 
	\end{itemize}
	hold, for some $\epsilon_1>0$ and sufficiently small $\epsilon_2 \in (0, \epsilon_1)$. 
	Then, $\operatorname{WF}_{\mathrm{de,sc} }^{m,\mathsf{s}}(u) \cap \calN^\varsigma_\sigma = \varnothing$.
	\label{prop:beth_propagation_three}
\end{propositionp}
The proof is analogous to those above, except we no longer need the cutoff along null infinity, and we must make sure that the symbols are well-defined in both coordinate patches.

\subsection{Propagation Through $\calK$}
\label{subsec:calK}

\begin{proposition}
	Fix signs $\varsigma,\sigma \in \{-,+\}$.
	Suppose that $m\in \bbR$ and $\mathsf{s} = (s_{\mathrm{Pf}},s_{\mathrm{nPf}},s_{\mathrm{Sf}},s_{\mathrm{nFf}},s_{\mathrm{Ff}})\in \bbR^5$ satisfying $m+s_{\mathrm{nf}}-2s_{\mathrm{Sf}}> 1$, where $s_{\mathrm{nf}}\in \{s_{\mathrm{nPf}},s_{\mathrm{nFf}}\}$, depending on $\sigma$. 
	Then, if $u\in \calS'$ satisfies 
	\begin{itemize}
		\item $\operatorname{WF}_{\mathrm{de,sc}}^{m-1,\mathsf{s}+1}(P u) \cap \calK^\varsigma_\sigma = \varnothing$, 
		\item $\operatorname{WF}_{\mathrm{de,sc}}^{m,\mathsf{s}}(u) \cap \{ \rho^2 +\hat{\eta}^2+(\lambda+3)^2 + \varrho_{\mathrm{nf}}^2< \epsilon_1, \epsilon_2 < \varrho_{\mathrm{Sf}} < \epsilon_1 \}  = \varnothing$ for some $\epsilon_1>0$ and $\epsilon_2 \in (0,\epsilon_1)$ sufficiently small, 
	\end{itemize}
	it is the case that $\operatorname{WF}_{\mathrm{de,sc}}^{m,\mathsf{s}}(u)\cap \calK^\varsigma_\sigma = \varnothing$. 
	\label{prop:K_first_propagation}
\end{proposition}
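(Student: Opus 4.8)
This is a standard positive-commutator radial-point estimate at the saddle $\calK^\varsigma_\sigma$, so the structure will mirror the proof of \Cref{prop:A_first_propagation} (and \Cref{prop:beth_propagation_one}) very closely; as the paper's own remarks indicate, the main task is bookkeeping rather than new ideas. I will treat only the case $\varsigma,\sigma=+$, the other three following by the usual symmetries, and near $\calK^+_+$ I use the coordinate system \cref{eq:misc_co2} over $\Omega_{\mathrm{nfSf},+,R}$, with $\varrho_{\mathrm{df}}=\rho$ arranged locally. The commutant is built from the elliptic weight $a_0 = \varrho_{\mathrm{df}}^{m_0}\varrho_{\mathrm{nFf}}^{s_0}\varrho_{\mathrm{Sf}}^{\ell_0}$ with $m_0 = 1-2m$, $s_0=-1-2s_{\mathrm{nFf}}$, $\ell_0=-1-2s_{\mathrm{Sf}}$, localized to $\calK^+_+$ by cutoffs $\chi_\digamma$ applied to $\tilde p[g]$, to $\gimel$ (the quadratic defining function of $\calK^+_+$ from \Cref{prop:gimel}), and to $\max\{0,\varrho_{\mathrm{Sf}}-\bar\varrho_{\mathrm{Sf}}\}$ or simply to $\varrho_{\mathrm{Sf}}$; set $a = \chi_\digamma(\tilde p[g])^2\chi_\digamma(\varrho_{\mathrm{Sf}})^2\chi_\digamma(\gimel)^2 a_0$ near $\calK^+_+$. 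Using \cref{eq:H_nfSf_df} together with \Cref{prop:alpham0s0}, one computes $\mathsf{H}_{p[g]} a_0 = \tilde\alpha a_0$ where at $\calK^+_+$ (which has $\lambda=-3$, $\hat\eta=0$) the relevant combination has a definite sign; the hypothesis $m+s_{\mathrm{nf}}-2s_{\mathrm{Sf}}>1$ is exactly what makes the ``source-like'' direction's coefficient have the sign needed to put the main $-B^*B$ (resp.\ $+B^*B$, depending on propagation direction) term on the correct side. The crucial structural input special to $\calK$ is \Cref{prop:gimel}: $\mathsf{H}_{p[g]}\gimel = -4\gimel - E_3 + F_3$ with $E_3\geq 0$ and $F_3$ vanishing cubically, which means the cutoff-derivative term $\chi_{0,\digamma}(\gimel)^2\,\mathsf{H}_{p[g]}\gimel$ contributes a sign-definite piece $-E_3$ (usable, i.e.\ absorbed with the good sign, or discarded) plus a cubically-vanishing remainder $F_3$ that is harmless near the radial set exactly as in \Cref{prop:A_first_propagation}.

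The algebra then yields, near $\calK^+_+$, an identity of the schematic form
\begin{equation}
	\mathsf{H}_{p[g]} a + w^{-1} p_1 a = \Big( -\delta a_0^{-1} a \,a_0^{-1} - b^2 + e^2\varrho_{\mathrm{Sf}} - f^2 + h\,\tilde p[g]\Big)a_0,
\end{equation}
with $b$ elliptic at $\calK^+_+$ (the square root being legitimate because $w^{-1}p_1$ vanishes there, by \cref{eq:misc_276}, so $\digamma$ large keeps the sign), $e$ supported in the annular region $\{\epsilon_2<\varrho_{\mathrm{Sf}}<\epsilon_1\}$ where regularity is assumed, $f$ built from $(4\gimel + F_3)^{1/2}$ (or $E_3^{1/2}$), and $h$ absorbing the $\tilde p[g]$-proportional error. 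I would set $w=\varrho_{\mathrm{df}}^{-1}\varrho_{\mathrm{Pf}}\varrho_{\mathrm{nPf}}\varrho_{\mathrm{Sf}}\varrho_{\mathrm{nFf}}\varrho_{\mathrm{Ff}}$ and quantize, obtaining self-adjoint $A\in\Psi_{\mathrm{de,sc}}^{-m_0,(\cdots)}$, $B,E,F\in\Psi_{\mathrm{de,sc}}^{m,(\cdots)}$, $H$, and a remainder $R$ two orders lower, satisfying the operator identity
\begin{equation}
	-i[P,A] + i(P-P^*)A = -\delta A\Lambda^2 A - B^*B + E^*E - F^*F + H + R,
\end{equation}
with all essential supports inside $\operatorname{supp}a$, a small neighborhood of $\calK^+_+$. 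Pairing against $u$ (first Schwartz), moving $E^*E$ to the right, controlling $\langle Hu,u\rangle$ by microlocal elliptic regularity (since $\operatorname{supp}h$ avoids $\operatorname{Char}_{\mathrm{de,sc}}(P)$), $\langle Ru,u\rangle$ by Cauchy--Schwarz, and $\langle Au,Pu\rangle$ via $\Lambda,\Lambda_{-1}$ and an $\varepsilon$-Young inequality absorbed into the $\delta\|\Lambda Au\|^2$ term, gives the a priori estimate
\begin{multline}
	\lVert \tilde B u \rVert_{H_{\mathrm{de,sc}}^{m,(N,N,s_{\mathrm{Sf}},N,N)}}^2 \preceq \lVert \tilde E u \rVert_{H_{\mathrm{de,sc}}^{m,(-N,s_{\mathrm{nf}},s_{\mathrm{Sf}},-N,-N)}}^2 + \lVert G P u \rVert_{H_{\mathrm{de,sc}}^{m-1,(\cdots,s_{\mathrm{Sf}}+1,\cdots)}}^2 \\ + \lVert G u \rVert_{H_{\mathrm{de,sc}}^{m-1/2,(\cdots,s_{\mathrm{Sf}}-1/2,\cdots)}}^2 + \lVert u \rVert_{H_{\mathrm{de,sc}}^{-N,-N}}^2,
\end{multline}
with $\tilde B$ elliptic at $\calK^+_+$ and $\operatorname{WF}'_{\mathrm{de,sc}}(\tilde E)\subseteq\operatorname{WF}'_{\mathrm{de,sc}}(E)$, contained in the assumed-regular annular set once $\digamma$ is large enough.

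To upgrade this to a statement valid for $u\in\calS'$ I would run the standard two-step regularization of \cite{VasyGrenoble}\cite{HintzVasyScriEB} --- or, more cleanly, the simultaneous regularizer $\varphi_{\varepsilon,K}$ of \cref{eq:regularizer} with the exponents $m_1,s_1,\ell_1>0$ chosen so that $\varphi_{\varepsilon,K}^{-1}\mathsf{H}_{p[g]}\varphi_{\varepsilon,K}$ has, at $\calK^+_+$, the sign matching that of the $b,B$-terms (here the ``source/sink'' role of $\calK$ in the various directions dictates the choice, analogously to taking $s_1=2,\ m_1=\ell_1=1/2$ versus $m_1=s_1=\ell_1=1$ in \S\ref{subsec:calA}). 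The third (half-order-loss) term on the right is then removed by the familiar quarter-order inductive argument, using that the threshold inequality $m+s_{\mathrm{nf}}-2s_{\mathrm{Sf}}>1$ is stable under shifting all orders down by a quarter. Feeding in the two hypotheses --- $\operatorname{WF}_{\mathrm{de,sc}}^{m-1,\mathsf{s}+1}(Pu)\cap\calK^\varsigma_\sigma=\varnothing$ makes the $GPu$ term finite, and the assumed regularity on the annular set $\{\rho^2+\hat\eta^2+(\lambda+3)^2+\varrho_{\mathrm{nf}}^2<\epsilon_1,\ \epsilon_2<\varrho_{\mathrm{Sf}}<\epsilon_1\}$ makes the $\tilde E u$ term finite --- forces the left side finite, and ellipticity of $\tilde B$ at $\calK^\varsigma_\sigma$ gives $\operatorname{WF}_{\mathrm{de,sc}}^{m,\mathsf{s}}(u)\cap\calK^\varsigma_\sigma=\varnothing$. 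The only genuinely delicate point, as the paper flags for the analogous $\calN$ estimates, is getting the signs of every term (commutant weight, cutoff-derivative, regularizer) mutually consistent so that the $E^*E$ term --- which records exactly the annular region where regularity must be assumed --- is the only ``bad-sign'' term surviving; everything else is routine de,sc-calculus bookkeeping, which I would abbreviate by reference to the proof of \Cref{prop:A_first_propagation}.
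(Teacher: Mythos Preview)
Your proposal is correct and follows essentially the same approach as the paper --- the same commutant $a = \chi_\digamma(\tilde p[g])^2\chi_\digamma(\varrho_{\mathrm{Sf}})^2\chi_\digamma(\gimel)^2 a_0$, the same use of \Cref{prop:gimel} to ensure the $\gimel$-cutoff derivative contributes with the good sign, and the same conclusion that $E^*E$ (supported in the $\varrho_{\mathrm{Sf}}$ annulus) is the lone bad-sign term while $F^*F$ matches $B^*B$ and is discarded. One minor correction: at $\calK^+_+$ the computation gives $\mathsf{H}_{p[g]}a_0 = +\alpha a_0$ with $\alpha = 4(m+s_{\mathrm{nFf}}-2s_{\mathrm{Sf}}-1)>0$, so the paper's decomposition reads $(\delta a_0^{-2}a^2+b^2-e^2\varrho_{\mathrm{Sf}}+f^2+h)a_0$ and correspondingly $+\delta A\Lambda^2 A+B^*B-E^*E+F^*F$ --- the overall signs are opposite to the $\calA$-style identity you displayed --- but since only the relative signs determine which term is ``bad,'' your argument is unaffected.
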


The fact that the threshold condition involves the linear combination $m+s_{\mathrm{nf}}-2s_{\mathrm{Sf}}$ of orders can be read off \cref{eq:misc_34x}.
\begin{proof}
	We discuss the case $\varsigma,\sigma = +$, the other three being analogous. 
	Let $a_0 = \varrho_{\mathrm{df}}^{m_0}\varrho_{\mathrm{nFf}}^{s_0}\varrho_{\mathrm{Sf}}^{\ell_0}$, where $m_0=1-2m$, $s_0 = -1-2s_{\mathrm{nFf}}$, $\ell_0 = -1 - 2s_{\mathrm{Sf}}$. Exactly at $\calK^+_+$, $\lambda=-3$ and $\hat{\eta}^2=0$, so \cref{eq:H_nfSf_df} yields 
	\begin{equation} 
		\mathsf{H}_{p[g]} a_0 = \alpha a_0
	\end{equation}  
	for some $\alpha \in C^\infty({}^{\mathrm{de,sc}}\overline{T}^* \bbO)$ equal to $2(-m_0  -s_0 + 2 \ell_0) = 4(m+s_{\mathrm{nFf}} - 2s_{\mathrm{Sf}}-1)$ at $\calK^+_+$. Note that $\alpha>0$ near $\calK^+_+$. 
	
	Let $\gimel$ be as in \Cref{prop:gimel}.
	Define $a\in C^\infty({}^{\mathrm{de,sc}}\overline{T}^* \bbO)$ by 
	\begin{equation} 
		a = \chi_{\digamma}(\tilde{p}[g])^2 \chi_\digamma(\varrho_{\mathrm{Sf}})^2 \chi_\digamma(\gimel)^2 a_0
	\end{equation} 
	near $\calK^+_+$, with $a$ supported near $\calK^+_+$. We compute 
	\begin{multline}
		\mathsf{H}_{p[g]} a = \alpha a - 2(1-\lambda+ \varrho_{\mathrm{nf}}c )\chi_{\digamma} (\tilde{p}[g])^2\chi_{0,\digamma}(\varrho_{\mathrm{Sf}})^2 \chi_\digamma(\gimel)^2 \varrho_{\mathrm{Sf}} a_0   \\ + 2\chi_{\digamma}(\tilde{p}[g])^2\chi_\digamma(\varrho_{\mathrm{Sf}})^2\chi_{0,\digamma}(\gimel)^2 ( 4\gimel +E_3 - F_3 ) a_0  +2 \chi'_{\digamma}(\tilde{p}[g])\chi_{\digamma}(\tilde{p}[g])\chi_\digamma(\varrho_{\mathrm{Sf}})^2\chi_\digamma(\gimel)^2  \tilde{q} \tilde{p}[g] a_0,
	\end{multline}
	where 
	\begin{itemize}
		\item $E_3,F_3$ are as in \Cref{prop:gimel},
		\item $\tilde{q}$ is as in the proofs of the previous propositions,
		\item $c\in C^\infty({}^{\mathrm{de,sc}}\overline{T}^* \bbO)$ comes from hitting $\varrho_{\mathrm{Sf}}$ with $\mathsf{H}_{p[g]}-\mathsf{H}_p$. 
	\end{itemize}
	For all $\digamma>0$ sufficiently large, for $\delta$ sufficiently small, we can define symbols $b,e,f,h\in S_{\mathrm{de,sc}}^{0,\mathsf{0}}$ such that 
	\begin{align}
		\begin{split} 
		\mathsf{H}_{p[g]} a + w^{-1} p_1 a &= (\delta a_0^{-2}a^2+b^2-e^2 \varrho_{\mathrm{Sf}} + f^2 + h)a_0, \\
		H_{p[g]} a + p_1 a &= (\delta a_0^{-2}a^2+b^2-e^2 \varrho_{\mathrm{Sf}} + f^2 + h)wa_0
		\end{split} 
	\end{align}
	everywhere, with $b = \chi_{\digamma}(\tilde{p}[g]) \chi_\digamma(\varrho_{\mathrm{Sf}}) \chi_\digamma(\gimel) (\alpha-\delta a_0^{-1} a + w^{-1} p_1 )^{1/2}$,
	\begin{equation} 
		e = \sqrt{2(1-\lambda+\varrho_{\mathrm{nf}}c )} \chi_{\digamma}(\tilde{p}[g]) \chi_{0,\digamma}(\varrho_{\mathrm{Sf}}) \chi_\digamma(\gimel)
	\end{equation} 
	(as $\lambda=-3$ at $\calK^+_+$, the function under the square root is positive near the radial set), 
	\begin{equation} 
		f = \sqrt{2} \chi_{\digamma}(\tilde{p}[g]) \chi_\digamma(\varrho_{\mathrm{Sf}}) \chi_{0,\digamma}(\gimel) (4\gimel+E_3-F_3)^{1/2},
	\end{equation} 
	and $h = 2 \chi'_{\digamma}(\tilde{p}[g])\chi_\digamma(\tilde{p}[g])\chi_\digamma(\varrho_{\mathrm{Sf}})^2 \chi_\digamma(\gimel)^2 \tilde{q} \tilde{p}[g]$
	near $\calK^+_+$. The reason $f$ is well-defined is that, as long as $\digamma$ is large, then $4\gimel + E_3> F_3$. Indeed, since $E_3\geq 0$, it only helps, and the cubic vanishing of $F_3$ (as described in \Cref{prop:gimel}) suffices to guarantee that, for $\digamma$ sufficiently large, $4\gimel > F_3$ in the region under consideration.
	
	Quantizing, we get $A = (1/2)(\operatorname{Op}(a)+\operatorname{Op}(a)^*) \in  \Psi_{\mathrm{de,sc}}^{-m_0,(-\infty,-\infty,-\ell_0,-s_0,-\infty)},$
	\begin{align}
		\begin{split} 
			B = \operatorname{Op}(w^{1/2} a_0^{1/2} b) &\in  \Psi_{\mathrm{de,sc}}^{m,(-\infty,-\infty,s_{\mathrm{Sf}},s_{\mathrm{nFf}},-\infty)}, \\
			E = \operatorname{Op}(w^{1/2} a_0^{1/2} \varrho_{\mathrm{Sf}} e) &\in \Psi_{\mathrm{de,sc}}^{m,(-\infty,-\infty,-\infty,s_{\mathrm{nFf}},-\infty)}, \\
			F = \operatorname{Op}(w^{1/2} a_0^{1/2} f) &\in  \Psi_{\mathrm{de,sc}}^{m,(-\infty,-\infty,s_{\mathrm{Sf}},s_{\mathrm{nFf}},-\infty)}, 
		\end{split}
	\end{align}
	$h = \operatorname{Op}(wa_0 h) \in \Psi_{\mathrm{de,sc}}^{2m,(-\infty,-\infty,2s_{\mathrm{Sf}},2s_{\mathrm{nFf}},-\infty)}$, 
	and $R\in \Psi_{\mathrm{de,sc}}^{-m_0,(-\infty,-\infty,-2-s_0,-2-\ell_0,-\infty)}$ such that 
	\begin{equation}
		-i [ P, A] + i (P-P^*) A = \delta A \Lambda^2 A + B^*B -E^*E + F^*F +H  + R, 
		\label{eq:misc_k32}
	\end{equation}
	where $\Lambda$ is as in the previous propositions. If $\digamma$ is sufficiently large, then, for $\epsilon_2$ sufficiently small, 
	\begin{equation}
		\operatorname{WF}'_{\mathrm{de,sc}}(E) \subseteq \{ \rho^2 +\hat{\eta}^2+(\lambda+3)^2 + \varrho_{\mathrm{nf}}^2< \epsilon_1, \epsilon_2 < \varrho_{\mathrm{Sf}} < \epsilon_1 \} .
	\end{equation}

	The proof now proceeds as usual, where the key observation is that the $F^* F$ term has the same sign as the $B^*B$ term and therefore can be ultimately discarded from the estimates. Thus, 
	for some 
	\begin{equation}
		G,\tilde{B},\tilde{E} \in \Psi_{\mathrm{de,sc}}^{0,\mathsf{0}} 
	\end{equation}
	with $\tilde{B}$ elliptic at $\calK^+_+$ and $\operatorname{WF}_{\mathrm{de,sc}}'(\tilde{E})= \operatorname{WF}_{\mathrm{de,sc}}'(E)$,
	we have, for all $u\in \calS'$, the estimate  
	\begin{multline}
	\lVert \tilde{B} u \rVert_{H_{\mathrm{de,sc}}^{m,(N,N,s_{\mathrm{Sf}},s_{\mathrm{nFf}},N)} }^2 \lesssim  \lVert \tilde{E} u \rVert_{H_{\mathrm{de,sc}}^{m,(-N,-N,-N,s_{\mathrm{nFf}},-N)} }^2 + \lVert GP u \rVert_{H_{\mathrm{de,sc}}^{m-1,(-N,-N, s_{\mathrm{Sf}}+1,s_{\mathrm{nFf}}+1,-N )} }^2  \\ 
	+ \lVert u \rVert_{H_{\mathrm{de,sc}}^{-N,-N}}^2, 
	\end{multline}
	holding in the strong sense, where the essential support of $G$ can be made to be in an arbitrarily small neighborhood of $\calK^+_+$ by making $\digamma$ larger. This estimate completes the proof. 
\end{proof}

Similarly: 
\begin{propositionp}
	Fix signs $\varsigma,\sigma \in \{-,+\}$. Suppose that $m\in \bbR$ and $\mathsf{s} = (s_{\mathrm{Pf}},s_{\mathrm{nPf}},s_{\mathrm{Sf}},s_{\mathrm{nFf}},s_{\mathrm{Ff}})\in \bbR^5$ satisfying $m+s_{\mathrm{nf}}-2s_{\mathrm{Sf}}< 1$, where $s_{\mathrm{nf}}\in \{s_{\mathrm{nPf}},s_{\mathrm{nFf}}\}$, depending on $\sigma$. 
	Then, if $u\in \calS'$ satisfies 
	\begin{itemize}
		\item $\operatorname{WF}_{\mathrm{de,sc}}^{m-1,\mathsf{s}+1}(P u) \cap \calK^\varsigma_\sigma = \varnothing$, 
		\item $\operatorname{WF}_{\mathrm{de,sc}}^{m,\mathsf{s}}(u) \cap \{ \epsilon_2 < \rho^2 +\hat{\eta}^2+(\lambda+3)^2 + \varrho_{\mathrm{nf}}^2< \epsilon_1,  \varrho_{\mathrm{Sf}} < \epsilon_1 \}  = \varnothing$ for some $\epsilon_1>0$ and $\epsilon_2 \in (0,\epsilon_1)$ sufficiently small, 
	\end{itemize}
	it is the case that $\operatorname{WF}_{\mathrm{de,sc}}^{m,\mathsf{s}}(u)\cap \calK^\varsigma_\sigma = \varnothing$. 
\end{propositionp}

The proof follows the proof of \Cref{prop:K_first_propagation}, except the sign of the $B^*B$ term (along with the sign of the $\delta A \Lambda^2 A$ term) in \cref{eq:misc_k32} has to be switched, which results in having to keep the $F$ term in estimates rather than the $E$ term.

\subsection{Propagation Through $\calC$}
\label{subsec:calC}

\begin{proposition}
	Fix signs $\varsigma,\sigma \in \{-,+\}$. 
	Suppose that $m\in \bbR$ and $\mathsf{s} = (s_{\mathrm{Pf}},s_{\mathrm{nPf}},s_{\mathrm{Sf}},s_{\mathrm{nf}},s_{\mathrm{Tf}})\in \bbR^5$ satisfying $m+s_{\mathrm{nf}} - 2s_{\mathrm{Tf}} < 1$, where $s_{\mathrm{Tf}}\in \{s_{\mathrm{Pf}},s_{\mathrm{Ff}}\}$, depending on $\sigma$. 
	Then, if $u\in \calS'$ satisfies 
	\begin{itemize}
		\item $\operatorname{WF}_{\mathrm{de,sc}}^{m-1,\mathsf{s}+1}(P u) \cap \calC^\varsigma_\sigma = \varnothing$, 
		\item $\operatorname{WF}_{\mathrm{de,sc}}^{m,\mathsf{s}}(u) \cap  \{ \epsilon_2< \rho^2 +\hat{\eta}^2+(s-2)^2 + \varrho_{\mathrm{nf}}^2< \epsilon_1, \varrho_{\mathrm{Tf}} < \epsilon_1 \}  = \varnothing$ for some $\epsilon_1>0$ and $\epsilon_2 \in (0,\epsilon_1)$ sufficiently small, 
	\end{itemize}
	it is the case that $\operatorname{WF}_{\mathrm{de,sc}}^{m,\mathsf{s}}(u)\cap \calC^\varsigma_\sigma = \varnothing$. 
	\label{prop:C_first_propagation}
\end{proposition}

The fact that the threshold condition involves the linear combination $m+s_{\mathrm{nf}}-2s_{\mathrm{Tf}}$ of orders can be read off \cref{eq:misc_3fx}.
\begin{proof}
	We handle the case $\varsigma,\sigma = +$, the other three being analogous. Let $a_0 = \varrho_{\mathrm{df}}^{m_0}\varrho_{\mathrm{nFf}}^{s_0}\varrho_{\mathrm{Tf}}^{\ell_0}$, where $m_0 =1-2m$, $s_0 = -1-2s_{\mathrm{nFf}}$, $\ell_0 = -1-2s_{\mathrm{Ff}}$. Then, $\mathsf{H}_{p[g]} a_0 = \alpha a_0$ for $\alpha \in C^\infty({}^{\mathrm{de,sc}}\overline{T}^* \bbO )$ given by 
	\begin{equation}
		2^{-1}\alpha = (\hat{\eta}^2+(s-1)^2) m_0 + (s-1)s_0-s \ell_0 
	\end{equation}
	over null infinity. Exactly at $\calC^+_+$, $s=2$ and $\hat{\eta}=0$, so $\alpha = m_0 + s_0 -2\ell_0 = -2(m + s_{\mathrm{nFf}} -2 s_{\mathrm{Ff}}-1)>0$ there. 
	
	Define $a\in C^\infty({}^{\mathrm{de,sc}}\overline{T}^* \bbO)$ by $a = \chi_{\digamma}(\tilde{p}[g])^2 \chi_\digamma(\varrho_{\mathrm{Sf}} )^2 \chi_\digamma(\daleth)^2 a_0$ near $\calC^+_+$, where $\daleth$ is in \Cref{prop:daleth}, with $a$ supported near $\calC^+_+$. We calculate 
	\begin{multline}
		\mathsf{H}_{p[g]} a = \alpha a + 4(s+ \varrho_{\mathrm{nf}}c)\chi_{\digamma}(\tilde{p}[g])^2\chi_{0,\digamma}(\varrho_{\mathrm{Tf}})^2 \chi_\digamma(\daleth)^2 \varrho_{\mathrm{Tf}} a_0   \\- 2\chi_{\digamma}(\tilde{p}[g])^2\chi_\digamma(\varrho_{\mathrm{Tf}})^2\chi_{0,\digamma}(\daleth)^2 ( 4\daleth +E_4 + F_4 ) a_0  +2 \chi'_{\digamma}(\tilde{p}[g])\chi_{\digamma}(\tilde{p}[g])\chi_\digamma(\varrho_{\mathrm{Sf}})^2 \chi_\digamma(\daleth)^2  \tilde{q} \tilde{p}[g] a_0,
	\end{multline}
	where 
	\begin{itemize}
		\item  $E_4,F_4$ are as in \Cref{prop:daleth},
		\item $\tilde{q}$ is as in the proofs of the previous propositions, 
		\item $c\in C^\infty({}^{\mathrm{de,sc}}\overline{T}^* \bbO)$ arises from hitting $\varrho_{\mathrm{Tf}}$ with $\mathsf{H}_{p[g]}-\mathsf{H}_p$. 
	\end{itemize}

	For all $\digamma>0$ sufficiently large, for $\delta$ sufficiently small, we can define symbols $b,e,f,h\in S_{\mathrm{de,sc}}^{0,\mathsf{0}}$ such that 
		\begin{align}
			\mathsf{H}_{p[g]} a p +w^{-1} p_1 a &= (\delta a_0^{-2}a^2+b^2+e^2 \varrho_{\mathrm{Tf}} - f^2 + h)a_0, \\
			H_{p[g]} a +p_1 a &= (\delta a_0^{-2}a^2+b^2+e^2 \varrho_{\mathrm{Tf}} - f^2 + h)wa_0 
		\end{align}
		everywhere, with $b =  \chi_{\digamma'}(\tilde{p}[g]) \chi_\digamma(\varrho_{\mathrm{Tf}}) \chi_\digamma(\daleth) (\alpha-\delta a_0^{-1} a + w^{-1} p_1 )^{1/2}$,
		\begin{equation} 
			e = 2 \sqrt{s+ \varrho_{\mathrm{nf}}c } \chi_{\digamma}(\tilde{p}[g]) \chi_{0,\digamma}(\varrho_{\mathrm{Tf}}) \chi_\digamma(\daleth) \varrho_{\mathrm{Tf}}^{1/2},
		\end{equation} 
		$f = \sqrt{2} \chi_{\digamma}(\tilde{p}[g]) \chi_\digamma(\varrho_{\mathrm{Tf}}) \chi_{0,\digamma}(\daleth) (4\daleth+E_3+F_4)^{1/2}$,
		and $h = 2 \chi'_{\digamma}(\tilde{p}[g])\chi_\digamma(\tilde{p}[g])\chi_\digamma(\varrho_{\mathrm{Sf}})^2 \chi_\digamma(\daleth)^2 \tilde{q} \tilde{p}[g]$
		near $\calC^+_+$. 
		Since $s=2$ on the radial set in question, $e$ is well-defined. Similarly, as long as $\digamma$ is large enough, then $f$ is well-defined.

		Quantizing, we get $A = (1/2)(\operatorname{Op}(a)+\operatorname{Op}(a)^*) \in  \Psi_{\mathrm{de,sc}}^{-m_0,(-\infty,-\infty,-\infty,-s_0,-\ell_0)},$
		\begin{align}
			\begin{split} 
				B = \operatorname{Op}(w^{1/2} a_0^{1/2} b) &\in  \Psi_{\mathrm{de,sc}}^{m,(-\infty,-\infty,-\infty,s_{\mathrm{nFf}},s_{\mathrm{Ff}})}, \\
				E = \operatorname{Op}(w^{1/2} a_0^{1/2} \varrho_{\mathrm{Tf}} e) &\in \Psi_{\mathrm{de,sc}}^{m,(-\infty,-\infty,-\infty,s_{\mathrm{nFf}},-\infty)}, \\
				F = \operatorname{Op}(w^{1/2} a_0^{1/2} f) &\in  \Psi_{\mathrm{de,sc}}^{m,(-\infty,-\infty,-\infty,s_{s_{\mathrm{nFf}}},s_{\mathrm{Ff}})}, \\
				h = \operatorname{Op}(wa_0 h) &\in \Psi_{\mathrm{de,sc}}^{2m,(-\infty,-\infty,-\infty,2s_{\mathrm{nFf}},2s_{\mathrm{Ff}})},
			\end{split}
		\end{align}
		and $R\in \Psi_{\mathrm{de,sc}}^{-m_0,(-\infty,-\infty,-\infty,2s_{\mathrm{nFf}}-1,2s_{\mathrm{Ff}}-1)}$ such that 
		\begin{equation}
			-i [ P, A] + i (P-P^*) A = \delta A \Lambda^2 A + B^*B +E^*E - F^*F +H  + R
			\label{eq:misc_k3h}
		\end{equation}
		for $\Lambda = (1/2)( \operatorname{Op}(w^{1/2} a_0^{-1/2})+\operatorname{Op}(w^{1/2} a_0^{-1/2})^*) \in \Psi_{\mathrm{de,sc}}^{1-m,(-1/2,-1/2,-1/2,-1-s_{\mathrm{nFf}},-1-s_{\mathrm{Ff}})}$. 
		If $\digamma$ is sufficiently large, then
		\begin{equation}
			\operatorname{WF}'_{\mathrm{de,sc}}(F) \subseteq \{ \epsilon_2< \rho^2 +\hat{\eta}^2+(s-2)^2 + \varrho_{\mathrm{nf}}^2< \epsilon_1, \varrho_{\mathrm{Tf}} < \epsilon_1 \},
		\end{equation}
		as long as $\epsilon_2$ is sufficiently small.

		The proof now proceeds as usual, where the key observation is that the $E$ term has the same sign as the $B$ term and therefore can be ultimately discarded from the estimates. Thus, 
		for some 
		\begin{equation}
			G,\tilde{B},\tilde{F} \in \Psi_{\mathrm{de,sc}}^{0,\mathsf{0}} 
		\end{equation}
		with $\tilde{B}$ elliptic at $\calC^+_+$ and $\operatorname{WF}_{\mathrm{de,sc}}'(\tilde{F})= \operatorname{WF}_{\mathrm{de,sc}}'(F)$,
		we have, for all $u\in \calS'$, the estimate  
		\begin{multline}
			\lVert \tilde{B} u \rVert_{H_{\mathrm{de,sc}}^{m,(N,N,N,s_{\mathrm{nFf}},s_{\mathrm{Ff}})} }^2 \lesssim  \lVert \tilde{F} u \rVert_{H_{\mathrm{de,sc}}^{m,(-N,-N,-N,s_{\mathrm{nFf}},s_{\mathrm{Ff}})} }^2 + \lVert GP u \rVert_{H_{\mathrm{de,sc}}^{m-1,(-N,-N, -N,s_{\mathrm{nFf}}+1,s_{\mathrm{Sf}}+1 )} }^2  \\ 
			+ \lVert u \rVert_{H_{\mathrm{de,sc}}^{-N,-N}}^2, 
		\end{multline}
		holding in the strong sense, where the essential support of $G$ can be made to be in an arbitrarily small neighborhood of $\calC^+_+$ by making $\digamma$ larger. This estimate completes the proof. 
\end{proof}

Similarly: 
\begin{propositionp}
	Fix signs $\varsigma,\sigma \in \{-,+\}$. 
	Suppose that $m\in \bbR$ and $\mathsf{s} = (s_{\mathrm{Pf}},s_{\mathrm{nPf}},s_{\mathrm{Sf}},s_{\mathrm{nFf}},s_{\mathrm{Ff}})\in \bbR^5$ satisfying $m+s_{\mathrm{nf}} - 2s_{\mathrm{Tf}}>1$, where $s_{\mathrm{Tf}}\in \{s_{\mathrm{Pf}},s_{\mathrm{Ff}}\}$, depending on $\sigma$. 
	Then, if $u\in \calS'$ satisfies 
	\begin{itemize}
		\item $\operatorname{WF}_{\mathrm{de,sc}}^{m-1,\mathsf{s}+1}(P u) \cap \calC^\varsigma_\sigma = \varnothing$, 
		\item $\operatorname{WF}_{\mathrm{de,sc}}^{m,\mathsf{s}}(u) \cap  \{ \rho^2 +\hat{\eta}^2+(s-2)^2 + \varrho_{\mathrm{nf}}^2< \epsilon_1, \epsilon_2< \varrho_{\mathrm{Tf}} < \epsilon_1 \}  = \varnothing$ for some $\epsilon_1>0$ and $\epsilon_2 \in (0,\epsilon_1)$ sufficiently small, 
	\end{itemize}
	it is the case that $\operatorname{WF}_{\mathrm{de,sc}}^{m,\mathsf{s}}(u)\cap \calC^\varsigma_\sigma = \varnothing$. 
	\label{prop:C_second_propagation}
\end{propositionp}

\section{The radial set $\calR$}
\label{sec:radialpoint}

Let $P$ be as in the previous section. We encode the radial point estimate at $\calR$ in the qualitative statements:

\begin{theorem}[Propagation out of $\calR$, with module regularity]
	Suppose that $m\in \bbR$ and $\mathsf{s} \in (s_{\mathrm{Pf}},s_{\mathrm{nPf}},s_{\mathrm{Sf}},s_{\mathrm{nFf}},s_{\mathrm{Ff}}) \in \bbR^5$ and $s_{\mathrm{Pf},0},s_{\mathrm{Ff},0}\in \bbR$ satisfy 
	\begin{equation}
		s_{\mathrm{Pf}} > s_{\mathrm{Pf},0} > -1/2 \quad \text{ and } s_{\mathrm{Ff}} > s_{\mathrm{Ff},0} > -1/2.
	\end{equation} 
	Let $\mathsf{s}_0 = (s_{\mathrm{Pf},0},s_{\mathrm{nPf},0},s_{\mathrm{Sf},0},s_{\mathrm{nFf},0},s_{\mathrm{Ff},0}) \in \bbR^5$.
	Fix signs $\varsigma,\sigma \in \{-,+\}$ and $k,\kappa \in \bbN$. 
	Then, if $u\in \calS'$ is a solution to $P u=f$ such that 
	\begin{equation} 
		\operatorname{WF}_{\mathrm{de,sc}}^{-N,\mathsf{s}_0}(Au)\cap \calR^\varsigma_\sigma=\varnothing
		\label{eq:misc_nm1}
	\end{equation} 
	and if $\operatorname{WF}_{\mathrm{de,sc}}^{-N,\mathsf{s}+1}(Af)\cap \calR^\varsigma_\sigma=\varnothing$ for all $A\in \frakM_{\varsigma,\sigma}^{\kappa,k}$ for some $N>0$, then $\operatorname{WF}_{\mathrm{de,sc}}^{m,\mathsf{s}}(Au)\cap \calR^\varsigma_\sigma=\varnothing$
	for all $A\in \frakM_{\varsigma,\sigma}^{\kappa,k}$ as well.
	\label{thm:R1}
\end{theorem}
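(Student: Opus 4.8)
\textbf{Proof plan for \Cref{thm:R1}.}

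The plan is to prove this as a radial point estimate in the style of \cite{VasyGrenoble}, but carried out at the level of \emph{module regularity} — that is, simultaneously for all $A \in \frakM_{\varsigma,\sigma}^{\kappa,k}$ — using the structure established in \S\ref{subsec:test_modules}. The key structural input is \Cref{prop:criticality}, which says that $\frakN_\sigma$ is ``$P$-critical'' at $\calR_\sigma$: commutators $i\varrho^{-1}[P,A_j]$ of $P$ with the generators $A_j$ of $\frakN_\sigma$ lie in $\sum_k C_{j,k}A_k$ with the off-diagonal symbols $\sigma_{\mathrm{de,sc}}^{1,\mathsf{0}}(C_{j,k})$ vanishing on $\calR_\sigma$. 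Since $\calR^\varsigma_\sigma = \pm \mathsf{m}\iota^*\dd\tau(\mathrm{Tf})$ does \emph{not} hit fiber infinity (as emphasized after \cref{eq:R_def}), it behaves like a radial set in a classical (non-fiber-infinity) sense, so the estimate is really about decay orders at $\mathrm{Pf}$ or $\mathrm{Ff}$ and not about the differential order $m$; this is why the threshold conditions involve only $s_{\mathrm{Tf}} > s_{\mathrm{Tf},0} > -1/2$.

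First I would set up the base case $k = \kappa = 0$. Here the claim is an ordinary radial point estimate at $\calR^\varsigma_\sigma$: from $\operatorname{WF}_{\mathrm{de,sc}}^{-N,\mathsf{s}_0}(u) \cap \calR^\varsigma_\sigma = \varnothing$ and $\operatorname{WF}_{\mathrm{de,sc}}^{-N,\mathsf{s}+1}(f)\cap \calR^\varsigma_\sigma = \varnothing$ one concludes $\operatorname{WF}_{\mathrm{de,sc}}^{m,\mathsf{s}}(u)\cap\calR^\varsigma_\sigma = \varnothing$. The construction is the standard positive-commutator one: build a symbol $a = \chi(\tilde p[g])^2\,\chi(\text{localizer near }\calR^\varsigma_\sigma)^2\,\varrho_{\mathrm{df}}^{m_0}\varrho^{-2\mathsf{s}}$, compute $\mathsf{H}_{p[g]} a$, and use that $\calR^\varsigma_\sigma$ is a source/sink of the rescaled Hamiltonian flow (it is the source $\calR^+_-$ / sink $\calR^+_+$ etc. — see \Cref{fig:O}) so that the principal term of $\mathsf{H}_{p[g]} a$ has a definite sign once $s_{\mathrm{Tf}} > -1/2$; the quadratic defining function of $\calR^\varsigma_\sigma$ within $\Sigma_{\mathsf{m}}$ provides the complementary square. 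Because $P - P^* \in \operatorname{Diff}^{1,-\mathsf{2}}(\bbO)$ by \cref{eq:misc_276}, the $p_1$ contribution vanishes at $\calR^\varsigma_\sigma$ and does not affect signs. Quantize to get $-i[P,A] + i(P-P^*)A = \pm B^*B + (\text{good terms}) + R$, pair against $u$, and run the two-step regularization argument (first in the differential order, then in the decay order, with $s_{\mathrm{Tf},0}$ as the threshold for the regularized order) exactly as in \Cref{prop:A_first_propagation}; this is where the gap $s_{\mathrm{Tf}} > s_{\mathrm{Tf},0}$ is consumed, via a quarter-order inductive improvement as in \cref{eq:misc_z1a}.

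For the inductive step, suppose the result holds for all pairs with smaller $\kappa + k$. Given $A \in \frakM_{\varsigma,\sigma}^{\kappa,k}$, write $A = B_1 \cdots B_{\kappa+k}$ with the $B_\bullet$ drawn from $\{A_0,\dots,A_N,V_\varsigma\}$ as in the generating-set discussion, and commute $P$ through: using \Cref{prop:module_commutator_lemma}/the Corollary after it, $[P, A]$ lies in $\Psi_{\mathrm{de,sc}}^{2,-\mathsf{1}}(\frakM^{\kappa-1,k}_{\varsigma,\sigma} + \frakM^{\kappa,k-1}_{\varsigma,\sigma} + \Psi^{0,\mathsf{0}})$, and the inductive hypothesis controls $Bu$ for the lower-order pieces $B$. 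The point of $P$-criticality is that in the commutator relation $i\varrho^{-1}[P,A_j] = \sum_k C_{j,k}A_k$, the terms with $k \neq 0$ have symbol vanishing at $\calR^\varsigma_\sigma$, so they are ``lower order at the radial set'' and can be absorbed; this is precisely the mechanism that lets the radial point estimate propagate through the module. Concretely one runs a positive-commutator estimate with $A$ replaced by a regularized, $\calR^\varsigma_\sigma$-localized version $\mathcal{A}$ of $A$, pairs $\langle -i[P,\mathcal{A}^*\mathcal{A}]u, u\rangle$, and the diagonal (principal) contribution again has a definite sign by the source/sink structure while all off-diagonal contributions are either lower module order (handled inductively) or vanish at $\calR^\varsigma_\sigma$.

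\textbf{The main obstacle.} The hard part will be bookkeeping the commutators $[P, V_\varsigma]$ where $V_\varsigma = \chi\tau(\partial_\tau \mp i\mathsf{m})$ generates the $\frakM$-factors: one must check, using \Cref{prop:square_module_form} (which gives $\square + \mathsf{m}^2 = (\iota^*\tau^{-2})[V_\pm V_\mp + (d-1)V_\pm] + \varrho^2 R_\pm$ with $R_\pm \in \frakN^2$), that $[P, V_\varsigma]$ produces only terms in $\frakN_\sigma$ modulo acceptable errors — i.e. that the $V_\varsigma$-direction is genuinely ``critical'' and the commutator does not leak extra $\frakM$-factors with bad symbols at $\calR^\varsigma_\sigma$. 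This is the module-regularity analogue of the criticality computation in \Cref{prop:criticality}, and the e,sc-pullback formulas \cref{eq:misc_ptf}, \cref{eq:misc_pff} together with \cref{eq:misc_34b} are exactly what make it work. The regularization argument also needs care: one must regularize in the module filtration in tandem with the decay order so that the signs in the positive commutator survive for each regularized operator, uniformly; here the threshold $s_{\mathrm{Tf},0} > -1/2$ guarantees enough room. Once these are in place, concatenating with the base case via the inductive scheme completes the proof.
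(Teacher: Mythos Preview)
Your base case ($k=\kappa=0$) matches the paper's \S\ref{subsec:basecase} closely: a standard positive-commutator radial point estimate with the threshold $s_{\mathrm{Tf}}>-1/2$ coming from the sign of $\mathsf{H}_{p[g]}\varrho_{\mathrm{Tf}}^{\ell_0}$ at $\calR$, and a limited regularization consuming the gap $s_{\mathrm{Tf}}>s_{\mathrm{Tf},0}$.

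The inductive step, however, diverges from the paper in two significant ways. First, the paper does \emph{not} run a single induction on $\kappa+k$; it separates into a ``first induction'' on $k$ (with $\kappa=0$) in \S\ref{subsec:firstinduction} and a ``second induction'' on $\kappa$ in \S\ref{subsec:secondinduction}, and the mechanisms are different. For the $k$-step, your single-$\mathcal{A}$ commutant $\langle -i[P,\mathcal{A}^*\mathcal{A}]u,u\rangle$ will not close: the computation $i\varrho^{-1}[P,A_\alpha]=\sum_{|\beta|\leq k}C_{\alpha,\beta}A_\beta$ produces \emph{top-order} cross terms ($|\beta|=k$, $\beta\neq\alpha$) coupling $A_\alpha u$ to $A_\beta u$, and while $\sigma^{1,\mathsf{0}}(C_{\alpha,\beta})|_{\calR}=0$, these are not lower module order and cannot be handled inductively. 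The paper's fix is to take the commutant $\sum_{|\alpha|=k}A_{\alpha,\mathsf{s}+1/2}^*Q^*QA_{\alpha,\mathsf{s}+1/2}$ summed over \emph{all} multi-indices, assemble the resulting $\{C'_{\alpha,\beta}\}$ into a matrix-valued symbol, observe that near $\calR$ it is definite (diagonal $\sim 2\Re\sigma(D_{\mathsf{s}+1/2})$, off-diagonal vanishing), take its matrix square root $b=|c'|^{1/2}$, and quantize the entries $B_{\alpha,\beta}$ together with a parametrix $\Upsilon$. This is the ``somewhat involved secondary positive commutator argument'' the paper flags; your sketch does not have this structure.

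Second, the $\kappa$-step in the paper is not a commutator estimate at all. Instead of checking that $[P,V_\varsigma]$ stays in $\frakN_\sigma$, the paper uses \Cref{prop:square_module_form} to write $P=(\iota^*\tau^{-2})[V_\mp V_\pm + (d-1)V_\mp]+\varrho^2 R_\pm$ and observes that $\tau^{-2}V_\mp\in\operatorname{Diff}^{1,-\mathsf{1}}_{\mathrm{de,sc}}$ is \emph{elliptic} at $\calR^\varsigma_\sigma$. One then solves for $V_\pm A_0 u$ algebraically (equation \cref{eq:misc_nnz}): the right-hand side involves $A_0 Pu$, terms in $\Psi^{1,-\mathsf{1}}\frakN A_0 u$, and commutator terms which by \cref{eq:module_commutator_lemma} land in strictly lower $(\kappa,k)$; microlocal elliptic regularity then gives $\operatorname{WF}^{*,\mathsf{s}}(V_\pm A_0 u)\cap\calR=\varnothing$. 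Your ``main obstacle'' paragraph correctly identifies \Cref{prop:square_module_form} as the key input, but the way it is used is inversion rather than absorption in a positive commutator.
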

Recall that $\frakM^{\kappa,k}_{\varsigma,\sigma}$ was defined in \S\ref{subsec:test_modules}.
\begin{theorem}[Propagation into $\calR$, with module regularity]
		Suppose that $m\in \bbR$ and $\mathsf{s} \in (s_{\mathrm{Pf}},s_{\mathrm{nPf}},s_{\mathrm{Sf}},s_{\mathrm{nFf}},s_{\mathrm{Ff}}) \in \bbR^5$ and $s_{\mathrm{Pf},0},s_{\mathrm{Ff},0}\in \bbR$ satisfy $\max\{s_{\mathrm{Pf}},s_{\mathrm{Ff}}\}<-1/2$. Fix signs $\varsigma,\sigma \in \{-,+\}$ and $k,\kappa \in \bbN$.  Let $u\in \calS'$ denote a solution to $Pu=f$. Then, if there exists a neighborhood $U\subseteq \smash{{}^{\mathrm{de,sc}}\overline{T}^* \bbO}$ of $\calR$ such that 
		\begin{itemize}
			\item $\operatorname{WF}_{\mathrm{de,sc}}^{-N,\mathsf{s}}(Au) \cap U \subseteq \calR^\varsigma_\sigma$, 
			\item $\operatorname{WF}_{\mathrm{de,sc}}^{-N,\mathsf{s}+1}(Af) \cap \calR^\varsigma_\sigma=\varnothing$
		\end{itemize}
		for all $A\in \frakM_{\varsigma,\sigma}^{\kappa,k}$, then $\operatorname{WF}_{\mathrm{de,sc}}^{m,\mathsf{s}}(Au) \cap U =\varnothing$
		for all $A\in \frakM_{\varsigma,\sigma}^{\kappa,k}$ as well. 
		\label{thm:R2}
\end{theorem}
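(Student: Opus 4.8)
The plan is to prove \Cref{thm:R2} by a positive-commutator (radial-point) argument at $\calR^\varsigma_\sigma$, run by induction on the total module order $\kappa+k$, with the inductive step powered by the $P$-criticality of $\frakN_\sigma$ from \Cref{prop:criticality}. Since $\calR^\varsigma_\sigma$ is disjoint from fiber infinity, the differential order $m$ carries no essential information — it can be improved at will against the a priori hypothesis — so all the content lies in the weights $\mathsf{s}$ and in the module orders. I would work microlocally near $\calR^\varsigma_\sigma$; as this set lies over the interior of a timelike cap, only the face $\mathrm{Tf}$ and the corner $\mathrm{Tf}\cap\mathrm{nf}$ enter, and there $\calR^\varsigma_\sigma=\pm\mathsf{m}\,\iota^*\dd\tau(\mathrm{Tf})$ is a \emph{sink} of $\mathsf{H}_{p[g]}$ at the weights singled out by $\max\{s_{\mathrm{Pf}},s_{\mathrm{Ff}}\}<-1/2$. (One could instead transport everything to the scattering radial set of $P_{\mathrm{e,sc}}$ on the Poincar\'e cylinder via \Cref{prop:hyperbolic_smoothness} and \Cref{prop:ultimate_sobolev_conversion}, but keeping the argument on $\bbO$ makes the module bookkeeping transparent.)

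For the base case $\kappa=k=0$ I would establish the ordinary below-threshold radial point estimate at $\calR^\varsigma_\sigma$: take a commutant $a=\chi_\digamma(\tilde{p}[g])^2\,\chi_\digamma(q)^2\,a_0$, with $a_0=\varrho_{\mathrm{df}}^{m_0}\varrho^{-\mathsf{s}'}$ for $m_0=1-2m$ and the appropriately doubled weight $\mathsf{s}'$, and $q$ a quadratic defining function of $\calR^\varsigma_\sigma$ within the characteristic set (analogous to the $\aleph,\beth,\gimel,\daleth$ of \S\ref{sec:dynamics}). Because $\calR^\varsigma_\sigma$ is a sink, $\mathsf{H}_{p[g]}a_0=\alpha a_0$ with $\alpha$ of a definite sign at $\calR^\varsigma_\sigma$ precisely when the threshold inequality holds — the sign that yields a ``good'' definite term of the same sign as the principal commutator term, exactly as in \S\ref{subsec:calK}--\S\ref{subsec:calC}. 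Quantizing, using $P-P^*\in\operatorname{Diff}_{\mathrm{de,sc}}^{1,-\mathsf{2}}$ from \cref{eq:misc_276} to neutralize the subprincipal contribution along $\calR^\varsigma_\sigma$, pairing against $u$, and running the simultaneous differential-and-decay regularization of \S\ref{subsec:calA}, one obtains: if the forcing hypotheses supply control of $Pu$ at $\calR^\varsigma_\sigma$ to the orders demanded by the estimate and $\operatorname{WF}_{\mathrm{de,sc}}^{-N,\mathsf{s}}(u)\cap U\subseteq\calR^\varsigma_\sigma$, then $\operatorname{WF}_{\mathrm{de,sc}}^{m,\mathsf{s}}(u)\cap\calR^\varsigma_\sigma=\varnothing$; combined with microlocal elliptic regularity and the propagation already proven, $\operatorname{WF}_{\mathrm{de,sc}}^{m,\mathsf{s}}(u)\cap U=\varnothing$. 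The punctured-neighborhood hypothesis supplies exactly the control on $\operatorname{supp}a\setminus\calR^\varsigma_\sigma$ that a sink-type estimate consumes.

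For the inductive step, assume the statement for all module orders $(\kappa',k')$ with $\kappa'+k'<\kappa+k$, and let $A=B_1\cdots B_{\kappa+k}$ be a generator product with $\kappa$ of the $B_j$ equal to $V_\varsigma$ and the remainder among $\{A_0,\ldots,A_N\}$. I would apply the base-case estimate to $Au$, using $P(Au)=Af+[P,A]u$. The forcing hypothesis controls $Af$ at $\calR^\varsigma_\sigma$; the a priori hypotheses on $u$ give $\operatorname{WF}_{\mathrm{de,sc}}^{-N,\mathsf{s}}(Au)\cap U\subseteq\calR^\varsigma_\sigma$. It then remains to control $[P,A]u$ at $\calR^\varsigma_\sigma$ to the order demanded by the estimate: by \Cref{prop:module_commutator_lemma} and its corollary, $\varrho^{-1}[P,A]$ lies in $\frakM^{\kappa,k}_{\varsigma,\sigma}\Psi_{\mathrm{de,sc}}^{1,\mathsf{0}}+\frakM^{\max\{\kappa-1,0\},k}_{\varsigma,\sigma}\Psi_{\mathrm{de,sc}}^{1,\mathsf{0}}+\frakM^{\kappa,\max\{k-1,0\}}_{\varsigma,\sigma}\Psi_{\mathrm{de,sc}}^{1,\mathsf{0}}$; the strictly-lower-module-order terms applied to $u$ are controlled by the inductive hypothesis, and the top-module-order piece, written via \Cref{prop:criticality} (and \Cref{prop:square_module_form} to absorb the $V_\varsigma$-factors) as a sum $\sum_\ell C_\ell\,A_\ell'$ with $A_\ell'$ a module operator of order $\kappa+k$ and $\sigma_{\mathrm{de,sc}}^{1,\mathsf{0}}(C_\ell)=0$ on $\calR^\varsigma_\sigma$ for $\ell\ne0$, is controlled at $\calR^\varsigma_\sigma$ by microlocalizing into the region where $C_\ell$ is elliptic (disjoint from $\calR^\varsigma_\sigma$) and invoking the punctured-neighborhood hypothesis there, while the $\ell=0$ term is absorbed into the self-adjoint part of the commutator as in the base case. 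Thus $\operatorname{WF}_{\mathrm{de,sc}}^{m-1,\mathsf{s}+1}(P(Au))\cap\calR^\varsigma_\sigma=\varnothing$, the base estimate applies, and $\operatorname{WF}_{\mathrm{de,sc}}^{m,\mathsf{s}}(Au)\cap U=\varnothing$, closing the induction.

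The hard part will be the module bookkeeping in the inductive step — verifying that commuting $P$ past $A\in\frakM^{\kappa,k}_{\varsigma,\sigma}$ yields only terms of strictly smaller module order (dispatched inductively) plus top-module-order terms whose scalar coefficients vanish at $\calR^\varsigma_\sigma$ (so they can be microlocalized away using the a priori control). The potentially dangerous factor is $V_\varsigma=\chi\tau(\partial_\tau\mp i\mathsf{m})$, since a priori $[P,V_\varsigma]$ is only in $\Psi_{\mathrm{de,sc}}^{2,\mathsf{0}}$; that it in fact lies in $\frakM^{1,0}_{\varsigma,\sigma}+\frakN^1_\sigma$ with symbol vanishing at $\calR^\varsigma_\sigma$ is precisely the content of \Cref{prop:square_module_form} together with \Cref{prop:criticality}. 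A secondary difficulty, shared with all of \S\ref{sec:propagation}, is the regularization: because the below-threshold estimate takes the punctured-neighborhood control of $u$ as \emph{input}, the positive-commutator identity must be justified for $u\in\calS'$ through a two-step (or simultaneous) differential-then-decay regularizer as in \S\ref{subsec:calA}, with a threshold condition on the regularized orders, to avoid assuming the conclusion circularly.
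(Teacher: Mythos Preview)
Your base case is essentially the paper's \S\ref{subsec:basecase}, and your identification of the threshold sign and the role of $P-P^*\in\operatorname{Diff}^{1,-\mathsf{2}}_{\mathrm{de,sc}}$ is correct.

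The inductive step has a genuine gap. You propose to apply the scalar base-case estimate to $Au$ for a single generator product $A$, feeding $P(Au)=Af+[P,A]u$ in as the forcing, and to control the top-module-order terms $\varrho\,C_\ell A'_\ell u$ in $[P,A]u$ ``by microlocalizing into the region where $C_\ell$ is elliptic and invoking the punctured-neighborhood hypothesis there.'' This does not work: $\sigma^{1,\mathsf{0}}_{\mathrm{de,sc}}(C_\ell)|_{\calR^\varsigma_\sigma}=0$ does \emph{not} imply that $C_\ell A'_\ell u$ has controlled wavefront set at $\calR^\varsigma_\sigma$. Microlocality gives only $\operatorname{WF}^{m-1,\mathsf{s}}(C_\ell A'_\ell u)\subseteq\operatorname{WF}^{m,\mathsf{s}}(A'_\ell u)$, and the latter at $\calR^\varsigma_\sigma$ is precisely what you are trying to establish; the a priori hypothesis supplies control only on $U\setminus\calR^\varsigma_\sigma$. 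No splitting $C_\ell=GC_\ell+(1-G)C_\ell$ rescues this, since $GC_\ell$ remains of order $(1,\mathsf{0})$ however small its symbol is on $\operatorname{supp}G$. The top-order terms therefore couple the estimates for the various $A'_\ell u$, and the scalar base case applied to each $Au$ separately cannot close.

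The paper exploits the symbol vanishing differently, and runs two \emph{separate} inductions. For $\frakN$-regularity (induction on $k$, \S\ref{subsec:firstinduction}), the commutant is the \emph{matrix} operator $\sum_{|\alpha|=k}A^*_{\alpha,\mathsf{s}+1/2}Q^*QA_{\alpha,\mathsf{s}+1/2}$; the commutator identity arranges the top-order terms into a matrix $C'=\{2\delta_{\alpha\beta}\Re D_{\mathsf{s}+1/2}+2\Re C_{\alpha\beta}\}$ whose symbol at $\calR^\varsigma_\sigma$ is the scalar $2\Re\sigma(D_{\mathsf{s}+1/2})$ times the identity (since all $C_{\alpha\beta}$ vanish there by \Cref{prop:criticality}), hence strictly definite by the threshold. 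Taking an operator square root $B$ and a microlocal inverse $\Upsilon$ produces a single positive term $\sum_\gamma\lVert\sum_\alpha B_{\gamma\alpha}\varrho^{1/2}QA_{\alpha,\mathsf{s}+1/2}u\rVert^2$ controlling all $A_\alpha u$ simultaneously: the coupling is absorbed into positivity, not microlocalized away. For $\frakM$-regularity (induction on $\kappa$, \S\ref{subsec:secondinduction}), no commutator is used at all: the factorization $P=\tau^{-2}V_\mp V_\pm+\tau^{-2}(d-1)V_\mp+\varrho^2R_\pm$ from \Cref{prop:square_module_form}, together with microlocal ellipticity of $\tau^{-2}V_\mp$ at $\calR^\varsigma_\sigma$, directly upgrades $\frakM^{\kappa,k}$-control of $u$ and $Pu$ to $\frakM^{\kappa+1,k-1}$-control by solving for $V_\pm A_0 u$.
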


We only consider the case of $\calR^+_+$ explicitly. The case of $\calR^-_-$ is essentially identical, and the cases of $\calR^+_-,\calR^-_+$ have overall signs switched in the computations but are otherwise identical. 

We will prove the result in three parts: in \S\ref{subsec:basecase}, we handle the $k=0,\kappa= 0$ case (which is the de,sc-analogue of the standard radial point result described in \cite{VasyGrenoble,VasyKG}), in \S\ref{subsec:firstinduction} we handle $k>0$ via induction (this being done via a somewhat involved secondary positive commutator argument), and in \S\ref{subsec:secondinduction} we handle $\kappa>0$ via another, more straightforward induction. 
The argument is a modification of that in \cite[\S6]{HassellMelroseVasy}\cite[Appendix A]{HassellMelroseVasy2}\cite[\S3]{HassellETAL}. Because $\calR$ does not hit fiber infinity, the test modules $\frakM,\frakN$ used in this section are generated by differential operators; this is what allows the use of standard arguments. 

\subsection{Base case}
\label{subsec:basecase}
Let $\kappa,k=0$. 
We now use $\rho_0$ to denote a quadratic defining function of $\calR^+_+$ in $\Sigma_{\mathsf{m},+} \cap \mathrm {}^{\mathrm{de,sc}}\pi^{-1}(\mathrm{Ff})$, such that 
\begin{equation} 
	\calR^+_+ = \smash{\rho_0^{-1}(\{0\})} \cap \Sigma_{\mathsf{m},+}\cap \mathrm {}^{\mathrm{de,sc}}\pi^{-1}(\mathrm{Ff}).
\end{equation}  
Over $\Omega_{\mathrm{nfTf},+,0}$, we can take this to be of the form $\rho_0 = \hat{\eta}^2+(s-1)^2$ with respect to the coordinate system \cref{eq:misc_co2}. 

We first observe that the symbol $F \in S_{\mathrm{de,sc}}^{0,\mathsf{0}}$ defined by 
\begin{equation}
	\mathsf{H}_p \rho_0 = -4 \rho_0 + F
\end{equation}
vanishes cubically at $\calR^+_+$. In order to show this, it suffices to check the claim in local coordinate patches. Away from null infinity, this is familiar \cite{VasyGrenoble} from the radial point estimate for Klein--Gordon in the sc-calculus, so we only need to check the situation near null infinity. Near null infinity (recall that we are taking $\varrho_{\mathrm{df}}=\rho$ as usual over $\mathrm{nFf}\cap \mathrm{Tf}$), \cref{eq:H_nfTf_df} yields 
\begin{equation}
	\mathsf{H}_p \rho_0 = - 4(2-s) (\hat{\eta}^2+s(s-1))(s-1) + 4 (\hat{\eta}^2 + s^2-s-1) \hat{\eta}^2 , 
\end{equation}
from which it can be seen that $F$ vanishes cubically at $\calR^+_+$. Also, \begin{equation} 
	\mathsf{H}_{p[g]} \rho_0 = - 4 \rho_0 + F + \tilde{F}\varrho_{\mathrm{Ff}}
\end{equation} 
for some $\tilde{F} \in C^\infty({}^{\mathrm{de,sc}}\overline{T}^* \bbO)$.

Let $a_0 = \varrho_{\mathrm{nFf}}^{s_0}\varrho_{\mathrm{Ff}}^{\ell_0}$, where  $s_0 = -1 - 2s_{\mathrm{nFf}}$, $\ell_0 = -1-2s_{\mathrm{Ff}}$. Then, we can write 
\begin{equation} 
	\mathsf{H}_{p[g]} a_0 = \alpha a_0
\end{equation} 
for $\alpha \in C^\infty({}^{\mathrm{de,sc}}\overline{T}^* \bbO)$ given by 
\begin{equation}
	\alpha= -2(s_0(1-s)  +\ell_0s  ) = 2(1+2s_{\mathrm{nFf}})(1-s) + 2(1+2s_{\mathrm{Ff}})s
\end{equation} 
at ${}^{\mathrm{de,sc}} \pi^{-1}(\mathrm{nFf}\cap \mathrm{Ff})$, assuming without loss of generality that $\varrho_{\mathrm{Ff}}= \varrho_{\mathrm{Tf}}$ in local coordinates near $\mathrm{nFf}\cap \mathrm{Ff}$. 
Exactly at $\calR^+_+$, this is $2(1+2s_{\mathrm{Ff}})$, which has a definite sign as long as $s_{\mathrm{Ff}}\neq -1/2$. The sign found here is the same as over the whole of $\mathrm{Ff}$ in the standard sc-analysis (as it had to be, since our computation had to match the usual one slightly away from null infinity). 

Now consider, as usual, a symbol $a = \chi_\digamma(\tilde{p}[g])^2 \chi_{\digamma'}(\varrho_{\mathrm{Ff}})^2 \chi_\digamma(\rho_0)^2 a_0$ near $\calR^+_+$ and supported away from $\mathrm{df},\mathrm{Sf},\mathrm{nPf},\mathrm{Pf}$. Then 
\begin{multline}
	\mathsf{H}_{p[g]} a = \alpha a +2\chi_\digamma(\tilde{p}[g])^2 \chi_{\digamma'}(\varrho_{\mathrm{Ff}})^2 \chi_{0,\digamma}(\rho_0)^2 a_0 (4 \rho_0 - F - \tilde{F} \varrho_{\mathrm{Ff}} ) \\ + 2 \chi_\digamma(\tilde{p}[g])^2 \chi_{0,\digamma'}(\varrho_{\mathrm{Ff}})^2\chi_\digamma(\rho_0)^2 a_0 \varrho_{\mathrm{Ff}}(2s   - \varrho_{\mathrm{Ff}} c ) \\ + 2 \chi'_{\digamma}(\tilde{p}[g]) \chi_\digamma(\tilde{p}[g]) \chi_{\digamma'}(\varrho_{\mathrm{Ff}})^2 \chi_\digamma(\rho_0)^2 \tilde{q} \tilde{p}[g] a_0,
\end{multline}
where 
\begin{itemize}
	\item $\tilde{q}$ is as in the previous section, 
	\item  $c\in C^\infty({}^{\mathrm{de,sc}}\overline{T}^* \bbO)$ comes from applying $\varrho_{\mathrm{Tf}}^{-1} (\mathsf{H}_{p[g]} - \mathsf{H}_p) \in \calV_{\mathrm{b}}({}^{\mathrm{de,sc}}\overline{T}^* \bbO)$ to $\varrho_{\mathrm{Ff}}$. 
\end{itemize}
The proof now splits into two cases, depending on the sign of $s_{\mathrm{Ff}}+1/2=s_{\mathrm{Tf}}+1/2$. 
\begin{itemize}
	\item First suppose that $s_{\mathrm{Tf}} > -1/2$, so that $\alpha>0$ near $\calR^+_+$.  For all $\digamma$ sufficiently large, for all $\digamma'$ sufficiently large relative to $\digamma$, for all $\delta>0$ sufficiently small, we can define symbols $b,e,\tilde{g},h \in S_{\mathrm{de,sc}}^{0,\mathsf{0}}$ such that 
	\begin{align}
		\mathsf{H}_{p[g]} a &= (\delta a_0^{-2}a^2 + b^2  +e^2  +\varrho_{\mathrm{Ff}}\tilde{g}^2 + h ) a_0 
		\label{eq:misc_380}\\
		H_{p[g]} a &= (\delta a_0^{-2}a^2 + b^2 + e^2 +\varrho_{\mathrm{Ff}}\tilde{g}^2 + h ) wa_0
	\end{align}
	everywhere, with $b =\chi_\digamma(\tilde{p}[g])\chi_{\digamma'}(\varrho_{\mathrm{Ff}})\chi_\digamma(\rho_0)(\alpha-\delta a_0^{-1} a)^{1/2}$, 
	\begin{equation} 
		e = \sqrt{2} \chi_\digamma(\tilde{p}[g]) \chi_{\digamma'}(\varrho_{\mathrm{Ff}})\chi_{0,\digamma}(\rho_0) (4\rho_0-F - \tilde{F}\varrho_{\mathrm{Ff}} )^{1/2}
	\end{equation} 
	(for each fixed value of $\digamma$, the function $\chi_{0,\digamma}(\rho_0)$ is supported away from $\rho_0=0$, so, as long as $\digamma'$ is chosen sufficiently large, the function $4\rho_0-F - \tilde{F}\varrho_{\mathrm{Ff}}$ under the square root will be bounded away from zero on the support of the prefactor), 
	\begin{equation}
		\tilde{g} = \sqrt{4s - 2 \varrho_{\mathrm{Ff}} c} \chi_\digamma(\tilde{p}[g])\chi_{0,\digamma'}(\varrho_{\mathrm{Ff}}) \chi_\digamma(\rho_0), 
	\end{equation}
	and $h= 2 \chi_\digamma'(\tilde{p}[g])\chi_\digamma(\tilde{p}[g]) \chi_{\digamma'}(\varrho_{\mathrm{Ff}})\chi_\digamma(\rho_0)\tilde{q}\tilde{p}[g]$ near $\calR^+_+$. 
	
	Quantizing, we get $A = (1/2)(\operatorname{Op}(a)+\operatorname{Op}(a)^*) \in  \Psi_{\mathrm{de,sc}}^{-\infty,(-\infty,-\infty,-\infty,-s_0,-\ell_0)},$
	\begin{align}
		\begin{split} 
			B = \operatorname{Op}(w^{1/2} a_0^{1/2} b) &\in  \Psi_{\mathrm{de,sc}}^{-\infty,(-\infty,-\infty,-\infty,s_{\mathrm{nFf}},s_{\mathrm{Ff}})}, \\
			E = \operatorname{Op}(w^{1/2} a_0^{1/2} e) &\in  \Psi_{\mathrm{de,sc}}^{-\infty,(-\infty,-\infty,-\infty,s_{\mathrm{nFf}},s_{\mathrm{Ff}})}, \\
			\tilde{G} = \operatorname{Op}(w^{1/2} a_0^{1/2} \varrho_{\mathrm{Ff}}^{1/2} \tilde{g}) &\in  \Psi_{\mathrm{de,sc}}^{-\infty,(-\infty,-\infty,-\infty,s_{\mathrm{nFf}},-\infty )}, \\
			H = \operatorname{Op}(wa_0 h) &\in \Psi_{\mathrm{de,sc}}^{-\infty,(-\infty,-\infty,-\infty,2s_{\mathrm{nFf}},2s_{\mathrm{Ff}})},
		\end{split}
	\end{align}
	and $R\in \Psi_{\mathrm{de,sc}}^{-\infty,(-\infty,-\infty,-\infty,2s_{\mathrm{nFf}}-1,2s_{\mathrm{Ff}}-1)}$ such that 
	\begin{equation}
		-i [ P, A] + i (P-P^*) A = \delta A \Lambda^2 A + B^*B +E^*E + \tilde{G}^* \tilde{G}  +H  + R
	\end{equation}
	for 
	\begin{equation} 
		\Lambda = (1/2)( \operatorname{Op}(w^{1/2} a_0^{-1/2})+\operatorname{Op}(w^{1/2} a_0^{-1/2})^*) \in \Psi_{\mathrm{de,sc}}^{-1/2,(-1/2,-1/2,-1/2,-1-s_{\mathrm{nFf}},-1-s_{\mathrm{Ff}})}.
	\end{equation} 
	(Unlike in the estimates in the previous section, the $i(P-P^*) A$ term has the same order as $R$, so we do not need to take it into account in the principal symbolic construction, hence the absence of what we called $p_1$ in the previous section in the discussion above.)
	So, given sufficiently nice $u$, 
	\begin{equation}
		-2 \Im \langle Au , Pu \rangle_{L^2} =  \delta \lVert \Lambda A u \rVert_{L^2}^2 + \lVert Bu \rVert_{L^2}^2 + \lVert Eu  \rVert_{L^2}^2 + \lVert \tilde{G} u \rVert_{L^2}^2   + \langle Hu,u\rangle_{L^2} + \langle Ru,u \rangle_{L^2}. 
	\end{equation}
	
	The rest of the argument proceeds as in the other propagation estimates, except the regularization argument is more delicate, but in a standard way. Indeed, it is only possible to regularize by a finite amount. Consider, for each $\varepsilon,K,K'>0$, the regularizer
	\begin{equation}
		\varphi_{\varepsilon,K,K'} = \Big(1 + \frac{\varepsilon}{\varrho_{\mathrm{Ff}} \varrho_{\mathrm{nFf}}^{K'} }\Big)^{-K}.
	\end{equation}
	Then, using $\varrho_{\mathrm{nFf}}=\varrho_{\mathrm{nf}}$, 
	\begin{equation}
		\mathsf{H}_{p[g]} \varphi_{\varepsilon,K,K'} = - \frac{2K\varepsilon}{\varepsilon + \varrho_{\mathrm{Ff}}\varrho_{\mathrm{nFf}}^{K'} } \varphi_{\varepsilon,K,K'} (s+K'(1-s))
	\end{equation}
	over $\partial \bbO$, in some neighborhood of $\mathrm{nFf}\cap \mathrm{Ff}$. Notice that, at $\calR^+_+$, over $\Omega_{\mathrm{nfTf},+,0}$, we have $s+K'(1-s)=1$. 
	Combining this calculation with the one done over $\mathrm{cl}_\bbO\{r=0\}$ as part of the standard sc-analysis, we can conclude that 
	\begin{equation}
		-4K \leq \frac{2}{\varphi_{\varepsilon,K,K'}} \mathsf{H}_{p[g]} \varphi_{\varepsilon,K,K'} \Big|_{\calR^+_+}. 
	 \label{eq:misc_390}
	\end{equation}
	Define 
	\begin{equation} 
		a_{\varepsilon} = \varphi_{\varepsilon,K,K'}^2 a,
	\end{equation} 
	and likewise for the other symbols above with the exception of $b$, and define
	\begin{equation}
		b_{\varepsilon} = \varphi_{\varepsilon,K,K'} \chi_\digamma(\tilde{p}[g]) \chi_{\digamma'}(\varrho_{\mathrm{Ff}}) \chi_{\digamma}(\rho_0) \sqrt{ \alpha - \delta a_0^{-1} a_\varepsilon  + \frac{2}{\varphi_{\varepsilon,K,K'}} \mathsf{H}_{p[g]} \varphi_{\varepsilon,K,K'} },
	\end{equation}
	assuming that the symbol under the square root is positive on the support of the prefactor, so that this is a well-defined symbol. Since $\alpha = 4s_{\mathrm{Ff}}+2$ at $\calR$, as long as 
	\begin{equation} 
		K < s_{\mathrm{Ff}} + \frac{1}{2}, 
		\label{eq:misc_39b}
	\end{equation} 
	then \cref{eq:misc_390} guarantees that the symbol $b_\varepsilon$ is well-defined \emph{for all} $\varepsilon>0$,  as long as $\digamma$ is sufficiently large and $\delta$ is sufficiently small \emph{relative to} $K$. As long as these conditions are met, instead of \cref{eq:misc_380}, we have 
	\begin{equation}
		\mathsf{H}_{p[g]} a_\varepsilon = (\delta a_0^{-2}a^2_\varepsilon + b^2_\varepsilon  +e^2_\varepsilon  +\varrho_{\mathrm{Ff}}\tilde{g}^2_\varepsilon + h_\varepsilon ) a_0. 
	\end{equation}
	Thus, quantizing, we get operators, all of which are uniform families of de,sc-$\Psi$DOs of the same orders as their non-regularized counterparts, such that 
	\begin{equation}
		-i [ P, A_\varepsilon] + i (P-P^*) A_\varepsilon = \delta A_\varepsilon \Lambda^2 A_\varepsilon + B^*_\varepsilon B_\varepsilon +E^*_\varepsilon E_\varepsilon + \tilde{G}^*_\varepsilon \tilde{G}_\varepsilon  +H_\varepsilon  + R_\varepsilon.
		\label{eq:misc_392}
	\end{equation}
	Given $u$ with $\operatorname{WF}_{\mathrm{de,sc}}^{-N,\mathsf{s}_0}(u)\cap \calR^+_+=\varnothing$, we can take $K,K'$ large enough such that, as long as $\digamma$ is sufficiently large, then we can deduce from \cref{eq:misc_392} that
	\begin{multline}
		\qquad -2 \Im \langle A_\varepsilon u,P u \rangle_{L^2} = \delta \lVert \Lambda A_\varepsilon u\rVert_{L^2}^2 + \lVert B_\varepsilon u \rVert_{L^2}^2 + \lVert E_\varepsilon u \rVert_{L^2}^2 + \lVert \tilde{G}_\varepsilon u \rVert_{L^2}^2  \\ + \langle H_\varepsilon u,u \rangle_{L^2} + \langle R_\varepsilon u ,u \rangle_{L^2}, 
	\end{multline}
	from which the estimate 
	\begin{equation}
		\lVert B_\varepsilon u \rVert_{L^2}^2 + \delta \lVert \Lambda A_\varepsilon u \rVert_{L^2}^2 \leq |\langle H_\varepsilon u,u\rangle_{L^2}| + |\langle R_\varepsilon u,u \rangle_{L^2}|  +2 |\langle A_\varepsilon u,Pu \rangle_{L^2}|
		\label{eq:misc_396}
	\end{equation}
	follows. Indeed, for $\digamma,\digamma'$ sufficiently large: 
	\begin{enumerate}
		\item We have 
		\begin{equation} 
			\Lambda A_{\varepsilon} u,B_{\varepsilon} u,E_{\varepsilon}u,\tilde{G}_{\varepsilon} u \in H_{\mathrm{de,sc}}^{N,(N,N,N,KK'+s_{\mathrm{nFf},0} - s_{\mathrm{nFf}},K+s_{\mathrm{Ff},0} - s_{\mathrm{Ff}})} \subseteq L^2
			\label{eq:misc_397} 
		\end{equation} 
		as long as $K> s_{\mathrm{Ff}} - s_{\mathrm{Ff},0}$ and $KK'$ is sufficiently large, the latter of which we arrange by taking $K'$ large relative to $K$. Since $s_{\mathrm{Ff},0} \in (-1/2,s_{\mathrm{Ff}})$, $s_{\mathrm{Ff}}-s_{\mathrm{Ff},0} < s_{\mathrm{Ff}}+1/2$, so the interval $(s_{\mathrm{Ff}}-s_{\mathrm{Ff},0},s_{\mathrm{Ff}}+1/2 )$ is nonempty, which means there exists $K$ which is large enough but still satisfies \cref{eq:misc_39b}. So, choosing $K,K'$, we can assume that  \cref{eq:misc_397} holds.
		\item $H_\varepsilon u \in H_{\mathrm{de,sc}}^{N,(N,N,N,2KK'+s_{\mathrm{nFf},0} -2 s_{\mathrm{nFf}} ,2K+s_{\mathrm{Ff},0} - 2s_{\mathrm{Ff}} )} $, and, for $N$ sufficiently large, 
		\begin{equation}
			H_{\mathrm{de,sc}}^{N,(N,N,N,KK'+s_{\mathrm{nFf},0} -2 s_{\mathrm{nFf}} ,K+s_{\mathrm{Ff},0} - 2s_{\mathrm{Ff}} )} \subseteq \big(H_{\mathrm{de,sc}}^{-N,\mathsf{s}_0}\big)^*
		\end{equation}
		as long as $K> s_{\mathrm{Ff}} - s_{\mathrm{Ff},0}$ and $KK'$ is sufficiently large, which means that the $\langle H_\varepsilon u,u\rangle_{L^2}$ term is well-defined in the sense of H\"ormander, and likewise for $\langle R_\varepsilon u,u\rangle_{L^2}$.
		\item $\operatorname{WF}_{\mathrm{de,sc}}^{-N-2,\mathsf{s}_0+1}(Pu)\cap \calR^+_+=\varnothing$, and, for $N$ sufficiently large, 
		\begin{equation}
			A_\varepsilon u \in H_{\mathrm{de,sc}}^{N+2,(N,N,N,-1+KK'+s_{\mathrm{nFf},0} - 2s_{\mathrm{nFf}} ,-1+K+s_{\mathrm{Ff},0}-2s_{\mathrm{Ff}}) } \subseteq (\operatorname{WF}_{\mathrm{de,sc}}^{-N-2,\mathsf{s}_0+1})^*,
		\end{equation}
		assuming the conditions above are satisfied. This implies that $\langle A_\varepsilon u,Pu\rangle_{L^2}$ is well-defined in the sense of H\"ormander. 
	\end{enumerate}
	Having \cref{eq:misc_396}, we get, after taking $\varepsilon \to 0^+$, an estimate of the form 
	\begin{multline}
		\lVert \tilde{B} u \rVert_{H_{\mathrm{de,sc}}^{N,(N,N,N,s_{\mathrm{nFf}},s_{\mathrm{Ff}})} }^2 \lesssim  \lVert GP u \rVert_{H_{\mathrm{de,sc}}^{-N,(-N,-N, -N,s_{\mathrm{nFf}}+1,s_{\mathrm{Ff}}+1 )} }^2   \\ + \lVert G u \rVert_{H_{\mathrm{de,sc}}^{-N,(-N,-N,-N,s_{\mathrm{nFf}}-1/2 ,s_{\mathrm{Ff}}-1/2)} }^2
		+ \lVert u \rVert_{H_{\mathrm{de,sc}}^{-N,-N}}^2,
	\end{multline}
	where $\tilde{B}\in\smash{ \Psi_{\mathrm{de,sc}}^{0,\mathsf{0}}}$ is elliptic on $\smash{\calR^+_+}$ and $G \in \smash{\Psi_{\mathrm{de,sc}}^{0,\mathsf{0}}}$ whose essential support can be taken to be arbitrarily close to $\calR^+_+$ by making $\digamma,\digamma'$ larger. In order to make this an estimate in the strong sense, we can simply add a term to the right-hand side:
	\begin{multline}
	\lVert \tilde{B} u \rVert_{H_{\mathrm{de,sc}}^{N,(N,N,N,s_{\mathrm{nFf}},s_{\mathrm{Ff}})} }^2 \lesssim  \lVert GP u \rVert_{H_{\mathrm{de,sc}}^{-N,(-N,-N, -N,s_{\mathrm{nFf}}+1,s_{\mathrm{Sf}}+1 )} }^2   \\ + \lVert G u \rVert_{H_{\mathrm{de,sc}}^{-N,\mathsf{s}_0} }^2 + \lVert G u \rVert_{H_{\mathrm{de,sc}}^{-N,(-N,-N,-N,s_{\mathrm{nFf}}-1/2 ,s_{\mathrm{Ff}}-1/2)} }^2
	+ \lVert u \rVert_{H_{\mathrm{de,sc}}^{-N,-N}}^2. 
	\end{multline}
	The usual inductive argument then allows the removal of the penultimate term, and so we end up with the strong estimate 
	\begin{equation}
		\lVert \tilde{B} u \rVert_{H_{\mathrm{de,sc}}^{N,(N,N,N,s_{\mathrm{nFf}},s_{\mathrm{Ff}})} }^2 \lesssim  \lVert GP u \rVert_{H_{\mathrm{de,sc}}^{-N,(-N,-N, -N,s_{\mathrm{nFf}}+1,s_{\mathrm{Sf}}+1 )} }^2  + \lVert G u \rVert_{H_{\mathrm{de,sc}}^{-N,\mathsf{s}_0} }^2 
		+ \lVert u \rVert_{H_{\mathrm{de,sc}}^{-N,-N}}^2, 
	\end{equation}
	which completes the proof.

	\item On the other hand, if $s_{\mathrm{Tf}}<-1/2$, then $\alpha<0$ near $\calR^+_+$, then quantization yields operators as above, modulo some sign switches in their definitions, such that 
	\begin{equation}
		-i [ P, A] + i (P-P^*) A = -\delta A \Lambda^2 A - B^*B +E^*E +\tilde{G}^* \tilde{G}  +H  + R.
	\end{equation}
	From this, the strong estimate of the form 
	\begin{multline}
		\lVert \tilde{B} u \rVert_{H_{\mathrm{de,sc}}^{N,(N,N,N,s_{\mathrm{nFf}},s_{\mathrm{Ff}})} }^2 \lesssim  \lVert GP u \rVert_{H_{\mathrm{de,sc}}^{-N,(-N,-N, -N,s_{\mathrm{nFf}}+1,s_{\mathrm{Sf}}+1 )} }^2  + 	\lVert \tilde{E} u \rVert_{H_{\mathrm{de,sc}}^{-N,(-N,-N,-N,s_{\mathrm{nFf}},s_{\mathrm{Ff}})} }^2  \\ + 	\lVert \tilde{\tilde{G}} u \rVert_{H_{\mathrm{de,sc}}^{-N,(-N,-N,-N,s_{\mathrm{nFf}},-N)} }^2 
		+ \lVert u \rVert_{H_{\mathrm{de,sc}}^{-N,-N}}^2
	\end{multline}
	follows. The necessary regularization argument is simpler than the previous, as an arbitrarily large amount of regularization can be done. 
\end{itemize}
\subsection{First induction}
\label{subsec:firstinduction}
We now discuss the induction on $k$.

It will be convenient to reduce to the case where $P$ is $L^2$-symmetric. Indeed, $P$ differs $\square$ by an element of $\operatorname{Diff}^{2,-\mathsf{2}}_{\mathrm{de,sc}}$. Consequently, if $G\in \Psi_{\mathrm{de,sc}}^{-\infty,\mathsf{0}}$ has essential support disjoint from $\mathrm{df}$, then, in estimates, 
\begin{equation}
	\lVert G(P-P^*) Au \rVert_{H_{\mathrm{de,sc}}^{m,\mathsf{s}+1} } \lesssim \lVert G_0 Au \rVert_{H_{\mathrm{de,sc}}^{-N,\mathsf{s}-1}} +\lVert u \rVert_{H_{\mathrm{de,sc}}^{-N,-N}}
\end{equation}
for any $u\in \calS'$, $A\in \Psi_{\mathrm{de,sc}}$, and $G_0\in \Psi_{\mathrm{de,sc}}^{-\infty,0}$ elliptic on the essential support of $G$. For each $\varepsilon>0$, as long as we are restricting attention to a sufficiently small neighborhood of $\mathrm{Ff}$, 
we can bound 
\begin{equation}
	\lVert G_0 Au \rVert_{H_{\mathrm{de,sc}}^{-N,\mathsf{s}-1}} \leq \varepsilon \lVert G_0 Au \rVert_{H_{\mathrm{de,sc}}^{-N,\mathsf{s}}}+ C\lVert u \rVert_{H_{\mathrm{de,sc}}^{-N,-N}}
\end{equation}
for some $C\gg 1$, 
as follows from $L^2\to L^2$ bounds on multiplication by boundary-defining-functions. Consequently, the error terms that arise by replacing $P$ with $2^{-1}(P+P^*)$ can be absorbed into the left-hand sides of the desired estimates.
Conceptually, the reason this works is that, at $\calR$, the difference \begin{equation} 
	P-\square \in \operatorname{Diff}^{2,-\mathsf{2}}_{\mathrm{de,sc}}
\end{equation} 
is a full two orders lower than $P$ itself, which means that it does not affect positive commutator arguments. 
(Indeed, we have already remarked upon essentially this fact in \S\ref{subsec:basecase}.)
We could not do this in \S\ref{sec:propagation}, because there the radial sets were all at fiber infinity, and $P-\square $ has the same order as $P$ there.
(Of course, we could have instead worked with the $L^2(\bbR^{1,d},g)$-based inner product, and then a similar reduction to the symmetric case would apply under a symmetry assumption on the last term in \cref{eq:misc_273}.)

So, henceforth, assume that $P=P^*$.

Let $A_0,\ldots,A_N \in \frakN_{+}$ denote a spanning set over $\smash{\Psi_{\mathrm{de,sc}}^{0,\mathsf{0}}}$, with $A_0=1$. We laid out such a set of generators in \S\ref{subsec:test_modules}. 
For each multi-index $\alpha \in \bbN^N$ with $|k|=\alpha$ and tuple $\mathsf{s}\in \bbR^5$, let 
\begin{equation}
	A_{\alpha,\mathsf{s}} =\varrho^{-\mathsf{s}} A_1^{\alpha_1}\cdots A_N^{\alpha_N},
	\label{eq:Aas}
\end{equation}
where 
\begin{equation} 
	\varrho^{\mathsf{s}} = \varrho_{\mathrm{Pf}}^{s_{\mathrm{Pf}}}\varrho_{\mathrm{nPf}}^{s_{\mathrm{nPf}}}\varrho_{\mathrm{Sf}}^{s_{\mathrm{Sf}}}\varrho_{\mathrm{nFf}}^{s_{\mathrm{nFf}}}\varrho_{\mathrm{Ff}}^{s_{\mathrm{Ff}}}.
\end{equation}
Let 
\begin{equation} 
	D_{\mathsf{s}}=i\varrho^{-1} [P, \varrho^{-\mathsf{s}} ] \varrho^{\mathsf{s}} \in\mathrm{Diff}_{\mathrm{de,sc}}^{1,\mathsf{0}}.
\end{equation} 
Then, $\sigma_{\mathrm{de,sc}}^{1,\mathsf{0}} (D_{\mathsf{s}})$, which is proportional to $\mathsf{H}_{p} \varrho^{-\mathsf{s}}$ at $\calR$, satisfies
\begin{equation} 
	\begin{cases}
		\sigma_{\mathrm{de,sc}}^{1,\mathsf{0}} (D_{\mathsf{s}})|_{\calR^+_+} > 0 & (s_{\mathrm{Ff}}>0 ) \\
		\sigma_{\mathrm{de,sc}}^{1,\mathsf{0}} (D_{\mathsf{s}})|_{\calR^+_+} < 0 & (s_{\mathrm{Ff}}<0),
	\end{cases}
	\label{eq:misc_ggg}
\end{equation} 
as follows from \cref{eq:H_nfTf_df}. (The actual sign depends on our sign conventions, but the sign relative to other symbols appearing in the positive commutator estimates is independent of any conventions.)

In order to carry out the construction of the commutant, we recall the following algebraic computation, which is essentially \cite[eq. 6.16]{HassellMelroseVasy}\cite[eq. 3.23]{HassellETAL}. 
\begin{lemma}
	Let $P\in \Psi_{\mathrm{de,sc}}^{2,\mathsf{0}}$ be $L^2$-symmetric, $Q\in \Psi_{\mathrm{de,sc}}^{-\infty,\mathsf{0}}$.
	There exist $E_{\alpha,\mathsf{s}-1/2}=E_{\alpha,\mathsf{s}-1/2}(Q) \in \varrho^{-\mathsf{s}+1/2}\frakN^{k-1}_+$,
	with $\operatorname{WF}_{\mathrm{de,sc}}'(E_{\alpha,\mathsf{s}-1/2}) \subseteq \operatorname{WF}_{\mathrm{de,sc}}'(Q)$ such that 
	\begin{multline}
		i [P, A_{\alpha,\mathsf{s}+1/2}^* Q^* Q A_{\alpha,\mathsf{s}+1/2} ] =2 A_{\alpha,\mathsf{s}+1/2}^* Q^* \varrho^{1/2} \Re \Big[ D_{\mathsf{s}+1/2} + \sum_{j=1}^N \alpha_j C_{j,j} \Big] \varrho^{1/2} Q A_{\alpha,\mathsf{s}+1/2}  \\ + 2 \sum_{|\beta|= k, \beta\neq \alpha} A_{\alpha,\mathsf{s}+1/2}^* Q^* \varrho^{1/2} \Re [ C_{\alpha,\beta} ] \varrho^{1/2} Q A_{\beta,\mathsf{s}+1/2}   \\  + A^*_{\alpha,\mathsf{s}+1/2} Q^*  E_{\alpha,\mathsf{s}-1/2} + E_{\alpha,\mathsf{s}-1/2}^* Q A_{\alpha,\mathsf{s}+1/2}  + A_{\alpha,\mathsf{s}+1/2}^* i[P,Q^*Q]A_{\alpha,\mathsf{s}+1/2}
		\label{eq:misc_jh3}
	\end{multline}
	holds for some $C_{\alpha,\beta} \in \Psi_{\mathrm{de,sc}}^{1,\mathsf{0}}$ satisfying $i \varrho^{-1}[P,A_\alpha] = \sum_{|\beta|\leq k} C_{\alpha,\beta} A_\beta$ and
	\begin{equation} 
		\sigma_{\mathrm{de,sc}}^{0,\mathsf{0}}(C_{\alpha,\beta})|_{\calR_+^+} = 0
		\label{eq:misc_smg}
	\end{equation}
	for all multi-indices $\alpha,\beta\in \bbN^N$ with $|\alpha|=|\beta|=k$. 
\end{lemma}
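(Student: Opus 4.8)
The plan is to prove \cref{eq:misc_jh3} by a direct but carefully organized algebraic manipulation, using the Leibniz rule for commutators together with the algebraic structure of $\frakN_+$ and the criticality result \Cref{prop:criticality}. First I would establish the ``one-step'' commutator identity $i\varrho^{-1}[P,A_\alpha] = \sum_{|\beta|\le k} C_{\alpha,\beta} A_\beta$ with $C_{\alpha,\beta}\in \Psi_{\mathrm{de,sc}}^{1,\mathsf{0}}$ satisfying \cref{eq:misc_smg}: this follows by repeatedly applying \Cref{prop:criticality} (which handles the case $A_\alpha = A_j$, a single generator) and then inducting on $|\alpha|$ via $[P, A_j A_{\alpha'}] = A_j[P,A_{\alpha'}] + [P,A_j]A_{\alpha'}$, using \Cref{prop:module_commutator_lemma} to re-expand products $C A_j$ and $A_j C$ back into the standard form $\sum C' A_{\beta}$ while tracking that the principal symbols of the coefficients multiplying the top-order terms $A_\beta$ with $|\beta| = k$ still vanish on $\calR^+_+$ (the lower-order terms, with $|\beta| < k$, get absorbed into the $E$-remainder). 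The coefficient $\sum_j \alpha_j C_{j,j}$ in \cref{eq:misc_jh3} arises because, when commuting $P$ past $A_1^{\alpha_1}\cdots A_N^{\alpha_N}$, the ``diagonal'' contribution in which $P$ is commuted past one copy of $A_j$ and the result $C_{j,j}A_j$ is pulled back to the position of that $A_j$ produces exactly $\alpha_j$ copies of $C_{j,j}$ times $A_\alpha$.

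Next I would compute $i[P, A_{\alpha,\mathsf{s}+1/2}^* Q^* Q A_{\alpha,\mathsf{s}+1/2}]$ by Leibniz. Writing $A_{\alpha,\mathsf{s}+1/2} = \varrho^{-\mathsf{s}-1/2}A_\alpha$, I would first pull the weight $\varrho^{-\mathsf{s}-1/2}$ through $P$ using the definition $D_{\mathsf{s}} = i\varrho^{-1}[P,\varrho^{-\mathsf{s}}]\varrho^{\mathsf{s}}$, which contributes the $D_{\mathsf{s}+1/2}$ term; the symmetric splitting $\varrho^{-\mathsf{s}-1/2}(\cdot)\varrho^{-\mathsf{s}-1/2} = A_{\alpha,\mathsf{s}+1/2}^*Q^*\varrho^{1/2}(\cdot)\varrho^{1/2}QA_{\alpha,\mathsf{s}+1/2}$ with a $\varrho^{1/2}$ on each side is where the $\varrho^{1/2}\,\Re[\cdots]\,\varrho^{1/2}$ structure comes from, and the $\varrho^{-1}$ in $D$ and the $C$'s is what gets split into the two factors of $\varrho^{1/2}$. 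The three terms in \cref{eq:misc_jh3} then correspond to: commuting $P$ past the inner $A_\alpha$ (giving $D_{\mathsf{s}+1/2} + \sum_j\alpha_j C_{j,j}$ on the diagonal and $C_{\alpha,\beta}$ off the diagonal, using the one-step identity and taking real parts via $X + X^* = 2\Re X$ modulo self-adjointness corrections absorbed into $E$); commuting $P$ past $Q^*Q$ (the last term, left explicit); and the remainder $E_{\alpha,\mathsf{s}-1/2}$, which collects (i) the lower-order terms $C_{\alpha,\beta}A_\beta$ with $|\beta|<k$ from the one-step expansion, (ii) the commutator errors from moving $\varrho^{1/2}$, $Q$ past $A_\beta$, and (iii) the imaginary/lower-order corrections from symmetrizing. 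One checks $E_{\alpha,\mathsf{s}-1/2}\in \varrho^{-\mathsf{s}+1/2}\frakN_+^{k-1}$ by noting that each such term is a product of at most $k-1$ generators from $\frakN_+$ times a $\Psi_{\mathrm{de,sc}}^{0,\mathsf{0}}$ (or better) coefficient, with the extra half-order of decay $\varrho^{1/2}$ coming from the fact that commutators in the de,sc-calculus gain one order of decay (the principal symbol short exact sequence) and we have split that gain as $\varrho^{1/2}\cdot\varrho^{1/2}$; the wavefront inclusion $\operatorname{WF}'_{\mathrm{de,sc}}(E_{\alpha,\mathsf{s}-1/2})\subseteq \operatorname{WF}'_{\mathrm{de,sc}}(Q)$ follows because every term in $E$ has a factor of $Q$ or $Q^*$, and $\operatorname{WF}'_{\mathrm{de,sc}}$ is monotone under multiplication.

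\textbf{Main obstacle.} I expect the principal bookkeeping — the main subtlety rather than a mere calculation — to be the verification that, after all the re-expansions needed to bring $i[P,A_\alpha]$ into the canonical form $\sum_{|\beta|=k}C_{\alpha,\beta}A_\beta + (\text{lower})$, the symbols $\sigma_{\mathrm{de,sc}}^{0,\mathsf{0}}(C_{\alpha,\beta})$ of the coefficients of the \emph{top-order} terms still vanish on $\calR^+_+$. The point is that \Cref{prop:criticality} only directly gives vanishing of $\sigma_{\mathrm{de,sc}}^{1,\mathsf{0}}(C_{j,k})$ for $k\ne 0$ in the single-generator case, and when one iterates, new coefficients appear both from the Leibniz terms $A_j[P,A_{\alpha'}]$ and from re-expanding products $C A_j$ via \Cref{prop:module_commutator_lemma}; one must check that the only way a nonvanishing-at-$\calR^+_+$ symbol can survive at top order is through the genuinely ``diagonal'' channel (which is precisely the $D_{\mathsf{s}+1/2}+\sum_j\alpha_j C_{j,j}$ term), while every genuinely off-diagonal or order-reducing rearrangement either produces a coefficient vanishing at $\calR^+_+$ or drops the number of generators, hence lands in $E$. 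This is the heart of why the secondary positive commutator argument in \S\ref{subsec:firstinduction} will close, and it is exactly the content encoded in \cref{eq:misc_smg}; the rest of the proof is routine symbol calculus and adjoint manipulations.
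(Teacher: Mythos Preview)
Your proposal is correct and follows essentially the same strategy as the paper. The paper organizes the argument in two stages: first it shows that \emph{any} choice of $C_{\alpha,\beta}$ satisfying $i\varrho^{-1}[P,A_\alpha]=\sum_{|\beta|\le k}C_{\alpha,\beta}A_\beta$ yields an identity of the form \cref{eq:misc_jh3} (with a preliminary remainder $E^{(0)}$), and only then constructs specific $C_{\alpha,\beta}$ with the vanishing property \cref{eq:misc_smg}. For that construction the paper does not induct but expands $[P,A_\alpha]$ by Leibniz directly, substitutes $i[P,A_j]=\varrho\sum_\nu C_{j,\nu}A_\nu$ at each position, and commutes $\varrho C_{j,\nu}$ all the way to the left; the commutator errors from this last step land in $\Psi^{1,\mathsf{0}}_{\mathrm{de,sc}}\frakN_+^{k-1}$ automatically, so the top-order coefficient $C_{\alpha,\beta}$ is \emph{literally} a scalar multiple of a single $C_{j,\nu}$ (explicitly: $C_{\alpha,\alpha}=\sum_j\alpha_jC_{j,j}$, $C_{\alpha,\beta}=\alpha_jC_{j,\nu}$ when $\alpha,\beta$ differ by one increment/decrement, and $0$ otherwise). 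The vanishing \cref{eq:misc_smg} is then immediate from \Cref{prop:criticality}, with no symbol-tracking through an induction needed. This dissolves what you flagged as the main obstacle: the explicit formula makes the vanishing transparent rather than something to be verified inductively. (Note also a small imprecision in your summary: the diagonal piece $\sum_j\alpha_jC_{j,j}$ \emph{does} vanish at $\calR^+_+$---only $D_{\mathsf{s}+1/2}$ contributes the nonvanishing definite part used downstream.)
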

Here, for $j,k\in \{1,\ldots,N\}$, $C_{j,k}$ is as in \Cref{prop:criticality}. When $\alpha,\beta$ are multi-indices that are zero except in the $j$th and $k$th slots respectively, where they are one, then we can take $C_{\alpha,\beta} = C_{j,k}$.

The proof of this lemma will be the most technical part of this paper. The precise form of the right-hand side in \cref{eq:misc_jh3} is convenient, but most important is its structure:
\begin{multline}
	2 A_{\alpha,\mathsf{s}+1/2}^* Q^* \varrho^{1/2} \Re \Big[ \underbrace{D_{\mathsf{s}+1/2}}_{\text{main term}} + \underbrace{\sum_{j=1}^N \alpha_j C_{j,j}}_{\text{subprinciple at $\calR$, by Prop.\ \ref{prop:criticality}}} \Big] \varrho^{1/2} Q A_{\alpha,\mathsf{s}+1/2}  \\ + \underbrace{2 \sum_{|\beta|= k, \beta\neq \alpha} A_{\alpha,\mathsf{s}+1/2}^* Q^* \varrho^{1/2} \Re [ C_{\alpha,\beta} ] \varrho^{1/2} Q A_{\beta,\mathsf{s}+1/2}}_{{\text{subprinciple at $\calR$, by \cref{eq:misc_smg}}}}   \\  + \underbrace{A^*_{\alpha,\mathsf{s}+1/2} Q^*  E_{\alpha,\mathsf{s}-1/2} + E_{\alpha,\mathsf{s}-1/2}^* Q A_{\alpha,\mathsf{s}+1/2}}_{\text{lower order error}}  + \underbrace{A_{\alpha,\mathsf{s}+1/2}^* i[P,Q^*Q]A_{\alpha,\mathsf{s}+1/2}}_{\text{microsupported away from $\calR$}},
\end{multline}
where the ``lower order error'' is lower order because $E_{\alpha,\mathsf{s}-1/2}\in \varrho^{-\mathsf{s}+1/2} \frakN_+^{k-1}$, whereas 
\begin{equation} 
	A_{\alpha,\mathsf{s}+1/2}\in \varrho^{-\mathsf{s}-1/2} \frakN_+^k,
\end{equation} 
so these terms, when applied to $u$, will be controlled by our inductive hypothesis.
The ``main term'' $D_{\mathsf{s}+1/2}$ is the one that comes from differentiating the weight $\varrho$, which is what one expects when proving radial-point estimates.

\begin{proof}
	We show that given any collection of $C_{\alpha,\beta}\in \Psi_{\mathrm{de,sc}}^{1,\mathsf{0}}$ satisfying
	\begin{equation} 
		i \varrho^{-1}[P,A_\alpha] = \sum_{|\beta|\leq |\alpha|} C_{\alpha,\beta} A_\beta
		\label{eq:misc_hj4}
	\end{equation}  
	(the existence of at least one such collection follows from \Cref{prop:module_commutator_lemma}),
	there exists a collection of $E_{\alpha,\mathsf{s}-1/2}^{(0)} \in \varrho^{-\mathsf{s}+1/2} \frakN^{k-1}$ with $\operatorname{WF}_{\mathrm{de,sc}}'(E_{\alpha,\mathsf{s}-1/2}^{(0)}) \subseteq \operatorname{WF}_{\mathrm{de,sc}}'(Q)$ such that
	\begin{multline}
		i [P, A_{\alpha,\mathsf{s}+1/2}^* Q^* Q A_{\alpha,\mathsf{s}+1/2} ] = 2 \sum_{|\beta| \leq k} A_{\alpha,\mathsf{s}+1/2}^* Q^* \varrho^{1/2}  \Re \Big[ \delta_{\alpha,\beta} D_{\mathsf{s}+1/2} + C_{\alpha,\beta} \Big] \varrho^{1/2} Q A_{\beta,\mathsf{s}+1/2}  \\ + A^*_{\alpha,\mathsf{s}+1/2} Q^*  E_{\alpha,\mathsf{s}-1/2}^{(0)} + E_{\alpha,\mathsf{s}-1/2}^{(0)*} Q A_{\alpha,\mathsf{s}+1/2}  + A_{\alpha,\mathsf{s}+1/2}^* i[P,Q^*Q]A_{\alpha,\mathsf{s}+1/2}
		\label{eq:misc_jh1}
	\end{multline}
	holds for each $\alpha\in \bbN^N$. We then show that we can choose $C_{\alpha,\beta}$ such that, if $|\beta| = |\alpha|$,
	\begin{equation}
		C_{\alpha,\beta} = 
		\begin{cases}
			\sum_{j=1}^N \alpha_j C_{j,j}  & (\alpha=\beta), \\
			\alpha_\delta C_{j,\nu} & (|\alpha-\beta|=2), \\ 
			 0 & (\text{otherwise}),
		\end{cases}
	\label{eq:misc_cab}
	\end{equation} 
	where in the second case $j,\nu$ are the indices in which $\alpha,\beta$ differ, with $\alpha_j = \beta_j+1$ and $\beta_\nu = \alpha_\nu +1$. 
	Defining 
	\begin{equation}
		E_{\alpha,\mathsf{s}-1/2} = E_{\alpha,\mathsf{s}-1/2}^{(0)}  +   \varrho^{1/2} \sum_{|\beta|<k}  \Re[C_{\alpha,\beta}] \varrho^{1/2} Q A_{\beta,\mathsf{s}+1/2} \in \varrho^{-\mathsf{s}+1/2} \frakN^{k-1}_+, 
	\end{equation} 
	\cref{eq:misc_jh3} holds, and by \Cref{prop:criticality}, the $C_{\alpha,\beta}$ defined by \cref{eq:misc_cab} satisfy \cref{eq:misc_smg}.
	\begin{itemize}
		\item Suppose that we are given $C_{\alpha,\beta}$ satisfying \cref{eq:misc_hj4}. The left-hand side of \cref{eq:misc_jh1} is given by 
		\begin{equation} 
			i[P,A_{\alpha,\mathsf{s}+1/2}^* Q^* Q A_{\alpha,\mathsf{s}+1/2} ] =  A_{\alpha,\mathsf{s}+1/2}^* Q^* i [P, QA_{\alpha,\mathsf{s}+1/2}] + [P,A_{\alpha,\mathsf{s}+1/2}^* Q^*] Q A. 
		\end{equation}

		Consider $i[P, A_{\alpha,\mathsf{s}+1/2}] = i\varrho^{-\mathsf{s}-1/2} [P,A_\alpha] + i[P, \varrho^{-\mathsf{s}-1/2}] A_\alpha$. Using \cref{eq:misc_hj4}, this becomes
		\begin{align}
			\begin{split} 
				i[P, A_{\alpha,\mathsf{s}+1/2}] &=   \sum_{|\beta|\leq |\alpha|}\varrho^{-\mathsf{s}+1/2}C_{\alpha,\beta} A_{\beta} + D_{\mathsf{s}+1/2}  A_{\alpha,\mathsf{s}-1/2} \\ 
			&=  \sum_{|\beta|\leq|\alpha|}C_{\alpha,\beta} A_{\beta,\mathsf{s}-1/2} + \sum_{|\beta|\leq|\alpha|}[\varrho^{-\mathsf{s}+1/2},C_{\alpha,\beta} ]A_\beta + D_{\mathsf{s}+1/2}  A_{\alpha,\mathsf{s}-1/2}.
			\end{split}
		\end{align} 
		Therefore, $i[P, QA_{\alpha,\mathsf{s}+1/2}]  = 	Qi[P, A_{\alpha,\mathsf{s}+1/2}]  + 	i[P, Q]A_{\alpha,\mathsf{s}+1/2} $ can be written, after rearrangement of the terms in $Qi[P, A_{\alpha,\mathsf{s}+1/2}]$, as 
		\begin{multline}
			i[P, QA_{\alpha,\mathsf{s}+1/2}] =   \varrho D_{\mathsf{s}+1/2} Q A_{\alpha,\mathsf{s}+1/2}+ 
			\varrho \sum_{|\beta|\leq |\alpha|}C_{\alpha,\beta} Q A_{\beta,\mathsf{s}+1/2}
			+ [D_{\mathsf{s}+1/2}Q, \varrho] A_{\alpha,\mathsf{s}+1/2}  
			\\ + \sum_{|\beta|\leq |\alpha|} [C_{\alpha,\beta}Q,\varrho]A_{\beta,\mathsf{s}+1/2} + \sum_{|\beta|\leq |\alpha|}\Big( [Q,C_{\alpha,\beta}] + Q [\varrho^{-\mathsf{s}+1/2},C_{\alpha,\beta} ] \varrho^{\mathsf{s}-1/2} \Big) A_{\beta,\mathsf{s}-1/2}  \\ + [Q,D_{\mathsf{s}+1/2}] A_{\alpha,\mathsf{s}-1/2}  + i [P,Q] A_{\alpha,\mathsf{s}+1/2}, 
		\end{multline}
		and similarly for $[P,A_{\alpha,\mathsf{s}+1/2}^*] = -[P,A_{\alpha,\mathsf{s}+1/2}]^*$.
		So, the operator 
		\begin{equation} 
			i[P,A_{\alpha,\mathsf{s}+1/2}^* Q^* Q A_{\alpha,\mathsf{s}+1/2} ] =  A_{\alpha,\mathsf{s}+1/2}^* Q^* i [P, QA_{\alpha,\mathsf{s}+1/2}] + [P,A_{\alpha,\mathsf{s}+1/2}^* Q^*] Q A
			\label{eq:misc_514}
		\end{equation} 
		is given by 
		\begin{multline}
			i[P,A_{\alpha,\mathsf{s}+1/2}^* Q^* Q A_{\alpha,\mathsf{s}+1/2} ] = A_{\alpha,\mathsf{s}+1/2}^* Q^*  \Big( \varrho D_{\mathsf{s}+1/2} + D_{\mathsf{s}+1/2}^* \varrho + \sum_{|\beta|\leq|\alpha|} (\varrho C_{\alpha,\beta} + C_{\alpha,\beta}^* \varrho) \Big) Q A_{\beta,\mathsf{s}+1/2}
			\\ + A^*_{\alpha,\mathsf{s}+1/2} Q^*  E_{\alpha,\mathsf{s}-1/2}^{(1)} + E_{\alpha,\mathsf{s}-1/2}^{(1)*} Q A_{\alpha,\mathsf{s}+1/2}  + A_{\alpha,\mathsf{s}+1/2}^* i[P,Q^*Q]A_{\alpha,\mathsf{s}+1/2}
			\label{eq:misc_7k1}
		\end{multline}
		for 
		\begin{multline}
			E_{\alpha,\mathsf{s}-1/2}^{(1)} =  [D_{\mathsf{s}+1/2}Q, \varrho] A_{\alpha,\mathsf{s}+1/2}  
			+ \sum_{|\beta|\leq |\alpha|}[C_{\alpha,\beta}Q,\varrho]  A_{\beta,\mathsf{s}+1/2}  \\ +   \sum_{|\beta|\leq|\alpha|}\Big( [Q,C_{\alpha,\beta}] + Q [\varrho^{-\mathsf{s}+1/2},C_{\alpha,\beta} ] \varrho^{\mathsf{s}-1/2} \Big) A_{\beta,\mathsf{s}-1/2}    + [Q,D_{\mathsf{s}+1/2}] A_{\alpha,\mathsf{s}-1/2}.
			\label{eq:misc_geq}
		\end{multline}
		In passing from \cref{eq:misc_514} to \cref{eq:misc_7k1}, we have recombined 
		\begin{equation}
			A^*_{\alpha,\mathsf{s}+1/2} Q^* i[P,Q] A_{\alpha,\mathsf{s}+1/2}+A^*_{\alpha,\mathsf{s}+1/2} i[P,Q^*]Q A_{\alpha,\mathsf{s}+1/2} = A_{\alpha,\mathsf{s}+1/2}^* i[P,Q^*Q]A_{\alpha,\mathsf{s}+1/2}. 
		\end{equation}
	
		Term by term, we see that $E_{\alpha,\mathsf{s}-1/2}^{(1)} \in \Psi^{-\infty,\mathsf{s}-1/2}_{\mathrm{de,sc}} \frakN_+^{k-1}$. 
		For example, $[D_{\mathsf{s}+1/2}Q,\varrho] \in \Psi_{\mathrm{de,sc}}^{-\infty,-\mathsf{2}}$, so 
		\begin{equation} 
			[D_{\mathsf{s}+1/2}Q, \varrho] A_{\alpha,\mathsf{s}+1/2} = [D_{\mathsf{s}+1/2}Q, \varrho] \varrho^{-\mathsf{s}-1/2} A_\alpha
		\end{equation} 
		is the product of an element of $\Psi_{\mathrm{de,sc}}^{-\infty,-3/2+\mathsf{s}}$ and an element of $\frakN_+^{k}$. Since $\Psi_{\mathrm{de,sc}}^{-\infty,-1} \frakN_+^k \subseteq \frakN_+^{k-1}$, we get 
		\begin{equation} 
			[D_{\mathsf{s}+1/2}Q, \varrho] A_{\alpha,\mathsf{s}-1/2} \in \Psi^{-\infty,\mathsf{s}-1/2}_{\mathrm{de,sc}} \frakN_+^{k-1} .
		\end{equation}   
		The other terms in \cref{eq:misc_geq} are analyzed similarly. 
		
		\Cref{eq:misc_7k1} looks very similar to the desired \cref{eq:misc_jh1}, except we want to commute a factor of $\varrho^{1/2}$ through each of the $D_{\mathsf{s}+1/2}$'s and $C_{\alpha,\beta}$'s. Set
		\begin{equation}
			E_{\alpha,\mathsf{s}-1/2}^{(0)} = E_{\alpha,\mathsf{s}-1/2}^{(1)}  + \varrho^{1/2}\Big( [\varrho^{1/2}, D_{\mathsf{s}+1/2}] + \sum_{|\beta|\leq |\alpha|} [\varrho^{1/2},C_{\alpha,\beta}] \Big)Q A_{\alpha,\mathsf{s}+1/2}. 
		\end{equation}
		This lies in $\Psi^{-\infty,\mathsf{s}-1/2}_{\mathrm{de,sc}} \frakN_+^{k-1}$, and \cref{eq:misc_7k1} becomes \cref{eq:misc_jh1}. Observe that 
		\begin{equation} 
			\operatorname{WF}_{\mathrm{de,sc}}'(E_{\alpha,\mathsf{s}-1/2}^{(0)}) \subseteq \operatorname{WF}_{\mathrm{de,sc}}'(Q),
		\end{equation} 
		so we have accomplished our first task. 
		\item Our second task, arranging \cref{eq:misc_cab}, is mostly a computation of the left-hand side of \cref{eq:misc_hj4} (and the reason why ``arrange'' is required is that there is some redundancy in the right-hand side, so we have some freedom to choose the $C_{\alpha,\beta}$'s). 
		We compute
		\begin{align}
			\begin{split} 
			i\varrho^{-1} [P, A_\alpha] &= \varrho^{-1}\sum_{j=1}^N \Big[ \Big( \prod_{\ell=1}^{j-1} A_\ell^{\alpha_\ell} \Big) \Big(\sum_{\varkappa=1}^{\alpha_j} A_j^{\varkappa-1}i[P,A_j] A_j^{\alpha_j-\varkappa} \Big) \Big( \prod_{\ell=j+1}^{N} A_\ell^{\alpha_\ell} \Big) \Big] \\ 
			&= \varrho^{-1}\sum_{j=1}^N \Big[ \Big( \prod_{\ell=1}^{j-1} A_\ell^{\alpha_\ell} \Big) \Big(\sum_{\varkappa=1}^{\alpha_j} A_j^{\varkappa-1} \varrho \sum_{\nu=0}^N C_{j,\nu} A_\nu A_j^{\alpha_j-\varkappa} \Big) \Big( \prod_{\ell=j+1}^{N} A_\ell^{\alpha_\ell} \Big) \Big] \\
			&= \sum_{j=1}^N \sum_{\varkappa=1}^{\alpha_j} \sum_{\nu=0}^N  \Big[ \varrho^{-1}\Big( \prod_{\ell=1}^{j-1} A_\ell^{\alpha_\ell} \Big) \Big( A_j^{\varkappa-1} \varrho C_{j,\nu} A_\nu A_j^{\alpha_j-\varkappa} \Big) \Big( \prod_{\ell=j+1}^{N} A_\ell^{\alpha_\ell} \Big) \Big].
			\end{split}
		\end{align} 
		Consider the summand, $\varrho^{-1}( \prod_{\ell=1}^{j-1} A_\ell^{\alpha_\ell} ) ( A_j^{\varkappa-1} \varrho C_{j,\nu} A_\nu A_j^{\alpha_j-\varkappa} )(\prod_{\ell=j+1}^{N} A_\ell^{\alpha_\ell} )$. 
		In the case $\nu=j$,  the commutator resulting from commuting the term $\varrho C_{j,\nu}=\varrho C_{j,j}$ all the way to the left yields 
		\begin{equation}
			\varrho^{-1} \Big( \prod_{\ell=1}^{j-1} A_\ell^{\alpha_\ell} \Big) \Big( A_j^{\varkappa-1} \varrho C_{j,j}  A_j^{\alpha_j-\varkappa+1} \Big) \Big( \prod_{\ell=j+1}^{N} A_\ell^{\alpha_\ell} \Big) - C_{j,j} A_\alpha  \in  \Psi^{1,\mathsf{0}}_{\mathrm{de,sc}} \frakN_+^{k-1}. 
		\end{equation}
		The reason why this commutator lies in $\Psi^{1,\mathsf{0}}_{\mathrm{de,sc}} \frakN_+^{k-1}$ is that, each time we commute 
		\begin{equation} 
			\varrho C_{j,j} \in \Psi_{\mathrm{de,sc}}^{1,-\mathsf{1}}
		\end{equation} 
		through an $A_\bullet\in \Psi_{\mathrm{de,sc}}^{1,\mathsf{1}}$, we  get an element of $\Psi_{\mathrm{de,sc}}^{1,\mathsf{0}}$ sandwiched between a total of $k-1$ of the various $A_\bullet$'s. \Cref{prop:module_commutator_lemma} then allows us to bring the $\Psi_{\mathrm{de,sc}}^{1,\mathsf{0}}$ to the front.
		
		A similar computation applies when $\nu\notin \{j,0\}$, with the result 
		\begin{equation}
			\varrho^{-1} \Big( \prod_{\ell=1}^{j-1} A_\ell^{\alpha_\ell} \Big) \Big( A_j^{\varkappa-1} \varrho C_{j,\nu}  A_\nu A_j^{\alpha_j-\varkappa} \Big) \Big( \prod_{\ell=j+1}^{N} A_\ell^{\alpha_\ell} \Big) - C_{j,\nu} A_\beta  \in  \Psi^{1,\mathsf{0}}_{\mathrm{de,sc}} \frakN_+^{k-1},
		\end{equation}
		where $\beta$ differs from $\alpha$ by decrementing the $j$th entry and incrementing the $\nu$th. Indeed, we can first commute the $\varrho C_{j,\nu}$ factor to the left, picking up an error in $\Psi^{1,\mathsf{0}}_{\mathrm{de,sc}} \frakN_+^{k-1}$ as before. Then, we can commute the $A_\nu$ through the $A_j$'s and the other $A_\ell$'s until it is together with the other factors of $A_\nu$. Since $\frakN_+$ is closed under commutators, in this way we end up with another $\Psi^{1,\mathsf{0}}_{\mathrm{de,sc}} \frakN_+^{k-1}$ error.
		
		For the remaining case, $\nu=0$, we simply use that $A_\nu=A_0=1$, so
		\begin{equation}
				\varrho^{-1} \Big( \prod_{\ell=1}^{j-1} A_\ell^{\alpha_\ell} \Big) \Big( A_j^{\varkappa-1} \varrho C_{j,0}   A_j^{\alpha_j-\varkappa} \Big) \Big( \prod_{\ell=j+1}^{N} A_\ell^{\alpha_\ell} \Big) \in  \Psi^{1,\mathsf{0}}_{\mathrm{de,sc}} \frakN_+^{k-1}.
		\end{equation}
		So, 
		\begin{equation}
			i\varrho^{-1}[P, A_\alpha] = \sum_{\beta\in \bbN^N, |\beta|=k} C_{\alpha,\beta} A_\beta \bmod \Psi_{\mathrm{de,sc}}^{1,\mathsf{0}} \frakN^{k-1}_\sigma, 
			\label{eq:misc_54h}
		\end{equation}
		where $C_{\alpha,\beta}$ are defined for $|\alpha|=|\beta|=k$ by \cref{eq:misc_cab}. 
		
		Since the $A_\alpha$'s for $|\alpha|<k$ span $\frakN_+^{k-1}$ over $\Psi_{\mathrm{de,sc}}^{0,\mathsf{0}}$, the $\Psi_{\mathrm{de,sc}}^{1,\mathsf{0}} \frakN^{k-1}_+$ error term in \cref{eq:misc_54h} can be written as 
		\begin{equation} 
			\sum_{\beta\in \bbN^N, |\beta|<k} C_{\alpha,\beta} A_\beta
		\end{equation} 
		for some $\{C_{\alpha,\beta}\}_{|\beta|<k} \subseteq \Psi_{\mathrm{de,sc}}^{1,\mathsf{0}}$. So, the $\{C_{\alpha,\beta}\}_{|\alpha|,|\beta|\leq k}$ defined here satisfy \cref{eq:misc_hj4} on the nose, and they satisfy  \cref{eq:misc_cab} when $|\alpha|=|\beta|=k$. This completes the second part of the proof. 
		\end{itemize} 
\end{proof}

We now return to the main line of argument. Let $k\in \bbN^+$, still taking $\kappa=0$.

We consider 
\begin{equation}
	\{C'_{\alpha,\beta} \}_{|\alpha|,|\beta| = k}=  \{\delta_{\alpha,\beta}(D_{\mathsf{s}+1/2}+D_{\mathsf{s}+1/2}^*) + C_{\alpha,\beta}+C_{\alpha,\beta}^* \}_{|\alpha|,|\beta| = k}
\end{equation}
as a matrix-valued de,sc-$\Psi$DO $C'$ whose matrix elements are indexed by multiindices $\alpha,\beta \in \bbN^N$ with $|\alpha|=k$ and $|\beta|=k$. The matrix-valued principal symbol of $C'$ is the matrix 
\begin{equation}
	\sigma_{\mathrm{de,sc}}^{1,\mathsf{0}}(C') = \{ \delta_{\alpha,\beta} 2\Re \sigma_{\mathrm{de,sc}}^{1,\mathsf{0}} (D_{\mathsf{s}+1/2}) + 2 \Re \sigma_{\mathrm{de,sc}}^{1,\mathsf{0}}(C_{\alpha,\beta})  \}_{|\alpha|,|\beta|=k}. 
\end{equation}
Choosing representatives of $\smash{\sigma_{\mathrm{de,sc}}^{1,\mathsf{0}}(D_{\mathsf{s}+1/2})}$ and $\smash{\sigma_{\mathrm{de,sc}}^{1,\mathsf{0}}(C_{\alpha,\beta})}$, for each $\alpha,\beta$, we get a representative $c'$ of $\smash{\sigma_{\mathrm{de,sc}}^{1,\mathsf{0}}(C')}$, which assigns to each point of $\smash{{}^{\mathrm{de,sc}}T^* \bbO}$ an ordinary matrix.
As long as $s_{\mathrm{Ff}}\neq -1/2$, this matrix is -- owing to \cref{eq:misc_ggg}, \cref{eq:misc_smg} -- either positive definite or negative definite near $\calR^+_+$, so 
\begin{equation}
	b = |c'|^{1/2} 
	\label{eq:misc_bh4}
\end{equation}
is, near $\smash{\calR^+_+}$, a well-defined symmetric matrix whose entries are elements of $\smash{S_{\mathrm{de,sc}}^{1/2,\mathsf{0}}}$ defined near the radial set.
Let $\smash{\{b_{\alpha,\beta}\}_{|\alpha|,|\beta|=k }}$ denote the entries of $b$. Squaring \cref{eq:misc_bh4}, we see that, for each $\alpha,\beta$, 
\begin{equation}
	c_{\alpha,\beta}' =\pm  \sum_{|\gamma|=k} b_{\alpha,\gamma}b_{\gamma,\beta}, 
\end{equation}
where the sign is positive if $s_{\mathrm{Ff}}>-1/2$ and negative otherwise.

Quantizing (and remembering that the discussion above is only valid near the radial set), there exist 
\begin{equation} 
	B_{\alpha,\beta} =B_{\beta,\alpha} \in \Psi_{\mathrm{de,sc}}^{-\infty,\mathsf{0}},
\end{equation} 
$R_{\alpha,\beta}\in \Psi_{\mathrm{de,sc}}^{-\infty,-\mathsf{1}}$, and $E\in \Psi_{\mathrm{de,sc}}^{-\infty,-\infty}$ such that 
\begin{equation}
	Q^* \varrho^{1/2} C'_{\alpha,\beta} \varrho^{1/2} Q =  Q^* \varrho^{1/2}\Big[ \pm \sum_{|\gamma|=k} B^*_{\alpha,\gamma} B_{\gamma,\beta} + R_{\alpha,\beta} \Big] \varrho^{1/2} Q + E,
\end{equation}
with
\begin{equation} 
	\sigma_{\mathrm{de,sc}}^{0,\mathsf{0}}(B_{\alpha,\beta})=b_{\alpha,\beta}
\end{equation} 
near $\calR^+_+$, at least if $Q$ has essential support in a sufficiently small neighborhood of the radial set (so that the discussion above is valid within it). Moreover, since $b$ is invertible (as it is strictly definite and not just semidefinite) near $\calR^+_+$, there exist $\Upsilon_{\alpha,\beta} \in \smash{\Psi_{\mathrm{de,sc}}^{-\infty,\mathsf{0}}}$ such that 
\begin{align}
	Q^* \varrho^{1/2} \Big(\sum_{|\gamma| = k} \Upsilon_{\alpha,\gamma} B_{\gamma,\beta} - \delta_{\alpha,\beta} \Big) \varrho^{1/2} Q &\in \Psi_{\mathrm{de,sc}}^{-\infty,-\infty} \\ 
		Q^* \varrho^{1/2} \Big(\sum_{|\gamma| = k} B_{\alpha,\gamma} \Upsilon_{\gamma,\beta} - \delta_{\alpha,\beta}\Big) \varrho^{1/2} Q &\in \Psi^{-\infty,-\infty}_{\mathrm{de,sc}} 
\end{align} 
for each $\alpha,\beta$, where $\delta_{\bullet,\bullet}$ denotes the Kronecker $\delta$ (once again, as long as the essential support of $Q$ is sufficiently close to $\calR^+_+$). (That we can arrange for the errors above to be residual rather than merely one uniform order better is an instance of the iterative parametrix construction.)

Now consider $u\in \calS'$ as in the setup of the proposition. Assuming we can justify the algebraic manipulations:
\begin{multline}
	\sum_{|\alpha| = k} \langle u, i [P, A_{\alpha,\mathsf{s}+1/2}^* Q^* Q A_{\alpha,\mathsf{s}+1/2} ] u \rangle   \\ = \Big[ \sum_{|\alpha|,|\beta|=k} \Big\langle u, A_{\alpha,\mathsf{s}+1/2}^* Q^* \varrho^{1/2} \Big(\pm \sum_{|\gamma|=k}B^*_{\alpha,\gamma} B_{\gamma,\beta} + R_{\alpha,\beta}\Big) \varrho^{1/2} Q A_{\beta,\mathsf{s}+1/2} u\Big\rangle  \\ 
	+ \langle \varrho^{1/2} Q^* A_{\alpha,\mathsf{s}+1/2}^* u, E_{\alpha,\mathsf{s}} u \rangle + \langle E_{\alpha,\mathsf{s}}u, \varrho^{1/2} Q A_{\alpha,\mathsf{s}+1/2} u \rangle +\langle u,  A_{\alpha,\mathsf{s}+1/2}^* i[P,Q^*Q]A_{\alpha,\mathsf{s}+1/2} u \rangle \Big]. 
	\label{eq:misc_nn3}
\end{multline} 
The main term is 
\begin{equation}
	\pm \sum_{|\alpha|,|\beta|,|\gamma|=k} \langle B_{\gamma,\alpha}\varrho^{1/2} Q A_{\alpha,\mathsf{s}+1/2} u , B_{\gamma,\beta} \varrho^{1/2} Q A_{\alpha,\mathsf{s}+1/2} u \rangle  = \pm \sum_{|\gamma|=k} \Big\lVert \sum_{|\alpha|=k} B_{\gamma,\alpha}\varrho^{1/2} Q A_{\alpha,\mathsf{s}+1/2} u \Big\rVert^2_{L^2}. 
	\label{eq:misc_j75}
\end{equation}
Abbreviate this as $\lVert B \varrho^{1/2} Q A_{\alpha,\mathsf{s}+1/2} u \rVert^2_{L^2}$. Thus, \cref{eq:misc_nn3} yields 
\begin{multline}
	\lVert B \varrho^{1/2} Q A_{\alpha,\mathsf{s}+1/2} u \rVert^2_{L^2} \leq  \sum_{|\alpha|=k} \Big[|\langle u,  A_{\alpha,\mathsf{s}+1/2}^* i[P,Q^*Q]A_{\alpha,\mathsf{s}+1/2} u \rangle|  +2| \langle E_{\alpha,\mathsf{s}} u, \varrho^{1/2} Q A_{\alpha,\mathsf{s}+1/2} u\rangle|  \\ 
	+ 2|\langle  Q A_{\alpha,\mathsf{s}+1/2}Pu , Q A_{\alpha,\mathsf{s}+1/2} u \rangle|  \Big] 
	+ \sum_{|\alpha|,|\beta|=k} |\langle \varrho^{1/2} Q A_{\alpha,\mathsf{s}+1/2} u, R_{\alpha,\beta}\varrho^{1/2} Q A_{\beta,\mathsf{s}+1/2} u \rangle |. \label{eq:misc_j76}
\end{multline} 

If the right-hand side of \cref{eq:misc_j75} is finite, i.e.\ if 
\begin{equation}
	\sum_{|\alpha|=k} B_{\gamma,\alpha}\varrho^{1/2} Q A_{\alpha,\mathsf{s}+1/2} u \in L^2, 
	\label{eq:misc_g4z}
\end{equation}
then, applying $\Upsilon$, we conclude that  
\begin{equation}
	\varrho^{1/2} Q A_{\alpha,\mathsf{s}+1/2} u=\varrho^{1/2} Q \varrho^{-\mathsf{s}-1/2} A_{\alpha} u \in H_{\mathrm{de,sc}}^{\infty,\mathsf{0}}.
\end{equation}
As long as $Q$ is elliptic at $\calR^+_+$, we conclude that $\operatorname{WF}_{\mathrm{de,sc}}^{*,\mathsf{s}}(A_\alpha u) \cap \calR^+_+=\varnothing$. Quantitatively, this means that the $\smash{H_{\mathrm{de,sc}}^{-N,\mathsf{s}}}$ norm of $A_\alpha u$ near $\calR^+_+$ is controlled by the inequality 
\begin{equation}
	\lVert Q_1 A_\alpha u  \rVert_{H_{\mathrm{de,sc}}^{-N,\mathsf{s}}} \lesssim
	\lVert B \varrho^{1/2} Q A_{\alpha,\mathsf{s}+1/2} u \rVert_{L^2} + \lVert u \rVert_{H_{\mathrm{de,sc}}^{-N,-N}}
\end{equation}
for some $Q_1 \in \Psi_{\mathrm{de,sc}}^{0,\mathsf{0}}$ that is elliptic at $\calR^+_+$, this holding for each $N$ and $\mathsf{s}\in \bbR^5$, and for every $u\in \calS'$.

Each of the terms on the right-hand side of \cref{eq:misc_j76} can be controlled using the hypotheses of the propositions and the inductive hypothesis:
\begin{itemize}
	\item First of all assuming that $1-Q$ has essential support away from $\calR^+_+$, $[P,Q^* Q] \in \smash{\Psi^{-\infty,-\mathsf{1}}_{\mathrm{de,sc}}}$ has essential support which is disjoint from $\smash{\calR^+_+}$ as well. As long as $Q$ has essential support sufficiently close to $\calR^+_+$, the hypotheses of either \Cref{thm:R1}, \Cref{thm:R2} imply that 
	\begin{equation}
		\operatorname{WF}_{\mathrm{de,sc}}^{*,\mathsf{s}}(A_{\alpha} u) \cap \operatorname{WF}'_{\mathrm{de,sc}}([P,Q^* Q]) = \varnothing.
	\end{equation}
	Thus, the $\smash{|\langle u,  A_{\alpha,\mathsf{s}+1/2}^* i[P,Q^*Q]A_{\alpha,\mathsf{s}+1/2} u \rangle}$ term in \cref{eq:misc_j76} is finite and can be quantitatively controlled by the $H_{\mathrm{de,sc}}^{-N,\mathsf{s}}$ norms of $A_\alpha u$ in an annular region around $\smash{\calR^+_+}$. 
	\item Now, letting $\tilde{Q}\in \Psi_{\mathrm{de,sc}}^{-\infty,\mathsf{0}}$ be such that $\operatorname{WF}'_{\mathrm{de,sc}}(1-\tilde{Q})\cap \operatorname{WF}'_{\mathrm{de,sc}}(Q)=\varnothing$,
	\begin{equation}
			| \langle E_{\alpha,\mathsf{s}} u, \varrho^{1/2} Q A_{\alpha,\mathsf{s}+1/2} u\rangle| \leq | \langle (1-\tilde{Q}) E_{\alpha,\mathsf{s}} u, \varrho^{1/2} Q A_{\alpha,\mathsf{s}+1/2} u\rangle| + | \langle \tilde{Q} E_{\alpha,\mathsf{s}} u, \varrho^{1/2} Q A_{\alpha,\mathsf{s}+1/2} u\rangle|.
	\end{equation}
	The first term on the right-hand side is straightforward to estimate, as \begin{equation} 
		E_{\alpha,\mathsf{s}}(1-\tilde{Q})^* \varrho^{1/2} Q A_{\alpha,\mathsf{s}+1/2} \in \Psi_{\mathrm{de,sc}}^{-\infty,-\infty}.
	\end{equation} 
	On the other hand,  by Cauchy--Schwarz and AM-GM, 
	\begin{align}
		2| \langle \tilde{Q} E_{\alpha,\mathsf{s}} u, \varrho^{1/2} Q A_{\alpha,\mathsf{s}+1/2} u\rangle| \leq  \epsilon^{-1} \lVert \tilde{Q} E_{\alpha,\mathsf{s}} u  \rVert_{L^2}^2 + \epsilon  \lVert\varrho^{1/2} Q A^*_{\alpha,\mathsf{s}+1/2} u  \rVert_{L^2}^2
	\end{align}
	for any $\epsilon>0$. The term $ \lVert \tilde{Q} E_{\alpha,\mathsf{s}} u  \rVert_{L^2}^2$ can be controlled by the inductive hypothesis, since $E_{\alpha,\mathsf{s}} \in \smash{\Psi_{\mathrm{de,sc}}^{-\infty,-\mathsf{s}} \frakN_+^{k-1}}$. 
	
	The other term, which is controlled in terms of the $H^{-N,\mathsf{s}}_{\mathrm{de,sc}}$ norms of $A_\alpha u$ near $\calR^+_+$ , but this is suppressed a factor of $\epsilon$ and therefore can be absorbed into the left-hand side of the ultimate estimate. 
	\item Because $R_{\alpha,\beta} \in \Psi_{\mathrm{de,sc}}^{-\infty,-\mathsf{1}}$, we have $R_{\alpha,\beta}\varrho^{1/2} Q A_{\alpha,\mathsf{s}+1/2} \in \Psi_{\mathrm{de,sc}}^{-\infty,-\mathsf{s}} \frakN_+^{k-1}$. Thus, the 
	\begin{equation} 
		|\langle \varrho^{1/2} Q A_{\alpha,\mathsf{s}+1/2} u, R_{\alpha,\beta}\varrho^{1/2} Q A_{\beta,\mathsf{s}+1/2} u \rangle |
	\end{equation} 
	terms in \cref{eq:misc_j76} can be estimated like the previous class of terms. 
	\item Finally, consider the $|\langle  Q A_{\alpha,\mathsf{s}+1/2}Pu , Q A_{\alpha,\mathsf{s}+1/2} u \rangle|$ term in \cref{eq:misc_j76}. We can write 
	\begin{equation} 
		Q \varrho^{-1/2} = \varrho^{-1/2} Q + \varrho^{-1/2} F
	\end{equation} 
	for some $F\in \Psi_{\mathrm{de,sc}}^{-\infty,-1}$. Then, 
	\begin{multline}
		|\langle  Q A_{\alpha,\mathsf{s}+1/2}Pu , Q A_{\alpha,\mathsf{s}+1/2} u \rangle| \leq |\langle Q A_{\alpha,\mathsf{s}+1/2}Pu , \varrho^{-1/2} Q  A_{\alpha,\mathsf{s}} u \rangle| \\ + |\langle \varrho^{-1/2}  Q A_{\alpha,\mathsf{s+1/2}}Pu , F  A_{\alpha,\mathsf{s}} u \rangle|. 
		\label{eq:misc_391}
	\end{multline}
	By Cauchy--Schwarz and AM-GM, the second term on the right-hand side is bounded above as follows: 
	\begin{align}
		|\langle \varrho^{-1/2}  Q A_{\alpha,\mathsf{s}}Pu , F  A_{\alpha,\mathsf{s}+1/2} u \rangle| \lesssim \lVert \varrho^{-1/2} Q A_{\alpha,\mathsf{s}+1/2} Pu \rVert_{L^2}^2 + \lVert F Q A_{\alpha,\mathsf{s}} u \rVert_{L^2}^2. 
		\label{eq:misc_393}
	\end{align}
	The first term on the right-hand side of \cref{eq:misc_393} can be controlled using the hypotheses of the propositions to be proven. On the other hand, 
	\begin{equation} 
		FQ A_{\alpha,\mathsf{s}} \in \Psi_{\mathrm{de,sc}}^{-\infty,\mathsf{s}} \frakN_+^{k-1},
	\end{equation} 
	so the second term on the right-hand side of \cref{eq:misc_393} is controlled using the inductive hypothesis. The first term on the right-hand side of \cref{eq:misc_391} can be bounded above by 
	\begin{equation}
		2| \leq |\langle Q A_{\alpha,\mathsf{s}+1/2}Pu , \varrho^{-1/2} Q  A_{\alpha,\mathsf{s}} u \rangle| \leq \epsilon^{-1} \lVert  \varrho^{-1/2} Q A_{\alpha,\mathsf{s}+1/2}Pu \rVert_{L^2}^2 + \epsilon \lVert  Q  A_{\alpha,\mathsf{s}} u \rVert_{L^2}^2. 
	\end{equation}
	As above, the $\epsilon \lVert  Q  A_{\alpha,\mathsf{s}} u \rVert_{L^2}^2$ will be able to be absorbed into the left-hand side of the ultimate estimates. The remaining term is controllable, for each $\epsilon>0$, in terms of the $\smash{H_{\mathrm{de,sc}}^{-N,\mathsf{s}+1}}$ norms of $A_\alpha Pu$ near the radial set, which are finite by the hypotheses of the propositions to be proven.
\end{itemize}
The upshot is that, assuming the algebraic manipulations above are justified, then the $\smash{H_{\mathrm{de,sc}}^{-N,\mathsf{s}}}$ norms of the $A_\alpha u$ near $\smash{\calR^+_+}$ in terms of quantities already under control by the inductive hypothesis or assumptions of the propositions. 
Regularizing, in a manner completely analogous to that in \cite{HassellMelroseVasy,HassellMelroseVasy2}\cite{HassellETAL}, suffices to show that this estimate holds in the strong sense that if the terms on the right-hand side are all finite, then the left-hand side is finite as well. This then yields the next step in the induction on $k$. 

\subsection{Second induction}
\label{subsec:secondinduction}

We now induct on $\kappa$. We first prove the following ``parlaying'' lemma:
\begin{lemma}[Cf.\ \cite{HassellETAL}, eq.\ 3.31]
	Let $m\in \bbR$, $\mathsf{s}\in \bbR^5$ be arbitrary, and let $k\in \bbN^+$ and $\kappa \in \bbN$. 
	Suppose that $u\in \calS'$ satisfies 
	\begin{itemize}
		\item $\operatorname{WF}_{\mathrm{de,sc}}^{*,\mathsf{s}}(Au) \cap \calR^+_+=\varnothing$  and
		\item  $\operatorname{WF}_{\mathrm{de,sc}}^{*,\mathsf{s}+1}(AP u) \cap \calR^+_+=\varnothing$ 
	\end{itemize}
	for all $A\in \frakM_{+,+}^{\kappa,k}$. Then, 
	\begin{equation} 
		\operatorname{WF}_{\mathrm{de,sc}}^{*,\mathsf{s}}(Au) \cap \calR^+_+=\varnothing
		\label{eq:misc_b34}
	\end{equation} 
	for all $A\in \frakM_{+,+}^{\kappa+1,k-1}$.
	\label{lem:R_ind}
\end{lemma}
The way we will use this is, in the proof of \Cref{thm:R1}, \Cref{thm:R2}, as follows: use the $\kappa=0$ case (from the previous subsection) of these theorems to conclude that the hypothesis of \Cref{lem:R_ind} holds with $0$ in place of $\kappa$ and $k+\kappa$ in place of $k$. We can then repeatedly use \Cref{lem:R_ind}, parlaying one order of $\frakN$-regularity into one order of $\frakM$-regularity each time, until eventually the conclusion of the relevant one of \Cref{thm:R1}, \Cref{thm:R2} is reached. 
\begin{proof}
	We will prove that, under the hypotheses of the lemma, \cref{eq:misc_b34} holds for all $A\in \smash{\frakM_{+,+}^{\kappa+1,j}}$ for $j\in \{0,\ldots,k-1\}$, proceeding inductively on $j$, with $j=0$ as the base case.

	For each $j$, that it suffices to check \cref{eq:misc_b34} for a set of $\Psi$DOs spanning $\smash{\frakM_{+,+}^{\kappa+1,j}}$ as a left $\Psi_{\mathrm{de,sc}}^{0,\mathsf{0}}$-module. 
	Since $\smash{\frakM_{+,+}^{\kappa+1,j}}$ is generated as a left $\Psi_{\mathrm{de,sc}}^{0,\mathsf{0}}$-module by products of the form $V_+ A_0$ for $\smash{A_0\in \frakM_{+,+}^{\kappa,j}}$ together with elements of $\smash{\frakM_{+,+}^{\kappa,j+1}}$, in order to show that \cref{eq:misc_b34} holds for all $A\in\frakM_{+,+}^{\kappa+1,j}$ it suffices to prove that 
	\begin{equation} 
		\operatorname{WF}_{\mathrm{de,sc}}^{*,\mathsf{s}}(V_+ A_0 u) \cap \calR^+_+ = \varnothing
	\end{equation}  
	for $A_0 \in \frakM_{+,+}^{\kappa,j}$. (Indeed, since $j\leq k-1$, if $A\in \frakM_{+,+}^{\kappa,j+1}$ then $A\in \frakM_{+,+}^{\kappa,k}$, so \cref{eq:misc_b34} holds for such $A$ by hypothesis. It is therefore only those $A$ of the form $A=V_+A_0$ that need to be considered.) In particular, in order to prove the result for $\kappa,j=0$, we only need to prove that 
	\begin{equation} 
		\operatorname{WF}_{\mathrm{de,sc}}^{*,\mathsf{s}}(V_+ u) \cap \calR^+_+ = \varnothing;
	\end{equation}  
	that is, we only need to consider $A_0=1$.

	Applying \Cref{prop:square_module_form}, we write $P = \chi\cdot \tau^{-2} (V_- V_+ +  (d-1) V_- - \tau i \mathsf{m} (d-2))+\varrho^2 R$ for $R=R_-$, $\chi$ as in that proposition. Consequently, for any $A_0\in \Psi_{\mathrm{de,sc}}$, 
	\begin{multline}
		\tau^{-2} V_-  V_+ A_0  u = A_0 Pu   - \chi\tau^{-2} (d-1) V_- A_0 u + \chi\tau^{-1} i \mathsf{m} (d-2) A_0 u - \varrho^2  R A_0u \\ +[\chi\tau^{-2} V_- V_+,A_0]  u + (d-1)[\chi\tau^{-2}  V_- ,A_0] u  - i\mathsf{m}(d-2)[\chi\tau^{-1},A_0]u+ [\varrho^2R,A_0] u. 
		\label{eq:misc_nnz}
	\end{multline}
		Since $\chi\tau^{-2} V_- \in \smash{\operatorname{Diff}_{\mathrm{de,sc}}^{1,( -1, -1,-\infty,-1,-1)}}$ is elliptic at $\smash{\calR^+_+}$ (this being what distinguishes $V_-$ from $V_+$; $V_+$ is instead elliptic at $\calR^-$), it suffices to prove that the sets 
	\begin{multline}
			\operatorname{WF}_{\mathrm{de,sc}}^{*,\mathsf{s}+1}(A_0 Pu),\operatorname{WF}_{\mathrm{de,sc}}^{*,\mathsf{s}+1}(\tau^{-2}  V_- A_0 u),\operatorname{WF}_{\mathrm{de,sc}}^{*,\mathsf{s}+1}(\chi \tau^{-1}   A_0 u), \operatorname{WF}_{\mathrm{de,sc}}^{*,\mathsf{s}+1}(\varrho^2 R A_0u), \\
			\operatorname{WF}_{\mathrm{de,sc}}^{*,\mathsf{s}+1}([\chi \tau^{-2} V_- V_+,A_0]u),\operatorname{WF}_{\mathrm{de,sc}}^{*,\mathsf{s}+1}([\chi\tau^{-2}  V_- ,A_0] u), \operatorname{WF}_{\mathrm{de,sc}}^{*,\mathsf{s}+1}([\chi\tau^{-1}, A_0]u) , \\
			 \operatorname{WF}_{\mathrm{de,sc}}^{*,\mathsf{s}+1}([\chi\varrho^2R,A_0] u)
	\end{multline}	
	are all disjoint from $\smash{\calR^+_+}$. 
	That this is true for $A_0 Pu$ is a hypothesis. 
	That this is true for $\tau^{-2}  V_- A_0  u, \chi \tau^{-1} A_0 u$ follows from the other hypothesis, which says that $\operatorname{WF}_{\mathrm{de,sc}}^{*,\mathsf{s}}(A_0 u) \cap \calR^+_+=\varnothing$, and from 
	\begin{equation} 
		\tau^{-2}  V_-, \chi\tau^{-1} \in \operatorname{Diff}_{\mathrm{de,sc}}^{1,-\mathsf{1}}.
	\end{equation} 
	On the other hand, to control the $\varrho^2 R A_0 u$ term: because $\frakN^2 \subset \smash{\Psi_{\mathrm{de,sc}}^{1,\mathsf{1}}} \frakN_+$, we have 
	\begin{equation} 
		\varrho^2 R  \in \Psi_{\mathrm{de,sc}}^{1,-\mathsf{1}} \frakN_+.
	\end{equation} 
	So, 
	\begin{equation} 
		\varrho^2 R A_0 u \in \Psi_{\mathrm{de,sc}}^{1,-\mathsf{1}} \frakN \frakM^{\kappa,j}_{+,+}\subseteq \Psi_{\mathrm{de,sc}}^{1,-\mathsf{1}}\frakM^{\kappa,j+1}_{+,+} \subseteq  \Psi_{\mathrm{de,sc}}^{1,-\mathsf{1}}\frakM^{\kappa,k}_{+,+}.
	\end{equation}
	Since $j\leq k-1$, our hypothesis on $u$ (which gives control of $\frakM^{\kappa,k}_{+,+}u$ at $\calR^+_+$) gives 
	\begin{equation}
		\operatorname{WF}_{\mathrm{de,sc}}^{*,\mathsf{s}+1}(\varrho^2 R A_0u) \cap \calR^+_+  \subseteq \operatorname{WF}_{\mathrm{de,sc}}^{*,\mathsf{s}}(\frakM_{+,+}^{\kappa,k}u) \cap \calR^+_+ = \varnothing,
	\end{equation}
	where the equality used the full hypothesis on $u$. 
	It only remains to check the terms in the second line of \cref{eq:misc_nnz}. 
	
	If $\kappa=0$ and $j=0$, then, since we are only considering $A_0=1$, all of the terms in the second line of \cref{eq:misc_nnz} are just zero, so we are done. 
	
	Otherwise: 
	\begin{itemize}
		\item 
		From $\varrho^2 R \in \Psi_{\mathrm{de,sc}}^{1,-\mathsf{1}} \frakN_+$, we get 
		\begin{equation} 
			[\varrho^2 R,A_0] \in \Psi_{\mathrm{de,sc}}^{1,-\mathsf{1}} (1_{\kappa>0} \frakM_{+,+}^{\kappa-1,j+1}+\frakM_{+,+}^{\kappa,j})
		\end{equation} 
		via \Cref{cor:module_commutator_lemma}, so the proposition's hypothesis implies that the final term in \cref{eq:misc_nnz} has $\operatorname{WF}_{\mathrm{de,sc}}^{*,\mathsf{s}+1}$ disjoint from $\smash{\calR^+_+}$.
		\item 
		Similarly, since $\chi \tau^{-2} V_-, \chi \tau^{-1} \in \Psi_{\mathrm{de,sc}}^{1,-\mathsf{1}}$, the same reasoning (with one less $\frakN$) also applies to the second and third-to-last terms in \cref{eq:misc_nnz}.
		\item 
		On the other hand, 
		\begin{align}
			[\chi \tau^{-2} V_- V_+,A_0]   &= \chi \tau^{-2} V_- [V_+,A_0]  + [\chi \tau^{-2} V_-, A_0] V_+; \\
			\tau^{-2} V_- [V_+,A_0] &\in \Psi_{\mathrm{de,sc}}^{1,-\mathsf{1}}( \frakM_{+,+}^{\kappa,j} + 1_{j>0} \frakM_{+,+}^{\kappa+1,j-1} ), \label{eq:misc_l0l}\\
			[\chi \tau^{-2} V_- , A_0]V_+ &\in \Psi_{\mathrm{de,sc}}^{1,-1}(1_{\kappa>0}\frakM_{+,+}^{\kappa,j} + 1_{j>0}\frakM_{+,+}^{\kappa+1,j-1}  + \frakM_{+,+}^{1,0} ) \label{eq:misc_l02}
		\end{align}
		via \Cref{cor:module_commutator_lemma}. The hypothesis of the proposition implies that 
		\begin{equation}
			\operatorname{WF}_{\mathrm{de,sc}}^{*,\mathsf{s}+1} (B u) \cap \calR^+_+ = \varnothing 
			\label{eq:misc_568}
		\end{equation} 
		for all $B\in \Psi_{\mathrm{de,sc}}^{1,-\mathsf{1}} \frakM_{+,+}^{\kappa,j}$. So, the only contributions left to control are the final term in \cref{eq:misc_l0l} and the last two terms in \cref{eq:misc_l02}
		These are all controlled by some variant of the inductive hypothesis.
		For example, since we already know the $\kappa=0$ and $j=0$ case of the result, \cref{eq:misc_568} holds also for 
		\begin{equation} 
			B\in \Psi_{\mathrm{de,sc}}^{1,-\mathsf{1}} \frakM_{+,+}^{1,0}.
		\end{equation}
		So, the final term in \cref{eq:misc_l02} is handled.
		If $j=0$, then we can immediately conclude
		\begin{align} 
			\begin{split} 
			\operatorname{WF}_{\mathrm{de,sc}}^{m,\mathsf{s}+1}(\tau^{-2} V_- [V_+,A] u ) \cap \calR^+_+ &= \varnothing,\\
			\operatorname{WF}_{\mathrm{de,sc}}^{m,\mathsf{s}+1}([\tau^{-2} V_- ,A]V_+u)\cap \calR^+_+&=\varnothing.
			\end{split} 
		\label{eq:misc_407}
		\end{align}  
		This completes the proof in the $j=0$ case. 
		If $j\geq 1$, then the inductive hypothesis says that \cref{eq:misc_568} holds
		for all 
		\begin{equation} 
			B\in \Psi_{\mathrm{de,sc}}^{1,-\mathsf{1}} \frakM_{+,+}^{\kappa+1,j-1},
		\end{equation} 
		so the remaining terms in \cref{eq:misc_l0l}, \cref{eq:misc_l02} are under control and
		we can still conclude \cref{eq:misc_407}. 
	\end{itemize}
\end{proof} 

Consequently: 
\begin{propositionp}
	If 
	\begin{equation} 
		\operatorname{WF}_{\mathrm{de,sc}}^{*,\mathsf{s}}(Au)\cap \calR^+_+ = \varnothing
		\label{eq:misc_h1z}
	\end{equation} 
	for all $A \in \frakN_+^{\kappa+k}$ and $\operatorname{WF}_{\mathrm{de,sc}}^{*,\mathsf{s}+1}(APu)\cap \calR^+_+ = \varnothing$ for all $A\in \frakM_{+,+}^{\kappa,k}$, then \cref{eq:misc_h1z} holds for all $A\in \frakM_{+,+}^{\kappa,k}$. 
\end{propositionp}

\Cref{thm:R1} and \Cref{thm:R2} follow.

\section{Proofs of main theorems}
\label{sec:proofs}

We now spell out the precise hypotheses under which the main theorems are proven. We do not aim to be maximally general here; we call a Lorentzian metric $g$ on $\bbR^{1,d}$ \emph{admissible} if the following conditions are satisfied: 
\begin{itemize}
	\item $g$ satisfies $g-g_{\bbM} \in \varrho_{\mathrm{Pf}}^2 \varrho_{\mathrm{nPf}}^2\varrho_{\mathrm{Sf}}^2\varrho_{\mathrm{nFf}}^2\varrho_{\mathrm{Ff}}^2 C^\infty(\bbO; {}^{\mathrm{de,sc}}\!\operatorname{Sym}^2 T^* \bbO)$, where $g_{\bbM}$ is the exact Minkowski metric, 
	\item $(\bbR^{1,d},g)$ is globally hyperbolic and $t$ serves as a time function, so that $\mathrm{d}t$ timelike, 
	\item $\Sigma_T = \{(t,\bfx)\in\bbR^{1,d}: t=T\}$ is a Cauchy hypersurface for each $T\in \bbR$, 
	\item any null geodesic, when projected down to $\bbM^\circ$, tends to null infinity in both directions.
\end{itemize}
The first condition specifies the precise sense in which $g$ is asymptotically flat. 

\begin{proposition}
	If $g \in g_{\bbM} + (1+t^2+r^2)^{-1} C^\infty(\bbM;{}^{\mathrm{sc}}\!\operatorname{Sym}T^* \bbM)$, then the first condition above is satisfied.
	\label{prop:allowed_decay}
\end{proposition}
\begin{proof}
A frame for ${}^{\mathrm{sc}}\!\operatorname{Sym}T^* \bbM$ is given by the sections $\mathrm{d}x_i\odot \mathrm{d} x_j$ for $i\in \{0,\ldots,d\}$, and the computations in \S\ref{sec:geometry} show that 
\begin{equation}
	\mathrm{d}x_i\odot \mathrm{d} x_j \in \varrho_{\mathrm{nPf}}^{-2} \varrho_{\mathrm{nFf}}^{-2} C^\infty(\bbO; {}^{\mathrm{de,sc}}\!\operatorname{Sym}^2 T^* \bbO).
	\label{eq:misc_567}
\end{equation}
Indeed, in order to prove \cref{eq:misc_567}, we only consider the situation near $\mathrm{nFf}\cap\mathrm{Ff}$, the other corners being similar. 
Taking the exterior derivative of \cref{eq:tr_inv_def}, we get that 
\begin{equation}
	\dd t,\dd r \in \varrho_{\mathrm{nPf}}^{-1}\varrho_{\mathrm{nFf}}^{-1}  C^\infty(\bbO;{}^{\mathrm{de,sc}} T^* \bbO ),
\end{equation}
locally.
Likewise, $r\dd \theta_j \in C^\infty(\bbO;{}^{\mathrm{de,sc}} T^* \bbO )$ locally. So,  $C^\infty(\bbM;{}^{\mathrm{sc}} T^* \bbM )\subseteq \varrho_{\mathrm{nPf}}^{-1}\varrho_{\mathrm{nFf}}^{-1}  C^\infty(\bbO;{}^{\mathrm{de,sc}} T^* \bbO )$. Taking the symmetric product yields \cref{eq:misc_567}.

Since  $(1+t^2+r^2)^{-1} \in \varrho_{\mathrm{Pf}}^2 \varrho_{\mathrm{nPf}}^4\varrho_{\mathrm{Sf}}^2\varrho_{\mathrm{nFf}}^4\varrho_{\mathrm{Ff}}^2 C^\infty( \bbO)$, this implies that 
\begin{equation} 
	g-g_{\bbM} \in \varrho_{\mathrm{Pf}}^2 \varrho_{\mathrm{nPf}}^2\varrho_{\mathrm{Sf}}^2\varrho_{\mathrm{nFf}}^2\varrho_{\mathrm{Ff}}^2 C^\infty(\bbO; {}^{\mathrm{de,sc}}\!\operatorname{Sym}^2 T^* \bbO),
\end{equation}
so $g$ is asymptotically flat in the sense above. 
\end{proof}

It is not difficult to construct $g \in g_{\bbM} + (1+t^2+r^2)^{-1} C^\infty(\bbM;{}^{\mathrm{sc}}\!\operatorname{Sym}T^* \bbM)$ besides $g_{\bbM}$ itself satisfying the other conditions above, so the discussion below applies to more than just exact Minkowski spacetime. 

Given the setup above, the d'Alembertian $\square_g$ satisfies $\square_g - \square \in \operatorname{Diff}_{\mathrm{de,sc}}^{2,-\mathsf{2}}(\bbO)$. Consider now an operator of the form 
\begin{equation}
	P = \square_g + Q + \mathsf{m}^2 
\end{equation}
for $Q \in \operatorname{Diff}_{\mathrm{de,sc}}^{1,-\mathsf{2}}(\bbO)$. Such an operator has all of the properties required in each of the previous sections, so we can cite the various results.

\subsection{Initial value problem}
\label{subsec:IVP}

We now prove \Cref{thm:main}. Let $\chi$ be as in that theorem. First, for solutions to the IVP that are assumed to be tempered:
\begin{proposition}
	Suppose that $u\in \calS'(\bbR^{1,d})$ is a solution to the IVP
	 \begin{equation}
	 	\begin{cases}
	 		Pu = f \\ 
	 		u|_{t=0} = u^{(0)}, \\ 
	 		\partial_t u|_{t=0} = u^{(1)}
	 	\end{cases}
	 \end{equation}
 	for some $f\in \calS(\bbR^{1,d})$, $u^{(0)},u^{(1)}\in \calS(\bbR^d)$. Then, $u$ has the form 
	\begin{equation}
		u = u_0 + \chi \varrho_{\mathrm{Pf}}^{d/2}\varrho_{\mathrm{Ff}}^{d/2} e^{-i \mathsf{m} \sqrt{t^2-r^2}} u_- + \chi \varrho_{\mathrm{Pf}}^{d/2}\varrho_{\mathrm{Ff}}^{d/2} e^{+i \mathsf{m} \sqrt{t^2-r^2}} u_+
	\end{equation}
	for some $u_0 \in \calS(\bbR^{1,d})$ and some  $u_\pm \in \varrho_{\mathrm{nPf}}^\infty\varrho_{\mathrm{Sf}}^\infty\varrho_{\mathrm{nFf}}^\infty C^\infty(\bbO) = \bigcap_{k\in \bbN} \varrho_{\mathrm{nPf}}^k\varrho_{\mathrm{Sf}}^k\varrho_{\mathrm{nFf}}^k C^\infty(\bbO)$.
	\label{prop:repeated_thm_main}
\end{proposition}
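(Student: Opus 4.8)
The plan is to combine the microlocal propagation machinery of \S\ref{sec:propagation}, the radial-point estimates of \S\ref{sec:radialpoint}, and the asymptotic-extraction results of \S\ref{subsec:asymptotics} in essentially the order laid out in the introduction. First I would reduce the IVP to a forcing problem on all of $\bbR^{1,d}$: extend $u^{(0)},u^{(1)}$ to a Schwartz function $\tilde u$ supported in $\{t>-1\}$ (say) with the prescribed Cauchy data at $t=0$, and set $u' = u - \tilde u$. Then $u'$ solves $Pu' = f'$ with $f' = f - P\tilde u \in \calS(\bbR^{1,d})$ and $u'$ has vanishing Cauchy data at $t=0$; since $u'$ is tempered and $P\in \operatorname{Diff}_{\mathrm{de,sc}}^{2,\mathsf 0}(\bbO)$ arises from an admissible metric, it suffices to prove the conclusion for $u'$. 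Renaming, we may assume $u$ vanishes to infinite order at $\Sigma_0$ in the sense that $\operatorname{WF}_{\mathrm{sc}}^{m,s}(u) \cap {}^{\mathrm{sc}}\pi^{-1}\operatorname{cl}_{\bbM}\{t=0\} = \varnothing$ for all $m,s$; this is the hypothesis needed to seed \Cref{thm:propagation_through_scrI_1}.

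The heart of the argument is a bootstrap in the de,sc-Sobolev orders. Fix a large $m$ and large decay orders $\mathsf s = (s_{\mathrm{Pf}},s_{\mathrm{nPf}},s_{\mathrm{Sf}},s_{\mathrm{nFf}},s_{\mathrm{Ff}})$; I want to show $\operatorname{WF}_{\mathrm{de,sc}}^{m,\mathsf s}(u)\subseteq \calR$ for \emph{arbitrarily large} $s_{\mathrm{nPf}},s_{\mathrm{Sf}},s_{\mathrm{nFf}}$ and for any $m$, while keeping $s_{\mathrm{Pf}},s_{\mathrm{Ff}}$ constrained only by the radial-point threshold at $\calR$. Since $f\in\calS$, we have $\operatorname{WF}_{\mathrm{de,sc}}^{m-1,\mathsf s+1}(f) = \varnothing \subseteq \calR$ for all orders, so the hypotheses of \Cref{thm:propagation_through_scrI_1} are met provided $\mathsf s$ satisfies the stated inequalities ($m>s_{\mathrm{nf}}+1$, $2s_{\mathrm{Sf}}>\max\{-2m+2s_{\mathrm{nf}}+1, m+s_{\mathrm{nf}}-1\}$, $2s_{\mathrm{Tf}}<m+s_{\mathrm{nf}}-1$ for each null/timelike-cap pair). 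These are compatible: choose $s_{\mathrm{nPf}},s_{\mathrm{nFf}}$ with $m>s_{\mathrm{nf}}+1$, then $s_{\mathrm{Sf}}$ large, then $s_{\mathrm{Pf}},s_{\mathrm{Ff}}$ small (below the threshold $m+s_{\mathrm{nf}}-1$ over $2$); and by the sc-calculus remark, outside null infinity the only wavefront set is already $\subseteq\calR\cup{}^{\mathrm{de,sc}}\pi^{-1}(\mathrm{nf})$, so \Cref{thm:propagation_through_scrI_1} upgrades this to $\operatorname{WF}_{\mathrm{de,sc}}^{m,\mathsf s}(u)\subseteq\calR$. Iterating in $m$ and $s_{\mathrm{Sf}},s_{\mathrm{nPf}},s_{\mathrm{nFf}}$ (each step gains a fixed number of orders, exactly as in the de,sc-analogues of the standard sc-patching arguments), and using \Cref{thm:R1} at $\calR$ to propagate control \emph{out of} the radial set with whatever module regularity is seeded from $\operatorname{WF}_{\mathrm{de,sc}}^{-N,\mathsf s_0}(Au)=\varnothing$ (which holds because $u$ is tempered, hence lies in some $H_{\mathrm{de,sc}}^{-N,-N}$, and $\calR$ is disjoint from fiber infinity so module regularity there is automatic at low orders), we conclude that $u$ has de,sc-wavefront set contained in $\calR$ with $\varrho_{\mathrm{nPf}}^\infty\varrho_{\mathrm{Sf}}^\infty\varrho_{\mathrm{nFf}}^\infty$ decay away from $\calR$. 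In particular $u$ is Schwartz microlocally away from a neighborhood of $\calR$, and $\operatorname{WF}_{\mathrm{de,sc}}(u)\subseteq\calR\subseteq {}^{\mathrm{de,sc}}\pi^{-1}(\mathrm{Pf}\cup\mathrm{Ff})$.

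Finally I would extract the asymptotic expansion at the timelike caps. Having established that $u$, cut off near $\mathrm{Pf}\cup\mathrm{Ff}$ by $\chi$, has the requisite module regularity relative to the test module $\frakM_{\pm,\pm}$ (this is exactly the statement that $\operatorname{WF}_{\mathrm{de,sc}}^{m,\mathsf s}(Au)\cap\calR=\varnothing$ for all $A\in\frakM^{\infty,\infty}$, which follows from the radial-point estimate \Cref{thm:R1} applied iteratively in $k,\kappa$ — the theorem is stated for each fixed $(k,\kappa)$ and the orders at $\mathrm{Pf},\mathrm{Ff}$ can be taken just above $-1/2$ while those at the other faces are $+\infty$), I apply \Cref{prop:asymptotic_main}. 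Its hypotheses require $u\in H_{\mathrm{de,sc};\pm,-}^{m,(s,\infty,\infty,\infty,\infty);\infty,\infty}$ near $\mathrm{Pf}$ with $s>-3/2$ and similarly at $\mathrm{Ff}$, which is precisely what the bootstrap and radial-point estimates deliver (choosing $s$ just above $-3/2$ suffices). \Cref{prop:asymptotic_main} then gives $\chi u = w + \chi e^{\pm i\mathsf m\tau}\langle\tau\rangle^{-d/2} v$ with $w\in\calS(\bbR^{1,d})$ and $v\in\varrho_{\mathrm{nPf}}^\infty\varrho_{\mathrm{nFf}}^\infty\varrho_{\mathrm{Ff}}^\infty C^\infty(\bbO)$ near $\mathrm{Pf}$ (resp.\ with $\varrho_{\mathrm{Pf}}^\infty$ at $\mathrm{Ff}$); the two expansions at the two caps are glued by a partition of unity subordinate to $\chi$, using that on the overlap $\langle\tau\rangle^{-d/2}$ and $\varrho_{\mathrm{Pf}}^{d/2}\varrho_{\mathrm{Ff}}^{d/2}$ differ by an element of $C^\infty(\operatorname{supp}\chi;\bbR^+)$ — indeed $\iota^*\langle\tau\rangle^{-1}\in\varrho_{\mathrm{Pf}}\varrho_{\mathrm{nPf}}\varrho_{\mathrm{nFf}}\varrho_{\mathrm{Ff}} C^\infty(\bbX;\bbR^+)$ by \cref{eq:misc_vr1}, so on $\operatorname{supp}\chi$ (where $\varrho_{\mathrm{nPf}},\varrho_{\mathrm{nFf}}$ are bounded above and below away from zero) $\langle\tau\rangle^{-d/2}$ and $\varrho_{\mathrm{Pf}}^{d/2}\varrho_{\mathrm{Ff}}^{d/2}$ are smooth positive multiples of each other, and $e^{\mp i\mathsf m\sqrt{t^2-r^2}} = e^{\mp i\mathsf m\iota^*\tau}$ there. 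Absorbing the smooth positive factors into $u_\pm$ and collecting the Schwartz pieces into $u_0$ yields the claimed form, with $u_\pm\in\varrho_{\mathrm{nPf}}^\infty\varrho_{\mathrm{Sf}}^\infty\varrho_{\mathrm{nFf}}^\infty C^\infty(\bbO)$.

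The main obstacle is the bookkeeping of the module-regularity hypothesis needed to invoke \Cref{prop:asymptotic_main}: one must verify that the chain of radial-point and propagation estimates actually produces full ($\infty$-order in $k,\kappa$) $\frakM_{\pm,\sigma}$-regularity at both caps, not merely finitely many orders, and that the threshold conditions at $\calR$ (requiring $s_{\mathrm{Pf}},s_{\mathrm{Ff}}>-1/2$ for propagation \emph{out of} $\calR$) are compatible with the conditions in \Cref{thm:propagation_through_scrI_1} (which want $2s_{\mathrm{Tf}}<m+s_{\mathrm{nf}}-1$) — this forces a careful choice of the order vector $\mathsf s$ at each stage of the iteration, and one must check that the induction in $m$ can be run while keeping $s_{\mathrm{Pf}},s_{\mathrm{Ff}}$ fixed in the window $(-1/2,\,(m+s_{\mathrm{nf}}-1)/2)$, which widens as $m$ grows, so this is feasible but requires care. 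A secondary nuisance is justifying the reduction to the infinite-order-vanishing-at-$\Sigma_0$ case and the gluing of the two cap expansions, both of which are routine but need the identifications from \S\ref{subsec:basespace} and \cref{eq:misc_vr1}.
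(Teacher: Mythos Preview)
Your overall architecture is right, but there is a genuine error at the radial-point step. You invoke \Cref{thm:R1} (propagation \emph{out of} $\calR$, above-threshold), claiming its a~priori hypothesis $\operatorname{WF}_{\mathrm{de,sc}}^{-N,\mathsf{s}_0}(Au)\cap\calR^\varsigma_\sigma=\varnothing$ with $s_{\mathrm{Pf},0},s_{\mathrm{Ff},0}>-1/2$ ``holds because $u$ is tempered.'' This is false: temperedness only gives $u\in H_{\mathrm{de,sc}}^{-N,-N}$, i.e.\ absence of wavefront set at decay order $-N\ll -1/2$, not at any order $>-1/2$. In fact the solution to the IVP has $L^2$-decay order exactly $-1/2-\epsilon$ at the timelike caps (it is $O(|t|^{-d/2})$ pointwise), so the above-threshold hypothesis \emph{fails} generically. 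The paper instead applies \Cref{thm:R2} (propagation \emph{into} $\calR$, below-threshold, $s_{\mathrm{Pf}},s_{\mathrm{Ff}}<-1/2$), whose hypothesis --- control in a punctured neighborhood of $\calR$ --- is exactly what \Cref{thm:propagation_through_scrI_1} delivers. One then takes $s_{\mathrm{Pf}},s_{\mathrm{Ff}}\in(-3/2,-1/2)$, which is simultaneously below the radial-point threshold and above the $-3/2$ needed for \Cref{prop:asymptotic_main}.

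A second omission: before invoking \Cref{prop:asymptotic_main} one must microlocally decompose $u=\sum_{\varsigma,\sigma}u^\varsigma_\sigma$ so that each piece has de,sc-wavefront set in a \emph{single} component $\calR^\varsigma_\sigma$. The module $\frakM_{+,+}^{\kappa,k}$ contains elements elliptic at $\calR^-_+$, so the full $u$ (with wavefront set at both $\calR^+_+$ and $\calR^-_+$) does not lie in $H_{\mathrm{de,sc};+,+}^{m,\mathsf{s};\infty,\infty}$. The paper handles this by writing $1=\sum Q^\varsigma_\sigma$ with $\operatorname{WF}'_{\mathrm{de,sc}}(Q^\varsigma_\sigma)$ meeting only $\calR^\varsigma_\sigma$, setting $u^\varsigma_\sigma=Q^\varsigma_\sigma u$, and noting $Pu^\varsigma_\sigma$ is Schwartz since $[P,Q^\varsigma_\sigma]$ has essential support disjoint from $\calR$. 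Your sketch never makes this split, and the gluing you describe at the end presupposes it. (Your initial reduction via $\tilde u$ and the final identification of $\langle\tau\rangle^{-d/2}$ with $\varrho_{\mathrm{Pf}}^{d/2}\varrho_{\mathrm{Ff}}^{d/2}$ on $\operatorname{supp}\chi$ are fine, though the paper instead uses $u^\pm=\Theta(\pm t)u$ and the sc-calculus directly to seed Schwartzness near $\{t=0\}$.)
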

\begin{proof} 
In order to get started, we need to know that $u$ (which can be deduced to be smooth via the Duistermaat--H\"ormander theorem or propagation of singularities in physical space) is Schwartz in a neighborhood of $\mathrm{cl}_{\bbM}\{t=0\}$ in $\bbM$. 

One way to see this is to consider the advanced and retarded components $u^-(t,\bfx) = (1-\Theta(t)) u(t,\bfx)$ and $u^+(t,\bfx) = \Theta(t) u(t,\bfx)$, where $\Theta(t)=1_{t\geq 0}$ denotes a Heaviside function. We have $u^\pm \in \calS'(\bbR^{1,d})$; this is only nonobvious near $t=0$. For this, one can use the energy estimate corollary $u\in L^\infty_{\mathrm{loc}}(\bbR_t;L^2(\bbR^d))$.

These satisfy 
\begin{equation}
	P u^\pm(t,\bfx)  = \pm (\delta'(t) f_1(\bfx)+\delta(t) f_2(\bfx) )
\end{equation}
for some $f_1,f_2\in \calS(\bbR^d)$ depending on $u^{(0)}$ and $u^{(1)}$ and on $P$. The sc-wavefront sets $\operatorname{WF}_{\mathrm{sc}}(\delta'(t)f_1(\bfx))$, $\operatorname{WF}_{\mathrm{sc}}(\delta(t)f_2(\bfx))$ are disjoint from the sc-characteristic set of $P$. Indeed, it can be checked (either directly, or via an argument presented after the end of this proof) that 
\begin{equation}
	\operatorname{WF}_{\mathrm{sc}}(\delta'(t)f_1(\bfx)), \operatorname{WF}_{\mathrm{sc}}(\delta(t)f_2(\bfx)) \subseteq {}^{\mathrm{sc}}\! N^* \mathrm{cl}_\bbM \{t=0\} \cap {}^{\mathrm{sc}} \bbS^* \bbM , 
	\label{eq:misc_488}
\end{equation}
and the right-hand side is disjoint from the sc-characteristic set of $P$, which intersects $ {}^{\mathrm{sc}}\! N^* \mathrm{cl}_\bbM \{t=0\} $ only away from fiber infinity. 
Since $u^\pm$ vanish identically in one of the two temporal hemispheres $\mathrm{cl}_\bbM\{\mp t>0\}\backslash \mathrm{cl}_\bbM\{t=0\}$, $u^\pm$ has no sc-wavefront set over the corresponding hemisphere. We can therefore apply sc-propagation results \cite{VasyGrenoble} (noting that the wavefront sets $\operatorname{WF}_{\mathrm{sc}}(\delta'(t)f_1(\bfx))$, $\operatorname{WF}_{\mathrm{sc}}(\delta(t)f_2(\bfx))$ do not interrupt the propagation, since they are in the elliptic region) to conclude that the portion of $\operatorname{WF}_{\mathrm{sc}}(u^\pm)$ inside the sc-characteristic set  is a subset of the radial sets of the sc-Hamiltonian flow. The same therefore applies to $u=u^-+u^+$. But, by elliptic regularity in the sc-calculus (using that $f$ is Schwartz), $\operatorname{WF}_{\mathrm{sc}}(u)$ is a subset of the sc-characteristic set of $P$. So, $\operatorname{WF}_{\mathrm{sc}}(u)$ is a subset of the radial sets of the sc-Hamiltonian flow, which sit over $\overline{C}_\pm$. This implies that $u$ is Schwartz in a neighborhood of $\mathrm{cl}_{\bbM}\{t=0\}$ in $\bbM$.

This implies that 
\begin{equation}
	\operatorname{WF}_{\mathrm{de,sc}}(u) \subseteq \calR \cup {}^{\mathrm{de,sc}}\pi^{-1}(\mathrm{nPf}\cup \mathrm{nFf}). 
\end{equation} 
By \Cref{thm:propagation_through_scrI_1}, we can strengthen this to
\begin{equation}
	\operatorname{WF}_{\mathrm{de,sc}}(u) \subseteq \calR. 
	\label{eq:misc_788}
\end{equation}
Indeed, given any $m\in \bbR$ and $\mathsf{s}\in \bbR^5$, we can find some $m_0>m$ and $\mathsf{s}_0>\mathsf{s}$ such that the pair $(m_0,\mathsf{s}_0)$ satisfies the hypotheses of that theorem. The theorem then tells us that 
\begin{equation}
	\operatorname{WF}_{\mathrm{de,sc}}^{m_0,\mathsf{s}_0}(u) \subseteq \calR. 
\end{equation}
\Cref{eq:misc_788} then follows from the definition of $\operatorname{WF}_{\mathrm{de,sc}}$ (\cref{eq:WF_def}), since $\calR$ is closed.  

Now, we can find $Q_\pm \in \Psi_{\mathrm{de,sc}}^{0,\mathsf{0}}$ such that 
\begin{itemize}
	\item $1=Q_-+Q_+$, 
	\item $\operatorname{WF}'_{\mathrm{de,sc}}(Q_\pm) \cap \Sigma_{\mathsf{m},\mp} = \varnothing$.
\end{itemize} 
Let $\chi_0 \in C^\infty(\bbM)$ be identically equal to $0$ in some neighborhood of the past cap and identically equal to $1$ in some neighborhood of the future cap. Then, we can define 
\begin{equation}
	Q^\pm_- = (1-\chi_0) Q_\pm, \qquad Q^\pm_+ = \chi_0 Q_\pm. 
\end{equation}
For signs $\varsigma,\sigma \in \{-,+\}$, let $u^\varsigma_\sigma = Q^\varsigma_\sigma u$.
Observe that $P u^\varsigma_\sigma = Q^\varsigma_\sigma f + [P,Q^\varsigma_\sigma]u$. Since $\operatorname{WF}_{\mathrm{de,sc}}'([P,Q^\varsigma_\sigma]) \cap \calR=\varnothing$,  we have 
\begin{equation} 
	\operatorname{WF}_{\mathrm{de,sc}}'([P,Q^\varsigma_\sigma]) \cap \operatorname{WF}_{\mathrm{de,sc}}(u) =\varnothing,
\end{equation} 
which implies that $[P,Q^\varsigma_\sigma]u$ is Schwartz (by microlocality, \cref{eq:microlocality}). So, $f^\varsigma_\sigma = Q^\varsigma_\sigma f + [P,Q^\varsigma_\sigma]u$ is Schwartz. Moreover, by construction, 
\begin{equation}
	\operatorname{WF}_{\mathrm{de,sc}}(u^\varsigma_\sigma) \subseteq \calR^\varsigma_\sigma  .
	\label{eq:misc_877}
\end{equation}
For $\mathsf{s}$ with $s_{\mathrm{Pf}},s_{\mathrm{Ff}}<-1/2$, we can apply \Cref{thm:R2} (for each possible pair of signs) to the $u^\varsigma_\sigma$, the hypothesis of which is trivially satisfied as a consequence of \cref{eq:misc_788}, \cref{eq:misc_877}. The conclusion is that 
\begin{align}
	\begin{split} 
	u^\varsigma_- &\in H_{\mathrm{de,sc};\varsigma,-}^{\infty,(s_{\mathrm{Pf}},\infty,\infty,\infty,\infty );\infty,\infty}, \\ 
	u^\varsigma_+ &\in H_{\mathrm{de,sc};\varsigma,+}^{\infty,(\infty,\infty,\infty,\infty,s_{\mathrm{Ff}} );\infty,\infty}.
	\end{split} 
\end{align}
Taking $s_{\mathrm{Pf}},s_{\mathrm{Ff}} \in (-3/2,-1/2)$, we can cite \Cref{prop:asymptotic_main} to conclude that $u = u^-_-+u^+_-+u^-_++u^+_+$ has the form specified in the theorem.
\end{proof} 

If $f\in \calS(\bbR^d)$, then $f$, viewed initially as a function on $\Sigma_0 = \{(t,\bfx)\in \bbR^{1,d}:t=0\}$, can be extended to a Schwartz function $F$ on $\bbR^{1,d}$. 
This implies that, for any $m,s\in \bbR$,  
\begin{equation}
	\operatorname{WF}_{\mathrm{sc}}^{m,s}(\delta'(t) f(\bfx)) \subseteq \operatorname{WF}_{\mathrm{sc}}^{m,s_0}(\delta'(t) ) 
\end{equation}
for any $s_0\in \bbR$, because $\delta'(t) f(\bfx) = M_F \delta'(t)$ and $M_F \in \operatorname{Diff}_{\mathrm{sc}}^{0,-\infty}(\bbM)$. Similarly, $\operatorname{WF}_{\mathrm{sc}}^{m,s}(\delta(t) f(\bfx)) \subseteq \operatorname{WF}_{\mathrm{sc}}^{m,s_0}(\delta(t) )$. 
Consequently, in order to verify \cref{eq:misc_488}, it suffices to prove that 
\begin{equation}
	\operatorname{WF}_{\mathrm{sc}}^{m,s_0}(\delta'(t)), \operatorname{WF}_{\mathrm{sc}}^{m,s_0}(\delta(t)) \subseteq {}^{\mathrm{sc}}\! N^* \mathrm{cl}_\bbM \{t=0\} \cap {}^{\mathrm{sc}} \bbS^* \bbM 
\end{equation}
for some $s_0=s_0(m)\in \bbR$. 
Moreover, since $\partial_t \in \operatorname{Diff}_{\mathrm{sc}}^{1,0}(\bbM)$, we know that $\operatorname{WF}_{\mathrm{sc}}^{m,s_0}(\delta'(t)) \subseteq \operatorname{WF}_{\mathrm{sc}}^{m+1,s_0}(\delta(t))$, so it suffices to prove the above for just $\delta(t)$. In order to do this, we use that $\operatorname{WF}_{\mathrm{sc}}^{m,s}(w) = \calF_*^{-1}\circ  \operatorname{WF}_{\mathrm{sc}}^{s,m}(\calF w)$ for every $w\in \calS'(\bbR^{1,d})$, where $\calF$ is the spacetime Fourier transform and $\calF_*^{-1}$ is the involution of ${}^{\mathrm{sc}}\overline{T}^* \bbM$ switching frequency and position (choosing sign conventions appropriately). Thus, 
\begin{equation}
	\operatorname{WF}_{\mathrm{sc}}^{m,s_0}(\delta(t)) = \calF_* \operatorname{WF}_{\mathrm{sc}}^{s_0,m}(\delta(\bfx)).
\end{equation}
Recalling that the portion of $\operatorname{WF}_{\mathrm{sc}}^{s_0,m}(\delta(\bfx))$ over the interior is just $\operatorname{WF}^{s_0}(\delta(\bfx))$, if $s_0$ is sufficiently negative then $\operatorname{WF}_{\mathrm{sc}}^{s_0,m}(\delta(\bfx))$ is contained entirely over the boundary, which says that $\calF_* \operatorname{WF}_{\mathrm{sc}}^{s_0,m}(\delta(\bfx))$ is contained entirely at fiber infinity. Thus, $\operatorname{WF}_{\mathrm{sc}}^{m,s_0}(\delta(t)) \subseteq {}^{\mathrm{sc}} \bbS^* \bbM$. In order to see that $\operatorname{WF}_{\mathrm{sc}}^{m,s_0}(\delta(t)) \subseteq {}^{\mathrm{sc}}\! N^* \mathrm{cl}_\bbM \{t=0\} $, note that $t \delta = 0$ and $\triangle \delta = 0$, where $\triangle = -( \partial_{x_1}^2+\cdots+\partial_{x_d}^2)$ is the spatial Laplacian. The former implies that $\operatorname{WF}_{\mathrm{sc}}^{m,s_0}(\delta(t))$ is contained over $\mathrm{cl}_\bbM\{t=0\}$, and the latter implies that 
\begin{equation}
	\operatorname{WF}_{\mathrm{sc}}(\delta(t)) \subseteq \operatorname{Char}_{\mathrm{sc}}^{2,0}(\triangle). 
\end{equation}
As $\operatorname{Char}_{\mathrm{sc}}^{2,0}(\triangle) \cap {}^{\mathrm{sc}}\pi^{-1} \mathrm{cl}_\bbM\{t=0\} = N^* \mathrm{cl}_\bbM \{t=0\} $, this completes the verification. 

In order to see that a solution to the IVP \cref{eq:IVP} with Schwartz initial data (and indeed, much worse initial data) is automatically tempered, a basic energy estimate suffices; this is proved in \S\ref{subsec:temperedness}. Thus, the temperedness hypothesis of the previous proposition can be removed, yielding finally \Cref{thm:main}.

\subsection{Scattering problems}
\label{subsec:scattering}

Say that the forward problem for $P$ is well-posed if, for any $f\in \calS(\mathbb{R}^{1,d})$, there exists a unique $u\in C^\infty(\bbR^{1,d}) \cap \calS'(\bbR^{1,d})$ such that 
\begin{itemize}
	\item $Pu=f$, and 
	\item $\chi_0u \in \calS(\bbR^{1,d})$ whenever $\chi_0\in C^\infty(\bbM)$ is identically $0$ near the closed future timelike cap $\operatorname{cl}_\bbM C_+$.
\end{itemize}
There exist criteria in the literature that suffice for this. It should be possible to prove this for the $P$ considered above using the energy estimate in \S\ref{subsec:temperedness} in conjunction with a duality argument, but we do not present the details here, so the next proposition is stated with well-posedness of the forward problem as an assumption.

\begin{proposition}
	Let $v_\pm$ denote Schwartz functions on the past timelike cap of $\bbM$. 
	Then, assuming that the forward problem for $P$ is well-posed, there exists a unique function $u\in C^\infty(\bbR^{1,d}) \cap \calS'$ such that $Pu=0$ and 
		\begin{equation}
		u = u_0 + \chi \varrho_{\mathrm{Pf}}^{d/2}\varrho_{\mathrm{Ff}}^{d/2} e^{-i \mathsf{m} \sqrt{t^2-r^2}} u_- + \chi \varrho_{\mathrm{Pf}}^{d/2}\varrho_{\mathrm{Ff}}^{d/2} e^{+i \mathsf{m} \sqrt{t^2-r^2}} u_+
		\label{eq:misc_500}
		\end{equation}
	for some Schwartz $u_0\in \calS(\bbR^{1,d})$ and $u_\pm \in \varrho_{\mathrm{nPf}}^\infty \varrho_{\mathrm{Sf}}^\infty \varrho_{\mathrm{nFf}}^\infty  C^\infty(\bbO)$ such that, restricted to the past timelike cap, $u_\pm$ agree with $v_\pm$. 
\end{proposition}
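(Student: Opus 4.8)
The plan is to construct $u$ by solving a forward problem whose forcing is designed to cancel the error introduced by a formal asymptotic solution near the past cap. First I would invoke \Cref{prop:borel} with the past timelike cap and the given data $v_\pm$: this produces $u_\pm^{\mathrm{app}} \in \varrho_{\mathrm{nPf}}^\infty \varrho_{\mathrm{Sf}}^\infty \varrho_{\mathrm{nFf}}^\infty C^\infty(\bbO)$, supported away from every face of $\bbO$ except the past cap $\mathrm{Pf}$ and the adjacent component of $\mathrm{nf}=\mathrm{nPf}$, restricting to $v_\pm$ on $\mathrm{Pf}$, and such that the ``approximate solution''
\begin{equation}
	w = \chi \varrho_{\mathrm{Pf}}^{d/2}\varrho_{\mathrm{Ff}}^{d/2} e^{-i\mathsf{m}\sqrt{t^2-r^2}} u_-^{\mathrm{app}} + \chi \varrho_{\mathrm{Pf}}^{d/2}\varrho_{\mathrm{Ff}}^{d/2} e^{+i\mathsf{m}\sqrt{t^2-r^2}} u_+^{\mathrm{app}}
\end{equation}
satisfies $Pw = g$ for some $g\in \calS(\bbR^{1,d})$. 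Note that $w$ is supported away from the future cap $\mathrm{Ff}$, since $\chi$ is $1$ only near $\mathrm{Pf}\cup\mathrm{Ff}$ and $u_\pm^{\mathrm{app}}$ is supported away from $\mathrm{Ff}$; more importantly $w$ is itself Schwartz away from a neighborhood of the past cap, so it is an honest ``incoming'' state.

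Next I would set $u = w - v$, where $v\in C^\infty(\bbR^{1,d})\cap \calS'(\bbR^{1,d})$ is the unique solution to the forward problem $Pv = g$ with $\chi_0 v\in \calS(\bbR^{1,d})$ whenever $\chi_0\in C^\infty(\bbM)$ vanishes near the future cap, which exists by the assumed well-posedness of the forward problem for $P$. Then $Pu = Pw - Pv = g - g = 0$, as required. To see that $u$ has the claimed form \cref{eq:misc_500}, I argue that $v$ itself is of ``exponential-polyhomogeneous type'' concentrated at the future cap: since $g$ is Schwartz and $v$ solves the forward problem, $v$ is Schwartz away from any neighborhood of $\mathrm{Ff}$, so in particular $\operatorname{WF}_{\mathrm{de,sc}}(v)$ is contained in ${}^{\mathrm{de,sc}}\pi^{-1}$ of a neighborhood of $\mathrm{Ff}$. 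Running the argument in the proof of \Cref{prop:repeated_thm_main} --- first \Cref{thm:propagation_through_scrI_1} to refine the wavefront set to $\calR$, then a microlocal partition $1 = Q_- + Q_+$ and cutoffs $\chi_0$ to isolate the future-cap pieces $v^\varsigma_+ = Q^\varsigma_+ v$, which solve $Pv^\varsigma_+ = $ (Schwartz), then \Cref{thm:R2} with Sobolev orders $s_{\mathrm{Ff}}\in(-3/2,-1/2)$, and finally \Cref{prop:asymptotic_main} --- gives $v = v_0 + \chi\varrho_{\mathrm{Pf}}^{d/2}\varrho_{\mathrm{Ff}}^{d/2} e^{-i\mathsf{m}\sqrt{t^2-r^2}}\tilde{v}_- + \chi\varrho_{\mathrm{Pf}}^{d/2}\varrho_{\mathrm{Ff}}^{d/2} e^{+i\mathsf{m}\sqrt{t^2-r^2}}\tilde{v}_+$ for $v_0\in\calS(\bbR^{1,d})$ and $\tilde{v}_\pm\in\varrho_{\mathrm{nPf}}^\infty\varrho_{\mathrm{Sf}}^\infty\varrho_{\mathrm{nFf}}^\infty C^\infty(\bbO)$; moreover $\tilde{v}_\pm$ are supported away from $\mathrm{Pf}$ because $v$ is Schwartz near the past cap. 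Then $u = w - v$ has the form \cref{eq:misc_500} with $u_0 = -v_0$, $u_\pm = u_\pm^{\mathrm{app}} - \tilde{v}_\pm$, and restricting to the past cap $\mathrm{Pf}$, where $\tilde{v}_\pm$ vanishes and $u_\pm^{\mathrm{app}}|_{\mathrm{Pf}} = v_\pm$, we get $u_\pm|_{\mathrm{Pf}} = v_\pm$ as desired.

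For uniqueness, suppose $u,u'$ both solve $Pu=0$, have the form \cref{eq:misc_500}, and restrict to $v_\pm$ on the past cap. Then $\delta u = u - u'$ solves $P(\delta u) = 0$, has the form \cref{eq:misc_500} with $(\delta u)_\pm \in \varrho_{\mathrm{nPf}}^\infty\varrho_{\mathrm{Sf}}^\infty\varrho_{\mathrm{nFf}}^\infty C^\infty(\bbO)$ vanishing on $\mathrm{Pf}$, hence (by the structure of the expansion, since the leading coefficient at the past cap is exactly $(\delta u)_\pm|_{\mathrm{Pf}} = 0$ and the higher terms are controlled similarly) $\delta u$ is Schwartz in a neighborhood of the past cap. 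In particular $\chi_0 \delta u\in\calS(\bbR^{1,d})$ for $\chi_0$ vanishing near the future cap; combined with the energy estimate / uniqueness part of well-posedness of the forward problem, this forces $\delta u\equiv 0$. (Concretely: $\delta u$ solves the forward problem $P(\delta u) = 0$ with incoming data that is Schwartz, and the forward problem has a unique such solution, namely $0$.)

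The main obstacle I expect is the bookkeeping in the last step of the existence argument: verifying that the asymptotic expansion of the forward solution $v$ is genuinely \emph{supported away from the past cap}, so that the approximate incoming data $v_\pm$ is not corrupted by contributions from $\tilde{v}_\pm$. This requires knowing that ``Schwartz near $\mathrm{Pf}$'' for $v$ implies the coefficients $\tilde{v}_\pm$ in its expansion vanish identically in a neighborhood of $\mathrm{Pf}$; this follows from the fact that the exponential-polyhomogeneous representation of a function is essentially unique away from the light cone (where $\sqrt{t^2-r^2}$ is smooth and the two exponential phases are distinct), so a function Schwartz there must have vanishing coefficients --- but spelling this out carefully, and checking it is compatible with the support condition imposed by $\chi$, is where the real work lies. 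The rest is a routine concatenation of \Cref{prop:borel}, the well-posedness assumption, and the already-proven \Cref{prop:repeated_thm_main}-style argument.
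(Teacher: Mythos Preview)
Your existence argument is essentially the paper's: construct an approximate solution via \Cref{prop:borel}, correct it with the forward solution of the Schwartz error, and read off the expansion of the correction from \Cref{prop:repeated_thm_main}. The sign conventions differ ($u=w-v$ versus the paper's $u=u_{\mathrm{pre}}+w$) but the content is identical, and your discussion of why $\tilde v_\pm$ can be taken to vanish near $\mathrm{Pf}$ is if anything more explicit than the paper's.

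The uniqueness argument, however, has a real gap. You write that $(\delta u)_\pm|_{\mathrm{Pf}}=0$ implies $\delta u$ is Schwartz near the past cap ``by the structure of the expansion, since the leading coefficient \dots\ is $0$ and the higher terms are controlled similarly.'' This does not follow from the expansion alone. Vanishing of the leading coefficient only buys you one extra order of decay: a function of the form \cref{eq:misc_500} lies in $H_{\mathrm{de,sc}}^{\infty,(-1/2-,\infty,\infty,\infty,\infty)}$, and with the leading term gone you get $H_{\mathrm{de,sc}}^{\infty,(-1/2+\varepsilon,\infty,\infty,\infty,\infty)}$ for $\varepsilon<1$. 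That is above the threshold $s_{\mathrm{Pf}}=-1/2$, but it is far from Schwartz. The paper closes this gap by invoking \Cref{thm:R1} (the above-threshold radial point estimate, which needs precisely this $s_{\mathrm{Pf}}>-1/2$ input together with $P(\delta u)=0$) to conclude $\operatorname{WF}_{\mathrm{de,sc}}(\chi_0\,\delta u)\cap\calR_-=\varnothing$, and then \Cref{thm:propagation_through_scrI_2} to upgrade to $\chi_0\,\delta u\in\calS$. Only then does well-posedness of the forward problem force $\delta u=0$. Your phrase ``higher terms are controlled similarly'' hides exactly the step where the microlocal machinery of \S\ref{sec:radialpoint} is genuinely used; without it the argument is circular, since the recursive determination of higher Taylor coefficients is a formal statement that does not by itself control the remainder.
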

\begin{proof}
	By \Cref{prop:borel}, there exists functions $u_{-,\mathrm{pre}},u_{+,\mathrm{pre}} \in  \varrho_{\mathrm{nPf}}^\infty \varrho_{\mathrm{Sf}}^\infty \varrho_{\mathrm{nFf}}^\infty \varrho_{\mathrm{Ff}}^\infty \in C^\infty(\bbO)$ such that $u_{\pm,\mathrm{pre}}$, when restricted to the past timelike cap, agree with $v_\pm$, and such that the function $u_{\mathrm{pre}}$ defined by 
	\begin{equation}  
		u_{\mathrm{pre}} = \chi \varrho_{\mathrm{Pf}}^{d/2}\varrho_{\mathrm{Ff}}^{d/2} e^{-i \mathsf{m} \sqrt{t^2-r^2}} u_{-,\mathrm{pre}} + \chi \varrho_{\mathrm{Pf}}^{d/2}\varrho_{\mathrm{Ff}}^{d/2} e^{+i \mathsf{m} \sqrt{t^2-r^2}} u_{+,\mathrm{pre}}
		\label{eq:misc_424}
	\end{equation} 
	satisfies $Pu_{\mathrm{pre}} \in \calS(\bbR^{1,d})$. Let $f=Pu_{\mathrm{pre}}$. By the existence clause of the well-posedness of the forward problem, there exists a function $w \in \calS'(\bbR^{1,d})$ such that $Pw = -f$ and $\chi_0 w$ is Schwartz whenever $\chi_0$ is identically $0$ near the future timelike cap. In particular, $w$ solves the IVP 
	\begin{equation}
		\begin{cases}
			Pw=-f, \\ 
			w|_{t=0} = w^{(0)}, \\ 
			\partial_t w|_{t=0} = w^{(1)} ,
		\end{cases}
	\end{equation}
	for some $w^{(0)},w^{(1)} \in \calS(\bbR^d)$. By \Cref{prop:repeated_thm_main}, 	$w$ has the form
	\begin{equation}
		w = w_0 + \chi \varrho_{\mathrm{Pf}}^{d/2}\varrho_{\mathrm{Ff}}^{d/2} e^{-i \mathsf{m} \sqrt{t^2-r^2}} w_- + \chi \varrho_{\mathrm{Pf}}^{d/2}\varrho_{\mathrm{Ff}}^{d/2} e^{+i \mathsf{m} \sqrt{t^2-r^2}} w_+
	\end{equation}
	for some Schwartz $w_0\in \calS(\bbR^{1,d})$ and $w_\pm \in C^\infty(\bbO)$. Moreover, $w_\pm$ can be chosen to be supported near $\mathrm{nFf}\cup \mathrm{Ff}$ (or even just near $\mathrm{Ff}$). Set $u=u_{\mathrm{pre}} + w$. This solves $Pu=0$ and has the form \cref{eq:misc_424} for $u_{\pm} = u_{\pm,\mathrm{pre}}+w_\pm$ and $u_0=w_0$. By the support condition on $w_\pm$, the restriction of $u_{\pm}$ to the past timelike caps are the same as the restriction of $u_{\pm,\mathrm{pre}}$. 
	
	Conversely, suppose that we are given $u$ of the form \cref{eq:misc_500} with $u_0\in \calS(\bbR^{1,d})$, $u_\pm \in \varrho_{\mathrm{nPf}}^\infty \varrho_{\mathrm{Sf}}^\infty \varrho_{\mathrm{nFf}}^\infty  C^\infty(\bbO)$ restricting to $v_\pm$ at the past timelike caps. Define $w_? = u-u_{\mathrm{pre}}$. This satisfies $Pw_?=-f$. Choosing $\chi_0 \in C^\infty(\bbM)$ to be identically $0$ near the future timelike cap and identically $1$ near the past timelike cap, we have
	\begin{equation}
		P(\chi_0 w_?) = -\chi_0 f + [P,\chi_0] w_?  \in \calS(\bbR^{1,d}). 
	\end{equation}
	Any function of the form \cref{eq:misc_500} lies in $H_{\mathrm{de,sc}}^{\infty,(-1/2-,\infty,\infty ,\infty,-1/2-)}(\bbO)$. 
	Since the leading order terms in the asymptotic expansions of $u,u_{\mathrm{pre}}$ at the past timelike cap agree, 
	\begin{equation}
		\chi_0 w_? \in H_{\mathrm{de,sc}}^{\infty,(-1/2+\varepsilon,\infty,\infty ,\infty,\infty)}(\bbO) 
	\end{equation}
	for any $\varepsilon<1$. By \Cref{thm:R1}, we can actually conclude that $\operatorname{WF}_{\mathrm{de,sc}}(\chi_0 w_?) \cap \calR_- = \varnothing$. Thus, by \Cref{thm:propagation_through_scrI_2}, $\chi_0 w_? \in \calS(\bbR^{1,d})$. This implies that $\chi_1 w_? \in \calS(\bbR^{1,d})$ whenever $\chi_1$ is identically $0$ near the future timelike cap. 
	So, $w_?$ solves the same forward problem that $w$ does. By the uniqueness clause of the well-posedness of the forward problem, $w_{?} = w$. This shows that $u$ is unique. 
\end{proof}

\subsection{Temperedness}
\label{subsec:temperedness}

Here, we give a self-contained proof that the solutions $u$ to the initial value problem are tempered.
The argument below is, unsurprisingly, of a standard sort via an energy estimate. The point is just that the specific assumptions under which the main theorem is stated suffice for the argument to go through.

The operators considered in the body of the paper, as well as their formal $L^2(\bbR^{1,d})$-based adjoints, have the form 
\begin{equation}
	L = (1+a_{00}) \frac{\partial^2}{\partial t^2} + \sum_{j=1}^d a_{0j} \frac{\partial}{\partial t} \frac{\partial}{\partial x_j} - \sum_{j,k=1}^d (1-a_{jk}) \frac{\partial}{\partial x_j} \frac{\partial }{\partial x_k} + \sum_{j=0}^d b_j \frac{\partial}{\partial x_j} + V  + \mathsf{m}^2
\end{equation}
for some $a_{jk} = a_{kj} \in C^\infty(\bbR^{1,d};\bbR)$, $b_j\in C^\infty(\bbR^{1,d})$, and $V\in C^\infty(\bbR^{1,d})$, all of which are decaying symbols on $\bbO$. In particular, on each Cauchy hypersurface $\Sigma_T = \{(t,\bfx)\in \bbR^{1,d} : t=T\}$, which stays away from null infinity, $\partial_t a_{j,k}(t,\bfx),\partial_{x_\ell} a_{j,k}(t,\bfx) \in \langle \bfx \rangle^{-2} L^\infty(\bbR^d_\bfx)$, $b_j(t,\bfx),V(t,\bfx) \in \langle \bfx \rangle^{-1} L^\infty(\bbR^d_\bfx)$, and likewise for higher derivatives. These suffice to prove the most basic estimates. Proving estimates that are uniform as $t\to \pm \infty$ will require taking into account temporal decay. 

Consider the $H^1$-energy 
\begin{equation}
	E[u](t) = \int_{\bbR^d} \Big(  \Big|\frac{\partial u}{ \partial t}\Big|^2  + \sum_{j=1}^d \Big|\frac{\partial u}{ \partial x_j}\Big|^2 + |u|^2 \Big) \dd^d x.
\end{equation}
Because $1+a_{00}>0$, owing to the assumption that $\nabla t$ is timelike, and because the matrix $\{1-a_{jk}\}_{j,k=1}^d$ is strictly positive definite, owing to the assumption that the hypersurface $\Sigma_T$ is spacelike for each $T$, $E[u](t)$ can be bounded above by some multiple of
\begin{equation}
	E_0[u](t) = \int_{\bbR^d} \Big( (1+a_{00})  \Big|\frac{\partial u}{ \partial t}\Big|^2  + \sum_{j,k=1}^d (1-a_{jk}) \frac{\partial u^*}{ \partial x_j} \frac{\partial u}{\partial x_k} + \mathsf{m}^2 |u|^2 \Big) \dd^d x. 
\end{equation}
Indeed, the assumptions imply that $\inf_{(t,\bfx)\in \bbR^{1,d}} (1+a_{00})>0$, as well as a similar uniform lower bound on the matrix $\smash{\{1-a_{jk}\}_{j,k=1}^d}$. Thus, $E[u](t) \leq CE_0[u](t)$ for some $C>0$. Conversely, $E_0[u](t) \leq C_0 E[u](t)$ for some other $C_0>0$.

If $u\in C^\infty(\bbR_t; C^\infty_{\mathrm{c}}(\bbR^{d}_{\bfx}))$, then 
\begin{multline}
	\frac{\mathrm{d} E_0[u]}{\mathrm{d}t} = \int_{\bbR^d} 2  \Re \Big[ \frac{\partial u^*}{\partial t} \Big( (1+a_{00})\frac{\partial^2 u}{\partial t^2} - \sum_{j,k=1}^d(1-a_{jk}) \frac{\partial^2 u}{\partial x_j \partial x_k} + \mathsf{m}^2 u  \Big)  \Big]  \dd^d x  \\ 
	+ \int_{\bbR^d} \Big( \frac{\partial a_{00}}{\partial t} \Big| \frac{\partial u}{\partial t} \Big|^2 + 2 \sum_{j,k=1}^d\frac{\partial a_{jk}}{\partial x_j} \Re \Big[ \frac{\partial u^*}{\partial t} \frac{\partial u}{\partial x_k}\Big]  - \sum_{j,k=1}^d \frac{\partial a_{jk}}{\partial t} \frac{\partial u^*}{\partial x_j} \frac{\partial u}{\partial x_k}  \Big) \dd^d x. 
	\label{eq:misc_499}
\end{multline}
The integral on the first line is 
\begin{equation}
	2\int_{\bbR^d} \Re \Big[ \frac{\partial u^*}{\partial t} Lu \Big]  \dd^d x - 2 \int_{\bbR^d} \Re \Big[ \frac{\partial u^*}{\partial t} 
	\Big( \sum_{j=1}^d a_{0j} \frac{\partial^2 u}{\partial t \partial x_j} +\sum_{j=0}^d  b_j \frac{\partial u}{\partial x_j}  + V u \Big) \Big] \dd^d x . 
\end{equation}
Using Cauchy--Schwarz and AM-GM, the first term here is bounded as follows: 
\begin{equation}
2	\Big| \int_{\bbR^d} \Re \Big[ \frac{\partial u^*}{\partial t}Lu \Big] \Big| \leq \frac{C}{\langle t\rangle^2} E_0[u](t) + \langle t\rangle^{2} \lVert Lu(t,-) \rVert_{L^2(\bbR^d)} .
\end{equation}
Likewise, 
\begin{equation}
	2\Big| \int_{\bbR^d} \Re \Big[ \frac{\partial u^*}{\partial t} \Big( Vu+ \sum_{j=0}^d b_j \frac{\partial u}{\partial x_j}  \Big) \Big]  \Big|  \leq C \Big( \sum_{j=0}^d \sup_{\bfx\in \bbR^d} |b_j(t,\bfx)| + \sup_{\bfx\in \bbR^d} |V(t,\bfx)| \Big) E_0[u](t) .
\end{equation}
Finally, since $2 \Re [\partial_t u^* \partial_{t}\partial_{x_j} u ]= \partial_{x_j}  |\partial_t u|^2$, we can write 
\begin{equation}
	2 \int_{\bbR^d} \Re \Big[ \frac{\partial u^*}{\partial t}  \sum_{j=1}^d a_{0j} \frac{\partial^2 u}{\partial t \partial x_j}\Big] \dd^d x = -2 \int_{\bbR^d} \Big| \frac{\partial u}{\partial t} \Big|^2 \sum_{j=1}^d \frac{\partial a_{0j}}{\partial x_j} \dd^d x,
\end{equation}
integrating by parts. This satisfies $ 2|\int_{\bbR^d} |\partial_t u|^2 \operatorname{div} a_{0\bullet} \dd^d x|\leq2 C \sup_{\bfx\in \bbR^d} |\operatorname{div} a_{0\bullet}(t,\bfx)|  E_0[u](t)$. Turning to the second line of \cref{eq:misc_499}, 
\begin{equation}
	\Big| \int_{\bbR^d} \frac{\partial a_{00}}{\partial t} \Big| \frac{\partial u}{\partial t}\Big|^2  \dd^d x \Big| \leq C\sup_{\bfx\in \bbR^d} \Big| \frac{a_{00}}{\partial t}\Big| E_0[u](t),
\end{equation}
and 
\begin{equation}
	\Big| \int_{\bbR^d} 2 \sum_{j,k=1}^d \frac{\partial a_{jk}}{\partial x_k} \Re \Big[ \frac{\partial u^*}{\partial t} \frac{\partial u}{\partial x_j}\Big]  - \sum_{j,k=1}^d \frac{\partial a_{jk}}{\partial t} \frac{\partial u^*}{\partial x_j} \frac{\partial u}{\partial x_k}  \dd^d x \Big| \leq \frac{3 C}{2}  \sup_{j,k} \sup_{\bfx\in \bbR^d}  \lVert \nabla a_{j,k} \rVert  E_0[u](t). 
\end{equation}
Let 
\begin{multline} 
	c(t) = C \Big( \langle t \rangle^{-2} + \sum_{j=0}^d \sup_{\bfx\in \bbR^d} |b_j(t,\bfx)| + \sup_{\bfx\in \bbR^d} |V(t,\bfx)|  + 2\sup_{\bfx\in \bbR^d} |\operatorname{div} a_{0\bullet}(t,\bfx)|  \\ + \frac{3}{2} \sup_{\bfx\in \bbR^d} \sup_{j,k} \lVert \nabla a_{j,k} \rVert \Big).
\end{multline} 
The above shows that 
\begin{equation}
	\frac{\mathrm{d} E_0[u]}{\mathrm{d} t} \leq c(t) E_0[u] + \langle t \rangle^2 \lVert Lu(t,-) \rVert_{L^2(\bbR^d)}. 
\end{equation}
Because each of the $b_j,V,\operatorname{div} a_{0\bullet},\nabla a_{j,k}$ is a decaying symbol on $\bbM$, their supremums over $\Sigma_T$ depend continuously on $T$. Thus, $c\in C^0(\bbR;\bbR^+)$. 
Gr\"onwall's inequality then says that $E_0[u](t) \leq \exp ( \int_0^t c(s) \dd s)( E_0[u](0) + \int_0^t \langle s \rangle^2 \lVert Lu(s,-) \rVert_{L^2(\bbR^d)} \dd s   )$, which implies 
\begin{equation}
	E[u](t) \leq C \exp \Big( \int_0^t c(s) \dd s \Big) \Big( C_0 E[u](0) + \int_0^t \langle s \rangle^2 \lVert L u(s,-) \rVert_{L^2(\bbR^d)} \dd s \Big).
\end{equation}
This was proven under the assumption that $u(t,-)$ be compactly supported, but using e.g. finite speed of propagation this assumption can be removed. Consequently, if $u\in C^\infty(\bbR^{1,d})$ solves $Lu\in \calS$, then 
\begin{equation} 
	E[u](t) \leq C  \exp \Big( \int_0^t c(s) \dd s \Big) \Big( C_0E[u](0) + \int_0^\infty  \langle s \rangle^2 \lVert L u(s,-) \rVert_{L^2(\bbR^d)} \dd s \Big) ,
\end{equation} 
where part of the conclusion is that, if $\partial_t u|_{t=0}\in L^2(\bbR^d_\bfx)$ and $u|_{t=0} \in H^1(\bbR^d_\bfx)$, then $u(t,\bfx)\in H^1(\bbR^d_\bfx)$ for each $t\in\bbR$. 

Now we use that being a decaying symbol on $\bbO$ implies improved decay as $t\to\infty$. Indeed, we are assuming that $b_j,V$ are symbols of order $-2$ on $\bbO$, so that $b_j ,V \in (1+t^2+x^2)^{-1/2} L^\infty(\bbR^{1,d})$. We are also assuming this of the $a_{j,k}$, and \cref{eq:misc_062}, \cref{eq:misc_064} imply that $\partial_t a_{j,k},\partial_{x_\ell} a_{j,k}$ are then also symbols of the same order (actually one better order at $\mathrm{Pf},\mathrm{Sf},\mathrm{Ff}$), so
\begin{equation}
	\partial_t a_{j,k},\partial_{x_\ell} a_{j,k} \in \frac{1}{(1+t^2+x^2)^{1/2}} L^\infty(\bbR^{1,d})
\end{equation}
as well. 
This all implies that $c \in \langle t \rangle^{-1} L^\infty(\bbR_t)$. Thus, $\int_0^t c(s) \dd s$ diverges at worst logarithmically, and so $E[u](t) \leq C_1 \langle t\rangle^{C_2} E[u](0)$ for some $C_1,C_2>0$. 
So, $u\in \langle t\rangle^{C_2} L^\infty(\bbR_t;H^1(\bbR^d_{\bfx})) \subseteq \mathcal{S}'(\bbR^{1,d})$.

\section*{Acknowledgements}
I am grateful to my PhD. advisor Peter Hintz, whose direction this work was done under. Special thanks are due to Mikhail Molodyk for his careful reading and corrections, which included patching a hole in the original proof of \Cref{prop:beth_propagation_one}. In addition, thanks are due to the two referees for their corrections and suggestions.

\printbibliography
\end{document}